\newtheorem{thm}{Theorem}[section]
\newtheorem{prop}[thm]{Proposition}
\newtheorem{cor}[thm]{Corollary}
\newtheorem{Def}[thm]{Definition}
\newtheorem{lemma}[thm]{Lemma}
\newtheorem{remark}[thm]{Remark}
\newcommand{\ve}{{\varepsilon}}
\newcommand{\N}{{\mathbb{N}}}
\newcommand{\R}{{\mathbb{R}}}
\numberwithin{equation}{section}
\title{Justification of the Benjamin-Ono equation as an internal water waves model} 
\author[M. Oen Paulsen]{Martin Oen Paulsen}
\address{Department of Mathematics\\ University of Bergen\\ Postbox 7800\\ 5020 Bergen\\ Norway}
\email{Martin.Paulsen@UiB.no}
\date{\today}
\keywords{Benjamin-Ono equation, rigorous justification; long-time well-posedness}
\subjclass[2010]{Primary: 35Q35; Secondary: 76B55, 76B45}
\begin{document}

	\begin{abstract}  In this paper, we give the first rigorous justification of the Benjamin-Ono  equation:
		\begin{equation*}
			\hspace{3cm}
			\partial_t \zeta
			+
			(1 - \frac{\gamma}{2}\sqrt{\mu}|\mathrm{D}|)\partial_x \zeta 
			+
			\frac{3\ve }{2}\zeta \partial_x\zeta =0, 
			\hspace{3cm}  
			\text{(BO)}
		\end{equation*}
		as an internal water wave model on the physical time scale. Here, $\ve$ is a small parameter measuring the weak nonlinearity of the waves, $\mu$ is the shallowness parameter, and $\gamma\in (0,1)$ is the ratio between the densities of the two fluids. To be precise, we first prove the existence of a solution to the internal water wave equations for a two-layer fluid with surface tension, where one layer is of shallow depth and the other is of infinite depth. The existence time is of order $\mathcal{O}(\frac{1}{\ve})$ for a small amount of surface tension such that $\ve^2 \leq\mathrm{bo}^{-1} $ where  $\mathrm{bo}$ is the Bond number. Then, we show that these solutions are close, on the same time scale,  to the solutions of the BO equation with a precision of order $\mathcal{O}(\mu + \mathrm{bo}^{-1})$. In addition, we provide the justification of new equations with improved dispersive properties, the Benjamin equation, and the Intermediate Long Wave (ILW) equation in the deep-water limit.

		The long-time well-posedness of the two-layer fluid problem was first studied by Lannes [Arch. Ration. Mech. Anal., 208(2):481-567, 2013] in the case where both fluids have finite depth. Here, we adapt this work to the case where one of the fluid domains is of finite depth, and the other one is of infinite depth. The novelties of the proof are related to the geometry of the problem, where the difference in domains alters the functional setting for the Dirichlet-Neumann operators involved. In particular, we study the various compositions of these operators that require a refined symbolic analysis of the Dirichlet-Neumann operator on infinite depth and derive new pseudo-differential estimates that might be of independent interest.

	\end{abstract}
    \maketitle
     \section{Introduction}
    \subsection{The Benjamin-Ono equation}
    The Benjamin-Ono (BO) equation is a nonlocal asymptotic model for the unidirectional propagation of weakly nonlinear, long internal waves in a two-layer fluid. The equation is given by 
    \begin{equation}\label{Bo equation}
    	\partial_t \zeta
    	+
    	(1 - \frac{\gamma}{2}\sqrt{\mu}|\mathrm{D}|)\partial_x \zeta 
    	+
    	\frac{3\ve }{2}\zeta \partial_x\zeta =0, 
    \end{equation}
    where $x\in \R$, $t>0$ and $\zeta = \zeta(x,t)$ denote the free surface, which is a real-valued function. Here, $\ve$ is a small parameter measuring the weak nonlinearity of the waves, $\mu$ is the shallowness parameter, and $\gamma\in (0,1)$ is the ratio between the densities of the two fluids.  The operator $|\mathrm{D}|$ is a Fourier multiplier defined by
    \begin{equation*}
    	|\mathrm{D}|f(x) = \mathcal{F}^{-1}\big{(}|\xi| \mathcal{F}(f)(\xi)\big{)}(x).
    \end{equation*}
    The BO equation was introduced formally by Benjamin \cite{Benjamin_67} in 1967 and at the same time independently by Davis and Acrivos \cite{Davis_Acrivos_67}.  We also refer the reader to the book by Klein and Saut \cite{Klein_Saut_21}, Chapter 3, for a detailed state-of-the-art. The studies in \cite{Benjamin_67,Davis_Acrivos_67} showed that the BO equation admits solitary waves with mere algebraic decay, as opposed to the exponential decay exhibited for the solitary waves of the KdV equation. Davis and Acrivos also gave experimental results. The experiments were carried out in a wave tank with a stratified solution of salt and water, where almost any disturbance to the surface
    would, after a short time, produce a wave with a fixed shape that propagates stably. It was later noted by Ono \cite{Ono_75}  that the ease with which they could generate solitary waves indicates that they are solitons. 

    The paper by Ono sparked much interest in studying the dynamics of the BO equation. It was proved that the solitary waves are unique (up to translation) \cite{Amick_Toland_91}, and the stability of these objects is studied in \cite{Bennett_etal_83,Weinstein_87,Kenig_Martel_09}  (see the references for a precise definition). Moreover, the stability of these waves is strong enough to preserve its own identity upon nonlinear interactions. The strong interaction between several solitary waves is studied in \cite{Yoshimasa_06,Neves_Lopes_06} and relies on explicit formulas %that is deduced from the fact that it is completely integrable \cite{Fokas_Ablowitz_83} %\cite{Case_79} 
    (see also \cite{Kenig_Martel_09} for the asymptotic stability of one soliton and $N-$solitons).
      %The fact that we can have some explicit solutions of the BO equation is far from trivial since it is a nonlinear PDE with the dipsersive part depending on the nonlocal operator 
    %
    %
    %
   % \begin{equation*}
    %	|\mathrm{D}|f(x) = \mathcal{F}^{-1}\big{(}|\xi| \mathcal{F}(f)(\xi)\big{)}(x),
    %\end{equation*}
    %
    %
    %
    %where $\mathcal{F}$ is the Fourier transform. 

    The fact that explicit solutions like the soliton (or multi-solitons) exist is a consequence of the complete integrability of the BO equation. Nakamura \cite{Nakamura_79} proved the existence of an infinite number of conserved quantities and discovered a Lax pair structure (see also \cite{Bock_79,Fokas_Ablowitz_83,Gerard_Kappeler_21}). This insight is proven to be crucial for the study of the dynamics of the BO equation and was further developed by Gérard and Kapeller \cite{Gerard_Kappeler_21}. They constructed a nonlinear Fourier transform for the BO equation on the torus, which has several applications to low regularity well-posedness of the initial value problem, the long-time behavior of solutions, and stability of traveling waves (see \cite{Gerard_20} for a survey on this topic). More recently, Gérard  \cite{Gerard_22} derived an explicit formula for the BO equation based on the Lax pair structure with remarkable consequences, for instance, the zero-dispersion limit problem \cite{Gerard_23} (see also \cite{Gassot_23_zero, Gassot_23}).

    The Cauchy problem for BO has been extensively studied in the last 40 years. It was first proved to be globally well-posed in $H^s(\R)$ for $s>\frac{3}{2}$ using an energy method, see \cite{Abdelouhab_Bona_Felland_Saut_89,Iorio_86}. We also refer to the results \cite{Ponce_91,Koch_Tzvetkov_03,Kenig_Koenig_03} for an improvement by including the dispersive smoothing effects in the energy estimates.
    %
    %
    %
    % was improved by Ponce \cite{Ponce_91} to reach the endpoint $s=\frac{3}{2}$ by adding the local smoothing effects. Then, by incorporating refined Strichartz estimates in the energy estimates, the threshold was lowered to $s>\frac{5}{2}$ for local solutions \cite{Koch_Tzvetkov_03}, and by refining this method it was lowered to $s>\frac{9}{8}$ \cite{Kenig_Koenig_03}. 
    %
    %
    %
    One of the main difficulties in improving the result further is that the flow map fails to be $C^2$ in any Sobolev space $H^s(\R)$ \cite{Molinet_Saut_Tzvetkov_01} (see also \cite{Koch_Tzvetkov_05}). %fails to be even uniformly continous  f
    %
    % To improve the results further one needs to use conservation laws to control the higher regularity.
    %
     A breakthrough was achieved by Tao \cite{Tao_04}, where he introduced a clever change of variables (the gauge transform) to improve the structure of the nonlinearity. As a consequence, he obtained a global well-posedness result for data in $H^1(\R)$. Several papers expanded on these ideas. We refer the interested reader to  \cite{Burq_Planchon_08,Ionescu_Kenig_07,Molinet_Pilod_12,Ifrim_Tataru_19} for results on the line and \cite{Molinet_07,Molinet_08,Molinet_Pilod_12} in the periodic case culminating in the global well-posedness in $L^2$. % by Inesco Kenig on the line \cite{...} and Mollinet for the periodic case \color{black}. 
    So far, the theory is based on PDE methods. However, by actively using the integrable structure, Gérard, Kappeler, and Topalov \cite{Gerard_Kappeler_Topalov_23}  proved the sharp global well-posedness result in $H^s(\mathbb{T})$ for $s>-\frac{1}{2}$ on the torus. Also, still relying on the integrability,  Killip, Laurens, and Vi\c san \cite{Killip_Laurens_Visan_23} recently proved the global well-posedness in $H^s(\mathbb{R})$ for $s>-\frac{1}{2}$ on the line. 
    \begin{figure}[ht]\label{Fig set up}
    	\vstretch{0.8}{\includegraphics[scale=0.38]{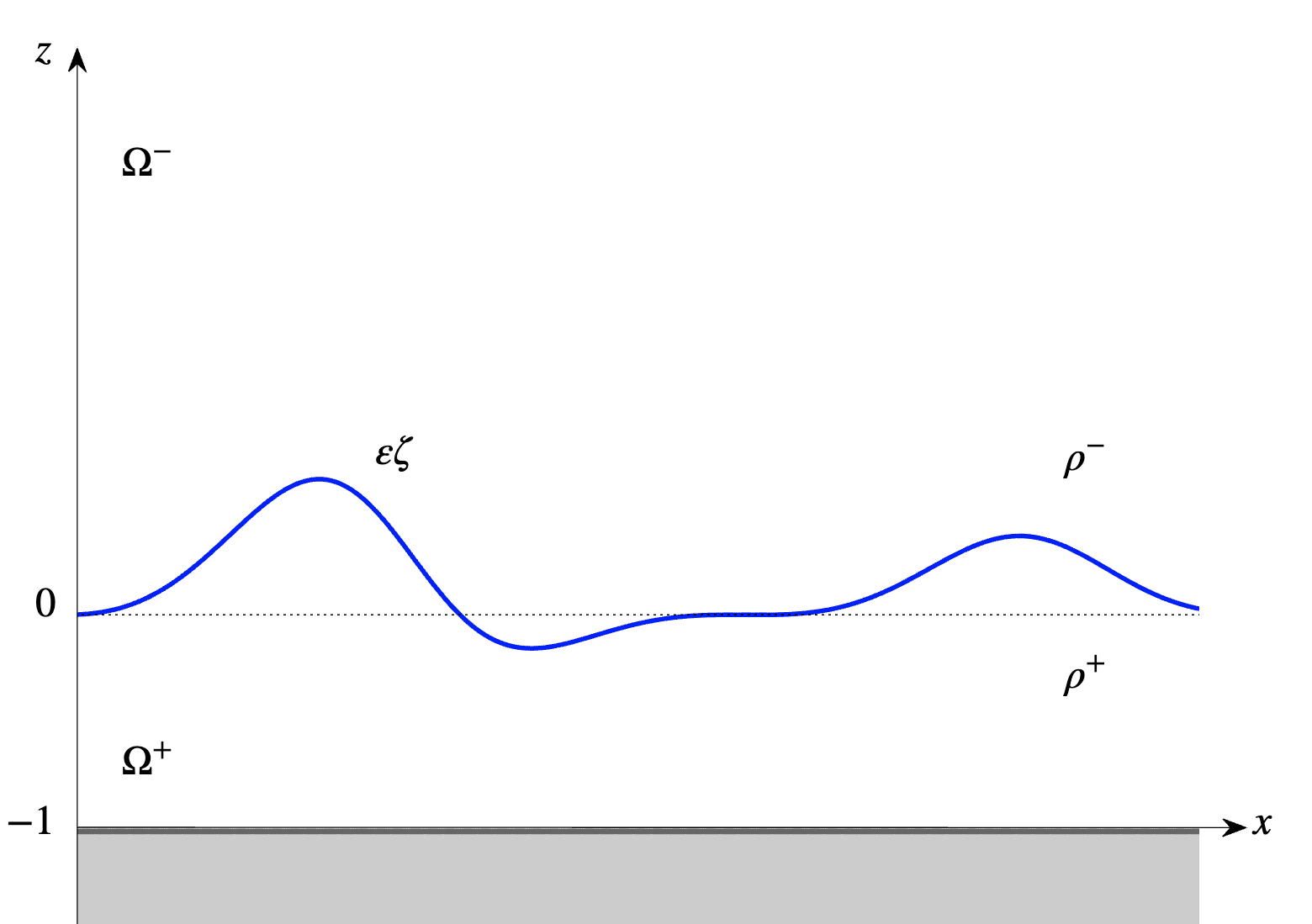}}
    	\caption{\small The blue line denotes the surface elevation $z=\ve \zeta$ and separates two fluids with density $0<\rho^-<\rho^+$. }
    \end{figure} 
	\subsubsection{Full justification}
	%
	%
	%
%   	Despite having a deep understanding of the well-posedness of the BO equation,
	Despite the rich well-posedness theory for the BO equation, it is still an open question whether its solutions are close to the ones of the original physical system. In the rigorous derivation of any asymptotic model, it is fundamental to know whether its solutions converge to the solutions of the reference model from which it is derived. The BO equation's reference model is a coupled system of Euler equations for two fluids that are joined with an interface, as in Figure  \ref{Fig set up}. Under the irrotationality condition, we will call the reference model the \lq\lq internal water waves system\rq\rq. To prove that BO is a valid approximation, we shall compare their solutions to the physical parameters:
	\begin{equation*}
		\ve = \frac{a}{H} \quad, \quad \mu = \frac{H^2}{\lambda^2} \quad \text{and} \quad  \mathrm{bo} = \frac{\rho^+ g\lambda^2(1-\gamma)}{\sigma},
	\end{equation*}
	where $a$ is the typical amplitude of the waves, $H$ is the still water depth in the lower fluid,  $\lambda$ is the typical wavelength, $\rho^+$ is the density of the lower fluid, $g$ is the acceleration of gravity, $\sigma$ is the surface tension parameter. Since  surface tension is only relevant for short waves, we will suppose that $\mathrm{bo}^{-1}$ is small.  To be precise, we will let  $\ve^2 \leq \mathrm{bo}^{-1} $, where we answer the following three questions:
	\begin{enumerate}[1.]
		
	   	\item  The solutions of the internal water wave equations exist on the relevant time scale $\mathcal{O}(\frac{1}{\ve})$.
	   	
	   	\item The solutions to the BO equation are uniformly controlled on the same time scale.
	   	
	   	%The solutions of the BO equation exist  (at least) on the time scale $\mathcal{O}(\frac{1}{\ve})$.	 
	   	
	   	\item %The solutions of the internal water wave equations solve the BO equation up to reminder terms of order $\mathcal{O}( (\mu +\mathrm{bo}^{-1}).

	   The \textit{consistency} between the BO equation and the internal water wave equations, i.e., solutions of the internal water wave equations solves the BO equation up to a small reminder. Then  use it to show that the error is of order $\mathcal{O}( (\mu +\mathrm{bo}^{-1})t)$ when comparing the two solutions.
	   	%Solutions of the water waves equations solve the asymptotic model up to remainder terms of
	%   	a specified order of precision in terms of the adimensional parameters μ, ε, and β. T
   \end{enumerate}

   The first point is the most challenging step of this paper, where we need to prove that the internal water waves equations are long-time well-posed for regular initial data. Moreover, we are confined to the specific geometry where one fluid layer is of shallow depth and the other of infinite depth. The main obstacle to constructing such solutions is the tendency of internal waves to break down due to Kelvin-Helmholtz instabilities. This issue was resolved in the case of a single fluid (i.e., $\rho^- = 0$), where stable solutions are deduced by imposing the \textit{Rayleigh-Taylor criterion}:
   \begin{equation}\label{R-T crit}
   		-\partial_z P^+|_{z = \ve \zeta} >0,
   \end{equation}
   where $z$ is the vertical coordinate, and $P^+|_{z = \ve \zeta}$  is the pressure at the surface. The physical relevance of this criterion can be seen by considering the Euler equations for a trivial flow $-\partial_z P^+|_{z = \ve \zeta} = \rho^+ g$, where gravity $g>0$ is the restoring force\color{black}. In this sense, the criterion is a natural condition to ensure that the pressure force is restoring and does not amplify the waves \cite{Sir_Taylor_50}. From a mathematical perspective, the criterion ensures the hyperbolicity of the water waves equation. Moreover, it is proved that under this condition, the water waves system in finite depth is locally well-posed by Lannes \cite{FirstWPWW_Lannes_05} and then long-time well-posed by Alvarez-Samaniego and Lannes \cite{Alvarez-SamaniegoLannes08a}. We also refer the reader to the pioneering work of Wu \cite{Wu_97,Wu_99} in the case of infinite depth where one of the key observations was the use of the Rayleigh-Taylor criterion \eqref{R-T crit} to remove a smallness condition on the data (see also the more recent work on extended life span and improved regularity results \cite{Wu_09,Wu_11,Wu_19,AlazardMetivier09,AlazardBurqZuily11,AlazardBurqZuily14,Ai_Ifrim_Tataru_22,Ai_23}).

   In the case of two fluid systems, the internal water waves system produces Kelvin-Helmholtz instabilities and becomes ill-posed unless there is surface tension $\sigma>0 $ \cite{Ebin_88,Iguchi_Tanaka_Tani_97,Kamotski_Lebeau_05}. There are several results on the well-posedness of the internal water wave systems in different configurations of the fluid domain where the time of existence depends on $\sigma$ \cite{Ambrose_03,Ambrose_Masmoudi_07,Cheng_Coutland_Shkoller_08,Shatah_Zeng_08,Shatah_Zeng_11}. On the other hand, the strength of surface tension is only relevant for very small values in water waves theory. Therefore, we must add surface tension to exploit its regularizing effect, but obtain a uniform local well-posedness result with respect to $\sigma>0$, allowing it to be taken small. Lannes solved this problem in  \cite{LannesTwoFluid13}  for two fluid layers of finite depth, where he derived a new stability criterion depending on $\sigma$. A crucial point is that surface tension could be taken small enough in the criterion such that it does not affect the main dynamics of the equation. It is also noted in the paper that the criterion depends strongly on the geometry of the problem. Many of the technical difficulties in this paper are related to this observation. In this work, we consider one of the layers to be of infinite depth. This is the key point of the paper, which will require a symbolic analysis of the Dirichlet-Neumann operator in a new functional setting. To achieve this goal, we derive several pseudo-differential estimates for symbols with limited smoothness.
   
    %where we need to take care of the functional setting. 
   
   % \  The proof of this point relies on several new pseudo-differential estimates for symbols of limited smoothness, using tools from \cite{Lannes_06}. \color{black}

  % The second point is already well-known since the BO equation is globally well-posed for regular data. However, to derive the BO equation we will actually consider two intermediate models. We will first derive a weakly dispersive BO type system from the internal water waves system that is consistent with a precision of order $\mathcal{O}(\mu + \ve\sqrt{\mu})$. Then we will consider uniderectional soltuions of this system, with the same precision, to deduce a weakly dispersive BO equation which is also consistent with the BO equation. The derivation of these models are based on the work of Bona, Lannes and Saut in \cite{Bona_Saut_Lannes_08}. In this paper, they derive several shallow water models for internal fluids and commented on the formal derivation of the BO equation. 
  % \\ 

  The second point is well-known since the BO equation is globally well-posed for regular data (see the discussion above). However, we will consider three intermediate models to derive the BO equation.   We will first derive a new weakly dispersive Benjamin-type system from the internal water waves system that is consistent with a precision of order $\mathcal{O}(\ve \sqrt{\mu})$. The novelty of this system is that it is exact at the linear level and models exactly the dispersion relation of the internal water waves systems when $\ve = 0$. These types of systems are commonly referred to as fully dispersive, and the analysis is motivated by the work of Emerald \cite{Emerald21}  (see also \cite{DucheneMMWW21}), where he derived fully dispersive shallow water models.  Then, we will consider unidirectional solutions of this system to deduce a weakly dispersive Benjamin equation, which is exact at the linear level, and from it, we give the first rigorous derivation of the Benjamin equation \cite{Benjamin_92,Benjamin_96}:
  \begin{equation}\label{Benjamin}
		\partial_t \zeta
            +
		\big{(} 1- \frac{\gamma}{2} \sqrt{\mu}|\mathrm{D}| - \frac{\partial_x^2}{2\mathrm{bo}}\big{)}\partial_x \zeta
		+
		\frac{3\ve }{2}\zeta \partial_x\zeta =  0.
	\end{equation}  
  We refer the reader to \cite{klein2023benjaminrelatedequations} for a detailed survey on the Benjamin equation and \cite{Linares_99, Chen_Guo_Xiao_11, Kozono_Ogawa_Tanisaka_01,Li_Wu_10} for global well-posedness results. Then from \eqref{Benjamin} we may use it to derive the BO equation by neglecting the effect of surface tension. Finally, we also use this result to derive the ILW equation rigorously. The ILW was derived formally in \cite{Joseph_77,Kubota_Ko_Dobbs_78} as an interface model where one of the fluid layers is of great depth, and its rigorous derivation remains an open problem. However, in the deep-water limit, it is well-known that its solutions converge to the ones of BO \cite{chapouto2024deepwaterlimitintermediatelong, Li_24} (see also  \cite{Ifrim_Saut_23,Albert_Bona_Saut_97,Molinet_Vento_25,Molinet_Pilod_Vento_18,Chapouto_Forlano_Oh_Pilod_2024} for other results on the Cauchy problem).  In this case, we can prove that the ILW approximates the internal water waves equations when one layer has infinite depth.  To rigorously derive these models, we closely follow the work of Bona, Lannes, and Saut in \cite{Bona_Saut_Lannes_08}. In this paper, they derive several shallow water models for internal fluids and comment on the formal derivation of the BO equation.

  %ADD IN LAST PARAGRAPHSTART 

  %  third point - a consequence of the energy method is the stability of solutions- used to link them- and prover convergence results (add paper by Lannes for SW - Tataru on similar model)

   % (see also familtonian approach - lead to ill-posed systems \cite{Klein_Saut_21}- draw back of the method - restrict to a hamiltonian may lead to unusable systems)
    
   % Finally, we should mention that there are several works that is closely related to the derivation of the BO equation.  On the derivation in the sense of consistency, there 

    Finally, we comment on several works that are closely related to the derivation of the BO equation. In \cite{Craig_Guyenne_Kalisch_05}, Craig, Guyenne, and Kalisch used a Hamiltonian perturbation approach to formally derive asymptotic models from the two-layer system. Among the models is the BO equation. The benefit of this approach is that the systems inherit the Hamiltonian structure, but as noted in \cite{Klein_Saut_21}, the process could lead to ill-posed systems. In particular, the BO system they derive, which can be used to derive the BO equation, is linearly ill-posed. In \cite{Ifrim_Rowan_Tataru_Wan_22}, Ifrim, Rowan, Tataru, and Wan show that the BO equation can also be viewed as an asymptotic model from the water waves equations in infinite depth in the case of constant vorticity. The approximation they obtain is rigorously justified but, of course, not related to the asymptotic description of internal waves. Lastly, in \cite{Ohi_Iguchi_09}, Ohi and Iguchi proved the well-posedness of the internal water waves for one fluid of infinite depth to derive the BO equation. However, in their paper, the existence time is of order $\mathcal{O}(\mathrm{bo}^{-\frac{1}{2}})$, which is too short to justify the BO equation on the physical time scale. The technique is based on the one of Wu \cite{Wu_97}, where the reference model is given in holomorphic coordinates. We will instead use a version of the Zakharov-Craig-Sulem formulation and closely follow the work of Lannes \cite{LannesTwoFluid13}. \textit{In particular, the goal of this paper is to prove the long-time existence of the internal water waves equations with one fluid of infinite depth and positive surface tension. Then we show that the difference between two regular solutions of the internal water waves equations and the BO equation, which evolves from the same initial datum, is bounded by $\mathcal{O}(\mu+ \mathrm{bo}^{-1})$ for all $0 \leq t \lesssim \ve^{-1}$ for any $\ve,\mu \in (0,1)$ such that $\ve^2 \leq \mathrm{bo}^{-1}$}.

    %Rigourus: In .. Tataru ... not Bo . Addtionally, Ohi and Iguchi proved the well-posedness of the internal water waves for one fluid of infinite depth with the objective to derive the BO equation. However, in their paper, the existence time is of order $\mathcal{O}(\sqrt{\sigma})$, which is too short to justify the BO equation on the physical time scale. The technique is based on the one of Wu \cite{...} defining the system in holomorphic coordinates. We will instead use a version of the Zakharov-Craig-Sulem formulation and closely follows the work of Lannes \cite{LannesTwoFluid13}. \textit{In particular, the goal of this paper is to prove the long time existence of the internal water waves equations with one fluid of infinite depth. Then show that the difference between two regular solutions of the internal water waves equations and the BO equation, which evolves from the same initial datum, is bounded by $\mathcal{O}(\mu + \ve \sqrt{\mu})$ for all $0 \leq t \lesssim \ve^{-1}$ for any $\ve,\mu \in (0,1)$.}

  	\subsection{The governing equations} The basis of this study is the Euler equations for an irrotational two-layer fluid written in the Zakharov-Craig-Sulem formulation \cite{Zakharov68,Craig_Sulem_93,Craig_Sulem_Sulem_92}. For the upper layer, we consider the following set of equations
  	\begin{equation}\label{WW - upper fluid}
  		\begin{cases}
  			\partial_t \zeta - \mathcal{G}^-[\zeta]\psi^{-} = 0
  			\\
  			\rho^-\Big{(}\partial_t \psi^{-} +g \zeta + \frac{1}{2}(\partial_x \psi^{-})^2 - \frac{1}{2} \frac{(\mathcal{G}^{-}[\zeta] \psi^{-} +  \partial_x \zeta  \partial_x \psi^{-})^2}{1+   (\partial_x \zeta)^2}\Big{)} = -P^{-}|_{z= \zeta}.
  		\end{cases}
  	\end{equation}
  	Here, the free surface elevation is the graph of $\zeta(t,x) \in \R$, the function $P^{-}|_{z=\epsilon \zeta}$ is the pressure force at the free surface. The function $\psi^{-}(t,x)\in\R$ is the trace at the surface of the velocity potential solving the elliptic problem
  	\begin{equation}\label{Phi-}
  		\begin{cases}
  			(\partial_{x}^2 + \partial_{z}^2)\Phi^- = 0 \quad \text{for} \quad  \Omega^-=\{(x,z) \: : \: z>\zeta\}
  			\\ 
  			\Phi^-|_{z= \zeta} = \psi^-,
  		\end{cases}
  	\end{equation}
  	and $\mathcal{G}^{-}$ is the negative Dirichlet-Neumann operator defined by
  	 \begin{equation*}
  		\mathcal{G}^{-}[\zeta] \psi^{-}  =  (\partial_{z} \phi^{-} - \partial_{x} \zeta \partial_{x} \phi^{-})|_{z =  \zeta}.
  	\end{equation*}
  	 For the fluid in the  lower layer, the governing equations are given in terms of $(\zeta,\psi^+)$ and read
  	\begin{equation}\label{WW - lower fluid}
  		\begin{cases}
  			\partial_t \zeta - \mathcal{G}^+[\zeta]\psi^{+} = 0
  			\\
  			\rho^+\Big{(}\partial_t \psi^{+} + g\zeta + \frac{1}{2}(\partial_x \psi^{+})^2 - \frac{1}{2} \frac{(\mathcal{G}^{+}[\zeta] \psi^{+} +  \partial_x \zeta  \partial_x \psi^{+})^2}{1+    (\partial_x \zeta)^2}\Big{)} = -P^{+}|_{z=\zeta},
  		\end{cases}
  	\end{equation}
  	where the elliptic problem in the lower fluid is given by
	\begin{equation}\label{Phi+}
	  	\begin{cases}
	  		(\partial_x^2 + \partial_z^2)\Phi^+ = 0 \quad \text{for} \quad \Omega^+= \{(x,z) \: : \: -H<z<\zeta \}
	  		\\ 
	  		\Phi^+|_{z=  \zeta} = \psi^+ \quad \partial_z \Phi^+ |_{z=-H} = 0,
	  	\end{cases}
	 \end{equation}
  	 and the positive Dirichlet-Neumann operator is defined by
  	  \begin{equation*}
  			\mathcal{G}^{+}[\zeta] \psi^{+}  =  (\partial_z \phi^{+} - \partial_x \zeta \partial_x \phi^{+})|_{z =  \zeta}.
  	  \end{equation*} 
  	 To ease the notation, we make the following simplifications
  	 \begin{equation*}
  	 	\gamma = \frac{\rho^-}{\rho^+}, \quad \rho^-<\rho^+ = 1, \quad g =1.
  	 \end{equation*}
   	 Moreover, we recall that  the difference in pressure at the interface is proportional to the mean curvature of the interface:
   	 \begin{equation*}
   	 	P^+|_{z= \zeta}-P^-|_{z = \zeta} = \sigma\kappa(\zeta),
   	 \end{equation*}
   	 where $\sigma\in (0,1)$ is the surface tension parameter and $\kappa(\zeta)$ is defined by
   	 \begin{equation}\label{kappa}
   	 	\kappa(\zeta) = - \partial_x\Big{(} \frac{ \partial_x\zeta}{\sqrt{1+ (\partial_x\zeta)^2}}\Big{)}.
   	 \end{equation}

  	 We will now collect all these equations into one system, where we reduce the number of unknowns by using the first equation in \eqref{WW - lower fluid} and \eqref{WW - upper fluid} to see that
  	 \begin{equation}\label{Relation Gpm}
  	 	\mathcal{G}^-[\zeta]\psi^{-} = \mathcal{G}^+[\zeta]\psi^+.
  	 \end{equation}
  	 In particular, we will prove later that we can write $\psi^-$ as a function of $\psi^+$ through the inverse relation
  	  \begin{equation}\label{psi-}
  	 	\psi^{-} = (\mathcal{G}^-[\zeta])^{-1}\mathcal{G}^+[\zeta]\psi^+.
  	 \end{equation}
	Then, we can define the new variable $\psi$ by the formula
   	\begin{align*}
   		\psi 
   		& = \psi^+ - \gamma \psi^{-} 
   		\\
   		& = \big{(}1-\gamma (\mathcal{G}^-[\zeta])^{-1}\mathcal{G}^+[\zeta]\big{)}\psi^+
   		\\ 
   		& = 
   		\mathcal{J}[\zeta]\psi^+.
   	\end{align*}
  	The unknowns $\zeta$ and $\psi$ are the primary variables. We follow the work of Lannes \cite{LannesTwoFluid13} to show that we can use them to recover the velocity potentials $\Phi^{\pm}$ through the transmission problem:
  	\begin{equation}\label{transition Phi}
  		\begin{cases}
  			\Delta_{x,z}\Phi^{\pm} = 0 \hspace{0.5cm}\qquad \text{in} \quad \Omega^{\pm}
  			\\
  			\Phi^{+}|_{z=\zeta}  - \gamma \Phi^-|_{z=\zeta} = \psi
  			\\
  			\partial_n\Phi^-|_{{z=\zeta}}= \partial_n\Phi^+|_{{z=\zeta}},
  			\quad
  			\partial_z \Phi^+|_{z=-H}  = 0,
  		\end{cases}
  	\end{equation}
  	with $\psi^{\pm} = \Phi^{\pm}|_{z=\zeta}$ and the normal condition on $z=\zeta$ is the same as \eqref{Relation Gpm} where $\partial_n$ stands for the upwards normal derivative. From these relations, it will be possible to reduce the two-fluid equations into a set of equations defined by $\zeta$ and $\psi$ where we formally define a new Dirichlet-Neumann operator that links the two fluids through the relation,
  	  \begin{equation}\label{Main G}
  	  	\mathcal{G}[\zeta] = \mathcal{G}^+[\zeta](\mathcal{J}[\zeta])^{-1}.
  	  \end{equation}
  	 From the above expressions, we have the main governing equations (in dimensional form) that we will study throughout this paper:
  	 \begin{equation}\label{IWW dim}
  	 	\begin{cases}
  	 		\partial_t \zeta - \mathcal{G}[\zeta] \psi = 0
  	 		\\
  	 		\partial_t \psi + (1-\gamma)\zeta 
  	 		+
  	 		\frac{1}{2}  
  	 		\big{(}
  	 		 (\partial_x \psi^{+})^2 
  	 		-
  	 		\gamma (\partial_x \psi^{-})^2 
  	 		\big{)}
  	 		+
  	 		\mathcal{N}[\zeta,\psi^{\pm}]
  	 		= - \sigma \kappa( \zeta),
  	 	\end{cases}
  	 \end{equation}
  	 where
  	 \begin{equation*}
  	 	\mathcal{N}[\zeta,\psi^{\pm}] = \frac{\gamma (\mathcal{G}^{-}[\zeta] \psi^{-} +  \partial_x \zeta  \partial_x \psi^{-})^2 -(\mathcal{G}^{+}[\zeta] \psi^{+} +  \partial_x \zeta  \partial_x \psi^{+})^2 }{2(1+    (\partial_x \zeta)^2)}.
  	 \end{equation*}

  	 \subsubsection{Nondimensionalization of the equations}\label{SubSec nondim} To derive an asymptotic model from \eqref{IWW dim}, we will compare every variable and function with physical characteristic parameters of the same dimension $H, a$, or $\lambda$. Since the BO equation describes long waves, it is natural to consider the scaling:
  	 \begin{equation*}
  	 	x = \lambda x', \quad \zeta = a \zeta',
  	 \end{equation*}
    where the prime notation denotes a nondimensional quantity. To identify the remaining variables $\psi'$ and $t = (1-\gamma)^{\frac{1}{2}} \frac{\lambda}{c_{\mathrm{ref}}}t'$ one needs information on the reference velocity $c_{\mathrm{ref}}$. To do so, we look at the linearized equations (with $\sigma=0$):
   	\begin{equation}\label{Linear eq.}
   			\begin{cases}
   			\partial_t \zeta - \mathcal{G}[0]\psi = 0
   			\\
   			\partial_t \psi + (1-\gamma)\zeta = 0,
   		\end{cases}
   	\end{equation}
   	where $\mathcal{G}[0]$ is a Fourier multiplier given by\footnote{See Remark \ref{Rmrk G[0]}.}
   	\begin{equation*}
   		\mathcal{G}[0]\psi(x) = \mathcal{F}^{-1}\Big{(} |\xi| \frac{\mathrm{tanh}(H |\xi|)}{1+\gamma \mathrm{tanh}(H|\xi|)} \hat{\psi}(\xi)\Big{)}(x).
   	\end{equation*}
   	For a wave with typical wavelength $\lambda$, the frequencies are concentrated around $|\xi| = \frac{2\pi}{\lambda}$. Therefore, if we suppose that the depth of lower fluid is small compared to the wavelength, then we have by a Taylor expansion that
   	 $$\mathcal{G}[0]\psi  = -H\partial_x^2 \psi,$$
  	 up to higher order terms in $\mu$. From this simplification we can reduce \eqref{Linear eq.} to a wave equation where we make the identification $c_{\mathrm{ref}}^2= H(1-\gamma)$, and from the second equation we find the dimensions of $\psi$:\footnote{The purpose of $(1-\gamma)^{\frac{1}{2}}$ is to replace $(1-\gamma)\zeta$ by $\zeta$ in \eqref{Linear eq.}.}
  	 \begin{equation*}
  	 	\psi = \frac{a \lambda(1-\gamma)^{\frac{1}{2}}}{\sqrt{H}}\psi'. 
  	 \end{equation*}
   	 Lastly, we also choose to scale the transverse variable with $H$ (i.e., $z = H z'$) to have a reference domain in the lower fluid of unitary depth. Then applying these changes of variables and dropping the prime notation, we find that the nondimensional internal water waves system \eqref{IWW dim} is given by:
  	  \begin{equation}\label{IWW}
  	 	\begin{cases}
  	 		\partial_t \zeta - \frac{1}{\mu}\mathcal{G}_{\mu}[\ve \zeta]\psi = 0
  	 		\\
  	 		\partial_t \psi + \zeta 
  	 		+
  	 		\frac{1}{2}  
  	 		\big{(}
  	 		\ve (\partial_x \psi^{+})^2 
  	 		-
  	 		\gamma \ve (\partial_x \psi^{-})^2 
  	 		\big{)}
  	 		+
  	 		\ve \mathcal{N}[\ve\zeta,\psi^{\pm}]
  	 		= - \frac{1}{\mathrm{bo}}\frac{1}{\ve \sqrt{\mu}} \kappa(\ve \sqrt{\mu} \zeta),
  	 	\end{cases}
  	 \end{equation}
  	 where
  	 \begin{equation*}
  	 	\mathcal{N}[\ve \zeta,\psi^{\pm}] =  \frac{1}{2\mu} \frac{\gamma (\mathcal{G}^{-}_{\mu}[\ve \zeta] \psi^{-} + \ve \mu \partial_x \zeta  \partial_x \psi^{-})^2 - (\mathcal{G}^{+}_{\mu}[\ve \zeta] \psi^{+} + \ve \mu \partial_x \zeta  \partial_x \psi^{+})^2 }{(1+   \varepsilon^2 \mu (\partial_x \zeta)^2)}.
  	 \end{equation*}
   	The operators $\mathcal{G}_{\mu}^{\pm}[\ve \zeta]$ are defined by 
   	\begin{equation*}
   		\mathcal{G}_{\mu}^{\pm}[\ve \zeta] \psi^{\pm} =  \sqrt{1+\ve^2 (\partial_x \zeta)^2} \partial_n \Phi^{\pm}|_{z = \ve \zeta}, %(\partial_{z} \Phi^{\pm} + \ve \mu \partial_{x} \zeta \partial_{x} \Phi^{\pm})|_{z = \ve \zeta},
   	\end{equation*}
   	through the solutions of the scaled Laplace equations:
   	\begin{equation*}
   		\begin{cases}
   			(\mu\partial_x^2 + \partial_z^2)\Phi^{+} = 0 \quad \text{for} \quad \Omega^+=\{(x,z) \: : \: -1<z<\ve \zeta\}
   			\\ 
   			\Phi^+|_{z=  \ve \zeta} = \psi^{+} \quad \partial_z \Phi^+ |_{z=-1} = 0,
   		\end{cases}
   	\end{equation*}
  	and
  	\begin{equation*} 
  		\begin{cases}
  			(\mu \partial_{x}^2 + \partial_{z}^2)\Phi^- = 0 \quad \text{for} \quad   \Omega^-=\{(x,z) \: : \: z>\ve \zeta\}
  			\\ 
  			\Phi^-|_{z= \ve \zeta} = \psi^-.
  		\end{cases}
  	\end{equation*}

  	\subsection{Main results} In this paper we  will first prove the well-posedness of \eqref{IWW} on a time scale $\mathcal{O}(\frac{1}{\ve})$ for $\ve \lesssim \sqrt{\mu}$ and $\mathrm{bo}^{-1} = \ve \sqrt{\mu}$. To state this result, there are two fundamental assumptions that we need to make. 
  	\begin{Def}[Non-cavitation condition] Let $\ve \in (0,1)$, $s >\frac{1}{2}$ and take $\zeta_0 \in H^{s}(\R)$.  We say $\zeta_0$ satisfies the \lq\lq non-cavitation condition\rq\rq\: if there exist $h_{\min}\in(0,1)$ such that 
  		\begin{equation}\label{non-cavitation}
  			h = 1+\ve\zeta_0(x)  \geq h_{\min}, \quad \text{for all} \: \: \: x\in \mathbb{R}.
  		\end{equation}  
  	\end{Def}
  	
  	The second condition is to ensure the solutions do not break down due to Kelvin-Helmholtz instabilities and is key to showing the long-time existence for solutions of \eqref{IWW}. The criterion is enforced for data in the energy space, which we will now define.
  	\begin{Def}[Energy space] Let $\ve, \mu, \mathrm{bo}^{-1} \in (0,1)$ and $N \in \N$. Then we define the function space $H^{N+1}_{\mathrm{bo}}(\R)$ by
  		\begin{equation*}
  			H^{N+1}_{\mathrm{bo} }(\R) = H^{N+1}(\R),
  		\end{equation*}
  		with norm
  		\begin{equation*}
  			|u|_{H^{N+1}_{\mathrm{bo} }}^2  = |u|_{H^N}^2 + \mathrm{bo}^{-1}  |\partial_xu|^2_{H^{N}}.
  		\end{equation*}
  		We define $\dot{H}_{\mu}^{s+\frac{1}{2}}(\R)$ as a Beppo-Levi  space 
  		\begin{equation*}
  			\dot{H}_{\mu}^{s+\frac{1}{2}}(\R) = \dot{H}^{s+\frac{1}{2}}(\R) =\{u \in L^2_{loc}(\R) \: : \: \partial_x u \in H^{s-\frac{1}{2}}(\R) \},
  		\end{equation*}
  		endowed with
  		\begin{equation*}
  			|u|_{\dot{H}_{\mu}^{s+\frac{1}{2}}}  = \Big{|}\frac{|\mathrm{D}|}{(1+\sqrt{\mu}|\mathrm{D}|)^{\frac{1}{2}}}u \Big{|}_{H^{s}}.
  		\end{equation*}
  		Moreover, let $\alpha \in \N^2$ and define the \lq\lq good unknowns\rq\rq \: by 
  		\begin{equation*}
  			\zeta_{(\alpha)} = \partial_{x,t}^{\alpha} \zeta,
  			\quad
  			\psi_{(\alpha)} = \partial_{x,t}^{\alpha}\psi - \ve \underline{w} \partial_{x,t}^{\alpha}\zeta.
  		\end{equation*}
  		Then the natural energy space $\mathscr{E}_{\mathrm{bo},T}^N$ associated to \eqref{IWW}  is defined for functions $\mathbf{U}= (\zeta_{(\alpha)}, \psi_{(\alpha)})$ in 
  		\begin{equation}\label{Energy space}
  			\mathscr{E}_{\mathrm{bo},T}^N = \{\mathbf{U} \in C([0,T];H^N(\R)\times\dot{H}^{t_0+3}(\R), \quad \sup \limits_{t\in [0,T]} 	\mathcal{E}^{N,t_0}_{\mathrm{bo},\mu}(\mathbf{U}(t))<\infty\},
  		\end{equation}
  		where
  		%
  		%
  		%
  	%	\begin{equation}\label{Energy functional 0}
  		%	\mathcal{E}^0_{\mathrm{bo},\mu}(\mathbf{U}) =  |\zeta|_{H^1_{\gamma, \mathrm{bo}}}^2 + |\psi|^2_{\dot{H}^{\frac{1}{2}}_{\mu}},
  		%\end{equation}
  		%
  		%
  		%
  		%and for $N\geq 1$ we define it by
  		%
  		%
  		%
  		\begin{equation}\label{Energy functional N}
  			\mathcal{E}^{N,t_0}_{\mathrm{bo},\mu}(\mathbf{U}) = |\partial_x\psi|_{H^{t_0+2}}^2 + \sum \limits_{\alpha\in\N^2, |\alpha|\leq N} |\zeta_{(\alpha)}|_{H^1_{\mathrm{bo}}}^2 + |\psi_{(\alpha)}|^2_{\dot{H}^{\frac{1}{2}}_{\mu}}.
  		\end{equation}
  	\end{Def}
  	\begin{remark}
  		Here, the energy depends on both time derivatives and spatial derivatives. This is the method put forward by \cite{RoussetTzvetkoz10,RoussetTzvetkoz11,MingRoissetTzvetkov15} to control the surface tension term for the water wave equations (see Remark \ref{Remark on the energy} for the specifics on this point). This method was later used for the internal water waves equations with surface tension in the case of two fluids of finite depth \cite{LannesTwoFluid13}, which is one of the primary references of this paper.
  	\end{remark}

	  \begin{Def}[Stability criterion]\label{Def stability crit} Let $\mathbf{U}_0 = (\zeta_0,\psi_0) \in L^2(\R)\times \dot{H}_{\mu}^{\frac{1}{2}}(\R)$ and $ \mathcal{E}^{N,t_0}_{\mathrm{bo},\mu}(\mathbf{U}_0)<\infty$. Then we define the \lq\lq stability criterion\rq\rq \:  by 
	  	\begin{equation}\label{Stability criterion}
	  		\qquad 0 <  \mathfrak{d}(\mathbf{U}) : = \inf \limits_{\R} \mathfrak{a}- \Upsilon \mathfrak{c} (\zeta)   |[\![ \underline{V}^{\pm}]\!]|_{L^{\infty}}^4, \quad \text{at} \:\: \: t=0,
	  	\end{equation}
	  	where  
	  	\begin{equation*}
	  		\Upsilon = \frac{\mathrm{bo}}{2} (1-\gamma)^2\gamma^2 \ve^4\mu,
	  	\end{equation*}
	  	and
	  	\begin{align*}
	  		\mathfrak{a}
	  		& =  \Big{(}1
	  		+\ve \big{(} (\partial_t + \ve \underline{V}^{+}\partial_x)\underline{w}^{+} 
	  		-
	  		\gamma  (\partial_t + \ve \underline{V}^{-}\partial_x)\underline{w}^{-}
	  		\big{)}\Big{)}
	  		\\
	  		\mathfrak{e} (\zeta)
	  		& = \sup\limits_{f \in H^{\frac{1}{2}}(\R), f\neq 0} \mu \frac{\big{(}(\mathcal{J}_{\mu}[\ve \zeta])^{-1}(\mathcal{G}^-_{\mu}[\ve \zeta])^{-1} \partial_x f, \partial_x f \big{)}_{L^2}}{|1+\sqrt{\mu}|\mathrm{D}|^{\frac{1}{2}}f|_{L^2}^2}
	  		\\
	  		\mathfrak{c}(\zeta) & = 
	  		\mathfrak{e}(\zeta)^2(1+\ve^2 \mu |\partial_x \zeta|_{L^{\infty}}^2)^{\frac{3}{2}}.
	  	\end{align*}
	  	The quantities $\underline{V}^{\pm}$, $\underline{w}^{\pm}$ describe the horizontal and vertical velocity field in the fluids and are given in Definition \ref{Def op}. See also Corollary \ref{cor definitions} in the Appendix, where they are given in terms of $\zeta$ and $\psi$. While $\mathfrak{e} (\zeta)$ is defined in \eqref{def e} such that it is uniform with respect to $\mu$, see Proposition \ref{Prop e}.
	  \end{Def}

%  \begin{remark}
  %		Since the criterion depends on time, we need to use the main equation to define $\partial_t \zeta$ and $\partial_t\psi$ at time $t=0$. See also Propostion \ref{Prop stability} for the details on the construction of the criterion.
  %\end{remark}

\begin{remark}\label{Remark on dynamics}
	The stability criterion \eqref{Stability criterion} can be seen as a two-layer generalization of the Rayleigh-Taylor criterion where
	$$\mathfrak{a} =
		-\big{(}\partial_z P^+ - \gamma \partial_z P^- \big{)}|_{z = \ve \zeta}>\Upsilon \mathfrak{c} (\zeta)  |[\![ \underline{V}^{\pm}]\!]|_{L^{\infty}}^4.
	$$ 
	We will let $\ve^2\mathrm{bo}$ to be of order one. In this case, the size of the quantity $\Upsilon$ is of order $\mathcal{O}(\ve^2\mu)$ and is neglected in the BO regime and the models considered in this paper. %Moreover, the criteria can be propagated on a long-time scale using the Fundamental Theorem of Calculus.
	
	%No impact on long wave dynamics 
	%For more details on the criterion see \ Remark ... TBD\color{black}.
\end{remark}
  
	\begin{remark}
		One key difference with the work of Lannes \cite{LannesTwoFluid13} for the internal water waves on finite depth is in the symbolic analysis of $\mathcal{G}^-_{\mu}[\ve \zeta]$. The operator depends on the solution of an elliptic problem on a domain with infinite depth. This alters the functional setting, where we also need precise estimates depending on the parameters $\ve, \mu \in (0,1)$.
	\end{remark} 

	\iffalse
	\begin{remark}
		We will prove that the solutions we obtain in Theorem \ref{Thm 1} below will propagate the non-cavitation condition and the stability criterion uniformly in time for $(\zeta,\psi)\in \mathscr{E}_{\mathrm{bo},\ve^{-1}T}^N$.
	\end{remark} 	
	\fi 
  	
  	\begin{thm}\label{Thm 1}
  		Let $t_0 = 1$, $N \geq 5$, $\ve, \mu, \gamma, \mathrm{bo}^{-1} \in (0,1)$ such that $\ve^2 \leq \mathrm{bo}^{-1}$. Assume that  $\mathbf{U}_0 = (\zeta_0,\psi_0)^T \in L^2(\R)\times \dot{H}_{\mu}^{\frac{1}{2}}(\R)$ such that $\mathcal{E}^{N,t_0}_{\mathrm{bo},\mu}(\mathbf{U}_0)< \infty$. Suppose further that $\mathbf{U}_0$  satisfies the non-cavitation condition \eqref{non-cavitation} and the stability criterion \eqref{Stability criterion}. Then for nondecreasing function of its argument $C = C(\mathcal{E}^N(\mathbf{U}_0), {h^{-1}_{\min}}, \mathfrak{d}(\mathbf{U}_0)^{-1})>0$  there exist a time
  		$$T = C^{-1},$$ 
  		and a unique solution $\mathbf{U}= (\zeta, \psi)^T \in \mathscr{E}_{\mathrm{bo},\ve^{-1}T}^N$ of \eqref{IWW}. Moreover, the solution satisfies
  		\begin{equation}\label{bound on sol E}
  			\sup \limits_{t \in [0, \ve^{-1}T]} \mathcal{E}^{N,t_0}_{\mathrm{bo},\mu}(\mathbf{U}) \leq C	\mathcal{E}^{N,t_0}_{\mathrm{bo},\mu}(\mathbf{U}_0).
  		\end{equation}
  	\end{thm}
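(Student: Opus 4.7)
The plan is to adapt the quasilinear energy method developed by Lannes \cite{LannesTwoFluid13} for two layers of finite depth, with the modifications required to handle the infinite-depth upper layer. The proof splits into four stages: (a) a pseudo-differential analysis of $\mathcal{G}^{\pm}_\mu[\ve\zeta]$, $\mathcal{J}_\mu$ and $\mathcal{G}_\mu$; (b) a quasilinearization of \eqref{IWW} using the Alinhac good unknowns $(\zeta_{(\alpha)},\psi_{(\alpha)})$; (c) a closed energy estimate of the form $\frac{d}{dt}\mathcal{E}^{N,t_0}_{\mathrm{bo},\mu}(\mathbf{U}) \leq \ve\, C(\mathcal{E}^{N,t_0}_{\mathrm{bo},\mu}(\mathbf{U}))$ valid under the stability criterion \eqref{Stability criterion}; and (d) existence and uniqueness via regularization, together with a continuity argument that propagates the non-cavitation and stability conditions.

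Stage (a) is where the paper genuinely departs from \cite{LannesTwoFluid13}. I would first solve the transmission problem \eqref{transition Phi} variationally using coercivity of the bilinear form $(\psi^+,\psi^-)\mapsto (\mathcal{G}^+_\mu\psi^+,\psi^+)_{L^2} + \gamma(\mathcal{G}^-_\mu\psi^-,\psi^-)_{L^2}$ on a space adapted to the infinite-depth geometry; this motivates the Beppo-Levi space $\dot H^{1/2}_\mu$ with the $\mu$-weighted norm $|\mathrm{D}|/(1+\sqrt{\mu}|\mathrm{D}|)^{1/2}$ as the natural trace space, since $\mathcal{G}^-_\mu[0] = \sqrt{\mu}|\mathrm{D}|$ in contrast to the $\mu|\mathrm{D}|\tanh(|\mathrm{D}|)$ principal symbol of $\mathcal{G}^+_\mu[0]$. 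I would then flatten the interface by a diffeomorphism and carry out a paradifferential expansion of $\mathcal{G}^-_\mu[\ve\zeta]$ as a pseudo-differential operator of order one with symbol of limited smoothness, tracking the precise dependence on $\ve$ and $\mu$, together with its shape derivative $d\mathcal{G}^-[\zeta](\dot\zeta)$, which at leading order equals $-\partial_x(\dot\zeta\,\underline V^-) - \mathcal{G}^-[\zeta](\underline w^-\dot\zeta)$. Combining these with the known expansions for $\mathcal{G}^+_\mu$ gives a symbolic description of $\mathcal{G}_\mu = \mathcal{G}^+_\mu\mathcal{J}_\mu^{-1}$, and in particular the correct formula for $\mathfrak{e}(\zeta)$ entering the stability criterion.

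Stages (b) and (c) are conceptually as in \cite{LannesTwoFluid13}. Applying $\partial^\alpha_{x,t}$ with $|\alpha|\leq N$ to \eqref{IWW} and rewriting in terms of $\psi_{(\alpha)}=\partial^\alpha_{x,t}\psi - \ve\underline w\,\zeta_{(\alpha)}$ cancels the worst commutators coming from the shape derivative of $\mathcal{G}_\mu[\ve\zeta]$, and leaves a symmetrizable quasilinear system of schematic form
\begin{equation*}
\partial_t\zeta_{(\alpha)} + \ve\underline V\partial_x\zeta_{(\alpha)} - \tfrac{1}{\mu}\mathcal{G}_\mu[\ve\zeta]\psi_{(\alpha)} = R_\alpha, \qquad \partial_t\psi_{(\alpha)} + \ve\underline V\partial_x\psi_{(\alpha)} + \mathfrak{a}\,\zeta_{(\alpha)} - \tfrac{1}{\mathrm{bo}}\partial_x^2\zeta_{(\alpha)} = S_\alpha,
\end{equation*}
with tame remainders $R_\alpha, S_\alpha$ of size $\ve$ in the energy. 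The natural quadratic form
\begin{equation*}
E_\alpha = \tfrac{1}{2}(\mathfrak{a}\,\zeta_{(\alpha)},\zeta_{(\alpha)})_{L^2} + \tfrac{1}{2\mathrm{bo}}|\partial_x\zeta_{(\alpha)}|^2_{L^2} + \tfrac{1}{2\mu}(\mathcal{G}_\mu[\ve\zeta]\psi_{(\alpha)},\psi_{(\alpha)})_{L^2}
\end{equation*}
is coercive and equivalent to $\mathcal{E}^{N,t_0}_{\mathrm{bo},\mu}$ precisely under \eqref{Stability criterion}: the threshold $\Upsilon\mathfrak{c}(\zeta)|[\![\underline V^{\pm}]\!]|^4_{L^\infty}$ measures the negative (Kelvin-Helmholtz) contribution carried by the shear part of $\mathcal{G}_\mu$, while the $\mathrm{bo}^{-1}$ term and the positivity of $\mathfrak{a}$ dominate what remains, using $\ve^2\leq\mathrm{bo}^{-1}$. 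Time-differentiating $E_\alpha$, using the symmetry of the principal part to cancel top-order contributions, and summing over $|\alpha|\leq N$ yields a Gronwall inequality which integrates to \eqref{bound on sol E} on $[0,\ve^{-1}T]$.

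Stage (d) is then standard: one regularizes \eqref{IWW} (e.g.\ by adding a small parabolic term $\nu\Delta$ or truncating frequencies), solves the regularized system by a fixed-point argument, applies the a priori bound uniformly in the regularization parameter, and passes to the limit by compactness; uniqueness follows from the same energy estimate applied to the difference of two solutions at a lower regularity. The non-cavitation and stability conditions are propagated on $[0,\ve^{-1}T]$ by a continuity argument, since both $\partial_t h$ and $\partial_t\mathfrak{d}(\mathbf{U})$ are of order $\ve$. The hard part, and the source of essentially all the technical work, is stage (a): obtaining $\mu$-uniform paradifferential estimates for $\mathcal{G}^-_\mu[\ve\zeta]$ in infinite depth and for the composition $\mathcal{G}_\mu = \mathcal{G}^+_\mu\mathcal{J}_\mu^{-1}$ with symbols of limited smoothness, and extracting the correct $\mathfrak{e}(\zeta)$ so that the stability criterion \eqref{Stability criterion} really yields a coercive energy of order $\mathcal{O}(1)$ rather than one that degenerates as $\mu\to 0$.
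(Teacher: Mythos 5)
Your route is essentially the paper's: symbolic analysis of $\mathcal{G}^\pm_\mu$, $\mathcal{J}_\mu^{-1}$ and $\mathcal{G}_\mu=\mathcal{G}^+_\mu\mathcal{J}_\mu^{-1}$ in the $\mu$-weighted Beppo-Levi/homogeneous setting, quasilinearization with the good unknowns $(\zeta_{(\alpha)},\psi_{(\alpha)})$ for $\alpha\in\N^2$, a symmetrized energy estimate of size $\ve$ under the stability criterion, and then regularization plus a bootstrap propagating non-cavitation and \eqref{Stability criterion}; this is exactly the architecture of Sections 2--6.

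One concrete point where your plan, as written, would not close. In your schematic quasilinear system the second equation carries only $\mathfrak{a}\,\zeta_{(\alpha)}-\mathrm{bo}^{-1}\partial_x^2\zeta_{(\alpha)}$ plus a transport term, so the quadratic form $E_\alpha$ you propose would be coercive as soon as $\inf\mathfrak{a}>0$, and the threshold $\Upsilon\,\mathfrak{c}(\zeta)\,|[\![\underline V^\pm]\!]|_{L^\infty}^4$ in \eqref{Stability criterion} would never be needed. The Kelvin--Helmholtz contribution is not hidden in the positivity of $\tfrac1\mu(\mathcal{G}_\mu\psi_{(\alpha)},\psi_{(\alpha)})_{L^2}$: it appears explicitly in the quasilinearized $\psi$-equation when $\ve\underline V^+\partial_x\psi^+_{(\alpha)}-\gamma\ve\underline V^-\partial_x\psi^-_{(\alpha)}$ is rewritten in terms of $(\zeta_{(\alpha)},\psi_{(\alpha)})$, producing the genuinely second-order nonlocal term $-(1-\gamma)\gamma\ve^2\mu\,[\![\underline V^\pm]\!]\,\mathfrak{E}_\mu[\ve\zeta](\zeta_{(\alpha)}[\![\underline V^\pm]\!])$ with $\mathfrak{E}_\mu=\partial_x(\mathcal{J}_\mu)^{-1}(\mathcal{G}^-_\mu)^{-1}\partial_x$, i.e.\ the instability operator \eqref{inst}. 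It is this term that must be dominated by the surface-tension part through Young's inequality, and even then the criterion yields a lower bound only after correcting the symmetrizer by the zeroth- and negative-order terms $a(\mathbf{U})+b(\mathbf{U})\Lambda^{-1}$ as in \eqref{Q: symmetrizer} and \eqref{lower bound ins}; moreover, since $\mathfrak{Ins}$ is of second order, at $|\alpha|=N$ its sub-principal (shape-derivative and $\mathcal{K}_{(\alpha)}$) contributions lose a derivative and have to be cancelled by the modified energy with the $\tilde Q$ cross-terms \eqref{modified energy}, not by Gronwall alone. Finally, the hypothesis $\ve^2\le\mathrm{bo}^{-1}$ is not only used for coercivity: it also enters the energy estimate itself (the terms $B_2^1$, $B_3$) and the bootstrap, since $\partial_t\mathfrak{d}(\mathbf{U})$ is of size $\ve+\Upsilon$ with $\Upsilon\sim\ve^4\mu\,\mathrm{bo}$, so $\Upsilon\lesssim\ve$ is needed to keep the criterion valid on the $\mathcal{O}(\ve^{-1})$ time scale.
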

  	
  	\begin{remark}
  		For notational convenience  we shall write $\mathcal{E}^{N}(\mathbf{U})$ instead of $\mathcal{E}^{N,t_0}_{\mathrm{bo},\mu}(\mathbf{U})$. We also consider the case of one horizontal dimension since our primary goal is to justify the BO equation, which is a model that does not include transverse effects. We will deal with the higher dimensional case in a forthcoming paper.
  	\end{remark}
  	
  	\begin{remark}
  		The local well-posedness of \eqref{IWW} was first proved by Ohi and Iguchi \cite{Ohi_Iguchi_09}. However, in their paper, the existence time is of order $\mathcal{O}(\mathrm{bo}^{-\frac{1}{2}})$, which is far too short to justify the BO equation on the physical time scale. In fact, Theorem \ref{Thm 1} is the first proof of the long-time well-posedness of the internal water waves in the case where one layer is of infinite depth. 
  	\end{remark}
  
  \begin{remark}
  	The surface tension term is regularizing and plays a fundamental role in the well-posedness of \eqref{IWW}. However, as noted in Remark \ref{Remark on dynamics}, it does not affect the dynamics of the BO equation.
  \end{remark}
	\begin{remark}
		It is also possible to let the top fluid layer be of finite depth and let the lower fluid layer be of infinite depth. The crucial point is to have a lower density in the top layer and the choice of the order of composition for Dirichlet-Neumann operators appearing in \eqref{psi-}. In particular, if $\mathcal{G}^{+}_{\mu}[\ve \zeta]$ corresponds to the operator for infinite depth, then one would need to simply change the order of composition in the proof (see Remark \ref{order of comp}).
	\end{remark}

	\iffalse 
	\begin{remark}
		The condition $\mathcal{E}^{N,t_0}_{\sigma,\mu}(\mathbf{U}|_{t=0})<\gamma^{-1}$  is related to the invertability of $\mathcal{G}_{\mu}$. The condition could be made less restirictive, but is written in this form for the sake of brevity. To be precise see Proposition \ref{Prop G} and \eqref{assumption op G} which says $\gamma |\psi|_{\dot{H}^{\frac{1}{2}}_{{\mu}}} = \mathcal{O}_{\ve}(1)$, where $\partial_x \psi$ is related to the velocity field of the fluid. See Remark $1.8$ in \cite{Paulsen22} for more  on this point. 
	\end{remark}
  	\fi 
  	\iffalse 
  	\begin{remark}\label{remark sol 1}
  		To derive asymptotic models from \eqref{IWW}, we will simplify the functional setting. In particular,
  		let the norm on the function space $\dot{H}^{s+1}(\R)$ be given by $|f|_{\dot{H}^{s+1}} = |\partial_x f|_{H^s}$. Then $\dot{H}^{1}(\R)\subset \dot{H}_{\mu}^{\frac{1}{2}}(\R)$, and we can restrict the initial data to functions $(\zeta_0,\psi_0)^T \in H^s(\R)\times \dot{H}^{s+1}(\R)$ for $s\geq 4$ satisfying the assumptions of Theorem \ref{Thm 1} to obtain a unique solution of \eqref{IWW} in
  		%
  		%
  		%
  		$$(\zeta, \psi)^T\in C\big{(}[0,\ve^{-1}T] \: : \: H^{s}(\R) \times \dot{H}^{s+1}(\R) \big{)}.$$
  		%
  		%
  		%
  	\end{remark}
  	\fi

  Having defined a solution of the reference model \eqref{IWW} on a long time scale, the next step is to derive the asymptotic models. Here, we follow the road map in \cite{Bona_Saut_Lannes_08}, where they derived several internal water wave models in finite depth and gave comments on the formal derivation of the BO equation. In particular, it is convenient to write \eqref{IWW} in terms of $\psi^+\in \dot{H}^{\frac{3}{2}}_{\mu}(\R)$ through the interface operator:
  \begin{equation}\label{interface op}
  	\mathbf{H}_{\mu}[\ve \zeta] \psi^+ = \partial_x \psi^-  \in H^{\frac{1}{2}}(\R),
  \end{equation}
  where $\psi^- = \Phi^-|_{z= \ve\zeta} \in \mathring{H}^{\frac{3}{2}}(\R)$ and $\Phi^-\in \dot{H}^2(\Omega^-)$ is the unique solution\footnote{See Proposition \ref{Inverse 2}.} (up to a constant) of
  \begin{equation}\label{For exp phi-}
  	\begin{cases}
  		(\mu \partial_x^2 + \partial_z^2) \Phi^- = 0 \hspace{3.35cm}\qquad \text{in} \quad \Omega^-
  		\\
  		\partial_{n} \Phi^- =  (1+\ve^2(\partial_x \zeta)^2)^{-\frac{1}{2}}\mathcal{G}^+_{\mu}[\ve \zeta]\psi^+ \hspace{1.25cm} \text{on} \quad  z = \ve \zeta.
  	\end{cases}
  \end{equation}
  Then we may define the the velocity variable 
  \begin{align}\label{def V}
  	v
  	& =
  	\partial_x\psi 
  	\\\notag
  	& = \partial_x \psi^+ -\gamma \mathbf{H}_{\mu}[\ve\zeta]\psi^+,
  \end{align}
  and apply a derivative to the second equation of \eqref{IWW} to find that
  \begin{equation}\label{IWW - v}
  	\begin{cases}
  		\partial_t \zeta - \frac{1}{\mu}\mathcal{G}^+_{\mu}[\ve \zeta]\psi^+ = 0
  		\\
  		\partial_t v + \partial_x \zeta 
  		+
  		\frac{\ve}{2}  \partial_x
  		\big{(}
  		(\partial_x \psi^{+})^2 
  		-
  		\gamma (\mathbf{H}_{\mu}[\ve\zeta]\psi^+)^2 
  		\big{)}
  		+
  		\ve \partial_x  \mathcal{N}[\ve\zeta,\psi^{\pm}]
  		= - \frac{1}{\mathrm{bo}}\frac{1}{\ve \sqrt{\mu}} \partial_x \kappa(\ve \sqrt{\mu} \zeta).
  	\end{cases}
  \end{equation}
  %
  %
  %
  %where
  %
  %
  %
  %\begin{equation*}
  	%\mathcal{N}[\ve \zeta,\psi^+] =  \frac{1}{2\mu} \frac{\gamma (\mathcal{G}^{+}_{\mu}[\ve \zeta] \psi^{+} %+ \ve \mu \partial_x \zeta  \mathbf{H}[\ve\zeta]\psi^+)^2 - (\mathcal{G}^{+}_{\mu}[\ve \zeta] \psi^{+} + %\ve \mu \partial_x \zeta  \partial_x \psi^{+})^2 }{(1+   \varepsilon^2 \mu (\partial_x \zeta)^2)}.
%  \end{equation*}
  %
  %
  %
 % \begin{remark}
  	%We will prove\footnote{See Lemma \ref{Prop op J}.} that $\mathcal{J}_{\mu}$ is invertible on $\dot{H}_{\mu}^{s+\frac{1}{2}}(\R)$, allowing us to define $v\in H^s(\R)$ by formula \eqref{def V} from the solution $\psi$. %In particular, under the provision of Theroem \ref{Thm 1} we have for data $(\zeta_0, v_0) \in H^s(\R)\times H^{s}(\R)$  that $(\zeta, v)^T \in C\big{(}[0,\ve^{-1}T] \: : \: H^{s}(\R) \times H^{s}(\R) \big{)}$ is a solution of \eqref{IWW - v}.
  	%
 % \end{remark}

 %\ 

  As noted in the introduction, we will first derive a system from \eqref{IWW - v}, %which is consistent in the BO regime, and then use it to derive a weakly dispersive BO equation from which the derivation of the BO equation is direct. In particular,
 where we will show that a solution of the internal water waves equations, with regular data $(\zeta_0,v_0)$, solves a weakly dispersive Benjamin system. In particular, we define the Fourier multipliers%\color{black} 
  \begin{equation}\label{t, k}
  	\mathrm{t}(\mathrm{D}) = \frac{1}{1+\gamma \tanh(\sqrt{\mu}|\mathrm{D}|)}\frac{\tanh(\sqrt{\mu}|\mathrm{D}|)}{\sqrt{\mu}|\mathrm{D}|}, \quad \mathrm{k}(\mathrm{D}) = \mathrm{t}(\mathrm{D})\big{(}1 - \mathrm{bo}^{-1} \partial_x^2\big{)},
  \end{equation}
  and the function
  $$u = \mathrm{t}(\mathrm{D})v.$$
  Then we will show that the solution of the internal water waves equations satisfies
  \begin{equation}\label{System Full disp B}
  	\begin{cases}
  		\partial_t \zeta
  		+
  		\partial_x u +\ve \mathrm{t}(\mathrm{D})\partial_x(\zeta u) = 0
  		\\
  		\partial_t u
  		+
  		\mathrm{k}(\mathrm{D})\partial_x \zeta 
  		+
  		\frac{\ve}{2} \mathrm{t}(\mathrm{D})\partial_x (u)^2 =   0,
  	\end{cases}
  \end{equation}
  up to an error of order $\mathcal{O}( \ve \sqrt{\mu})$, where give precise Sobolev bounds on the error in terms of the initial data. Then, under an additional assumption on the data (for right-moving waves), we will show that we can approximate this  system with the solutions of a weakly dispersive version of the Benjamin equation
  \begin{equation}\label{Full disp Benjamin}
  	\partial_t \zeta
  	+
 	 \sqrt{k}(\mathrm{D})\partial_x \zeta
  	+
  	\frac{3\ve }{2}\zeta \partial_x\zeta =  0,
  \end{equation}
   up to an error of order $\mathcal{O}( \ve^2 + \ve \mathrm{bo}^{-1}+\ve \sqrt{\mu})$. Then if we simplify the multiplier $\sqrt{k}(\mathrm{D})$ we will show that one get  the Benjamin equation
   \begin{equation*}%\label{Benjamin}
	   	\partial_t \zeta
	   	+
	   	\big{(} 1- \frac{\gamma}{2} \sqrt{\mu}|\mathrm{D}| - \frac{\partial_x^2}{2\mathrm{bo}}\big{)}\partial_x \zeta
	   	+
	   	\frac{3\ve }{2}\zeta \partial_x\zeta =  0,
   \end{equation*}
    up to an error $\mathcal{O}( \ve^2 + \ve \mathrm{bo}^{-1}  + \sqrt{\mu} \mathrm{bo}^{-1}+\mu )$. Moreover,  if we neglect the effect of surface tension we obtain the Benjamin-Ono equation.

   \begin{remark}
   	Since we need to prove a consistency result  to link the internal water waves equations with the BO equation, we also use it to derive some new models. These models have the added benefit of being fully dispersive; meaning they are exact at the linear level. Several extensions could be made, for instance, derivation of Green-Naghdi type models \cite{Choi_Camassa_96}, counterpropagating BO equations \cite{Bambusi_Paleari_24}, or improving the precision of \eqref{Full disp Benjamin} by following the analysis of \cite{Emerald21b}. 
   \end{remark}
  \begin{remark}
  	An alternative approach would be to rigorously derive the regularized BO system given by:
  	\begin{equation}\label{reg Bo}
  		\begin{cases}
  			\big{(}1 + \alpha\sqrt{\mu} \gamma|\mathrm{D}| \big{)}\partial_t \zeta
  			+
  			\big{(}
  			1
  			+
  			(\alpha - 1) \gamma\sqrt{\mu}|\mathrm{D}|\big{)}\partial_x v  +\ve  \partial_x(\zeta v) = 0
  			\\
  			\partial_t v 
  			+
  			\partial_x \zeta 
  			+
  			\ve v\partial_xv = 0,
  		\end{cases}
  	\end{equation}
  	for $\alpha \geq 0$. This is the model that was formally derived in \cite{Bona_Saut_Lannes_08}, and moreover we can use it to derive the \lq\lq regularized Benjamin-Ono equation\rq\rq \:  \cite{Craig_Sulem_Sulem_92}: 
  	\begin{equation*}
  		\big{(}1 + \alpha\sqrt{\mu} \gamma|\mathrm{D}| \big{)}\partial_t \zeta
  		+
  		\partial_x \zeta +
  		(2\alpha - 1)\frac{\gamma}{2}\sqrt{\mu}|\mathrm{D}|\partial_x \zeta 
  		+
  		\frac{3\ve }{2}\zeta \partial_x\zeta = 0.
  	\end{equation*}
  	The rigorous derivation of these models is straightforward when having Theorem \ref{Thm 1} and a consistency result, if we combine it with the long-time well-posedness of \eqref{reg Bo} provided by Xu in \cite{Xu_12} for  $\alpha > 1$. %which we would need to be small in order for it to be consistent with the BO equation.
  \end{remark}

  Before we proceed, we   establish the long-time well-posedness of the weakly dispersive system introduced above. To do so, we must define the energy space associated with \eqref{System Full disp B}.
  \begin{Def}\label{Energy space X} Let $s\geq 0$ and $\mu, \mathrm{bo}^{-1} \in (0,1)$. Then we define the norm on the function space $X^s_{\mu, \mathrm{bo}}(\R)$ to be
  	\begin{equation*}
  		|(\zeta,u)|_{X^s_{\mu,\mathrm{bo}}}^2 : = |\zeta|_{H^{s+1}_{\mathrm{bo}}}^2 +|u|_{H^s}^2+\sqrt{\mu}||\mathrm{D}|^{\frac{1}{2}}u |_{H^s}^2.
  	\end{equation*}
  \end{Def}

  \begin{thm}\label{W-P System BO} Let $\ve, \mu,\gamma, \mathrm{bo}^{-1} \in (0,1)$ and $s> \frac{3}{2}$. Assume that $(\zeta_0,u_0) \in X^s_{\mu, \mathrm{bo}}(\R)$. Then there exists a  $C = C(h_{\min}^{-1},|(\zeta_0,u_0)|_{X^s_{\mu,\mathrm{bo}}})>0$  nodecreasing function of its argument and a time
  $$T = C^{-1},$$ 
  such that there exist a unique solution $(\zeta^{\text{\tiny wBs}}, u^{\text{\tiny wBs}}) \in C([0,\ve^{-1}T] :  X^s_{\mu, \mathrm{bo}}(\R)\big{)}$ to the weakly dispersive Benjamin system \eqref{System Full disp B}  that satisfies
  \begin{equation}\label{Bound sol Full disp B system}
  	\sup\limits_{t \in [0,\ve^{-1}T]} |(\zeta^{\text{\tiny wBs}},u^{\text{\tiny wBs}})|_{X^s_{\mu, \mathrm{bo}}} \leq C | (\zeta_0,u_0)|_{X^s_{\mu, \mathrm{bo}}}.
  \end{equation}
  \end{thm}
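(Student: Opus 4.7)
The plan is an energy method adapted to the dispersive structure of \eqref{System Full disp B}, built around the operator identity $\mathrm{k}(\mathrm{D}) = \mathrm{t}(\mathrm{D})(1-\mathrm{bo}^{-1}\partial_x^2)$ that governs the symmetrization of the linear part. I define the energy functional
\begin{equation*}
E^s(\mathbf{U}) = \sum_{\alpha=0}^s \int_{\mathbb{R}} \bigl((1-\mathrm{bo}^{-1}\partial_x^2)\partial_x^\alpha\zeta\bigr)\,\partial_x^\alpha\zeta + \bigl(\mathrm{t}(\mathrm{D})^{-1}\partial_x^\alpha u\bigr)\,\partial_x^\alpha u \, dx.
\end{equation*}
Since the Fourier symbol $\mathrm{t}(\xi)^{-1} = (1+\gamma\tanh(\sqrt{\mu}|\xi|))\sqrt{\mu}|\xi|/\tanh(\sqrt{\mu}|\xi|)$ is pointwise comparable to $1+\sqrt{\mu}|\xi|$ with constants depending only on $\gamma$, Plancherel's theorem gives $E^s(\mathbf{U}) \simeq |(\zeta,u)|_{X^s_{\mu,\mathrm{bo}}}^2$ uniformly in $\mu,\mathrm{bo}^{-1}\in(0,1)$. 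The choice of weights is dictated by the requirement that their ratio equals the dispersion: $\mathrm{t}(\mathrm{D})^{-1}\mathrm{k}(\mathrm{D}) = 1-\mathrm{bo}^{-1}\partial_x^2$, which makes the linear cross terms cancel.

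Differentiating $E^s$ along a classical solution of \eqref{System Full disp B}, the terms linear in $\mathbf{U}$ sum to
\begin{equation*}
-2\sum_{\alpha=0}^s \int \bigl[(1-\mathrm{bo}^{-1}\partial_x^2)\partial_x^\alpha\zeta\cdot\partial_x^{\alpha+1}u + \mathrm{t}(\mathrm{D})^{-1}\mathrm{k}(\mathrm{D})\partial_x^{\alpha+1}\zeta\cdot\partial_x^\alpha u\bigr]\,dx,
\end{equation*}
which vanishes upon integration by parts by the multiplier identity above and the self-adjointness of the Fourier multipliers. After shifting the self-adjoint multipliers, the nonlinear contribution is
\begin{equation*}
-2\ve \sum_{\alpha=0}^s \int \mathrm{k}(\mathrm{D})\partial_x^\alpha\zeta\cdot\partial_x^{\alpha+1}(\zeta u) \, dx - \ve \sum_{\alpha=0}^s \int \partial_x^\alpha u\cdot \partial_x^{\alpha+1}(u^2) \, dx.
\end{equation*}

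The second sum is bounded by $\ve C(|u|_{H^s})|u|_{H^s}^2$ for $s>3/2$ via Moser's product inequality followed by integration by parts at the top order. The first sum is treated by splitting $\mathrm{k}(\mathrm{D}) = \mathrm{t}(\mathrm{D}) - \mathrm{bo}^{-1}\mathrm{t}(\mathrm{D})\partial_x^2$ and distributing derivatives by Leibniz and integration by parts: the $\mathrm{t}(\mathrm{D})$ part is controlled by $|\zeta|_{H^s}|u|_{H^s}$-type quantities using that $\mathrm{t}(\mathrm{D})$ is a contraction on $L^2$, while the $\mathrm{bo}^{-1}\mathrm{t}(\mathrm{D})\partial_x^2$ part is paired with the extra weight $\mathrm{bo}^{-1}|\partial_x\zeta|_{H^s}^2$ present in the $X^s_{\mu,\mathrm{bo}}$ norm. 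This yields $|\frac{d}{dt}E^s| \leq \ve C(E^s)E^s$ with $C$ depending only on $|(\zeta,u)|_{X^s_{\mu,\mathrm{bo}}}$, so Grönwall's inequality produces the existence time $T=C^{-1}/\ve$ and the bound \eqref{Bound sol Full disp B system}.

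Rigorous construction of solutions is achieved by a Friedrichs mollification of the nonlinear terms, converting \eqref{System Full disp B} into an ODE in $X^s_{\mu,\mathrm{bo}}$ that is locally well-posed by the Cauchy-Lipschitz theorem; the energy estimate above is stable under the mollification and furnishes uniform bounds on $[0,\ve^{-1}T]$, so Aubin-Lions compactness allows passage to the limit. Uniqueness and continuous dependence are then obtained by applying the same symmetrizer to the difference of two solutions in a lower-regularity norm (e.g.\ $X^0_{\mu,\mathrm{bo}}$). The main technical obstacle is closing the nonlinear estimate with constants uniform in both $\mu$ and $\mathrm{bo}^{-1}$: the top-derivative term $-2\ve\int \mathrm{k}(\mathrm{D})\partial_x^s\zeta\cdot\zeta\partial_x^{s+1}u\,dx$ naively loses a derivative of $u$, and balancing it requires a symmetric integration by parts together with commutator bounds that exploit both the negative order of $\mathrm{t}(\mathrm{D})$ at high frequency and the structural identity $\mathrm{k}(\mathrm{D})=\mathrm{t}(\mathrm{D})(1-\mathrm{bo}^{-1}\partial_x^2)$.
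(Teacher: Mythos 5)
There is a genuine gap, and it sits exactly where you flag the ``main technical obstacle'': your energy omits the solution-dependent part of the symmetrizer. In the paper the weight on the $u$-component is not $\mathrm{t}^{-1}(\mathrm{D})$ but $\mathrm{t}^{-1}(\mathrm{D})+\ve\zeta$ (see the symmetrizer \eqref{S: symmetrizer}); it is this $\ve\zeta$ correction that makes the \emph{quasilinear} part of the system symmetric with respect to the energy, so that the term coming from $\ve\,\mathrm{t}(\mathrm{D})\partial_x(\zeta u)$ in the mass equation, namely $-\ve\big(\zeta\,\Lambda^s\partial_x u,\mathrm{k}(\mathrm{D})\Lambda^s\zeta\big)$, is cancelled (up to commutators) against $-\ve\big(\mathrm{k}(\mathrm{D})\Lambda^s\partial_x\zeta,\zeta\,\Lambda^s u\big)$ produced by testing the dispersive term of the $u$-equation against the $\ve\zeta$ weight. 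With your energy this term has no partner: the linear cross terms cancel exactly and the only other nonlinear contribution is the pure Burgers term in $u$, so $-2\ve\int\mathrm{k}(\mathrm{D})\partial_x^s\zeta\cdot\zeta\,\partial_x^{s+1}u$ must be estimated directly. That cannot be done uniformly in $\mu,\mathrm{bo}^{-1}\in(0,1)$: the gain of $\mathrm{t}(\mathrm{D})$ at high frequency is only $(1+\sqrt{\mu}|\xi|)^{-1}$, the $X^s_{\mu,\mathrm{bo}}$ norm controls only $s+\tfrac12$ derivatives of $u$ (with a $\mu^{1/4}$ factor) and $s+1$ derivatives of $\zeta$ (with a $\mathrm{bo}^{-1/2}$ factor), and no Leibniz/integration-by-parts redistribution balances the count, because the antisymmetric trick only works when the top-order derivatives fall on the same function on both sides of the pairing. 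The degenerate limit $\mu\to 0$, $\mathrm{bo}^{-1}\to 0$ makes this transparent: the system reduces to the Saint-Venant system, for which it is classical that the plain $H^s\times H^s$ energy loses a derivative and a $\zeta$-dependent symmetrizer is required. A telltale sign is that your argument never uses the non-cavitation condition, whereas the constant in the theorem depends on $h_{\min}^{-1}$ precisely because the lower bound for the weight $\mathrm{t}^{-1}(\mathrm{D})+\ve\zeta$ (the coercivity of the energy) needs $1+\ve\zeta\geq h_{\min}$.

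Concretely, the paper's route is: energy $\big(\Lambda^s\mathbf{U},\mathcal{Q}(\mathbf{U})\Lambda^s\mathbf{U}\big)_{L^2}$ with $\mathcal{Q}(\mathbf{U})=\mathrm{diag}\big(1-\mathrm{bo}^{-1}\partial_x^2,\ \mathrm{t}^{-1}(\mathrm{D})+\ve\zeta\big)$; equivalence with the $X^s_{\mu,\mathrm{bo}}$ norm via non-cavitation and the coercivity of $\mathrm{t}^{-1/2}(\mathrm{D})$; and the residual commutator terms (including the $\partial_t\mathcal{Q}$ contribution) closed with the $\sqrt{\mathrm{t}}(\mathrm{D})$, $\sqrt{\mathrm{k}}(\mathrm{D})$ bounds of Propositions \ref{prop B 2} and \ref{prop B 5}, following the computation of Proposition 3.4 in \cite{Paulsen22}. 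Your remaining steps (regularization, uniqueness via a lower-norm difference estimate, Gr\"onwall on the $\mathcal{O}(\ve^{-1})$ time scale) match the paper's scheme, with the caveat that the difference estimate at regularity $s$ loses a derivative, so continuous dependence requires a Bona--Smith argument rather than the $X^0_{\mu,\mathrm{bo}}$ estimate alone; but the essential missing idea is the quasilinear correction $\ve\zeta$ in the symmetrizer, without which the nonlinear estimate does not close.
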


 \begin{remark}
 	System \eqref{System Full disp B} and \eqref{Full disp Benjamin} are new and are chosen such that it is easy to deduce the long-time existence. The choice was based on the observations made in \cite{Paulsen22}, where weakly dispersive shallow water models are considered and can, in some cases, give rise to well-posed systems while their strongly dispersive versions are not. 
 \end{remark}
  
    \begin{remark}\label{Remark wp}
  	For the data $\zeta_0 \in H^s(\R)$ with $s>\frac{3}{2}$ the long-time (global)  well-posedness of the Benjamin equation and the Benjamin-Ono equation is classical. The long time well-posedness can easily be extended for \eqref{Full disp Benjamin} with $\ve \in (0,1)$. See, for instance, \cite{LinaresPonce15} in the case of the Benjamin-Ono equation on a fixed time with $\ve = 1$. 
  \end{remark}

  With this result in hand, we can use it as a link to prove the consistency between the BO equation and the internal water waves equations.

 \begin{thm}\label{Consistency}\label{Thm: Consistency} Let $\ve, \mu, \gamma, \mathrm{bo}^{-1}  \in (0,1)$, such that $\ve^2 \leq \mathrm{bo}^{-1}$. Assume that   $\mathbf{U}_0 = (\zeta_0,\psi_0)^T$ satisfies the assumptions of Theorem \ref{Thm 1} and define $v_0 = \partial_x \psi_0$.  Then for $v = \partial_x \psi$ there exists $C=C(\mathcal{E}^N(\mathbf{U}_0), {h^{-1}_{\min}}, \mathfrak{d}(\mathbf{U}_0)^{-1})>0$ nondecreasing function of its argument and  a time $T=C^{-1}$ such that
 	\begin{equation}\label{Est on Sol}
 		\sup_{t\in [0,\ve^{-1}T] }\big{(}|\zeta|_{H^{N+1}_{\mathrm{bo}}}^2 + |v|_{H^{N-\frac{1}{2}}}^2\big{)} \leq  C.
 	\end{equation}	
 	Moreover, there exists a generic function $R$ satisfying 
 	$$|R|_{H^{N-5}}\leq C,$$ 
 	for all $t\in [0,\varepsilon^{-1}T]$ such that we have the following results:
 	\begin{itemize}
 		\item [1.] There exists a unique solution $(\zeta, v)^T \in C\big{(}[0,\ve^{-1}T] \: : \: H^{N+1}_{\mathrm{bo}}(\R) \times H^{N-\frac{1}{2}}(\R) \big{)}$ to \eqref{IWW - v}, where $v = \partial_x \psi$. Moreover, on the same time interval, for 
 		\begin{equation*}
 			\mathrm{t}(\mathrm{D}) = \frac{1}{1+\gamma \tanh(\sqrt{\mu}|\mathrm{D}|)}\frac{\tanh(\sqrt{\mu}|\mathrm{D}|)}{\sqrt{\mu}|\mathrm{D}|}, \quad \mathrm{k}(\mathrm{D}) = \mathrm{t}(\mathrm{D})\big{(}1 - \mathrm{bo}^{-1} \partial_x^2\big{)},
 		\end{equation*}
 		and $u = \mathrm{t}(\mathrm{D})v$ we have that the solution also satisfies
 		\begin{equation*}\label{System Full disp BO}
	 		\begin{cases}
	 			\partial_t \zeta
	 			+
	 			\partial_x u +\ve \mathrm{t}(\mathrm{D})\partial_x(\zeta u) = \ve \sqrt{\mu}R
	 			\\
	 			\partial_t u
	 			+
	 			\mathrm{k}(\mathrm{D})\partial_x \zeta 
	 			+
	 			\frac{\ve}{2} \mathrm{t}(\mathrm{D})\partial_x (u)^2 =   \ve \sqrt{\mu}R.
	 		\end{cases}
 		\end{equation*} \\

 		\item [2.] %There exist a unique solution of $(\zeta^{\text{\tiny BOs}}, v^{\text{\tiny BOs}})  \in C\big{(}[0,\ve^{-1}T] \: : \: H^{N-\frac{1}{2}}(\R) \times H^{N-\frac{1}{2}}(\R) \big{)},$ to the weakly dispersive BO system \eqref{WP system Full disp BO}. We can split the solution into right-moving wave  

 		There exist a unique solution $\zeta^{\text{\tiny wB}}\in C\big{(}[0,\ve^{-1}T] \: : \:  H^{N+1}_{\mathrm{bo}}(\R) \big{)}$ that the solves weakly dispersive Benjamin equation 
 		\begin{equation*} 
 			\partial_t \zeta^{\text{\tiny wB}}
 			+
 			 \sqrt{k}(\mathrm{D})\partial_x \zeta^{\text{\tiny wB}}
 			+
 			\frac{3\ve }{2}\zeta^{\text{\tiny wB}}  \partial_x\zeta^{\text{\tiny wB}} =  0.
 		\end{equation*}
 		Suppose further that the data $v_0$ is given by
 		\begin{equation}\label{right moving wave}
 			v_0 = \mathrm{t}^{-1}(\mathrm{D})
 			\big{(}\sqrt{	\mathrm{k}}(\mathrm{D})\zeta_0^{\text{\tiny wB}}  - \frac{\ve}{4} (\zeta_0^{\text{\tiny wB}})^2\big{)},
 		\end{equation}
 		and define $v^{\text{\tiny wB}},u^{\text{\tiny wB}}  \in C\big{(}[0,\ve^{-1}T] \: : \:  H^{N-\frac{1}{2}}(\R) \big{)}$ by
 		\begin{equation*}
 			u^{\text{\tiny wB}}=
 			\mathrm{t}(\mathrm{D})v^{\text{\tiny wB}} =  
 			\big{(}\sqrt{	\mathrm{k}}(\mathrm{D})\zeta^{\text{\tiny wB}}  - \frac{\ve}{4} (\zeta^{\text{\tiny wB}})^2\big{)}.
 		\end{equation*}
 		Then the solution $(\zeta^{\text{\tiny wB}},u^{\text{\tiny wB}})$ also satisfies
 		\begin{equation*} 
 			\begin{cases}
 				\partial_t \zeta^{\text{\tiny wB}}
 				+
 				\partial_{ x} u^{\text{\tiny wB}}  +\ve\mathrm{t}(\mathrm{D}) \partial_{x}(\zeta^{\text{\tiny wB}}u^{\text{\tiny wB}}) = \varepsilon( \varepsilon + \sqrt{\mu} + \mathrm{bo}^{-1} )R
 				\\
 				\partial_t u^{\text{\tiny wB}}
 				+
 				\sqrt{	\mathrm{k}}(\mathrm{D})\partial_x  \zeta^{\text{\tiny wB}}
 				+
 				\frac{\ve}{2} \mathrm{t}(\mathrm{D}) \partial_{{x}}\big{(}u^{\text{\tiny wB}}\big{)}^2 = \varepsilon( \varepsilon + \sqrt{\mu} + \mathrm{bo}^{-1} )R.
 			\end{cases}
 		\end{equation*}\\ 
 		\item [3.] There exist a unique solution $\zeta^{\text{\tiny B}} \in C\big{(}[0,\ve^{-1}T] \: : \: H^{N+1}_{\mathrm{bo}}(\R)\big{)}$ that solves the Benjamin equation
 		\begin{equation*}
 			\partial_t \zeta^{\text{\tiny B}} 
 			+
 			\big{(} 1- \frac{\gamma}{2} \sqrt{\mu}|\mathrm{D}| - \frac{\partial_x^2}{2\mathrm{bo}}\big{)}\partial_x \zeta^{\text{\tiny B}} 
 			+
 			\frac{3\ve }{2}\zeta^{\text{\tiny B}}  \partial_x\zeta^{\text{\tiny B}} =  0,
 		\end{equation*}
 		and on the same time interval it satisfies
 		\begin{equation*} 
 			\partial_t \zeta^{\text{\tiny B}}
 			+
 			\sqrt{k}(\mathrm{D})\partial_x \zeta^{\text{\tiny B}}
 			+
 			\frac{3\ve }{2}\zeta^{\text{\tiny B}}  \partial_x\zeta^{\text{\tiny B}} =  (\mu + \sqrt{\mu}\mathrm{bo}^{-1}) R.
 		\end{equation*}\\

 		\item [4.] There exist a unique solution $\zeta^{\text{\tiny BO}} \in C\big{(}[0,\ve^{-1}T] \: : \: H^{N+1}_{\mathrm{bo}}(\R)\big{)}$ that solves
 		\begin{equation*}
 			\partial_t \zeta^{\text{\tiny BO}}
 			+
 			\big{(}1 - \frac{\gamma}{2}\sqrt{\mu}|\mathrm{D}|\big{)}\partial_x \zeta^{\text{\tiny BO}}
 			+
 			\frac{3\ve }{2}\zeta^{\text{\tiny BO}} \partial_x\zeta^{\text{\tiny BO}} =  0,
 		\end{equation*}
 		and on the same time interval it satisfies
 		\begin{equation*} 
 			\partial_t \zeta^{\text{\tiny BO}} 
 			+
 			\big{(} 1- \frac{\gamma}{2} \sqrt{\mu}|\mathrm{D}| - \frac{\partial_x^2}{2\mathrm{bo}}\big{)}\partial_x \zeta^{\text{\tiny BO}} 
 			+
 			\frac{3\ve }{2}\zeta^{\text{\tiny BO}}  \partial_x\zeta^{\text{\tiny BO}} =  \mathrm{bo}^{-1}R.
 		\end{equation*}
 	\end{itemize}
 \end{thm}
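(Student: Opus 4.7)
By Theorem \ref{Thm 1}, there exists a unique $(\zeta,\psi) \in \mathscr{E}_{\mathrm{bo},\varepsilon^{-1}T}^N$ solving \eqref{IWW} on $[0,\varepsilon^{-1}T]$. Setting $v = \partial_x\psi$, the estimate \eqref{Est on Sol} follows from the bound \eqref{bound on sol E} and mapping properties of the interface operator $\mathbf{H}_\mu[\varepsilon\zeta]$ (see Proposition \ref{Inverse 2}), which yield bounds on $\psi^\pm$ in terms of $\psi$. To derive the weakly dispersive Benjamin system I would carry out the shallow-water expansion of both equations of \eqref{IWW - v}. For the first equation, write $\frac{1}{\mu}\mathcal{G}_\mu^+[\varepsilon\zeta]\psi^+ = -\partial_x\big((1+\varepsilon\zeta)\partial_x\psi^+\big) + \mathcal{O}(\mu)$ and expand $\partial_x\psi^+ = v + \gamma\mathbf{H}_\mu[\varepsilon\zeta]\psi^+$; conjugating with $\mathrm{t}(\mathrm{D})$ absorbs the coupling via the linear identity $\mathrm{t}(\mathrm{D})(-\partial_x^2) = \mu^{-1}\mathcal{G}[0]$, turning the leading term into $-\partial_x(\mathrm{t}(\mathrm{D})v) = -\partial_x u$ plus a Burgers-type nonlinearity $\varepsilon \mathrm{t}(\mathrm{D})\partial_x(\zeta u)$. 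For the Bernoulli equation, differentiate in $x$, apply $\mathrm{t}(\mathrm{D})$ to both sides, and observe that the surface tension term $-\mathrm{bo}^{-1}\partial_x \kappa(\varepsilon\sqrt{\mu}\zeta)$ combined with $\partial_x \zeta$ produces precisely $\mathrm{k}(\mathrm{D})\partial_x \zeta$. The quadratic piece $(\partial_x\psi^+)^2 - \gamma(\mathbf{H}_\mu[\varepsilon\zeta]\psi^+)^2$ reduces, at leading order in $\mu$, to $v^2$ times appropriate factors, which combines with $\varepsilon\mathcal{N}[\varepsilon\zeta,\psi^\pm]$ to yield $\frac{\varepsilon}{2}\mathrm{t}(\mathrm{D})\partial_x u^2$. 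All remainders are of order $\mathcal{O}(\varepsilon\sqrt{\mu})R$.

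\textbf{Part 2.} The wBo equation is long-time well-posed on $[0,\varepsilon^{-1}T]$ by Remark \ref{Remark wp} (or directly by the strategy of Theorem \ref{W-P System BO}), yielding $\zeta^{\text{wB}}$. The consistency of $(\zeta^{\text{wB}},u^{\text{wB}})$ with the target system is then a direct computation: differentiate the defining relation $u^{\text{wB}} = \sqrt{\mathrm{k}}(\mathrm{D})\zeta^{\text{wB}} - \frac{\varepsilon}{4}(\zeta^{\text{wB}})^2$ in $t$, substitute $\partial_t\zeta^{\text{wB}} = -\sqrt{\mathrm{k}}(\mathrm{D})\partial_x\zeta^{\text{wB}} - \frac{3\varepsilon}{2}\zeta^{\text{wB}}\partial_x\zeta^{\text{wB}}$, and expand the quadratic terms. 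The correction $-\frac{\varepsilon}{4}(\zeta^{\text{wB}})^2$ is tailored precisely to reconcile the coefficient $\tfrac{3}{2}$ of the Burgers nonlinearity with the coefficient $2$ coming from $\partial_x(\zeta u)$; remaining discrepancies are controlled by commutator estimates of the form $[\sqrt{\mathrm{k}}(\mathrm{D}),\zeta]$ and $(\mathrm{t}(\mathrm{D})-1)\partial_x f$, each of size $\mathcal{O}(\sqrt{\mu}+\mathrm{bo}^{-1})$ in the relevant Sobolev norm, producing the announced error $\mathcal{O}(\varepsilon(\varepsilon+\sqrt{\mu}+\mathrm{bo}^{-1}))R$.

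\textbf{Parts 3 and 4.} The Benjamin equation and the wBo equation share their nonlinearity and differ only in their linear dispersion, so the proof reduces to comparing the Fourier multipliers. A Taylor expansion at $\sqrt{\mu}|\xi|=0$ gives $\sqrt{\mathrm{k}}(\xi) - \big(1 - \tfrac{\gamma}{2}\sqrt{\mu}|\xi| + \tfrac{\xi^2}{2\mathrm{bo}}\big) = \mathcal{O}(\mu |\xi|^2 + \sqrt{\mu}\mathrm{bo}^{-1}|\xi|^3)$; substituting $\zeta^{\text{B}}$ into the wBo equation, this multiplier difference applied to $\partial_x\zeta^{\text{B}}$ yields the error $\mathcal{O}(\mu+\sqrt{\mu}\mathrm{bo}^{-1})R$ by the Sobolev smoothness of $\zeta^{\text{B}}$. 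Part 4 is even simpler: the BO and Benjamin equations differ by exactly $\frac{1}{2\mathrm{bo}}\partial_x^3\zeta$, so substituting $\zeta^{\text{BO}}$ into the Benjamin equation leaves remainder $\mathrm{bo}^{-1}R$ with $R = \frac{1}{2}\partial_x^3\zeta^{\text{BO}}$, bounded uniformly on $[0,\varepsilon^{-1}T]$ by the classical long-time well-posedness of BO.

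\textbf{Main obstacle.} The crux of the argument is Part 1, specifically the shallow-water expansion of the operators $\mathcal{G}^+_\mu[\varepsilon\zeta]$ and $\mathbf{H}_\mu[\varepsilon\zeta]$ in the mixed geometry where the lower fluid has finite depth and the upper fluid infinite depth. The remainders must be controlled uniformly in $(\varepsilon,\mu,\mathrm{bo})$ in mixed Sobolev/homogeneous spaces, which is precisely the setting where the refined symbolic and pseudo-differential estimates developed for Theorem \ref{Thm 1} become indispensable. Once these expansions are in place, Parts 2--4 amount to bookkeeping involving symbol comparisons and standard commutator estimates.
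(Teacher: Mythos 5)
Your overall architecture (derive the coupled system, then the unidirectional ansatz $u^{\text{\tiny wB}}=\sqrt{\mathrm{k}}(\mathrm{D})\zeta^{\text{\tiny wB}}-\tfrac{\ve}{4}(\zeta^{\text{\tiny wB}})^2$, then pure multiplier comparisons for the Benjamin and BO steps) is the same as the paper's, and your Parts 2--4 are essentially correct: the correction $-\tfrac{\ve}{4}\zeta^2$, the bound $|\sqrt{\mathrm{k}}(\xi)-(1-\tfrac{\gamma}{2}\sqrt{\mu}|\xi|+\tfrac{\xi^2}{2\mathrm{bo}})|\lesssim \mu\xi^2+\sqrt{\mu}\,\mathrm{bo}^{-1}|\xi|^3$, and the $\mathrm{bo}^{-1}\partial_x^3$ discrepancy for BO all match estimates \eqref{est sqrt K} and \eqref{precise est sqrt K}.

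The genuine gap is in Part 1, and it is quantitative, not just bookkeeping. You expand $\frac{1}{\mu}\mathcal{G}^+_\mu[\ve\zeta]\psi^+=-\partial_x\big((1+\ve\zeta)\partial_x\psi^+\big)+\mathcal{O}(\mu)$; that remainder is $\mathcal{O}(\mu)$ with no $\ve$ prefactor (it contains the purely linear dispersive defect $\frac{1}{\mu}\mathcal{G}^+_\mu[0]+\partial_x^2$), so no subsequent manipulation can produce the claimed remainder $\ve\sqrt{\mu}R$ unless $\sqrt{\mu}\lesssim\ve$, which is not assumed. Your proposed repair --- "conjugating with $\mathrm{t}(\mathrm{D})$ via $\mathrm{t}(\mathrm{D})(-\partial_x^2)=\mu^{-1}\mathcal{G}_\mu[0]$" --- does not work: once the $\mathrm{T}(\mathrm{D})$ structure has been discarded at $\mathcal{O}(\mu)$ it cannot be reinserted, and applying $\mathrm{t}(\mathrm{D})$ to the whole mass equation would also replace $\partial_t\zeta$ by $\mathrm{t}(\mathrm{D})\partial_t\zeta$, introducing a further $\mathcal{O}(\sqrt{\mu})$ error without $\ve$. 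The paper instead keeps the operators fully dispersive: Emerald's expansion \eqref{exp G+}, $\mathcal{G}^+_\mu[\ve\zeta]\psi^+=-\mu\,\partial_x\big((1+\ve\zeta)\mathrm{T}(\mathrm{D})\partial_x\psi^+\big)+\mathcal{O}(\ve\mu^2)$, together with the new deep-layer interface expansion of Proposition \ref{H: Infinite depth/BO regime}, $\mathbf{H}_\mu[\ve\zeta]\psi^+=-\tanh(\sqrt{\mu}|\mathrm{D}|)\partial_x\psi^+ +\mathcal{O}(\ve\sqrt{\mu})$, proved by constructing an approximate solution of the elliptic problem in the infinite-depth strip and running weighted trace/energy estimates. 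It is precisely these two expansions (both with remainders carrying the $\ve$ prefactor) that make the derived system exact at the linear level and yield $\ve\sqrt{\mu}R$ after inverting $\partial_x\psi^+=\gamma\mathbf{H}_\mu\psi^+ + v$ to get $\partial_x\psi^+=(1+\gamma\tanh(\sqrt{\mu}|\mathrm{D}|))^{-1}v+\ve\sqrt{\mu}R$. You correctly flag in your "main obstacle" paragraph that these expansions are the crux, but you neither state them with the correct error structure nor supply the proof of the $\mathbf{H}_\mu$ expansion, which is a new result of the paper rather than a citable standard fact; as written, your Part 1 only delivers consistency at order $\mathcal{O}(\mu+\ve\sqrt{\mu})$, which is too weak for estimate \eqref{CV BOs} and for the statement of Part 1 of the theorem.
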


\begin{remark}
	The results in Theorem \ref{Thm: Consistency} are stated in the one-dimensional case, but the derivation of \eqref{System Full disp B} can be extended to higher dimensions (see Remark \ref{Rmrk higher d}).
\end{remark}

 %
 %
 	%\begin{remark}
 	%	The assumption on the data makes sense because $\dot{H}^1(\R)\subset \dot{H}^{\frac{1}{2}}_{\mu}(\R)$. The additional restriction on the data is simply made out of convenience. 
 	%	\end{remark}
  %
  %
%
%%
	A consequence of the above results is the full justification of BO and related equations. 

 %Let $\ve, \mu, \gamma  \in (0,1)$, such that $\ve^2 \leq \mathrm{bo}^{-1}$. Assume that   $\mathbf{U}_0 = (\zeta_0,\psi_0)^T$ satisfies the assumptions of Theorem \ref{Thm 1} and define $v_0 = \partial_x \psi_0$.  Then for $v = \partial_x \psi$ there exists $C=C(\mathcal{E}^N(\mathbf{U}_0), \frac{1}{h_{\min}}, \frac{1}{\mathfrak{d}(\mathbf{U}_0)})>0$ nondecreading function of its argument and  a time $T=C^{-1}$ such that

  	\begin{thm}\label{Justification} Let $\ve, \mu, \gamma, \mathrm{bo}^{-1} \in (0,1)$, such that $\ve^2 \leq \mathrm{bo}^{-1}$.  Assume that   $\mathbf{U}_0 = (\zeta_0,\psi_0)^T$ satisfies the assumptions of Theorem \ref{Thm 1} with $N\geq 8$ and define $v = \partial_x \psi$ with  $v_0 = \partial_x \psi_0$. Suppose further that $v_0$ satisfies \eqref{right moving wave}. Then there exists $C=C(\mathcal{E}^N(\mathbf{U}_0), {h^{-1}_{\min}}, \mathfrak{d}(\mathbf{U}_0)^{-1})>0$ nondecreasing function of its argument and  a time $T=C^{-1}$ such that:	\\ 
  	%	Assume that   $\mathbf{U}_0 = (\zeta_0,\psi_0)^T$ satisfies the assumptions of Theorem \ref{Thm 1} with $N\geq 8$ and define $v_0 = \partial_x \psi_0$. Suppose further that $\zeta_0$ satisfies the $\gamma-$dependent surface condition \eqref{gamma NonCav} and that $v_0$ satisfies \eqref{right moving wave}. hen there exists $C=C(\mathcal{E}^N(\mathbf{U}_0), \frac{1}{h_{\min}}, \frac{1}{\mathfrak{d}(\mathbf{U}_0)})>0$ nondecreasing function of its argument and  a time $T=C^{-1}$ such that:
  		%
  		%
  		%
  		\begin{itemize}
  			\item [1.] There exist a unique solution $(\zeta, v)  \in C\big{(}[0,\ve^{-1}T] \: : \: H^{N+1}_{\mathrm{bo}}(\R) \times H^{N-\frac{1}{2}}(\R) \big{)}$ to \eqref{IWW - v}.\\

  			\item [2.] From the same initial data:
  			\begin{itemize}
  				\item [2.1.] There exists a unique solution $(\zeta^{\text{\tiny wBs}}, u^{\text{\tiny wBs}}) \in C([0,\ve^{-1}T] : X^{N-\frac{1}{2}}_{\mu, \mathrm{bo}}(\R) \big{)}$, where $u^{\text{\tiny wB}} = \mathrm{t}(\mathrm{D})v^{\text{\tiny wB}}$,  to the weakly dispersive Benjamin system
  				%
  				%
  				%$u = \mathrm{t}(\mathrm{D})v$
  				\begin{equation*}
  					\begin{cases}
  						\partial_t \zeta
  						+
  						 \partial_x u^{\text{\tiny wBs}} +\ve \mathrm{t}(\mathrm{D})\partial_x(\zeta^{\text{\tiny wBs}} u^{\text{\tiny wBs}}) = 0
  						\\
  						\partial_t u^{\text{\tiny wBs}}
  						+
  						\mathrm{k}(\mathrm{D})\partial_x \zeta^{\text{\tiny wBs}} 
  						+
  						\frac{\ve}{2} \mathrm{t}(\mathrm{D})\partial_x( u^{\text{\tiny wBs}})^2 =   0,
  					\end{cases} 
  				\end{equation*} 
  				and for any $t\in [0,\ve^{-1}T]$ there holds,
  				\begin{equation}\label{CV BOs}
  					| (\zeta-\zeta^{\text{\tiny wBs}}, v-v^{\text{\tiny wBs}}) |_{L^{\infty}([0,t]: H^{N-7}\times H^{N-7})} 
  				\leq   \varepsilon  \sqrt{\mu} t C.
  				\end{equation}\\

  				\item [2.2.] There exists a unique solution $\zeta^{\text{\tiny wB}} \in C\big{(}[0,\ve^{-1}T] \: : \: H^{N+1}_{\mathrm{bo}}(\R)  \big{)}$ to the weakly dispersive Benjamin equation
  				\begin{equation*}
  					\partial_t \zeta^{\text{\tiny wB}}
  					+
  					\sqrt{	\mathrm{k}}(\mathrm{D})\partial_x \zeta^{\text{\tiny wB}}
  					+
  					\frac{3\ve }{2}\zeta^{\text{\tiny wB}} \partial_x\zeta^{\text{\tiny wB}} =0
  				\end{equation*}
  				and for any $t\in [0,\ve^{-1}T]$ there holds,
  				\begin{equation}\label{CV wB}
  					|\zeta-\zeta^{\text{\tiny wB}}|_{L^{\infty}([0,t] : H^{N-7})} \leq \varepsilon( \ve+ \sqrt{\mu} +  \mathrm{bo}^{-1}) tC.
  				\end{equation}\\

  				\item [2.3.] There exists a unique solution $\zeta^{\text{\tiny B}} \in C\big{(}[0,\ve^{-1}T] \: : \: H^{N+1}_{\mathrm{bo}}(\R) \big{)}$ to the Benjamin equation
  				\begin{equation*} 
	  				\partial_t \zeta^{\text{\tiny B}} 
	  				+
	  				\big{(} 1- \frac{\gamma}{2} \sqrt{\mu}|\mathrm{D}| - \frac{\partial_x^2}{2\mathrm{bo}}\big{)}\partial_x \zeta^{\text{\tiny B}} 
	  				+
	  				\frac{3\ve }{2}\zeta^{\text{\tiny B}}  \partial_x\zeta^{\text{\tiny B}} = 0,
  				\end{equation*}
  				and for any $t\in [0,\ve^{-1}T]$ there holds,
  				\begin{equation}\label{CV B}
  					|\zeta-\zeta^{\text{\tiny B}}|_{L^{\infty}([0,t] : H^{N-7})} \leq \big{(} \ve^2 + \ve \mathrm{bo}^{-1}  + \sqrt{\mu} \mathrm{bo}^{-1}+\mu \big{)} tC.
  				\end{equation}\\

  				\item [2.4.] There exists a unique solution $\zeta^{\text{\tiny BO}} \in C\big{(}[0,\ve^{-1}T] \: : \:H^{N+1}_{\mathrm{bo}}(\R) \big{)}$ to the BO equation
  				\begin{equation*} 
  					%\begin{cases}
  					\partial_t \zeta^{\text{\tiny BO}}
  					+
  					c(1 - \frac{\gamma}{2}\sqrt{\mu}|\mathrm{D}|)\partial_x \zeta^{\text{\tiny BO}}
  					+
  					c\frac{3\ve }{2}\zeta^{\text{\tiny BO}} \partial_x\zeta^{\text{\tiny BO}} = 0,
  				\end{equation*}
  				and for any $t\in [0,\ve^{-1}T]$ there holds,
  				\begin{equation}\label{CV BO}
  					|\zeta-\zeta^{\text{\tiny BO}}|_{L^{\infty}([0,t] : H^{N-7})} \leq \big{(} \mu +\ \mathrm{bo}^{-1})tC.
  				\end{equation}
  			\end{itemize}
  			 
  		\end{itemize}

  	\end{thm}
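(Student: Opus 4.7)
My plan is to combine four ingredients already established in the paper: (i) the long-time existence of the IWW solution (Theorem \ref{Thm 1}) and of the weakly dispersive Benjamin system solution (Theorem \ref{W-P System BO}) on the time scale $\ve^{-1}T$; (ii) the classical long-time well-posedness of the Benjamin and Benjamin-Ono equations recalled in Remark \ref{Remark wp}; (iii) the consistency estimates of Theorem \ref{Thm: Consistency}, which bound the size of the residual produced when the IWW solution (or a neighboring model solution) is plugged into a model equation; and (iv) stability estimates for each asymptotic model that upgrade an $O(\eta)$ consistency error into an $O(\eta\, t)$ difference bound. Point 1 of the statement and the existence portions of 2.1--2.4 follow by direct citation of these results, so the real content is the convergence estimates \eqref{CV BOs}--\eqref{CV BO}, which I would prove in the order 2.1, 2.2, 2.3, 2.4 using a triangle-inequality cascade.

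The core step is \eqref{CV BOs}. Setting $u=\mathrm{t}(\mathrm{D})v$ with $v=\partial_x\psi$, Theorem \ref{Thm: Consistency}(1) gives that $(\zeta,u)$ solves the weakly dispersive Benjamin system with source $\ve\sqrt{\mu}R$, where $|R|_{H^{N-5}}\leq C$. Letting $(\zeta^{\text{\tiny wBs}},u^{\text{\tiny wBs}})$ denote the Theorem \ref{W-P System BO} solution launched from the same initial data, I would subtract the two systems to obtain, for the difference $(\tilde\zeta,\tilde u)$, the linearization of the weakly dispersive Benjamin system around $(\zeta^{\text{\tiny wBs}},u^{\text{\tiny wBs}})$ with quadratic perturbation depending on $(\zeta,u)$ and a forcing of size $\ve\sqrt{\mu}R$. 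The linear part is skew-symmetric in the energy induced by the $X^{s}_{\mu,\mathrm{bo}}$ norm (the very norm used to prove Theorem \ref{W-P System BO}), and both solutions are uniformly bounded on $[0,\ve^{-1}T]$ at regularity $s=N-7$. A standard Gronwall argument on the difference energy then produces $|(\tilde\zeta,\tilde u)|_{H^{N-7}\times H^{N-7}} \leq \ve\sqrt{\mu}\, t\, C$, and translating the $\tilde u$-bound back to $\tilde v$ via the multiplier $\mathrm{t}(\mathrm{D})^{-1}$ (absorbing an extra derivative that is already budgeted in the $H^{N-7}$ loss) yields \eqref{CV BOs}.

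For \eqref{CV wB}, Theorem \ref{Thm: Consistency}(2) shows that $(\zeta^{\text{\tiny wB}},u^{\text{\tiny wB}})$ is itself a near-solution of the weakly dispersive Benjamin \emph{system} with residual $\ve(\ve+\sqrt\mu+\mathrm{bo}^{-1})R$. Repeating the stability argument of the previous paragraph now to compare $(\zeta^{\text{\tiny wBs}},u^{\text{\tiny wBs}})$ with $(\zeta^{\text{\tiny wB}},u^{\text{\tiny wB}})$ gives a bound $\ve(\ve+\sqrt\mu+\mathrm{bo}^{-1})tC$ for their difference, and the triangle inequality with \eqref{CV BOs} yields \eqref{CV wB}. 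For \eqref{CV B} and \eqref{CV BO}, parts (3) and (4) of Theorem \ref{Thm: Consistency} say that $\zeta^{\text{\tiny B}}$ and $\zeta^{\text{\tiny BO}}$ are consistent, respectively, with the weakly dispersive Benjamin equation and the Benjamin equation up to residuals $(\mu+\sqrt\mu\,\mathrm{bo}^{-1})R$ and $\mathrm{bo}^{-1}R$. The corresponding scalar stability is the classical energy estimate for a nonlocal equation with a Burgers-type nonlinearity, losing one derivative; combining it with the triangle inequality and the bounds already obtained, and using $\ve\sqrt{\mu}\leq \ve^2+\mu$ together with the standing assumption $\ve^2\leq \mathrm{bo}^{-1}$ to absorb cross terms, produces the announced rates in \eqref{CV B} and \eqref{CV BO}.

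The main obstacle is the stability estimate for the weakly dispersive Benjamin system used in the core step: one must symmetrize a nonlocal quasilinear system whose linear part mixes the Benjamin-type multiplier $\mathrm{t}(\mathrm{D})$ with the Bond-type operator $\mathrm{k}(\mathrm{D})=\mathrm{t}(\mathrm{D})(1-\mathrm{bo}^{-1}\partial_x^2)$, and the resulting commutator estimates must be uniform in the small parameters $\mu$ and $\mathrm{bo}^{-1}$. This is exactly the energy structure that underlies Theorem \ref{W-P System BO}, and the present argument is its two-solution (difference) version; the loss of derivatives from $N$ to $N-7$ accounts for the $H^{N-5}$ bound on the consistency remainder $R$, two derivatives typically lost in a quasilinear difference estimate, and the further derivative required to pass between $u$ and $v$ via $\mathrm{t}(\mathrm{D})^{-1}$.
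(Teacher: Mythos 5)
Your proposal follows essentially the same route as the paper's proof: the consistency results of Theorem \ref{Thm: Consistency} supply the residuals, the two-solution (difference) energy estimates of Proposition \ref{Energy fully disp BO} together with Gr\"onwall give the stability bound for the weakly dispersive Benjamin system, and a triangle-inequality cascade through $\zeta^{\text{\tiny wBs}}$, $\zeta^{\text{\tiny wB}}$, $\zeta^{\text{\tiny B}}$ yields \eqref{CV wB}--\eqref{CV BO} exactly as you describe. The only bookkeeping difference is that the paper runs the system difference estimate at regularity $N-6$ rather than $N-7$, precisely so that the extra derivative cost of $\mathrm{t}(\mathrm{D})^{-1}$ when converting the $u$-difference into the $v$-difference still lands the final bound in $H^{N-7}$, which is the derivative budget you already acknowledge in your closing paragraph.
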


   \color{black}
  	
  	\begin{remark}  \color{white} space \color{black}
  		
  		\begin{itemize}
  			\item Estimates \eqref{CV BOs}, \eqref{CV wB}, and \eqref{CV B} together with the well-posedness theory imply the full justification of their respective systems as internal water waves equations. These are new results, but their primary purpose is to serve as an intermediate step for the derivation of the BO equation and are added here for the sake of completeness.  \\ 
  			
  			\item Several asymptotic regimes are not captured by the result of this paper and the one of Lannes \cite{LannesTwoFluid13}. %In particular, the case of having one layer of great depth. 
  			An interesting example would be the intermediate long wave regime where one layer is allowed to be larger than the other (see  \cite{Klein_Saut_21,Bona_Saut_Lannes_08,klein2023benjaminrelatedequations}). This will require a new well-posedness result for the internal water waves equations where one would have to track the dependencies in the depths carefully. \\ 
  			
  			\item The Intermediate Long Wave equation (ILW) is a particular deep water model closely related to the BO equation. To be precise, the ILW equation is given by
  			\begin{equation}\label{ilw}
  				\partial_t \zeta
  				+
  				(1 - \frac{\gamma}{2}\sqrt{\mu}\mathcal{L}(\mathrm{D}))\partial_x \zeta 
  				+
  				\frac{3\varepsilon }{2}\zeta \partial_x\zeta =0,
  			\end{equation}
  			where $\mathcal{L}(\mathrm{D}) = |\mathrm{D}|\coth(\sqrt{\mu^-}|\mathrm{D}|)-\frac{1}{\sqrt{\mu^-}}$
  			 and $\sqrt{\mu^-} = \frac{H^-}{\lambda}$, and $H^-$ is the depth of upper fluid domain and $\lambda$ is the typical wavelength. Then as $H^-\rightarrow\infty$, we observe that the symbol associated with $\mathcal{L}(\mathrm{D})$ converges to the Hilbert transform $\mathcal{H}$, whose symbol in frequency reads $-i \text{sign}(\xi)$. In fact, it has been proved that the solutions of \eqref{ilw} converge to the ones of BO, even on $L^2(M)$ for $M = \R$ or $\mathbb{T}$ \cite{chapouto2024deepwaterlimitintermediatelong}. 
  			
  		\end{itemize}
  		
  	\end{remark}

  	From Theorem \ref{Justification}, one can prove as a Corollary, that the ILW equation provides a good approximation of the internal water waves equations if $(\frac{\mu}{\mu^-})^{\frac{1}{2}} + \mu +\mathrm{bo}^{-1}$ is small.

  	\begin{cor}\label{Cor ilw}
		Let $N\geq 8$, and $\ve, \mu, \gamma, (\mu^-)^{-1}, \mathrm{bo}^{-1} \in (0,1)$, such that $\ve^2 \leq \mathrm{bo}^{-1}$. Assume that  $\mathbf{U}_0 = (\zeta_0,\psi_0)^T$ satisfies the assumptions of Theorem \ref{Justification}.  Then the solutions $\zeta, \zeta^{\text{\tiny ILW}} \in C\big{(}[0,\ve^{-1}T] \: : \:H^{N}(\R) \big{)}$ of \eqref{IWW - v} and \eqref{ilw} satisfies the convergence estimate
  		\begin{equation}\label{CV ILW}
  			|\zeta-\zeta^{\text{\tiny ILW}}|_{L^{\infty}([0,t] : H^{N-7})} \leq \big{(} (\frac{\mu}{\mu^-})^{\frac{1}{2}} +\mu + \mathrm{bo}^{-1})tC.
  		\end{equation}\\ 
  	\end{cor}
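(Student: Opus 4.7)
The plan is to establish the result by triangle inequality, combining the BO approximation already provided by Theorem \ref{Justification} with a direct comparison between the BO and ILW flows. Estimate \eqref{CV BO} immediately gives
\begin{equation*}
|\zeta-\zeta^{\text{\tiny BO}}|_{L^\infty([0,t]:H^{N-7})} \leq (\mu+\mathrm{bo}^{-1})tC
\end{equation*}
on $[0,\ve^{-1}T]$, so it suffices to bound $|\zeta^{\text{\tiny BO}}-\zeta^{\text{\tiny ILW}}|_{L^\infty([0,t]:H^{N-7})}$ by $(\mu/\mu^-)^{1/2}tC$.

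The first key ingredient is a uniform Fourier multiplier estimate. Writing $y=\sqrt{\mu^-}|\xi|$, one has the identity
\begin{equation*}
\sqrt{\mu}\bigl(\mathcal{L}(\xi)-|\xi|\bigr) = \sqrt{\mu/\mu^-}\,g(y), \qquad g(y):=y\coth(y)-y-1.
\end{equation*}
Now $g(0)=0$ (here the constant $-1/\sqrt{\mu^-}$ in the definition of $\mathcal{L}$ is exactly what cancels the $1/y$ singularity of $\coth(y)$ at $y=0$), $g(y)\to -1$ as $y\to+\infty$ (since $\coth(y)-1$ decays exponentially), and $g$ is smooth on $(0,\infty)$; hence $g$ is bounded on $[0,\infty)$ by a universal constant. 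It follows that $\sqrt{\mu}(\mathcal{L}(\mathrm{D})-|\mathrm{D}|)$ defines a bounded operator on every $H^s(\R)$ with norm at most $C(\mu/\mu^-)^{1/2}$.

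Next I would invoke long-time well-posedness of \eqref{ilw} on $[0,\ve^{-1}T]$ with uniform $H^{N}(\R)$ bounds depending only on the data; since $\mathcal{L}(\mathrm{D})$ is skew-adjoint and of order one, the classical energy method for BO (see Remark \ref{Remark wp}) carries over verbatim, uniformly in $\mu^-$. Setting $\delta := \zeta^{\text{\tiny BO}}-\zeta^{\text{\tiny ILW}}$ and subtracting the two equations yields
\begin{equation*}
\partial_t \delta + \bigl(1-\tfrac{\gamma}{2}\sqrt{\mu}|\mathrm{D}|\bigr)\partial_x\delta + \tfrac{3\ve}{4}\partial_x\bigl((\zeta^{\text{\tiny BO}}+\zeta^{\text{\tiny ILW}})\delta\bigr) = \tfrac{\gamma}{2}\sqrt{\mu}\bigl(\mathcal{L}(\mathrm{D})-|\mathrm{D}|\bigr)\partial_x\zeta^{\text{\tiny ILW}}.
\end{equation*}
A standard $H^{N-7}$ energy estimate -- transport and dispersive operators being skew-adjoint, the Burgers-type nonlinearity controlled by Moser and commutator estimates and carrying a factor $\ve$, the right-hand side bounded by $C(\mu/\mu^-)^{1/2}$ via the multiplier estimate -- gives
$\tfrac{d}{dt}|\delta|_{H^{N-7}} \leq \ve C|\delta|_{H^{N-7}} + C(\mu/\mu^-)^{1/2}$, so Gronwall with $\delta|_{t=0}=0$ produces $|\delta(t)|_{H^{N-7}}\leq Ct(\mu/\mu^-)^{1/2}$ on $[0,\ve^{-1}T]$.

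The only delicate point is the symbol analysis: a naive bound on $|\xi|(\coth(\sqrt{\mu^-}|\xi|)-1)$ is not small at low frequency, and the uniform rate $(\mu/\mu^-)^{1/2}$ only emerges after exploiting the precise cancellation with the subtracted constant $1/\sqrt{\mu^-}$ built into $\mathcal{L}(\mathrm{D})$. Everything else is routine, and combining the two bounds via triangle inequality yields \eqref{CV ILW}.
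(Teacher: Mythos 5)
Your proposal is correct and follows essentially the same route as the paper: compare the BO and ILW flows through a Gr\"onwall estimate whose source term is controlled by the uniform bound $\sqrt{\mu}\,|\mathcal{L}(\xi)-|\xi||\lesssim(\mu/\mu^-)^{1/2}$, then combine with estimate \eqref{CV BO} of Theorem \ref{Justification} by the triangle inequality. The only cosmetic difference is that you derive the multiplier bound directly via the substitution $y=\sqrt{\mu^-}|\xi|$ and boundedness of $y\coth(y)-y-1$, whereas the paper cites Lemma 2.3 of \cite{chapouto2024deepwaterlimitintermediatelong} (which bounds $|\xi|(1-\coth(\sqrt{\mu^-}|\xi|))$ by $1/\sqrt{\mu^-}$ even without the cancellation you emphasize); the content is equivalent.
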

  
  	\subsubsection{Strategy and outline of the proofs}  The main body of the paper is devoted to the proof of Theorem \ref{Thm 1}, which relies on energy estimates similar to the ones provided by Lannes \cite{LannesTwoFluid13}. To do so, we first need to prove that the operators involved in the main system \eqref{IWW} are well-defined and can be formulated solely in terms of $\zeta$ and $\psi$. We start by studying the operator $\mathcal{G}_{\mu}$ in Section \ref{Properties of G}, which will be given by the expression:
  	\begin{equation}\label{Outline G}
  		\mathcal{G}_{\mu}[\ve \zeta]\psi =  \mathcal{G}^+_{\mu}[\ve \zeta] \Big{(} 1 -   \gamma   (\mathcal{G}^-_{\mu}[\ve \zeta])^{-1} \color{black}\mathcal{G}^+_{\mu}[\ve \zeta] \Big{)}^{-1}\psi.
  	\end{equation}
  	The main difference with the work of Lannes is that we have the composition of two operators $\mathcal{G}^+_{\mu}$ and $\mathcal{G}^-_{\mu}$ that act on different spaces. This is a consequence of having one fluid of finite depth and the other of infinite depth. In particular, to define the composition $(\mathcal{G}_{\mu}^-)^{-1}$  with $\mathcal{G}_{\mu}^+$ we need to work on a homogeneous\footnote{The function space with $|f|_{\mathring{H}^{s+\frac{1}{2}}} = | |\mathrm{D}|^{\frac{1}{2}} f|_{H^s}$.} type  target space. 
  	
  	The next step is to give a symbolic description of the operators involved in the expression of $\mathcal{G}_{\mu}$. This involves some of the key estimates that will be used to close the energy estimates. To do so, we need a symbolic description of each operator involved in \eqref{Outline G}. The key estimate, which is proved in Section \ref{Sec Symbolic} reads,
  	\begin{equation*}%\label{G- symbolic}
  		|\mathcal{G}^-_{\mu}[\ve \zeta]\psi^- - \big{(}-\sqrt{\mu} |\mathrm{D}|\psi^- \big{)}|_{H^{s+\frac{1}{2}}} \leq \ve \mu C\big{(}|\zeta|_{H^{t_0+3}}\big{)}|\psi^-|_{\mathring{H}^{s+\frac{1}{2}}}.
  	\end{equation*}
	The symbolic approximation of $\mathcal{G}^-_{\mu}$ is well-known; see the collection of work by Lannes, Alazard, Metivier, Burq, and Zuily \cite{FirstWPWW_Lannes_05,AlazardMetivier09,AlazardBurqZuily11, AlazardBurqZuily14} for similar estimates. Their results are sharper in the sense that they require less regularity on $\zeta$. However, the symbolic description they provide is without the parameters $\ve, \mu$, and more importantly, is given on the space $\psi^{-} \in H^{s+\frac{1}{2}}(\R)$. More precisely, the main difference is that we need to account for the small parameters and arrive at the space $\mathring{H}^{s+\frac{1}{2}}(\R)$ to close the estimates later. This result seems new and could be of independent interest. We refer the reader to Remark \ref{Remark dim} for the novelties of the proof and possible extensions.

  	The last step before providing the a priori estimates is the quasilinearization of the internal water wave system. This is done in Section \ref{Quasilinearization}, where we only give detailed proofs of the steps unique to the current setting. In fact, we can reduce some of the proofs to the estimates performed in \cite{LannesTwoFluid13}. We also provide details on the derivation of the stability criterion in Proposition \ref{Prop stability}, where we obtain coercivity type estimates. 	From these results, we  establish a priori bounds on the solution in Section \ref{Energy est IWW}. Then, we use them to prove Theorem \ref{Thm 1} in Section \ref{Pf main thm}.

  	For the remainder of the paper, we give the details on the derivation of the BO equation. We also provide a complete justification of the models that link the BO equation with the internal water waves equations. In Section \ref{Sec Consistency}, we follow the work of Bona, Lannes, and Saut \cite{Bona_Saut_Lannes_08} to derive the intermediate systems provided in Theorem \ref{Thm: Consistency} and the BO equation. Then, in Section \ref{Sec Wp system BO}, we give a short proof of Theorem \ref{W-P System BO}, which gives the long-time well-posedness of a weakly dispersive Benjamin system. Lastly, in Section \ref{Sec just}, we conclude the paper by proving Theorem \ref{Justification}, i.e., the full justification of each of the systems derived in Theorem \ref{Thm: Consistency}, and then a short proof of Corollary \ref{Cor ilw}.

  	\subsection{Definition and notations}
  	\begin{itemize}
  		\iffalse
		\item We define the $2\times 2$ matrix $I_{\mu}$ by
		%
		%
		%
		\begin{equation*}
			I_{\mu} 
			=
			\begin{pmatrix}
				\sqrt{\mu}  &  0 \\
				0  & 1
			\end{pmatrix}.
		\end{equation*}
		\fi 
	
		\item We define the gradient by
		\begin{equation*}
			\nabla^{\mu}_{x,z} %= I_{\mu}\nabla_{x,z} 
			= 	(\sqrt{\mu} \partial_x, \partial_z)^T
		\end{equation*}
		and we introduce the scaled Laplace operator
		\begin{equation*}
			\Delta^{\mu}_{x,z} = \nabla^{\mu}_{x,z}\cdot \nabla^{\mu}_{x,z} = \mu \partial_x^2 + \partial_z^2.
		\end{equation*}  		
		\item We  let $c$ denote a positive constant independent of $\mu, \ve$, and $\mathrm{bo}$ that may change from line to line. Also, as a shorthand, we use the notation $a \lesssim b$ to mean $a \leq c\: b$.
		\item Let $L^2(\R)$ be the usual space of square integrable functions with norm $|f|_{L^2} = \sqrt{\int_{\R} |f(x)|^2 \: dx}$. Also, for any $f,g \in L^2(\R)$ we denote the scalar product by $\big{(} f,g \big{)}_{L^2} = \int_{\R} f(x) \overline{g(x)} \: dx$.
		\item Let $f:\mathbb{R} \to \mathbb{R}$ be a tempered distribution, let $\hat{f}$ or $\mathcal{F}f$ be its Fourier transform. Let $F:\mathbb{R}\to\mathbb{R}$ be a smooth function. Then the Fourier multiplier associated with $F(\xi)$ is denoted $\mathrm{F}$ and defined by the formula:
		\begin{align*}
			\mathcal{F}\big{(}\mathrm{F}(\mathrm{D})f(x)\big{)}(\xi) = F(\xi)\hat{f}(\xi).
		\end{align*}
		\item For any $s \in \mathbb{R}$ we call the multiplier $ \widehat{|\mathrm{D}|^{s}f} (\xi)= |\xi|^s \hat{f}(\xi)$ the Riesz potential of order $-s$. 
		\item For any $s \in \mathbb{R}$ we call the multiplier $\Lambda^s = (1+\mathrm{D}^2)^{\frac{s}{2}} = \langle \mathrm{D} \rangle^s$ the Bessel potential of order $-s$. 
		\item The Sobolev space $H^s(\mathbb{R})$  is equivalent to the weighted $L^2-$space with $|f|_{H^s} = |\Lambda^s f|_{L^2}$. 
		\item For any $s \geq 0$ we will denote  $\mathring{H}^{s+\frac{1}{2}}(\mathbb{R})$ a homogeneous type Sobolev space of degree $\frac{1}{2}$ with $|f|_{\mathring{H}^{s+\frac{1}{2}}} = | |\mathrm{D}|^{\frac{1}{2}} f|_{H^s}$. One should note that $|\mathrm{D}| = \mathcal{H} \partial_x$, where $\widehat{\mathcal{H}f}(\xi) = -i\: \text{sgn}(\xi)\hat{f}(\xi)$  is the Hilbert transform.
		\item For any $s \geq 0$ we will denote $\dot{H}^{s+1}(\mathbb{R})$ the Beppo-Levi space  with $|f|_{\dot{H}^{s+1}} = |\Lambda^{s}\partial_x f|_{L^2}$. 
		\item For any $s \geq 0$ we will denote $\dot{H}_{\mu}^{s+\frac{1}{2}}(\mathbb{R}) = \dot{H}^{s+\frac{1}{2}}(\mathbb{R})$ with  $|f|_{\dot{H}_{\mu}^{s+\frac{1}{2}}} = |\mathfrak{P}f|_{L^2}$ and where $\mathfrak{P}$ is a Fourier multiplier defined in frequency by:
		\begin{equation*}
			\mathcal{F}(\mathfrak{P}f)(\xi) = \frac{|\xi|}{(1+\sqrt{\mu}|\xi|)^{\frac{1}{2}}} \hat{f}(\xi).
		\end{equation*} 
		\item For any $s \geq 0$, $\mathrm{bo}^{-1} \in (0,1)$ we denote  $H^{s+1}_{\mathrm{bo} }(\R) = H^{s+1}(\R)$ 
		with norm
		\begin{equation*}
			|u|_{H^{s+1}_{ \mathrm{bo} }}^2  = |u|_{H^s}^2 + \mathrm{bo}^{-1}  |\partial_xu|^2_{H^{s}}.
		\end{equation*}
	
		\item  For any $s \geq 0$, $\mu, \mathrm{bo}^{-1} \in (0,1)$ we denote $X^s_{\mu, \mathrm{bo}}(\R) = H^{s+1}_{\mathrm{bo} }(\R)\times H^{s+\frac{1}{2}}(\R)$ with norm
		\begin{equation*}
		|(\zeta,u)|_{X^s_{\mu,\mathrm{bo}}}^2  = |\zeta|_{H^{s+1}_{\mathrm{bo}}}^2 +|u|_{H^s}^2+\sqrt{\mu}||\mathrm{D}|^{\frac{1}{2}}u |_{H^s}^2.
		\end{equation*}
	
		\iffalse 
	
		\item For any $\alpha \in \N^2$ we define 
		%
		%
		%
		\begin{equation*}
			\zeta_{(\alpha)} = \partial_{x,t}^{\alpha} \zeta,
			\quad
			\psi_{(\alpha)} = \partial_{x,t}^{\alpha}\psi - \ve \underline{w} \partial_{x,t}^{\alpha}\zeta.
		\end{equation*}
		%
		%
		%
		Let  $\mathbf{U}= (\zeta_{(\alpha)}, \psi_{(\alpha)})$, then for any $N \in \N$ we denote
		%
		%
		%
		\begin{equation*}
			\mathscr{E}_{\mathrm{bo},T}^N = \{\mathbf{U} \in C([0,T];H^N(\R)\times\dot{H}^{t_0+3}(\R), \quad \sup \limits_{t\in [0,T]} 	\mathcal{E}^{N,t_0}_{\mathrm{bo},\mu}(U(t))<\infty\},
		\end{equation*}
		%
		%
		%
		where
		%
		%
		%
		\begin{equation*}
			\mathcal{E}^{N,t_0}_{\mathrm{bo},\mu}(\mathbf{U}) = |\partial_x\psi|_{H^{t_0+2}}^2 + \sum \limits_{\alpha\in\N^2, |\alpha|\leq N} |\zeta_{(\alpha)}|_{H^1_{\mathrm{bo}}}^2 + |\psi_{(\alpha)}|^2_{\dot{H}^{\frac{1}{2}}_{\mu}}.
		\end{equation*}
		\fi 
		\color{black}
		%
		%
		%
		%\item Let $k\in\mathbb{N}, l\in\mathbb{N}$ and $m \in \mathbb{N}$. A function $R$ is said to be of order $O(\mu^k\varepsilon^l)$, denoted $R=O(\mu^k\varepsilon^l),$ if divided by $\mu^k\varepsilon^l$ this function is uniformly bounded with respect to $\mu,\varepsilon \in (0,1)$ in the Sobolev norms $|\cdot|_{H^{s}}$.
		%
		%
		%
		\item We say $f$ is a  Schwartz function $\mathscr{S}(\mathbb{R})$, if $f \in C^{\infty}(\mathbb{R})$ and satisfies for all $j,k  \in \mathbb{N}$,
		\begin{equation*}
			\sup \limits_{x} |x^{j} \partial_x^{k} f | < \infty.
		\end{equation*}
		\item  Let $a<b$ be real numbers and consider the domain $\mathcal{S} = (a,b) \times \R$. Then the space $ \dot{H}^{s}(\mathcal{S})$ is endowed with the seminorm
		 $$\|f\|^2_{\dot{H}^s(\mathcal{S}) } = \int_{a}^{b}|\nabla_{x,z}f(\cdot,z)|_{H^{s-1}}^2\: \mathrm{d}z.$$
		%
		%
		%
		\iffalse
		Moreover, for $k\in \N$ we introduce the space  $H^{s,k}(\mathcal{S})$  with norm 
		%
		%
		%
		\begin{equation*}
			\|f\|_{H^{s,0}(\mathcal{S})}^2 = \int_{a}^b |---| ,
		\end{equation*}
		%
		%
		%
		and in the case $k=0$ we write $H^{s,0}(\mathcal{S}) = H^{s}(\mathcal{S})$.\fi
		%
		%
		%
		\item If  $A$ and $B$ are two operators, then we denote the commutator between them to be $[A,B] = AB - BA$.		
  		\item Let $t_0 > \frac{1}{2}$, $s\geq 0$, and $h_{\min}\in (0,1)$. Then for  $C(\cdot)$ being a positive non-decreasing function of its argument, we define the constants
  		\begin{equation*}
  			M =  C(\frac{1}{h_{\min}},  |\zeta|_{H^{t_0+2}}),
  		\end{equation*}
  		and
  		\begin{equation*}
  			M(s) = C(M, |\zeta|_{H^{s}}).
  		\end{equation*}
  	\end{itemize}

  	\subsubsection{Diffeomorphisms} In many instances it is convenient to \lq\lq straighten\rq\rq\: the fluid domain. In particular, instead of working on the fluid domain $\Omega^{\pm}_t$ we introduce the two strips:
  	\begin{equation*}
  		\mathcal{S}^+ = \{ (x,z) \in \R^2 \: : \: -1<z<0 \} 
  		\quad
  		\text{and} 
  		\quad
  		\mathcal{S}^- = \{ (x,z)  \in \R^2 \: : \: 0<z \}.               
  	\end{equation*}
	  The mapping between $\mathcal{S}^{\pm}$ and $\Omega_t^{\pm}$ will be given by the trivial diffeomorphisms defined below.
	  \begin{Def}\label{diffeomorphism +-}
		  	Let $t_0 > \frac{1}{2}$ and $\zeta \in H^{t_0+2}(\mathbb{R})$  such that the non-cavitation assumptions \eqref{non-cavitation} is satisfied. For the lower domain, we have that:
		  	\begin{itemize}
		  		\item [1.] 	We define the time-dependent diffeomorphism mapping the lower domain $\mathcal{S}^+$ onto the water domain $\Omega_t^+$ through
		  		\begin{equation*}\label{TrivialDiffeomorphismEq+}
		  			\Sigma^+ 
		  			: \: 
		  			\begin{cases}
		  				\mathcal{S}^+ \hspace{0.5cm} \longrightarrow & \Omega_t^+ \\
		  				(x,z) \: \:  \mapsto &  (x,z(1+\ve  \zeta)+  \ve \zeta).
		  			\end{cases}
		  		\end{equation*}
	  			\item [2.] The  Jacobi matrix $J_{\Sigma^+}$  is given by
	  			\begin{equation*}
	  				J_{\Sigma^+}
	  				=
	  				\begin{pmatrix}
	  					1& 0 \\
	  					\varepsilon(1+z)\partial_x \zeta & (1 + \varepsilon \zeta)  
	  				\end{pmatrix},
	  			\end{equation*}
	  			and is bounded on $\mathcal{S}^+$. Moreover the determinant is given by $1+\ve \zeta$ and is bounded below due to the non-cavitation condition \eqref{non-cavitation}.\\
	  			\item [3.] The matrix associated with the change of variable for the Laplace problem is given by
	  			\begin{align*}
	  				P(\Sigma^+)
	  			%	=
	  			%	\mathrm{det}(J_{\Sigma^+})(J^{-1}_{\Sigma^+}) (J_{\Sigma^+}^{-1})^T
	  				= 
	  				 \begin{pmatrix}
	  					(1+\ve \zeta)  & -\ve \sqrt{\mu}(z+1)\partial_x \zeta \\
	  					-\ve \sqrt{\mu}(z+1)\partial_x\zeta & \frac{1 + \ve^2 \mu (z+1)^2 |\partial_x \zeta|^2}{1 + \ve \zeta}
	  				\end{pmatrix},
	  			\end{align*}
  				and is uniformly coercive. In fact, one can verify that it satisfies for all $\theta \in \R^{d+1}$ and any $(X,z) \in \mathcal{S}^+$ that 
  				\begin{align}\label{Coercivity+}
  					P(\Sigma^+) \theta \cdot \theta \geq  \frac{1}{1+M} |\theta|^2 \quad \text{and} \quad \|P^+(\Sigma^+)\|_{L^{\infty}} \leq M.
  				\end{align}
  			\end{itemize}
  				Moreover, we define $\partial_{n}^{P^+} =  \mathbf{e}_{z} \cdot  P(\Sigma^+) \nabla^{\mu}_{x,z}$.\\ 

  				Similarly, for the upper domain, we have that:

  				\begin{itemize}
  					\item [1.] 	We define the time-dependent diffeomorphism mapping the upper-half plane $\mathcal{S}^-$ onto the water domain $\Omega_t^-$ through the transformation
  					\begin{equation*}\label{TrivialDiffeomorphismEq-}
  						\Sigma^- 
  						: \: 
  						\begin{cases}
  							\mathcal{S}^- \hspace{0.5cm} \longrightarrow & \Omega_t^- \\
  							(x,z) \: \:  \mapsto & (x, z + \ve \zeta ).
  						\end{cases}
  					\end{equation*}
  					\item [2.] The  Jacobi matrix $J_{\Sigma^-}$  is given by
  					\begin{equation*}
  						J_{\Sigma^-}
  						=
  						\begin{pmatrix}
  							1 & 0 \\
  							\ve  \partial_x \zeta  & 1 
  						\end{pmatrix},
  					\end{equation*}
  					and is bounded on $\mathcal{S}^-$. Moreover, the determinant is given by $1$.
  					\item [3.] The matrix associated with the change of variable for the Laplace problem is given by
  					\begin{align*}
  						P(\Sigma^-)
  					%	=
  					%	\mathrm{det}(J_{\Sigma^-})(J^{-1}_{\Sigma^-}) (J_{\Sigma^-}^{-1})^T
  						= 
  						\begin{pmatrix}
  							1 & - \ve \sqrt{\mu} \partial_x \zeta \\
  							-\ve \sqrt{\mu}\partial_x \zeta & 1 + \ve^2 \mu (\partial_x\zeta)^2
  						\end{pmatrix},
  					\end{align*}
  					and is uniformly coercive. In fact, one can verify that it satisfies  for all $\theta \in \R^{2}$ and any $(x,z) \in \mathcal{S}^-$ that
  					\begin{align}\label{Coercivity-}
  						P(\Sigma^-) \theta \cdot \theta \geq \frac{1}{1+M} |\theta|^2 \quad \text{and} \quad \|P^-(\Sigma^-)\|_{L^{\infty}} \leq M.
  					\end{align}
  					Moreover, we define $\partial_{n}^{P^-} =  \mathbf{e}_{z} \cdot  P(\Sigma^-) \nabla^{\mu}_{x,z}$. \\ 
		  	\end{itemize} 
	  \end{Def}

  	\section{Properties of $\mathcal{G}_{\mu}$}\label{Properties of G} In this section, we aim to give a rigorous meaning to the operator $\mathcal{G}_{\mu}$ introduced formally in equation \eqref{Main G} and study its properties. The main results in this section will now be stated.
  	\begin{prop}\label{Prop G}
  		Let $t_0 \geq 1$, $s\in [0,t_0+1]$,  and $\zeta \in H^{t_0+2}(\R)$ be such that \eqref{non-cavitation} is satisfied.  Then the mapping
  		\begin{equation}\label{Def G}
  			\mathcal{G}_{\mu}[\ve \zeta]
  			\:
  			:
  			\:
  			\begin{cases}
  				\dot{H}_{\mu}^{s + \frac{1}{2}}(\R) & \rightarrow  H^{s - \frac{1}{2}}(\R)
  				\\
  				\psi & \mapsto   \mathcal{G}^+_{\mu}[\ve \zeta] \Big{(} 1 -   \gamma   (\mathcal{G}^-_{\mu}[\ve \zeta])^{-1} \color{black}\mathcal{G}^+_{\mu}[\ve \zeta] \Big{)}^{-1}\psi,
  			\end{cases} 
  		\end{equation}
  		is well-defined and satisfies the following properties:
  		\begin{itemize}
  			\item [1.] For any $0 \leq s \leq t_0 +1$ ,there holds,
  			\begin{align}\label{1 Est G} 
  					|\mathcal{G}_{\mu}[\ve \zeta]\psi|_{H^{s-\frac{1}{2}}} \leq \mu^{\frac{3}{4}}M |\psi|_{\dot{H}_{\mu}^{s+\frac{1}{2}}},
  			\end{align}
  			and
  			\begin{align}\label{Est G} 
  				|\mathcal{G}_{\mu}[\ve \zeta]\psi|_{H^{s-\frac{1}{2}}} \leq \mu^{\frac{1}{2}}M |\partial_x\psi|_{H^{s-\frac{1}{2}}}.
  			\end{align}
  			\item [2.] For any $0 \leq s \leq t_0 + \frac{1}{2}$ there holds,
  			\begin{align}\label{2 Est G}
  				|\mathcal{G}_{\mu}[\ve \zeta]\psi|_{H^{s-\frac{1}{2}}} \leq \mu M |\psi|_{\dot{H}_{\mu}^{s+1}},
  			\end{align}
  			and
  			\begin{align}\label{Est G 2}
  				|\mathcal{G}_{\mu}[\ve \zeta]\psi|_{H^{s-\frac{1}{2}}} \leq \mu^{\frac{3}{4}} M |\partial_x \psi|_{H^{s}}.
  			\end{align}
  			\item [3.] The operator is uniformly coercive on on  $\psi \in \dot{H}_{\mu}^{\frac{1}{2}}(\R)$,
  			\begin{equation}\label{G Coercive}
  				|\psi|_{\dot{H}_{\mu}^{\frac{1}{2}}}^2\leq M \big{(}\psi, \frac{1}{\mu}\mathcal{G}_{\mu}[\ve \zeta]\psi\big{)}_{L^2}.
  			\end{equation}
  			\item [4.] The bilinear form is symmetric on $\dot{H}_{\mu}^{\frac{1}{2}}(\R) \times \dot{H}_{\mu}^{\frac{1}{2}}(\R)$,
  			\begin{equation}\label{G Symmetic}
  				\big{(}\mathcal{G}_{\mu}[\ve \zeta]\psi, \psi\big{)}_{L^2}
  				=
  				\big{(}\psi,\mathcal{G}_{\mu}[\ve \zeta]\psi\big{)}_{L^2}.
  			\end{equation}
  			\item[5.] For all $s\in [0,t_0+1]$ and $f,g \in \dot{H}_{\mu}^{s+\frac{1}{2}}(\R)$ there holds,
  			\begin{equation}\label{G continuous form}
  				|\big{(}\Lambda^s\mathcal{G}_{\mu}[\ve \zeta]f, \Lambda^s g \big{)}_{L^2}| \leq
  				 \mu M |f|_{\dot{H}_{\mu}^{s+\frac{1}{2}}} |g|_{\dot{H}_{\mu}^{s+\frac{1}{2}}}.
  			\end{equation}
  		\end{itemize}
  	\end{prop}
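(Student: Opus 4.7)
The plan is to prove the proposition in four stages: (i) define $\mathcal{J}_\mu[\varepsilon\zeta] := 1 - \gamma(\mathcal{G}^-_\mu[\varepsilon\zeta])^{-1}\mathcal{G}^+_\mu[\varepsilon\zeta]$ as a bounded operator on $\dot{H}_\mu^{s+1/2}(\R)$, (ii) invert $\mathcal{J}_\mu$ via a variational formulation of the transmission problem \eqref{transition Phi}, (iii) read off the size estimates \eqref{1 Est G}--\eqref{Est G 2} from the known mapping properties of $\mathcal{G}^+_\mu$ post-composed with $\mathcal{J}_\mu^{-1}$, and (iv) derive symmetry, coercivity, and the continuous bilinear bound from Green's identities applied in both fluid domains.

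For step (i), I would combine the continuity of $\mathcal{G}^+_\mu[\varepsilon\zeta]\colon\dot{H}_\mu^{s+1/2}\to H^{s-1/2}$ (a finite-depth DN estimate already in the literature, cf. Lannes' book and the preceding section of this paper) with the continuity of $(\mathcal{G}^-_\mu[\varepsilon\zeta])^{-1}\colon H^{s-1/2}\to\mathring{H}^{s+1/2}$ (coming from solvability of the infinite-depth Neumann problem \eqref{For exp phi-} modulo constants). The Sobolev embedding $\mathring{H}^{s+1/2}\hookrightarrow\dot{H}_\mu^{s+1/2}$ (which is uniform up to a harmless $\mu^{1/4}$ factor, from the explicit symbol $|\xi|/(1+\sqrt{\mu}|\xi|)^{1/2}\leq |\xi|^{1/2}$) makes the composition well-defined on $\dot{H}_\mu^{s+1/2}$. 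For step (ii), I would realize $\mathcal{J}_\mu$ as the ``Dirichlet-to-Dirichlet'' trace map of the transmission problem \eqref{transition Phi}. Using the change of variables of Definition \ref{diffeomorphism +-}, set up the variational problem of minimizing
\begin{equation*}
E(\Phi^+,\Phi^-) = \tfrac12\int_{\Omega^+}|\nabla^\mu_{x,z}\Phi^+|^2\, \mathrm{d}x\mathrm{d}z + \tfrac{\gamma}{2}\int_{\Omega^-}|\nabla^\mu_{x,z}\Phi^-|^2\,\mathrm{d}x\mathrm{d}z
\end{equation*}
over pairs $(\Phi^+,\Phi^-)\in\dot{H}^1(\Omega^+)\times\dot{H}^1(\Omega^-)$ constrained by $\Phi^+|_\Sigma-\gamma\Phi^-|_\Sigma=\psi$ (and $\partial_z\Phi^+|_{z=-1}=0$). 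Coercivity and strict convexity follow from \eqref{Coercivity+}--\eqref{Coercivity-} together with the trace estimates on the two strips, so Lax--Milgram yields a unique minimizer; its Euler--Lagrange conditions force harmonicity in each domain plus $\partial_n\Phi^+|_\Sigma=\partial_n\Phi^-|_\Sigma$, i.e.\ $\mathcal{G}^+_\mu\psi^+=\mathcal{G}^-_\mu\psi^-$. Solvability for arbitrary $\psi\in\dot{H}_\mu^{1/2}$ with the quantitative bound $|\psi^+|_{\dot{H}_\mu^{s+1/2}}\leq M|\psi|_{\dot{H}_\mu^{s+1/2}}$ is precisely the invertibility of $\mathcal{J}_\mu$ (tame estimates in $s$ follow from a standard induction using the Moser-type commutator bounds proved earlier on $\mathcal{G}^\pm_\mu$).

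For step (iii), writing $\mathcal{G}_\mu[\varepsilon\zeta]\psi=\mathcal{G}^+_\mu[\varepsilon\zeta]\psi^+$ reduces each of the four estimates to the corresponding a priori estimate on $\mathcal{G}^+_\mu$: the $\mu^{1/2}$ bound \eqref{Est G} and the sharper $\mu^{3/4}$ bound \eqref{1 Est G} come from the two shallow-water scalings of the symbol of $\mathcal{G}^+_\mu$ (large-frequency loss of $\sqrt{\mu}$ versus $\mu^{1/4}$), and \eqref{2 Est G}, \eqref{Est G 2} follow by replacing one factor $|D|^{1/2}(1+\sqrt{\mu}|D|)^{-1/2}$ with $|D|$ at a cost of one further $\mu^{1/4}$. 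In each case, composition with $\mathcal{J}_\mu^{-1}$ costs only a factor $M$ by step (ii). For step (iv), symmetry \eqref{G Symmetic} follows from the Green identity
\begin{equation*}
(\mathcal{G}_\mu\psi_1,\psi_2)_{L^2} = (\mathcal{G}^+_\mu\psi_1^+,\psi_2^+)_{L^2}-\gamma(\mathcal{G}^-_\mu\psi_1^-,\psi_2^-)_{L^2},
\end{equation*}
where we used $\mathcal{G}^+_\mu\psi_1^+=\mathcal{G}^-_\mu\psi_1^-$ to convert $-\gamma(\mathcal{G}^+_\mu\psi_1^+,\psi_2^-)_{L^2}$ into the second summand, after which self-adjointness of each $\mathcal{G}^\pm_\mu$ gives the claim. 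Coercivity \eqref{G Coercive} follows by specializing this to $\psi_1=\psi_2=\psi$: the right-hand side equals $\mu\int_{\Omega^+}|\nabla^\mu\Phi^+|^2+\gamma\mu\int_{\Omega^-}|\nabla^\mu\Phi^-|^2$, and combining the trace inequality $|\psi^+|_{\dot{H}_\mu^{1/2}}^2+\gamma|\psi^-|_{\mathring{H}^{1/2}}^2\leq M(\text{Dirichlet integrals})$ with $\psi=\psi^+-\gamma\psi^-$ yields $|\psi|_{\dot{H}_\mu^{1/2}}^2\leq M\cdot\mu^{-1}(\mathcal{G}_\mu\psi,\psi)_{L^2}$. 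The continuous bilinear bound \eqref{G continuous form} comes from polarizing the same identity and applying Cauchy-Schwarz at the Sobolev level $s$, using tame commutator estimates to move $\Lambda^s$ across $\mathcal{J}_\mu^{-1}$.

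The main obstacle is step (ii) in the infinite-depth geometry: because $\Phi^-$ is only determined modulo a constant, all trace estimates and coercivity arguments must take place on homogeneous spaces $\mathring{H}^{s+1/2}$, and the embedding $\mathring{H}^{s+1/2}\hookrightarrow\dot{H}_\mu^{s+1/2}$ must be tracked with a constant uniform in $\mu\in(0,1)$. This is the specific novelty relative to Lannes' two-fluid finite-depth framework in \cite{LannesTwoFluid13} and forces a careful separation of low- and high-frequency regimes of the symbol $|\xi|/(1+\sqrt{\mu}|\xi|)^{1/2}$ in the variational argument. Once the right function spaces are in place, the rest of the proof reduces to Moser/tame estimates that parallel Lannes' treatment.
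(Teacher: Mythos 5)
Your step (i) is where the argument breaks. You factor $\mathcal{J}_\mu[\ve\zeta]$ through the intermediate space $H^{s-\frac{1}{2}}(\R)$, invoking continuity of $(\mathcal{G}^-_\mu[\ve\zeta])^{-1}\colon H^{s-\frac{1}{2}}\to\mathring{H}^{s+\frac{1}{2}}$. That operator bound is false at low frequency: already for $\zeta=0$ one has $(\mathcal{G}^-_\mu[0])^{-1}=-\mu^{-\frac{1}{2}}|\mathrm{D}|^{-1}$, and $\big||\mathrm{D}|^{-1}f\big|_{\mathring{H}^{s+\frac{1}{2}}}=\big||\mathrm{D}|^{-\frac{1}{2}}f\big|_{H^{s}}$ is not controlled by $|f|_{H^{s-\frac{1}{2}}}$ (take $\hat f$ concentrated near $\xi=0$). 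Equivalently, the infinite-depth Neumann problem is only solvable in $\dot H^1(\mathcal{S}^-)$ when the boundary datum pairs continuously against traces of $\dot H^1$ functions, which generic $H^{s-\frac{1}{2}}$ data does not do. What rescues the composition is the specific low-frequency behaviour of the datum $\mathcal{G}^+_\mu[\ve\zeta]\psi^+$ (at $\zeta=0$ its symbol vanishes like $\mu|\xi|^2$ at the origin), encoded in the duality estimate \eqref{dual est G+ s}; this is exactly why the paper analyses $(\mathcal{G}^-_\mu)^{-1}\mathcal{G}^+_\mu$ as a single operator in Proposition \ref{Inverse 2}, through the variational problem on $\mathcal{S}^-$ together with the $\mu$-uniform trace inequalities \eqref{trace 1} and \eqref{trace 2}. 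Since this coupled analysis is precisely the infinite-depth difficulty the proposition is meant to resolve, the well-definedness part of your proposal has a genuine hole until step (i) is redone in that form. (Note also that the embedding you use goes the other way: $|f|_{\dot H_\mu^{s+\frac{1}{2}}}\le\mu^{-\frac{1}{4}}|f|_{\mathring H^{s+\frac{1}{2}}}$ is a loss, compensated in the paper only because \eqref{Inverse est 2} carries a matching gain $\mu^{\frac{1}{4}}$.)

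The remainder of your plan is essentially sound. Steps (iii) and (iv) coincide in substance with the paper's proof: the size estimates come from the $\mathcal{G}^+_\mu$ bounds composed with $|\mathcal{J}_\mu^{-1}\psi|_{\dot H_\mu^{s+\frac{1}{2}}}\le M|\psi|_{\dot H_\mu^{s+\frac{1}{2}}}$, and symmetry, coercivity and \eqref{G continuous form} follow from the Green identities for $\mathcal{G}^\pm_\mu$ (modulo a bookkeeping slip: $(\psi,\mathcal{G}_\mu[\ve\zeta]\psi)_{L^2}$ equals the weighted Dirichlet integrals with no extra factor of $\mu$, which is why the coercivity statement involves $\frac{1}{\mu}\mathcal{G}_\mu$). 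Your step (ii) is genuinely different from the paper: you invert $\mathcal{J}_\mu$ by constrained minimization of the transmission energy, whereas the paper first derives the lower bound $|\psi^+|_{\dot H_\mu^{\frac{1}{2}}}\le M|\mathcal{J}_\mu[\ve\zeta]\psi^+|_{\dot H_\mu^{\frac{1}{2}}}$ from the same energy identity and then obtains surjectivity by a semi-Fredholm plus homotopy-in-$\gamma$ index argument, deducing the transmission problem (Proposition \ref{Prop Transition}) afterwards as a corollary rather than using it as an input. The variational route can be made to work, but it still has to deliver the quantitative $s$-dependent bound on $\psi^+$, and the trace and coercivity ingredients needed for that are exactly the ones needed to repair step (i); so closing that gap is the real task on either route.
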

  	
  	\begin{remark}
  		Here we take $t_0\geq 1$ since we will apply the result at high regularity. If one would take $t_0 >  \frac{1}{2}$ then we would need $s\in [\max\{0, 1-t_0\},t_0+1]$ to enforce Remark \ref{Remark extension of G-}, which specifies the needed regularity to define $\mathcal{G}^-_{\mu}$.
  	\end{remark}
  
  \begin{remark}\label{Rmrk G[0]}
  	For the undisturbed case, we can use formulas \eqref{G+[0]} and \eqref{G-[0]} to find that
  	\begin{equation}\label{G[0]}
  		\mathcal{G}_{\mu}[0]  = \sqrt{\mu} |\mathrm{D}| \frac{\mathrm{tanh}(\sqrt{\mu}|\mathrm{D}|)}{1+\gamma\mathrm{tanh}(\sqrt{\mu}|\mathrm{D}|)}. 
  	\end{equation}
  	
  \end{remark}
  	
  	We will follow the same strategy as in \cite{LannesTwoFluid13}, where we study the operators involved in the expression for $\mathcal{G}_{\mu}$ and prove that we can define them by $\psi$ through the transition problem \eqref{transition Phi}. We study each part individually in separate subsections. The main difference from the previous work is that we have to carefully track the dependence on the parameters for the current regime. Moreover, the functional setting of the upper fluid is fundamentally different from the one in the lower fluid domain.

  	\subsection{Properties of $(\mathcal{G}_{\mu}^-)^{-1} \mathcal{G}_{\mu}^+$. } For the description of the operator $(\mathcal{G}_{\mu}^-)^{-1} \mathcal{G}_{\mu}^+$ we will follow the proof of Proposition $1$ in \cite{LannesTwoFluid13}. The main difference is that we have an interaction of two operators that act on different scales, and this is seen in the estimates given below: 

  	\begin{prop}\label{Inverse 2}
   	Let $t_0\geq 1$, $s\in [0, t_0+1]$ and $\zeta \in H^{t_0+2}(\R)$ be such that \eqref{non-cavitation} is satisfied.  Then the mapping
   	\begin{equation}\label{inverse G 2}
   		(\mathcal{G}_{\mu}[\ve \zeta]^-)^{-1}  \mathcal{G}^+_{\mu}[\ve \zeta]
   		\:
   		:
   		\:
   		\begin{cases}
   			\dot{H}_{\mu}^{s + \frac{1}{2}}(\R) & \rightarrow   \mathring{H}^{s + \frac{1}{2}}(\R)\\
   			\psi^+ &  \mapsto (\mathcal{G}^-_{\mu}[\ve \zeta])^{-1}  \mathcal{G}^+_{\mu}[\ve \zeta]\psi^+
   		\end{cases} 
   	\end{equation}
    is well-defined and satisfies
    \begin{equation}\label{Inverse est 2}
    	  | (\mathcal{G}^-_{\mu}[\ve \zeta])^{-1} \mathcal{G}^+_{\mu}[\ve \zeta]\psi^{+} |_{\mathring{H}^{s+ \frac{1}{2}}}\leq \mu^{\frac{1}{4}} M |\psi^{+}|_{\dot{H}_{\mu}^{s+\frac{1}{2}}}, 
    \end{equation}
	and 
	\begin{equation}\label{Inverse est 2.1}
		| \partial_x \big{(}(\mathcal{G}^-_{\mu}[\ve \zeta])^{-1} \mathcal{G}^+_{\mu}[\ve \zeta]\psi^{+} \big{)}|_{H^{s}}
		\leq
		M |\partial_x\psi^{+}|_{H^{s}}.
	\end{equation}
    \end{prop}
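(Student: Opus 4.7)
The plan is to construct $\psi^- := (\mathcal{G}^-_\mu[\ve\zeta])^{-1}\mathcal{G}^+_\mu[\ve\zeta]\psi^+$ as the Dirichlet trace at $z=\ve\zeta$ of the unique (modulo constants) solution $\Phi^-$ of the Neumann problem \eqref{For exp phi-}. The homogeneous target $\mathring{H}^{s+\frac{1}{2}}(\R)$ is forced by the fact that constants lie in the kernel of $\mathcal{G}^-_\mu$ on the infinite-depth domain, so traces of finite-energy harmonic functions on $\Omega^-$ naturally sit in this space. The argument closely parallels Proposition 1 of \cite{LannesTwoFluid13}, but now the two operators act on different scales and must be carefully matched.

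First I would pull back through the diffeomorphism $\Sigma^-$ from Definition \ref{diffeomorphism +-}, posing the problem on the flat strip $\mathcal{S}^-$ with coefficient matrix $P(\Sigma^-)$, which is uniformly coercive by \eqref{Coercivity-}. The weak formulation
\begin{equation*}
\int_{\mathcal{S}^-} P(\Sigma^-)\nabla^{\mu}_{x,z}\tilde{\Phi}^- \cdot \nabla^{\mu}_{x,z}\varphi \, dx\, dz = \big((1+\ve^2(\partial_x\zeta)^2)^{-\frac{1}{2}}\mathcal{G}^+_\mu[\ve\zeta]\psi^+,\, \varphi|_{z=0}\big)_{L^2}
\end{equation*}
is then solved in the Beppo--Levi space $\dot{H}^1(\mathcal{S}^-)/\R$ via Lax--Milgram. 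For the right-hand side to extend to a continuous functional on the quotient one uses the trace/lifting estimate $|\varphi|_{z=0}|_{\mathring{H}^{\frac{1}{2}}} \lesssim \|\nabla^{\mu}_{x,z}\varphi\|_{L^2(\mathcal{S}^-)}/\mu^{\frac{1}{4}}$ (computable explicitly from the harmonic extension $e^{-\sqrt{\mu}|\xi|z}$) together with the finite-depth bound $|\mathcal{G}^+_\mu[\ve\zeta]\psi^+|_{\mathring{H}^{-\frac{1}{2}}} \leq \mu^{\frac{3}{4}}M|\psi^+|_{\dot{H}^{\frac{1}{2}}_\mu}$, which is the $\mathcal{G}^+_\mu$-analogue of \eqref{1 Est G} from \cite{LannesTwoFluid13}. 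The required compatibility condition $\int_\R (1+\ve^2(\partial_x\zeta)^2)^{-\frac{1}{2}}\mathcal{G}^+_\mu[\ve\zeta]\psi^+ = 0$ follows from testing the Laplace problem for $\Phi^+$ against the constant function $1$ in $\Omega^+$ and using the bottom condition $\partial_z\Phi^+|_{z=-1}=0$. Testing with $\varphi=\tilde{\Phi}^-$, inserting the two inequalities above, and canceling one factor of $|\psi^-|_{\mathring{H}^{\frac{1}{2}}}$ yields $\mu^{\frac{1}{2}}|\psi^-|_{\mathring{H}^{\frac{1}{2}}} \leq \mu^{\frac{3}{4}}M|\psi^+|_{\dot{H}^{\frac{1}{2}}_\mu}$, i.e.\ exactly \eqref{Inverse est 2} at $s=0$ with the sharp $\mu^{\frac{1}{4}}$ power. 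For $s>0$ I would iterate elliptic regularity by applying tangential derivatives $\partial_x^k$ to the transported problem, paying one derivative on $\zeta \in H^{t_0+2}$ per commutator with $P(\Sigma^-)$, which is precisely what forces the restriction $s \leq t_0+1$.

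The second estimate \eqref{Inverse est 2.1} exploits that $\partial_x$ does not see constants. Applying $\partial_x$ to the identity $\mathcal{G}^-_\mu[\ve\zeta]\psi^- = \mathcal{G}^+_\mu[\ve\zeta]\psi^+$ produces a new Neumann problem for $\partial_x\tilde{\Phi}^-$ whose data is $\partial_x\mathcal{G}^+_\mu[\ve\zeta]\psi^+$ plus commutators with the straightening, all controlled by $\mu^{\frac{1}{2}}M|\partial_x\psi^+|_{H^s}$ in $H^{s-\frac{1}{2}}$. Because we now target $\partial_x\psi^- \in H^s(\R)$ rather than a homogeneous space, the Neumann lift is bounded on $H^s(\R)$ without the low-frequency weight loss: symbolically the factor $\mu^{\frac{1}{2}}$ carried by $\mathcal{G}^+_\mu \sim \sqrt{\mu}|\mathrm{D}|\tanh(\sqrt{\mu}|\mathrm{D}|)$ cancels against the $\mu^{-\frac{1}{2}}$ from inverting $\mathcal{G}^-_\mu \sim \sqrt{\mu}|\mathrm{D}|$, giving a $\mu$-uniform bound.

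The main obstacle will be the bookkeeping of $\mu$-powers through the composition $(\mathcal{G}^-_\mu)^{-1}\mathcal{G}^+_\mu$: the two operators live on domains of very different geometry, and the low-frequency regime -- where the weight in $\dot{H}^{\frac{1}{2}}_\mu$ degenerates essentially to $|\xi|$ -- requires using the zero-average property of the Neumann data and the homogeneous structure of the target in a compatible way. This is the point where the asymmetry between the two fluid geometries first appears, and the sharp $\mu^{\frac{1}{4}}$ power established here is exactly what is needed later when feeding $\mathcal{G}^+_\mu(\mathrm{Id}-\gamma(\mathcal{G}^-_\mu)^{-1}\mathcal{G}^+_\mu)^{-1}$ into the symbolic analysis of Section \ref{Sec Symbolic}.
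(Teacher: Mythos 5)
Your construction of $\psi^-$ as the boundary trace of the variational solution on the straightened strip, the use of the coercivity \eqref{Coercivity-}, the homogeneous trace inequality with the $\mu^{-\frac{1}{4}}$ loss, the duality bound on $\mathcal{G}^+_{\mu}$ (equivalent to combining \eqref{dual est G+ s} with \eqref{trace 2}), and the tangential/commutator iteration in $s$ with the restriction $s\le t_0+1$ coming from the regularity of $P(\Sigma^-)$ reproduce the paper's proof of \eqref{Inverse est 2}, including the correct $\mu$-power bookkeeping at $s=0$. Three points in this part deserve care. First, once the problem is posed on $\mathcal{S}^-$ with the conormal derivative $\partial_n^{P^-}$ as in \eqref{Claim phi-}, the Neumann datum is $\mathcal{G}^+_{\mu}[\ve\zeta]\psi^+$ with \emph{no} factor $(1+\ve^2(\partial_x\zeta)^2)^{-\frac{1}{2}}$: that factor belongs to the unit-normal formulation \eqref{For exp phi-} and is absorbed by the surface measure; keeping it in a $dx$-pairing would yield $\mathcal{G}^-_{\mu}\psi^-=(1+\ve^2(\partial_x\zeta)^2)^{-\frac{1}{2}}\mathcal{G}^+_{\mu}\psi^+$ rather than the identity you need. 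Second, the compatibility condition $\int_{\R}\mathcal{G}^+_{\mu}\psi^+\,dx=0$ is neither available in general (the datum need not be integrable for $\psi^+\in\dot{H}^{\frac{1}{2}}_{\mu}(\R)$) nor needed: constants are handled because the linear functional is continuous with respect to the homogeneous boundary norm, which vanishes on constants, and this is exactly what \eqref{dual est G+ s} together with \eqref{trace 2} provides. Third, for $s>0$ the integrations by parts on the unbounded strip must be justified; the paper does this through the decay statement \eqref{Decay est} of Lemma \ref{Lemma trace}, an ingredient missing from your sketch.

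The genuine gap is in your argument for \eqref{Inverse est 2.1}. You differentiate the problem and assert that, because the target is $H^s(\R)$ rather than a homogeneous space, the Neumann lift is bounded on $H^s(\R)$ ``without the low-frequency weight loss''. Nothing in the variational framework delivers an inhomogeneous trace bound: the energy $\|\Lambda^s\nabla^{\mu}_{x,z}\cdot\|_{L^2(\mathcal{S}^-)}$ controls the trace only in $\mathring{H}^{s+\frac{1}{2}}(\R)$, with the $\mu^{-\frac{1}{4}}$ loss, so the low frequencies of $\partial_x\psi^-$ are not reached by the construction you describe, and the claimed cancellation of the $\mu^{\pm\frac{1}{2}}$ factors is not established. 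The estimate is, however, an immediate consequence of what you already proved: since $\langle\xi\rangle^s|\xi|\lesssim\langle\xi\rangle^{s+\frac{1}{2}}|\xi|^{\frac{1}{2}}$ and $\langle\xi\rangle^{s+\frac{1}{2}}|\xi|(1+\sqrt{\mu}|\xi|)^{-\frac{1}{2}}\lesssim\mu^{-\frac{1}{4}}\langle\xi\rangle^{s}|\xi|$, Plancherel gives $|\partial_x\psi^-|_{H^s}\lesssim|\psi^-|_{\mathring{H}^{s+1}}$ and $\mu^{\frac{1}{4}}|\psi^+|_{\dot{H}_{\mu}^{s+1}}\lesssim|\partial_x\psi^+|_{H^s}$, so \eqref{Inverse est 2.1} follows by applying \eqref{Inverse est 2} with $s$ replaced by $s+\frac{1}{2}$; this is exactly the paper's Step 3 and avoids revisiting the boundary value problem.
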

	\begin{remark}\label{order of comp}
		If we change the role of $\pm$ the result is not true. Indeed, consider the case $\ve \zeta = 0$, then we have by direct computations that
		\begin{align*}
			(\mathcal{G}_{\mu}^+[0])^{-1} \mathcal{G}_{\mu}^-[0] = ( \tanh(\sqrt{\mu}|\mathrm{D}|))^{-1} ,
		\end{align*}
		and we obtain the estimate
		\begin{equation*}
			| (\mathcal{G}_{\mu}^+[0])^{-1} \mathcal{G}_{\mu}^-[0]\psi^- |_{\dot{H}_{\mu}^{ \frac{1}{2}}}\lesssim \frac{1}{\sqrt{\mu}} |\psi^-|_{H^{\frac{1}{2}}},
		\end{equation*}
		which is incompatible with having $\psi^- \in \mathring{H}^{s+\frac{1}{2}}(\R)$.
		
	\end{remark}

	Before we turn to the proof, we need a Lemma that will be used to justify some of the computations that will be made.

	\begin{lemma}\label{Lemma trace} Suppose the provisions of Proposition \ref{Inverse 2}, and further that $\phi^- = \Phi^-\circ \Sigma^- \in  \dot{H}^{s+1}(\mathcal{S}^-)$ is a variational solution to   
		\begin{equation}\label{Claim phi-}
			\begin{cases}
				\nabla_{x,z}^{\mu} \cdot P(\Sigma^{-})\nabla_{x,z}^{\mu} \phi^- = 0 \hspace{0.5cm}\qquad \text{in} \quad \mathcal{S}^-
				\\
				\partial_{n}^{P^-} \phi^- =  \mathcal{G}^+_{\mu}[\ve \zeta]\psi^+ \hspace{2.1cm} \text{on} \quad  z = 0.
			\end{cases}
		\end{equation}
		Then there is a number $R>0$ such that for $\mathcal{S}^{-}_R = \{(x,z)\in\mathcal{S}^- \: : \: z>R \}$ and any $\alpha+\beta$  integer $>0$ there holds, 
		\begin{equation}\label{Decay est}
			%\partial_x^{\alpha}
			\partial_z^{\alpha} \partial_x^{\beta} \phi^-\in L^2(\mathcal{S}^{-}_R)\quad \text{and} \quad
			\lim \limits_{z \rightarrow \infty} \sup \limits_{x \in \R}
			\big{|} %\partial_x^{\alpha} 
			\partial_z^{\alpha} \partial_x^{\beta} \phi^-(x,z) \big{|}
			= 0.
		\end{equation}
		Moreover, for any $\varphi \in \dot{H}^1(\mathcal{S}^-)$ have the following trace inequalities:
		\begin{equation}\label{trace 1}
			 |\varphi^-|_{z = 0}|_{\mathring{H}^{s+\frac{1}{2}}} \leq \mu^{-\frac{1}{4}}  \| \Lambda^s \nabla_{x,z}^{\mu} \varphi \|_{L^2(\mathcal{S}^-)},\color{black}
		\end{equation}
		and 
		\begin{equation}\label{trace 2}
		 	 |\varphi|_{z = 0}|_{\dot{H}_{\mu}^{s+\frac{1}{2}}} \leq \mu^{-\frac{1}{2}} \| \Lambda^s \nabla^{\mu}_{x,z} \varphi \|_{L^2(\mathcal{S}^-)}.\color{black}
		\end{equation}

	\end{lemma}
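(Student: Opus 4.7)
The plan is to separate the three claims. The trace inequalities \eqref{trace 1} and \eqref{trace 2} reduce to the classical half-plane trace theorem via a dilation that absorbs all $\mu$-dependence, while the decay estimate \eqref{Decay est} relies on interior elliptic regularity for \eqref{Claim phi-} combined with a one-dimensional calculus lemma about $H^1$ functions of $z$.

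For the trace inequalities, I would first reduce to the case $s = 0$ by observing that $\Lambda^s$ commutes with both $\nabla^\mu_{x,z}$ and the trace at $z=0$. By density it then suffices to prove the estimates for Schwartz $\varphi$. The key tool is the one-dimensional identity
\begin{equation*}
|\hat\varphi(\xi, 0)|^2 \;=\; -\int_0^\infty \partial_z |\hat\varphi(\xi, z)|^2 \, dz \;\leq\; 2\,\|\hat\varphi(\xi,\cdot)\|_{L^2_z}\,\|\partial_z \hat\varphi(\xi,\cdot)\|_{L^2_z},
\end{equation*}
after Fourier transform in $x$. The cleanest way to handle $\mu$ is to set $\tilde\varphi(y,z) = \varphi(\sqrt{\mu}\, y, z)$; then $\nabla_{y,z}\tilde\varphi$ pulls back $\nabla^\mu_{x,z}\varphi$, an elementary change of variable gives $\|\nabla_{y,z}\tilde\varphi\|^2_{L^2} = \mu^{-1/2}\|\nabla^\mu_{x,z}\varphi\|^2_{L^2}$, the $\dot H^{1/2}$ norm of the trace is scale-invariant, and the classical trace theorem $|\tilde\varphi|_{z=0}|_{\dot H^{1/2}} \lesssim \|\nabla_{y,z}\tilde\varphi\|_{L^2}$ yields \eqref{trace 1}. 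For \eqref{trace 2} the same rescaling applies after observing that $|f|_{\dot H_\mu^{1/2}}^2 = \int |\xi|^2(1+\sqrt{\mu}|\xi|)^{-1}|\hat f|^2\,d\xi \leq |f|^2_{\dot H^{1/2}}$, so the extra factor $\mu^{-1/2}$ coming from the weight in the $\dot H_\mu^{1/2}$ norm combines with the $\mu^{-1/2}$ from the gradient rescaling to give the claimed $\mu^{-1}$ inside the square.

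For the decay estimate, the key structural observation is that $P(\Sigma^-)$ depends only on $x$ (not on $z$), so away from $z=0$ the equation \eqref{Claim phi-} has $x$-only coefficients whose regularity is controlled by $\zeta \in H^{t_0+2}$. Starting from $\nabla_{x,z}\phi^- \in L^2(\mathcal{S}^-)$, interior elliptic regularity on translated strips $\{R < z < R+2\}$, combined with the identity $\partial_z^2 \phi^- = -\mu \partial_x^2 \phi^- + (\text{lower-order } z\text{-derivatives})$ read off from the Laplace equation, lets one trade every $\partial_z^2$ for two $\partial_x$'s and bootstrap to $\partial_z^\alpha\partial_x^\beta \phi^- \in L^2(\mathcal{S}^-_R)$ for any $\alpha+\beta > 0$ and any $R > 0$. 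To upgrade to uniform decay, fix such $\alpha,\beta$ and choose $k > 1/2$ with $H^k_x \hookrightarrow L^\infty_x$; the function $g(z) := |\partial_z^\alpha \partial_x^\beta \phi^-(\cdot,z)|^2_{H^k_x}$ lies in $W^{1,1}(R,\infty)$ (since both $g$ and $g'$ are controlled by one more $L^2$-integrable derivative), hence is absolutely continuous and tends to a limit as $z \to \infty$; because $g \in L^1(R,\infty)$ that limit must be zero, giving the claimed uniform decay.

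The main obstacle is the decay step: although each individual ingredient (interior elliptic regularity, Sobolev embedding, the one-dimensional calculus fact) is standard, the statement requires $L^\infty_x$ decay at every order, which demands that the bootstrap can indeed be iterated indefinitely. The $z$-independence of $P(\Sigma^-)$ is what makes this work transparently and is the feature exploited throughout. By contrast, the trace inequalities are more routine once the right dilation is chosen; the only delicacy there is the $\mu$-bookkeeping, and this is the reason for performing the change of variable $y = x/\sqrt{\mu}$ rather than estimating directly on the Fourier side.
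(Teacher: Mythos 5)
Your trace-inequality argument is correct but takes a different route from the paper. The paper proves \eqref{trace 1} directly on the Fourier side with a frequency-adapted cut-off: it applies the Fundamental Theorem of Calculus to $\chi(z\sqrt{\mu}|\xi|)\,\big||\xi|^{1/2}\hat\varphi\big|^2$ on the \emph{finite} interval $0\le z\le 2/(\sqrt{\mu}|\xi|)$ and then uses Young's inequality; because the integration in $z$ is truncated by the cut-off, no integrability or decay of $\hat\varphi(\xi,\cdot)$ itself is ever needed -- only $|\xi|\hat\varphi$ and $\partial_z\hat\varphi$, i.e.\ gradient terms, appear. Your dilation $y=x/\sqrt{\mu}$ plus the classical homogeneous trace theorem gives the same $\mu^{-1/4}$ and is a legitimate, arguably cleaner, alternative; but note that the ``key tool'' identity $|\hat\varphi(\xi,0)|^2\le 2\|\hat\varphi(\xi,\cdot)\|_{L^2_z}\|\partial_z\hat\varphi(\xi,\cdot)\|_{L^2_z}$ presupposes $\hat\varphi(\xi,\cdot)\in L^2_z$, which does not follow from $\varphi\in\dot H^1(\mathcal{S}^-)$ on the infinite half-plane, so you should either cite the homogeneous trace theorem as such or use a truncation as in the paper. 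Also, your intermediate inequality $|f|^2_{\dot H^{1/2}_\mu}\le|f|^2_{\dot H^{1/2}}$ is false at high frequencies (for $\sqrt{\mu}|\xi|\gg1$ the weight $|\xi|^2(1+\sqrt{\mu}|\xi|)^{-1}$ exceeds $|\xi|$); the correct comparison is $|\xi|^2(1+\sqrt{\mu}|\xi|)^{-1}\le\mu^{-1/2}|\xi|$, i.e.\ $|f|_{\dot H_\mu^{s+1/2}}\le\mu^{-1/4}|f|_{\mathring H^{s+1/2}}$ as in \eqref{Basic est: B to D}, which is exactly how the paper deduces \eqref{trace 2} from \eqref{trace 1}; your final factor-counting is consistent with this corrected bound, so this is a slip rather than a structural problem.

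The decay estimate \eqref{Decay est} is where there is a genuine gap. Your plan is to bootstrap interior elliptic regularity for \eqref{Claim phi-} in the flattened variables, using the $z$-independence of $P(\Sigma^-)$, up to derivatives of arbitrary order. But $z$-independence is not the binding constraint: the coefficients of $P(\Sigma^-)$ are functions of $\partial_x\zeta$ with only finite Sobolev regularity ($\zeta\in H^{t_0+2}$, resp.\ $H^{t_0+3}$ later), and each step of the bootstrap -- differentiating the equation in $x$, or solving for $\partial_z^2\phi^-$ in terms of $x$-derivatives -- consumes one more derivative of $\zeta$. The iteration therefore terminates after finitely many steps, and in fact $\partial_x^\beta\phi^- = \partial_x^\beta\big(\Phi^-(x,z+\varepsilon\zeta(x))\big)$ involves $\partial_x^\beta\zeta$, which does not even exist for large $\beta$; so ``bootstrap indefinitely'' cannot deliver the statement at every order. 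The paper avoids this entirely by leaving the flattened variables: above $z=R\ge\sup\varepsilon\zeta$ the untransformed potential $\Phi^-$ solves the constant-coefficient equation $\mu\partial_x^2\Phi^-+\partial_z^2\Phi^-=0$ on a genuine half-plane, \eqref{trace 1} places $\Phi^-|_{z=R}$ in $\mathring H^{s+1/2}(\R)$, and the solution there is identified (by harmonicity and agreement on $\{z=R\}$) with the explicit Poisson extension $e^{-(z-R)\sqrt{\mu}|\mathrm{D}|}\Phi^-(\cdot,R)$, from which membership in $L^2(\mathcal{S}^-_R)$ and uniform decay of derivatives of every order are read off by Plancherel and the smoothing of the heat-like semigroup, with no further regularity demanded of $\zeta$. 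Your $W^{1,1}$-in-$z$ trick for upgrading $L^2$ bounds to uniform decay is fine as far as it goes, but the input it needs at high order is exactly what the finite regularity of the coefficients prevents you from producing; the constant-coefficient representation above the crest line is the missing idea.
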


	\begin{remark}
		For the proof of \eqref{trace 1}, we need to work on the upper-half plane to prove the estimate, and this is the technical reason why we do not study the reverse composition: $(\mathcal{G}_{\mu}^+)^{-1}\mathcal{G}_{\mu}^-$.
	\end{remark}

	\begin{remark}\label{Alazard}
		The proof of estimate \eqref{Decay est} is given in \cite{Alazard21} for $\phi|_{z=0} \in H^{s+\frac{1}{2}}(\R)$ and we will briefly explain the changes for $\phi|_{z=0} \in \mathring{H}^{s+\frac{1}{2}}(\R)$.
	\end{remark}

	Assuming for a moment that Lemma \ref{Lemma trace} holds true, we can give the proof of Proposition \ref{Inverse 2}.

	\begin{proof}[Proof of Proposition \ref{Inverse 2}]
		We divide the proof into four main steps.\\

		\noindent
		\underline{Step 1.} \textit{$(\mathcal{G}_{\mu}^-)^{-1} \circ \mathcal{G}_{\mu}^+$ is well-defined on $\mathring{H}^{s+\frac{1}{2}}(\R)$}. It is sufficient to prove that there exists a unique variational solution $\phi^- \in  \dot{H}^{s+1}(\mathcal{S}^-)$ to the system \eqref{Claim phi-}  for any $\psi^+ \in \dot{H}^{s+\frac{1}{2}}_{\mu}(\R)$. Indeed, assuming there is such a solution, then by \eqref{trace 1}, we can define
		\begin{equation*}
			\psi^-  = \phi^-|_{z=0}\in  \mathring{H}^{s+\frac{1}{2}}(\R),
		\end{equation*}
		so that	for any $\psi^+ \in \dot{H}^{s+\frac{1}{2}}_{\mu}(\R)$ there is a $\psi^-$ where Proposition \ref{Prop G- dual est} implies
		\begin{equation*}
			\mathcal{G}^-_{\mu}[\ve \zeta]\psi^-
			=
			\mathcal{G}^+_{\mu}[\ve \zeta]\psi^+.
		\end{equation*}

		To prove the claim we first let $s=0$, and use \eqref{Decay est} to define the variational problem associated to \eqref{Claim phi-} by
		\begin{align}\label{var phi-}
			a(\phi^-, \varphi) 
			&  =
			\int_{\mathcal{S}^-} P(\Sigma^-)\nabla_{x,z}^{\mu} \phi^{-} \cdot \nabla_{x,z}^{\mu} \varphi \: \mathrm{d}x \mathrm{d}z
			\\
			\notag 
			& = 
			-\int_{\{z=0\}}( \mathcal{G}^+_{\mu}[\ve \zeta]\psi^+) \varphi \: \mathrm{d} x
			\\
			& 
			=
			\notag
			L(\varphi),
		\end{align}
		for any $\varphi \in C^{\infty}(\overline{\mathcal{S}^-})\cap\dot{H}^1(\mathcal{S}^-)$. We will now verify the assumptions of Lax-Milgram's Theorem to deduce a variational solution in $\dot{H}^1(\mathcal{S}^-)$ in two steps (extending the result to $\dot{H}^{s+1}(\mathcal{S}^-)$ is classical).

		The first step is to show that the application $\varphi \mapsto L(\varphi)$ is continuous on $\dot{H}^1(\mathcal{S}^-)$. To do so, we note by estimate \eqref{dual est G+ s}  that
		\begin{align*}
			|L(\varphi)| 
			& \leq
			|\big{(}\mathcal{G}^+_{\mu}[\ve \zeta] \psi^+,\varphi|_{z=0} \big{)}_{L^2}|
			\\
			& \leq 
			\mu| \psi^+|_{\dot{H}^{\frac{1}{2}}_{\mu}} |\varphi|_{z=0}|_{\dot{H}^{\frac{1}{2}}_{\mu}}.
		\end{align*}
		Then use \eqref{trace 2} to obtain the needed bound
		\begin{equation}\label{Claim 3 ineq}
			|\varphi|_{z = 0}|_{\dot{H}^{\frac{1}{2}}_{\mu}} \leq \mu^{-\frac{1}{2}}| \|\nabla_{x,z}^{\mu} \varphi \|_{L^2(\mathcal{S}^-)}.
			\color{black}
		\end{equation}
		Lastly, The bilinear form $a(\cdot,\cdot)$ is continuous and coercive on $\dot{H}^1(\mathcal{S}^-)$ by using estimate  \eqref{Coercivity-}.  We may therefore conclude that there is a unique solution $\phi^- \in \dot{H}^1(\mathcal{S}^-)$. \\

		\noindent
		\underline{Step 2.} \textit{Estimate \eqref{Inverse est 2} holds true.} Let $\psi^- = \phi^-|_{z=0} \in  \mathring{H}^{s+\frac{1}{2}}$ defined by the solution of \eqref{Claim phi-}. Then we can use estimate \eqref{est psi -} to get that
		\begin{align*}
			| (\mathcal{G}^-_{\mu}[\ve \zeta])^{-1}  \mathcal{G}^+_{\mu}[\ve \zeta]\psi^{+} |_{\mathring{H}^{s+ \frac{1}{2}}}
			& =
		 	| \psi^{-} |_{\mathring{H}^{s+ \frac{1}{2}}}   
			\\
			& 
			\leq \mu^{-\frac{1}{4}} M\| \Lambda^s \nabla_{x,z}^{\mu} \phi^{-}\|_{L^2(\mathcal{S}^+)}.\color{black}
		\end{align*}
		Thus, we need to verify
		\begin{equation}\label{est on phi- in prop 2.4}
			\| \Lambda^s \nabla^{\mu} \phi^{-}\|_{L^2(\mathcal{S}^-)} \leq \mu^{\frac{1}{2}}M|\psi^+|_{\dot{H}_{\mu}^{s+\frac{1}{2}}}.
		\end{equation}
		To this end, we apply $\Lambda^s$ to \eqref{Claim phi-} and study:
		\begin{equation*}
			\begin{cases}
				\nabla_{x,z}^{\mu} \cdot P(\Sigma^{-})\nabla_{x,z}^{\mu} \Lambda^s \phi^- = -\nabla_{x,z}[P(\Sigma^-),\Lambda^s]\nabla_{x,z}^{\mu} \phi^{-}
				\hspace{1cm} \text{in} \quad \mathcal{S}^-
				\\
				\partial_{n}^{P^-}\Lambda^s \phi^-  =-\mathbf{e}_z \cdot[P(\Sigma^-), \Lambda^s]\nabla_{x,z}^{\mu} \phi^- +  \Lambda^s\mathcal{G}^{+}_{\mu}[\ve \zeta]\psi^+ \hspace{0.75cm} \text{on} \quad  z = 0.
			\end{cases}
		\end{equation*}
		We may therefore multiply the equation with $\Lambda^s \phi^-$ and integrate by parts on a finite domain $\mathcal{S}^{-}_R = \{(x,z)\in \mathcal{S}^- \: : \: 0<z<R\}$ and use the Dominated Convergence Theorem with \eqref{Decay est} to obtain
		\begin{align*}
			\int_{\mathcal{S}^-}P(\Sigma^-) \nabla_{x,z}^{\mu}\Lambda^s \phi^- \cdot \nabla_{x,z}^{\mu} \Lambda^s \phi^- \: \mathrm{d}x \mathrm{d}z
			& =
			-
			\int_{\{z=0\}} \Lambda^s(\mathcal{G}^{+}_{\mu}[\ve \zeta]\psi^+) \Lambda^s \phi^- \: \mathrm{d}x
			\\
			& 
			\hspace{0.5cm}
			+
			\int_{\mathcal{S}^-} [P(\Sigma^-), \Lambda^s]\nabla_{x,z}^{\mu} \phi^- \cdot \nabla_{x,z}^{\mu} \Lambda^s \phi^- \: \mathrm{d}x \mathrm{d}z.
		\end{align*}
		Now, by the coercivity estimate \eqref{Coercivity-}, Cauchy-Schwarz, \eqref{dual est G+ s}, \eqref{trace 2}, and the commutator estimate \eqref{Commutator est} we get that
		\begin{align}\label{induction relation 2}
			\| \Lambda^s  \nabla_{x,z}^{\mu} \phi^- \|_{L^2(\mathcal{S}^-)}
			\leq  M \big{(}
			\mu^{\frac{1}{2}}|\psi^+|_{\dot{H}^{s+\frac{1}{2}}_{\mu}} + \| \Lambda^{s-1}\nabla_{x,z}^{\mu} \phi^-\|_{L^2(\mathcal{S}^-)}\big{)}.\color{black}
		\end{align}
		Also, using the estimates in Step $1$, we obtain the base case
		\begin{align}\label{Base case 2}
			\| \nabla_{x,z}^{\mu} \phi^-\|_{L^2(\mathcal{S}^-)} \leq \mu^{\frac{1}{2}} M	|\psi^+|_{\dot{H}^{\frac{1}{2}}_{\mu}}.\color{black}
		\end{align}
		We may now conclude the proof of this step by continuous induction. \\
		
		\noindent
		\underline{Step 3.} To prove estimate \eqref{Inverse est 2.1} we first observe that
		\begin{equation*}
			\langle \xi \rangle^s |\xi| \lesssim 	\langle \xi \rangle^{s+\frac{1}{2}} |\xi|^{\frac{1}{2}} \quad \text{and} \quad  \langle \xi \rangle^{s+\frac{1}{2}} \frac{|\xi|}{(1+ \sqrt{\mu}|\xi|)^{\frac{1}{2}}} \lesssim \mu^{-\frac{1}{4}} \langle \xi \rangle^{s} |\xi|.
		\end{equation*}
		Then use Plancherel's identity and \eqref{Inverse est 2} %(with $s+\frac{1}{2}\leq t_0+1$) 
		to find that
		\begin{align*}
			| \partial_x \big{(}(\mathcal{G}^-_{\mu}[\ve \zeta])^{-1} \circ \mathcal{G}^+_{\mu}[\ve \zeta]\psi^{+} \big{)}|_{H^{s}}
			& \lesssim 
			|  (\mathcal{G}^-_{\mu}[\ve \zeta])^{-1} \circ \mathcal{G}^+_{\mu}[\ve \zeta]\psi^{+} |_{\mathring{H}^{(s+\frac{1}{2})+\frac{1}{2}}}
			\\ 
			& \leq 
			\mu^{\frac{1}{4}} M |\psi^{+}|_{\dot{H}_{\mu}^{(s+\frac{1}{2})+\frac{1}{2}}}
			\\
			& \leq 
		 	M |\partial_x \psi^{+}|_{H^{s}}
			.
		\end{align*}
		\end{proof}

		To close this subsection, we give the proof of Lemma \ref{Lemma trace}.
		
		\begin{proof}[Proof of Lemma \ref{Lemma trace}] We first give a proof of the trace inequalities on $C^{\infty}(\overline{\mathcal{S}^-})\cap\dot{H}^1(\mathcal{S}^-)$.  
			For the proof of \eqref{trace 1}, we define a multiplier being a smooth cut-off function in frequency $\chi: [0,\infty) \rightarrow [0,1]$ such that $\chi(0) = 1$, $\chi(\xi) = 0$ for $\xi>1$, and $\chi, \chi'\in L^{\infty}(\R)$.  Then by the Fundamental Theorem of Calculus we obtain that
			\begin{align*}
				||\mathrm{D}|^{\frac{1}{2}} \varphi|_{z=0}|_{L^2}^2 
				& = \Big{|}
				\int_{\R} \int_0^{\frac{2}{\sqrt{\mu}|\xi|}} \partial_z\big{(}\chi(z\sqrt{\mu}|\xi|) \big{|}|\xi|^{\frac{1}{2}} \hat{\varphi}\big{|}^2 \big{)}\: \mathrm{d}z \mathrm{d}\xi \Big{|}
				\\ 
				& 
				\lesssim \sqrt{\mu}  |\chi'|_{L^{\infty}}
				\int_{\R} \int_0^{\infty} \big{|}|\xi| \hat{\varphi}\big{|}^2 \: \mathrm{d}z \mathrm{d}\xi
				+
				|\chi|_{L^{\infty}}\int_{\R} \int_0^{\infty}  \big{|} |\xi| \hat{\varphi}\big{|}   \big{|} \partial_z\hat{\varphi}\big{|} \: \mathrm{d}z \mathrm{d}\xi.
			\end{align*}
			For the last term, we use Young's inequality to find that
			\begin{align*}
				\int_{\R} \int_0^{\infty}  \big{|} |\xi| \hat{\varphi}\big{|}   \big{|} \partial_z\hat{\varphi}\big{|} \: \mathrm{d}z \mathrm{d}\xi
				\lesssim 
				\sqrt{\mu}\int_{\R} \int_0^{\infty}  \big{|} |\xi| \hat{\varphi}\big{|}^2\: \mathrm{d}z \mathrm{d}\xi + \frac{1}{\sqrt{\mu}}\int_{\R} \int_0^{\infty}  \big{|} \partial_z\hat{\varphi}\big{|}^2 \: \mathrm{d}z \mathrm{d}\xi
			\end{align*}
			Then to conclude, we use Plancherel's identity to obtain \eqref{trace 1} for $s=0$:
			\begin{align*}
				||\mathrm{D}|^{\frac{1}{2}} \varphi|_{z=0}|_{L^2}^2
				&  \lesssim \mu^{-\frac{1}{2}}
				\int_{\R} \int_0^{\infty}  \Big{(}\big{|} \sqrt{\mu}|\xi| \hat{\varphi}\big{|}^2 +  \big{|} \partial_z\hat{\varphi}\big{|}^2\Big{)} \: \mathrm{d}z \mathrm{d}\xi
				\\
				& 
				\lesssim 
				\mu^{-\frac{1}{2}}
				\| \nabla_{x,z}^{\mu} \varphi \|_{L^2(\mathcal{S}^-)}^2.
			\end{align*}
			The general case is proved similarly by applying the same estimate to $\tilde{\varphi} = \Lambda^s \varphi$.
			
			For the proof of \eqref{trace 2} we simply use that $|\varphi|_{z = 0}|_{\dot{H}_{\mu}^{s+\frac{1}{2}}} \leq \mu^{-\frac{1}{4}}	| \varphi|_{z=0}|_{\mathring{H}^{s+\frac{1}{2}}}$ and then apply \eqref{trace 1} to conclude.  \color{black}

			For the proof of \eqref{Decay est}, we note from Definition \ref{diffeomorphism +-} that $\Phi^- = \phi^-\circ{\Sigma}^{-1} \in  \dot{H}^{s+1}(\Omega^-)$ is harmonic, and by Sobolev embedding we have that $\zeta$ is bounded above by some constant $R>0$. From these observations, we can use \eqref{trace 1} to see that $\Phi^-|_{z=R} \in \mathring{H}^{s+\frac{1}{2}}(\R)$. Moreover, we consider the harmonic extension on $\mathcal{S}^-_R$  given by the Poisson formula:
			\begin{equation*}
				\Phi^{-}_R (x,z) = \mathrm{e}^{-(z-R)\sqrt{\mu}|\mathrm{D}|}\Phi^-(x,R).
			\end{equation*}			
				From this formula, we can verify \eqref{Decay est} for $\Phi^{-}_R$ by direct computations as in \cite{Alazard21}. To conclude, we use that both functions are harmonic and agree on the line $z=R$.

		\end{proof}
	
		We should note that there are several results that follow from Proposition \ref{Inverse 2}, and that will be used throughout the paper. However, to ease the presentation, we postponed these results for the Appendix in Section \ref{Appendix Corollaries} since the proofs are technical and not needed in this section.

	\subsection{Properties of $(1-\gamma (\mathcal{G}_{\mu}^-)^{-1} \mathcal{G}_{\mu}^+)^{-1}$. } Following the road map provided in \cite{LannesTwoFluid13}, we can recover the velocity potentials $\phi^{\pm}$ from the knowledge of $\zeta$ and a trace $\psi$ defined though the transmission problem:
	\begin{equation}\label{Transmission pb}
		\begin{cases}
			\nabla_{x,z}^{\mu^{\pm}} \cdot P(\Sigma^{\pm})\nabla_{x,z}^{\mu^{\pm}} \phi^{\pm} = 0 \hspace{0.5cm}\qquad \text{in} \quad \mathcal{S}^{\pm}
			\\
			\phi^{+}|_{z=0}  - \gamma \phi^-|_{z=0} = \psi
			\\
			\partial_{n}^{P^{-}} \phi^-|_{z=0} = \partial_{n}^{P^{+}} \phi^+|_{z=0},
			\quad
			\partial_{n}^{P^{+}} \phi^+|_{z=-1}  = 0,
		\end{cases}
	\end{equation}
	where the solvability of this problem is ensured in the next result:
	\begin{prop}\label{Prop Transition}
		Let $t_0 \geq 1$, $s \in [0,t_0 +1]$ and $\zeta \in H^{t_0 +2}(\R)$ be such that \eqref{non-cavitation} is satisfied. Then for all $\psi \in \dot{H}_{\mu}^{s+\frac{1}{2}}(\R)$, there exist a unique solution $\phi^{\pm}\in \dot{H}^{s+1}(\mathcal{S}^{\pm})$ to \eqref{Transmission pb} and that satisfies
		\begin{equation*}
			\| \Lambda^s \nabla^{\mu}_{x,z} \phi^{\pm} \|_{L^2(\mathcal{S}^{+})} \leq \sqrt{\mu} M |\psi|_{\dot{H}^{s+\frac{1}{2}}_{\mu}}.
		\end{equation*}
		%
		%
		%
		%and
		%
		%
		%
	%	\begin{equation*}
		%	\| \Lambda^s \nabla_{x,z}^{\mu}  \phi^{-} \|_{L^2(\mathcal{S}^{-})} \leq  \sqrt{\mu} M |\psi|_{\dot{H}^{s+\frac{1}{2}}_{\mu}}.
		%\end{equation*}
	\end{prop}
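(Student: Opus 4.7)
The plan is to combine a variational argument on the product space with the functional setting developed in Lemma \ref{Lemma trace} and the estimates from Propositions \ref{Prop G} and \ref{Inverse 2}. First I would set up the bilinear form
\begin{equation*}
\mathfrak{a}\bigl((\phi^+,\phi^-),(\varphi^+,\varphi^-)\bigr)
= \int_{\mathcal{S}^+} P(\Sigma^+)\nabla^{\mu}_{x,z}\phi^+ \cdot \nabla^{\mu}_{x,z}\varphi^+ \, \mathrm{d}x\mathrm{d}z
+ \gamma \int_{\mathcal{S}^-} P(\Sigma^-)\nabla^{\mu}_{x,z}\phi^- \cdot \nabla^{\mu}_{x,z}\varphi^- \, \mathrm{d}x\mathrm{d}z,
\end{equation*}
defined on pairs in $\mathbb{V} = \dot{H}^1(\mathcal{S}^+) \times \dot{H}^1(\mathcal{S}^-)$. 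The presence of the weight $\gamma$ is precisely what makes the transmission condition $\phi^+|_{z=0} - \gamma \phi^-|_{z=0} = \psi$ compatible with the Euler-Lagrange equation: integrating by parts against test pairs $(\varphi^+,\varphi^-)$ satisfying $\varphi^+|_{z=0} = \gamma \varphi^-|_{z=0}$ yields exactly the continuity of normal fluxes together with the Neumann condition at $z=-1$, once the coercivity \eqref{Coercivity+}-\eqref{Coercivity-} has been used.

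The base case $s = 0$ is obtained by Lax-Milgram on the closed subspace $\mathbb{V}_0 = \{(\phi^+,\phi^-)\in\mathbb{V} \,:\, \phi^+|_{z=0} = \gamma\phi^-|_{z=0}\}$. To reduce the inhomogeneous problem to this subspace, I would construct a lift of the boundary datum: set, for instance, $\phi^-_{\mathrm{lift}}(x,z) = 0$ and $\phi^+_{\mathrm{lift}}(x,z) = \chi(z)\mathrm{e}^{-(1+z)\sqrt{\mu}|\mathrm{D}|}\psi$ with a cutoff $\chi$, which belongs to $\dot{H}^1(\mathcal{S}^+)$ with $\|\nabla^{\mu}_{x,z}\phi^+_{\mathrm{lift}}\|_{L^2(\mathcal{S}^+)} \lesssim \sqrt{\mu}|\psi|_{\dot{H}^{1/2}_{\mu}}$. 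Continuity of the linearized problem's right-hand side then follows from the trace inequality \eqref{trace 2} and its analogue on $\mathcal{S}^+$, which furnish the decisive $\mu$-dependence. The resulting minimizer $(\phi^+,\phi^-) \in \mathbb{V}$ is the unique solution to \eqref{Transmission pb} for $s=0$, and its energy is bounded by $\sqrt{\mu}M|\psi|_{\dot{H}^{1/2}_{\mu}}$.

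For higher regularity $s \in (0, t_0+1]$, I would proceed by applying $\Lambda^s$ to the system and treating the commutators $[P(\Sigma^\pm),\Lambda^s]\nabla^{\mu}_{x,z}\phi^\pm$ as forcing terms, exactly as in the induction of Step 2 of the proof of Proposition \ref{Inverse 2}. Combining the commutator estimate \eqref{Commutator est}, the dual estimate \eqref{dual est G+ s}, the trace inequalities of Lemma \ref{Lemma trace}, and the coercivity of $P(\Sigma^\pm)$, one obtains the recursive inequality
\begin{equation*}
\|\Lambda^s \nabla^{\mu}_{x,z}\phi^\pm\|_{L^2(\mathcal{S}^\pm)} \leq \sqrt{\mu}M |\psi|_{\dot{H}^{s+\frac{1}{2}}_{\mu}} + M\|\Lambda^{s-1}\nabla^{\mu}_{x,z}\phi^\pm\|_{L^2(\mathcal{S}^\pm)},
\end{equation*}
which closes by continuous induction starting from the base case. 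Alternatively, one can first prove invertibility of $\mathcal{J}[\ve\zeta] = 1 - \gamma(\mathcal{G}^-_{\mu})^{-1}\mathcal{G}^+_{\mu}$ on $\dot{H}^{s+1/2}_{\mu}(\R)$ via a coercivity identity derived by testing against $\mathcal{G}^+_{\mu}\psi^+$, and then solve the two Dirichlet problems separately; I would use the first approach since the symmetric variational formulation handles both sides simultaneously.

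The main obstacle is the asymmetry between the two strips: $\mathcal{S}^+$ is a finite slab where standard Poincaré-type arguments apply, while $\mathcal{S}^-$ is a half-space where only the homogeneous trace \eqref{trace 1} is available and the decay property \eqref{Decay est} is needed to justify the integration by parts. Tracking the correct powers of $\mu$ through both the lift construction and the commutator induction, while respecting the different scalings $\dot{H}^{s+1/2}_{\mu}$ versus $\mathring{H}^{s+1/2}$ on either side of the interface, is the most delicate point; this is exactly where the present setting departs from the finite-depth analysis of \cite{LannesTwoFluid13} and forces the use of the refined trace and dual estimates proved in the preceding sections.
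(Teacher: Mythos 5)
Your route is viable but genuinely different from the paper's, and in fact the "alternative" you mention in your last sentence and discard is precisely the paper's proof: it first proves invertibility of $\mathcal{J}_{\mu}[\ve\zeta]$ on $\dot{H}^{s+\frac{1}{2}}_{\mu}(\R)$ (Lemma \ref{Prop op J}, via the coercivity identity obtained by testing against $\mathcal{G}^+_{\mu}\psi^+$ plus a Fredholm-index argument), then sets $\psi^+=(\mathcal{J}_{\mu}[\ve\zeta])^{-1}\psi$ and $\psi^-=(\mathcal{G}^-_{\mu}[\ve\zeta])^{-1}\mathcal{G}^+_{\mu}[\ve\zeta]\psi^+$, and solves the two \emph{decoupled} Dirichlet problems in $\mathcal{S}^{\pm}$ separately, importing the already-established estimates \eqref{est phi +}, \eqref{est phi -}, \eqref{Inverse est 2} and \eqref{Est J}. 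That makes the proof a few lines, with all the $\mu$-bookkeeping concentrated in Proposition \ref{Inverse 2} and Lemma \ref{Prop op J}. Your coupled variational formulation on $\dot{H}^1(\mathcal{S}^+)\times\dot{H}^1(\mathcal{S}^-)$ with the weight $\gamma$ and the constrained traces $\varphi^+|_{z=0}=\gamma\varphi^-|_{z=0}$ is the classical transmission-problem approach (closer in spirit to the finite-depth treatment of \cite{LannesTwoFluid13}); the Euler--Lagrange computation and the higher-regularity induction, with the interface term handled by \eqref{dual est G+ s}, the trace inequalities of Lemma \ref{Lemma trace} (and their $\mathcal{S}^+$ analogue \eqref{est psi +}) and the commutator estimate \eqref{Commutator est}, do go through — but you end up re-proving in coupled form estimates the paper already possesses in decoupled form.

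The one concrete flaw is the lift. First, $\chi(z)\mathrm{e}^{-(1+z)\sqrt{\mu}|\mathrm{D}|}\psi$ equals $\mathrm{e}^{-\sqrt{\mu}|\mathrm{D}|}\psi\neq\psi$ at $z=0$, so the transmission datum is not attained (presumably you meant $\mathrm{e}^{z\sqrt{\mu}|\mathrm{D}|}\psi$ on $-1<z<0$). Second, and more substantively, with a cutoff in $z$ alone the vertical derivative produces the term $\chi'(z)$ times the extension of $\psi$ itself, which is \emph{not} controlled by the Beppo--Levi norm: since $\psi\in\dot{H}^{\frac{1}{2}}_{\mu}(\R)$ only guarantees $\mathfrak{P}\psi\in L^2$, the Fourier transform $\hat\psi$ need not be square-integrable near $\xi=0$, so the claimed bound $\|\nabla^{\mu}_{x,z}\phi^+_{\mathrm{lift}}\|_{L^2(\mathcal{S}^+)}\lesssim\sqrt{\mu}\,|\psi|_{\dot{H}^{1/2}_{\mu}}$ fails for this particular lift. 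The standard remedy — the one underlying \eqref{est phi +} in \cite{WWP} — is a frequency-adapted cutoff such as $\chi\big((1+z)\sqrt{\mu}|\mathrm{D}|\big)\psi$, so that every $z$-derivative produces a factor $\sqrt{\mu}|\xi|$ and the homogeneous norm suffices; alternatively, simply invoke the first point of Proposition \ref{Prop G+ dual est} to produce the lift. With that correction your argument closes.
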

	The proof of this result is a consequence of the following Lemma:
	\begin{lemma}\label{Prop op J}
		Let $t_0\geq 1$, $s \in [0,t_0 +1]$. Then the mapping
		\begin{equation}\label{J}
			\mathcal{J}_{\mu}[\ve \zeta]
			\:
			:
			\:
			\begin{cases}
				\dot{H}_{\mu}^{s + \frac{1}{2}}(\R) & \rightarrow  \dot{H}_{\mu}^{s + \frac{1}{2}}(\R)\\
				\psi^+ & \mapsto ( 1-  \gamma (\mathcal{G}^-_{\mu}[\ve \zeta])^{-1} \mathcal{G}^+_{\mu}[\ve \zeta] )\psi^+
			\end{cases} 
		\end{equation}
		is one-to-one and onto. Moreover, it satisfies the estimate
		\begin{equation}\label{Est J}
			|(\mathcal{J}_{\mu}[\ve \zeta])^{-1}\psi^+|_{\dot{H}_{\mu}^{s+\frac{1}{2}}} \leq M |\psi^+|_{\dot{H}_{\mu}^{s+\frac{1}{2}}}.			
		\end{equation}
		%
		%
		%
		%Moreover, in the case $s\in [\max\{0,1-t_0\}, t_0+\frac{1}{2}]$, then the same result holds true for $\mathcal{J}\psi^+ \in \mathring{H}^{s+\frac{1}{2}}(\R)$.
	\end{lemma}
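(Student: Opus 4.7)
The plan is to realize $(\mathcal{J}_{\mu}[\ve \zeta])^{-1}$ through a variational formulation of the transmission problem \eqref{Transmission pb}. First, to check that $\mathcal{J}_{\mu}[\ve \zeta]$ maps $\dot{H}_{\mu}^{s+\frac{1}{2}}(\R)$ into itself, I would combine \eqref{Inverse est 2} with the elementary pointwise bound $\frac{|\xi|}{(1+\sqrt{\mu}|\xi|)^{1/2}} \leq \mu^{-1/4}|\xi|^{1/2}$, which yields a continuous embedding $\mathring{H}^{s+\frac{1}{2}}(\R) \hookrightarrow \dot{H}_{\mu}^{s+\frac{1}{2}}(\R)$ with norm $\mu^{-1/4}$. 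Applied to $K := (\mathcal{G}^-_{\mu}[\ve \zeta])^{-1}\mathcal{G}^+_{\mu}[\ve \zeta]$, this gives $|K\psi^+|_{\dot{H}_{\mu}^{s+\frac{1}{2}}} \leq M|\psi^+|_{\dot{H}_{\mu}^{s+\frac{1}{2}}}$, so $\mathcal{J}_{\mu}[\ve \zeta] = 1-\gamma K$ is a bounded endomorphism of $\dot{H}_{\mu}^{s+\frac{1}{2}}(\R)$.

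For surjectivity in the base case $s=0$, given $\psi \in \dot{H}_{\mu}^{\frac{1}{2}}(\R)$ I would consider the bilinear form
\begin{equation*}
	a(\Phi,\Xi) = \int_{\mathcal{S}^+} P(\Sigma^+)\nabla_{x,z}^{\mu}\phi^+\cdot\nabla_{x,z}^{\mu}\xi^+\,\mathrm{d}x\mathrm{d}z + \gamma\int_{\mathcal{S}^-} P(\Sigma^-)\nabla_{x,z}^{\mu}\phi^-\cdot\nabla_{x,z}^{\mu}\xi^-\,\mathrm{d}x\mathrm{d}z
\end{equation*}
on the Hilbert space $V = \dot{H}^1(\mathcal{S}^+)\times\dot{H}^1(\mathcal{S}^-)$ and seek $\Phi = (\phi^+,\phi^-)$ minimizing $a(\Phi,\Phi)$ under the affine constraint $\phi^+|_{z=0}-\gamma\phi^-|_{z=0} = \psi$. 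The explicit lift $\tilde{\phi}^+ = \cosh((z+1)\sqrt{\mu}|\mathrm{D}|)(\cosh(\sqrt{\mu}|\mathrm{D}|))^{-1}\psi$ with $\tilde{\phi}^-\equiv 0$ meets the constraint and satisfies $\|\nabla_{x,z}^{\mu}\tilde{\phi}^+\|_{L^2(\mathcal{S}^+)} \lesssim \sqrt{\mu}|\psi|_{\dot{H}_{\mu}^{\frac{1}{2}}}$ by a direct Fourier computation, while $a$ is coercive on $V_0 := \{(\varphi^+,\varphi^-)\in V : \varphi^+|_{z=0} = \gamma\varphi^-|_{z=0}\}$ by \eqref{Coercivity+}--\eqref{Coercivity-}. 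Lax--Milgram then produces a unique $\Phi\in V$ obeying the constraint and $a(\Phi,\Xi)=0$ for every $\Xi\in V_0$, together with $\|\nabla_{x,z}^{\mu}\phi^{\pm}\|_{L^2(\mathcal{S}^{\pm})} \leq \sqrt{\mu}M|\psi|_{\dot{H}_{\mu}^{\frac{1}{2}}}$. A standard integration by parts (justified on $\mathcal{S}^-$ by \eqref{Decay est}) identifies the Euler--Lagrange equations with the transmission problem \eqref{Transmission pb}; setting $\psi^{\pm} := \phi^{\pm}|_{z=0}$ one reads off $\psi^+-\gamma\psi^- = \psi$ and $\mathcal{G}^+_{\mu}\psi^+ = \mathcal{G}^-_{\mu}\psi^-$, hence $\psi^- = K\psi^+$ and $\mathcal{J}_{\mu}\psi^+ = \psi$. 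The trace inequality \eqref{trace 2} together with its finite-depth analogue on $\mathcal{S}^+$ yields $|\psi^+|_{\dot{H}_{\mu}^{\frac{1}{2}}} \leq \mu^{-\frac{1}{2}}\|\nabla_{x,z}^{\mu}\phi^+\|_{L^2(\mathcal{S}^+)} \leq M|\psi|_{\dot{H}_{\mu}^{\frac{1}{2}}}$, which is \eqref{Est J} at $s=0$.

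Injectivity follows from the same energy identity: if $\mathcal{J}_{\mu}\psi^+ = 0$, the variational solutions $\phi^{\pm}$ with Dirichlet data $\psi^+$ and $K\psi^+$ satisfy $\phi^+|_{z=0} = \gamma\phi^-|_{z=0}$ with matching normal fluxes, so integration by parts forces $a((\phi^+,\phi^-),(\phi^+,\phi^-)) = 0$; coercivity then makes $\phi^{\pm}$ constants, whence $\psi^+ = 0$ in the Beppo-Levi quotient. For $s>0$ I would apply $\Lambda^s$ to the transmission problem and run exactly the commutator induction already developed in the proof of Proposition \ref{Inverse 2}, producing $\|\Lambda^s\nabla_{x,z}^{\mu}\phi^{\pm}\|_{L^2(\mathcal{S}^{\pm})} \leq \sqrt{\mu}M|\psi|_{\dot{H}_{\mu}^{s+\frac{1}{2}}}$; the trace inequality then closes \eqref{Est J}.

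The main obstacle I foresee is the asymmetric functional setting across the two fluid regions: the natural boundary regularity for $\phi^-$ is the homogeneous $\mathring{H}^{s+\frac{1}{2}}$-trace space whereas $\phi^+$ lives in the $\mu$-weighted $\dot{H}_{\mu}^{s+\frac{1}{2}}$-trace space, so the choice of affine lift and the two distinct trace inequalities must be calibrated so that the final constant $M$ in \eqref{Est J} is independent of $\mu$. This is exactly the asymmetry that already dictated the fixed order of composition in Proposition \ref{Inverse 2} (cf.\ Remark \ref{order of comp}).
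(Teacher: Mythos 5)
Your argument is sound, but it follows a genuinely different route from the paper for the surjectivity part. The paper first proves the lower bound \eqref{lower bound}, $|\psi^+|_{\dot{H}_{\mu}^{\frac{1}{2}}} \leq M|\mathcal{J}_{\mu}[\ve\zeta]\psi^+|_{\dot{H}_{\mu}^{\frac{1}{2}}}$, by testing against $\mathcal{G}^+_{\mu}\psi^+$ and using the duality identities of Propositions \ref{Prop G+ dual est} and \ref{Prop G- dual est} (this is essentially the same energy identity you invoke for injectivity), and then obtains ontoness by soft functional analysis: the lower bound plus boundedness of $(\mathcal{G}^-_{\mu})^{-1}\mathcal{G}^+_{\mu}$ makes $\mathcal{J}_{\mu}$ injective with closed range, hence semi-Fredholm; a Neumann series gives invertibility for small $\gamma$, and homotopic invariance of the Fredholm index (Kato) forces index zero, hence surjectivity, for all $\gamma\in(0,1)$. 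You instead construct the preimage directly, by solving the coupled transmission problem \eqref{Transmission pb} through a constrained Lax--Milgram/minimization argument on $\dot{H}^1(\mathcal{S}^+)\times\dot{H}^1(\mathcal{S}^-)$ with an explicit $\cosh$-lift, and read off $\mathcal{J}_{\mu}\psi^+=\psi$ from the Euler--Lagrange system; the estimate \eqref{Est J} then follows from the lift energy bound, coercivity \eqref{Coercivity+}--\eqref{Coercivity-}, and the strip trace estimate \eqref{est psi +} (your ``finite-depth analogue'' of \eqref{trace 2}). Note that this reverses the paper's logical order: the paper deduces the solvability of \eqref{Transmission pb} (Proposition \ref{Prop Transition}) \emph{from} Lemma \ref{Prop op J}, whereas in your scheme Proposition \ref{Prop Transition} comes first and the lemma is a byproduct. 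Your route buys a constructive, index-free proof and a $\mu$- and $\gamma$-uniform constant obtained in one stroke (the $\gamma$-weighted term in the bilinear form is simply dropped when bounding $\|\nabla^{\mu}_{x,z}\phi^+\|_{L^2}$); the paper's route avoids setting up the coupled constrained variational problem and recycles only the one-sided results of Propositions \ref{Inverse 2}, \ref{Prop G+ dual est}, \ref{Prop G- dual est}. Two points you should make explicit if you write this up: traces in the Beppo-Levi/homogeneous setting are only defined modulo constants, so both the affine constraint and the identification $\psi^-=(\mathcal{G}^-_{\mu})^{-1}\mathcal{G}^+_{\mu}\psi^+$ (which relies on uniqueness up to constants of the Neumann problem on $\mathcal{S}^-$) must be interpreted in the quotient; and the integration by parts on the unbounded domain must be justified by \eqref{Decay est}, as you indicate. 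Your treatment of the higher-order case $s\in(0,t_0+1]$ by the commutator induction of Proposition \ref{Inverse 2} matches what the paper does by referring to Lemma $2$ of \cite{LannesTwoFluid13}.
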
	   
	  	\begin{remark}\label{rmk id psi- psi+}
		For any $\psi \in \dot{H}^{s+\frac{1}{2}}_{\mu}(\R)$ we can define $\psi^+$ by
		\begin{equation*}
			\psi^+ = (\mathcal{J}_{\mu}[\ve \zeta])^{-1} \psi \in \dot{H}^{s+\frac{1}{2}}_{\mu}(\R),
		\end{equation*}
		from which we define $\psi^-$ by
		\begin{align*}
			\psi^{-} =  (\mathcal{G}^-_{\mu}[\ve \zeta])^{-1} \mathcal{G}^+_{\mu}[\ve \zeta] \psi^+ \in \mathring{H}^{s+ \frac{1}{2}}(\R).
		\end{align*}
		Also, note that from these identities and \eqref{Inverse est 2} that there holds,
		\begin{align}\label{Est J with psi}
			|\psi|_{\dot{H}_{\mu}^{s+\frac{1}{2}}}
			& = 
			| \mathcal{J}_{\mu}[\ve \zeta]\psi^+|_{\dot{H}_{\mu}^{s+\frac{1}{2}}} \\ 
			& \leq \notag 
			|\psi^+|_{\dot{H}_{\mu}^{s+\frac{1}{2}}} +  \gamma |(\mathcal{G}^-_{\mu}[\ve \zeta])^{-1} \mathcal{G}^+_{\mu}[\ve \zeta] \psi^+|_{\dot{H}_{\mu}^{s+\frac{1}{2}}}
			\\ 
			& \leq 
			\notag 
			|\psi^+|_{\dot{H}_{\mu}^{s+\frac{1}{2}}} + \gamma \mu^{-\frac{1}{4}}|(\mathcal{G}^-_{\mu}[\ve \zeta])^{-1} \mathcal{G}^+_{\mu}[\ve \zeta] \psi^+|_{\mathring{H}^{s+\frac{1}{2}}}
			\\
			& \leq 
			\notag M
			|\psi^+|_{\dot{H}_{\mu}^{s+\frac{1}{2}}}
			.			
		\end{align}
		
	\end{remark}
	\begin{proof}[Proof of Lemma \ref{Prop op J}]
	
		To prove estimate \eqref{Est J}, we first consider the case $s=0$. Then  we use the definition of $\mathcal{J}_{\mu}[\ve \zeta]$, the construction $\psi^- =  (\mathcal{G}^-_{\mu}[\ve \zeta])^{-1}\mathcal{G}^+_{\mu}[\ve \zeta]\psi^+$ %and $ \mathcal{G}_{\mu}[\ve \zeta]^+ \psi^+ = \mathcal{G}_{\mu}[\ve \zeta]^-\psi^-$ 
		 to get that
		\begin{align*}
			\int_{\R} \mathcal{J}_{\mu}[\ve \zeta] \psi^+ \: \mathcal{G}^+_{\mu}[\ve \zeta] \psi^+ \: \mathrm{d}x 
			& = 
			\int_{\R}\psi^+\: \mathcal{G}^+_{\mu}[\ve \zeta] \psi^+ \: \mathrm{d}x  
			-
			\gamma 
		 	\int_{\R} (\mathcal{G}^-_{\mu}[\ve \zeta])^{-1} \mathcal{G}^+_{\mu}[\ve \zeta]\psi^+ \: \mathcal{G}^+_{\mu}[\ve \zeta] \psi^+ \: \mathrm{d}x
			\\
			& = 
			\int_{\R}\psi^+ \: \mathcal{G}^+_{\mu}[\ve \zeta] \psi^+ \: \mathrm{d}x
			-
			\gamma \int_{\R}\psi^- \: \mathcal{G}^-_{\mu}[\ve \zeta] \psi^- \: \mathrm{d}x.
		\end{align*}
		Then apply Proposition \ref{Prop G+ dual est} and \ref{Prop G- dual est} combined with the coercivity of $P(\Sigma^{\pm})$ to obtain the estimate
		\begin{align*}
			\int_{\R} \mathcal{J}_{\mu}[\ve \zeta] \psi^+ \: \mathcal{G}^+_{\mu}[\ve \zeta] \psi^+ \: \mathrm{d}x 
			& =
			\int_{\mathcal{S}^+} P(\Sigma^+)\nabla^{\mu}_{x,z}\phi^{+} \cdot \nabla^{\mu}_{x,z}\phi^+  \: \mathrm{d}x \mathrm{d}z
			\\ 
			& 
			\hspace{0.5cm}
			+
			\gamma \int_{\mathcal{S}^-} P(\Sigma^-)\nabla_{x,z}^{\mu}\phi^{-} \cdot \nabla_{x,z}^{\mu}\phi^-\: \mathrm{d}x \mathrm{d}z 
		%	\\
		%	& 
		%	\geq 
		%	 \frac{1}{1+M}\Big{(}
		%	\|\nabla_{x,z}^{\mu} \phi^+\|_{L^2(\mathcal{S}^+)}^2 
		%	+
		%	\gamma\|\nabla_{x,z}^{\mu} \phi^-\|_{L^2(\mathcal{S}^-)}^2 
		%	\Big{)}
			\\
			& 
			\geq 
			\frac{1}{1+M}
			\|\nabla_{x,z}^{\mu} \phi^+\|_{L^2(\mathcal{S}^+)}^2.
		\end{align*}
		Moreover, since $\mathcal{J}_{\mu}[\ve \zeta] \psi^+ \in \dot{H}_{\mu}^{s+\frac{1}{2}}(\R) \subset \dot{H}_{\mu}^{\frac{1}{2}}(\R)$ we have \eqref{dual est G+ s} at hand. In particular, we obtain from the above estimates and \eqref{est psi +} that
		\begin{align}\label{lower bound}
			|\psi^+|_{\dot{H}_{\mu}^{\frac{1}{2}}} \leq M|\mathcal{J}_{\mu}[\ve \zeta]\psi^+|_{\dot{H}_{\mu}^{\frac{1}{2}}}.
		\end{align}
		Equivalently, estimate \eqref{Est J} holds in the case $s=0$. We may also use this estimate to prove the invertibility as in \cite{LannesTwoFluid13}. In particular, we have from the lower bound \eqref{lower bound} and Proposition \ref{Inverse 2} with estimate \eqref{Inverse est 2} that $\mathcal{J}_{\mu}$ is an injective and closed operator since,
		\begin{equation*}
			\gamma |(\mathcal{G}^-_{\mu}[\ve \zeta])^{-1} \mathcal{G}^+_{\mu}[\ve \zeta]\psi^+|_{\dot{H}^{s+ \frac{1}{2}}_{\mu}}
			\leq 
			\mu^{-\frac{1}{4}}\gamma 	| (\mathcal{G}^-_{\mu}[\ve \zeta])^{-1}  \mathcal{G}^+_{\mu}[\ve \zeta]\psi^{+} |_{\mathring{H}^{s+ \frac{1}{2}}}\leq \gamma  M |\psi^{+}|_{\dot{H}_{\mu}^{s+\frac{1}{2}}}.
		\end{equation*}
		Moreover, from the lower bound, \eqref{lower bound}, on $\mathcal{J}_{\mu}$ we know that it is also semi-Fredholm. Then since for small enough values of $\gamma \in (0,1)$ it is invertible by a Neumann series expansion, we have from the homotopic invariance of the index that the operator is in fact Fredholm of index zero \cite{Kato_66}. Consequently, the operator is also surjective and therefore invertible. 
		
		For the general case of $s \in [0,t_0+1]$, the proof is the same as Lemma $2$ in \cite{LannesTwoFluid13}.
	\end{proof}
   
   We may now use Lemma \ref{Prop op J} to prove Proposition \ref{Prop Transition}.
   \begin{proof} [Proof of Proposition \ref{Prop Transition}]
   		We first consider the existence of a unique solution $\phi^+$ in the lower domain. Since $\psi \in \dot{H}_{\mu}^{s+\frac{1}{2}}(\R)$ we can use Lemma \ref{Prop op J} to make the definition:
   		\begin{equation*}
   			\psi^+ = (\mathcal{J}_{\mu}[\ve \zeta])^{-1} \psi \in \dot{H}_{\mu}^{s+\frac{1}{2}}(\R)
   			\quad 	
   			\text{and}
   			\quad   			
   			\psi^{-} =  (\mathcal{G}^-_{\mu}[\ve \zeta])^{-1}\mathcal{G}^+_{\mu}[\ve \zeta] \psi^+ \in \mathring{H}^{s+ \frac{1}{2}}(\R)\subset  \dot{H}_{\mu}^{s+\frac{1}{2}}(\R),
   		\end{equation*}
   		where we let $\phi^{\pm}|_{z=0} = \psi^{\pm}$. Then we can use the first point of Proposition \ref{Prop G+ dual est} to deduce a unique solution $\phi^+$ in the lower domain, where the estimate follows from \eqref{est phi +} and \eqref{Est J}:
   		\begin{align*}
   			\| \Lambda^s \nabla^{\mu}_{x,z} \phi^{+} \|_{L^2(\mathcal{S}^{\pm})} 
   			& \leq  \sqrt{\mu}M |\psi^+|_{\dot{H}^{s+\frac{1}{2}}_{\mu}}
   			\\ 
   			& =
   			\sqrt{\mu }M |(\mathcal{J}_{\mu}[\ve \zeta])^{-1}\psi|_{\dot{H}^{s+\frac{1}{2}}_{\mu}}
   			\\ 
   			& 
   			\leq 
   			\sqrt{\mu }M |\psi|_{\dot{H}^{s+\frac{1}{2}}_{\mu}}.
   		\end{align*}

   		For the upper half plane, we use Proposition \ref{Inverse 2}  we use Remark \ref{Remark extension of G-} together with the first point of Proposition \ref{Prop G- dual est} to deduce a unique solution $\phi^-$. The estimate is a consequence of estimates \eqref{est phi -}, \eqref{Inverse est 2}, and then \eqref{Est J}:
   		\begin{align*}
   			\| \Lambda^s\nabla_{x,z}^{\mu}\phi^-\|_{L^2(\mathcal{S}^-)} 
   			& \leq
   			\mu^{\frac{1}{4}} M 	|\psi^-|_{\mathring{H}^{s+\frac{1}{2}}}
   			\\
   			& = 
   		 	\mu^{\frac{1}{4}} \gamma M 	|(\mathcal{G}^-_{\mu}[\ve \zeta])^{-1}\mathcal{G}^+_{\mu}[\ve \zeta] \psi^+|_{\mathring{H}^{s+\frac{1}{2}}}
   			\\
   			& 
   			\leq 
   			\sqrt{\mu} M |\psi^{+}|_{\dot{H}_{\mu}^{s+\frac{1}{2}}}
   			\\ 
   			& 
   			\leq 
   			\sqrt{\mu} M |\psi|_{\dot{H}^{s+\frac{1}{2}}_{\mu}}.
   		\end{align*} 
   \end{proof}

   We may now give the proof of the main result of the section.

   \begin{proof}[Proof of Proposition \ref{Prop G}] We prove each point individually in four separate steps. \\
   	
   	\noindent
   	\underline{Step 1.} The proof of estimate \eqref{1 Est G} and \eqref{Est G} follows by \eqref{Est G+}, \eqref{Est J}, and Plancherel's identity:    
   	\begin{align*}
   		|\mathcal{G}_{\mu}[\ve \zeta]\psi|_{H^{s-\frac{1}{2}}} 
   		 & =
   		|\mathcal{G}^+_{\mu}[\ve \zeta](\mathcal{J}_{\mu}[\ve \zeta])^{-1}\psi|_{H^{s-\frac{1}{2}}}
   		\\
   		&
   		\leq 
   		\mu^{\frac{3}{4}} |(\mathcal{J}_{\mu}[\ve \zeta])^{-1}\psi|_{\dot{H}^{s+\frac{1}{2}}_{\mu}}
   		\\ 
   		&
   		\leq 
   		\mu^{\frac{3}{4}} |\psi|_{\dot{H}^{s+\frac{1}{2}}_{\mu}} 
   		\\
   		&
   		\leq \sqrt{\mu}|\partial_x \psi|_{H^{s-\frac{1}{2}}}.
   	\end{align*}

   \noindent
   \underline{Step 2.} The proof of \eqref{2 Est G} and \eqref{Est G 2}, is proved the same way as in Step $1$, but we instead use \eqref{Est G+ mu}.\\

   \noindent
   \underline{Step 3.} The coercivity estimate \eqref{G Coercive} follows by construction where use the identities in  Remark \ref{rmk id psi- psi+} to get that
   \begin{align*}
   		\big{(} \psi, \mathcal{G}_{\mu}[\ve \zeta] \psi \big{)}_{L^2}
   		= 
   		\big{(} \psi, \mathcal{G}^+_{\mu}[\ve \zeta] (\mathcal{J}_{\mu}[\ve \zeta])^{-1}\psi \big{)}_{L^2}
   		 = 
   		\big{(} \mathcal{J}_{\mu}[\ve \zeta]\psi^+, \mathcal{G}^+_{\mu}[\ve \zeta]\psi^+ \big{)}_{L^2}.
   \end{align*}
   Now, argue as in the proof Lemma  \ref{Prop op J} where estimates \eqref{est psi +} and \eqref{Est J with psi} implies
   \begin{align*}
   		\big{(} \psi, \mathcal{G}_{\mu}[\ve \zeta] \psi \big{)}_{L^2}
   		\geq
   		\frac{\mu}{1+M}|\psi^+|^2_{\dot{H}^{\frac{1}{2}}_{\mu}}
   		\geq 
   		\frac{\mu}{1+M}|\psi|^2_{\dot{H}^{\frac{1}{2}}_{\mu}}.
   \end{align*}

	\noindent
	\underline{Step 4.} The symmetry follows by the second point in Propostions \ref{Prop G+ dual est} and \ref{Prop G- dual est}. \\
	
	\noindent
	\underline{Step 5.} Finally, for the proof estimate \eqref{G continuous form} we use \eqref{dual est G+ s} and Lemma \ref{Prop op J}.\\
   \end{proof}

	\section{Symbolic analysis of the Dirichlet-Neumann operator}\label{Sec Symbolic}
	In this section, we will give a symbolic description of the operator $\mathcal{G}_{\mu}$ defined given by \eqref{Def G}. The estimates need to be precise in terms of the parameters $\mu, \ve$, and where we carefully track the Sobolev regularity with respect to $\psi$ and $\zeta$. One reason for these expressions is that we need to have an estimate of the type:
	\begin{equation}\label{Motivation symbolic}
		|\big{(}\mathcal{G}_{\mu}[\ve \zeta]\circ  (\mathcal{G}^-_{\mu}[\ve \zeta])^{-1}\big{(} \partial_x(f g)\big{)},f
		\big{)}_{L^2(\R)}| \leq  M(t_0+3)|g|_{H^{t_0+1}}|f|^2_{L^2},
	\end{equation}
	which appears naturally in the energy estimates and the quasilinearisation of the main equations. As we can see from \eqref{Motivation symbolic}, one needs to absorb a derivative and be uniform with respect to the small parameters. Also, recall that	
	\begin{equation*}
		\mathcal{G}_{\mu}[\ve \zeta]\psi =   \mathcal{G}^+_{\mu}[\ve \zeta] \Big{(} 1 -   \gamma   (\mathcal{G}^-_{\mu}[\ve \zeta])^{-1} \color{black}\mathcal{G}^+_{\mu}[\ve \zeta] \Big{)}^{-1}\psi,
	\end{equation*}
	which means we need a good symbolic description of each of the operators involved in the expression. This is the strategy that was implemented in \cite{FirstWPWW_Lannes_05,LannesTwoFluid13}, where we first will consider the symbolic description of $\mathcal{G}^{\pm}_{\mu}$, then we treat the inverse operators that are involved.

	\subsection{Symbolic analysis of $\mathcal{G}_{\mu}^{\pm}$} For the symbolic description of $\mathcal{G}^+_{\mu}$, we know that the operator coincides with the ones studied in \cite{LannesTwoFluid13}. In particular, we can use Theorem $4$, in dimension one, which is one of the key estimates of the paper: 
	\begin{prop}[Theorem 4 in \cite{LannesTwoFluid13}]\label{Symbolic thm 4 Lannes}
		Let $t_0>\frac{1}{2}$ and $\zeta \in H^{t_0+3}(\R)$ be such that \eqref{non-cavitation} is satisfied. Then for all $0\leq s \leq t_0$ and $\psi \in \dot{H}_{\mu}^{s+\frac{1}{2}}(\R)$, one can approximate the positive Dirichlet-Neumann operator by
		\begin{equation*}
			\mathrm{Op}(S^+)\psi(x) = \sqrt{\mu}  \mathcal{F}^{-1}\Big{(}|\xi| \tanh(\sqrt{\mu} t(x, \xi)\hat{\psi}(\xi)\Big{)}(x),
		\end{equation*}
		where we define the \lq\lq tail\rq\rq\: by the symbol
		\begin{equation}\label{tail operator}
			t(x,\xi) = (1+\ve \zeta)\frac{\arctan(\ve \sqrt{\mu}\partial_x \zeta)}{\ve \sqrt{\mu}\partial_x \zeta}|\xi|.
		\end{equation}
		Moreover, for $k=0,1$, the operator satisfy
		\begin{equation}\label{Symbolic G+}
			|\mathcal{G}^+_{\mu}[\ve \zeta]\psi - \mathrm{Op}(S^+)\psi|_{H^{s+\frac{k}{2}}}
			\leq
			\ve \mu^{1-\frac{k}{4}}M(t_0 + 3)|\psi|_{\dot{H}_{\mu}^{s+\frac{1}{2}}}.
		\end{equation}
	\end{prop}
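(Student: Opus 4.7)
My plan is to establish the approximation via a parametrix construction for the elliptic problem defining $\phi^+ = \Phi^+\circ \Sigma^+$ on the flat strip $\mathcal{S}^+$. After straightening the domain using the diffeomorphism $\Sigma^+$ from Definition \ref{diffeomorphism +-}, $\phi^+$ solves $\nabla^\mu_{x,z}\cdot P(\Sigma^+)\nabla^\mu_{x,z}\phi^+ = 0$ with Dirichlet data $\psi$ at $z=0$ and the Neumann condition $\partial_n^{P^+}\phi^+|_{z=-1}=0$. The goal is to write $\phi^+$, up to a controlled remainder, as the action of an explicit pseudodifferential semigroup on $\psi$, so that the conormal trace $\partial_n^{P^+}\phi^+|_{z=0}$ reproduces $\mathrm{Op}(S^+)\psi$ modulo an error of the claimed size.

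The first main step would be to factor the operator $L = \nabla^\mu_{x,z}\cdot P(\Sigma^+)\nabla^\mu_{x,z}$ formally as $(\partial_z - \mathrm{Op}(a^-))(\partial_z - \mathrm{Op}(a^+))$ up to a regularizing remainder, with symbols $a^\pm$ of order one in $\xi$ determined by the quadratic relation coming from the principal symbol of $L$. Using the explicit form of $P(\Sigma^+)$ one finds that the relevant upward-propagating symbol, evaluated at $z=0$, is precisely $\sqrt{\mu}\,t(x,\xi)$; the factor $(1+\varepsilon\zeta)\arctan(\varepsilon\sqrt{\mu}\partial_x\zeta)/(\varepsilon\sqrt{\mu}\partial_x\zeta)$ in \eqref{tail operator} is exactly what comes out of diagonalising the symbol of $L$ at a frozen point, correcting the naive expression $\sqrt{\mu}|\xi|$ for the combined depth rescaling and slope-induced anisotropy.

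The second main step would be to build an approximate solution of the form $\phi^+ \approx \mathrm{Op}(E(x,z,\xi))\psi$ where $E$ is glued from the two WKB branches $\exp\bigl(\int_0^z a^+\bigr)$ and $\exp\bigl(\int_{-1}^z a^-\bigr)$, enforcing the bottom Neumann condition — this matching is what produces the $\tanh$ structure in $S^+$. Applying $\mathbf{e}_z\cdot P(\Sigma^+)\nabla^\mu_{x,z}$ at $z=0$ then yields $\mathrm{Op}(S^+)\psi$ as the leading contribution. To quantify the error, writing $r = \phi^+ - \mathrm{Op}(E)\psi$, I would derive an elliptic equation for $r$ whose right-hand side is measured by commutators of pseudodifferential operators with $P(\Sigma^+) - I$, which is $O(\varepsilon)$ by construction. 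Trace inequalities of the type \eqref{trace 1}--\eqref{trace 2} then transfer interior $L^2$-control to Sobolev bounds on $z=0$, producing the factor $\mu^{1-k/4}$ (the $\mu^{-k/4}$ coming from the cost of upgrading the trace by $k/2$ derivatives).

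The main obstacle I anticipate is the low-regularity hypothesis $\zeta\in H^{t_0+3}$: classical pseudodifferential calculus demands smooth symbols, so the composition and commutator estimates above must be carried out in Bony's paradifferential framework. Keeping the $\varepsilon$- and $\mu$-dependence sharp throughout — in particular, verifying that the paradifferential remainders are absorbed into $\varepsilon M(t_0+3)$ without degrading the $\mu^{1-k/4}$ gain — is the delicate bookkeeping step. Once this is in place, standard elliptic energy estimates on $\mathcal{S}^+$ combined with the parametrix construction close the proof of \eqref{Symbolic G+} for both $k=0$ and $k=1$.
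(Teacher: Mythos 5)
First, note that the paper does not prove this proposition at all: it is imported verbatim as Theorem 4 of \cite{LannesTwoFluid13} and used as a black box, so there is no in-paper proof to compare against; the closest in-house analogue is the proof of Proposition \ref{Prop G- S-} for $\mathcal{G}^-_{\mu}$. Measured against that template (which is essentially Lannes's own method), your overall strategy --- build an explicit approximate solution of the straightened elliptic problem on $\mathcal{S}^+$ whose conormal trace at $z=0$ is $\mathrm{Op}(S^+)\psi$, control the error by elliptic energy estimates, and pass to the boundary with trace inequalities costing $\mu^{-k/4}$ per extra half-derivative --- is the right one, and your explanation of the $\tanh$ structure as coming from the matching with the bottom Neumann condition is correct.

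Two points, however, need correction. First, the \lq\lq tail\rq\rq\ symbol \eqref{tail operator} does not come from \lq\lq diagonalising the symbol of $L$ at a frozen point\rq\rq\ evaluated at $z=0$: the matrix $P(\Sigma^+)$ depends on $z$ through $(z+1)\partial_x\zeta$, and the $\arctan$ factor is exactly the integral over $z\in(-1,0)$ of the $z$-dependent principal root, namely $\int_{-1}^{0}\frac{(1+\varepsilon\zeta)|\xi|}{1+\varepsilon^2\mu(z+1)^2(\partial_x\zeta)^2}\,\mathrm{d}z$; freezing the coefficients at $z=0$ produces no $\arctan$ at all. Your Step 2 (WKB phases $\int a^{\pm}\,\mathrm{d}z$) implicitly contains the correct mechanism, but the claim in Step 1 as written is false and should be replaced by the integrated phase. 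Second, asserting that the low-regularity issue is disposed of by \lq\lq Bony's paradifferential framework\rq\rq\ followed by \lq\lq standard elliptic energy estimates\rq\rq\ hides the real difficulty: classical paradifferential estimates come with no uniformity in $\varepsilon$ and $\mu$, whereas the whole content of \eqref{Symbolic G+} is the prefactor $\varepsilon\mu^{1-\frac{k}{4}}$. Both Lannes's proof and the analogous Proposition \ref{Prop G- S-} here rely on the limited-regularity symbol classes and the quantitative product, composition, commutator and adjoint estimates of Proposition \ref{PDO Est Lannes} (together with commutator bounds such as \eqref{Commutator est}), precisely so that every remainder carries explicit $\varepsilon$- and $\mu$-factors under only $\zeta\in H^{t_0+3}(\R)$. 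Without carrying out (or at least citing) that bookkeeping, your final paragraph is a statement of intent rather than a derivation of the $\varepsilon\mu^{1-\frac{k}{4}}$ gain, and this is the genuinely delicate part of the argument.
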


	In the case of infinite depth, it is pointed out in Remark $17$ in \cite{LannesTwoFluid13} that the tail effects vanish (formally) since the hyperbolic tangent is a bottom effect. In particular, we have the following result that proves this fact.

	\begin{prop}\label{Prop G- S-}
		Let $t_0\geq 1$ and $\zeta \in H^{t_0+3}(\R)$. Then for all $0\leq s \leq t_0$ and $\psi \in \mathring{H}^{s+\frac{1}{2}}(\R)$, one can approximate the negative Dirichlet-Neumann operator by
		\begin{equation*}
			S^-(\mathrm{D}) = -\sqrt{\mu} |\mathrm{D}|,
		\end{equation*}
		where we have the following estimate
		\begin{equation}\label{G- symbolic}
			|\mathcal{G}^-_{\mu}[\ve \zeta]\psi - \mathrm{Op}(S^-)\psi|_{H^{s+\frac{1}{2}}} \leq \ve \mu M(t_0 + 3)|\psi|_{\mathring{H}^{s+\frac{1}{2}}}.
		\end{equation}
	\end{prop}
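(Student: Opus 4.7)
The strategy is to straighten the infinite-depth fluid domain via the diffeomorphism $\Sigma^-$, compare the straightened solution against the flat Poisson-kernel extension of $\psi^-$, and control the resulting remainder by elliptic regularity together with the trace inequalities from Lemma~\ref{Lemma trace}.

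Setting $\phi^- := \Phi^-\circ\Sigma^-$, the pull-back satisfies $\nabla^\mu_{x,z}\cdot P(\Sigma^-)\nabla^\mu_{x,z}\phi^- = 0$ in $\mathcal{S}^-$ with $\phi^-|_{z=0}=\psi^-$ and the decay \eqref{Decay est} at $z\to+\infty$. I introduce the reference extension $\phi_0(x,z):=e^{-z\sqrt{\mu}|\mathrm{D}|}\psi^-$, which is harmonic in $\mathcal{S}^-$ with the same boundary data and whose trace $\partial_z\phi_0|_{z=0}$ coincides with $S^-(\mathrm{D})\psi^-$. Using the identification $\mathcal{G}^-_\mu[\ve\zeta]\psi^- = \partial_n^{P^-}\phi^-|_{z=0}$ (as in the proof of Proposition \ref{Inverse 2}) and writing $\phi^- = \phi_0 + r$, the error decomposes as
\begin{equation*}
\mathcal{G}^-_\mu[\ve\zeta]\psi^- - S^-(\mathrm{D})\psi^- = \mathbf{e}_z\cdot(P(\Sigma^-) - I)\nabla^\mu\phi_0\big|_{z=0} + \partial_n^{P^-}r\big|_{z=0},
\end{equation*}
where $r$ solves $\nabla^\mu\cdot P(\Sigma^-)\nabla^\mu r = -\nabla^\mu\cdot(P(\Sigma^-) - I)\nabla^\mu\phi_0$ in $\mathcal{S}^-$ with $r|_{z=0}=0$, so in particular $\partial_n^{P^-}r|_{z=0} = (1+\ve^2\mu(\partial_x\zeta)^2)\partial_z r|_{z=0}$.

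For the elliptic remainder I apply $\Lambda^{s+1}$ to the equation for $r$, pair with $\Lambda^{s+1}r$, integrate by parts (justified by \eqref{Decay est} as in Step~2 of Proposition~\ref{Inverse 2}), invoke the coercivity \eqref{Coercivity-} of $P(\Sigma^-)$, and iterate using the commutator estimate \eqref{Commutator est}. The forcing $(P(\Sigma^-) - I)\nabla^\mu\phi_0$ is controlled in $H^{s+1}(\mathcal{S}^-)$ by tame Moser products with $\zeta\in H^{t_0+3}$, the decisive gain in $\mu$ coming from the $z$-integration of the Poisson kernel (since $\int_0^\infty e^{-2z\sqrt{\mu}|\xi|}\,\mathrm{d}z = (2\sqrt{\mu}|\xi|)^{-1}$ effectively converts vertical derivatives into $\sqrt{\mu}|\mathrm{D}|$). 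The trace inequality \eqref{trace 1} then converts the bulk estimate into the desired $H^{s+1/2}$ control of $\partial_n^{P^-}r|_{z=0}$. For the explicit term, an evaluation using the form of $P(\Sigma^-)-I$ and $\nabla^\mu\phi_0|_{z=0} = (\sqrt{\mu}\partial_x\psi^-,-\sqrt{\mu}|\mathrm{D}|\psi^-)$ yields $-\ve\mu\partial_x\zeta\partial_x\psi^- - \ve^2\mu^{3/2}(\partial_x\zeta)^2|\mathrm{D}|\psi^-$; combined with the leading-order contribution of $\partial_n^{P^-}r|_{z=0}$, the algebraic identity $\partial_x(\zeta\partial_x) + |\mathrm{D}|(\zeta|\mathrm{D}|) = \partial_x\zeta\cdot\partial_x + [|\mathrm{D}|,\zeta]|\mathrm{D}|$ together with the Fourier-space identity $-K\xi + (|K+\xi|-|\xi|)|\xi| = 0$ valid on the sector $|K|\leq|\xi|$ with $\mathrm{sgn}(K)=\mathrm{sgn}(\xi)$ produces a Calderón-commutator-type operator that is genuinely of order zero at high frequency.

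The main obstacle is precisely this cancellation: each individual piece of the decomposition is first-order in $\psi^-$, hence only naturally in $H^{s-1/2}$ from $\psi^-\in\mathring{H}^{s+1/2}$, while only the sum gains the required two derivatives to land in $H^{s+1/2}$ with size $\ve\mu M(t_0+3)|\psi^-|_{\mathring{H}^{s+1/2}}$. To exploit it rigorously I plan to refine the flat ansatz to $\tilde\phi_0 := \phi_0 + \ve\, e^{-z\sqrt{\mu}|\mathrm{D}|}(\zeta\sqrt{\mu}|\mathrm{D}|\psi^-)$, which cancels the first-order boundary mismatch at $z=\ve\zeta$; the new remainder $\tilde r = \phi^- - \tilde\phi_0$ then has both forcing and boundary data of size $\ve^2\mu$, and the $\tilde\phi_0$-contribution to the error reduces precisely to the Calderón-commutator expression above. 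This paraproduct-type estimate drives the requirement of three extra derivatives on $\zeta$ (the ``bad'' derivative is absorbed into $|\zeta|_{H^{t_0+3}}$) and explains why the homogeneous space $\mathring{H}^{s+1/2}$ is natural on the right-hand side, which is precisely the novelty highlighted in the remark preceding the Proposition.
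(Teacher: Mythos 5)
Your proposal takes a genuinely different route from the paper, but as written it has a gap at its central step: the corrector. In the straightened setting you adopt, the flat extension $\phi_0=e^{-z\sqrt{\mu}|\mathrm{D}|}\psi^-$ already matches the Dirichlet datum exactly at $z=0$, so adding $\ve\, e^{-z\sqrt{\mu}|\mathrm{D}|}(\zeta\sqrt{\mu}|\mathrm{D}|\psi^-)$ does not cancel a boundary mismatch — it creates one: $\tilde r|_{z=0}=-\ve\sqrt{\mu}\,\zeta|\mathrm{D}|\psi^-$, which is of size $\ve\sqrt{\mu}$ and carries a full derivative on $\psi^-$, not of size $\ve^2\mu$ as you claim. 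Your justification (``cancels the first-order mismatch at $z=\ve\zeta$'') belongs to the unstraightened domain, but there $e^{-z\sqrt{\mu}|\mathrm{D}|}$ is not controllable below $z=0$ where the interface dips, and repairing this forces a $\zeta$-dependent phase — essentially the frozen-coefficient symbol $L(x,\xi,z)$ that the paper introduces. More importantly, the corrector only addresses the Neumann-data mismatch; the interior forcing of the remainder still contains terms of the type $\ve\mu\,\partial_x\zeta\,\partial_x\partial_z\phi_0$, with two derivatives on $\psi^-$ and only one on $\zeta$, so the $\Lambda^{s+1}$ energy estimate you propose cannot close with $|\psi^-|_{\mathring{H}^{s+\frac{1}{2}}}$ on the right: the derivative loss you correctly identify for the boundary term reappears in the bulk and is not removed by your ansatz. (Your symbol cancellation $-K\xi+(|K+\xi|-|\xi|)|\xi|=0$ on $|K|\leq|\xi|$ is indeed the right mechanism for the boundary piece — the classical order-zero property of $\partial_x\zeta\partial_x+|\mathrm{D}|\zeta|\mathrm{D}|$-type combinations — but you would need a full paralinearization of the interior problem as well, which the proposal does not supply.)

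Two further points. First, \eqref{trace 1} is a Dirichlet trace inequality; it does not give $H^{s+\frac{1}{2}}$ control of the Neumann trace $\partial_n^{P^-}r|_{z=0}$ from a bulk bound. One must pass through the variational/duality formulation, which is exactly how the paper organizes the proof — and where it resolves the derivative loss differently: the dual test function is split into low and high frequencies; against $\chi_1(\mathrm{D})\varphi$ the flat Poisson ansatz suffices because the extra derivative is dumped harmlessly on the frequency-truncated test function, while in high frequency the ansatz $\mathrm{Op}(L)$ is used, whose Neumann datum is exactly $-\sqrt{\mu}|\mathrm{D}|\psi$ and whose interior residual $r_3$ carries derivatives of $\zeta$ only, at the price of the weighted-in-$z$ estimates with $\omega_\mu$. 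Second, the uniform constants $\ve\mu\, M(t_0+3)$ with the $\mathring{H}^{s+\frac{1}{2}}$ norm require the $\mu^{\frac{1}{4}}$ gains from the Poisson smoothing and careful bookkeeping of the weights; you mention the $z$-integration gain but do not carry out this accounting. In short, the strategy might be salvageable along paralinearization lines (Alazard--M\'etivier type), but as written the key claim about the corrected remainder is unjustified and the interior derivative loss is unresolved, so the proof does not go through.
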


	\iffalse
	\begin{remark}
		The symbolic approximation of $\mathcal{G}^-_{\mu}$ is well-known see for instance \cite{FirstWPWW_Lannes_05,AlazardMetivier09,AlazardBurqZuily11, AlazardBurqZuily14}. However, we need to account for the small parameters and arrive at the homogeneous space $\mathring{H}^{s+\frac{1}{2}}(\R)$ to close the estimates later. Moreover, this result seems to be new and could be of independent interest. 
	\end{remark}
	\fi 
	%
	%
	%

	\begin{remark}\label{Remark dim}
		The proof can be seen as a modified version of the proof presented in  \cite{LannesTwoFluid13}, but where we need to make two approximations of an elliptic problem depending on the frequencies and weighted estimates to deal with some integrability issues in $\mathcal{S}^- = \R \times [0,\infty)$. The weights and the cut-off functions are adapted to the approximation and deal with separate issues. We also note that the proof in \cite{LannesTwoFluid13} also extends to two dimensions. In fact, Proposition \ref{Prop G- S-} is mainly the result that restricts Theorem \ref{Thm 1}  to one horizontal dimension in this paper. However, we expect the result to be true for $X \in \R^2$ by modifying the ansatz in the proof and letting $\mathrm{Op}(S^-)$ be given by
		\begin{equation*}
			\mathrm{Op}(S^-)\psi(X) = -\sqrt{\mu}\mathcal{F}^{-1}\Big{(}\sqrt{|\xi|^2 +  \ve^2\mu\big{(} (|\nabla_X \zeta|^2 |\xi|^2)-(\nabla_X \zeta\cdot \xi)^2\big{)}}\hat{\psi}(\xi)\Big{)}(X),
		\end{equation*}
		for $X,\xi \in \R^2$. 
	\end{remark}
	\begin{proof}
		The proof will be given in several steps, where we first decompose the estimate into two main parts depending on the frequencies. In particular, let $\chi_1$ be a Fourier multiplier with a smooth symbol and equal to one around zero. Also, let $\chi_2 = 1- \chi_1$ be the high-frequency part. Then we have by duality
		\begin{align}\notag
			|\mathcal{G}^-_{\mu}[\ve \zeta]\psi + \sqrt{\mu} |\mathrm{D}|\psi|_{H^{s+\frac{1}{2}}} 
			& = 
			\sup\limits_{ |\varphi|_{L^2}=1}
			\Big{|}
			\int_{\R} \Lambda^{s+\frac{1}{2}}\big{(} \mathcal{G}^-_{\mu}[\ve \zeta]\psi + \sqrt{\mu}|\mathrm{D}|\psi\big{)} \chi_1(\mathrm{D}) \varphi \: \mathrm{d}x
			\\ 
			& \hspace{2cm}\notag
			+
			\int_{\R} \Lambda^{s+\frac{1}{2}}\big{(} \mathcal{G}^-_{\mu}[\ve \zeta]\psi +\sqrt{\mu} |\mathrm{D}|\psi\big{)} \chi_2(\mathrm{D})\varphi \: \mathrm{d}x\Big{|}
			\\ 
			& \leq  
			\sup\limits_{ |\varphi|_{L^2}=1} \Big{(} |I_1(\varphi)| + |I_2(\varphi)|\Big{)}.
		\end{align}
		We will now make two approximations of the elliptic problem \eqref{Elliptic pb G-}, where the estimate on $I_1$ can be made using the Poisson kernel. On the other hand, the estimate on $I_2$ requires an approximate solution of \eqref{Elliptic pb G-} that accounts for the principal part of the elliptic operator and is \lq\lq well-behaved\rq\rq\: in high frequency on weighted Sobolev spaces. \\

		\noindent
		\underline{Step 1.} \textit{Approximate solutions for $I_1$}. We know that $\phi_{\mathrm{app}}^1 = e^{-z\sqrt{\mu}|\mathrm{D}|}\psi$ solves
		\begin{equation}\label{Phi app 1}
			\begin{cases}
				(\mu \partial_x^2 + \partial_z^2)\phi_{\text{app}}^1 = 0 \quad \text{in} \quad  \mathcal{S}^-
				\\
				\phi_{\text{app}}^1|_{z=0} = \psi,
			\end{cases}
		\end{equation}
		and
		\begin{align*}
			\partial_z \phi_{\mathrm{app}}^1 |_{z=0} = - \sqrt{\mu}|\mathrm{D}|\psi.
		\end{align*}
		Now, let $\phi^-$ be the solution of \eqref{Elliptic pb G-} and define $u_1 = \phi^- - \phi_{\mathrm{app}}^1$. Then $u_1$ solves
		\begin{equation}\label{Pb u_1}
			\begin{cases}
				\nabla_{x,z}^{\mu} \cdot P(\Sigma^-)\nabla_{x,z}^{\mu} u_1 = \ve \mu r_1 \quad \text{in} \quad  \mathcal{S}^-
				\\
				u_1|_{z=0} = 0, \quad \lim \limits_{z\rightarrow \infty} |\nabla^{\mu}_{x,z} u_1| = 0,
			\end{cases}
		\end{equation}
		where $r_1$ reads
		\begin{align*}
			r_1 
			& = 
			\partial_x\big{(}(\partial_x \zeta)  \partial_z \phi_{\mathrm{app}}^1\big{)} + (\partial_x \zeta)\partial_x\partial_z \phi_{\mathrm{app}}^1 - \ve (\partial_x \zeta)^2 \partial_z^2 \phi_{\mathrm{app}}^1.
		\end{align*}
		Moreover, we have by construction that
		\begin{align*}
			\partial_n^{P^-}u_1 |_{z=0} = \mathcal{G}^-_{\mu}[\ve \zeta]\psi + \sqrt{\mu} |\mathrm{D}|\psi + \ve \mu r_2,
		\end{align*}
		where the second rest is given by
		\begin{equation*}
			r_2  = (\partial_x\zeta) \partial_x \psi + \ve \sqrt{ \mu} (\partial_x \zeta)^2 |\mathrm{D}| \psi.
		\end{equation*}
		\noindent
		\underline{Step 1.1.} \textit{Estimate on $I_1$}. From the previous step, we deduce that
		\begin{align*}
			|I_1(\varphi)| 
			& \leq
			\Big{|} \int_{\R} (\Lambda^{s+\frac{1}{2}}\partial_n^{P^-}u_1 |_{z=0}) \chi_1(\mathrm{D})\varphi \: \mathrm{d}x \Big{|} 
			+
			\ve \mu \Big{|} \int_{\R} (\Lambda^{s+\frac{1}{2}}r_2)  \chi_1(\mathrm{D})\varphi  \: \mathrm{d}x\Big{|}
			\\
			& = 
			|I_1^1(\varphi)| + |I_1^2(\varphi)|.
		\end{align*}
		We will now estimate each piece separately. For the estimate on $|I_1^1(\varphi)|$, we use the Divergence Theorem to deduce that
		\begin{align*}
			I_1^1(\varphi) 
			& = 
			\int_{\mathcal{S}^-} P(\Sigma^-)\nabla_{x,z}^{\mu}u_1 \cdot (\Lambda^{s+\frac{1}{2}}\chi_1(\mathrm{D})\nabla_{x,z}^{\mu}\varphi^{\mathrm{ext_1}})  \: \mathrm{d}x\mathrm{d}z
			+
			\ve \mu
			\int_{\mathcal{S}^-} r_1 (\Lambda^{s+\frac{1}{2}}\chi_1(\mathrm{D}) \varphi^{\mathrm{ext_1}}) \: \mathrm{d}x\mathrm{d}z
			\\
			& =
			I_1^{1,1}(\varphi) + I_1^{1,2}(\varphi).
		\end{align*}
		Here we let $\varphi^{\mathrm{ext_1}}$ be the extension of $\varphi$ onto the upper half-plane defined by
		\begin{equation*}
			\varphi^{\mathrm{ext_1}}(x,z) = \eta(\sqrt{\mu} z) \varphi(x),
		\end{equation*}
		where $\eta(s)$ is a positive smooth function supported on $[0,1]$. Then we estimate each term separately. For the estimate on $I_1^{1,1}$, we first use Hölder inequality and Sobolev embedding to deduce that
		\begin{align}\label{I_1^1,1 to be est}
				|I_1^{1,1}(\varphi)|
				& \leq \sqrt{\mu}(|\eta|_{L^{\infty}}+|\eta'|_{L^{\infty}})
				M \|\nabla_{x,z}^{\mu} u_1 \|_{L^2(\mathcal{S}^-)} |\varphi|_{L^2}.
		\end{align}
		To conclude this estimate, we use \eqref{Coercivity-} and the Divergence Theorem to make the following observation:
		\begin{align*}
			\|\nabla_{x,z}^{\mu} u_1 \|_{L^2(\mathcal{S}^-)}^2 
			& \leq 
			M\int_{\mathcal{S}^-} P(\Sigma^-)\nabla_{x,z}^{\mu} u_1 \cdot \nabla_{x,z}^{\mu} u_1 \: \mathrm{d}x \mathrm{d}z
			\\ 
			& \leq 	\ve \mu
			M  \Big{|}
			\int_{\mathcal{S}^-} r_1 u_1 \: \mathrm{d}x \mathrm{d}z\Big{|}.
		\end{align*}
		Then use the definition of $r_1$ and integration by parts to distribute the derivatives in a convenient way:
		\begin{align*}
			\|\nabla_{x,z}^{\mu} u_1 \|_{L^2(\mathcal{S}^-)}^2 
			& \leq 
			\ve \mu \Big{|}\int_{\mathcal{S}^-} (\partial_x \zeta)  (\partial_z \phi_{\mathrm{app}}^1)\partial_xu_1\: \mathrm{d}x \mathrm{d}z\Big{|}
			+
			\ve \mu \Big{|}\int_{\mathcal{S}^-}(\partial_x \zeta)(\partial_x \phi_{\mathrm{app}}^1)\partial_zu_1\: \mathrm{d}x \mathrm{d}z\Big{|}
			\\ 
			& 
			\hspace{0.5cm}
			+
			\ve^2 \mu \Big{|}\int_{\mathcal{S}^-}(\partial_x \zeta)^2 \partial_z \phi_{\mathrm{app}}^1 \partial_z u_1\: \mathrm{d}x \mathrm{d}z\Big{|}.
		\end{align*}
		 In particular, if we apply Hölder's inequality, Sobolev embedding, and integrate the definition of $ \phi_{\mathrm{app}}^1$ in $z$, we find that
		\begin{align*}
			\|\nabla_{x,z}^{\mu} u_1 \|_{L^2(\mathcal{S}^-)} 
			& \leq
			\ve \mu M  \| e^{-z\sqrt{\mu}|\mathrm{D}|}|\mathrm{D}|\psi\|_{L^2(\mathcal{S}^-)}
			\\
			& \leq
			\ve \mu^{\frac{3}{4}} M |\psi|_{\mathring{H}^{\frac{1}{2}}}
			,
		\end{align*}
		and we use it to can contol \eqref{I_1^1,1 to be est} by
		\begin{align*}
			|I_1^{1,1}(\varphi)|
			& \leq
			\ve \mu M |\psi|_{\mathring{H}^{\frac{1}{2}}} |\varphi|_{L^2}.
		\end{align*}
		Moreover, the estimate on $I_1^{1,2}(\varphi)$ contains $r_1$, and can be estimated similarly by integrating only in $x$ and using the cut-off $\chi_{1}$ to obtain that
			\begin{align*}
			|I_1^{1,2}(\varphi)|
			& \leq
			\ve \mu\Big{(} \Big{|}\int_{\mathcal{S}^-} (\partial_x \zeta)  (\partial_z \phi_{\mathrm{app}}^1)\partial_x(\Lambda^{s+\frac{1}{2}}\chi_1(\mathrm{D}) \varphi^{\mathrm{ext_1}}) \: \mathrm{d}x \mathrm{d}z\Big{|}
			\\ 
			& 
			\hspace{0.5cm}
			+
			\Big{|}\int_{\mathcal{S}^-}(\partial_z \phi_{\mathrm{app}}^1)\partial_x\big((\partial_x \zeta)\Lambda^{s+\frac{1}{2}}\chi_1(\mathrm{D}) \varphi^{\mathrm{ext_1}}\big) \: \mathrm{d}x \mathrm{d}z\Big{|}
			\\ 
			& 
			\hspace{0.5cm}
			+
			\ve \mu\Big{|}\int_{\mathcal{S}^-} (\partial_x e^{-z\sqrt{\mu}|\mathrm{D}|}\psi) \partial_x \big((\partial_x \zeta)^2\Lambda^{s+\frac{1}{2}}\chi_1(\mathrm{D}) \varphi^{\mathrm{ext_1}} \big) \: \mathrm{d}x \mathrm{d}z\Big{|}\Big{)}
			\\ 
			& 
			\leq \ve \mu
			M  |\psi|_{\mathring{H}^{\frac{1}{2}}} |\varphi|_{L^2}.
		\end{align*}
		Finally, we arrive at the estimate of $I_1^2(\varphi)$. Here we use the definition of $r_2$ to find that 
		\begin{align*}
			|I_1^2(\varphi)|
			& \leq 
			\ve \mu \Big{|} \int_{\R}  (\partial_x\zeta) (\partial_x \psi)  (\Lambda^{s+\frac{1}{2}} \chi_1(\mathrm{D})\varphi)  \: \mathrm{d}x\Big{|}
			+
			\ve^2\mu^{\frac{3}{2}} \Big{|} \int_{\R}   (\partial_x \zeta)^2 (|\mathrm{D}| \psi) (\Lambda^{s+\frac{1}{2}} \chi_1(\mathrm{D})\varphi)  \: \mathrm{d}x\Big{|}
			\\ 
			& = 
			|I_1^{2,1}(\varphi)| + |I_1^{2,2}(\varphi)|.
		\end{align*}
		Each term is estimated similarly, and for the estimate of the first term, we recall $\partial_x = - \mathcal{H}|\mathrm{D}|$, where $\mathcal{H}$ is the Hilbert transform. Consequently, we have from Hölder's inequality, Sobolev embedding, and commutator estimate \eqref{Commutator Dhalf} that
		\begin{align*}
			|I_1^{2,1}(\varphi)|
			& 
			\leq 
			\ve \mu \Big{|} \int_{\R}  [\partial_x\zeta, |\mathrm{D}|^{\frac{1}{2}}] \mathcal{H}|\mathrm{D}|^{\frac{1}{2}} \psi)  (\Lambda^{s+\frac{1}{2}} \chi_1(\mathrm{D})\varphi)  \: \mathrm{d}x\Big{|}
			\\ 
			& 
			\hspace{0.5cm}
			+
			\ve \mu \Big{|} \int_{\R}  (\partial_x\zeta) (\mathcal{H}|\mathrm{D}|^{\frac{1}{2}} \psi)  (\Lambda^{s+\frac{1}{2}} |\mathrm{D}|^{\frac{1}{2}}\chi_1(\mathrm{D})\varphi)  \: \mathrm{d}x\Big{|}
			\\ 
			& \leq 
			\ve \mu
			M |\psi|_{\mathring{H}^{\frac{1}{2}}}|\varphi|_{L^2}.
		\end{align*}
		We may now collect all these estimates to conclude this step:
		\begin{align*}
			|I_1(\varphi)| \leq \ve \mu M |\psi|_{\mathring{H}^{\frac{1}{2}}}|\varphi|_{L^2}.
		\end{align*}

		\noindent
		\underline{Step 2.} \textit{Approximate solutions for $I_2$}. By using the approach in \cite{FirstWPWW_Lannes_05,LannesTwoFluid13} we can show that the function:
		\begin{equation}\label{phi app 2}
			\phi_{\text{app}}^2(x,z)
			= 
			(\text{Op}(L) \psi) (x,z)
			= 
			\mathcal{F}^{-1}\Big{(} L(x,\xi,z)\hat{\psi}(\xi)\Big{)}(x),
		\end{equation} 
		where $L$ is a symbol given by
		\begin{equation*}
			L(x,\xi, z) = e^{-z\big( \frac{	\sqrt{\mu}|\xi|}{1+\ve^2\mu(\partial_x \zeta)^2} - i \frac{\ve \mu \partial_x \zeta \xi}{1+\ve^2\mu(\partial_x \zeta)^2}\big)},
		\end{equation*}
		provides a good approximation of the elliptic problem \eqref{Elliptic pb G-} and, by extension, an approximation of the Neumann condition at the boundary. To see this, we use Definition \ref{diffeomorphism +-} for $P(\Sigma^-)$, and \eqref{phi app 2} to compute $(\partial_n^{P^-} \phi^2_{\text{app}})|_{z=0}$:
		\begin{align*}
			(\partial_n^{P^-} \phi^2_{\text{app}})|_{z=0} 
			& 
			=
			\mathbf{e}_z \cdot P(\Sigma^-)\nabla_{x,z}^{\mu}\phi^2_{\text{app}}|_{z=0} 
			\\ 
			& 
			= 
			- 
			\varepsilon \mu (\partial_x\zeta )\partial_x \phi^2_{\text{app}}|_{z=0}  
			+
			\big{(}1+\varepsilon^2 \mu (\partial_x\zeta)^2\big{)}\partial_z \phi^2_{\text{app}}|_{z=0} 
			\\ 
			& 
			= 
			- 
			\varepsilon \mu (\partial_x\zeta )\mathrm{Op}(L)\partial_x\psi|_{z=0}
			+
			\big{(}1+\varepsilon^2 \mu (\partial_x\zeta)^2\big{)}\mathrm{Op}(\partial_zL)\psi|_{z=0}
			\\ 
			&
			 =
			 -\sqrt{\mu} |\mathrm{D}|\psi.
		\end{align*}		
		where used that $\partial_x L |_{z = 0} = 0$. \color{black} 	In fact, the approximation $\phi_{\text{app}}^2$ is constructed from the solution of the following ODE:
		\begin{equation*}
			(1+\ve^2\mu (\partial_x \zeta)^2) \partial_z^2 L - 2 i \ve \mu (\partial_x \zeta)\xi \partial_z L - \mu \xi^2 L = 0,
		\end{equation*}
		which corresponds to the principal part of  \eqref{Elliptic pb G-} with \lq\lq frozen coefficients\rq\rq. As a result, we have that $u_2 = \phi^- - \phi_{\mathrm{app}}^2$ solves
		\begin{equation}\label{Elliptic phi app 2}
			\begin{cases}
				\nabla_{x,z}^{\mu} \cdot P(\Sigma^-)\nabla_{x,z}^{\mu} u_2 = \ve \mu r_3 \quad \text{in} \quad  \mathcal{S}^-
				\\
				u_2|_{z=0} = 0, \quad \lim \limits_{z\rightarrow \infty} \omega(z)|\nabla^{\mu}_{x,z} u_2| = 0,
			\end{cases}
		\end{equation}
		where we simply take $\omega(z) = e^{-\frac{z}{2}}$ and the rest term can be computed explicitly:
		\begin{align*}
			 r_3 
			= 
			2  \text{Op}((\partial_x \zeta) \partial_x\partial_z L) \psi
			- 
			2i \frac{1}{\ve}\text{Op}(\partial_xL\xi) \psi
			-
			\frac{1}{\ve}\text{Op}(\partial_x^2L) \psi
			+
			\mathrm{Op}\big{(}(\partial_x^2\zeta)\partial_zL\big{)}\psi.
		\end{align*}
		One should note that derivatives in $x$ on $L$ give rise to  $\ve \mu$ and is polynomial in $z$. The polynomial dependence in $z$ will require weighted estimates in high frequency to get an estimate for $\psi \in \mathring{H}^{s+\frac{1}{2}}(\R)$.\\
	
		\noindent
		\underline{Step 2.2.} \textit{Estimate on $I_2$}. By construction and the Divergence Theorem, we have that
		\begin{align*}
			I_2(\varphi) 
			& = 
			\int_{\R} (\Lambda^{s+\frac{1}{2}}\partial_n^{P^-}u_2|_{z=0})
			\chi_2(\mathrm{D})\varphi \: \mathrm{d}x
			\\ 
			& = 
			\int_{\mathcal{S}^-} \Lambda^{s+1}P(\Sigma^-)\nabla_{x,z}^{\mu}u_2 \cdot (\Lambda^{-\frac{1}{2}}\chi_2(\mathrm{D})\nabla_{x,z}^{\mu}\varphi^{\mathrm{ext_2}})  \: \mathrm{d}x\mathrm{d}z
			\\ 
			& 
			\hspace{0.5cm}
			+
			\ve \mu
			\int_{\mathcal{S}^-} (\Lambda^{s}r_3) (\Lambda^{\frac{1}{2}}\chi_2(\mathrm{D}) \varphi^{\mathrm{ext_2}}) \: \mathrm{d}x\mathrm{d}z
			\\
			& =
			I_2^{1}(\varphi) + I_2^{2}(\varphi).
		\end{align*}
		We now need to incorporate a weight in the estimates. To do so, we let $\varphi^{\mathrm{ext_2}}$ be the extension of $\varphi$ onto the upper half-plane defined by
		\begin{equation*}
			\varphi^{\mathrm{ext_2}}(x,z) = (\omega_{\mu}(z))^2 e^{-z\sqrt{\mu}|\mathrm{D}|}\varphi(x).
		\end{equation*}
		Here we let $\omega_{\mu}(z) = e^{-\frac{\sqrt{\mu} z}{2M}}$. %and $\eta_{\mu}(z) = \eta(\sqrt{\mu}z)$ where $\eta(z) \in [0,1]$ is smooth. 
		For the estimate on $I_2^{1}(\varphi)$ we first make the decomposition
		\begin{align*}
			I_2^{1}(\varphi)
			& = 
			\int_{\mathcal{S}^-} \omega_{\mu}\chi_2(\mathrm{D})\Lambda^{s+1}P(\Sigma^-)\nabla_{x,z}^{\mu}u_2 \cdot (\Lambda^{-\frac{1}{2}} (\omega_{\mu})^{-1}\nabla_{x,z}^{\mu}\varphi^{\mathrm{ext_2}})  \: \mathrm{d}x\mathrm{d}z
			\\
			& = 
			\int_{\mathcal{S}^-} [\chi_2(\mathrm{D})\Lambda^{s+1},P(\Sigma^-)]\omega_{\mu}\nabla_{x,z}^{\mu}u_2 \cdot (\Lambda^{-\frac{1}{2}}(\omega_{\mu})^{-1}\nabla_{x,z}^{\mu}\varphi^{\mathrm{ext_2}})  \: \mathrm{d}x\mathrm{d}z
			\\ 
			& 
			\hspace{0.5cm}
			+
			\int_{\mathcal{S}^-} P(\Sigma^-) \Lambda^{s+1}\chi_2(\mathrm{D})\omega_{\mu}\nabla_{x,z}^{\mu}u_2 \cdot (\Lambda^{-\frac{1}{2}}(\omega_{\mu})^{-1}\nabla_{x,z}^{\mu}\varphi^{\mathrm{ext_2}})  \: \mathrm{d}x\mathrm{d}z
			\\ 
			& =
			I_2^{1,1}(\varphi) + I_2^{1,2}(\varphi).
		\end{align*}
		For the control of $I_2^{1,1}(\varphi)$, we use Cauchy-Schwarz, commutator estimate \eqref{Commutator est}, and the half-derivative smoothing from the Poisson kernel:
		\begin{equation}\label{smoothing}
			 \|  |\mathrm{D}|^{\frac{1}{2}} e^{-z\sqrt{\mu}|\mathrm{D}|}\varphi \|_{L^2(\mathcal{S}^-)} \leq \mu^{-\frac{1}{4}}|\varphi|_{L^2},
		\end{equation}
		and the weight estimate:
		\begin{equation*}
			\| \omega_{\mu}(z) \varphi \|_{L^2(\mathcal{S}^-)} \leq \mu^{-\frac{1}{4}}M|\varphi|_{L^2},
		\end{equation*}
		to get that
		\begin{align*}
			|I_2^{1,1}(\varphi)|
			& \leq
			\ve \sqrt{\mu} M \| \omega_{\mu}\Lambda^{s} \nabla_{x,z}^{\mu} u_2\|_{L^2(\mathcal{S}^-)} \| \Lambda^{-\frac{1}{2}}(\omega_{\mu})^{-1}\nabla_{x,z}^{\mu}\varphi^{\mathrm{ext_2}}) \|_{L^2(\mathcal{S}^-)} 
		%	\
		%	& 
		%	\leq 
		%	\ve \mu M(t_0+1) \| \omega_{\mu}\Lambda^{s} \nabla_{x,z}^{\mu} u_2\|_{L^2(\mathcal{S}^-)} \sqrt{\mu} \| |\mathrm{D}|^{\frac{1}{2}}  e^{-z\sqrt{\mu}|\mathrm{D}|}\varphi \|_{L^2(\mathcal{S}^-)}
			\\
			& \leq 
			\ve \sqrt{\mu} 
			M \| \omega_{\mu}\Lambda^{s} \nabla_{x,z}^{\mu} u_2\|_{L^2(\mathcal{S}^-)} \mu^{\frac{1}{4}} |\varphi |_{L^2}.
		\end{align*}
		We also use the definition $u_2 = \phi^- - \phi_{\mathrm{app}}^2$, the decay of the weight,  estimates \eqref{est phi -} and the ones provided in Lemma \ref{est Op L} to obtain	
		\begin{align*}
			 \| \omega_{\mu} \Lambda^{s} \nabla_{x,z}^{\mu}u_2\|_{L^2(\mathcal{S}^-)} & \leq  \| \Lambda^{s} \nabla_{x,z}^{\mu} \phi \|_{L^2(\mathcal{S}^-)} 
			 +
			 \|\frac{1}{z} \Lambda^{s} \mathrm{Op}(\partial_x L )\psi\|_{L^2(\mathcal{S}^-)} 
			 \\ 
			 & 
			 \hspace{0.5cm}
			 +
			 \sqrt{\mu}\| \Lambda^{s} \mathrm{Op}( L)\partial_x\psi\|_{L^2(\mathcal{S}^-)} 
			 +
			 \| \Lambda^{s} \mathrm{Op}( \partial_z L)\psi\|_{L^2(\mathcal{S}^-)}
			 \\
			 & \leq \mu^{\frac{1}{4}} M |\psi|_{\mathring{H}^{s+\frac{1}{2}}}.
		\end{align*}
		We therefore have
		\begin{align*}
				|I_2^{1,1}(\varphi)| \leq \ve \mu M |\psi|_{\mathring{H}^{s+\frac{1}{2}}} |\varphi |_{L^2}.
		\end{align*}
		For the estimate on $ I_2^{1,2}(\varphi)$, we use Hölder's inequality and Sobolev embedding to find that
		\begin{align*}
			|I_2^{1,2}(\varphi)| 
			& 
			\leq \mu^{\frac{1}{4}}
			M \|\Lambda^{s+1}\chi_2(\mathrm{D})\omega_{\mu}\nabla_{x,z}^{\mu} u_2\|_{L^2(S^-)}
			| \varphi|_{L^2}.
		\end{align*}
		So we have again reduced the problem to an elliptic estimate. Using the coercivity of $P(\Sigma^-)$ and integration by parts we find that
		\begin{align}
			 A \label{A norm}
			 & = \|\Lambda^{s+1}\chi_2(\mathrm{D})\omega_{\mu}\nabla_{x,z}^{\mu} u_2 \|_{L^2(S^-)}^2
			 \\ 
			 & \leq  \notag
			M  \Big{|}\int_{\mathcal{S}^-}
			 P(\Sigma^-)\Lambda^{s+1}\chi_2(\mathrm{D})\omega_{\mu}\nabla_{x,z}^{\mu} u_2 \cdot \Lambda^{s+1}\chi_2(\mathrm{D})\omega_{\mu}\nabla_{x,z}^{\mu} u_2 \: \mathrm{d}x \mathrm{d}z \Big{|}
			 \\ 
			 & \leq  \notag
			 M \Big{|}\int_{\mathcal{S}^-}
			 (\nabla_{x,z}^{\mu}\cdot  P(\Sigma^-)\Lambda^{s+1}\chi_2(\mathrm{D})\omega_{\mu}\nabla_{x,z}^{\mu} u_2)  (\Lambda^{s+1}\chi_2(\mathrm{D})\omega_{\mu}u_2) \: \mathrm{d}x \mathrm{d}z \Big{|}
			 \\ 
			 &  \notag
			 \hspace{0.5cm}
			 +
			 M \Big{|}\int_{\mathcal{S}^-}
			 (\mathbf{e}_z \cdot P(\Sigma^-)\Lambda^{s+1}\chi_2(\mathrm{D})\omega_{\mu}\nabla_{x,z}^{\mu} u_2)  (\Lambda^{s+1}\chi_2(\mathrm{D})\omega'_{\mu}u_2) \: \mathrm{d}x \mathrm{d}z \Big{|}
			 \\  \notag
		 	 & = M \cdot (A_1 + A_2). 
		\end{align}
		Before estimating $A_1$, we will decompose it into three pieces:
		\begin{align*}
			A_1 
			& = 
			\Big{|}\int_{\mathcal{S}^-}
			(\nabla_{x,z}^{\mu}\cdot  [P(\Sigma^-),\Lambda^{s+1}\chi_2(\mathrm{D})]\omega_{\mu}\nabla_{x,z}^{\mu}u_2)  (\Lambda^{s+1}\chi_2(\mathrm{D})\omega_{\mu}u_2) \: \mathrm{d}x \mathrm{d}z \Big{|}
			\\ 
			& \hspace{0.5cm}	
			+
			\ve \mu
			\Big{|}\int_{\mathcal{S}^-}
			(\Lambda^{s+1}\chi_2(\mathrm{D})\omega_{\mu}r_3)  (\Lambda^{s+1}\chi_2(\mathrm{D})\omega_{\mu}u_2) \: \mathrm{d}x \mathrm{d}z \Big{|}
			\\ 
			& \hspace{0.5cm}
			+
			\Big{|}\int_{\mathcal{S}^-}
			(\Lambda^{s+1}\chi_2(\mathrm{D})(\omega_{\mu})' \mathbf{e}_z \cdot  P(\Sigma^-) \nabla_{x,z}^{\mu} u_2)  (\Lambda^{s+1}\chi_2(\mathrm{D})\omega_{\mu}u_2) \: \mathrm{d}x \mathrm{d}z \Big{|}
			\\ 
			& = 
			A_1^1 + A_1^2 + A_1^3.
		\end{align*}
		For the estimate on $A_1^1$, we write out each term explicitly to clearly see the terms we need to treat. In particular, we have
		\begin{align*}
			\nabla_{x,z}^{\mu} \cdot  [P(\Sigma^-), \Lambda^{s+1}\chi_2&(\mathrm{D})]\omega_{\mu}\nabla_{x,z}^{\mu} u_2 
			\\ 
			& = 
			-\ve \mu \partial_x[\partial_x \zeta, \Lambda^{s+1}\chi_2(\mathrm{D})]\omega_{\mu}\partial_z u_2
			-
			\ve \mu \partial_z[\partial_x \zeta, \Lambda^{s+1}\chi_2(\mathrm{D})]\omega_{\mu}\partial_x u_2
			\\ 
			& \hspace{0.5cm}
			+
			\ve^2\mu \partial_z[(\partial_x \zeta)^2, \Lambda^{s+1}\chi_2(\mathrm{D})]\omega_{\mu}\partial_z u_2.
		\end{align*}
		Then using this decomposition together with Plancherel's identity, integration by parts, commutator estimate \eqref{Commutator est}, and the support of $\chi_2$,  we find that
		\begin{align*}
			A_1^1 
			& \leq 
			\ve \mu  \Big{|}\int_{\mathcal{S}^-}
			(\Lambda^{-1}\partial_x[\partial_x \zeta, \Lambda^{s+1}\chi_2(\mathrm{D})]\omega_{\mu}\partial_z u_2)  (\Lambda^{s+2}\chi_2(\mathrm{D})\omega_{\mu}u_2) \: \mathrm{d}x \mathrm{d}z \Big{|}
			\\ 
			&
			\hspace{0.5cm}
			+
			\ve \mu  \Big{|}\int_{\mathcal{S}^-}
			([\partial_x^2 \zeta, \Lambda^{s+1}\chi_2(\mathrm{D})]\omega_{\mu}\partial_z u_2)  (\Lambda^{s+1}\chi_2(\mathrm{D})\omega_{\mu}u_2) \: \mathrm{d}x \mathrm{d}z \Big{|}
			\\ 
			&
			\hspace{0.5cm}
			+
			\ve \mu  \Big{|}\int_{\mathcal{S}^-}
			([\partial_x \zeta, \Lambda^{s+1}\chi_2(\mathrm{D})]\omega_{\mu}\partial_z u_2)  (\Lambda^{s+1}\chi_2(\mathrm{D})\omega_{\mu}\partial_x u_2) \: \mathrm{d}x \mathrm{d}z \Big{|}
			\\ 
			&
			\hspace{0.5cm}
			+
			\ve^2\mu \Big{|}\int_{\mathcal{S}^-}
			( [\partial_x \zeta, \Lambda^{s+1}\chi_2(\mathrm{D})]\omega_{\mu}\partial_z u_2)
			(\Lambda^{s+1}\chi_2(\mathrm{D})\partial_z(\omega_{\mu} u_2)) \: \mathrm{d}x \mathrm{d}z \Big{|}
			\\
			& 
			\leq \ve \mu  M 
			\| \Lambda^s\partial_z u_2\|_{L^2(\mathcal{S}^-)}
			\|\Lambda^{s+1}\chi_2(\mathrm{D})\omega_{\mu}\nabla_{x,z} u_2 \|_{L^2(S^-)}.
		\end{align*}
		Then to conclude this part, we simply use that $u_2 = \phi^- - \phi_{\mathrm{app}}^2$, together with the estimates \eqref{est phi -} and Lemma \ref{est Op L}:
		\begin{align*}
			A_1^1 
			& \leq \ve \mu^{1+\frac{1}{4}}  M |\psi|_{\mathring{H}^{s+\frac{1}{2}}}
			\|\Lambda^{s+1}\chi_2(\mathrm{D})\omega_{\mu}\nabla_{x,z} u_2 \|_{L^2(S^-)}
			\\
			& \leq \ve \mu^{\frac{3}{4}}  M |\psi|_{\mathring{H}^{s+\frac{1}{2}}}
			\|\Lambda^{s+1}\chi_2(\mathrm{D})\omega_{\mu}\nabla_{x,z}^{\mu} u_2 \|_{L^2(S^-)}.
		\end{align*}
		Next, we make an estimate on $A_1^2$ using the definition of $r_3$ in the previous step:
		\begin{align*}
			A_1^2 
			& \leq 
			2\ve \mu
			\Big{|}\int_{\mathcal{S}^-}
			(\Lambda^{s}\chi_2(\mathrm{D})\omega_{\mu} \text{Op}((\partial_x \zeta) \partial_x\partial_z L) \psi)  (\Lambda^{s+2}\chi_2(\mathrm{D})\omega_{\mu}u_2) \: \mathrm{d}x \mathrm{d}z \Big{|}
			\\ 
			& 
			\hspace{0.5cm}
			+
			2\mu 
			\Big{|}\int_{\mathcal{S}^-}
			(\Lambda^{s}\chi_2(\mathrm{D})\omega_{\mu}\text{Op}(\partial_xL) \partial_x \psi)  (\Lambda^{s+2}\chi_2(\mathrm{D})\omega_{\mu}u_2) \: \mathrm{d}x \mathrm{d}z \Big{|}
			\\ 
			& 
			\hspace{0.5cm}
			+\ve \mu 
			\Big{|}\int_{\mathcal{S}^-}
			(\Lambda^{s}\chi_2(\mathrm{D})\omega_{\mu} \mathrm{Op}\big{(}(\partial_x^2\zeta)\partial_zL\big{)}\psi)  (\Lambda^{s+2}\chi_2(\mathrm{D})\omega_{\mu}u_2) \: \mathrm{d}x \mathrm{d}z \Big{|}
			\\ 
			& 
			\hspace{0.5cm}
			+\mu
			\Big{|}\int_{\mathcal{S}^-}
			(\Lambda^{s}\chi_2(\mathrm{D})\omega_{\mu}\text{Op}(\partial_x^2L) \psi)  (\Lambda^{s+2}\chi_2(\mathrm{D})\omega_{\mu}u_2) \: \mathrm{d}x \mathrm{d}z \Big{|}
			\\ 
			& = 
			A_{1}^{2,1}
			+
			A_{1}^{2,2}
			+
			A_{1}^{2,3}
			+
			A_{1}^{2,4}.
		\end{align*}
		The first three terms are easily estimated by Lemma \ref{est Op L}, with Remark \ref{remark op L}, and the support of $\chi_2$ where we find that
		\begin{align*}
			A_{1}^{2,1}
			+
			A_{1}^{2,2}
			+
			A_{1}^{2,3} 
			& \leq  
			\ve \mu^{1+\frac{1}{4}} M |\psi|_{\mathring{H}^{s+\frac{1}{2}}}
			\|\Lambda^{s+1}\chi_2(\mathrm{D})\omega_{\mu}\nabla_{x,z} u_2 \|_{L^2(S^-)}
			\\
			& \leq
			\ve \mu^{\frac{3}{4}} M |\psi|_{\mathring{H}^{s+\frac{1}{2}}}
			\|\Lambda^{s+1}\chi_2(\mathrm{D})\omega_{\mu}\nabla_{x,z}^{\mu} u_2 \|_{L^2(S^-)}.
		\end{align*}
		For the last term, we use the weight to gain decay in $z$. Then apply the same estimates as above to find that
		\begin{align*}
			A_{1}^{2,4} 
			& \leq  
			\sqrt{\mu} \|\Lambda^{s}\chi_2(\mathrm{D})\text{Op}(\frac{1}{z}\partial_x^2L) \psi\|_{L^2(\mathcal{S}^-)} 
			\|\Lambda^{s+2}\chi_2(\mathrm{D})\omega_{\mu}u_2\|_{L^2(\mathcal{S}^-)}
		%	\\
		%	& \leq  
		%	\ve \mu^{1+\frac{1}{4}} M(t_0+3) |\psi|_{\mathring{H}^{s+\frac{1}{2}}} 
		%	\|\Lambda^{s+1}\chi_2(\mathrm{D})\omega_{\mu}\nabla_{x,z}u_2\|_{L^2(\mathcal{S}^-)}
			\\
			& \leq  
			\ve \mu^{\frac{3}{4}} M(t_0+3) |\psi|_{\mathring{H}^{s+\frac{1}{2}}} 
			\|\Lambda^{s+1}\chi_2(\mathrm{D})\omega_{\mu}\nabla_{x,z}^{\mu} u_2\|_{L^2(\mathcal{S}^-)}.
		\end{align*}
		As a result, we have the same estimate on $A_1^2$, and so we proceed with the estimate on $A_1^3$. We observe by definition of $P(\Sigma^-)$ that we have one term without $\ve$ that need special attention:
		\begin{align*}
			A_1^3
			& \leq 
			 \frac{\ve \mu^{\frac{3}{2}}}{2M} \Big{|}\int_{\mathcal{S}^-}
			(\Lambda^{s}\chi_2(\mathrm{D})\omega_{\mu} (\partial_x \zeta) \partial_x u_2)  (\Lambda^{s+2}\chi_2(\mathrm{D})\omega_{\mu}u_2) \: \mathrm{d}x \mathrm{d}z \Big{|}
			\\ 
			& \hspace{0.5cm}
			+
			\frac{\ve^2 \mu^{\frac{3}{2}}}{2M} \Big{|}\int_{\mathcal{S}^-}
			(\Lambda^{s}\chi_2(\mathrm{D})\omega_{\mu} (\partial_x \zeta)^2 \partial_z u_2)  (\Lambda^{s+2}\chi_2(\mathrm{D})\omega_{\mu}u_2) \: \mathrm{d}x \mathrm{d}z \Big{|}
			\\ 
			&
			\hspace{0.5cm}
			+
			\frac{ \sqrt{\mu}}{2M}\Big{|}\int_{\mathcal{S}^-}
			(\Lambda^{s+1}\chi_2(\mathrm{D})\omega_{\mu}  \partial_z u_2)  (\Lambda^{s+1}\chi_2(\mathrm{D})\omega_{\mu}u_2) \: \mathrm{d}x \mathrm{d}z \Big{|}
			\\ 
			& = 
			A_1^{3,1} + A_1^{3,2} + A_1^{3,3}.
		\end{align*}
		Here we see that the first two terms are easily estimated by using the weight to gain decay and then combining it with the estimates in Lemma \ref{est Op L},
		\begin{align*}
			A_1^{3,1} + A_1^{3,2} \leq \ve \mu^{\frac{3}{4}} M(t_0+1)|\psi|_{\mathring{H}^{s+\frac{1}{2}}} \|\Lambda^{s+1}\chi_2(\mathrm{D})\omega_{\mu}\nabla_{x,z}^{\mu}u_2\|_{L^2(\mathcal{S}^-)}.
		\end{align*}
		Lastly, for $A_1^{3,3}$ we use integration by parts and the support of $\chi_2$ to find that
		\begin{align*}
		 	A_1^{3,3} 
		 	& \leq 
		 	\frac{\mu}{4M} 
		 	\Big{|}\int_{\mathcal{S}^-}
		 	(\Lambda^{s+1}\chi_2(\mathrm{D})\omega_{\mu}  u_2)  (\Lambda^{s+1}\chi_2(\mathrm{D})\omega_{\mu}u_2) \: \mathrm{d}x \mathrm{d}z \Big{|}
		 	\\ 
		 	& 
		 	\leq
		 	\frac{\mu}{4M}  \|\Lambda^{s+1}\chi_2(\mathrm{D})\omega_{\mu}\nabla_{x,z}u_2\|_{L^2(\mathcal{S}^-)}^2,
		\end{align*}
		and therefore can be reabsorbed into $A$. The only remaining estimate is now on $A_2$. However, we note that it is similar to $A_1^3$, and the same estimates apply. In conclusion, we may gather all these estimates and use \eqref{A norm} to find that
		\begin{align*}
			(1-	\frac{1}{4} )
			\|\Lambda^{s+1}\chi_2(\mathrm{D})\omega_{\mu}\nabla_{x,z}^{\mu} u_2 \|_{L^2(S^-)} \leq \ve \mu^{\frac{3}{4}} M(t_0+3)|\psi|_{\mathring{H}^{s+\frac{1}{2}}}.
		\end{align*}
		Returning to $I_2^1$, we obtain the bound
		\begin{align*}
			|I_2^1(\varphi)| \leq \ve \mu M(t_0+3)|\psi|_{\mathring{H}^{s+\frac{1}{2}}}|\varphi|_{L^2}.
		\end{align*}
		To conclude this step, it only remains to estimate $I_2^{2}(\varphi)$. However, this is just a simple version of $A_1^2$ and the following estimate is easy to deduce
		\begin{align*}
			|I_2(\varphi)| 
			& \leq |I_2^1(\varphi)| +|I_2^2(\varphi)| 
			\\ 
			& 
			\leq 
			\ve \mu M(t_0+3)|\psi|_{\mathring{H}^{s+\frac{1}{2}}}|\varphi|_{L^2}.
		\end{align*}
	
		\noindent
		\underline{Step 3.} \textit{Conclusion of proof}. We have from Step $1$. and Step $2$. that the result holds:
		\begin{align*}
			|\mathcal{G}^-_{\mu}[\ve \zeta]\psi + \sqrt{\mu} |\mathrm{D}|\psi|_{H^{s+\frac{1}{2}}} 
			& \leq  
			\sup\limits_{ |\varphi|_{L^2}=1} \Big{(} |I_1(\varphi)| + |I_2(\varphi)|\Big{)}
			\\ 
			 & \leq 	\ve \mu M(t_0 + 3)|\psi|_{\mathring{H}^{s+\frac{1}{2}}}. 
		\end{align*}

	\end{proof}

	\subsection{Symbolic analysis of $(\mathcal{G}_{\mu}^-)^{-1}\mathcal{G}_{\mu}^+$} The next step in studying the symbolic behaviour of $\mathcal{G}_{\mu}$ is to understand the composition of $(\mathcal{G}_{\mu}^-)^{-1}$ with $\mathcal{G}_{\mu}^+$.  

	\begin{cor}\label{Cor 3.4}
		Let $t_0 \geq 1$ and $\zeta \in H^{t_0 +3}(\R)$ be such that \eqref{non-cavitation} is satisfied. Then for all $0 \leq s \leq t_0$ and $f \in \mathscr{S}(\R)$ %$\psi \in H^{s-\frac{1}{2}}(\R)$
		 one can approximate the operator $(\mathcal{G}_{\mu}^-)^{-1}\mathcal{G}_{\mu}^+$ by
		\begin{equation*}
			\mathrm{Op}\Big( \frac{S^+}{S^-} \Big)f(x)
			= 
			-  \mathcal{F}^{-1} \big{(}\tanh(\sqrt{\mu} t(x, \xi)) \hat{f}(\xi) \big{)}(x),
		\end{equation*}
		where $t$ is defined by \eqref{tail operator}. Moreover, there holds,
		\begin{equation}\label{Symbol G-invG+}
			|(\mathcal{G}^-_{\mu}[\ve \zeta])^{-1}\mathcal{G}^+_{\mu}[\ve \zeta] f - \mathrm{Op}\Big( \frac{S^+}{S^-} \Big) f|_{\mathring{H}^{s+\frac{1}{2}}} \leq \ve \mu^{\frac{1}{4}} M(t_0 + 3)|f|_{H^{s-\frac{1}{2}}}. 
			\color{black}
		\end{equation}
			
	\end{cor}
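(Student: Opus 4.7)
The plan is to reduce the desired estimate on $h - g$, where $h = (\mathcal{G}^-_{\mu}[\ve\zeta])^{-1}\mathcal{G}^+_{\mu}[\ve\zeta]f$ and $g = \mathrm{Op}(S^+/S^-)f$, to a combination of the symbolic approximations in Propositions \ref{Symbolic thm 4 Lannes} and \ref{Prop G- S-}, followed by the variational inversion of $\mathcal{G}^-_{\mu}[\ve\zeta]$ used in Step 2 of the proof of Proposition \ref{Inverse 2}. First I would apply $\mathcal{G}^-_{\mu}[\ve\zeta]$ to the difference and decompose
\begin{align*}
\mathcal{G}^-_{\mu}[\ve\zeta](h-g)
&= \bigl(\mathcal{G}^+_{\mu}[\ve\zeta]f - \mathrm{Op}(S^+)f\bigr) \\
&\quad + \bigl(\mathrm{Op}(S^+)f - S^-(\mathrm{D})g\bigr) + \bigl(S^-(\mathrm{D})g - \mathcal{G}^-_{\mu}[\ve\zeta]g\bigr).
\end{align*}
The first and third terms are symbolic residuals bounded directly by \eqref{Symbolic G+} (taking $k=1$) and \eqref{G- symbolic}, so the only nontrivial piece is the middle term.

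This middle term is a composition defect: by the very definition of $\mathrm{Op}(S^+/S^-)$ one has $S^-(\mathrm{D})g = \sqrt{\mu}|\mathrm{D}|\,\mathrm{Op}(T)f$ and $\mathrm{Op}(S^+)f = \sqrt{\mu}\,\mathrm{Op}(|\xi|T(x,\xi))f$ with $T(x,\xi) = \tanh(\sqrt{\mu}t(x,\xi))$, so the difference equals $-\sqrt{\mu}\bigl[|\mathrm{D}|,\mathrm{Op}(T)\bigr]f$. Standard pseudo-differential symbolic calculus identifies the leading order of this commutator as $\mathrm{Op}\bigl(\sqrt{\mu}\,\mathrm{sgn}(\xi)\partial_x T(x,\xi)\bigr)f$ modulo lower-order remainders. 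A direct computation gives $\partial_x T = \sqrt{\mu}\,\mathrm{sech}^2(\sqrt{\mu}t)\partial_x t$ with $|\partial_x t|\lesssim \ve\,M(t_0+3)|\xi|$, and the crucial point is that the smoothing factor $\sqrt{\mu}|\xi|\,\mathrm{sech}^2(\sqrt{\mu}|\xi|)$ effectively localizes the symbol to frequencies $|\xi|\lesssim \mu^{-1/2}$. Exploiting this localization together with the $H^{t_0+3}$-regularity of $\zeta$ yields a middle-term bound of size $\ve\mu^{3/4}\,M(t_0+3)|f|_{H^{s-1/2}}$ in the Sobolev space dual to $\mathring{H}^{s+1/2}$.

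To convert this control on $\mathcal{G}^-_{\mu}(h-g)$ into the desired estimate on $h - g$, I would adapt the variational argument from Proposition \ref{Inverse 2}: pairing $\mathcal{G}^-_{\mu}[\ve\zeta]\phi = F$ against $\Lambda^{2s}\phi$ in the strip $\mathcal{S}^-$, using the coercivity \eqref{Coercivity-} and the trace inequality \eqref{trace 1}, yields a bound of the form $|h-g|_{\mathring H^{s+1/2}}\leq \mu^{-1/2}M(t_0+3)\,\|F\|_*$ for the appropriate dual-Sobolev norm, contributing an extra factor of $\mu^{-1/2}$. Combining this inversion with the three residual bounds above yields exactly the claimed $\ve\mu^{1/4}M(t_0+3)|f|_{H^{s-1/2}}$ estimate. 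The main obstacle is keeping the $\mu$-powers sharp throughout: a naive commutator bound that ignores the frequency localization induced by $\mathrm{sech}^2(\sqrt{\mu}t)$ loses a factor of $\mu^{1/4}$ and fails to close the inequality, which is precisely why the sharp pseudo-differential estimates developed in Section \ref{Sec Symbolic} are essential.
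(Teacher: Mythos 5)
Your skeleton (apply $\mathcal{G}^-_{\mu}[\ve\zeta]$ to the difference, split into two symbolic residuals plus a composition defect, then invert $\mathcal{G}^-_{\mu}$ variationally) has the same shape as the paper's argument, but there is a genuine gap in your treatment of the first and third residuals, and it sits exactly where the content of the corollary lies. The estimates \eqref{Symbolic G+} and \eqref{G- symbolic} require their argument at the level $\dot{H}^{s+\frac{1}{2}}_{\mu}$ resp.\ $\mathring{H}^{s+\frac{1}{2}}$: applied \emph{directly} to $f$ and to $g=\mathrm{Op}(S^+/S^-)f$ (which has only the regularity of $f$, since $\mathrm{Op}(S^+/S^-)$ is of order zero), they give bounds of size $\ve\mu^{\frac{3}{4}}M|f|_{\dot{H}^{s+\frac{1}{2}}_{\mu}}$ and $\ve\mu M|g|_{\mathring{H}^{s+\frac{1}{2}}}\lesssim \ve\mu M|f|_{\mathring{H}^{s+\frac{1}{2}}}$, i.e.\ one full derivative above the $|f|_{H^{s-\frac{1}{2}}}$ that appears in \eqref{Symbol G-invG+}. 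Measuring the output in a norm dual to $\mathring{H}^{s+\frac{1}{2}}$ does not repair this, because the input regularity demanded by those estimates is unchanged. As written, your scheme only yields the weaker, non-smoothing statement with $|f|$ at the $s+\frac{1}{2}$ level on the right, whereas the one-derivative gain of \eqref{Symbol G-invG+} is precisely what is exploited later (cf.\ \eqref{Motivation symbolic}, \eqref{Est I a}).

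The paper avoids this by never estimating $\mathcal{G}^+_{\mu}f-\mathcal{G}^-_{\mu}\tilde{\psi}^-$ in a strong norm: inside the variational inversion it appears only in the pairing \eqref{claim ineq G+ - G-} against $\Lambda^s u_0$, where $u_0$ is the trace of the difference of potentials and is controlled in $\mathring{H}^{s+\frac{1}{2}}$ through \eqref{trace 1}. Using the symmetry of $\mathcal{G}^{\pm}_{\mu}$ together with the commutator estimates \eqref{Commutator G+}, \eqref{Commutator G-}, \eqref{Commutator Lambda s S+/S-}, the symbolic errors are transferred onto $u_0$ (that is where \eqref{Symbolic G+} with $k=1$ and \eqref{G- symbolic} are legitimately invoked), and the leading contributions $\big(\mathrm{Op}(S^+/|\mathrm{D}|)^{\ast}\Lambda^s f,\Lambda^s|\mathrm{D}|u_0\big)_{L^2}$ and $\big(\mathrm{Op}(S^+/|\mathrm{D}|)\Lambda^s f,\Lambda^s|\mathrm{D}|u_0\big)_{L^2}$ cancel up to the adjoint estimate \eqref{adjoint est on S+/D}; in this way $f$ never leaves $H^{s-\frac{1}{2}}$. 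This dualization step is the missing idea in your proposal. Two secondary points: your identification of the middle term as $-\sqrt{\mu}\,[|\mathrm{D}|,\mathrm{Op}(T)]f$ is correct, but its size cannot be obtained from ``standard'' smooth pseudodifferential calculus since the symbol has only Sobolev regularity in $x$ (one needs the rough-symbol estimates of the type collected in the appendix), and the asserted $\ve\mu^{\frac{3}{4}}$ prefactor is not justified; moreover the inversion loss you posit ($\mu^{-\frac{1}{2}}$ applied to an unspecified dual norm) is an assertion rather than an estimate — in the paper the $\mu$-loss enters through \eqref{trace 1} inside the coercivity argument, coupled to the pairing bound \eqref{claim ineq G+ - G-}, not through a free-standing dual-norm inversion.
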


	\begin{remark}\label{Rmrk negative Sob}
		From the symmetry of the Dirichlet-Neumann operators $\mathcal{G}^{\pm}$ they can be defined on Sobolev spaces of negative order by duality. See Remark $3.17$ in \cite{WWP}.
	\end{remark}
	
	\begin{proof}
		We define $\tilde{\psi}^- =  \mathrm{Op}\Big( \frac{S^+}{S^-} \Big) f$ and let $\tilde{\phi}^-$ be a solution of 
		\begin{equation*}
			\begin{cases}
				\nabla_{x,z}^{\mu} \cdot P(\Sigma^{-})\nabla_{x,z}^{\mu} \tilde{\phi}^- = 0 \hspace{0.7cm}\qquad \text{in} \quad \mathcal{S}^-
				\\
				\tilde{\phi}^- = \tilde{\psi}^- \hspace{4.05cm} \text{on} \quad  z = 0.
			\end{cases}
		\end{equation*}	
		Then we can make the definition
		\begin{equation*}
			\mathcal{G}^-_{\mu}[\ve \zeta]\tilde{\psi}^- = \partial_n \tilde{\phi}^-|_{z=0}.
		\end{equation*}
		Now, let $\phi^-$ be defined by the solution of
		\begin{equation}
			\begin{cases}
				\nabla_{x,z}^{\mu} \cdot P(\Sigma^{-})\nabla_{x,z}^{\mu} \phi^- = 0 \hspace{0.5cm}\qquad \text{in} \quad \mathcal{S}^-
				\\
				\partial_{n}^{P^-} \phi^- =  \mathcal{G}^+_{\mu}[\ve \zeta]f \hspace{2.3cm} \text{on} \quad  z = 0.
			\end{cases}
		\end{equation}
		Then the difference $u = \phi^- - \tilde{\phi}^-$ satisfies
		\begin{equation}
			\begin{cases}
				\nabla_{x,z}^{\mu} \cdot P(\Sigma^{-})\nabla_{x,z}^{\mu} u = 0 \hspace{1.7cm}\qquad \text{in} \quad \mathcal{S}^-
				\\
				\partial_{n}^{P^-} u =  \mathcal{G}^+_{\mu}[\ve \zeta]f - \mathcal{G}^-_{\mu}[\ve \zeta]\tilde{\psi}^- \hspace{1.4cm} \text{on} \quad  z = 0.
			\end{cases}
		\end{equation}
		It is straightforward to prove that these problems are well-defined where both $\phi^-$ and $\tilde{\phi}^-$ satisfy \eqref{Decay est}. Consequently, we can consider the variational equation:
		\begin{align*}
			\int_{\mathcal{S}^-} P(\Sigma^-)\nabla_{x,z}^{\mu} \Lambda^s u \cdot \nabla_{x,z}^{\mu}\Lambda^su \: \mathrm{d}x \mathrm{d}z 
			& =
			-\int_{\{z=0\}} \Lambda^s(\mathcal{G}^+_{\mu}[\ve \zeta]f - \mathcal{G}^-_{\mu}[\ve \zeta]\tilde{\psi}^-) \Lambda^s u_0 \: \mathrm{d}x
			\\ 
			& \hspace{1cm}
			+
			\int_{\mathcal{S}^-} [\Lambda^s,P(\Sigma^-)]\nabla_{x,z}^{\mu} u \cdot \nabla_{x,z}^{\mu}\Lambda^s u \: \mathrm{d}x \mathrm{d}z,
		\end{align*}
		where $u_0 = u|_{z=0}$ and trace inequality \eqref{trace 1} implies
		\begin{equation*}
			|u_0|_{\mathring{H}^{s+\frac{1}{2}}}\lesssim \mu^{-\frac{1}{4}} \|\Lambda^s \nabla_{x,z}^{\mu} u\|_{L^2(\mathcal{S}^-)}.
			\color{black}
		\end{equation*}
		To conclude the proof, we need the following inequality
		\begin{align}\label{claim ineq G+ - G-}
			\Big{|} \big{(} \Lambda^s(\mathcal{G}^+_{\mu}[\ve \zeta]f- \mathcal{G}^-_{\mu}[\ve \zeta]\tilde{\psi}^-) , \Lambda^s u_0  \big{)}_{L^2}  \Big{|} 
			\leq \ve \sqrt{\mu} M(t_0 +3) |f|_{H^{s-\frac{1}{2}}} |u_0|_{\mathring{H}^{s+\frac{1}{2}}}.
			\color{black}
		\end{align}
		Assuming the inequality holds, we can argue as in the proof of Proposition \ref{Inverse 2}, Step $2$. to obtain the result. 
		
		To prove \eqref{claim ineq G+ - G-}, we will decompose the left-hand side in several pieces. In particular, we first observe that
		\begin{align*}
			\big{(}\Lambda^s \mathcal{G}^+_{\mu}[\ve \zeta] f, \Lambda^s u_0 \big{)}_{L^2}
			& = 
			\big{(} \mathrm{Op}\Big(\frac{S^+}{|\mathrm{D}|}\Big)^{\ast}\Lambda^s f, \Lambda^s |\mathrm{D}|u_0 \big{)}_{L^2} + R_1 + R_2,
		\end{align*}
		where the rest is given by
		\begin{align*}
			R _1 & =\big{(}\big [\Lambda^s, \mathcal{G}^+_{\mu}[\ve \zeta]\big ] f, \Lambda^s u_0 \big{)}_{L^2},
			\\
			R_2 & = \big{(}\Lambda^{s-\frac{1}{2}} f, \Lambda^{\frac{1}{2}}\big{(} \mathcal{G}^+_{\mu}[\ve \zeta] - \mathrm{Op}(S^+)\big{)} \Lambda^s u_0 \big{)}_{L^2}.
		\end{align*}
		Here we estimate $R_1$ by Cauchy-Schwarz, \eqref{Commutator G+}, and \eqref{Basic est: B to D} to get that
		\begin{align*}
			|R_1| 
			& \leq \ve \mu M|f|_{\dot{H}^{s-\frac{1}{2}}_{\mu}} |u_0|_{\dot{H}^{s+\frac{1}{2}}_{\mu}} 
			\\ 
			& \leq \ve \sqrt{\mu} M|f|_{H^{s-\frac{1}{2}}}  |u_0|_{\mathring{H}^{s+\frac{1}{2}}}.
		\end{align*}
		While for $R_2$ we use \eqref{Symbolic G+} (with $k=1$) and \eqref{Basic est: B to D} to get that
		\begin{align*}
			|R_2| 
			& \leq \ve \sqrt{\mu} M(t_0+3)|f|_{H^{s-\frac{1}{2}}}| u_0|_{\mathring{H}^{s+\frac{1}{2}}}.
		\end{align*}
		Next, we decompose the remaining part where we get that
		\begin{align*}
			\big{(}\Lambda^s \mathcal{G}^-_{\mu}[\ve \zeta] \tilde\psi^-, \Lambda^s u_0 \big{)}_{L^2}
			& = 
			\big{(} \mathrm{Op}\Big(\frac{S^+}{|\mathrm{D}|}\Big)\Lambda^s f, \Lambda^s |\mathrm{D}|u_0 \big{)}_{L^2} + R_3 + R_4 + R_5,
		\end{align*}
		where the rest terms are defined by
		\begin{align*}
			R_3 & = \big{(}\big{[}\Lambda^s, \mathcal{G}^-_{\mu}[\ve \zeta]\big{]} \tilde\psi^-, \Lambda^s u_0 \big{)}_{L^2},
			\\
			R_4 & =  \big{(}\Lambda^{s-\frac{1}{2}} \tilde\psi^-, \Lambda^{\frac{1}{2}}\big{(} \mathcal{G}^-_{\mu}[\ve \zeta] - \mathrm{Op}(S^-)\big{)} \Lambda^s u_0 \big{)}_{L^2},
			\\ 
			R_5 & =  \big{(}\Lambda^{\frac{1}{2}}[\Lambda^{s},\mathrm{Op}\big{(} \frac{S^+}{S^-} \big{)} ]f, \Lambda^{-\frac{1}{2}} \mathrm{Op}(S^-)\Lambda^s u_0 \big{)}_{L^2}.
		\end{align*} 
		For $R_3$, the commutator estimate \eqref{Commutator G-}, and \eqref{PDO est tanh} yields,
		\begin{align*}
			|R_3| 
			& \leq \ve \sqrt{\mu} M|\tilde \psi^-|_{\mathring{H}^{s-\frac{1}{2}}} |u_0|_{\mathring{H}^{s+\frac{1}{2}}}
			\\ 
			& 
			\leq 
			\ve \sqrt{\mu} M |f|_{H^{s-\frac{1}{2}}} |u_0|_{\mathring{H}^{s+\frac{1}{2}}}.
		\end{align*}
		For $R_4$, we argue similarly but use Proposition \ref{Prop G- S-} with estimate \eqref{G- symbolic} to get that 
		\begin{align*}
			|R_4| \leq \ve \mu M(t_0+3)|f|_{H^{s-\frac{1}{2}}} |u_0|_{\mathring{H}^{s+\frac{1}{2}}}.
		\end{align*}
		Finally, for $R_5$, we apply Cauchy-Schwarz, the commutator estimate \eqref{Commutator Lambda s S+/S-} to get that
		\begin{align*}
			|R_5|
			&  \leq \sqrt{\mu}
			|[\Lambda^{s},\mathrm{Op}\big{(} \frac{S^+}{S^-} \big{)} ]f|_{H^{\frac{1}{2}}} |\Lambda^{s-\frac{1}{2}} |\mathrm{D} |u_0|_{L^2}
			\\
			&  
			\leq \ve \sqrt{\mu} M |f|_{H^{s-\frac{1}{2}}} |u_0|_{\mathring{H}^{s+\frac{1}{2}}}.
		\end{align*}
		Gathering all these observations, we can estimate the left-hand side of \eqref{claim ineq G+ - G-} by
		\begin{equation*}
			\mathrm{LHS}_{\eqref{claim ineq G+ - G-}} \leq \Big{|}\big{(} \Lambda^{\frac{1}{2}}(\mathrm{Op}\big( \frac{S^+}{|\mathrm{D}|}\big)^{\ast} - \mathrm{Op}\big( \frac{S^+}{|\mathrm{D}|}\big))\Lambda^s f, \Lambda^{s-\frac{1}{2}} |\mathrm{D}|u_0\big{)}_{L^2}\Big{|}+ \sum\limits_{i=1}^5 |R_i|.
		\end{equation*}
		Then to conclude the proof, we use Cauchy-Schwarz, the adjoint estimate \eqref{adjoint est on S+/D} to get that
		\begin{equation*}
			\mathrm{LHS}_{\eqref{claim ineq G+ - G-}} \leq  \ve \sqrt{\mu} M(t_0 + 3) |f|_{H^{s-\frac{1}{2}}} |u_0|_{\mathring{H}^{s+\frac{1}{2}}}.
		\end{equation*}
		
	\end{proof}

Before we proceed, we also note that under the provisions of Corollary \ref{Cor 3.4} one can make an approximation of $(\mathcal{G}_{\mu}^{-})^{-1} \partial_x$ in terms of the Hilbert transform:
\begin{cor}
	Let $t_0 \geq 1$ and $\zeta \in H^{t_0 +3}(\R)$ be such that \eqref{non-cavitation} is satisfied. Then for all $0 \leq s \leq t_0$ and $f \in \mathscr{S}(\R)$ there holds,
	\begin{equation}\label{est G- inv dx pdo}
		|(\mathcal{G}^-_{\mu}[\ve \zeta])^{-1} \partial_x  f  -\frac{1}{\sqrt{\mu}}\mathcal{H}  f|_{\mathring{H}^{s+\frac{1}{2}}} \leq \ve \mu^{-\frac{1}{4}} M(t_0 + 3)|f|_{H^{s-\frac{1}{2}}}. 
	\end{equation}
\end{cor}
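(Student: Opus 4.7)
The plan is to mimic the proof of Corollary~3.4 (i.e.\ estimate \eqref{Symbol G-invG+}) almost verbatim, replacing the role of $\mathcal{G}_{\mu}^+[\ve\zeta]f$ with $\partial_x f$ and the approximate trace $\mathrm{Op}(S^+/S^-)f$ with $\tfrac{1}{\sqrt{\mu}}\mathcal{H}f$. The first observation is the elementary identity $\mathrm{Op}(S^-)\bigl(\tfrac{1}{\sqrt{\mu}}\mathcal{H}f\bigr)=-|\mathrm{D}|\mathcal{H}f=\partial_x f$, so that $\tfrac{1}{\sqrt{\mu}}\mathcal{H}f$ plays exactly the role of an \emph{approximate trace} for the Neumann data $\partial_x f$ through the operator $\mathrm{Op}(S^-)$. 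This is the analogue of the fact that $\mathrm{Op}(S^+/S^-)$ was the approximate trace for $\mathcal{G}^+_{\mu}f$ through $\mathrm{Op}(S^-)$ in Corollary~\ref{Cor 3.4}.

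Concretely, set $\tilde{\psi}^-=\tfrac{1}{\sqrt{\mu}}\mathcal{H}f$, let $\tilde{\phi}^-\in\dot{H}^{s+1}(\mathcal{S}^-)$ be the variational solution to $\nabla_{x,z}^{\mu}\cdot P(\Sigma^-)\nabla_{x,z}^{\mu}\tilde{\phi}^-=0$ with $\tilde{\phi}^-|_{z=0}=\tilde{\psi}^-$, and let $\phi^-\in\dot{H}^{s+1}(\mathcal{S}^-)$ be the solution of the Neumann problem $\nabla_{x,z}^{\mu}\cdot P(\Sigma^-)\nabla_{x,z}^{\mu}\phi^-=0$ with $\partial_{n}^{P^-}\phi^-|_{z=0}=\partial_x f$, so that $\phi^-|_{z=0}=(\mathcal{G}^-_{\mu}[\ve\zeta])^{-1}\partial_x f$. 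The difference $u=\phi^--\tilde{\phi}^-$ is harmonic for the conjugated Laplacian with trace $u_0=u|_{z=0}=(\mathcal{G}^-_{\mu})^{-1}\partial_x f-\tfrac{1}{\sqrt{\mu}}\mathcal{H}f$, and satisfies the Neumann condition
\begin{equation*}
\partial_{n}^{P^-}u\big|_{z=0}=\partial_x f-\mathcal{G}^-_{\mu}[\ve\zeta]\tilde{\psi}^-=-\bigl(\mathcal{G}^-_{\mu}[\ve\zeta]-\mathrm{Op}(S^-)\bigr)\tilde{\psi}^-.
\end{equation*}

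Applying $\Lambda^s$, testing against $\Lambda^s u$ and integrating by parts as in Proposition~\ref{Inverse 2}, Step~2, the coercivity \eqref{Coercivity-} reduces the proof to bounding the boundary term $B=\bigl(\Lambda^s\bigl[\mathcal{G}^-_{\mu}-\mathrm{Op}(S^-)\bigr]\tilde{\psi}^-,\Lambda^s u_0\bigr)_{L^2}$ (the interior commutator $[\Lambda^s,P(\Sigma^-)]$ is handled by the standard estimate \eqref{Commutator est} exactly as in Proposition~\ref{Prop G- S-}). Splitting $\Lambda^s[\mathcal{G}^-_{\mu}-\mathrm{Op}(S^-)]=[\mathcal{G}^-_{\mu}-\mathrm{Op}(S^-)]\Lambda^s+[\Lambda^s,\mathcal{G}^-_{\mu}]$ (since $\mathrm{Op}(S^-)$ is a Fourier multiplier) and using the self-adjointness of $\mathcal{G}^-_{\mu}-\mathrm{Op}(S^-)$, Cauchy--Schwarz with the $H^{s-1/2}\times H^{1/2}$ pairing yields
\begin{equation*}
|B|\leq |\tilde{\psi}^-|_{H^{s-1/2}}\,\bigl|\bigl[\mathcal{G}^-_{\mu}-\mathrm{Op}(S^-)\bigr]\Lambda^s u_0\bigr|_{H^{1/2}}+|([\Lambda^s,\mathcal{G}^-_{\mu}]\tilde{\psi}^-,\Lambda^s u_0)_{L^2}|.
\end{equation*}
Invoking the key symbolic estimate \eqref{G- symbolic} with $s=0$ gives $|[\mathcal{G}^-_{\mu}-\mathrm{Op}(S^-)]\Lambda^s u_0|_{H^{1/2}}\leq \ve\mu M(t_0+3)|u_0|_{\mathring{H}^{s+1/2}}$, while $|\tilde{\psi}^-|_{H^{s-1/2}}=\mu^{-1/2}|f|_{H^{s-1/2}}$. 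The commutator piece is controlled in the same spirit via \eqref{Commutator G-} (used exactly as in the bound of $R_3$ in Corollary~\ref{Cor 3.4}). Combining, $|B|\leq \ve\sqrt{\mu}\,M(t_0+3)\,|f|_{H^{s-1/2}}\,|u_0|_{\mathring{H}^{s+1/2}}$.

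Plugging this into the energy identity for $u$ and using the trace inequality \eqref{trace 1}, $|u_0|_{\mathring{H}^{s+1/2}}\lesssim \mu^{-1/4}\|\Lambda^s\nabla_{x,z}^{\mu}u\|_{L^2(\mathcal{S}^-)}$, one obtains $\|\Lambda^s\nabla_{x,z}^{\mu}u\|_{L^2}\leq \ve\mu^{1/4}M(t_0+3)|f|_{H^{s-1/2}}$, and a second application of \eqref{trace 1} produces the desired estimate \eqref{est G- inv dx pdo}. The main obstacle is the regularity mismatch: a naive application of \eqref{G- symbolic} to $\mathcal{H}f$ would require $f\in H^{s+1/2}$ and therefore fail; the decomposition above overcomes this by distributing the half-derivative between the two factors in $B$ so that $f$ is only probed at the scale $H^{s-1/2}$ while $u_0$ is measured in the stronger $\mathring{H}^{s+1/2}$. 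The $\mu^{-1/4}$ in the final estimate then arises naturally from the single use of the trace inequality, combined with the $\sqrt{\mu}$ gain inside $B$ and the $\mu^{-1/2}$ coming from $|\tilde{\psi}^-|_{H^{s-1/2}}$.
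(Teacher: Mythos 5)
Your argument is essentially the paper's own proof: the paper establishes this corollary by rerunning Corollary \ref{Cor 3.4} with $S^+$ traded for $\partial_x$, noting $\mathrm{Op}\bigl(\tfrac{\partial_x}{S^-}\bigr)=\tfrac{1}{\sqrt{\mu}}\mathcal{H}$, so that only the two terms you isolate survive (the terms $R_1,R_2,R_5$ of Corollary \ref{Cor 3.4} vanish): the commutator $\bigl([\Lambda^s,\mathcal{G}^-_{\mu}[\ve\zeta]]\tilde{\psi}^-,\Lambda^s u_0\bigr)_{L^2}$ handled by \eqref{Commutator G-} and the dualized symbolic difference handled by \eqref{G- symbolic}, followed by the same energy/trace machinery of Proposition \ref{Inverse 2}. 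The one point to correct is your combined bound $|B|\leq\ve\sqrt{\mu}\,M(t_0+3)|f|_{H^{s-1/2}}|u_0|_{\mathring{H}^{s+1/2}}$: the $\sqrt{\mu}$ gain is genuine only for the $(\mathcal{G}^-_{\mu}-\mathrm{Op}(S^-))$ term, whereas in the commutator term the factor $\ve\sqrt{\mu}$ provided by \eqref{Commutator G-} is exactly consumed by $|\tilde{\psi}^-|_{\mathring{H}^{s-1/2}}\leq\mu^{-1/2}|f|_{H^{s-1/2}}$, so the combined boundary pairing is only of size $\ve\,M(t_0+3)|f|_{H^{s-1/2}}|u_0|_{\mathring{H}^{s+1/2}}$ --- which is precisely the paper's claim \eqref{claim ineq dx - G-}, and is the reason the final estimate \eqref{est G- inv dx pdo} carries the loss $\mu^{-1/4}$ (in contrast with the gain $\mu^{1/4}$ in \eqref{Symbol G-invG+}) rather than being uniform in $\mu$ as your accounting would suggest.
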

\begin{proof}
	The proof is the same as Corollary \ref{Cor 3.4}, where we note that $S^+$ is traded for $\partial_x$ in the formula $\mathrm{Op}(\frac{\partial_x}{S^-}) = \frac{1}{\sqrt{\mu}}\mathcal{H}$. In particular, now $\tilde{\psi} = \mathrm{Op}(\frac{\partial_x}{S^-})f$ and  the proof follows from:
	\begin{align}\label{claim ineq dx - G-}
		\Big{|} \big{(} \Lambda^s(\partial_x f- \mathcal{G}^-_{\mu}[\ve \zeta]\tilde{\psi}^-) , \Lambda^s u_0  \big{)}_{L^2}  \Big{|} 
		\leq \ve  M(t_0 +3) |f|_{H^{s-\frac{1}{2}}} |u_0|_{\mathring{H}^{s+\frac{1}{2}}}.
		\color{black}
	\end{align}
	Then using the same notation as in the previous proof, we note that in this case  we have
	\begin{align*}
		\big{(} \Lambda^s(\partial_x f- \mathcal{G}^-_{\mu}[\ve \zeta]\tilde{\psi}^-) , \Lambda^s u_0  \big{)}_{L^2} 
		& = 
		\big{(}\big{[}\Lambda^s, \mathcal{G}^-_{\mu}[\ve \zeta]\big{]} \tilde\psi^-, \Lambda^s u_0 \big{)}_{L^2}
		\\
		&
	 	\hspace{0.5cm}
		+ 
		\big{(}\Lambda^{s-\frac{1}{2}} \tilde\psi^-, \Lambda^{\frac{1}{2}}\big{(} \mathcal{G}^-_{\mu}[\ve \zeta] - \mathrm{Op}(S^-)\big{)} \Lambda^s u_0 \big{)}_{L^2} 
		\\ 
		& = R_3 + R_4,
	\end{align*}
	where $R_1 = R_2 = R_5= 0$ and
	\begin{equation*}
		| R_3 | + |R_4| \leq \ve \sqrt{\mu} M(t_0+3)|\tilde \psi^-|_{\mathring{H}^{s-\frac{1}{2}}} |u_0|_{\mathring{H}^{s+\frac{1}{2}}} \leq \ve  M(t_0+3)|f|_{H^{s-\frac{1}{2}}} |u_0|_{\mathring{H}^{s+\frac{1}{2}}}.
	\end{equation*}
	Adding the estimates concludes the proof.
\end{proof}

	\subsection{Symbolic analysis of $(\mathcal{G}_{\mu}^-)^{-1}\mathcal{G}_{\mu}$}
	
		Next we will study the symbolic behaviour of $(\mathcal{G}_{\mu}^-)^{-1}\mathcal{G}_{\mu}$. To do so, we recall that we may write $\mathcal{G}_{\mu} = \mathcal{G}^+_{\mu} (\mathcal{J}_{\mu})^{-1}$ where the symbolic behaviour of $\mathcal{J}_{\mu}$ is captured by the symbol
		\begin{equation}\label{S_J}
			S_{J} = 1- \gamma \frac{S^+}{S^-}.
		\end{equation}
	
		\begin{cor}
		Let $t_0 \geq 1$ and $\zeta \in H^{t_0 +3}(\R)$ be such that \eqref{non-cavitation} is satisfied. Then for all $0 \leq s \leq t_0$ and $f \in \mathscr{S}(\R)$ one can approximate the operator $(\mathcal{G}^{-}_{\mu}[\ve \zeta])^{-1}\mathcal{G}_{\mu}[\ve \zeta]$ by
		\begin{equation*}
			\mathrm{Op}\Big( \frac{S^+}{S^-S_J} \Big)f(x)
			= 
			-
			\mathcal{F}^{-1}\Big{(} \frac{\tanh(\sqrt{\mu} t(x, \xi))}{ 1 +  \gamma \tanh(\sqrt{\mu} t(x, \xi))}\hat{f}(\xi)\Big{)}(x),
		\end{equation*}
		where $t$ is defined by \eqref{tail operator}. Moreover,  there holds,
		\begin{equation}\label{Est G-invG}
			|(\mathcal{G}^-_{\mu}[\ve \zeta])^{-1}\mathcal{G}_{\mu}[\ve \zeta] f - \mathrm{Op}\Big( \frac{S^+}{S^-S_J} \Big) f|_{\mathring{H}^{s+\frac{1}{2}}} \leq \ve  M(t_0 + 3)|f|_{H^{s-\frac{1}{2}}}. 
			\color{black}
		\end{equation}

		\end{cor}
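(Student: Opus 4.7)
The plan is to combine the algebraic identity $\mathcal{G}_\mu = \mathcal{G}^+_\mu(\mathcal{J}_\mu)^{-1}$ (from \eqref{Main G}) with Corollary \ref{Cor 3.4} and a parallel symbolic approximation of $\mathcal{J}_\mu^{-1}$. A convenient reduction uses the elementary identity $\gamma(\mathcal{G}^-_\mu)^{-1}\mathcal{G}^+_\mu = \mathrm{Id} - \mathcal{J}_\mu$, which gives
\[
(\mathcal{G}^-_\mu[\ve\zeta])^{-1}\mathcal{G}_\mu[\ve\zeta] = \tfrac{1}{\gamma}\bigl(\mathcal{J}_\mu[\ve\zeta]^{-1} - \mathrm{Id}\bigr),
\]
while the symbolic side obeys $\frac{S^+}{S^- S_J} = \frac{1}{\gamma}\bigl(\frac{1}{S_J}-1\bigr)$ by the definition \eqref{S_J}. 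It therefore suffices to establish the parallel estimate
\[
\bigl|\mathcal{J}_\mu[\ve\zeta]^{-1}f - \mathrm{Op}(1/S_J)f\bigr|_{\mathring H^{s+\frac{1}{2}}}\leq \ve\gamma M(t_0+3)\,|f|_{H^{s-\frac{1}{2}}}.
\]

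Setting $g = \mathcal{J}_\mu^{-1}f$ and $\tilde g = \mathrm{Op}(1/S_J)f$, one has $\mathcal{J}_\mu(g - \tilde g) = f - \mathcal{J}_\mu\tilde g$, and the right-hand side decomposes as
\[
f - \mathcal{J}_\mu\tilde g = \bigl[\mathrm{Op}(S_J)\mathrm{Op}(1/S_J) - \mathrm{Id}\bigr]f + \gamma\bigl[\mathrm{Op}(S^+/S^-) - (\mathcal{G}^-_\mu[\ve\zeta])^{-1}\mathcal{G}^+_\mu[\ve\zeta]\bigr]\tilde g.
\]
The second bracket is immediately bounded by \eqref{Symbol G-invG+} of Corollary \ref{Cor 3.4}, upon noting that $\mathrm{Op}(1/S_J)$ is a zeroth-order Fourier multiplier bounded uniformly in $x,\xi$ (since $1\leq S_J\leq 1+\gamma$), so $|\tilde g|_{H^{s-\frac{1}{2}}}\leq M|f|_{H^{s-\frac{1}{2}}}$. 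The first bracket is a purely symbolic composition error whose estimation is the main obstacle: although $S_J\cdot(1/S_J)=1$ at the symbol level, the composition of the corresponding $x$-dependent pseudo-differential operators differs from $\mathrm{Id}$ by an operator of order $-1$. Since $S_J - 1 = \gamma\tanh(\sqrt\mu\,t(x,\xi))$ depends on $x$ only through $\ve\zeta$ via the tail symbol \eqref{tail operator}, a single commutator expansion in the spirit of the pdo estimates \eqref{PDO est tanh} and \eqref{Commutator Lambda s S+/S-} produces a remainder carrying a factor $\ve\partial_x\zeta$ together with one derivative gain, yielding $\bigl|[\mathrm{Op}(S_J)\mathrm{Op}(1/S_J)-\mathrm{Id}]f\bigr|_{\mathring H^{s+\frac{1}{2}}}\leq \ve M(t_0+3)|f|_{H^{s-\frac{1}{2}}}$. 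This is the one new symbolic-calculus ingredient, and its proof is the only real work beyond collecting what is already proved.

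Having controlled $\mathcal{J}_\mu(g-\tilde g)$ in $\mathring H^{s+\frac{1}{2}}$ by $\ve\gamma M(t_0+3)|f|_{H^{s-\frac{1}{2}}}$, the conclusion follows by inverting $\mathcal{J}_\mu$ on that space. Boundedness of $\mathcal{J}_\mu^{-1}$ on $\mathring H^{s+\frac{1}{2}}$ is not verbatim Lemma \ref{Prop op J}, but is obtained by the same Fredholm/Neumann-series scheme used there, combined with the refined estimate \eqref{Inverse est 2} for $(\mathcal{G}^-_\mu)^{-1}\mathcal{G}^+_\mu$ on $\mathring H^{s+\frac{1}{2}}$; this gives $|g-\tilde g|_{\mathring H^{s+\frac{1}{2}}}\leq \ve\gamma M(t_0+3)|f|_{H^{s-\frac{1}{2}}}$. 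Rescaling by $1/\gamma$ and invoking the first-paragraph reduction concludes the proof of \eqref{Est G-invG}.
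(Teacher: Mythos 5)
Your proposal is essentially correct, but it takes a genuinely different route from the paper. The paper keeps the operator $(\mathcal{G}^-_{\mu})^{-1}\mathcal{G}^+_{\mu}$ in play: it writes $\mathcal{G}_{\mu}=\mathcal{G}^+_{\mu}(\mathcal{J}_{\mu})^{-1}$, first proves the intermediate approximation $|(\mathcal{J}_{\mu})^{-1}f-\mathrm{Op}(1/S_J)f|_{\dot H_{\mu}^{s+\frac12}}\leq \ve\mu^{-k/4}M|f|_{H^{s-k/2}}$, and then converts to the $\mathring H^{s+\frac12}$ target by composing with $(\mathcal{G}^-_{\mu})^{-1}\mathcal{G}^+_{\mu}$, whose $\mu^{\frac14}$-gain in \eqref{Inverse est 2} exactly compensates the $\mu^{-\frac14}$ loss; the remaining pieces are \eqref{Symbol G-invG+} and the symbol-composition error \eqref{Op S+/S- 1/SJ}. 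You instead collapse the whole operator through the exact identities $(\mathcal{G}^-_{\mu})^{-1}\mathcal{G}_{\mu}=\tfrac1\gamma\big((\mathcal{J}_{\mu})^{-1}-\mathrm{Id}\big)$ (this is \eqref{id 2}) and $\frac{S^+}{S^-S_J}=\tfrac1\gamma\big(\tfrac1{S_J}-1\big)$, reducing everything to a $\mathring H^{s+\frac12}$ approximation of $(\mathcal{J}_{\mu})^{-1}$. This is slicker and avoids the $R_3$--$R_6$ bookkeeping; note, though, that your ``main obstacle'', the composition error $\mathrm{Id}-\mathrm{Op}(S_J)\mathrm{Op}(1/S_J)$, is not new: it is \eqref{Op S_J 1/SJ} applied at regularity $s+\tfrac12$ (and its proof shows the error carries the factor $\gamma$ you need to cancel the $1/\gamma$ from the reduction, since $S_J-1=\gamma\tanh(\sqrt\mu\,t)$). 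Also, the boundedness $|\mathrm{Op}(1/S_J)f|_{H^{s-\frac12}}\leq M|f|_{H^{s-\frac12}}$ should be cited as \eqref{Op 1/SJ}: the symbol is $x$-dependent, so pointwise bounds on it do not by themselves give Sobolev boundedness.

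The one place where your justification as phrased is shaky is the inversion of $\mathcal{J}_{\mu}$ on $\mathring H^{s+\frac12}$. A Neumann series only converges when $\gamma M<1$, and re-running the Fredholm scheme of Lemma \ref{Prop op J} on the new space would require an injectivity/lower bound in $\mathring H^{s+\frac12}$ that the variational coercivity (which lives in $\dot H_{\mu}^{\frac12}$) does not directly provide. The claim is nevertheless correct and follows from what is already proved: write $(\mathcal{J}_{\mu})^{-1}=\mathrm{Id}+\gamma(\mathcal{G}^-_{\mu})^{-1}\mathcal{G}^+_{\mu}(\mathcal{J}_{\mu})^{-1}$, estimate the second term in $\mathring H^{s+\frac12}$ by $\gamma\mu^{\frac14}M|(\mathcal{J}_{\mu})^{-1}h|_{\dot H_{\mu}^{s+\frac12}}$ via \eqref{Inverse est 2}, then use \eqref{Est J} and $|h|_{\dot H_{\mu}^{s+\frac12}}\leq\mu^{-\frac14}|h|_{\mathring H^{s+\frac12}}$ from \eqref{Basic est: B to D}; the powers of $\mu$ cancel and give $|(\mathcal{J}_{\mu})^{-1}h|_{\mathring H^{s+\frac12}}\leq M|h|_{\mathring H^{s+\frac12}}$ uniformly. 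Note in particular that you cannot instead quote the paper's intermediate estimate \eqref{Est Jinv pdo}, since it is stated in the $\dot H_{\mu}^{s+\frac12}$ norm, which does not control $\mathring H^{s+\frac12}$ at low frequencies. With these two repairs your argument closes and yields \eqref{Est G-invG} with the stated constant $\ve M(t_0+3)$.
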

	
		%\begin{remark}
		%	The estimate \eqref{Est G-invG} is not given in a homogeneous space, but this is not needed since we ultimately want to prove \eqref{Motivation symbolic}. However, to prove this estimate 
		%\end{remark}
	
		\begin{proof}
			We will give the proof in three steps. First, we will prove that $S_J$ provides a good symbolic description of $\mathcal{J}_{\mu}$. Then we will show that $1/S_J$ describes the inverse, and lastly, we combine each step with the previous results of this section to prove \eqref{Est G-invG}.\\

			\noindent
			\underline{Step 1.} We can approximate $\mathcal{J}_{\mu}$ with $\mathrm{Op}(S_J)$. Indeed, by definition and estimate \eqref{Symbol G-invG+} we get that
			\begin{align}\label{est J sJ}
				|\mathcal{J}_{\mu}[\ve \zeta]f - \mathrm{Op}(S_J)f|_{\mathring{H}^{s+\frac{1}{2}}} 
				& 
				=
				\gamma |(\mathcal{G}^-_{\mu}[\ve \zeta])^{-1}\mathcal{G}^+_{\mu}[\ve \zeta] f - \mathrm{Op}\big{(} \frac{S^+}{S^-}\big{)}f|_{\mathring{H}^{s+\frac{1}{2}}}
				\\
				& 
				\leq   \ve \mu^{\frac{1}{4}} M(t_0 +3)|f|_{H^{s-\frac{1}{2}}}.\notag
				\color{black}
			\end{align}
			
			\noindent
			\underline{Step 2.} We can approximate $(\mathcal{J}_{\mu})^{-1}$ with $\mathrm{Op}(1/S_J)$. To prove this fact, we simply employ Lemma \ref{Prop op J} with inequality \eqref{Est J} to deduce that
			\begin{align*}
				|(\mathcal{J}_{\mu}[\ve \zeta])^{-1}f - \mathrm{Op}(1/S_J)f|_{\dot{H}_{\mu}^{s+\frac{1}{2}}} 
				& = 
				|(\mathcal{J}_{\mu}[\ve \zeta])^{-1}\big(1 -\mathcal{J}_{\mu}[\ve \zeta] \mathrm{Op}(1/S_J)\big{)}f|_{\dot{H}_{\mu}^{s+\frac{1}{2}}} 
				\\ 
				& \leq M
				|\big(1 -\mathcal{J}_{\mu}[\ve \zeta] \mathrm{Op}(1/S_J)\big{)}f|_{\dot{H}_{\mu}^{s+\frac{1}{2}}} 
				\\ 
				& \leq M \big{(}
				|\big(1 -\mathrm{Op}(S_J) \mathrm{Op}(1/S_J)\big{)}f|_{\dot{H}_{\mu}^{s+\frac{1}{2}}} 
				\\
				& 
				\hspace{0.5cm}
				+
				|\big(\mathcal{J}_{\mu}[\ve \zeta]- \mathrm{Op}(S_J) \big{)}\mathrm{Op}(1/S_J)f|_{\dot{H}_{\mu}^{s+\frac{1}{2}}}  
				\\
				& = :
				R_1 + R_2.
			\end{align*}
			Here $R_1$ is estimated by \eqref{Op S_J 1/SJ}:
			\begin{align*}
				R_1 
				 \leq 
				\mu^{-\frac{k}{4}}|\big(1 -\mathrm{Op}(S_J) \mathrm{Op}(1/S_J)\big{)}f|_{H^{s+1-\frac{k}{2}}} 
				 \leq 
				\ve \mu^{-\frac{k}{4}} M |f|_{H^{s-\frac{k}{2}}}.
			\end{align*}
			While $R_2$ we use the estimate in Step 1. and \eqref{Op 1/SJ} to obtain
			\begin{align*}
				R_2 
				&
				\leq
				\mu^{-\frac{1}{4}}
				|\big(\mathcal{J}_{\mu}[\ve \zeta]- \mathrm{Op}(S_J) \big{)}\mathrm{Op}(1/S_J)f|_{\mathring{H}^{s+\frac{1}{2}}} 
				\\  
				& 
				\ \leq
				\ve
				M(t_0+3)
				|\mathrm{Op}(1/S_J)f|_{H^{s-\frac{1}{2}}}  
					\\  
				&
				\leq\ve
				M(t_0+3)|f|_{H^{s-\frac{1}{2}}}.
				\color{black}
			\end{align*}
			As a result, we have the following estimate
			\begin{align}\label{Est Jinv pdo}
				|(\mathcal{J}_{\mu}[\ve \zeta])^{-1}f - \mathrm{Op}(1/S_J)f|_{\dot{H}_{\mu}^{s+\frac{1}{2}}} 
				\leq 
				\ve\mu^{-\frac{k}{4}} 
				M(t_0+3)|f|_{H^{s-\frac{k}{2}}}.
				\color{black}
			\end{align}
			\underline{Step 3.} Estimate \eqref{Est G-invG} holds true. To prove this, we first use the definition $\mathcal{G}_{\mu} = \mathcal{G}_{\mu}^+ (\mathcal{J}_{\mu})^{-1}$ to make the decomposition:
			\begin{align*}
				\mathrm{LHS}_{\eqref{Est G-invG}} 
				& : = 
				|(\mathcal{G}^-_{\mu}[\ve \zeta])^{-1}\mathcal{G}^+_{\mu}[\ve \zeta] (\mathcal{J}_{\mu}[\ve \zeta])^{-1} f - \mathrm{Op}\Big( \frac{S^+}{S^-S_J} \Big) f|_{\mathring{H}^{s+\frac{1}{2}}}
				\\
				& 
				\leq 
				|(\mathcal{G}^-_{\mu}[\ve \zeta])^{-1}\mathcal{G}^+_{\mu}[\ve \zeta] \big{(}(\mathcal{J}_{\mu}[\ve \zeta])^{-1}  - \mathrm{Op}\Big( \frac{1}{S_J} \Big)\big{)} f|_{\mathring{H}^{s+\frac{1}{2}}}
				\\
				& 
				\hspace{0.5cm}
				+
				|(\mathcal{G}^-_{\mu}[\ve \zeta])^{-1}\mathcal{G}^+_{\mu}[\ve \zeta] \mathrm{Op}\Big{(} \frac{1}{S_J} \Big{)} f - \mathrm{Op}\Big( \frac{S^+}{S^-S_J} \Big) f|_{\mathring{H}^{s+\frac{1}{2}}}
				\\ 
				& = : R_3 + R_4. 
			\end{align*}
			For $R_3$ we use Proposition \ref{Inverse 2} with inequality \eqref{Inverse est 2} and then Step 2. to get that
			\begin{align*}
				R_3
				\leq \mu^{\frac{1}{4}}  M
				| \big{(}(\mathcal{J}_{\mu}[\ve \zeta])^{-1}  - \mathrm{Op}\Big( \frac{1}{S_J} \Big)\big{)} f|_{\dot{H}_{\mu}^{s+\frac{1}{2}}} 
				\leq \ve \mu^{\frac{1}{4}} M(t_0+3)
				| f |_{H^{s-\frac{1}{2}}}.
				\color{black}
			\end{align*}
			For $R_4$, we decompose it further:
			\begin{align*}
				R_4 
				& =
				|\Big{(}(\mathcal{G}^-_{\mu}[\ve \zeta])^{-1}\mathcal{G}^+_{\mu}[\ve \zeta] - \mathrm{Op}\Big( \frac{S^+}{S^-} \Big)\Big{)}\mathrm{Op}\Big{(} \frac{1}{S_J} \Big{)}  f|_{\mathring{H}^{s+\frac{1}{2}}}
				\\ 
				&
				\hspace{0.5cm}
				+
				|\Big{(} \mathrm{Op}\Big( \frac{S^+}{S^-} \Big) \mathrm{Op}\Big{(} \frac{1}{S_J} \Big{)}  - \mathrm{Op}\Big( \frac{S^+}{S^-S_J} \Big)\Big{)} f|_{\mathring{H}^{s+\frac{1}{2}}}
				\\
				& = :
				R_5 + R_6.
			\end{align*}
			For $R_5$ we use \eqref{Symbol G-invG+} and \eqref{Op 1/SJ} to get that
			\begin{align*}
				R_5 
				\leq 
				\ve \mu^{\frac{1}{4}}
				M(t_0+3)|\mathrm{Op}\Big{(} \frac{1}{S_J} \Big{)} f |_{H^{s-\frac{1}{2}}}
				\leq 
				\ve \mu^{\frac{1}{4}}
				M(t_0+3)|f|_{H^{s-\frac{1}{2}}}.
				\color{black}
			\end{align*}
			While for $R_6$ we simply employ estimate \eqref{Op S+/S- 1/SJ} which yields
			\begin{align*}
				R_6
				& = 
				\ve M|f|_{H^{s-\frac{1}{2}}}.
			\end{align*}
			Gathering all these estimates concludes the proof.

		\end{proof}

	\subsection{Symbolic description of $\mathcal{G}_{\mu}$} For the next result, we will give a simple approximation of $\mathcal{G}_{\mu}$. We will allow for a loss of derivatives in the estimate. However, the loss in regularity is translated into precision with respect to the small parameters.

	\begin{cor} Let $t_0 \geq 1$ and $\zeta \in H^{t_0 +2}(\R)$ be such that \eqref{non-cavitation} is satisfied. Then for all $0 \leq s \leq t_0$ and $\psi \in \dot{H}^{s+\frac{3}{2}}_{\mu}(\R)$ one can approximate the operator $\mathcal{G}_{\mu}[\ve \zeta]$ by  the Fourier multiplier $\mathcal{G}_{\mu}[0]$ defined by
	\begin{equation*}
		\mathcal{G}_{\mu}[0] = \sqrt{\mu}
		|\mathrm{D}| \frac{\tanh(\sqrt{\mu} | \mathrm{D}|)}{ 1 + \gamma \tanh(\sqrt{\mu} | \mathrm{D}|)},
	\end{equation*}
	and it satisfies the estimate
	\begin{equation}\label{Symbolic G}
		 |\mathcal{G}_{\mu}[\ve \zeta]\psi - \mathcal{G}_{\mu}[0]\psi|_{H^s} \leq \ve  \mu M |\psi|_{\dot{H}_{\mu}^{s+\frac{3}{2}}} .
	\end{equation}
	\end{cor}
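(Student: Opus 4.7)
The plan is to use the factorization $\mathcal{G}_{\mu}[\ve\zeta] = \mathcal{G}^+_{\mu}[\ve\zeta](\mathcal{J}_{\mu}[\ve\zeta])^{-1}$ from \eqref{Main G}, together with the symbolic approximations already developed in this section. I would first write the telescoping identity
\begin{align*}
\mathcal{G}_{\mu}[\ve\zeta]\psi - \mathcal{G}_{\mu}[0]\psi
&= \bigl(\mathcal{G}^+_{\mu}[\ve\zeta]-\mathcal{G}^+_{\mu}[0]\bigr)(\mathcal{J}_{\mu}[0])^{-1}\psi \\
&\quad + \mathcal{G}^+_{\mu}[\ve\zeta]\bigl((\mathcal{J}_{\mu}[\ve\zeta])^{-1}-(\mathcal{J}_{\mu}[0])^{-1}\bigr)\psi,
\end{align*}
so that the problem reduces to estimating the two shape-differences $\mathcal{G}^+_{\mu}[\ve\zeta]-\mathcal{G}^+_{\mu}[0]$ and $(\mathcal{J}_{\mu}[\ve\zeta])^{-1}-(\mathcal{J}_{\mu}[0])^{-1}$ in suitable norms. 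Note that $(\mathcal{J}_{\mu}[0])^{-1}$ is a bounded constant-coefficient Fourier multiplier acting continuously on $\dot H_{\mu}^{s+\frac{3}{2}}$, so the first term is reduced to $(\mathcal{G}^+_{\mu}[\ve\zeta]-\mathcal{G}^+_{\mu}[0])$ applied to a function with the same regularity as $\psi$.

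For each shape difference I would insert the pseudo-differential approximations via a further telescoping. For the $\mathcal{G}^+_{\mu}$ piece, I would split
\begin{equation*}
\mathcal{G}^+_{\mu}[\ve\zeta]-\mathcal{G}^+_{\mu}[0] = \bigl(\mathcal{G}^+_{\mu}[\ve\zeta]-\mathrm{Op}(S^+)\bigr) + \bigl(\mathrm{Op}(S^+)-\mathrm{Op}(S^+|_{\zeta=0})\bigr) - \bigl(\mathcal{G}^+_{\mu}[0]-\mathrm{Op}(S^+|_{\zeta=0})\bigr),
\end{equation*}
where the two outer pieces are controlled directly by Proposition \ref{Symbolic thm 4 Lannes} and yield $\ve\mu$ contributions, while the middle is a pure symbol difference. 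The latter would be handled by a Taylor expansion in $\zeta$: using $t(x,\xi)-|\xi|=|\xi|(\ve\zeta+\mathcal{O}(\ve^2\mu))$ from \eqref{tail operator} and the mean-value theorem for $\tanh$, one obtains
\begin{equation*}
|S^+[\ve\zeta](x,\xi)-S^+[0](\xi)|\lesssim \ve\mu|\xi|^2\mathrm{sech}^2(\sqrt{\mu}\tilde c(x,\xi)),
\end{equation*}
which is the symbol of an operator of the same class as $\mathcal{G}_{\mu}[0]$ multiplied by $\ve\mu$. The analogous reduction for $(\mathcal{J}_{\mu}[\ve\zeta])^{-1}-(\mathcal{J}_{\mu}[0])^{-1}$ proceeds via the resolvent identity $(\mathcal{J}_{\mu}[\ve\zeta])^{-1}-(\mathcal{J}_{\mu}[0])^{-1} = (\mathcal{J}_{\mu}[\ve\zeta])^{-1}(\mathcal{J}_{\mu}[0]-\mathcal{J}_{\mu}[\ve\zeta])(\mathcal{J}_{\mu}[0])^{-1}$ together with \eqref{Est J} and \eqref{Est Jinv pdo}, using the definition $\mathcal{J}_{\mu} = 1-\gamma(\mathcal{G}^-_{\mu})^{-1}\mathcal{G}^+_{\mu}$ and the symbolic analysis of $(\mathcal{G}^-_{\mu})^{-1}\mathcal{G}^+_{\mu}$ developed in Proposition \ref{Inverse 2} and Corollary \ref{Cor 3.4}.

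Assembling the pieces, the outer factor $\mathcal{G}^+_{\mu}[\ve\zeta]$ in the second telescoping term contributes an $\mathcal{O}(\sqrt{\mu})$ factor by \eqref{Est G+ mu}, while the inner shape-difference of $(\mathcal{J}_{\mu})^{-1}$ contributes an extra $\mathcal{O}(\ve\sqrt{\mu})$, combining to the desired $\ve\mu$ precision. The main obstacle will be the careful bookkeeping of $\mu$ powers in the symbol differences, and ensuring that the extra derivative afforded by working in $\dot H_{\mu}^{s+\frac{3}{2}}$ instead of $\dot H_{\mu}^{s+\frac{1}{2}}$ is precisely what absorbs the two factors of $|\xi|$ appearing in $S^+[\ve\zeta]-S^+[0]$: the weight $(1+\sqrt{\mu}|\mathrm{D}|)^{-1/2}$ built into the norm provides just enough damping at high frequency, which in turn exploits the exponential decay of $\mathrm{sech}^2(\sqrt{\mu}\cdot)$. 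This is the loss-of-derivative-for-precision trade advertised in the paragraph preceding the statement.
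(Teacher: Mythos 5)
Your telescoping is the mirror image of the one in the paper: there the decomposition is $\mathcal{G}_{\mu}[\ve\zeta]\psi-\mathcal{G}_{\mu}[0]\psi=\big(\mathcal{G}^+_{\mu}[\ve\zeta]-\mathcal{G}^+_{\mu}[0]\big)(\mathcal{J}_{\mu}[\ve\zeta])^{-1}\psi+\mathcal{G}^+_{\mu}[0]\big((\mathcal{J}_{\mu}[\ve\zeta])^{-1}-(\mathcal{J}_{\mu}[0])^{-1}\big)\psi$, so the constant-coefficient operator $\mathcal{G}^+_{\mu}[0]$ sits outside the $\mathcal{J}$-difference. This choice is not cosmetic: the elementary symbol bound $\sqrt{\mu}|\xi|\tanh(\sqrt{\mu}|\xi|)\leq\mu|\xi|^2(1+\sqrt{\mu}|\xi|)^{-1/2}$ then lets one pull a full $\partial_x$ out of that term, after which the product rule and the shape-derivative formulas \eqref{shape derivative J}, \eqref{shape derivative Jinv}, together with \eqref{Est J}, deliver the factor $\ve$ while staying entirely inside the homogeneous $\dot{H}_{\mu}$ framework. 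For the other term the paper does not pass through $\mathrm{Op}(S^+)$ at all; it invokes the small-amplitude (Taylor) expansion of $\mathcal{G}^+_{\mu}$ in $\ve\zeta$ (Proposition 3.44 of \cite{WWP}), which costs one derivative on $\psi$ — exactly why the statement is phrased in $\dot{H}_{\mu}^{s+\frac{3}{2}}$ — but needs only $\zeta\in H^{t_0+2}(\R)$.

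Two points in your route are genuine gaps rather than bookkeeping. First, Proposition \ref{Symbolic thm 4 Lannes} and Corollary \ref{Cor 3.4} require $\zeta\in H^{t_0+3}(\R)$ and yield the constant $M(t_0+3)$, whereas the corollary you are proving assumes only $\zeta\in H^{t_0+2}(\R)$ and asserts the constant $M$; as written your argument proves a strictly weaker statement with a higher-regularity constant. Second, the estimates you lean on — \eqref{Symbolic G+}, \eqref{Symbol G-invG+}, \eqref{Est Jinv pdo}, and your mean-value symbol bound fed into \eqref{product est Op} — all take inhomogeneous input norms such as $|f|_{H^{s-\frac{1}{2}}}$; you cannot apply them directly to $\psi$ or to $(\mathcal{J}_{\mu}[0])^{-1}\psi$, since $\psi$ lives only in a Beppo--Levi space and its low-frequency part (as opposed to that of $\partial_x\psi$) is not controlled. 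One must first extract a derivative or a factor of $\mathfrak{P}$, which is precisely what the paper's placement of $\mathcal{G}^+_{\mu}[0]$ outside the $\mathcal{J}$-difference accomplishes; with $\mathcal{G}^+_{\mu}[\ve\zeta]$ outside, as in your splitting, you would instead have to use \eqref{Est G+ mu} and then bound the inner difference in $\dot{H}_{\mu}^{s+\frac{3}{2}}$, which brings this issue back. Finally, a minor slip: \eqref{Est G+ mu} gives a factor $\mu$ (with a half-derivative loss) and \eqref{Est G+} gives $\mu^{3/4}$, not the $\sqrt{\mu}$ you attribute to the outer factor; the powers of $\mu$ can still be made to close, but the accounting as stated is not correct.
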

	
	\begin{proof}

	For the proof of inequality \eqref{Symbolic G}, we let $\mathcal{G}_{\mu}[0] =  \mathcal{G}^+_{\mu}[0] (\mathcal{J}_{\mu}[0])^{-1}$ and make the following decomposition
	\begin{align*}
		|\mathcal{G}_{\mu}[\ve \zeta]\psi - \mathcal{G}_{\mu}^+[0] (\mathcal{J}_{\mu}[0])^{-1}\psi |_{H^s}
		& \leq 
		|\big{(}\mathcal{G}^+_{\mu}[\ve \zeta]-  \mathcal{G}_{\mu}^+[0]\big{)}(\mathcal{J}_{\mu}[\ve \zeta])^{-1}\psi |_{H^s}
		\\ 
		& 
		\hspace{0.5cm}
		+
		|  \mathcal{G}_{\mu}^+[0]\big{(}(\mathcal{J}_{\mu}[\ve \zeta])^{-1} -   (\mathcal{J}_{\mu}[0])^{-1}\big{)}\psi |_{H^s}
		\\ 
		& = 
		B_1 + B_2.
	\end{align*}
	For the estimate on $B_1$, we use the classical small amplitude expansion of $\mathcal{G}^+_{\mu}$ provided by Proposition $3.44$ (with $n=0$, $k=1$, $s'=s+\frac{1}{2}$) in \cite{WWP}, and then \eqref{Est J}:
	\begin{align*}
		B_1 
		\leq
		\ve  \mu  M|\psi|_{\dot{H}_{\mu}^{s+\frac{3}{2}}}.
	\end{align*}
	For an estimate on $B_2$, we first observe that
	\begin{equation*}
		\sqrt{\mu}|\xi|\tanh(\sqrt{\mu}|\xi|) \leq \mu \frac{|\xi|^2}{(1+\sqrt{\mu}|\xi|)^{\frac{1}{2}}}.
	\end{equation*}
	We may therefore use the product rule, combined with \eqref{Est J} and the shape derivative formulas \eqref{shape derivative J} and \eqref{shape derivative Jinv} to find that
	\begin{align*}
		B_2 
		& \leq
		\mu |\partial_x \big{(}(\mathcal{J}_{\mu}[\ve \zeta])^{-1}(1-\mathcal{J}_{\mu}[\ve \zeta] (\mathcal{J}_{\mu}[0])^{-1})\psi\big{)}|_{\dot{H}_{\mu}^{s+\frac{1}{2}}} 
		\\ 
		& \leq 
		\mu M \Big{(} |\mathrm{d}_{\zeta}(\mathcal{J}_{\mu}[\ve \zeta])^{-1}(\partial_x \zeta)(1-\mathcal{J}_{\mu}[\ve \zeta] (\mathcal{J}_{\mu}[0])^{-1})\psi|_{\dot{H}_{\mu}^{s+\frac{1}{2}}} 
		\\
		& 
		\hspace{0.5cm}
		+
		|\mathrm{d}_{\zeta}\mathcal{J}_{\mu}[\ve \zeta](\partial_x\zeta)  (\mathcal{J}_{\mu}[0])^{-1}\psi|_{\dot{H}_{\mu}^{s+\frac{1}{2}}} + |\partial_x \psi |_{\dot{H}_{\mu}^{s+\frac{1}{2}}} 	\Big{)}
		\\ 
		& 
		\leq \ve \mu M|\psi|_{\dot{H}_{\mu}^{s+\frac{3}{2}}} .
	\end{align*}

	\end{proof}

	\subsection{Symbolic analysis of $(\mathcal{J}_{\mu})^{-1}(\mathcal{G}_{\mu}^-)^{-1}\partial_x$}
	We end this section with the study of an operator that appears in the quasilinearisation of the equations. In particular, we will need the symbolic description for the energy estimates later (see inequality \eqref{What we want...}). The operator is well-defined by the first point in Corollary \ref{Cor inverse G-}, and can be approximated by  a first order operator with a skew-symmetric principal symbol.
	\begin{cor}\label{Cor last one}
		Let $t_0 \geq 1$ and $\zeta \in H^{t_0+3}(\R)$ be such that \eqref{non-cavitation} is satisfied. Then for all $0\leq s \leq t_0$ and $f \in H^{s+\frac{1}{2}}(\R)$, one can approximate the operator $\mathfrak{P}^2(\mathcal{J}_{\mu})^{-1}(\mathcal{G}_{\mu}^-)^{-1}\partial_x$ by 
		\begin{equation*}
				\mathrm{Op}\Big( \frac{\mathfrak{P}^2}{S_JS^-} \Big)\partial_x f(x)
				= -  \frac{1}{\sqrt{\mu}}\mathcal{F}^{-1}\Big{(} \frac{|\xi|(1+\sqrt{\mu}|\xi|)^{-1}}{ 1 +  \gamma \tanh(\sqrt{\mu} t(x, \xi))}i\xi\hat{f}(\xi)\Big{)}(x),
		\end{equation*}
		and for $k=0,1$, there holds,
		\begin{equation}
			|\big{[}\mathfrak{P}^2(\mathcal{J}_{\mu}[\ve \zeta])^{-1}(\mathcal{G}_{\mu}^-[\ve \zeta])^{-1}-\mathrm{Op}(\frac{\mathfrak{P}^2}{S_J S^-})\big{]}\partial_x f|_{H^{s+\frac{k}{2}}}\leq\ve \mu^{-1- \frac{k}{4}} M(t_0 +3)|(1+\sqrt{\mu}|\mathrm{D}|)^{\frac{1}{2}}f|_{H^s}.
		\end{equation}
		
	\end{cor}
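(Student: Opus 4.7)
The plan is to mimic the telescoping strategy used in the earlier corollaries of this section, inserting the two intermediate pseudo-differential approximations that have already been established, namely $(\mathcal{G}_\mu^-[\ve\zeta])^{-1}\partial_x \approx \mathrm{Op}(\partial_x/S^-) = \tfrac{1}{\sqrt{\mu}}\mathcal{H}$ from \eqref{est G- inv dx pdo}, and $(\mathcal{J}_\mu[\ve\zeta])^{-1} \approx \mathrm{Op}(1/S_J)$ from \eqref{Est Jinv pdo}. The factor $\mathfrak{P}^2$ is a Fourier multiplier with symbol $|\xi|^2/(1+\sqrt{\mu}|\xi|)$ that commutes through the inner operators, and thanks to the bound $\mathfrak{P}^2(\xi) \leq |\xi|/\sqrt{\mu}$ it will always contribute a factor $\mu^{-1/2}$ together with one spatial derivative.

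First, I would write the telescoping decomposition
\begin{align*}
\mathfrak{P}^2(\mathcal{J}_\mu)^{-1}(\mathcal{G}_\mu^-)^{-1}\partial_x f &- \mathrm{Op}(\tfrac{\mathfrak{P}^2}{S_JS^-})\partial_x f = I_1 + I_2 + I_3,
\end{align*}
where
\begin{align*}
I_1 &= \mathfrak{P}^2\bigl[(\mathcal{J}_\mu[\ve\zeta])^{-1} - \mathrm{Op}(1/S_J)\bigr](\mathcal{G}_\mu^-[\ve\zeta])^{-1}\partial_x f,\\
I_2 &= \mathfrak{P}^2\,\mathrm{Op}(1/S_J)\bigl[(\mathcal{G}_\mu^-[\ve\zeta])^{-1}\partial_x f - \tfrac{1}{\sqrt{\mu}}\mathcal{H}f\bigr],\\
I_3 &= \bigl[\mathfrak{P}^2\,\mathrm{Op}(1/S_J)\,\mathrm{Op}(\partial_x/S^-) - \mathrm{Op}(\tfrac{\mathfrak{P}^2}{S_JS^-})\partial_x\bigr] f.
\end{align*}
For $I_1$, I would apply \eqref{Est Jinv pdo} to the argument $(\mathcal{G}_\mu^-)^{-1}\partial_x f$ (whose $H^{\sigma-k/2}$ norm is controlled by estimates like \eqref{est G- inv dx pdo} combined with the obvious control of $\mathrm{Op}(\partial_x/S^-)$), and then absorb the operator $\mathfrak{P}^2$ as a $\mu^{-1/2}|\mathrm{D}|$-bound. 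For $I_2$, I would apply \eqref{est G- inv dx pdo} to the bracket, and use the fact that $\mathrm{Op}(1/S_J)$ is bounded on $\mathring{H}^{\sigma+1/2}$ together with the inequality $|\mathfrak{P}^2 u|_{H^{\sigma+k/2}} \leq \mu^{-1/2-k/4} |u|_{\mathring{H}^{\sigma+1/2}}$ which follows from the pointwise bound $\mathfrak{P}^2(\xi)\leq |\xi|^{1+k/2}\mu^{-1/2-k/4}$ on the support of $\mathring{H}^{\sigma+1/2}$. The pure symbol-calculus term $I_3$ is handled by arguments identical to those leading to \eqref{Op S+/S- 1/SJ} and \eqref{Op 1/SJ}: since $1/S_J - 1$ vanishes to order $\ve$ when $\zeta = 0$, the commutation defect between the composition of symbols and the symbol of the composition is controlled by $\ve$ times a derivative loss that is compensated by $\mathfrak{P}^2$.

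The Bessel-type weight $(1+\sqrt{\mu}|\mathrm{D}|)^{1/2}$ on the right-hand side arises naturally: the inner error bound \eqref{est G- inv dx pdo} is stated in $\mathring{H}^{s+\frac{1}{2}}$ (producing a $\mu^{-1/4}$), the outer error bound \eqref{Est Jinv pdo} produces another $\mu^{-k/4}$, and the prefactor $\mathfrak{P}^2$ converts these homogeneous norms back into an inhomogeneous one, giving precisely the combination $|\xi|\cdot(1+\sqrt{\mu}|\xi|)^{-1/2}\cdot\langle\xi\rangle^{s+k/2}$ acting on the raw data $f$. Adding the three contributions and reading off the worst power $\mu^{-1-k/4}$ yields the stated estimate. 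The main technical obstacle will be step $I_3$: since $\mathfrak{P}^2$ is a Fourier multiplier but $1/S_J$ has $x$-dependence through the tail symbol $t(x,\xi)$, the composition $\mathfrak{P}^2\,\mathrm{Op}(1/S_J)\,\mathrm{Op}(\partial_x/S^-)$ differs from $\mathrm{Op}(\mathfrak{P}^2/(S_JS^-))\partial_x$ by a genuine commutator; controlling it with only $\ve M(t_0+3)$ regularity requires using the skew-symmetric structure of the principal part and the fact that lower-order terms in the symbolic expansion carry an extra power of $\ve\partial_x\zeta$, so that the Calderón--Vaillancourt-type estimates from the Appendix apply uniformly in $\mu$.
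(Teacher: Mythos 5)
Your telescoping identity $I_1+I_2+I_3$ is algebraically correct, and it is a genuinely different route from the paper's (the paper works backwards: it sets $f^{\sharp}=(\mathcal{J}_{\mu})^{-1}(\mathcal{G}_{\mu}^-)^{-1}\partial_x f$, inserts the near-inverse pair $\mathrm{Op}(\tfrac{\mathfrak{P}^2}{S_JS^-})\mathrm{Op}(\tfrac{S_JS^-}{\mathfrak{P}^2})$ via \eqref{1 - op b2...}, and then uses $\mathcal{G}^-_{\mu}\mathcal{J}_{\mu}f^{\sharp}=\partial_x f$, so that every symbolic error term is pre-composed with the tame operator $\mathrm{Op}(\tfrac{\mathfrak{P}^2}{S_JS^-})$). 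However, your version has a genuine gap precisely where the two routes differ: in $I_1$ and $I_2$ you apply the bare multiplier $\mathfrak{P}^2$ to error terms that are only controlled in $\mathring{H}^{s+\frac{1}{2}}$ (resp.\ $\dot{H}_{\mu}^{s+\frac{1}{2}}$). The inequality you invoke, $|\mathfrak{P}^2 u|_{H^{\sigma+\frac{k}{2}}}\leq \mu^{-\frac{1}{2}-\frac{k}{4}}|u|_{\mathring{H}^{\sigma+\frac{1}{2}}}$, is false: comparing the weights one would need $\frac{|\xi|^{\frac{3}{2}}\langle\xi\rangle^{\frac{k}{2}}}{1+\sqrt{\mu}|\xi|}\lesssim \mu^{-\frac{1}{2}-\frac{k}{4}}$, and the left-hand side grows like $\mu^{-\frac{1}{2}}|\xi|^{\frac{1}{2}+\frac{k}{2}}$ at high frequency. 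The pointwise bound $\mathfrak{P}^2(\xi)\leq \mu^{-\frac{1}{2}-\frac{k}{4}}|\xi|^{1+\frac{k}{2}}$ is true, but it converts into a loss of a full (weighted) derivative, not half a one, so $\mathfrak{P}^2$ cannot be absorbed by the half-derivative contained in $\mathring{H}^{s+\frac{1}{2}}$ or $\dot{H}_{\mu}^{s+\frac{1}{2}}$.

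If you try to repair this by invoking \eqref{est G- inv dx pdo} and \eqref{Est Jinv pdo} at the higher indices $s+\tfrac{1}{2}+\tfrac{k}{2}$ (so that the extra derivative demanded by $\mathfrak{P}^2$ is available), you leave their range of validity $0\leq s\leq t_0$ for $\zeta\in H^{t_0+3}(\R)$, exactly at the endpoint cases the corollary must cover, and you additionally need boundedness of $\mathrm{Op}(1/S_J)$ on homogeneous spaces of regularity up to $t_0+\tfrac{3}{2}$, beyond what \eqref{Op 1/SJ} provides; also note that $\mathfrak{P}^2$ does not commute with the $x$-dependent operator $\mathrm{Op}(1/S_J)$, so $I_2$ is not merely ``$\mathfrak{P}^2$ after a bounded operator''. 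The paper's decomposition is designed to avoid precisely this: the error terms are always hit by $\mathrm{Op}(\tfrac{\mathfrak{P}^2}{S_JS^-})$, whose $1/S^-$ factor supplies the missing derivative (see \eqref{B2/S-}), so all building blocks (Proposition \ref{Prop G- S-}, \eqref{Symbolic G+}, \eqref{Inverse est 2}) are only needed at index $s+\tfrac{1}{2}$; moreover, since \eqref{G- symbolic} is only available on $\mathring{H}^{s+\frac{1}{2}}$ while $f^{\sharp}\in\dot{H}_{\mu}^{s+\frac{1}{2}}$, the paper further splits $(\mathrm{Op}(S^-S_J)-\mathcal{G}^-_{\mu}\mathcal{J}_{\mu})f^{\sharp}$ so that the $\mathcal{G}^-_{\mu}$-approximation is only ever applied to arguments such as $\mathcal{J}_{\mu}f^{\sharp}=(\mathcal{G}^-_{\mu})^{-1}\partial_x f$ that genuinely lie in $\mathring{H}^{s+\frac{1}{2}}$. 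To make your forward telescoping work you would have to reorganize $I_1$ and $I_2$ so that the factor $1/S^-$ (or the pair from \eqref{1 - op b2...}) shields the error terms from the bare $\mathfrak{P}^2$ — at which point you are essentially reproducing the paper's argument.
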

	
	\begin{proof}
		We first decompose the main operator into three pieces:
		\begin{align}
			\mathfrak{P}^2(\mathcal{J}_{\mu}[\ve \zeta])^{-1}(\mathcal{G}_{\mu}^-[\ve \zeta])^{-1}\partial_xf 
			& =
			\mathrm{Op}(\frac{\mathfrak{P}^2}{S_J S^-})\partial_x f + R_1 + R_2,
		\end{align}
		where we define $f^{\sharp} = (\mathcal{J}_{\mu})^{-1}(\mathcal{G}_{\mu}^-)^{-1}\partial_x f$
		\begin{equation*}
			R_1 = \Big{(} 1- \mathrm{Op}(\frac{\mathfrak{P}^2}{S_J S^-})\mathrm{Op}(\frac{S_J S^-}{\mathfrak{P}^2}) \Big{)}\mathfrak{P}^2f^{\sharp},
		\end{equation*}
		and
		\begin{equation*}
				R_2 = \mathrm{Op}(\frac{\mathfrak{P}^2}{S_J S^-}) \big{(}	\mathrm{Op}(S^-S_J) -  \mathcal{G}^{-}_{\mu}[\ve \zeta]\mathcal{J}_{\mu}[\ve \zeta]\big{)}f^{\sharp}.
		\end{equation*}
		For the estimate on $R_1$ we use \eqref{1 - op b2...} %with \eqref{Basic..} (for $k=0$) and \eqref{Basic est: B to D} (for $k=1$) 
		to get,
		\begin{align*}
			|R_1|_{H^{s+\frac{k}{2}}}
			\leq
			\ve \mu^{-\frac{k}{4}} M |\mathfrak{P}f^{\sharp}|_{H^s}.
		\end{align*}
		Then apply \eqref{Est J} and\eqref{Est derivative} to find that
		\begin{align} \label{B f sharp}
			 |\mathfrak{P}f^{\sharp}|_{L^2} 
			 & =
			 |(\mathcal{J}_{\mu}[\ve \zeta])^{-1}(\mathcal{G}_{\mu}^-[\ve \zeta])^{-1}\partial_x f|_{\dot{H}_{\mu}^{\frac{1}{2}}}
			 \\ 
			 &\notag
			 \leq 
			 \mu^{-\frac{1}{4}}M|(\mathcal{G}_{\mu}^-[\ve \zeta])^{-1}\partial_x f|_{\mathring{H}^{\frac{1}{2}}}
			  \\ 
			 & \notag
			 \leq 
			 \mu^{-\frac{3}{4}}M| f|_{\mathring{H}^{\frac{1}{2}}},
		%	 \\ 
			% & \notag
			 %\leq \mu^{-1}|(1+\sqrt{\mu}|\mathrm{D}|)^{\frac{1}{2}}f|_{L^2},
		\end{align}
		which yields the following estimate on $R_1$:
		\begin{align*}
			|R_1|_{H^{s+\frac{k}{2}}}
			\leq 
			 \ve \mu^{-1- \frac{k}{4}}M |(1+\sqrt{\mu}|\mathrm{D}|)^{\frac{1}{2}}f|_{H^s}.
		\end{align*}
		%
		%
		%
	%	So that by Cauchy-Schwarz inequality we get
		%
		%
		%
	%	\begin{equation*}
		%	|\big{(} R_1,g\big{)}_{L^2}| \leq \ve \mu^{-1- \frac{k}{4}}M |(1+\sqrt{\mu}|\mathrm{D}|)^{\frac{1}{2}}f|_{L^2}|g|_{L^2}.
		%\end{equation*}
		%
		%
		%
		For the estimate on $R_2$ we would like to employ the symbolic description of $\mathcal{G}_{\mu}^{\pm}$. However, since estimate \eqref{G- symbolic} arrives on $\mathring{H}^{\frac{1}{2}}(\R)$, we need to carefully decompose this term so that we can estimate  $f^{\sharp} \in \dot{H}^{s+\frac{1}{2}}_{\mu}(\R)$. First, we observe that% simply employ \eqref{Op 1/SJ} and \eqref{B2/S-} to obtain,
		%
		%
		%
	%	\begin{align*}
		%	|R_2|_{H^{s+\frac{k}{2}}} 
		%	& \leq 
		%	\mu^{-\frac{3}{4}}M |\big{(}	\mathrm{Op}(S^-S_J) -  \mathcal{G}^{-}_{\mu}[\ve \zeta]\mathcal{J}_{\mu}[\ve \zeta]\big{)}f^{\sharp}|_{\dot{H}_{\mu}^{s+\frac{k}{2}}},
		%\end{align*}
		%
		%
		%
	%	where we make the decomposition
		%
		%
		%
		\begin{align*}
			\big{(}\mathrm{Op}(  S^-S_J ) -   \mathcal{G}^{-}_{\mu}[\ve \zeta]\mathcal{J}_{\mu}[\ve \zeta]\big{)}f^{\sharp} 
			&
			= 
			\Big{(}\big{(}\mathrm{Op}(S^-) -  \mathcal{G}^{-}_{\mu}[\ve \zeta]\big{)}\mathcal{J}_{\mu}[\ve \zeta]\Big{)}f^{\sharp}
			\\ 
			& \hspace{0.5cm}
			+
			\big{(}\mathrm{Op}(S^-S_J) -\mathrm{Op}(S^-)\mathrm{Op}(S_J)  \big{)}f^{\sharp}
			\\ 
			& 
			\hspace{0.5cm}
			+
			\Big{(}\mathrm{Op}(S^-)\big{(}\mathrm{Op}(S_J) -  \mathcal{J}_{\mu}[\ve \zeta]\big{)}\Big{)}f^{\sharp}
			\\  
			& = 
			r_1+r_2+r_3.
		\end{align*}
		Let $R_2^i =\Lambda^{\frac{k}{2}} \mathrm{Op}(\frac{\mathfrak{P}^2}{S_J S^-}) r_i$ with $i=1,2,3$. Then for the contribution of $R_2^1$, we first use  estimates \eqref{Op 1/SJ} and \eqref{B2/S-} to find that
		\begin{align*}
			| R_2^1|_{H^s}
			& \leq  \mu^{-\frac{3}{4}-\frac{k}{4}} M(t_0+3)|r_1|_{\mathring{H}^{s+\frac{1}{2}}}.
		\end{align*}
		Then apply Proposition \ref{Prop G- S-}, the definition of $f^{\sharp}$, and argue as in the proof of estimate \eqref{B f sharp} to deduce,
		\begin{align*}
			| R_2^1|_{H^s}
			& \leq \ve \mu^{\frac{1}{4}-\frac{k}{4}}   M(t_0+3)|(\mathcal{G}_{\mu}^-[\ve \zeta])^{-1} \partial_x f|_{\mathring{H}^{s+\frac{1}{2}}}
			\\ 
			& \leq \ve  \mu^{-1}M(t_0+3) |(1+\sqrt{\mu}|\mathrm{D}|)^{\frac{1}{2}}f|_{H^s}.
		\end{align*}
		For $R_2^2$, we apply  estimates \eqref{Op 1/SJ}, \eqref{B2/S-}, \eqref{S-SJ comp est}, and \eqref{B f sharp}:
		\begin{align*} 
			| R_2^2|_{H^s}
			\leq \ve \mu^{-1}M |(1+\sqrt{\mu}|\mathrm{D}|)^{\frac{1}{2}}f|_{H^s}.
		\end{align*} 
		Lastly, for $R_2^3$, we will decompose it further:
		\begin{align*}
			R_2^3
			& =
			-\gamma \Lambda^{\frac{k}{2}} \mathrm{Op}\big{(}\frac{\mathfrak{P}^2}{S_J S^-}\big{)} \Big{(}\mathrm{Op}(S^-) \mathrm{Op}(\frac{S^+}{S^-})-\mathrm{Op}(S^+)\Big{)}f^{\sharp}
			\\ 
			&
			\hspace{0.5cm}
			-
			\gamma \Lambda^{\frac{k}{2}} \mathrm{Op}\big{(}\frac{\mathfrak{P}^2}{S_J S^-}\big{)} \Big{(}\mathrm{Op}(S^+)-\mathcal{G}^+_{\mu}[\ve \zeta]\Big{)}f^{\sharp}
			\\ 
			&
			\hspace{0.5cm}
			-\gamma 
			\Lambda^{\frac{k}{2}}\mathrm{Op}\big{(}\frac{\mathfrak{P}^2}{S_J S^-}\big{)} \Big{(} \mathcal{G}_{\mu}^{-}[\ve \zeta] - \mathrm{Op}(S^-) \Big{)}
			(\mathcal{G}^-_{\mu}[\ve \zeta])^{-1}
			\mathcal{G}^+_{\mu}[\ve \zeta]f^{\sharp} 
			\\ 
			& = 
			R_2^{3,1} +	R_2^{3,2} + 	R_2^{3,3}.
		\end{align*}
		For $R_2^{3,1}$, we use \eqref{Op 1/SJ}, \eqref{B2/S-}, \eqref{S-SJ comp est}, and  \eqref{B f sharp} to find that
		\begin{equation*}
			|R_2^{3,1}|_{H^s} \leq \ve \mu^{-1} M |(1+\sqrt{\mu}|\mathrm{D}|)^{\frac{1}{2}}f|_{H^s}.
		\end{equation*}
		For $R_2^{3,2}$, we use \eqref{Op 1/SJ}, \eqref{B2/S-}, then estimate \eqref{Symbolic G+} to find that%
		\begin{equation*}
			|R_2^{3,2}|_{H^s}\leq \ve\mu^{1-\frac{k}{4}} M(t_0+3) |(1+\sqrt{\mu}|\mathrm{D}|)^{\frac{1}{2}}f|_{H^s}
		\end{equation*}
		For $R_2^{3,3}$, we use estimates \eqref{Op 1/SJ}, \eqref{B2/S-}, \eqref{G- symbolic}, \eqref{Inverse est 2}, \eqref{B f sharp} to find that
		%
		% \eqref{dual est G+ s}, \eqref{inv reg}, \eqref{B f sharp}, and the adjoint estimate \eqref{est adjoint negative} to obtain
		%
		%
		%
		\begin{align*}
			|R_2^{3,3}|_{H^s}
			& \leq
			\mu^{-1} M |	 \Big{(} \mathrm{Op}(S^-)- \mathcal{G}_{\mu}^{-}[\ve \zeta] \Big{)}
			(\mathcal{G}^-_{\mu}[\ve \zeta])^{-1}
			\mathcal{G}^+_{\mu}[\ve \zeta]f^{\sharp}|_{H^{s+\frac{1}{2}}}
			\\
			& \leq
			\ve  M(t_0+3) |(\mathcal{G}^-_{\mu}[\ve \zeta])^{-1}
			\mathcal{G}^+_{\mu}[\ve \zeta]f^{\sharp}|_{\mathring{H}^{s+\frac{1}{2}}}
			\\
			& \leq
			\ve \mu^{\frac{1}{4}} M(t_0+3) |f^{\sharp}|_{\dot{H}_{\mu}^{s+\frac{1}{2}}}
			\\
			& \leq  
			\ve \mu^{-1} M(t_0 +3)|(1+\sqrt{\mu}|\mathrm{D}|)^{\frac{1}{2}}f|_{H^s}.
		\end{align*}
		Gathering all these estimates implies
		\begin{align*}
			|R_2|_{H^s} 
			\leq  
			\ve \mu^{-1-\frac{k}{4}} M(t_0 +3)|(1+\sqrt{\mu}|\mathrm{D}|)^{\frac{1}{2}}f|_{H^s},
		\end{align*}
		and adding the estimates for $R_1$ and $R_2$ completes the proof.
		\color{black}
	
	\end{proof}

	\section{Quasilinearization of the  internal water wave system}\label{Quasilinearization}
	 In this section, we will put the internal water waves system \eqref{IWW} in a quasilinear form. This is done by applying time and space derivatives to the system and taking care of the principal terms. In particular, one needs shape derivative formulas for the Dirichlet-Neumann operator \eqref{Def G}, where we prove in Lemma \ref{Lemma shape derivative G} that
	\begin{equation*}
		\mathrm{d}_{\zeta} \mathcal{G}_{\mu}[\ve\zeta](h) \psi = - \ve \mathcal{G}_{\mu}[\ve\zeta]\big{(}h(\underline{w}^+ -\gamma  \underline{w}^-) \big{)} - \ve \mu \mathcal{I}[\mathbf{U}]h,
	\end{equation*}
	where $\underline{w}^{\pm}$ is defined below together with the operator $\mathcal{I}[\mathbf{U}]h$. In fact, the operator $\mathcal{I}[\mathbf{U}]h$ is one of the main quantities that we need to understand, and is where we will use the symbolic descriptions from the previous section.  Before stating the main result, we formally introduce the main operators involved, and that will be studied in detail later. 
	
	\begin{Def}\label{Def op} Let the functions $\psi^{\pm}$ serve as Dirichlet data for the elliptic problems \eqref{Phi+} and \eqref{Phi-}, then we define the horizontal components of the velocities at the surface  by
		\begin{equation*}
			\underline{w}^{\pm}
			= \frac{\mathcal{G}^{\pm}_{\mu}[\ve \zeta] \psi^+ + \ve \mu \partial_x \zeta \partial_x \psi^\pm}{1+ \ve^2 \mu (\partial_x \zeta)^2}.
		\end{equation*}
		We define the vertical component of the velocity at the surface by
		\begin{equation*}
			\underline{V}^\pm = \partial_x\psi^\pm - \ve \underline{w}^\pm \partial_x \zeta,
		\end{equation*}
		and
		\begin{equation*}
			[\![ \underline{V}^{\pm}]\!] =  \underline{V}^{+}-\underline{V}^{-}.
		\end{equation*}
		We refer to Corollary \ref{cor definitions} for the precise definition. 
		Moreover, we define the quantity related to the Rayleigh-Taylor criterion by
		\begin{equation}\label{a R-T}
			\mathfrak{a}
			 =  \Big{(}1
			+\ve \big{(} (\partial_t + \ve \underline{V}^{+}\partial_x)\underline{w}^{+} 
			-
			\gamma  (\partial_t + \ve \underline{V}^{-}\partial_x)\underline{w}^{-}
			\big{)}\Big{)},
		\end{equation}
		and a quantity related to the presence of surface tension:
		\begin{equation*}
			\mathcal{K}[\ve \sqrt{\mu} \partial_x \zeta] \bullet = (1+\ve^2\mu(\partial_x\zeta)^2)^{-\frac{3}{2}} \bullet . 
		\end{equation*}
		From these quantities, we let $\mathbf{U} = (\zeta, \psi)^T$ and define the linear operator, $\mathcal{I}[\mathbf{U}]$, of order one by,
	\begin{equation}\label{Def I}
		\mathcal{I}[\mathbf{U}] \bullet
		=
		\partial_x ( \bullet \underline{V}^+)+\gamma  \mathcal{G}_{\mu}[\ve \zeta]  (\mathcal{G}^-_{\mu}[\ve \zeta])^{-1}\partial_x	\Big{(} 
		\bullet [\![ \underline{V}^{\pm}]\!]\Big{)},
	\end{equation}
	and its adjoint reads,
	\begin{equation}\label{I star}
		\mathcal{I}[\mathbf{U}]^{\ast} \bullet
		=
		-\underline{V}^+\partial_x  \bullet
		-
		\gamma 
		[\![ \underline{V}^{\pm}]\!]  \partial_x\big{(}(\mathcal{G}^-_{\mu}[\ve \zeta])^{-1}\mathcal{G}_{\mu}[\ve \zeta] \bullet\big{)}.
	\end{equation}
	 Moreover, we define the instability operator:
		\begin{equation}\label{inst}
				\mathfrak{Ins}[\mathbf{U}] \bullet = \mathfrak{a}\bullet -	(1-\gamma)\gamma \ve^2 \mu	[\![ \underline{V}^{\pm}]\!]   \mathfrak{E}_{\mu}[\ve \zeta]	\big{(} 
			\bullet[\![ \underline{V}^{\pm}]\!] \big{)} - \mathrm{bo}^{-1} \partial_x \mathcal{K}[\ve \sqrt{\mu} \partial_x \zeta]\partial_x  \bullet,
		\end{equation}
		where 
		\begin{equation}\label{E inst}
			\mathfrak{E}_{\mu}[\ve \zeta] \bullet = \partial_x \circ (\mathcal{J}_{\mu}[\ve \zeta])^{-1}(\mathcal{G}^-_{\mu}[\ve \zeta])^{-1}\circ \partial_x \bullet.
		\end{equation}
		Then to put the internal water waves system in matrix form, it is convenient to introduce the notation
		\begin{equation*}
			\mathcal{A}[U] 
			=
			\begin{pmatrix}
				0 & - \frac{1}{\mu}\mathcal{G}_{\mu}[\ve \zeta]
				\\ 
				\mathfrak{Ins}[\mathbf{U}] & 0
			\end{pmatrix},
			\qquad 
			\mathcal{B}[\mathbf{U}] 
			=
			\begin{pmatrix}
				\ve \mathcal{I}[\mathbf{U}] & 0
				\\ 
				0 & - \ve \mathcal{I}[\mathbf{U}]^{\ast}
			\end{pmatrix},
		\end{equation*}
		where $\mathcal{A}$ and $\mathcal{B}$ corresponds to the principal part of the system. To account for surface tension, one also needs to track the dependence in the sub-principal part, which is defined for $\alpha = (\alpha_1,\alpha_2)\in \N^2$:
		\begin{equation*}
			\mathcal{C}_{\alpha}[U] 
			=
			\begin{pmatrix}
				0 & - \frac{1}{\mu}\mathcal{G}_{\mu,(\alpha)}[\ve \zeta]
				\\ 
				\mathrm{bo}^{-1} \mathcal{K}_{(\alpha)}[\ve \sqrt{\mu} \partial_x \zeta]   & 0
			\end{pmatrix},
		\end{equation*}
		where we let $(\partial_1, \partial_2) = (\partial_x, \partial_t)$, $F = (f_1,f_2)$ to define
		\begin{equation*}
			\mathcal{G}_{\mu,(\alpha)}[\ve \zeta] F= \sum \limits_{j=1}^2 \ \alpha_j \mathrm{d}_{\zeta}\mathcal{G}_{\mu}[\ve \zeta](\partial_j \zeta) f_j,
		\end{equation*}
		and we have:
		\begin{equation*}
			\mathcal{K}_{(\alpha)}[\partial_x \zeta] F
			=  
			- \partial_x \Big{(}\sum\limits_{j=1}^2 \mathrm{d}_{\zeta}\mathcal{K}[\partial_x \partial_j \zeta] \partial_x f_j + \mathcal{K}[\partial_x f_j] \partial_x \partial_j \zeta\Big{)}.
		\end{equation*}
		
	\end{Def}
	
	With these formulas, we can state the main result of the section.
	
	\begin{prop}\label{Prop Quasilinear system}
		Let $T>0$, $t_0 \geq 1$ and $N \in \N$ be such that $	N \geq 5$. Furthermore, let $\mathbf{U} = (\zeta, \psi)^T \in \mathscr{E}_{\mathrm{bo}, T}^N$ be such that \eqref{non-cavitation} is satisfied on $[0,T]$. Also, for any $\alpha = (\alpha^1,\alpha^2) \in \N^2$, $\check\alpha^j= \alpha- \mathbf{e}_j$, with $1 \leq |\alpha| \leq N$ we define $\partial_{x,t}^{\alpha} = \partial_x^{\alpha_1} \partial_t^{\alpha_2}$, and let $\underline{w}^{\pm}$ be defined as in \eqref{Def w pm}. Moreover, define $\underline{w}$ by
		\begin{align*}
			\underline{w} &  =  \underline{w}^+ - \gamma \underline{w}^-, 
		\end{align*}
		and
		\begin{equation*}
			\zeta_{(\alpha)} = \partial_{x,t}^{\alpha} \zeta,
			\quad
			\psi_{(\alpha)} = \partial_{x,t}^{\alpha}\psi - \ve \underline{w} \partial_{x,t}^{\alpha}\zeta,
			\quad \psi_{\langle\check{\alpha}\rangle} = (\psi_{(\check \alpha^1)},\psi_{(\check\alpha^2)})^T.
		\end{equation*}
		Then for $\mathbf{U}_{(\alpha)} = (\zeta_{(\alpha)}, \psi_{(\alpha)})^T$ and $\mathbf{U}_{\langle\check\alpha\rangle } = (\zeta_{(\alpha)}, \psi_{\langle \check \alpha\rangle})^T$, there holds
		\begin{align}\label{Quasi 1}
			\text{if} \: \: 1\leq|\alpha| <N :
			& \quad \hspace{4.1cm}
			\partial_t \mathbf{U}_{(\alpha)} + \mathcal{A}[\mathbf{U}]\mathbf{U}_{(\alpha)} = \ve (R_{\alpha},S_{\alpha})^T,
			\\ 
			\text{if} \qquad\: |\alpha| = N:
			& \quad \label{Quasi 2}
			\partial_t \mathbf{U}_{(\alpha)} + \mathcal{A}[\mathbf{U}]\mathbf{U}_{(\alpha)} + \mathcal{B}[\mathbf{U}]\mathbf{U}_{(\alpha)}  + \mathcal{C}[\mathbf{U}]\mathbf{U}_{\langle \check\alpha\rangle} = \ve (R_{\alpha},S_{\alpha})^T
		\end{align}
		The rest functions satisfy the estimates
		\begin{equation}\label{Est R and S}
			|R_{\alpha}|^2_{H^1_{\mathrm{bo}}} + |S_{\alpha}|^2_{\dot{H}^{\frac{1}{2}}_{\mu}} \leq C \mathcal{E}^N(\mathbf{U}) \big{(} 1 + \ve^2 \sqrt{\mu}|[\![ \underline{V}^{\pm}]\!]|_{L^{\infty}}^2|\zeta|^2_{<N+\frac{1}{2}>}\big{)},
		\end{equation}
		for some $C(M,\mathcal{E}^N(\mathbf{U}))>0$ nondecreasing function of its argument and where $|\zeta|_{<N+\frac{1}{2}>}$ is defined by
		\begin{equation*}
			|\zeta|_{<N+\frac{1}{2}>} = \sum \limits_{\alpha \in \N^2, |\alpha|=N} |\partial_{x,t}^{\alpha}\zeta|_{\mathring{H}^{\frac{1}{2}}}.
		\end{equation*}
	\end{prop}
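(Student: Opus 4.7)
The proof proceeds by systematically applying the multi-derivative $\partial_{x,t}^\alpha$ to both equations of \eqref{IWW}, identifying the good unknown $\psi_{(\alpha)}$ so that the worst commutator $[\partial_{x,t}^\alpha, \tfrac{1}{\mu}\mathcal{G}_\mu[\ve\zeta]]\psi$ collapses onto a transport contribution, and organising the leftover terms into the three matrices $\mathcal{A}$, $\mathcal{B}$, and $\mathcal{C}$. Here $\mathcal{A}$ collects the linearised principal part (Dirichlet--Neumann operator paired with the Rayleigh--Taylor/surface-tension operator $\mathfrak{Ins}$), $\mathcal{B}$ collects the transport operators $\mathcal{I}$ and $\mathcal{I}^{\ast}$ which govern the large-time behaviour and must be retained only at top order $|\alpha|=N$, and $\mathcal{C}$ gathers the sub-principal contributions of the shape derivatives of $\mathcal{G}_\mu$ and of $\mathcal{K}$ that must be tracked at top order for the energy method to close in the presence of surface tension.

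For the kinematic equation, I would apply $\partial_{x,t}^\alpha$ and rewrite $\partial_{x,t}^\alpha(\mathcal{G}_\mu[\ve\zeta]\psi)$ by iterated use of the shape-derivative formula
\[
\mathrm{d}_\zeta \mathcal{G}_\mu[\ve\zeta](h)\psi = -\ve\, \mathcal{G}_\mu[\ve\zeta]\big(h\underline{w}\big) - \ve\mu\, \mathcal{I}[\mathbf{U}]h,
\]
with $\underline{w}=\underline{w}^+-\gamma\underline{w}^-$. The first contribution combines with $\mathcal{G}_\mu\partial_{x,t}^\alpha\psi$ to produce $\mathcal{G}_\mu\psi_{(\alpha)}$, which is exactly the $(1,2)$-entry of $\mathcal{A}[\mathbf{U}]$. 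The second, at top order, yields $-\ve\,\mathcal{I}[\mathbf{U}]\zeta_{(\alpha)}$, i.e.\ the first diagonal entry of $\mathcal{B}[\mathbf{U}]$; at sub-principal order it feeds $\mathcal{G}_{\mu,(\alpha)}$ inside $\mathcal{C}[\mathbf{U}]\mathbf{U}_{\langle\check\alpha\rangle}$. Every remaining commutator has at most $|\alpha|-1$ derivatives falling on coefficients and is absorbed into $\ve R_\alpha$ using the Sobolev bounds on $\mathcal{G}_\mu$ from Proposition \ref{Prop G} together with standard product and commutator inequalities.

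For the dynamic equation the decisive work is the identification of $\mathfrak{a}$. I would apply $\partial_{x,t}^\alpha$ to $\partial_t\psi+\zeta+\tfrac{\ve}{2}\bigl((\partial_x\psi^+)^2-\gamma(\partial_x\psi^-)^2\bigr)+\ve\mathcal{N}$ plus the capillary term, and substitute $\partial_{x,t}^\alpha\psi=\psi_{(\alpha)}+\ve\underline{w}\,\zeta_{(\alpha)}$. When the convective derivatives $\partial_t+\ve\underline{V}^{\pm}\partial_x$ from the quadratic velocity pieces hit $\ve\underline{w}^{\pm}\zeta_{(\alpha)}$, the combination reconstructs exactly the coefficient $\mathfrak{a}\zeta_{(\alpha)}$ of \eqref{a R-T}. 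The Kelvin--Helmholtz piece comes from the part of $\mathcal{N}$ linear in $[\![\underline{V}^{\pm}]\!]$: using $\psi^-=(\mathcal{G}_\mu^-)^{-1}\mathcal{G}_\mu^+\psi^+$ and $\mathcal{G}_\mu=\mathcal{G}_\mu^+(\mathcal{J}_\mu)^{-1}$ one recognises the operator $\mathfrak{E}_\mu[\ve\zeta]$ of \eqref{E inst} acting on $[\![\underline{V}^{\pm}]\!]\zeta_{(\alpha)}$, which is precisely the Kelvin--Helmholtz correction inside $\mathfrak{Ins}$. The surface tension term expands at top order to $-\mathrm{bo}^{-1}\partial_x(\mathcal{K}[\ve\sqrt{\mu}\partial_x\zeta]\partial_x\zeta_{(\alpha)})$, with the remaining distribution of derivatives producing the $\mathcal{K}_{(\alpha)}$ entry of $\mathcal{C}$; the analogue of the kinematic computation produces the $(2,2)$-entry $-\ve\,\mathcal{I}[\mathbf{U}]^{\ast}$ of $\mathcal{B}$.

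The main technical obstacle is then the remainder estimate \eqref{Est R and S}, and in particular the term with the factor $\ve^2\sqrt{\mu}|[\![\underline{V}^{\pm}]\!]|_{L^{\infty}}^2|\zeta|_{<N+\tfrac{1}{2}>}^2$: this arises from commutators where $N$ derivatives land on $\zeta$ inside the nonlocal composition $(\mathcal{G}_\mu^-)^{-1}\mathcal{G}_\mu$, which because of the infinite-depth upper fluid lives naturally on the homogeneous space $\mathring{H}^{s+\frac{1}{2}}(\R)$ where the usual tame product estimate is no longer available. For this piece I would use Proposition \ref{Inverse 2} to place the composition on the correct scale and Corollary \ref{Cor last one} to handle the resulting commutator at top order. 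All other remainders are bounded by repeated use of Proposition \ref{Prop G}, the shape-derivative bounds, and tame product inequalities, with $N\geq 5>t_0+3$ providing ample regularity to place the $t_0+1$ derivatives on coefficients in each product. Whenever a step reproduces verbatim the corresponding computation of Lannes \cite{LannesTwoFluid13} for two finite-depth layers, I would simply cite it and present only the pieces that are genuinely altered by the infinite-depth geometry, namely those involving $(\mathcal{G}_\mu^-)^{-1}$ and the functional setting $\mathring{H}^{s+\frac{1}{2}}$.
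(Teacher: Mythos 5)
Your treatment of the kinematic equation matches the paper's route (apply $\partial_{x,t}^{\alpha}$, use the shape derivative formula \eqref{d G} for $\mathcal{G}_{\mu}$, keep the one-derivative-on-$\zeta$ terms as $\mathcal{G}_{\mu,(\alpha)}\psi_{\langle\check\alpha\rangle}$ at top order, dump the rest into $\ve R_{\alpha}$). The genuine gap is in the dynamic equation. After the quadratic terms and $\mathcal{N}$ are recast in convective form (the identity \eqref{Claim in pf of quasi}, which also needs the analogue of Lemma 4.13 of \cite{WWP} and is what produces $\mathfrak{a}$), one is left at order $N$ with the mixed transport terms $\ve \underline{V}^{+}\partial_x\psi^{+}_{(\alpha)}-\gamma\ve\underline{V}^{-}\partial_x\psi^{-}_{(\alpha)}$, which involve the per-fluid good unknowns $\psi^{\pm}_{(\alpha)}$ and not the coupled unknown $\psi_{(\alpha)}$. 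Your proposal never converts these: you assert that $-\ve\mathcal{I}[\mathbf{U}]^{\ast}$ arises ``by analogy with the kinematic computation'' and that the Kelvin--Helmholtz correction comes from ``the part of $\mathcal{N}$ linear in $[\![\underline{V}^{\pm}]\!]$'', and neither is how these operators appear. The actual mechanism is the algebraic splitting \eqref{so that} into $\frac{\ve}{2}(\underline{V}^{+}+\underline{V}^{-})\partial_x\psi_{(\alpha)}$ plus $\frac{\ve}{2}[\![\underline{V}^{\pm}]\!]\partial_x\big(\psi^{+}_{(\alpha)}+\gamma\psi^{-}_{(\alpha)}\big)$, combined with Lemma \ref{Lemma Gpm inv} (the linearization at order $N$ of the kinematic relation $\mathcal{G}^{\pm}_{\mu}\psi^{\pm}=\mathcal{G}_{\mu}\psi$, which expresses $\mathcal{G}^{\pm}_{\mu}\psi^{\pm}_{(\alpha)}$ through $\mathcal{G}_{\mu}\psi_{(\alpha)}$ up to the correction $\partial_x(\zeta_{(\alpha)}[\![\underline{V}^{\pm}]\!])$) and the operator identities \eqref{id 1}--\eqref{id 3}; it is this substitution that yields exactly $-\ve\mathcal{I}[\mathbf{U}]^{\ast}\psi_{(\alpha)}$ together with $-(1-\gamma)\gamma\ve^{2}\mu[\![\underline{V}^{\pm}]\!]\mathfrak{E}_{\mu}[\ve\zeta]\big(\zeta_{(\alpha)}[\![\underline{V}^{\pm}]\!]\big)$, i.e.\ the Kelvin--Helmholtz part of $\mathfrak{Ins}$. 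Without this step the stated block structure of $\mathcal{A}$ and $\mathcal{B}$ in \eqref{Quasi 2} is not reached.

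The same omission affects your account of \eqref{Est R and S}: the factor $\ve^{2}\sqrt{\mu}\,|[\![\underline{V}^{\pm}]\!]|_{L^{\infty}}^{2}|\zeta|_{<N+\frac{1}{2}>}^{2}$ does not come from commutators to be treated with Corollary \ref{Cor last one} (that result is invoked later, in the energy estimate for the term $B_2^{2,1}$), but from the residuals $r^{\pm}_{\alpha}$ of Lemma \ref{Lemma Gpm inv} and the resulting bounds \eqref{claim - est psi} and \eqref{claim - est w} on $\psi^{\pm}_{(\alpha)}$ and $\partial^{\beta}\underline{w}^{\pm}$ in $\dot{H}^{\frac{1}{2}}_{\mu}$, each of which carries the weight $\ve\mu^{\frac{1}{4}}|[\![\underline{V}^{\pm}]\!]|_{L^{\infty}}|\zeta|_{<N+\frac{1}{2}>}$; the infinite-depth functional setting enters there through Proposition \ref{Inverse 2} and Corollary \ref{Cor inverse G-}. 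So before the remainder of your plan — which otherwise correctly follows the paper's strategy of reducing to Lannes's finite-depth computations and detailing only the $(\mathcal{G}^{-}_{\mu})^{-1}$-related modifications — you need to state and prove the analogue of Lemma \ref{Lemma Gpm inv} and carry out the substitution described above.
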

	 For the proof, we need to carefully track the dependencies in the small parameters. However, this part is very similar to the one in \cite{LannesTwoFluid13} and can therefore be considered a technical point. We give the details and point out the differences in the Appendix $\mathrm{A}$, Section \ref{proof of quasi}. Now, before we proceed with the energy estimates, we need to give a rigorous meaning to the operators given in Definition \ref{Def op}. This will be done in separate subsections.

	\subsubsection{Properties of $\mathcal{I}[\mathbf{U}]\bullet$} First, we study the operator $\mathcal{I}[\mathbf{U}]\bullet$ which appears later in the shape derivative formulas for $\mathcal{G}_{\mu}$.

	\begin{prop}\label{prop I }
		Let $t_0 \geq 1$ and $\mathbf{U} = (\zeta, \psi)^T$, with $\zeta \in H^{t_0 +3}(\R)$ satisfying \eqref{non-cavitation} and $\psi \in \dot{H}^{t_0 + 2}(\R)$. Then we may define the operator 
		\begin{equation*}
			\mathcal{I}[\mathbf{U}] \bullet
			=
			\partial_x ( \bullet \underline{V}^+)+\gamma  \mathcal{G}_{\mu}[\ve \zeta]  (\mathcal{G}^-_{\mu}[\ve \zeta])^{-1}\partial_x	\Big{(} 
			\bullet[\![ \underline{V}^{\pm}]\!]\Big{)},
		\end{equation*}
		and it satisfies the following properties:
		\begin{itemize} 
			\item [1.] For all $0\leq s \leq t_0$ and $f \in H^{s+\frac{1}{2}}(\R)$ there holds,
			\begin{equation}\label{est I inst op}
				|\mathcal{I}[\mathbf{U}]f|_{H^{s-\frac{1}{2}}} \leq M |f|_{H^{s+\frac{1}{2}}} |\partial_x \psi|_{H^{t_0 + \frac{1}{2}}}, 
			\end{equation}
			and
			\begin{equation}\label{est I[U]h psi in Hs}
				|\mathcal{I}[\mathbf{U}]f|_{H^{s-\frac{1}{2}}} \leq M |f|_{H^{t_0+\frac{1}{2}}} |\partial_x \psi|_{H^{s + \frac{1}{2}}}.
			\end{equation}
		
			\item [2.] Let $\mathfrak{a} = 1 + \mathfrak{b}$, with $\mathfrak{b} \in H^{t_0 +1}(\R)$, and for all $f \in L^2(\R)$ there holds,
			\begin{equation}\label{Est I a}
				\big{(}\mathfrak{a} \mathcal{I}[\mathbf{U}]f,f\big{)}_{L^2} 
				\leq
				M(t_0+3)(1+|\mathfrak{b}|_{H^{t_0+1}})|\partial_x \psi|_{H^{t_0+1}}|f|^2_{L^2}. 
			\end{equation}

			\item [3.] Let $K \in H^{t_0+1}(\R)$, then for all $f \in H^1(\R)$ there holds,
			\begin{equation}
				\big{(} \partial_x (K\partial_x  \mathcal{I}[\mathbf{U}]f),f\big{)}_{L^2} 
				\leq
				M(t_0+3)|K|_{H^{t_0+1}}|\partial_x \psi|_{H^{t_0+1}}|f|^2_{H^1}. 
			\end{equation}

			\item [4.] For all $f \in \dot{H}_{\mu}^{\frac{1}{2}}(\R)$, $g \in H^{\frac{1}{2}}(\R)$, one has
			\begin{equation}
				\big{(} \mathcal{I}[\mathbf{U}]^{\ast}f,g\big{)}_{L^2} 
				\leq
				M|\partial_x \psi|_{H^{t_0+\frac{1}{2}}}|f|_{\dot{H}^{\frac{1}{2}}_{\mu}}(|g|_{L^2} + \mu^{\frac{1}{4}}|g|_{H^{\frac{1}{2}}}).
			\end{equation}
		
		\end{itemize}

		\begin{proof}
			We prove each point in separate steps. \\
			
			\noindent
			\underline{Step 1.} To prove the first point, we observe that the first part of $\mathcal{I}[\mathbf{U}]f$ is estimated by the product estimate \eqref{Classical prod est} and \eqref{V pm est}:
			\begin{align*}
				|\partial_x (f \underline{V}^+)|_{H^{s-\frac{1}{2}}}
				&  \leq
				|f|_{H^{s+\frac{1}{2}}} |\underline{V}^+|_{H^{\max\{s+\frac{1}{2},t_0\}}} 
				\\
				& \leq M |f|_{H^{s+\frac{1}{2}}} |\partial_x \psi|_{H^{t_0 + \frac{1}{2}}}.
			\end{align*}
			To estimate the remaining part we first estimate  $\mathcal{G}_{\mu}$ with \eqref{1 Est G} to find that
			\begin{align*}
				|\mathcal{G}_{\mu}[\ve \zeta](\mathcal{G}^-_{\mu}[\ve \zeta])^{-1}\big{(} \partial_x( f [\![ \underline{V}^{\pm}]\!])\big{)}|_{H^{s-\frac{1}{2}}} 
				\leq
				 \mu^{\frac{3}{4}}|(\mathcal{G}^-_{\mu}[\ve \zeta])^{-1}\big{(} \partial_x( f [\![ \underline{V}^{\pm}]\!])\big{)}|_{\dot{H}_{\mu}^{s+\frac{1}{2}}}.
			\end{align*}
			Then use \eqref{Est derivative}, the product estimate \eqref{Classical prod est}, and \eqref{V pm est} to see that
			\begin{align*}
				 \mu^{\frac{3}{4}}|(\mathcal{G}^-_{\mu}[\ve \zeta])^{-1}\big{(} \partial_x( f [\![ \underline{V}^{\pm}]\!])\big{)}|_{\dot{H}_{\mu}^{s+\frac{1}{2}}}  
				 & 
				 \leq  \mu^{\frac{1}{2}}|(\mathcal{G}^-_{\mu}[\ve \zeta])^{-1}\big{(} \partial_x( f [\![ \underline{V}^{\pm}]\!])\big{)}|_{\mathring{H}^{s+\frac{1}{2}}}  
				 \\ 
				 & \leq
				| f [\![ \underline{V}^{\pm}]\!]|_{H^{s+\frac{1}{2}}}  
				 \\
				 & 
				 \leq 
				 M |f|_{H^{s+\frac{1}{2}}} |\partial_x \psi|_{H^{t_0 + \frac{1}{2}}}.
			\end{align*}

			For the proof of \eqref{est I[U]h psi in Hs}, it is proved similarly where we only need to modify the part when we apply the product estimate. \\

			\noindent
			\underline{Step 2.} In this estimate, we note that the instability operator is a first-order differential operator acting on $f \mapsto \mathcal{I}[\mathbf{U}]f$. But the principal symbol is skew-symmetric. Indeed, for the first part, we use integration by parts
			\begin{align*}
				\big{(}\mathfrak{a} \partial_x(f\underline{V}^+),f\big{)}_{L^2} 
				& =
				\big{(}\mathfrak{a}(\partial_x \underline{V}^+) f,f\big{)}_{L^2} 
				- 
				\frac{1}{2}	\big{(}\big{(}\partial_x(\mathfrak{a}\underline{V}^+)\big{)} f,f\big{)}_{L^2}.
			\end{align*}
			Then Hölder's inequality, the Sobolev embedding $H^{t_0}(\R) \hookrightarrow L^{\infty}(\R)$, the product estimate \eqref{Classical prod est}, and \eqref{Def V pm}  gives
			\begin{align*}
				|\big{(}\mathfrak{a} \partial_x(f\underline{V}^+),f\big{)}_{L^2}|
				& =
				(1+ |\mathfrak{b}|_{H^{t_0+1}})|\underline{V}^+|_{H^{t_0+1}}|f|_{L^2}^2
				\\ 
				& 
				\leq 
				(1+ |\mathfrak{b}|_{H^{t_0+1}})|\partial_x \psi|_{H^{t_0+1}}|f|_{L^2}^2.
			\end{align*}
			For the second part, we use integration by parts together with the fact that $\mathcal{G}_{\mu}$ and $\mathcal{G}_{\mu}^-$ are symmetric to make the following decomposition
			\begin{align*}
				\big{(}
				\mathfrak{a}\mathcal{G}_{\mu}[\ve \zeta] (\mathcal{G}^-_{\mu}[\ve \zeta])^{-1}\big{(} \partial_x(f [\![ \underline{V}^{\pm}]\!])\big{)},f
				\big{)}_{L^2} 
				& =
				-
				\big{(}  f [\![ \underline{V}^{\pm}]\!] , \partial_x\Big( (\mathcal{G}^-_{\mu}[\ve \zeta])^{-1}\mathcal{G}_{\mu}[\ve \zeta](\mathfrak{a}f)\Big)
				\big{)}_{L^2} 
				\\
				& =
				-
				\big{(}  f [\![ \underline{V}^{\pm}]\!] , \partial_x\Big( (\mathcal{G}^-_{\mu}[\ve \zeta])^{-1}\mathcal{G}_{\mu}[\ve \zeta]-\mathrm{Op}\Big( \frac{S^+}{S^-S_J} \Big)\Big)(\mathfrak{a}f)
				\big{)}_{L^2} 
				\\
				&
				\hspace{0.5cm}
				+
				\big{(}  \partial_x(f [\![ \underline{V}^{\pm}]\!]) , \mathrm{Op}\Big( \frac{S^+}{S^-S_J} \Big)(\mathfrak{a}f)
				\big{)}_{L^2} 
				\\ 
				& = :
				R_1 + R_2.
			\end{align*}
			For $R_1$ we use Cauchy-Schwarz and estimate \eqref{Est G-invG} to deduce that
			\begin{align*}
				|R_1| 
				& 
				\leq 	
				|f [\![ \underline{V}^{\pm}]\!]|_{L^2} | ( (\mathcal{G}^-_{\mu}[\ve \zeta])^{-1}\mathcal{G}_{\mu}[\ve \zeta] -\mathrm{Op}\Big( \frac{S^+}{S^-S_J} \Big))(\mathfrak{a}f)|_{\mathring{H}^{1}}
				\\ 
				& 
				\leq 
				\ve M(t_0+3)
				|f [\![ \underline{V}^{\pm}]\!]|_{L^2} | \mathfrak{a}f|_{L^2}.
				\color{black}
			\end{align*}
			Then use the product estimate \eqref{Classical prod est} and \eqref{V pm est} to get that
			 \begin{equation*}
			 	|R_1| 
			 	 \leq 
			 	\ve  M(t_0+3)(1+ |\mathfrak{b}|_{H^{t_0}}) |\partial_x \psi|_{H^{t_0}}|f|^2_{L^2}.\color{black}
			 \end{equation*}
		 	For $R_2$, we will make two different decomposition where we first split it into several parts:
		 	\begin{align*}
		 		R_2 
		 		& =
		 		\big{(}  \Big{(}\mathrm{Op}\Big( \frac{S^+}{S^-S_J} \Big)^{\ast}-\mathrm{Op}\Big( \frac{S^+}{S^-S_J} \Big)\Big{)}\partial_x(f [\![ \underline{V}^{\pm}]\!]) , (\mathfrak{a}f)
		 		\big{)}_{L^2} 
		 	%	& 
		 	%	\hspace{0.5cm}
		 		+
		 		\big{(} \mathrm{Op}\Big( \frac{S^+}{S^-S_J} \Big{)}(f \partial_x [\![ \underline{V}^{\pm}]\!]) , (\mathfrak{a}f)
		 		\big{)}_{L^2} 
		 		\\ 
		 		& 
		 		\hspace{0.5cm}
		 		+
		 		\big{(} [\mathrm{Op}\Big( \frac{S^+}{S^-S_J} \Big{)}, [\![ \underline{V}^{\pm}]\!]]\partial_xf , (\mathfrak{a}f)
		 		\big{)}_{L^2} 
		 		+
		 		\big{(}  [\![ \underline{V}^{\pm}]\!] \mathrm{Op}\Big( \frac{S^+}{S^-S_J} \Big{)} \partial_x f , (\mathfrak{a}f)
		 		\big{)}_{L^2} 
		 		\\
		 		& =
		 		R_2^1 + R_2^2 + R_2^3 +R_2^4. 
		 	\end{align*}
	 		For $R_2^1$ we use estimate \eqref{adjoint est S+/S-SJ}, for $R_2^2$ we use \eqref{est S+/S-SJ}, for $R_2^3$ we use \eqref{commutator est S+/S-SJ}, combined with \eqref{Classical prod est} and \eqref{V pm est} we get that
	 		\begin{equation*}
	 			|R_2^1| + |R_2^2| + |R_2^3| \leq M(1+ |\mathfrak{b}|_{H^{t_0}}) |\partial_x \psi|_{H^{t_0+1}}|f|^2_{L^2}.
	 		\end{equation*}
 			On the other hand,  to cancel $R_2^4$ we can use integration by parts on $R_2$. Indeed, we obtain that
 			\begin{align*}
 				R_2 
 				& =
 				-
 				\big{(}  (f [\![ \underline{V}^{\pm}]\!]) , \mathrm{Op}\Big( \partial_x\frac{S^+}{S^-S_J} \Big)(\mathfrak{a}f)
 				\big{)}_{L^2} 
 				-
 				\big{(}  (f [\![ \underline{V}^{\pm}]\!]) ,\mathrm{Op}\Big( \frac{S^+}{S^-S_J} \Big)(f\partial_x \mathfrak{b})
 				\big{)}_{L^2} 
 				\\ 
 				&
 				\hspace{0.5cm}
 				-
 				\big{(}  (f [\![ \underline{V}^{\pm}]\!]) ,[\mathrm{Op}\Big( \frac{S^+}{S^-S_J} \Big), \mathfrak{b}]\partial_xf 
 				\big{)}_{L^2}
 				-
 				\big{(}  (f [\![ \underline{V}^{\pm}]\!]) ,\mathfrak{a}\mathrm{Op}\Big( \frac{S^+}{S^-S_J} \Big) \partial_xf 
 				\big{)}_{L^2} 
 				\\ 
 				& = 
 				R_2^5 + R_2^6 + R_2^7 + R_2^8. 
 			\end{align*}
 			Here we estimate $R_2^5, R_2^6, R_2^7$ as above where we also use estimate \eqref{Op dx S+/S- 1/SJ} for $R_2^5$. Adding these two decompositions implies
 			\begin{equation*}
 				2|R_2| \leq \big{|}\sum \limits_{j=1}^8 R_2^j \big{|} \leq  M (1+ |\mathfrak{b}|_{H^{t_0+1}}) |\partial_x \psi|_{H^{t_0+1}}|f|^2_{L^2}.
 			\end{equation*}

		\noindent
		\underline{Step 3.} Since $K$ is symmetric and $f \mapsto \mathcal{I}[\mathbf{U}]f$ is skew-symmetric, we can absorb one derivative by integrating by parts as we did in the previous step.\\

		\noindent
		\underline{Step 4.} From the definition \eqref{I star} we have that
		\begin{align*}
				\big{(} \mathcal{I}[\mathbf{U}]^{\ast}f,g\big{)}_{L^2} 
				& = 
				-\big{(}\underline{V}^+\partial_x  f,g\big{)}_{L^2} 
				-
				\gamma \big{(}
				[\![ \underline{V}^{\pm}]\!]  \partial_x\big{(}(\mathcal{G}^-_{\mu}[\ve \zeta])^{-1}\mathcal{G}_{\mu}[\ve \zeta]f\big{)},g\big{)}_{L^2}
				\\ 
				& = 
				A_1 + \gamma A_2.
		\end{align*}
		For the first term, we introduce a commutator, then apply Hölder's inequality, Sobolev embedding, estimate \eqref{V pm est}, and \eqref{Commutator mu quart} to find that 
		\begin{align*}
			|A_1|
			& \leq 
			|[\underline{V}^+, (1+\sqrt{\mu}|\mathrm{D}|)^{\frac{1}{2}}]\frac{\partial_x }{(1+\sqrt{\mu}|\mathrm{D}|)^{\frac{1}{2}}} f|_{L^2}|g|_{L^2}
			\\ 
			& 
			\hspace{0.5cm}
			+
			|\underline{V}^+|_{L^{\infty}} |\frac{\partial_x }{(1+\sqrt{\mu}|\mathrm{D}|)^{\frac{1}{2}}}  f| |(1+\sqrt{\mu}|\mathrm{D}|)^{\frac{1}{2}}g|_{L^2}
			\\ 
			& 
			\leq
			M|\partial_x \psi|_{H^{t_0+\frac{1}{2}}}|f|_{\dot{H}^{\frac{1}{2}}_{\mu}}(|g|_{L^2} + \mu^{\frac{1}{4}}|g|_{H^{\frac{1}{2}}}).
		\end{align*}
		For the second term, we instead use commutator estimate \eqref{Commutator Dhalf} and \eqref{Inverse est 2} to obtain
		\begin{align*}
			|A_2|
			& \leq 
			|\big{[}[\![ \underline{V}^{\pm}]\!], |\mathrm{D}|^{\frac{1}{2}} \big{]} \mathcal{H}|\mathrm{D}|^{\frac{1}{2}}(\mathcal{G}^-_{\mu}[\ve \zeta])^{-1}\mathcal{G}_{\mu}[\ve \zeta]f|_{L^2} |g|_{L^2}
		%	\\ 
		%	& 
		%	\hspace{0.5cm}
			+
			|[\![ \underline{V}^{\pm}]\!]|_{L^{\infty}}
			| (\mathcal{G}^-_{\mu}[\ve \zeta])^{-1}\mathcal{G}_{\mu}[\ve \zeta]f|_{\mathring{H}^{\frac{1}{2}}} |g|_{\mathring{H}^{\frac{1}{2}}}
			\\
			& \leq 
			M\mu^{\frac{1}{4}}|\partial_x \psi|_{H^{t_0+\frac{1}{2}}}|f|_{\dot{H}^{\frac{1}{2}}_{\mu}}(|g|_{L^2} + |g|_{H^{\frac{1}{2}}}).
		\end{align*}
		  
		\end{proof}
	 
	\end{prop}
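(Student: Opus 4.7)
The plan is to handle the four points separately, exploiting the structure of $\mathcal{I}[\mathbf{U}]$ as a first-order operator with skew-symmetric principal symbol. For point (1), I would decompose $\mathcal{I}[\mathbf{U}]f = \partial_x(f\underline{V}^+) + \gamma \mathcal{G}_\mu[\varepsilon\zeta](\mathcal{G}^-_\mu[\varepsilon\zeta])^{-1}\partial_x(f[\![\underline{V}^{\pm}]\!])$. The first piece is estimated by the standard product rule in Sobolev spaces (tame estimates), together with control of $\underline{V}^+$ in terms of $\partial_x\psi$ (formula \eqref{V pm est}). For the second piece, I would apply estimate \eqref{1 Est G} on $\mathcal{G}_\mu$ to trade a loss of $\mu^{3/4}$ for a gain of $|\cdot|_{\dot H_\mu^{s+1/2}}$, then use \eqref{Est derivative} to transfer to $\mathring H^{s+1/2}$, absorbing the $(\mathcal{G}^-_\mu)^{-1}\partial_x$, and conclude by the product estimate. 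The two bounds in \eqref{est I inst op} and \eqref{est I[U]h psi in Hs} differ only in which factor absorbs the top regularity.

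The heart of the matter is point (2), where one must prove that $(\mathfrak{a}\mathcal{I}[\mathbf{U}]f,f)_{L^2}$ is controlled by $|f|_{L^2}^2$, despite $\mathcal{I}[\mathbf{U}]$ being of order one. This requires cancellation of the principal symbol. For the local piece $(\mathfrak{a}\partial_x(f\underline{V}^+),f)_{L^2}$, I would integrate by parts to replace $\partial_x f \cdot f$ by $\frac{1}{2}\partial_x(f^2)$, then integrate by parts once more and use Hölder plus Sobolev embedding. For the nonlocal piece, I would use the symmetry of $\mathcal{G}_\mu$ and of $\mathcal{G}^-_\mu$ (Propositions \ref{Prop G} and \ref{Prop G- dual est}) to move $\mathcal{G}_\mu(\mathcal{G}^-_\mu)^{-1}$ to the other factor, and then insert its symbolic approximation $\mathrm{Op}(S^+/(S^-S_J))$ from \eqref{Est G-invG}. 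The error is controlled by $\varepsilon M(t_0+3)$ by the symbolic estimate; for the principal part, the skew-symmetry is extracted by decomposing the pairing in two equivalent ways --- one where $\partial_x$ hits the right factor, one where it hits the left --- and then adding the decompositions so that the top-order terms $([\![\underline{V}^{\pm}]\!]\mathrm{Op}(S^+/(S^-S_J))\partial_x f, \mathfrak{a} f)_{L^2}$ and its integration-by-parts counterpart cancel up to commutators and adjoint errors, each of which is controlled by \eqref{adjoint est S+/S-SJ}, \eqref{est S+/S-SJ}, \eqref{commutator est S+/S-SJ}, and \eqref{Op dx S+/S- 1/SJ}.

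Point (3) follows from the same skew-symmetry principle: $\partial_x(K\partial_x\mathcal{I}[\mathbf{U}]f)$ is formally third order, but since $K$ is a pointwise multiplier and the principal symbol of $\mathcal{I}[\mathbf{U}]$ is skew-symmetric, integrating by parts twice and using the $K$-symmetry reduces the effective order to one on $\partial_x f$, so that the scheme of point (2) applies with $\partial_xf$ in place of $f$ and weight $K$ in place of $\mathfrak{a}$. Point (4) is more direct: expanding $\mathcal{I}[\mathbf{U}]^*$ from \eqref{I star}, I would test $(-\underline{V}^+\partial_x f, g)_{L^2}$ by inserting $(1+\sqrt{\mu}|\mathrm{D}|)^{1/2}(1+\sqrt{\mu}|\mathrm{D}|)^{-1/2}$ and using the commutator estimate \eqref{Commutator mu quart}; the nonlocal term $([\![\underline{V}^{\pm}]\!]\partial_x((\mathcal{G}^-_\mu)^{-1}\mathcal{G}_\mu f), g)_{L^2}$ is treated analogously via the half-derivative commutator \eqref{Commutator Dhalf} and the $\mathring H^{1/2}$-boundedness of $(\mathcal{G}^-_\mu)^{-1}\mathcal{G}_\mu$ from \eqref{Inverse est 2}, producing the split bound with factors $|g|_{L^2}$ and $\mu^{1/4}|g|_{H^{1/2}}$.

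The main obstacle, as anticipated, is point (2): one must simultaneously track the skew-symmetry of the principal symbol of $\mathrm{Op}(S^+/(S^-S_J))$ and the mismatch in functional setting between $\dot H_\mu^{1/2}$, $\mathring H^{1/2}$, and $L^2$, which forces careful bookkeeping of all commutator and adjoint remainders from Section \ref{Sec Symbolic}. Every $\mu$-power must be traced so that the final bound is uniform in $\mu,\varepsilon,\mathrm{bo}^{-1}\in(0,1)$, since this coercivity-type estimate is the linchpin for the energy estimates in Theorem \ref{Thm 1}.
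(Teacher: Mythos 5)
Your proposal is correct and follows essentially the same route as the paper: the same local/nonlocal splitting with \eqref{1 Est G}, \eqref{Est derivative} and the product estimate for point (1); the same symmetry-plus-symbolic-approximation argument via \eqref{Est G-invG} with the double decomposition of the PDO pairing and cancellation of the top-order terms through \eqref{adjoint est S+/S-SJ}, \eqref{est S+/S-SJ}, \eqref{commutator est S+/S-SJ}, \eqref{Op dx S+/S- 1/SJ} for point (2); and the same skew-symmetry reduction for (3) and commutator estimates \eqref{Commutator mu quart}, \eqref{Commutator Dhalf} with \eqref{Inverse est 2} for (4). No substantive gap to report.
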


	\subsection{Properties of $\mathfrak{Ins}[\mathbf{U}]\bullet$. } The instability operator is defined in terms of  $\mathfrak{E}_{\mu}$ which is given by
	\begin{equation*}
		\mathfrak{E}_{\mu}[\ve \zeta] \bullet = \partial_x \circ (\mathcal{J}_{\mu}[\ve \zeta])^{-1}(\mathcal{G}^-_{\mu}[\ve \zeta])^{-1}\circ \partial_x \bullet. 
	\end{equation*}
	Meaning we first need to study its properties, which is the topic of the next Proposition. 
	\begin{prop}\label{Prop e}
		Let $t_0\geq 1$ and $\zeta \in H^{t_0+2}(\R)$ be such that \eqref{non-cavitation} is satisfied. Then we have the following results:
		\begin{itemize}
			\item [1.] There exist a constant $c\leq M$ such that for all $f\in H^{\frac{1}{2}}(\R)$ there holds,
			\begin{equation}\label{Est E 1}
				0 \leq  \big{(}\mathfrak{E}_{\mu}[\ve \zeta] f,f \big{)}_{L^2} \leq \frac{c}{\mu}|(1+\sqrt{\mu}|\mathrm{D}|)^{\frac{1}{2}} f|_{L^2}^2.
			\end{equation}
			\item [2.] If we suppose further that $\zeta$ is time dependent and satisfies \eqref{non-cavitation} uniformly in time, then for all $f \in H^{\frac{1}{2}}(\R)$ there holds,
			\begin{equation}\label{Est E 2}
				|\big{(}\big{[}\partial_t, \mathfrak{E}_{\mu}[\ve \zeta]\big{]} f,f \big{)}_{L^2}| \leq \frac{\ve}{\mu} M |\partial_t \zeta|_{H^{t_0+1}}|(1+\sqrt{\mu}|\mathrm{D}|)^{\frac{1}{2}} f|_{L^2}^2.
			\end{equation}
			\item [3.] If we suppose further that $\zeta \in H^{t_0 +3}(\R)$. Then for all  $f \in H^{\frac{1}{2}}(\R)$ and $g \in H^{-\frac{1}{2}}(\R)$ there holds,
			\begin{equation}\label{est E East}
				|\big{(}f,\big{(}\mathfrak{E}_{\mu}[\ve \zeta]^{\ast}- \mathfrak{E}_{\mu}[\ve \zeta]\big{)} g\big{)}_{L^2}| \leq \ve \mu^{-\frac{3}{4}} M(t_0+3) |f|_{H^{\frac{1}{2}}} |g|_{H^{-\frac{1}{2}}}.
			\end{equation}
			
 		\end{itemize}
	\end{prop}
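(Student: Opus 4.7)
Set $w = (\mathcal{J}_\mu[\ve\zeta])^{-1}(\mathcal{G}^-_\mu[\ve\zeta])^{-1}\partial_x f$, so that $\mathcal{G}^-_\mu\mathcal{J}_\mu w = \partial_x f$, i.e.\ $(\mathcal{G}^-_\mu - \gamma\mathcal{G}^+_\mu)w = \partial_x f$. Integrating by parts gives
\begin{equation*}
(\mathfrak{E}_\mu[\ve\zeta]f, f)_{L^2} \;=\; -(w,\partial_x f)_{L^2} \;=\; -(w,\mathcal{G}^-_\mu w)_{L^2} \;+\; \gamma\,(w,\mathcal{G}^+_\mu w)_{L^2},
\end{equation*}
and both terms are non-negative by the variational formulations for $\pm\mathcal{G}^\pm_\mu$ (Propositions \ref{Prop G+ dual est} and \ref{Prop G- dual est}). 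For the upper bound I would invoke the dual bounds $-(w,\mathcal{G}^-_\mu w)\leq \sqrt{\mu}\,M|w|^2_{\mathring{H}^{1/2}}$ and $(w,\mathcal{G}^+_\mu w)\leq \mu\,M|w|^2_{\dot{H}^{1/2}_\mu}$, then use \eqref{Est J} together with \eqref{est G- inv dx pdo} to reduce both norms of $w$ to the corresponding norms of $\tfrac{1}{\sqrt{\mu}}f$. A short Fourier-side comparison $\sqrt{\mu}|\xi|\leq 1+\sqrt{\mu}|\xi|$ then shows that each piece is controlled by $\tfrac{c}{\mu}|(1+\sqrt{\mu}|\mathrm{D}|)^{1/2}f|_{L^2}^2$, closing the upper bound.

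\textbf{Point 2.} Since $\mathfrak{E}_\mu$ depends on $t$ only through $\zeta$, the commutator equals $\ve\,\mathrm{d}_\zeta\mathfrak{E}_\mu[\ve\zeta](\partial_t\zeta)$. Applying the product rule to $(\mathcal{J}_\mu)^{-1}(\mathcal{G}^-_\mu)^{-1}$ produces two pieces of the form $\partial_x\,A\,(\mathrm{d}_\zeta B)(\partial_t\zeta)\,C\,\partial_x$, with $A,B,C\in\{(\mathcal{J}_\mu)^{-1},(\mathcal{G}^-_\mu)^{-1},\mathcal{J}_\mu,\mathcal{G}^-_\mu\}$ and $\mathrm{d}_\zeta$ the shape derivatives already used in Sections 2 and 4. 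Pairing against $f$ on both sides and repeating the integration-by-parts trick of Point 1 (absorbing the outer $\partial_x$'s into $w$), I would place $\partial_t\zeta$ in $L^\infty$ at the cost of $|\partial_t\zeta|_{H^{t_0+1}}$ and estimate the remaining operator chains with \eqref{Est J}, Proposition \ref{Inverse 2}, and the dual bounds for $\mathcal{G}^\pm_\mu$. The $\mu$-counting is identical to Point 1, since the only new element is the scalar factor $\partial_t\zeta$ which costs no $\mu$, yielding the stated $\ve\mu^{-1}M$ bound.

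\textbf{Point 3.} Using $(\mathcal{G}^-_\mu)^{\ast}=\mathcal{G}^-_\mu$ together with $(\mathcal{J}_\mu^{\ast})^{-1}-(\mathcal{J}_\mu)^{-1}=-\gamma\,(\mathcal{J}_\mu^{\ast})^{-1}\bigl[(\mathcal{G}^-_\mu)^{-1},\mathcal{G}^+_\mu\bigr](\mathcal{J}_\mu)^{-1}$, I obtain
\begin{equation*}
\mathfrak{E}_\mu^{\ast}-\mathfrak{E}_\mu \;=\; -\gamma\,\partial_x\,(\mathcal{J}_\mu^{\ast})^{-1}\bigl[(\mathcal{G}^-_\mu)^{-1},\mathcal{G}^+_\mu\bigr](\mathcal{J}_\mu)^{-1}(\mathcal{G}^-_\mu)^{-1}\partial_x ,
\end{equation*}
so the task reduces to controlling the commutator $[(\mathcal{G}^-_\mu)^{-1},\mathcal{G}^+_\mu]$. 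Since $\mathcal{G}^\pm_\mu[0]$ are Fourier multipliers the commutator vanishes at $\zeta=0$; quantitatively, inserting the symbolic approximations from Propositions \ref{Symbolic thm 4 Lannes} and \ref{Prop G- S-} and using that $\mathrm{Op}(1/S^-)$ is a Fourier multiplier (so $\mathrm{Op}(S^+)\mathrm{Op}(1/S^-)=\mathrm{Op}(S^+/S^-)$) splits the bracket into three remainder pieces, each carrying an explicit $\ve$-factor by \eqref{Symbolic G+}, \eqref{G- symbolic}, and \eqref{Symbol G-invG+}. Pairing against $f$ and $g$ via \eqref{Est J} and Proposition \ref{Inverse 2} then assembles the $\ve\mu^{-3/4}M(t_0+3)$ bound.

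\textbf{Main obstacle.} I expect Point 3 to be the most delicate. The algebra of the decomposition is routine, but the \emph{bookkeeping of $\mu$-powers} is not: $(\mathcal{G}^-_\mu)^{-1}$ naturally lands in the \emph{homogeneous} space $\mathring{H}^{s+1/2}$, whereas the desired estimate is posed on the \emph{inhomogeneous} scale $H^{1/2}\times H^{-1/2}$. Each transition between these scales costs a factor $\mu^{\pm 1/4}$ through trace-type inequalities, and threading the $\ve$-factors from the symbolic remainders of Section \ref{Sec Symbolic} so that they collectively assemble to exactly $\ve\mu^{-3/4}$, rather than a weaker power, is the core technical point and will demand a careful accounting at every step.
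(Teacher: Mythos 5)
Your Points 1 and 2 are essentially the paper's argument (positivity via the sign of $\mathcal{G}^-_{\mu}\mathcal{J}_{\mu}=\mathcal{G}^-_{\mu}-\gamma\mathcal{G}^+_{\mu}$, then Cauchy--Schwarz plus the mapping bounds, and for Point 2 the two shape-derivative pieces of $(\mathcal{J}_{\mu})^{-1}(\mathcal{G}^-_{\mu})^{-1}$). One caveat on your upper bound in Point 1: the paper does not split the quadratic form; it applies Cauchy--Schwarz directly to $-(w,\partial_x f)_{L^2}$, which only needs $|w|_{\dot{H}^{1/2}_{\mu}}$, available from \eqref{Est J}, \eqref{Basic est: B to D} and \eqref{Est derivative}. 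Your splitting into $-(w,\mathcal{G}^-_{\mu}w)$ and $\gamma(w,\mathcal{G}^+_{\mu}w)$ requires an $\mathring{H}^{1/2}$ bound on $w=(\mathcal{J}_{\mu})^{-1}(\mathcal{G}^-_{\mu})^{-1}\partial_x f$, which \eqref{Est J} does not give (it lives on the $\dot{H}^{1/2}_{\mu}$ scale), and your appeal to \eqref{est G- inv dx pdo} needs $\zeta\in H^{t_0+3}$ and produces $M(t_0+3)$, whereas Point 1 only assumes $\zeta\in H^{t_0+2}$ and asks for $c\le M$. This is fixable (e.g. via $(\mathcal{J}_{\mu})^{-1}=1+\gamma(\mathcal{G}^-_{\mu})^{-1}\mathcal{G}^+_{\mu}(\mathcal{J}_{\mu})^{-1}$ and \eqref{Inverse est 2}), but as written it does not close under the stated hypotheses.

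Point 3 contains the genuine gap. Taking the $L^2$-adjoint reverses the order of the factors: since $(\mathcal{G}^-_{\mu})^{\ast}=\mathcal{G}^-_{\mu}$ and $\partial_x^{\ast}=-\partial_x$, one has $\mathfrak{E}_{\mu}^{\ast}=\partial_x(\mathcal{G}^-_{\mu})^{-1}(\mathcal{J}_{\mu}^{\ast})^{-1}\partial_x$, so
\begin{equation*}
\mathfrak{E}_{\mu}^{\ast}-\mathfrak{E}_{\mu}=\partial_x\Big((\mathcal{G}^-_{\mu})^{-1}(\mathcal{J}_{\mu}^{\ast})^{-1}-(\mathcal{J}_{\mu})^{-1}(\mathcal{G}^-_{\mu})^{-1}\Big)\partial_x ,
\end{equation*}
which is not your expression $\partial_x\big((\mathcal{J}_{\mu}^{\ast})^{-1}-(\mathcal{J}_{\mu})^{-1}\big)(\mathcal{G}^-_{\mu})^{-1}\partial_x$; your identity silently commutes $(\mathcal{G}^-_{\mu})^{-1}$ past $(\mathcal{J}_{\mu}^{\ast})^{-1}$. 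Worse for your strategy: if you carry the formal algebra out correctly, $(\mathcal{J}_{\mu})^{-1}(\mathcal{G}^-_{\mu})^{-1}=(\mathcal{G}^-_{\mu}\mathcal{J}_{\mu})^{-1}=(\mathcal{G}^-_{\mu}-\gamma\mathcal{G}^+_{\mu})^{-1}$ is the inverse of a formally symmetric operator, so at the formal level $\mathfrak{E}_{\mu}^{\ast}-\mathfrak{E}_{\mu}=0$ and there is no nontrivial commutator $[(\mathcal{G}^-_{\mu})^{-1},\mathcal{G}^+_{\mu}]$ left to estimate. The entire content of \eqref{est E East} is that this formal self-adjointness cannot be justified on the $H^{1/2}\times H^{-1/2}$ pairing: the symmetry of $\mathcal{G}^{\pm}_{\mu}$ is only available as bilinear forms on $\dot{H}^{1/2}_{\mu}$ resp. $\mathring{H}^{1/2}$ (Propositions \ref{Prop G+ dual est}, \ref{Prop G- dual est}), and $g\in H^{-1/2}$ takes the compositions out of those spaces. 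The paper therefore proceeds by a quantitative comparison: on both sides it replaces $(\mathcal{G}^-_{\mu})^{-1}\partial_x$ by $\mathrm{Op}(\partial_x/S^-)$ via \eqref{est G- inv dx pdo} and $(\mathcal{J}_{\mu})^{-1}$ by $\mathrm{Op}(1/S_J)$ via \eqref{Est Jinv pdo}, and the two explicit leading PDO terms then cancel only up to adjoint and commutator errors for symbols of limited smoothness, \eqref{Op SJ 1}--\eqref{Op SJ 3}, each carrying the factor $\ve$. Your sketch cites only the remainder bounds \eqref{Symbolic G+}, \eqref{G- symbolic}, \eqref{Symbol G-invG+} and never invokes these adjoint/commutator estimates, so even after repairing the algebra the $\ve$-smallness of the leading part is unaccounted for; the missing idea is this PDO cancellation scheme (and the duality needed to handle $g\in H^{-1/2}$), not merely the $\mu$-power bookkeeping you flag as the main obstacle.
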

	\begin{proof} We give the proof in three separate points. \\ 
		
		\noindent
		\underline{Step 1.}	For the proof of the first point, we deduce the positivity by integrating parts:
		\begin{align*}
			\big{(}\mathfrak{E}_{\mu}[\ve \zeta] f,f \big{)}_{L^2} 
			& =
			- \big{(} (\mathcal{J}_{\mu}[\ve \zeta])^{-1}(\mathcal{G}^-_{\mu}[\ve \zeta])^{-1} \partial_xf,\partial_x f \big{)}_{L^2}
			\\ 
			& =
			- \big{(} g, (\mathcal{G}^-_{\mu}[\ve \zeta]) \mathcal{J}_{\mu}[\ve \zeta] g\big{)}_{L^2}
			\\ 
			& \geq 0
			,
		\end{align*}
		where we defined $g$ by
		\begin{equation*}
			g = (\mathcal{J}_{\mu}[\ve \zeta])^{-1}(\mathcal{G}^-_{\mu}[\ve \zeta])^{-1} \partial_xf,
		\end{equation*}
		and used that $\mathcal{G}^{-}_{\mu} \mathcal{J}_{\mu} = \mathcal{G}^-_{\mu} - \gamma \mathcal{G}^+_{\mu}$ is negative.
		
		For the upper bound, we use Plancherel's identity, and Cauchy-Schwarz inequality with the estimates \eqref{Est J} and \eqref{Est derivative} to find that
		\begin{align*}
			|\big{(}\mathfrak{E}_{\mu}[\ve \zeta] f,f \big{)}_{L^2}|
			& \leq 
			|(\mathcal{J}_{\mu}[\ve \zeta])^{-1}(\mathcal{G}^-_{\mu}[\ve \zeta])^{-1} \partial_xf|_{\dot{H}^{\frac{1}{2}}_{\mu}} |(1+\sqrt{\mu}|\mathrm{D}|)^{\frac{1}{2}} f|_{L^2}
			\\
			& \leq  
			\mu^{-\frac{1}{4}}
			M
			|(\mathcal{G}^-_{\mu}[\ve \zeta])^{-1} \partial_xf|_{\mathring{H}^{\frac{1}{2}}} |(1+\sqrt{\mu}|\mathrm{D}|)^{\frac{1}{2}} f|_{L^2}
			\\
			& \leq 
			\mu^{-\frac{3}{4}}
			M |f|_{\mathring{H}^{\frac{1}{2}}} |(1+\sqrt{\mu}|\mathrm{D}|)^{\frac{1}{2}} f|_{L^2}.
			%\\
			%& \leq \mu^{-1}M
			%|(1+\sqrt{\mu}|\mathrm{D}|)^{\frac{1}{2}} f|_{L^2}^2.
		\end{align*}

		\noindent
		\underline{Step 2.} By direct computations, we need to control the following two terms:
		\begin{align*}
			\big{[}\partial_t, \mathfrak{E}_{\mu}[\ve \zeta]\big{]} f 
			& = 
			\partial_x \circ (\mathrm{d}_{\zeta}(\mathcal{J}_{\mu}[\ve \zeta])^{-1}(\partial_t \zeta))\circ(\mathcal{G}^-_{\mu}[\ve \zeta])^{-1}\circ \partial_x f
			\\
			&
			\hspace{0.5cm}
			+
			\partial_x \circ (\mathcal{J}_{\mu}[\ve \zeta])^{-1}\circ\mathrm{d}_{\zeta}(\mathcal{G}^-_{\mu}[\ve \zeta])^{-1}(\partial_t \zeta)\circ \partial_x f
			\\ 
			& = f_1 + f_2.
		\end{align*}
		For the contribution of the first term, we use Plancherel's identity, Cauchy-Schwarz, and estimates \eqref{shape derivative Jinv} and \eqref{Est derivative}:
		\begin{align*}
			|\big{(} f_1, f\big{)}_{L^2}|
			& \leq  
			|\mathrm{d}_{\zeta}(\mathcal{J}_{\mu}[\ve \zeta])^{-1}(\partial_t \zeta)(\mathcal{G}^-_{\mu}[\ve \zeta])^{-1}\circ \partial_x f|_{\dot{H}^{\frac{1}{2}}_{\mu}} |(1+\sqrt{\mu}|\mathrm{D}|)^{\frac{1}{2}} f|_{L^2}
			\\ 
			& \leq \ve\mu^{-\frac{1}{4}} M |\partial_t \zeta|_{H^{t_0+1}}
			|(\mathcal{G}^-_{\mu}[\ve \zeta])^{-1} \partial_xf|_{\mathring{H}^{\frac{1}{2}}} |(1+\sqrt{\mu}|\mathrm{D}|)^{\frac{1}{2}} f|_{L^2}
			\\
			&
			\leq 
			\ve\mu^{-1} M |\partial_t \zeta|_{H^{t_0+1}}
			|(1+\sqrt{\mu}|\mathrm{D}|)^{\frac{1}{2}} f|_{L^2}^2.
		\end{align*}
		For the estimate on $f_2$ we use we use Plancherel's identity, Cauchy-Schwarz, \eqref{Est J}, \eqref{G- inv djG-}, and then \eqref{Est derivative} to obtain
		\begin{align*}
			|\big{(} f_2, f\big{)}_{L^2}|
			& \leq  
			\mu^{-\frac{1}{4}}|(\mathcal{G}^-_{\mu}[\ve \zeta])^{-1}\circ \mathrm{d}_{\zeta}\mathcal{G}^-_{\mu}[\ve \zeta](\partial_t \zeta)\circ (\mathcal{G}^-_{\mu}[\ve \zeta])^{-1}\circ \partial_x f|_{\mathring{H}^{\frac{1}{2}}} |(1+\sqrt{\mu}|\mathrm{D}|)^{\frac{1}{2}} f|_{L^2}
			\\
			&
			\leq 
			\ve\mu^{-1} M |\partial_t \zeta|_{H^{t_0+1}}
			|(1+\sqrt{\mu}|\mathrm{D}|)^{\frac{1}{2}} f|_{L^2}^2.\\
		\end{align*}

		\noindent
		\underline{Step 3.} We will split the left hand side of \eqref{est E East} into two parts and trade the differential operators with corresponding PDOs.  In particular, for the first term we observe that
		\begin{align*}
			\big{(}f, \mathfrak{E}_{\mu}[\ve \zeta]^{\ast} g\big{)}_{L^2}
			& = 
			\big{(} \Lambda^{\frac{1}{2}}\partial_x (\mathcal{J}_{\mu})^{-1}\Big{(}(\mathcal{G}_{\mu}^{-})^{-1}\partial_x - \mathrm{Op}(\frac{\partial_x}{S^-})\Big{)}f, \Lambda^{-\frac{1}{2}} g\big{)}_{L^2}
			\\ 
			& \hspace{0.5cm} 
			+
			\big{(}   \Lambda^{\frac{1}{2}}\partial_x  \Big{(}(\mathcal{J}_{\mu})^{-1} - \mathrm{Op}(\frac{1}{S_J})   \Big{)} \mathrm{Op}(\frac{\partial_x}{S^-})f, \Lambda^{-\frac{1}{2}}g\big{)}_{L^2} 
			+
			\big{(}\partial_x \mathrm{Op}(\frac{1}{S_J})  \mathrm{Op}(\frac{\partial_x}{S^-})f,g\big{)}_{L^2} 
			\\ 
			& = 
			A_1 + A_2 + A_3.
		\end{align*}
		Here $A_1$ is estimated by Cauchy-Schwarz inequality, \eqref{Est J},  \eqref{est G- inv dx pdo}, to find that
		\begin{align*}
			|A_1| 
			\leq |(\mathcal{J}_{\mu})^{-1}\Big{(}(\mathcal{G}_{\mu}^{-})^{-1}\partial_x - \mathrm{Op}(\frac{\partial_x}{S^-})\Big{)}f|_{\dot{H}^{\frac{3}{2}}_{\mu}} |g|_{H^{-\frac{1}{2}}}\leq 
			\ve \mu^{-\frac{1}{2}}M(t_0+3)|f|_{H^\frac{1}{2}}|g|_{H^{-\frac{1}{2}}},
		\end{align*}
		while $A_2$ is estimated similarly using  \eqref{Est Jinv pdo} with $s=k=1$ to find that
		\begin{equation*}
			|A_2| \leq \ve \mu^{-\frac{3}{4}} M(t_0 + 3)|f|_{H^{\frac{1}{2}}}|g|_{H^{-\frac{1}{2}}}.
		\end{equation*}
		Lastly, the contribution will cancel up to a smoothing estimate with the remaining terms
		\begin{align*}
			-
			\big{(}f, \mathfrak{E}_{\mu}[\ve \zeta] g\big{)}_{L^2}
			& = 
			\big{(} \mathfrak{P}^{-1}\partial_x f,  \mathfrak{P}(\mathcal{J}_{\mu})^{-1}\Big{(}(\mathcal{G}_{\mu}^{-})^{-1}\partial_x - \mathrm{Op}(\frac{\partial_x}{S^-})\Big{)}g\big{)}_{L^2}
			\\
			& 
			\hspace{0.5cm}
			+
			\big{(} \mathfrak{P}^{-1} \partial_x f, \mathfrak{P} \Big{(}(\mathcal{J}_{\mu})^{-1} - \mathrm{Op}(\frac{1}{S_J}) \Big{)}  \mathrm{Op}(\frac{\partial_x}{S^-})g\big{)}_{L^2}
			+
			\big{(} \partial_x f, \mathrm{Op}(\frac{1}{S_J}) \mathrm{Op}(\frac{\partial_x}{S^-})g\big{)}_{L^2}
			\\
			& = 
			B_1 + B_2 + B_3,
		\end{align*}
		where $|B_1| + |B_2|$ is estimated as $|A_1| + |A_2|$ to find that 
		\begin{equation*}
			|B_1| + |B_2| \leq \ve \mu^{-\frac{3}{4}} M(t_0 + 3)|f|_{H^{\frac{1}{2}}}|g|_{H^{-\frac{1}{2}}}.
		\end{equation*}
		On the other hand, $B_3$ can be compensated with $A_3$ by noting that
		\begin{align*}
			B_3 
			& = -
			\big{(}\Lambda^{\frac{1}{2}} \mathrm{Op}(\frac{\partial_x}{S^-})\mathrm{Op}(\frac{1}{S_J})^{\ast}\partial_x f,\Lambda^{-\frac{1}{2}} g
			\big{)}_{L^2}
			\\ 
			& = 
			-
			\big{(} \Lambda^{\frac{1}{2}}\mathrm{Op}(\frac{\partial_x}{S^-})\Big{(}\mathrm{Op}(\frac{1}{S_J})^{\ast} - \mathrm{Op}(\frac{1}{S_J})\Big{)}\partial_x f,\Lambda^{-\frac{1}{2}} g
			\big{)}_{L^2}
			\\
			& \hspace{0.5cm}
			-
			\big{(}\Lambda^{\frac{1}{2}} \big{[} \mathrm{Op}(\frac{\partial_x}{S^-}), \mathrm{Op}(\frac{1}{S_J})\big{]}\partial_x f, \Lambda^{-\frac{1}{2}}g
			\big{)}_{L^2}
			-
			\big{( \Lambda^{\frac{1}{2}} \big{[}\mathrm{Op}(\frac{1}{S_J}), \partial_x\big{]} }\mathrm{Op}(\frac{\partial_x}{S^-}) f, \Lambda^{-\frac{1}{2}}g
			\big{)}_{L^2} - A_3
			\\ 
			& = 
			B_3^1 + B_3^2 + B_3^3 - A_3,
		\end{align*}
		where $|	B_3^1| + |B_3^2| + |B_3^3 |$ is estimated using PDO estimates \eqref{Op SJ 1},  \eqref{Op SJ 3}, and \eqref{Op SJ 2} respectively, which yields the following bound
		\begin{equation*}
			 |	B_3^1| + |B_3^2| + |B_3^3 | \leq \ve \mu^{-\frac{1}{2}}M|f|_{H^{\frac{1}{2}}} |g|_{H^{-\frac{1}{2}}}.
		\end{equation*} 
		Adding all these estimates implies the final result.
		\begin{align*}
			|\big{(}f,\big{(}\mathfrak{E}_{\mu}[\ve \zeta]^{\ast}- \mathfrak{E}_{\mu}[\ve \zeta]\big{)} g\big{)}_{L^2}| \leq \ve \mu^{-\frac{3}{4}} M(t_0+3) |f|_{H^{\frac{1}{2}}} |g|_{H^{-\frac{1}{2}}}.
		\end{align*}
		
	\end{proof}

	\begin{remark}
		In the first point, we can define the smallest constant $c\leq M$ such that \eqref{Est E 1} holds by 
		\begin{equation}\label{def e}
			\mathfrak{e} (\zeta)
			= \sup\limits_{f \in H^{\frac{1}{2}}(\R), f\neq 0} \mu  \frac{\big{(}(\mathcal{J}_{\mu}[\ve \zeta])^{-1}(\mathcal{G}^-_{\mu}[\ve \zeta])^{-1} \partial_x f, \partial_x f \big{)}_{L^2}}{|1+\sqrt{\mu}|\mathrm{D}|^{\frac{1}{2}}f|_{L^2}^2}.
		\end{equation}
		Also, it is evident from the proof that for $f,g \in H^{\frac{1}{2}}(\R)$ that there holds
		\begin{equation}\label{est on E fg}
			 \big{(}\mathfrak{E}_{\mu}[\ve \zeta] f,g \big{)}_{L^2} \leq \frac{c}{\mu} |(1+\sqrt{\mu}|\mathrm{D}|)^{\frac{1}{2}} f|_{L^2}|(1+\sqrt{\mu}|\mathrm{D}|)^{\frac{1}{2}} g|_{L^2}.
		\end{equation}
	\end{remark}

	For the next result, we will treat the properties of the instability operator. In particular, we will show that under the stability criterion, we can have a coercivity-type estimate. This is essential for the well-posedness theory to work and relies on the surface tension parameter through the Bond number $\mathrm{bo}^{-1}>0$. To be clear, we restate the stability criterion in the introduction:
		\begin{equation*}
			0 <  \mathfrak{d}(\mathbf{U}) : = \inf \limits_{\R} \mathfrak{a}- \Upsilon \mathfrak{c} (\zeta)  |[\![ \underline{V}^{\pm}]\!]|_{L^{\infty}}^4,
		\end{equation*}
		where  
		\begin{equation*}
			\Upsilon = \frac{\mathrm{bo}}{2} (1-\gamma)^2\gamma^2 \ve^4\mu 
		\end{equation*}
		and $\mathfrak{a}$ is given by \eqref{def a}, $\mathfrak{e}$ is given in \eqref{def e}, and we define
		\begin{align*}
			\mathfrak{c}(\zeta)  = 
			\mathfrak{e}(\zeta)^2(1+\ve^2 \mu |\partial_x \zeta|_{L^{\infty}}^2)^{\frac{3}{2}}.
		\end{align*}
		%
		%
		%

\iffalse

	\begin{remark}
		Remark 25 in paper - page 539
	\end{remark}
\fi

		\begin{prop}\label{Prop stability} Let $\ve,\mu, \mathrm{bo}^{-1} \in(0,1)$ such that $\varepsilon^2 \mu \leq \mathrm{bo}^{-1}$, $T>0$, $t_0 \geq 1$, $\mathcal{E}^{\lceil t_0+2\rceil}(\mathbf{U})$ be defined by \eqref{Energy functional N}, and $\mathbf{U} = (\zeta, \psi) \in \mathscr{E}^{\lceil t_0+2\rceil}_{\mathrm{bo},T}$ be such that  the non-cavitation condition \eqref{non-cavitation} holds and satisfies the stability criterion \eqref{Stability criterion} on $[0,T]$.  Then we have the following set of inequalities on the same time interval:
		\begin{itemize}
			\item [1.] For all $u \in H^1_{\gamma, \mathrm{bo}}(\R)$ and $\mathfrak{Ins}(\mathbf{U})$ defined by \eqref{inst} one has
			\begin{equation}\label{upper bound ins}
				\big{(}u, \mathfrak{Ins}(\mathbf{U})u\big{)}_{L^2} \leq C\big{(} \mathcal{E}^{\lceil t_0+2\rceil }(\mathbf{U})\big{)} |u|_{H^1_{\mathrm{bo}}}^2,
			\end{equation}
			where $C>0$ is a nondecreasing function of its argument.\\ 
			\item [2.] For $a(\mathbf{U})$ defined by
			\begin{equation}\label{def a}
				a(\mathbf{U}) = (1-\gamma)\gamma \ve^2  \mathfrak{e} (\zeta) | [\![ \underline{V}^{\pm}]\!] |_{L^{\infty}}^2,
			\end{equation}
			and there is some $C_1>0$ and $b(\mathbf{U})$ defined by
			\begin{equation}\label{def b}
				b(\mathbf{U}) = (1-\gamma)\gamma \ve^2\sqrt{\mu} C_1 \mathfrak{e} (\zeta) | [\![ \underline{V}^{\pm}]\!] |_{H^{t_0+1}}^2,
			\end{equation}
			such that for $\mathfrak{d}$ defined by \eqref{Stability criterion} there holds,
			\begin{equation}\label{lower bound ins}
				\min\big{\{}\mathfrak{d}(\mathbf{U}), \frac{1}{M}\big{\}} |u|_{H^1_{\mathrm{bo}}}^2
				\leq 
				\big{(}u, \mathfrak{Ins}(\mathbf{U})u\big{)}_{L^2} + a(\mathbf{U})|u|_{L^2}^2 
				+
				b(\mathbf{U})|u|_{H^{-\frac{1}{2}}}^2.
				\color{black}
			\end{equation}
			\item [3.]  Lastly, there is a control on $u \in \mathring{H}^{\frac{1}{2}}(\R)$ through the inequality
			\begin{equation}\label{zeta half norm}
				(1-\gamma)\gamma \ve^2
				\sqrt{\mu}\mathfrak{e}(\zeta) \max\limits_{|\alpha|\leq 1} |\partial^{\alpha}_{x,t}[\![ \underline{V}^{\pm}]\!]|_{L^{\infty}}^2  | u|^2 \leq \varepsilon C\big{(} \mathcal{E}^{\lceil t_0+2\rceil }(\mathbf{U})\big{)} \Big{(}|u|_{L^2}^2 + \mathrm{bo}^{-1} |\partial_x u|_{L^2}^2\Big{)}.
			\end{equation}
			where $C>0$ is a nondecreasing function of its argument.\\ 
		%	and
			%
			%
			%
			%\begin{equation}
			%	\ve^2 |u|_{\dot{H}^{s+\frac{1}{2}}_{\mu}} \leq \ve  |u|_{H^1_{\sigma,\gamma}}.
		%	\end{equation}
		\end{itemize}
	\end{prop}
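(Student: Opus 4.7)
I will prove the three points separately, with (2) being the main work.

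For (1), I expand
$(u, \mathfrak{Ins}(\mathbf{U})u)_{L^2} = (\mathfrak{a}u,u)_{L^2} - (1-\gamma)\gamma\ve^2\mu\,(u[\![\underline V^\pm]\!], \mathfrak{E}_\mu[\ve\zeta](u[\![\underline V^\pm]\!]))_{L^2} + \mathrm{bo}^{-1}(\partial_x u, \mathcal{K}[\ve\sqrt\mu\partial_x\zeta]\partial_x u)_{L^2}.$
Sobolev embedding together with Corollary \ref{cor definitions} (writing $\underline w^\pm$, $\underline V^\pm$, and hence $\mathfrak{a}$, in terms of $\zeta$ and $\psi$) gives $|\mathfrak{a}|_{L^\infty} \leq C(\mathcal{E}^{\lceil t_0+2\rceil}(\mathbf{U}))$; the middle term is non-positive by the non-negativity in \eqref{Est E 1} and is dropped; the last is bounded by $\mathrm{bo}^{-1}|\mathcal{K}|_{L^\infty}|\partial_x u|_{L^2}^2$. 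Together these yield \eqref{upper bound ins}.

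For (2), the central idea is that the stability criterion supplies a buffer $\Upsilon\mathfrak{c}(\zeta)|[\![\underline V^\pm]\!]|_{L^\infty}^4|u|_{L^2}^2$ inside $(\mathfrak{a}u,u)_{L^2}$ that precisely absorbs the worst contribution of the $\mathfrak{E}_\mu$-term once the latter is balanced against surface tension. Concretely, $(\mathfrak{a}u,u)_{L^2} \geq (\mathfrak{d}(\mathbf{U}) + \Upsilon\mathfrak{c}(\zeta)|[\![\underline V^\pm]\!]|_{L^\infty}^4)|u|_{L^2}^2$. The upper bound in \eqref{Est E 1} together with $|(1+\sqrt\mu|\mathrm D|)^{1/2}f|_{L^2}^2 = |f|_{L^2}^2 + \sqrt\mu\,||\mathrm D|^{1/2}f|_{L^2}^2$ splits the negative $\mathfrak{E}_\mu$-term into a low-frequency piece $(1-\gamma)\gamma\ve^2\mathfrak{e}(\zeta)|u[\![\underline V^\pm]\!]|_{L^2}^2$, absorbed exactly by $a(\mathbf{U})|u|_{L^2}^2$ via $|u[\![\underline V^\pm]\!]|_{L^2} \leq |[\![\underline V^\pm]\!]|_{L^\infty}|u|_{L^2}$, and a half-derivative piece $(1-\gamma)\gamma\ve^2\mathfrak{e}(\zeta)\sqrt\mu\,||\mathrm D|^{1/2}(u[\![\underline V^\pm]\!])|_{L^2}^2$. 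For the latter I write $|\mathrm D|^{1/2}(u[\![\underline V^\pm]\!]) = [\![\underline V^\pm]\!]|\mathrm D|^{1/2}u + [|\mathrm D|^{1/2},[\![\underline V^\pm]\!]]u$, apply $(p+q)^2 \leq (1+\delta)p^2 + (1+\delta^{-1})q^2$ with a small fixed $\delta < \sqrt{2}-1$, and dispose of the commutator via a Calderón/Kato--Ponce type estimate $|[|\mathrm D|^{1/2},f]g|_{L^2} \leq C_1|f|_{H^{t_0+1}}|g|_{H^{-1/2}}$, which produces exactly $b(\mathbf{U})|u|_{H^{-1/2}}^2$ and fixes the constant $C_1$ in the statement.

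The remaining principal piece is $(1+\delta)(1-\gamma)\gamma\ve^2\mathfrak{e}(\zeta)\sqrt\mu|[\![\underline V^\pm]\!]|_{L^\infty}^2\,||\mathrm D|^{1/2}u|_{L^2}^2$. Plancherel and Cauchy--Schwarz in frequency yield $||\mathrm D|^{1/2}u|_{L^2}^2 \leq |\partial_x u|_{L^2}|u|_{L^2}$; Young's inequality with a parameter $\alpha$ tuned so that the resulting $|\partial_x u|_{L^2}^2$ coefficient equals the coercivity lower bound $\mathrm{bo}^{-1}\kappa|\partial_x u|_{L^2}^2$ of $\mathrm{bo}^{-1}(\partial_x u, \mathcal{K}\partial_x u)_{L^2}$, where $\kappa = (1+\ve^2\mu|\partial_x\zeta|_{L^\infty}^2)^{-3/2}$, closes the absorption. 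Using the identity $\mathfrak{e}(\zeta)^2 = \kappa\,\mathfrak{c}(\zeta)$, the resulting $|u|_{L^2}^2$ coefficient collapses to $\tfrac{(1+\delta)^2}{2}\Upsilon\mathfrak{c}(\zeta)|[\![\underline V^\pm]\!]|_{L^\infty}^4$, which for $\delta<\sqrt 2-1$ is strictly below the buffer; what survives is $\mathfrak{d}(\mathbf{U})|u|_{L^2}^2$ plus a positive multiple of $\mathrm{bo}^{-1}|\partial_x u|_{L^2}^2$, from which \eqref{lower bound ins} follows with prefactor $\min(\mathfrak{d}(\mathbf{U}), 1/M)$.

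Point (3) is a direct Young estimate: $||\mathrm D|^{1/2}u|_{L^2}^2 \leq |\partial_x u|_{L^2}|u|_{L^2}$, followed by Young with parameter tuned to $\mathrm{bo}^{-1}$ and the structural hypothesis $\ve^2\mu \leq \mathrm{bo}^{-1}$, converts the left-hand side of \eqref{zeta half norm} into $\ve\,C(\mathcal{E}^{\lceil t_0+2\rceil}(\mathbf U))(|u|_{L^2}^2 + \mathrm{bo}^{-1}|\partial_x u|_{L^2}^2)$. The main obstacle throughout is the sharp constant book-keeping in (2): the factor $(1+\delta)^2/2$ generated by splitting $|\mathrm D|^{1/2}(u[\![\underline V^\pm]\!])$ must remain strictly below $1$, which is precisely why the weight $\Upsilon = \tfrac{\mathrm{bo}}{2}(1-\gamma)^2\gamma^2\ve^4\mu$ and the function $\mathfrak{c}(\zeta) = \mathfrak{e}(\zeta)^2(1+\ve^2\mu|\partial_x\zeta|_{L^\infty}^2)^{3/2}$ take their exact form in Definition \ref{Def stability crit}.
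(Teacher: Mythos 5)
Your proposal follows the paper's own route in all three points: the same three-term decomposition and sign considerations for \eqref{upper bound ins}; for \eqref{lower bound ins} the same splitting of $|(1+\sqrt{\mu}|\mathrm{D}|)^{\frac12}(u[\![ \underline{V}^{\pm}]\!])|_{L^2}^2$ into an $L^2$ piece (absorbed by $a(\mathbf{U})$), a commutator piece (absorbed by $b(\mathbf{U})$, fixing $C_1$), and a principal piece handled by the interpolation $|u|_{\mathring H^{1/2}}^2\le |u|_{L^2}|\partial_x u|_{L^2}$ plus Young against the surface-tension coercivity, with the identity $\tfrac{\mathrm{bo}\mu}{2}(1+\ve^2\mu|\partial_x\zeta|_{L^\infty}^2)^{3/2}a(\mathbf{U})^2=\Upsilon\,\mathfrak{c}(\zeta)|[\![ \underline{V}^{\pm}]\!]|_{L^\infty}^4$ matching the criterion \eqref{Stability criterion}; and for \eqref{zeta half norm} the same interpolation/Young argument using $\ve^2\mu\le\mathrm{bo}^{-1}$ together with the $L^\infty$ bounds on $\partial^\alpha_{x,t}[\![ \underline{V}^{\pm}]\!]$ coming from \eqref{V pm est} and \eqref{est on dtVpm}.

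There is, however, one concrete slip in your bookkeeping for point 2. You tune the Young parameter ``so that the resulting $|\partial_x u|_{L^2}^2$ coefficient equals the coercivity lower bound $\mathrm{bo}^{-1}\kappa|\partial_x u|_{L^2}^2$'', i.e.\ you spend the \emph{entire} surface-tension coercivity on the absorption, and you then assert that ``a positive multiple of $\mathrm{bo}^{-1}|\partial_x u|_{L^2}^2$'' survives. These two statements are incompatible: if all of $I_3\ge \mathrm{bo}^{-1}\kappa|\partial_x u|_{L^2}^2$ is consumed, the right-hand side of \eqref{lower bound ins} is only bounded below by $\big(\inf\mathfrak{a}-\tfrac{(1+\delta)^2}{2}\Upsilon\mathfrak{c}|[\![ \underline{V}^{\pm}]\!]|_{L^\infty}^4\big)|u|_{L^2}^2$, which controls the $L^2$ part of $|u|_{H^1_{\mathrm{bo}}}^2$ but not the $\mathrm{bo}^{-1}|\partial_x u|_{L^2}^2$ part, so \eqref{lower bound ins} as stated is not obtained; the surplus on $|u|_{L^2}^2$ cannot be traded back into a derivative term. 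The repair is exactly the margin your own $\delta<\sqrt2-1$ creates: reserve a fraction $\theta\in[(1+\delta)^2/2,1)$ of the coercivity for the absorption and keep the remaining $(1-\theta)\mathrm{bo}^{-1}\kappa|\partial_x u|_{L^2}^2$ for the conclusion. The paper does this with $\theta=\tfrac12$ and no $\delta$-loss on the principal term (its splitting \eqref{eq 5.7 in lannes} puts the loss only on the commutator piece, folded into $C_1$), so the absorbed $|u|_{L^2}^2$-coefficient is exactly the buffer $\Upsilon\mathfrak{c}|[\![ \underline{V}^{\pm}]\!]|_{L^\infty}^4$ and the leftover half coercivity yields the $(M\mathrm{bo})^{-1}|\partial_x u|_{L^2}^2$ term behind the prefactor $\min\{\mathfrak{d}(\mathbf{U}),\tfrac1M\}$.
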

	\iffalse 
	\begin{remark}
		The stability criteria is stronger than the one presented in \cite{LannesTwoFluid13}. The reason is to have an additional gain in $\mu$ from the inequality
		%
		%
		%
		\begin{equation*}
			|\underline{V}^{\pm}|_{L^{\infty}} \leq \mu^{\frac{1}{4}}|\partial_x \psi|_{H^{t_0+2}},
		\end{equation*}
		%
		%
		%
		deduced from the Sobolev embedding and \eqref{V pm est mu}. In particular, we will apply \eqref{zeta half norm} in the estimate of the quantity $B_2^1$ in the proof of the a priori estimates below. 
		
	\end{remark}
	\fi 
	
	\begin{proof}  The proof is similar to the one of Lemma $11$ in \cite{LannesTwoFluid13}. However, we give a short proof for the convenience of the reader to track the constants that are responsible for the definitions above. We divide the proof into three steps \\

	\noindent
	\underline{Step 1.} For the proof of \eqref{upper bound ins}, we have to deal with the terms:
	\begin{align*}
		\big{(}u, \mathfrak{Ins}(\mathbf{U})u\big{)}_{L^2}  
		& =
		\big{(}u, \mathfrak{a} u\big{)}_{L^2}  
		-	
		(1-\gamma)\gamma \ve^2 \mu	\big{(}u,  [\![ \underline{V}^{\pm}]\!]   \mathfrak{E}_{\mu}[\ve \zeta]	\big{(} 
		u[\![ \underline{V}^{\pm}]\!] \big{)} \big{)}_{L^2} 
		\\ 
		& 
		\hspace{0.5cm}
		-
		\mathrm{bo}^{-1} \big{(}u, \partial_x \mathcal{K}[\ve \sqrt{\mu} \partial_x \zeta]\partial_x  u\big{)}_{L^2} 
		\\ 
		& = 
		I_1 + I_2 + I_3.
	\end{align*}
	For $I_1$, we have that
	\begin{equation}\label{est I1}
		\big{(} \inf \limits_{x\in \R} \mathfrak{a} \big{)} |u|_{L^2}^2 \leq I_1 \leq C\big{(} \mathcal{E}^{\lceil t_0+2\rceil }(\mathbf{U})\big{)} |u|_{L^2}^2,
	\end{equation}
	where the upper bound is a consequence of the Hölder inequality and estimate \eqref{est on a}. For the estimate on $I_2$, we employ \eqref{Est E 1} to find that
	\begin{align*}
		-(1-\gamma)\gamma \ve^2 \mathfrak{e}(\zeta)
		|(1+\sqrt{\mu} |\mathrm{D}|)^{\frac{1}{2}}(u [\![ \underline{V}^{\pm}]\!])|_{L^2}^2 \leq I_2 \leq 0.
	\end{align*}
	For the estimate on $I_3$, we have by integration by parts that
	\begin{equation}\label{est I3}
		\mathrm{bo}^{-1} (1+ \ve^2 \mu |\partial_x \zeta|^2_{L^{\infty}})^{-\frac{3}{2}} |\partial_x u|_{L^2}^2 \leq I_3 \leq \mathrm{bo}^{-1}|\partial_x u|_{L^2}^2.
	\end{equation}
	So the upper bound in \eqref{upper bound ins} is proved.\\

	\noindent
	\underline{Step 2.} For the proof of \eqref{lower bound ins}, we need to work on the lower bound in $I_2$. First, we can replace the lower bound using the commutator estimate \eqref{Commutator est} and Hölder's inequality to find that
	\begin{align}\label{eq 5.7 in lannes}
		|\langle \sqrt{\mu}\mathrm{D}\rangle^{\frac{1}{2}}(u  [\![ \underline{V}^{\pm}]\!])|_{L^2}^2\notag
		& \leq 
		C_1 \sqrt{\mu}| [\![ \underline{V}^{\pm}]\!]|_{H^{t_0+1}}^2|u|_{H^{-\frac{1}{2}}}^2\notag 
		+
		\sqrt{\mu}
		| [\![ \underline{V}^{\pm}]\!]|_{L^{\infty}}^2||\mathrm{D}|^{\frac{1}{2}}u|_{L^2}^2
		\\ 
		& 
		\hspace{0.5cm}%\notag
		+
		| [\![ \underline{V}^{\pm}]\!]|^2_{L^{\infty}}|u|_{L^2}^2,
	\end{align}
	for some constant $C_1>0$. Then we may define $b(\mathbf{U})$ by \eqref{def b} and use the expression for $a(\mathbf{U})$ to find that
	\begin{align*}
		I_2 \geq -b(\mathbf{U})|u|_{H^{-\frac{1}{2}}}^2 - a(\mathbf{U})\big{(} |u|_{L^2}^2 
		+  \sqrt{\mu} |u|^2_{\mathring{H}^{\frac{1}{2}}}\big{)}.
	\end{align*}
	Adding all these estimates, one finds that 
	\begin{align*}
		\mathrm{RHS}_{\eqref{lower bound ins}} :& = 
		a(\mathbf{U})|u|_{L^2}^2 
		+
		b(\mathbf{U})|u|_{H^{-\frac{1}{2}}}
		+
		\big{(}u, \mathfrak{Ins}(\mathbf{U})u\big{)}_{L^2}  
		\\ 
		& = 
		 a(\mathbf{U})|u|_{L^2}^2 
		+
		b(\mathbf{U})|u|_{H^{-\frac{1}{2}}}^2
		+
		I_1 + I_2 + I_3
		\\ 
		& \geq 
		\big{(} \inf \limits_{x\in \R} \mathfrak{a} \big{)} |u|_{L^2}^2 
		+
		\mathrm{bo}^{-1} (1+ \ve^2 \mu |\partial_x \zeta|^2_{L^{\infty}})^{-\frac{3}{2}} |\partial_x u|_{L^2}^2
		- 
		a(\mathbf{U})\sqrt{\mu}  |u|^2_{\mathring{H}^{\frac{1}{2}}},
	\end{align*}
	where the last term is controlled using interpolation and Young's inequality:
	\begin{align*}
		\sqrt{\mu} a(\mathbf{U}) |u|^2_{\mathring{H}^{\frac{1}{2}}} 
		& \leq
		\sqrt{\mu} a(\mathbf{U})|u|_{L^2}  |\partial_x u|_{L^2}
		\\ 
		&  \leq 
		 \frac{\mathrm{bo} \mu}{2}(1+ \ve^2 \mu |\partial_x \zeta|^2_{L^{\infty}})^{\frac{3}{2}} a(\mathbf{U})^2|u|_{L^2}^2 + \frac{1}{2\mathrm{bo}}(1+ \ve^2 \mu |\partial_x \zeta|^2_{L^{\infty}})^{-\frac{3}{2}} |\partial_x u|_{L^2}^2.
	\end{align*}
	Consequently, we find that
	\begin{align*}
		\mathrm{RHS}_{\eqref{lower bound ins}} 
		& \geq 
		\Big{(} \inf \limits_{x\in \R} \mathfrak{a} -\frac{\mathrm{bo} \mu}{2}(1+ \ve^2 \mu |\partial_x \zeta|^2_{L^{\infty}})^{\frac{3}{2}} a(\mathbf{U})^2\Big{)} |u|_{L^2}^2 
		+
		\frac{1}{2\mathrm{bo}}(1+ \ve^2 \mu |\partial_x \zeta|^2_{L^{\infty}})^{-\frac{3}{2}} |\partial_x u|_{L^2}^2,
		\\
		& 
		\geq 
		\Big{(} \inf \limits_{x\in \R} \mathfrak{a} - \Upsilon   \mathfrak{c} (\zeta) | [\![ \underline{V}^{\pm}]\!] |_{L^{\infty}}^4 \Big{)} |u|_{L^2}^2 
		+
		(M \mathrm{bo})^{-1} |\partial_x u|_{L^2}^2,
	\end{align*}
	where we used Sobolev embedding and defined the quantities 
	\begin{equation*}
		   \Upsilon \cdot  \mathfrak{c} (\zeta) : =   \frac{\mathrm{bo} }{2} (1-\gamma)^2\gamma^2\ve^4  \mu   \cdot \mathfrak{e}^2 (\zeta)  (1+ \ve^2 \mu |\partial_x \zeta|^2_{L^{\infty}})^{\frac{3}{2}},
	\end{equation*}
	in accordance with Definition \ref{Def stability crit}.
	\color{black}
	\\ 
	
	\noindent
	\underline{Step 3.} To prove the third point, we  use interpolation, Sobolev embedding, \eqref{V pm est}, \eqref{est on dtVpm}, and that $\varepsilon^2 \mu \leq \mathrm{bo}^{-1}$ to obtain %\ve^{-2}\Upsilon^2 \mathfrak{c} (\zeta) | \partial_x \psi |_{H^{t_0+2}}^4
	\begin{align*}
		\mathrm{LHS}_{\eqref{zeta half norm}} & =
		(1-\gamma)\gamma \ve^2
		\sqrt{\mu}\mathfrak{e}(\zeta) \max\limits_{|\alpha|\leq 1} |\partial^{\alpha}_{x,t}[\![ \underline{V}^{\pm}]\!]|_{L^{\infty}}^2  | u|^2_{\mathring{H}^{\frac{1}{2}}} 
		\\
		& \leq \varepsilon\Big{(} \varepsilon^{-2}
		\Upsilon \mathfrak{c} (\zeta) \max\limits_{|\alpha|\leq 1} |\partial^{\alpha}_{x,t}[\![ \underline{V}^{\pm}]\!]|_{L^{\infty}}^4 |u|_{L^2}^2
		+
		\mathrm{bo}^{-1} (1+  \varepsilon^2\mu |\partial_x \zeta|_{L^{\infty}})^{-\frac{3}{2}} |\partial_x u|_{L^2}^2\Big{)}
		\\
		& \leq\varepsilon  
		\Big{(} C\big{(} \mathcal{E}^{\lceil t_0+2\rceil}(\mathbf{U})\big{)}|u|_{L^2}^2 + \mathrm{bo}^{-1} |\partial_x u|_{L^2}^2\Big{)}.
	\end{align*}
	%
	%\leq \varepsilon C\big{(} \mathcal{E}^{\lceil t_0+2\rceil }(\mathbf{U})\big{)}|u|_{L^2}^2 + \mathrm{bo}^{-1} |\partial_x u|_{L^2}^2.
	%
	%
	\end{proof}

	\begin{remark}\label{Rmrk 1 bo numb}\ 
		Here we made the restriction $\varepsilon^2 \mu \leq \mathrm{bo}^{-1}$ to prove the third point. Alternatively, we  could have made stronger version of the stability criterion \eqref{Stability criterion}: 
		\begin{equation}\label{Crit ve mu}
			 \inf \limits_{x\in \R} \mathfrak{a} - \varepsilon^{-2}
			 \Upsilon \mathfrak{c} (\zeta) \max\limits_{|\alpha|\leq 1} |\partial^{\alpha}_{x,t}[\![ \underline{V}^{\pm}]\!]|_{L^{\infty}}^4 >0.
		\end{equation}
		In this case, we could have used it to prove \eqref{zeta half norm} without any restriction on the small parameters. However, in order to use this criterion later in the proof of Theorem \ref{Thm 1} we would need to make the restriction $\varepsilon\mu \leq \mathrm{bo}^{-1}$ (see Remark \ref{Rmrk 3 bo numb}). Also, due to a technical point in the energy estimate we need the restrction $\ve^2 \leq \mathrm{bo}^{-1}$ (see Remark \ref{Rmrk 2 bo numb}).\color{black}
		
		% increase .... Seee remark 3.... better . but Remark 2
	\end{remark}

%	\begin{remark}
	%	From the proof it is clear that the stability criterion is more restrictive than needed to prove inequality \eqref{lower bound ins}. However, it was fundamental to gain the correct order of $\ve$ and $\mu$ in estimate \eqref{zeta half norm}.
	%\end{remark}

	\section{A priori estimates}\label{Energy est IWW}
	We are now in the position to derive energy estimates for the internal water waves system \eqref{IWW}. To define a natural energy to the system, we distinguish between the cases $\alpha=0$ and $1\leq |\alpha|\leq N.$ For $\alpha = 0$, we have enough regularity on the data to control the solutions with the energy:
	\begin{equation}\label{E0}
		E^0(\mathbf{U}) = |\Lambda^{t_0+\frac{5}{2}}\zeta|_{H^1_{\mathrm{bo}}}^2 + \frac{1}{\mu} \big{(}\Lambda^{t_0+\frac{5}{2}}\psi, \mathcal{G}_{\mu}[0]\Lambda^{t_0+\frac{5}{2}}\psi\big{)}_{L^2},
	\end{equation}
	where $\mathcal{G}[0]\psi$ is defined by formula \eqref{G[0]}:
	\begin{equation*}
		\mathcal{G}[0]  \psi = \sqrt{\mu} |\mathrm{D}| \frac{\mathrm{tanh}(\sqrt{\mu}|\mathrm{D}|)}{1+\gamma \mathrm{tanh}(\sqrt{\mu}|\mathrm{D}|)} \psi.
	\end{equation*}
	For $1\leq|\alpha|\leq N$, we use Proposition \ref{Prop Quasilinear system} and exploit its quaslinear structure to define a suitable symmetrizer. In particular, we will need to cancel specific terms in the energy estimates, and it will be done by introducing the symmetrizer:
	\begin{equation}\label{Q: symmetrizer}
		Q(\mathbf{U}) =  Q^{(1)}(\mathbf{U})+ Q^{(2)}(\mathbf{U})
		=
		\begin{pmatrix}
			\mathfrak{Ins}[\mathbf{U}] & 0 \\
			0 &  \frac{1}{\mu}\mathcal{G}_{\mu}[\ve \zeta]
		\end{pmatrix}
		+ 
		\begin{pmatrix}
			a(\mathbf{U})+b(\mathbf{U})\Lambda^{-1} & 0 \\
			0 & 0
		\end{pmatrix}.
	\end{equation}
	where $a(U)$ and $b(U)$ are defined by \eqref{def a} and \eqref{def b}. With this symmetrizer at hand, we define energies by
	\begin{equation}\label{Energy alpha}
		E_{\alpha}( \mathbf{U}) = \big{(} \mathbf{U}_{(\alpha)}, Q(\mathbf{U}) \mathbf{U}_{(\alpha)} \big{)}_{L^2}.
	\end{equation}
	Finally, taking the sum over all $\alpha$ will be the main quantity used to control the principal part of \eqref{IWW}:
	\begin{equation}\label{Def Energy}
		E^{k}( \mathbf{U}) =  	E^{0}( \mathbf{U}) +  \sum\limits_{1\leq|\alpha|\leq k} E_{\alpha}( \mathbf{U}). 
	\end{equation}
	
	Before proceeding, we make three comments on the choice of the energy.
	
	\begin{remark}\label{Remark on the energy} \color{white} space \color{black}
		\begin{itemize}
			\item[1.] The term $Q^{(1)}(\mathbf{U})$ in \eqref{Q: symmetrizer}, is chosen specifically to cancel the principal part:
			\begin{equation*}
					\big{(} \mathcal{A}[\mathbf{U}]\mathbf{U}_{(\alpha)}, Q^{(1)}(\mathbf{U}) \mathbf{U}_{(\alpha)}\big{)}_{L^2} =0, %= write ,
			\end{equation*}
			which appears naturally in the energy estimates. See estimate of $B_2^1$ in the proof of Proposition \ref{Prop Energy est} below. 
			
			\item[2.] The role of the term $Q^{(2)}(\mathbf{U})$  is to make the energy equivalent to the energy norm. In particular, for \eqref{Def Energy} to be coercive, we need $Q^{(2)}(\mathbf{U})$ and the stability criterion \eqref{Stability criterion} to have a lower bound on $\mathfrak{Ins}[\mathbf{U}]$ in the energy space.%  - role is to control the energy - coercivity? price: $B_2^1$ (balancing act)?. 
			
			\item[3.] To close the energy estimates, we actually need to modify \eqref{Def Energy}. This is because $\mathfrak{Ins}[\mathbf{U}]$ is a second-order operator and therefore makes a contribution to the sub-principal part of the equation (when $|\alpha| = N$). In particular, we correct the energy to cancel the terms
			\begin{equation*}
				\big{(} \mathcal{C}[\mathbf{U}]\mathbf{U}_{\langle \check\alpha\rangle}, Q^{(1)}(\mathbf{U}) \mathbf{U}_{(\alpha)}\big{)}_{L^2},
			\end{equation*}
			which will appear in the a priori estimates. To do so, we define the quantity 
			\begin{equation*}
				\tilde Q(\mathbf{U})
				=
				\begin{pmatrix}
					\mathrm{bo}^{-1} \mathcal{K}_{(\alpha)}[\ve \sqrt{\mu} \partial_x \zeta] & 0 \\
					0 &  \frac{1}{\mu}\mathcal{G}_{\mu, (\alpha)}
				\end{pmatrix},
			\end{equation*}
			where we define the modified energy for the internal water waves equation by 
			\begin{equation}\label{modified energy}
				E_{{\text{\tiny IWW}}}^N(\mathbf{U}) = E^N(\mathbf{U}) + C_2C E^{N-1}(\mathbf{U}) + \sum\limits_{|\alpha| = N}\big{(}\mathbf{U}_{(\alpha)}, \tilde Q(\mathbf{U})\mathbf{U}_{\langle\check \alpha\rangle}\big{)}_{L^2},
			\end{equation}
			for some constants $C,C_2>0$ to be fixed in the proof. 
		
		%	\item [4.] see remark 9.9 in \cite{WWP} - add the referances there
			
		\end{itemize}

	\end{remark}

	\begin{prop}\label{Prop Energy est} Let $\ve, \mu, \mathrm{bo}^{-1} \in (0,1)$, such that $\varepsilon^2\leq \mathrm{bo}^{-1}$, $t_0 = 1$, $N\geq 5$, and let $\mathbf{U} =  (\zeta, \psi)^T  \in \mathscr{E}_{\mathrm{bo},\gamma}^{N,t_0}$ be a solution to \eqref{IWW} on a time interval $[0,T]$ for some $T >0 $.  Assume that $\mathbf{U}$ satisfies the non-cavitation condition \eqref{non-cavitation} and the stability criterion \eqref{Stability criterion} on $[0,T]$. Then, for the modified energy defined by  \eqref{modified energy}  there is a nondecreasing function of its argument \ $C = C(\mathcal{E}^N(\mathbf{U}), h_{\min}^{-1}, (\mathfrak{d}(\mathbf{U}))^{-1})>0$ \color{black} such that,\color{black}
		\begin{equation}\label{Energy estimate}
			\frac{d}{dt}  E_{{\text{\tiny IWW}}}^N(\mathbf{U}) \leq \ve C E_{{\text{\tiny IWW}}}^N(\mathbf{U}),
		\end{equation}
		for all $0<t<T$. Furthermore, for the energy \eqref{Def Energy} is equivalent to 
		\begin{equation*}
			\mathcal{E}^{N}(\mathbf{U}) = |\partial_x\psi|_{H^{t_0+2}}^2 + \sum \limits_{\alpha\in\N^2, |\alpha|\leq N} |\zeta_{(\alpha)}|_{H^1_{\mathrm{bo}}}^2 + |\psi_{(\alpha)}|^2_{\dot{H}^{\frac{1}{2}}_{\mu}}.
		\end{equation*}
		In particular, there holds,
		\begin{equation}\label{equiv of norms k}
			\frac{1}{C}\mathcal{E}^N (\mathbf{U}) \leq E^N(\mathbf{U}) \leq C   \mathcal{E}^N (\mathbf{U}),
		\end{equation}
		and
		\begin{equation}\label{equiv of modified norms}
			\frac{1}{C}\mathcal{E}^N (\mathbf{U}) \leq  E_{{\text{\tiny IWW}}}^N(\mathbf{U}) \leq C   \mathcal{E}^N (\mathbf{U}),
		\end{equation}
		for all $0<t<T$.
	\end{prop}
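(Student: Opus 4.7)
The plan is to first verify the norm equivalences \eqref{equiv of norms k}--\eqref{equiv of modified norms}, and then establish the differential inequality \eqref{Energy estimate} by carefully differentiating each component of $E^N_{\text{\tiny IWW}}(\mathbf{U})$ and exploiting the algebraic structure of the symmetrizer $Q(\mathbf{U})$. For the equivalence, I would combine the coercivity of $\mathcal{G}_{\mu}$ in \eqref{G Coercive}, the upper bound \eqref{G continuous form}, and the two-sided bound on $\mathfrak{Ins}[\mathbf{U}]$ from Proposition \ref{Prop stability} (estimates \eqref{upper bound ins} and \eqref{lower bound ins}). The lower bound is the delicate one: the stability constant $\mathfrak{d}(\mathbf{U})>0$ controls the $L^2$-part of $\zeta_{(\alpha)}$ while the surface-tension term $\mathrm{bo}^{-1}|\partial_x\zeta_{(\alpha)}|_{L^2}^2$ controls the $H^1_{\mathrm{bo}}$-part, and the auxiliary terms $a(\mathbf{U})|u|_{L^2}^2+b(\mathbf{U})|u|_{H^{-1/2}}^2$ in $Q^{(2)}$ absorb the contribution coming from $\mathfrak{E}_{\mu}$. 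The correction $\sum_{|\alpha|=N}(\mathbf{U}_{(\alpha)},\tilde Q(\mathbf{U})\mathbf{U}_{\langle\check\alpha\rangle})_{L^2}$ is of lower order (it loses one derivative on $\mathbf{U}_{\langle\check\alpha\rangle}$ but this is compensated by $E^{N-1}(\mathbf{U})$) so by choosing $C_2$ large enough the modified energy remains equivalent to $\mathcal{E}^N(\mathbf{U})$.

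For the time derivative of $E^0(\mathbf{U})$, I would commute $\Lambda^{t_0+5/2}$ with the equation \eqref{IWW}, pair with the appropriate component, and exploit the symmetry \eqref{G Symmetic} of $\mathcal{G}_{\mu}[0]$; since we are paying $t_0+5/2\leq N$ derivatives, standard product and commutator estimates together with the shape-derivative formulas give an $\ve\cdot C\cdot\mathcal{E}^N$ bound. For $1\leq|\alpha|<N$, the key computation is
\[
\tfrac{d}{dt}E_\alpha(\mathbf{U})=2(\partial_t\mathbf{U}_{(\alpha)},Q(\mathbf{U})\mathbf{U}_{(\alpha)})_{L^2}+(\mathbf{U}_{(\alpha)},\dot Q(\mathbf{U})\mathbf{U}_{(\alpha)})_{L^2},
\]
and substituting \eqref{Quasi 1} the principal term is
\[
-2(\mathcal{A}[\mathbf{U}]\mathbf{U}_{(\alpha)},Q^{(1)}(\mathbf{U})\mathbf{U}_{(\alpha)})_{L^2},
\]
which vanishes by the choice of $Q^{(1)}$ (the off-diagonal structure of $\mathcal{A}$ together with the symmetry of both $\mathfrak{Ins}$ and $\mathcal{G}_{\mu}$). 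The contribution of $Q^{(2)}$ produces lower-order terms that are absorbed using Proposition \ref{Prop stability}, and in particular \eqref{zeta half norm} combined with the restriction $\ve^2\leq\mathrm{bo}^{-1}$ is what allows the $a(\mathbf{U})|\zeta_{(\alpha)}|_{L^2}^2$ piece to be controlled. The source $\ve(R_\alpha,S_\alpha)^T$ is handled via \eqref{Est R and S} together with the control \eqref{zeta half norm} on the $\mathring H^{1/2}$-seminorm in terms of $|\cdot|_{H^1_{\mathrm{bo}}}$.

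The hard part is the top-order case $|\alpha|=N$, where \eqref{Quasi 2} produces, in addition to $\mathcal{A}[\mathbf{U}]\mathbf{U}_{(\alpha)}$ and the zeroth-order term $\mathcal{B}[\mathbf{U}]\mathbf{U}_{(\alpha)}$, the sub-principal contribution $\mathcal{C}_\alpha[\mathbf{U}]\mathbf{U}_{\langle\check\alpha\rangle}$. The pairing $(\mathcal{C}_\alpha[\mathbf{U}]\mathbf{U}_{\langle\check\alpha\rangle},Q^{(1)}(\mathbf{U})\mathbf{U}_{(\alpha)})_{L^2}$ contains the surface-tension term $\mathrm{bo}^{-1}\partial_x\mathcal{K}_{(\alpha)}\partial_x$ paired with $\mathrm{bo}^{-1}\mathcal{K}[\ve\sqrt{\mu}\partial_x\zeta]\partial_x^2$ and, via the off-diagonal $\mathcal{G}_\mu$-entries, genuinely losses $N+1$ derivatives on $\zeta$. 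This is precisely the loss that the correction $(\mathbf{U}_{(\alpha)},\tilde Q(\mathbf{U})\mathbf{U}_{\langle\check\alpha\rangle})_{L^2}$ in \eqref{modified energy} is engineered to cancel: differentiating it in time produces, up to commutators and lower-order contributions bounded by $\ve C\,\mathcal{E}^N$, exactly the opposite of the offending sub-principal pairing. The contribution $(\mathcal{B}[\mathbf{U}]\mathbf{U}_{(\alpha)},Q(\mathbf{U})\mathbf{U}_{(\alpha)})_{L^2}$ is first order on each diagonal entry, and here one invokes the skew-symmetric structure of the principal symbol of $\mathcal{I}[\mathbf{U}]$ captured in points 2 and 4 of Proposition \ref{prop I } (which in turn rely on the symbolic calculus of Section 3, in particular the self-adjointness estimates \eqref{adjoint est S+/S-SJ} and \eqref{Op dx S+/S- 1/SJ}) to gain back the derivative.

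Finally, I would bound $(\mathbf{U}_{(\alpha)},\dot Q(\mathbf{U})\mathbf{U}_{(\alpha)})_{L^2}$ using \eqref{Est E 2} and the bound on $\partial_t\mathfrak{a}$, $\partial_t\underline{V}^{\pm}$ coming from differentiating the equation once in time (each such time derivative costs one factor of $\ve$ because of the scaling of \eqref{IWW}). Summing over $|\alpha|\leq N$ and choosing $C_2$ in \eqref{modified energy} large enough to dominate all residual lower-order terms by $C_2C\,E^{N-1}(\mathbf{U})$ yields \eqref{Energy estimate}. The chief technical obstacle is the top-order pairing described above: if $\tilde Q$ were not included, one would lose a derivative on $\zeta$ through the coupling of surface tension with the off-diagonal Dirichlet--Neumann entry, and no amount of the symmetrizer alone could recover it.
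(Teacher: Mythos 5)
Your plan reproduces the paper's own argument almost step for step: the same norm equivalences via \eqref{G Coercive}, \eqref{G continuous form} and Proposition \ref{Prop stability}, the same cancellation $(\mathcal{A}[\mathbf{U}]\mathbf{U}_{(\alpha)},Q^{(1)}(\mathbf{U})\mathbf{U}_{(\alpha)})_{L^2}=0$, the same treatment of $\mathcal{B}[\mathbf{U}]$ through the skew-symmetry of $\mathcal{I}[\mathbf{U}]$ and the symbolic calculus, and the same role for $\tilde Q$ in \eqref{modified energy}, namely cancelling the top-order pairing $(\mathcal{C}_\alpha[\mathbf{U}]\mathbf{U}_{\langle\check\alpha\rangle},Q^{(1)}(\mathbf{U})\mathbf{U}_{(\alpha)})_{L^2}$ that would otherwise lose a derivative on $\zeta$ through the coupling of surface tension with the Dirichlet--Neumann entries.

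The one genuine gap is in your displayed identity
$\tfrac{d}{dt}E_\alpha(\mathbf{U})=2(\partial_t\mathbf{U}_{(\alpha)},Q(\mathbf{U})\mathbf{U}_{(\alpha)})_{L^2}+(\mathbf{U}_{(\alpha)},\dot Q(\mathbf{U})\mathbf{U}_{(\alpha)})_{L^2}$,
which tacitly assumes $Q(\mathbf{U})$ is self-adjoint. It is not: while $\mathcal{G}_{\mu}$ is symmetric by \eqref{G Symmetic} and the $\mathfrak{a}$- and surface-tension parts of $\mathfrak{Ins}[\mathbf{U}]$ are symmetric after integration by parts, the weighted operator $[\![ \underline{V}^{\pm}]\!]\,\mathfrak{E}_{\mu}[\ve\zeta]\big([\![ \underline{V}^{\pm}]\!]\,\cdot\big)$ inside $\mathfrak{Ins}[\mathbf{U}]$ is only symmetric up to the defect $\mathfrak{E}_{\mu}-\mathfrak{E}_{\mu}^{\ast}$. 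The paper therefore keeps the extra term $\tfrac12\big(\mathbf{U}_{(\alpha)},(Q(\mathbf{U})-Q(\mathbf{U})^{\ast})\partial_t\mathbf{U}_{(\alpha)}\big)_{L^2}$ (its $B_3$), and bounding it is not free: one needs the smoothing estimate \eqref{est E East} for $\mathfrak{E}_{\mu}^{\ast}-\mathfrak{E}_{\mu}$ together with a control of $\partial_t\zeta_{(\alpha)}$ in $H^{-\frac12}$ obtained by going back to the quasilinear equation \eqref{Quasi 2} and using \eqref{1 Est G}, \eqref{est I inst op}, \eqref{Est R and S}. This term, along with your $B_2^1$-type term, is precisely where the hypothesis $\ve^2\leq\mathrm{bo}^{-1}$ is consumed, so it cannot be dismissed as a harmless lower-order correction; your argument should be amended to include this third contribution explicitly.
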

 
	\begin{proof}
		We first prove \eqref{equiv of norms k} and \eqref{equiv of modified norms} in two separate steps before turning to the proof of \eqref{Energy estimate}. \\

		\noindent
		\underline{Proof of \eqref{equiv of norms k}.} First consider the case $k=0$, \  where we only deal with space derivatives\color{black}. Let  $( \mathcal{G}_{\mu}[0])^{\frac{1}{2}}$ be the square root of the symbol associated to $\mathcal{G}_{\mu}[0]$ and then use Plancherel's identity to see that
		\begin{equation*}
			 \frac{1}{\mu} \big{(}\Lambda^{\frac{7}{2}}\psi, \mathcal{G}_{\mu}[0]\Lambda^{\frac{7}{2}}\psi\big{)}_{L^2} = \frac{1}{\mu} |( \mathcal{G}_{\mu}[0])^{\frac{1}{2}}\psi|_{H^{\frac{7}{2}}}^2.
		\end{equation*}
		Then use \eqref{equivalence alpha = 0} and the definition \eqref{E0} to obtain that 
		\begin{equation*}
		 |\Lambda^{\frac{7}{2}}\zeta|_{H^1_{\mathrm{bo}}}^2+	\frac{1}{C}|\partial_x \psi |_{H^{3}}^2  \leq E^0(\mathbf{U})\leq C  | \psi |_{\dot{H}_{\mu}^{4}}^2 +  \mathcal{E}^N(\mathbf{U}). 
		\end{equation*}
		The lower bound will be absorbed in $\mathcal{E}^N(\mathbf{U})$ when summing over all $k$. While for the upper bound, we relate $\psi$ with the definition of $\psi_{(\alpha)} = \partial_{x,t}^{\alpha} \psi - \varepsilon \underline{w}\zeta_{(\alpha)}$ through estimate \eqref{est psi}:
		\begin{equation*}
			\ 
			| \psi |_{\dot{H}_{\mu}^{(N-\frac{1}{2}) + \frac{1}{2}}}^2 \leq  M \mathcal{E}^{N}(\mathbf{U}).
		\end{equation*}

		For  the case $k = N$, we use the definition of the individual energies  \eqref{Energy alpha} to find that
		\begin{align*}
			E_{\alpha}(\mathbf{U}) 
			& =
			\big{(}\zeta_{(\alpha)}, \mathfrak{Ins}[U] \zeta_{(\alpha)}\big{)}_{L^2}
			+
			\big{(}  \zeta_{(\alpha)}, (a(\mathbf{U}) + b(\mathbf{U})\Lambda^{-1})\zeta_{(\alpha)}\big{)}_{L^2}
			+
			\frac{1}{\mu} \big{(}\psi_{(\alpha)}, \mathcal{G}_{\mu}[\ve \zeta]\psi_{(\alpha)}\big{)}_{L^2}.
		\end{align*}
		The first two terms are estimated by  \eqref{lower bound ins} and \eqref{upper bound ins}. While the last term is controlled by \eqref{G Coercive} and \eqref{G continuous form}, which implies the result
		\begin{equation*}
			\frac{1}{C}\mathcal{E}^N(\mathbf{U}) \leq E^N(\mathbf{U}) \leq C   \mathcal{E}^N(\mathbf{U}).
		\end{equation*}\\
		
		\noindent
		\underline{Proof of \eqref{equiv of modified norms}.} Next, we prove \eqref{equiv of modified norms}. To estimate the additional term appearing in \eqref{modified energy} when $|\alpha|=N$, we first observe that (suppressing the argument in $\zeta$):
		\begin{align*}
			\frac{1}{\mu} |\big{(}\psi_{(\alpha)}, \mathcal{G}_{{\mu},(\alpha)}\psi_{\langle\check{\alpha}\rangle}\big{)}_{L^2}|
			& \leq 
			\frac{1}{\mu}
			\sum\limits_{j=1}^2 \alpha_j \Big{(}
			|\big{(} \psi_{(\alpha)},\mathrm{d}\mathcal{G}_{\mu}^+(\partial_j\zeta) (\mathcal{J}_{\mu})^{-1} \psi_{(\check \alpha^j)}\big{)}_{L^2}|
			\\ 
			&
			\hspace{0.5cm} 
			+
			|\big{(}  \psi_{(\alpha)},  \mathcal{G}_{\mu}^+(\mathcal{J}_{\mu})^{-1}  (\mathrm{d} \mathcal{J}_{\mu}(\partial_j \zeta)) (\mathcal{J}_{\mu})^{-1}\psi_{(\check \alpha^j)}\big{)}_{L^2}|\Big{)},
		\end{align*}
		where $\check{\alpha}^j=  \alpha- \mathbf{e}_j$ and $\mathcal{G}_{{\mu},(\alpha)}$ is given in Definition \ref{Def op}. For the first term we use  \eqref{Est djG mu ve} with \eqref{Est J}, while the for the second term we also use \eqref{dual est G+ s}, and \eqref{shape derivative Jinv} to find that
		\begin{equation}\label{est G alpha}
			\frac{1}{\mu} |\big{(}\psi_{(\alpha)}, \mathcal{G}_{{\mu},(\alpha)}\psi_{\langle\check{\alpha}\rangle}\big{)}_{L^2}| 
			\leq \ve M |\psi_{(\alpha)}|_{\dot{H}^{\frac{1}{2}}_{\mu}}
			\sum\limits_{j=1}^2 |\psi_{(\check{\alpha}^j)}|_{\dot{H}^{\frac{1}{2}}_{\mu}}.
		\end{equation}
		Then since the surface tension term can be treated directly by integration by parts, we use Young's inequality to find that
		\begin{align*}
			\sum \limits_{|\alpha| = N}|\big{(}\mathbf{U}_{(\alpha)}, \tilde Q(\mathbf{U})\mathbf{U}_{\langle\check \alpha\rangle}\big{)}_{L^2}| 
			&\leq \sum \limits_{|\alpha| = N}
			\mathrm{bo}^{-1} |\big{(}\zeta_{(\alpha)}, \mathcal{K}_{(\alpha)} \zeta_{\langle \check{\alpha}\rangle }\big{)}_{L^2}|
			+
			\frac{1}{\mu} |\big{(}\psi_{(\alpha)}, \mathcal{G}_{{\mu},(\alpha)}\psi_{\langle \check{\alpha}\rangle }\big{)}_{L^2}| 
			\\ 
			&\leq 
			\frac{1}{2C_2} \mathcal{E}^N(\mathbf{U}) + \frac{C_2}{2}\mathcal{E}^{N-1}(\mathbf{U}),
		\end{align*}
		for any $C_2>0$. Then let $C_2 \geq C$ and use \eqref{equiv of norms k} with the definition of the modified energy \eqref{modified energy} to see that
		\begin{align*}
			\frac{1}{2}E^N(\mathbf{U})  
			\leq E_{IWW}^N(\mathbf{U}) \leq C_2CE^N(\mathbf{U}).
		\end{align*}
		%
		%
		%
	%	From these inequalities we can conclude by using \eqref{equiv of norms k} with $k=N$.\\ 

		Lastly, we will prove energy estimate \eqref{Energy estimate} by considering two cases.\\

		\noindent 
		\underline{An energy estimate in the case $k=0$.} We first use \eqref{IWW} to make the decomposition
		\begin{align*}
			\partial_t \zeta & = \frac{1}{\mu} \mathcal{G}_{\mu}[0]\psi +  \ve \mathcal{N}_1(\mathbf{U})\\
			\partial_t \psi   & = -\big{(}1 - \mathrm{bo}^{-1} \partial_x^2\big{)}\zeta + \ve \mathcal{N}_2(\mathbf{U}),
		\end{align*}
		where
		\begin{equation*}
			\mathcal{N}_1(\mathbf{U})  = \frac{1}{\mu \ve}(\mathcal{G}_{\mu}[\ve \zeta] - \mathcal{G}_{\mu}[0])\psi,
		\end{equation*}
		and 
		\begin{align*}
			\mathcal{N}_2(\mathbf{U}) 
			&  =  \frac{1}{\ve\mathrm{bo}}
			\partial_x \Big{(} \big{(} \frac{1}{\sqrt{1+\ve^2 \mu (\partial_x\zeta)^2}} - 1\big{)}\partial_x \zeta \Big{)}
			-
			\frac{1}{2}  
			\big{(}
			 (\partial_x \psi^{+})^2 
			-
			\gamma    (\partial_x \psi^{-})^2 
			\big{)}
			-
			\mathcal{N}(\mathbf{U}).
		\end{align*}
		The decomposition emphasizes the terms of order $\ve$, while the linear terms are canceled by our choice of the energy:
		\begin{align*}
			\frac{1}{2}\frac{\mathrm{d}}{\mathrm{d}t}E^0(U)
			&  = 
			\big{(}\Lambda^{\frac{7}{2}}\partial_t\zeta, \big{(}1- \mathrm{bo}^{-1} \partial_x^2\big{)}\Lambda^{\frac{7}{2}}\zeta\big{)}_{L^2}+ \frac{1}{\mu} \big{(}\Lambda^{\frac{7}{2}}\partial_t\psi, \mathcal{G}_{\mu}[0]\Lambda^{\frac{7}{2}}\psi\big{)}_{L^2}
			\\ 
			& = 
			\ve \big{(} \Lambda^{\frac{7}{2}}\mathcal{N}_1(\mathbf{U}), \big{(}1 - \mathrm{bo}^{-1} \partial_x^2\big{)}\Lambda^{\frac{7}{2}}\zeta\big{)}_{L^2}
			+
			\frac{\ve}{\mu} \big{(}\Lambda^{\frac{7}{2}} \mathcal{N}_2(\mathbf{U}), \mathcal{G}_{\mu}[0]\Lambda^{\frac{7}{2}}\psi\big{)}_{L^2}.
			\\
			& =
			A_1 + A_2.
		\end{align*}
		For the estimate on $A_1$, we use Plancherel's identity,  Cauchy-Schwarz inequality, and \  \eqref{Symbolic G} with $s=t_0 = 3$  to find that
		\begin{align*}
			|A_1| 
			& \leq 
			\ve 
			|\mathcal{N}_1(\mathbf{U})|_{H^{3}}|\Lambda^{5}\zeta|_{H_{\gamma, \mathrm{bo}}^{1}}
			\\
			& \leq 
			\ve M(5) |\psi|_{\dot{H}_{\mu}^{\frac{9}{2}}}|\Lambda^{5}\zeta|_{H^{1}_{\gamma, \mathrm{bo}}}.
			%\\ 
			%&% \leq 
			%\ve C \mathcal{E}^N(\mathbf{U}).
		\end{align*}
		Then to estimate $\psi$, we use that $ |\psi|_{\dot{H}_{\mu}^{\frac{9}{2}}}\leq  |\psi|_{\dot{H}_{\mu}^{(N-1)+\frac{1}{2}}}$ together with \eqref{est psi}. Combining these estimates implies
		\begin{align*}
			|A_1| 
			%& \leq \ve (1+\mathrm{bo}^{-\frac{1}{2}})M(6) |\partial_x \psi|_{H^{t_0 + \frac{7}{2}}}|\Lambda^{t_0+\frac{3}{2}}\zeta|_{H^{1}_{\gamma, \mathrm{bo}}}
			%\\ 
			%&
			 \leq 
			\ve C( \mathcal{E}^N(\mathbf{U})) \mathcal{E}^N(\mathbf{U}).
		\end{align*}
		\color{black}
		For the estimate on $A_2$, we integrate by parts and use the algebra property of $H^{\frac{7}{2}}(\R)$ to find that
		\begin{align*}
			|A_2| & \lesssim
			\frac{\ve}{\mu} \mathrm{bo}^{-1}  |\Lambda^5 \zeta|_{H^1} |\mathcal{G}_{\mu}[0]\psi|_{H^{3}} 
			+ 
			\frac{\ve}{\mu}( |\partial_x \psi^+|^2_{{H^{4}}}  +  |\partial_x\psi^-|^2_{{H^{4}}} 
		%	\\ 
		%	& \hspace{0.5cm}
			+ |\mathcal{N}(\mathbf{U})|_{H^4})|\mathcal{G}_{\mu}[0]\psi|_{H^{3}},
		\end{align*}
		where we recall the definition
		\begin{equation*}
			\mathcal{N}[\ve \zeta,\psi] 
			=  
			\frac{1}{2\mu} \frac{\gamma (\mathcal{G}^{-}_{\mu}[\ve \zeta] \psi^{-} + \ve \mu \partial_x \zeta  \partial_x \psi^{-})^2 - (\mathcal{G}^{+}_{\mu}[\ve \zeta] \psi^{+} + \ve \mu \partial_x \zeta  \partial_x \psi^{+})^2 }{(1+   \varepsilon^2 \mu (\partial_x \zeta)^2)}.
		\end{equation*}
		To conclude, we use \eqref{Est 2 G0} to control  $\frac{1}{\mu}\mathcal{G}_{\mu}[0]\psi$ together with \eqref{est psi} in the first term:
		\begin{equation*}
			\frac{\mathrm{bo}^{-1}}{\mu}  |\Lambda^5 \zeta|_{H^1} |\mathcal{G}_{\mu}[0]\psi|_{H^{3}} \lesssim |\Lambda^5 \zeta|_{H^1_{ \mathrm{bo}}} |\psi|_{\dot{H}^{\frac{9}{2}}_{\mu}} \leq \mathcal{E}^{N}(\mathbf{U}).
		\end{equation*}
		For the second term,  we use Corollary \ref{cor definitions} and estimates \eqref{Est V pm} with \eqref{est psi} to get that
		\begin{equation*}
			|\partial_x \psi^+|^2_{{H^{4}}} + |\partial_x \psi^-|_{{H^{4}}}^2 \leq M |\partial_x\psi|_{H^{4}}^2 \leq    M |\psi|_{\dot{H}^{5}_{\mu}}^2\leq  M \mathcal{E}^{N}(\mathbf{U}).
		\end{equation*}		
		Lastly, to deal with the terms in $\mathcal{N}(\mathbf{U})$ we use the algebra property of ${H^{4}}(\R)$, estimate \eqref{W+ est low reg} to obtain
		\begin{align*}
			|\mathcal{N}(\mathbf{U})|_{{H^{4}}}
			 \leq    
			M (\frac{\gamma}{\mu}|\underline{w}^-|^2_{{H^{4}}} + \frac{1}{\mu}|\underline{w}^+|_{{H^{4}}}^2)
			 \leq 
			M |\psi|_{\dot{H}_{\mu}^{5}}^2 \leq
			M \mathcal{E}^N(\mathbf{U}),
		\end{align*}
		Gathering all these estimates, we find that
		\begin{equation}\label{est 0}
			\frac{\mathrm{d}}{\mathrm{d}t}E^0(U) \leq \ve C( \mathcal{E}^N(\mathbf{U}))  \mathcal{E}^N(\mathbf{U}).
		\end{equation}\\

		\noindent 
		\underline{An energy estimate in the case $1\leq k \leq N$.} We first derive an estimate on each $E_{\alpha}(\mathbf{U})$ defined by \eqref{Energy alpha} for $1\leq |\alpha|\leq N$. To do so, we use that $Q(\mathbf{U})$ is symmetric up to a lower order term to find that
		\begin{align*}
			\frac{1}{2}\frac{\mathrm{d}}{\mathrm{d} t} E_{\alpha}(\mathbf{U}) 
			& =
			\frac{1}{2}
			\big{(} \mathbf{U}_{(\alpha)}, \big{(}\partial_t Q(\mathbf{U})\big{)} \mathbf{U}_{(\alpha)}\big{)}_{L^2} 
			+
			\big{(} \partial_t \mathbf{U}_{(\alpha)}, Q(\mathbf{U}) \mathbf{U}_{(\alpha)}\big{)}_{L^2} 
			\\ 
			& \hspace{0.5cm}
			+
			\frac{1}{2}
			\big{(}\mathbf{U}_{(\alpha)}, \big{(}Q(\mathbf{U}) - Q(\mathbf{U})^{\ast}\big{)}  \partial_t \mathbf{U}_{(\alpha)}\big{)}_{L^2} 
			\\ 
			& =
			B_1 + B_2 + B_3,
		\end{align*}
		where $B_3$ is the contribution of the lower order term and reads
		\begin{align*}
			B_3 = -\frac{1}{2}(1-\gamma)\gamma \ve^2 \mu \Big{((}\zeta_{(\alpha)}[\![ \underline{V}^{\pm}]\!], \big{(}\mathfrak{E}_{\mu}[\ve \zeta] - \mathfrak{E}_{\mu}[\ve \zeta]^{\ast}\big{)} \big{(} \partial_t\zeta_{(\alpha)} [\![ \underline{V}^{\pm}]\!]\big{)}\Big{)}_{L^2}.
		\end{align*} 

		\underline{Control of $B_1$.}  By definition of $Q(\mathbf{U})$ given by \eqref{Q: symmetrizer}, we find that
		\begin{align*}
			B_1
			& =
			\frac{1}{2}
			\big{(} [\partial_t, \mathfrak{Ins}(\mathbf{U})]\zeta_{(\alpha)}, \zeta_{(\alpha)}\big{)}_{L^2} 
			+
			\frac{1}{2\mu }
			\big{(} [\partial_t, \mathcal{G}]\psi_{(\alpha)}, \psi_{(\alpha)}\big{)}_{L^2}
			\\ 
			& 
			\hspace{0.5cm}
			+
			\frac{1}{2}
			\big{(} (\partial_t a)\zeta_{(\alpha)}, \zeta_{(\alpha)}\big{)}_{L^2}
			+
			\frac{1}{2}
			\big{(} (\partial_t b)\zeta_{(\alpha)}, \Lambda^{-1}\zeta_{(\alpha)}\big{)}_{L^2}
			& 
			\\ 
			& = 
			B_1^1 + B_1^2 + B_1^3 + B_1^4.
		\end{align*}
		For the estimate on $B_1^1$, we have by definition and integration by parts that
		\begin{align*}
			B_1^1 
			& =
			\frac{1}{2}
			\big{(}\zeta_{(\alpha)},\big{(} \partial_t \mathfrak{a}\big{)} \zeta_{(\alpha)}\big{)}_{L^2}  
			+
			\frac{1}{2\mathrm{bo}}\big{(} \partial_x \zeta_{(\alpha)}, \big{[}\partial_t, \mathcal{K}[\ve \sqrt{\mu} \partial_x \zeta]\big{]}\partial_x  \zeta_{(\alpha)}\big{)}_{L^2} 
			\\ 
			& 
			\hspace{0.5cm}
			-	
			(1-\gamma)\gamma \ve^2 \mu	\big{(} \big{(} \partial_t [\![ \underline{V}^{\pm}]\!] \big{)}\zeta_{(\alpha)},     \mathfrak{E}_{\mu}[\ve \zeta]	\big{(} 
			\zeta_{(\alpha)}[\![ \underline{V}^{\pm}]\!] \big{)} \big{)}_{L^2} 
			\\ 
			& 
			\hspace{0.5cm}
			-
			\frac{1}{2}
			(1-\gamma)\gamma \ve^2 \mu	\big{(}  [\![ \underline{V}^{\pm}]\!] \zeta_{(\alpha)}, \big{[}\partial_t, \mathfrak{E}_{\mu}[\ve \zeta] \big{]}	\big{(} 
			\zeta_{(\alpha)}[\![ \underline{V}^{\pm}]\!] \big{)} \big{)}_{L^2}
			\\
			& = 
			B_1^{1,1} + B_1^{1,2} + B_1^{1,3} + B_1^{1,4}.
		\end{align*}
		\ For the estimate on $B_1^{1,1}$, we have an estimate on $\partial_t \mathfrak{a}$ in $L^{\infty}(\R)$ given by \eqref{est on dta}. In particular,  the Hölder  inequality implies that
		\begin{align*}
			|B_1^{1,1}|
			& \leq \frac{1}{2}
			|\partial_t \mathfrak{a}|_{H^{s}} |\zeta_{(\alpha)}|_{L^2}^2
			\\ 
			& \leq  \varepsilon C\big{(}\mathcal{E}^{N}(\mathbf{U})\big{)} \mathcal{E}^{N}(\mathbf{U}),
		\end{align*}
		where we used that $t_0+3 \leq N$.
		\color{black}

		For the estimate on $B_1^{1,2}$, it only depends on $\partial_t\mathcal{K}[\ve \sqrt{\mu} \partial_x \zeta]$ which is in $L^{\infty}(\R)$. Indeed, it follows from an estimate on $\partial_x \partial_t \zeta$, where we use the Sobolev embedding:
		\begin{align*}
			|\partial_x \partial_t \zeta|_{L^{\infty}}^2 \leq |\partial_t \zeta|_{H^2}^2 \leq \sum \limits_{\alpha\in \N^2, |\alpha|\leq3} |\zeta_{(\alpha)}|_{L^2}^2 \leq \mathcal{E}^N(\mathbf{U}).
		\end{align*} 
		As a result, we obtain by Hölder's inequality and the definition of the energy that
		\begin{align*}
			|B_1^{1,2}| \leq  \ve C\big{(}\mathcal{E}^{N}(\mathbf{U})\big{)} \mathcal{E}^N(\mathbf{U}).
		\end{align*}
		For the estimate on $B_1^{1,3}$, we use estimates \eqref{est on E fg}, \eqref{eq 5.7 in lannes}, \eqref{V pm est}, and \eqref{est on dtVpm} to find that
		\begin{align*}
			B_1^{1,3} 
			& \lesssim  (1-\gamma)\gamma \mathfrak{e}(\zeta)\ve^2 
			|(1+\sqrt{\mu}|\mathrm{D}|)^{\frac{1}{2}} (\zeta_{(\alpha)}[\![ \underline{V}^{\pm}]\!])|_{L^2}	|(1+\sqrt{\mu}|\mathrm{D}|)^{\frac{1}{2}} (\zeta_{(\alpha)}\partial_t[\![ \underline{V}^{\pm}]\!])|_{L^2}
			\\ 
			& \leq  \ve   C\big{(} \mathcal{E}^N(\mathbf{U}) \big{)} \mathcal{E}^N(\mathbf{U})
			+
			(1-\gamma)\gamma  \mathfrak{e}(\zeta) \ve^2
			\sqrt{\mu} \max\limits_{|\alpha|\leq 1} |\partial_{x,t}^{\alpha}[\![ \underline{V}^{\pm}]\!]|_{L^{\infty}}^2  | \zeta_{(\alpha)}|^2 
			\\ 
			& 
			\leq \ve C \mathcal{E}^N(\mathbf{U}).
		\end{align*}
		Likewise, the estimate on $B_1^{1,4}$ we simply use \eqref{Est E 2} instead of \eqref{Est E 1} to obtain the same bound and we deduce that
		\begin{align*}
			|	B_1^1 |\leq \ve C \mathcal{E}^N(\mathbf{U}).
		\end{align*}
		Clearly, the estimates on $B_1^3$ and $B_1^{4}$ is estimated similarly to $B_1^{1,1}$, while we estimate $B_1^2$ we have by direct computations that
		\begin{align*}
			|B_1^{2}| \leq  \frac{1}{2\mu }
			\big{(} [\partial_t, \mathcal{G}_{\mu}^+] (\mathcal{J}_{\mu})^{-1}\psi_{(\alpha)}, \psi_{(\alpha)}\big{)}_{L^2}
			+
			\frac{1}{2\mu }
			\big{(}  \mathcal{G}_{\mu}^+(\mathcal{J}_{\mu})^{-1}  (\mathrm{d} \mathcal{J}_{\mu}(\partial_t \zeta)) (\mathcal{J}_{\mu})^{-1} \psi_{(\alpha)}, \psi_{(\alpha)}\big{)}_{L^2}.
		\end{align*}
		For the first term we use Proposition $3.6$ of \cite{Alvarez-SamaniegoLannes08a} with \eqref{Est J}, while the for the second term we also use \eqref{dual est G+ s},  \eqref{Est J}, and then \eqref{shape derivative J} to find that
		\begin{align*}
			|	B_1^{2} |\leq \ve C \mathcal{E}^N(\mathbf{U}).
		\end{align*}
		Collecting each estimate so far gives
		\begin{align*}
			|B_1| \leq \ve C \mathcal{E}^N(\mathbf{U}).
		\end{align*}

		\underline{Control of $B_2$.} We use system \eqref{Quasi 2} to find that
		\begin{align*}
			B_2 
			& = 
			-\big{(} \mathcal{A}[\mathbf{U}]\mathbf{U}_{(\alpha)}, Q(\mathbf{U}) \mathbf{U}_{(\alpha)}\big{)}_{L^2} 
			-
			\big{(} \mathcal{B}[\mathbf{U}]\mathbf{U}_{(\alpha)}, Q(\mathbf{U}) \mathbf{U}_{(\alpha)}\big{)}_{L^2} 
			\\ 
			& \hspace{0.5cm}
			-\big{(} \mathcal{C}[\mathbf{U}]\mathbf{U}_{\langle \check{\alpha}\rangle}, Q(\mathbf{U}) \mathbf{U}_{(\alpha)}\big{)}_{L^2} 
			+
			\ve 
			\big{(} (R_{\alpha},S_{\alpha})^T, Q(\mathbf{U}) \mathbf{U}_{(\alpha)}\big{)}_{L^2}  
			\\ 
			& =
			B_2^1 + B_2^2 + B_2^3 + B_2^4.
		\end{align*}
		
		\noindent 
		\textit{Control of $B_2^1$.} From  Definition \ref{Def op} and \eqref{Q: symmetrizer} we have that 
		\begin{equation*}
			\big{(} \mathcal{A}[\mathbf{U}]\mathbf{U}_{(\alpha)}, Q^{(1)}(\mathbf{U}) \mathbf{U}_{(\alpha)}\big{)}_{L^2}  =0.
		\end{equation*}
		We may therefore decompose $B_2^1$ into two pieces:
		\begin{align*}
			B_2^1 
			& = 
			-\big{(} \mathcal{A}[\mathbf{U}]\mathbf{U}_{(\alpha)}, Q_2(\mathbf{U}) \mathbf{U}_{(\alpha)}\big{)}_{L^2} 
			\\ 
			& = 
			a(\mathbf{U})\big{(} \frac{1}{\mu}\mathcal{G}_{\mu}\psi_{(\alpha)}, \zeta_{(\alpha)}\big{)}_{L^2}  
			+
			b(\mathbf{U})
			\big{(} \frac{1}{\mu} \mathcal{G}_{\mu}\psi_{(\alpha)}, \Lambda^{-1}\zeta_{(\alpha)}\big{)}_{L^2}.
		\end{align*}
		Both terms are estimated directly by \eqref{G continuous form}, \eqref{V pm est} combined with Sobolev embedding to estimate $a(\mathbf{U})$, to find that
		\begin{align*}
			|B_2^1|
			& \leq 
			M  |\psi_{(\alpha)}|_{\dot{H}^{\frac{1}{2}}_{\mu}}\big{(}a(\mathbf{U}) |\zeta_{(\alpha)}|_{\dot{H}^{\frac{1}{2}}_{\mu}} + b(\mathbf{U}) |\Lambda^{-1}\zeta_{(\alpha)}|_{\dot{H}^{\frac{1}{2}}_{\mu}} \big{)}
			\\
			& \leq 
			C  |\psi_{(\alpha)}|_{\dot{H}^{\frac{1}{2}}_{\mu}}\big{(}  \ve^2 |\partial_x \zeta_{(\alpha)}|_{L^2} + b(\mathbf{U}) |\zeta_{(\alpha)}|_{L^2} \big{)}
			\\ 
			& 
			\leq  \ve C \mathcal{E}^N(\mathbf{U}),
		\end{align*}
		where we used that $\ve \leq \mathrm{bo}^{-\frac{1}{2}}$. 
		\\

	\noindent   
	\textit{Control of $B_2^2$.} We first decompose $B_2^2$ into four parts
	\begin{align*}
		B_2^2 
		& =
		\ve \big{(}\mathcal{I}[\mathbf{U}]\zeta_{(\alpha)}, \mathfrak{Ins}[\mathbf{U}]\zeta_{(\alpha)} \big{)}_{L^2}
		-
		\frac{\ve}{\mu}\big{(}\mathcal{I}^{\ast}[\mathbf{U}]\psi_{(\alpha)}, \mathcal{G}_{\mu}\psi_{(\alpha)} \big{)}_{L^2}
		\\ 
		& 
		\hspace{0.5cm}
		+
		\ve a(\mathbf{U}) \big{(}\mathcal{I}[\mathbf{U}]\zeta_{(\alpha)}, \zeta_{(\alpha)} \big{)}_{L^2}
		+
		\ve b(\mathbf{U})\big{(}\mathcal{I}[\mathbf{U}]\zeta_{(\alpha)}, \Lambda^{-1}\zeta_{(\alpha)} \big{)}_{L^2}
		\\ 
		& = 
		B_2^{2,1} + B_2^{2,2} + B_2^{2,3} + B_2^{2,4}.
	\end{align*}
	We first observe that the two last terms are easily treated with an estimate of the type \eqref{Est I a} to get that
	\begin{align*}
		|B_2^{2,3}| + |B_2^{2,4}|
	%	& \leq \ve C |\partial_x \psi|_{H^{t_0+1}} |\zeta_{(\alpha)}|_{L^2}
	%	\\ 
		& \leq \ve C \mathcal{E}^N(\mathbf{U}).
	\end{align*}
	For the remaining two terms, we will need to work some more.  \\ 

	\noindent 
	\textit{Control of $B_2^{2,1}$.} Clearly, by \eqref{zeta half norm} it is enough to prove that
	\begin{align}\label{What we want...}
		|B_2^{2,1}| \leq \mathcal{E}^N(\mathbf{U})\big{(} 1+  \ve^2 \sqrt{\mu}| [\![ \underline{V}^{\pm}]\!]|_{W^{1,\infty}}^2|\zeta|^2_{<N+\frac{1}{2}>}\big{)}.
	\end{align}
	Since the quantities involved in this depend on $(\mathcal{G}^{-}_{\mu})^{-1}$, we need to adapt the proof of  \cite{LannesTwoFluid13}.  However, the proof relies on the symbolic expression of $(\mathcal{J}_{\mu})^{-1}(\mathcal{G}^-_{\mu})^{-1}\partial_x$, and this estimate is provided in Corollary \ref{Cor last one} which has the same outcome as the one in \cite{LannesTwoFluid13}. %Using this expression, we can prove \eqref{What we want...} following the proof provided in \cite{LannesTwoFluid13}.
	 In particular, we refer the reader to the proof of estimate $(5.15)$ in this paper. \\

	\noindent 
	\textit{Control of $B_2^{2,2}$.} This estimate is similar to estimate $(5.16)$ in \cite{LannesTwoFluid13}, but we give the details here to account for the difference in $\mathcal{G}^-_{\mu}$. In particular, we have by definition \eqref{I star}, that we must provide an estimate on the terms
	\begin{align*}
		B_2^{2,2}
		& = 
		\frac{\ve}{\mu}\big{(}	\underline{V}^+\partial_x \psi_{(\alpha)}, \mathcal{G}_{\mu}\psi_{(\alpha)} \big{)}_{L^2}
		+
		\gamma \frac{\ve}{\mu}\big{(}[\![ \underline{V}^{\pm}]\!]  \partial_x\big{(}(\mathcal{G}^-_{\mu} )^{-1}\mathcal{G}_{\mu}  \psi_{(\alpha)}\big{)}, \mathcal{G}_{\mu}\psi_{(\alpha)} \big{)}_{L^2}
		\\ 
		& = B_{2}^{2,2,1} +  B_{2}^{2,2,2}.
	\end{align*} 
	We let $g = (\mathcal{J}_{\mu})^{-1} \psi_{(\alpha)}$ and use the definition $\mathcal{G}_{\mu} = \mathcal{G}_{\mu}^+ (\mathcal{J}_{\mu})^{-1}$ to find that
	\begin{align*}
		B_{2}^{2,2,1} 
		& =
		\frac{\ve}{\mu}\big{(}	\underline{V}^+\partial_x \mathcal{J}_{\mu} g, \mathcal{G}^+_{\mu}g \big{)}_{L^2}
		\\ 
		& = 
		\frac{\ve}{\mu}\big{(}	\underline{V}^+ \partial_x g, \mathcal{G}^+_{\mu}g \big{)}_{L^2}
		-
		\gamma \frac{\ve}{\mu}\big{(}	\underline{V}^+ \partial_x (\mathcal{G}^-_{\mu})^{-1} \mathcal{G}^+_{\mu} g, \mathcal{G}^+_{\mu}g \big{)}_{L^2}
		\\ 
		& =
		B_{2}^{2,2,1,1} + 	B_{2}^{2,2,1,2}.
	\end{align*}
	For the estimate on $B_{2}^{2,2,1,1}$, we use \eqref{Prop 3.30}, combined with Sobolev embedding, \eqref{Est J}, and \eqref{V pm est} to obtain,
	\begin{align*}
		|B_{2}^{2,2,1,1}| 
		& \leq
		\ve M |\underline{V}^+ |_{W^{1,\infty}}|g|_{\dot{H}^{\frac{1}{2}}_{\mu}}^2
		\\ 
		& \leq
		\ve M |\underline{V}^+ |_{H^{2}}|\psi_{(\alpha)}|_{\dot{H}^{\frac{1}{2}}_{\mu}}^2
		\\
		& 
		\leq 
		\ve C \mathcal{E}^N(\mathbf{U}).
	\end{align*}
	For the estimate on $B_{2}^{2,2,1,2}$, we let $\tilde{g} = (\mathcal{G}^-_{\mu})^{-1} \mathcal{G}^+_{\mu}\psi_{(\alpha)} $ and use \eqref{ext prop 3.30}. It is then straightforward to obtain the desired bound using also \eqref{Inverse est 2}: 
	\begin{align*}
		|B_{2}^{2,2,1,1}|
		& \leq 
		\gamma \frac{\ve}{\mu}|\big{(}	\underline{V}^+ \partial_x  \tilde g, \mathcal{G}^-_{\mu}\tilde g \big{)}_{L^2}|
		\\ 
		& \leq
		\ve \mu^{-\frac{1}{2}} M |\underline{V}^+ |_{W^{1,\infty}}|\tilde g|_{\mathring{H}^{\frac{1}{2}}}^2
		\\
		& 
		\leq 
		\ve C \mathcal{E}^N(\mathbf{U}).
	\end{align*}
	Lastly, for the estimate on $B_2^{2,2,2}$ we let $g^{\sharp} = (\mathcal{G}^-_{\mu} )^{-1}\mathcal{G}_{\mu}  \psi_{(\alpha)}$, and again use \eqref{ext prop 3.30} with \eqref{Inverse 2} and \eqref{Est J}:
	\begin{align*}
		|B_2^{2,2,2}|
		& = 
		\gamma \frac{\ve}{\mu}|\big{(}[\![ \underline{V}^{\pm}]\!]  \partial_x g^{\sharp}, \mathcal{G}^-_{\mu} g^{\sharp}\big{)}_{L^2}|
		\\ 
		&
		\leq
		\ve \mu^{-\frac{1}{2}} M |(\mathcal{G}^-_{\mu} )^{-1}\mathcal{G}^+_{\mu}\mathcal{J}_{\mu}^{-1}\psi_{(\alpha)}|_{\mathring{H}^{\frac{1}{2}}}^2
		\\
		& 
		\leq 
		\ve C \mathcal{E}^N(\mathbf{U}).
	\end{align*}
	From these estimates, we conclude the bound on $B_2^2$ where we find that
	\begin{align*}
		|B_2^2| 	
		\leq 
		\ve C \mathcal{E}^N(\mathbf{U}).
	\end{align*}

	\noindent 
	\textit{Control of $B_2^3$.} To get an estimate of order $\ve$ we need to cancel the following term (when $k=N$):
	\begin{align*}
		B_2^{3,1} 
		: =
		\big{(} \mathcal{C}[\mathbf{U}]\mathbf{U}_{\langle \check\alpha \rangle }, Q^{(1)}(\mathbf{U}) \mathbf{U}_{(\alpha)}\big{)}_{L^2}  
		= - 	\big{(} \mathcal{A}[\mathbf{U}]\mathbf{U}_{(\alpha)}, \tilde Q(\mathbf{U}) \mathbf{U}_{\langle\check\alpha\rangle}\big{)}_{L^2} .
	\end{align*}
	We will cancel $B_2^{3,1}$ later by introducing $\tilde Q$ into the energy (see Remark \ref{Remark on the energy}). Therefore, we may decompose $(B_2^3-B_2^{3,1})$ into two parts:
	\begin{align*}
		(B_2^3-B_2^{3,1})
		& =
		\frac{1}{\mu}a(\mathbf{U}) \big{(}\mathcal{G}_{\mu,(\alpha)}\psi_{\langle \check \alpha \rangle}, \zeta_{(\alpha)} \big{)}_{L^2}
		+
		\frac{1}{\mu}b(\mathbf{U}) \big{(}\mathcal{G}_{\mu,(\alpha)}\psi_{\langle\check \alpha\rangle}, \Lambda^{-1}\zeta_{(\alpha)} \big{)}_{L^2}
		\\ 
		& = 
		B_2^{3,2} + B_2^{3,3}.
	\end{align*}
	However, these terms are easily dealt with using estimates on $\mathcal{G}^+_{\mu}$ and $(\mathcal{J}_{\mu})^{-1}$  (similar to \eqref{est G alpha}) and that $\ve \leq \mathrm{bo}^{-\frac{1}{2}}$ to obtain
	\begin{align*}
		|B_2^{3,2}| + |B_2^{3,3}| \leq   \ve C \mathcal{E}^N(\mathbf{U}).
	\end{align*}

	\noindent 
	\textit{Control of $B_2^4$.} For $B_2^4$ we have to estimate the following terms,
	\begin{align*}
		B_2^4
		& =
		\ve \big{(}R_{\alpha}, \mathfrak{Ins}[\mathbf{U}]\zeta_{(\alpha)} \big{)}_{L^2}
		+
		\frac{\ve }{\mu}\big{(}S_{\alpha}, \mathcal{G}_{\mu}\psi_{(\alpha)} \big{)}_{L^2}
		\\
		& 
		\hspace{0.5cm}
		+
		\ve a(\mathbf{U}) \big{(}R_{\alpha}, \zeta_{(\alpha)} \big{)}_{L^2}
		+
		\ve b(\mathbf{U}) \big{(}R_{\alpha}, \Lambda^{-1}\zeta_{(\alpha)} \big{)}_{L^2}.
	\end{align*}
	For the first term, we use \eqref{upper bound ins} and \eqref{Est R and S} to obtain,
	\begin{align*}
		\ve |\big{(}R_{\alpha}, \mathfrak{Ins}[\mathbf{U}]\zeta_{(\alpha)} \big{)}_{L^2}| 
		\leq  
		\ve C \mathcal{E}^1(\mathbf{U})|R_{\alpha}|_{H^1_{\mathrm{bo}}}
		\leq 
		\ve C \mathcal{E}^N(\mathbf{U}).
	\end{align*}
	For the second term is treated by \eqref{G continuous form}, \eqref{Est R and S}, and \eqref{zeta half norm} to get
	\begin{align*}
		\frac{\ve }{\mu}|\big{(}S_{\alpha}, \mathcal{G}_{\mu}\psi_{(\alpha)} \big{)}_{L^2}| 
		\leq \ve M  |S_{\alpha}|_{\dot{H}^{\frac{1}{2}}} |\psi_{(\alpha)}|_{\dot{H}^{\frac{1}{2}}} 
		\leq \ve C \mathcal{E}^N(\mathbf{U}).
	\end{align*}
	The remaining two estimates satisfy the same upper bound, and so gathering all these estimates, we find that
	\begin{align}\label{Energy step 1}
		\frac{1}{2}\frac{\mathrm{d}}{\mathrm{d} t} E_{\alpha}(\mathbf{U}) - B_1^{3,1} \leq \ve  C \mathcal{E}^N(\mathbf{U}).
	\end{align}
	As pointed out above, we need to cancel  $B_2^{3,1}$ and is done by modifying the energy by
	\begin{align*}
		\frac{1}{2}\frac{\mathrm{d}}{\mathrm{d} t}  \big{(}\mathbf{U}_{(\alpha)}, \tilde Q(\mathbf{U})\mathbf{U}_{\langle \check{\alpha}\rangle }\big{)}_{L^2} 
		& =
		\frac{1}{2}
		\big{(} \mathbf{U}_{(\alpha)}, \big{(}\partial_t \tilde Q(\mathbf{U})\big{)} \mathbf{U}_{\langle \check{\alpha}\rangle }\big{)}_{L^2} 
		-
		\big{(} \mathcal{A}[\mathbf{U}]\mathbf{U}_{(\alpha)}, \tilde Q(\mathbf{U}) \mathbf{U}_{\langle \check{\alpha}\rangle }\big{)}_{L^2} 
		\\ 
		& 
		\hspace{0.5cm}
		-
		\big{(} \mathcal{B}[\mathbf{U}]\mathbf{U}_{(\alpha)}, \tilde Q(\mathbf{U}) \mathbf{U}_{\langle \check{\alpha}\rangle }\big{)}_{L^2} 
		-
		\big{(} \mathcal{C}[\mathbf{U}]\mathbf{U}_{\langle\check{\alpha}\rangle}, \tilde Q(\mathbf{U}) \mathbf{U}_{\langle \check{\alpha}\rangle }\big{)}_{L^2} 
		\\ 
		& \hspace{0.5cm}
		+
		\ve 
		\big{(} (R_{\alpha},S_{\alpha})^T, \tilde Q(\mathbf{U}) \mathbf{U}_{\langle \check{\alpha}\rangle }\big{)}_{L^2}  
		\\ 
		& =
		\tilde B_1 +\tilde  B_2 + \tilde B_3 + \tilde B_4 + \tilde B_5. 
	\end{align*}
	Where we observe that $\tilde B_2 = - B_2^{3,1}$ and gives the desired cancellation (when $k=N$), while the remaining terms are treated as above and satisfy
	\begin{equation*}
		|\tilde B_1| + |\tilde B_3| + |\tilde B_4| + |\tilde B_5| \leq \ve C \mathcal{E}^N(\mathbf{U}).
	\end{equation*}
	We therefore find that 
	\begin{align}\label{Working progress}
		\frac{1}{2}\frac{\mathrm{d}}{\mathrm{d} t} \Big{(} E_{\alpha}(\mathbf{U}) + \big{(}\mathbf{U}_{(\alpha)}, \tilde Q(\mathbf{U})\mathbf{U}_{\langle \check{\alpha}\rangle }\big{)}_{L^2} \Big{)} \leq  \ve C \mathcal{E}^N(\mathbf{U}) + |B_3|,
	\end{align} 
	where we recall that $\tilde Q= 0$ when $|\alpha| < N$. \\

	\underline{Control of $B_3$.} For the estimate on $B_3$, we use \eqref{est E East}, \eqref{Classical prod est}, \eqref{V pm est}, $\ve^2 \leq \mathrm{bo}^{-1}$, to find that
	\begin{align*}
		|B_3| 
		& \leq 
		\ve^3 \mu^{\frac{1}{4}}M(t_0+3)|\zeta_{(\alpha)}[\![ \underline{V}^{\pm}]\!]|_{H^{\frac{1}{2}}} |\partial_t\zeta_{(\alpha)} [\![ \underline{V}^{\pm}]\!]|_{H^{-\frac{1}{2}}}
		\\ 
		& 
		\leq 
		\ve^2 \mu^{\frac{1}{4}} C(\mathcal{E}^N(\mathbf{U})) |\zeta_{(\alpha)}|_{H^{1}_{\mathrm{bo}}} |\partial_t \zeta_{(\alpha)}|_{H^{-\frac{1}{2}}}.
	\end{align*}
	Now for the control of $\partial_t \zeta_{(\alpha)}$, we use equation \eqref{Quasi 2} to see that
	\begin{align*}
		|\partial_t \zeta_{(\alpha)}|_{H^{-\frac{1}{2}}} 
		& \leq 
		\frac{1}{\mu} |\mathcal{G}_{\mu}\psi_{(\alpha)}|_{H^{-\frac{1}{2}}} +\frac{1}{\mu} |\mathcal{G}_{\mu, (\alpha)}\psi_{(\alpha)}|_{H^{-\frac{1}{2}}}+ \ve | \mathcal{I}[\mathbf{U}] \zeta_{(\alpha)}|_{H^{-\frac{1}{2}}}  + \ve |R_{\alpha}|_{H^{-\frac{1}{2}}},
	\end{align*}
	where $R_{\alpha}$ satisfies \eqref{Est R and S}. Then to conclude this estimate, we simply employ \eqref{1 Est G}, \eqref{est I inst op}, and  \eqref{Est R and S} to deduce that
	\begin{equation*}
		|B_3|  \leq \ve C \mathcal{E}^N(\mathbf{U}). 
	\end{equation*}

	\noindent
	\underline{Proof of estimate \eqref{Energy estimate}.} To conclude, we use \eqref{est 0} combined with \eqref{Working progress} where we sum over $0\leq |\alpha|\leq N$ and then apply estimate \eqref{equiv of modified norms} to find that
	\begin{align*} 
			\frac{1}{2}\frac{\mathrm{d}}{\mathrm{d} t}   E_{{\text{\tiny IWW}}}^N(\mathbf{U}) 
			\leq
			\ve C E_{{\text{\tiny IWW}}}^N(\mathbf{U}). 
	\end{align*}
	
	\color{black}

	\end{proof}

	\begin{remark}\label{Rmrk 2 bo numb}
		The  restiction  $\varepsilon^2 \leq \mathrm{bo}^{-1}$ stems from the estimate on $B_2^1$ and $B_3$.
	\end{remark}

	\section{Proof of Theorem \ref{Thm 1}}\label{Pf main thm}
	
	The strategy for the proof of Theorem \ref{Thm 1} is classical, and we refer the reader to \cite{WWP} Chapter $4$ for a detailed proof in the case of the water waves equations. See also Chapter $9$ in the same book in the case of the water waves equations with surface tension. We will now give the main steps involved.
	
	\begin{proof}
		The first step is to regularize the internal water waves equations \eqref{IWW}, and use the Fixed Point Theorem to deduce the existence of a solution. In fact, the solution is smooth. However, the existence time depends on the regularization parameter and shrinks to zero when taking the limit. To extend the existence time,  one applies Proposition \ref{Prop Energy est} and then a compactness argument to deduce a limit. Note however, this requires us to verify the stability criteria and the non-cavitatation condition  on a long-time scale where we merely assume the non-cavitation condition \eqref{non-cavitation}, and the stability criterion \eqref{Stability criterion} for the initial data\footnote{This is an abuse of notation, note that terms involving time derivatives are not initially prescribed. But they can be relplaced with spatial derivatives using shape derivative formulas and the equation for $\partial_t \zeta$ and $\partial_t \psi$. }. For clarity, we recall \eqref{Stability criterion} (in the case of the non-cavitation condition see for instance Lemma 5.3 in \cite{Paulsen22}):

		\begin{equation*}
			0<\mathfrak{d}(\mathbf{U}_0)  = \inf \mathfrak{a}|_{t=0} - \Upsilon \mathfrak{c}(\zeta|_{t=0}) |[\![ \underline{V}^{\pm}|_{t=0}]\!]|_{L^{\infty}}^4.
		\end{equation*}

			To verify the stability criterion on a long time scale follows classically from a boot-strap argument and
			\begin{equation*}
				\sup\limits_{t \in [0,\frac{T}{\varepsilon}]}\mathcal{E}^N(\mathbf{U}) \leq C \mathcal{E}^N(\mathbf{U}_0), 
			\end{equation*}
			if we can verify the inequality
			\begin{align}\label{FTC}
				\mathfrak{d}(\mathbf{U})(t) \geq  \frac{1}{2}\mathfrak{d}(\mathbf{U}_0),
			\end{align}
			for any $t \in [0,\frac{T}{\varepsilon}]$ where $T = C^{-1}>0$ with $C = C(\mathcal{E}^N(\mathbf{U}_0), {h^{-1}_{\min}}, \mathfrak{d}(\mathbf{U}_0)^{-1})$. To prove \eqref{FTC}, we use \eqref{est on dta} to estimate $\partial_t \mathfrak{a}$, \eqref{Est E 2} to estimate $\mathfrak{c}(\partial_t \zeta)$, and for the estimates on $\underline{V}^{\pm}$ we use the Sobolev embedding and \eqref{est on dtVpm} to find that
			\begin{equation*}
			|\partial_t	\mathfrak{d}(\mathbf{U})(t) |_{L^{\infty}} \leq (\varepsilon+\Upsilon) C\big{(} \mathcal{E}^N(\mathbf{U})(t)\big{)},
			\end{equation*}
			for all $t \in [0,\frac{T}{\varepsilon}]$. From this estimate we may employ the Fundamental Theorem of Calculus to deduce that
			\begin{align*}
				\mathfrak{d}(\mathbf{U})(t) 
				& =
				\mathfrak{d}(\mathbf{U}_0) - \int_{0}^{t} (\partial_t\mathfrak{d}(\mathbf{U}))(s)\: \mathrm{d}s\\
				& \geq 
				\mathfrak{d}(\mathbf{U}_0)
				- \frac{T}{\varepsilon}  (\varepsilon+\Upsilon) C,
			\end{align*}
			which implies \eqref{FTC} by taking $T$ small enough, depending only on $C$, and $\Upsilon \sim \varepsilon^4 \mu \mathrm{bo} \lesssim \varepsilon$ which is ensured by the assumption that $\varepsilon^2   \leq \mathrm{bo}^{-1}$.\color{black} 
		
		 	For uniqueness, one needs to have an estimate of the difference between two solutions, which can be proved with estimates similar to the ones used for the proof of Proposition \ref{Prop Energy est}.
	\end{proof}

	\begin{remark}\label{Rmrk 3 bo numb}
	Here we needed to make the restriction $\Upsilon \lesssim \varepsilon$  Also, note if we have imposed the stronger criterion \eqref{Crit ve mu} we would have $\varepsilon^{-2}\Upsilon \lesssim \varepsilon$ and implies $\varepsilon \mu \leq \mathrm{bo}^{-1}$. % anyway the  main is ...

	% increase .... Seee remark 3.... better . but Remark 2
\end{remark}

	 	\section{Proof of Theorem \ref{W-P System BO}}\label{Sec Wp system BO}
	 	
	 	Since the long-time well-posedness of the unidirectional model \eqref{Full disp Benjamin} is classical (see Remark \ref{Remark wp}) we only give the proof for \eqref{System Full disp B}. The strategy of the proof is the same as for \eqref{IWW}. It relies on the energy method, where we need to find a suitable symmetrizer for the system. For simplicity, we define
	 	$$u = \mathrm{t}(\mathrm{D})v \quad \text{and} \quad \mathbf{U} = (\zeta, u)^T,$$ 
	 	using it to write  \eqref{System Full disp B} on the compact form:\color{black} 
	 \begin{equation}\label{Eq 1 M}
	 	\partial_t \mathbf{U} + \mathcal{M}(\mathbf{U}) \mathbf{U} = \mathbf{0},
	 \end{equation}
	 with
	 \begin{equation}\label{M 1}
	 	\mathcal{M}(\mathbf{U}) = 
	 	\begin{pmatrix}
	 		\ve \mathrm{t}(\mathrm{D})\big{(} u \partial_x \bullet \big{)}  & \partial_x + \ve \mathrm{t}(\mathrm{D})\big{(}\zeta \partial_x\bullet \big{)} \\
	 		\mathrm{k}(\mathrm{D})\partial_x & 	\ve \mathrm{t}(\mathrm{D})\big{(} u \partial_x \bullet \big{)}
	 	\end{pmatrix},
	 \end{equation}
	 To define an energy, we introduce the symmetrizer
	 \begin{equation}\label{S: symmetrizer}
	 	\mathcal{Q}(\mathbf{U}) 
	 	=
	 	\begin{pmatrix}
	 		(1-\mathrm{bo}^{-1} \partial_x^2)& 0 \\
	 		0 & 	\mathrm{t}^{-1}(\mathrm{D}) + \varepsilon \zeta 
	 	\end{pmatrix}.
	 \end{equation}
	 Then the energy associated with $X^s_{\mu,\mathrm{bo}}(\R)$, given in Definition \ref{Energy space X}, for the weakly dispersive Benjamin system \eqref{System Full disp B} can be written as
	 \begin{equation*}
	 	E_{\text{\tiny wBs}}^s( \mathbf{U}) = \big{(} \Lambda^s \mathbf{U}, \mathcal{Q}(\mathbf{U})\Lambda^s \mathbf{U} \big{)}_{L^2}.
	 \end{equation*}
	 We will use this energy to deduce a bound on the solutions of \eqref{Eq 1 M}. As noted in the proof Theorem \ref{Thm 1}, this is not enough. One also needs a uniqueness type estimate to establish the well-posedness with the energy method. This estimate is derived similarly,  but since we will need it for the proof of the full justification, we also state it in the next result. For convenience,  we let two solutions of \eqref{Eq 1 M} be given by $\mathbf{U}_1 = (\zeta_1, u_1 )^T$ and $\mathbf{U}_2 = (\zeta_2, u_2)^T$ where we define  $(\eta,w) = (\zeta_1 - \zeta_2, u_1-u_2)$. Then $\mathbf{W} = (\eta,w)^T$ solves 
	 \begin{equation}\label{lin W}
	 	\partial_t \mathbf{W} +  \mathcal{M}(\mathbf{U}_1)\mathbf{W} =  \mathbf{F}, 
	 \end{equation}
	 with $\mathcal{M}$ defined as  in \eqref{M 1} and  the source term is given by
	 \begin{equation} \label{F: source term}
	 	\mathbf{F} 
	 	=- \big(\mathcal{M}(\mathbf{U}_1) - \mathcal{M}(\mathbf{U}_2)  \big) \mathbf{U}_2.
	 %	-\ve 
	 	%\begin{pmatrix}
	 	%	w \partial_x \zeta_2 + \eta \partial_x v_2 \\
	 	%	w \partial_x v_2
	 	%\end{pmatrix} .
	 \end{equation}
	 The energy associated to \eqref{lin W} is given in terms of the symmetrizer $\mathcal{Q}(\mathbf{U}_1)$ defined in \eqref{S: symmetrizer} and reads
	 \begin{equation}\label{tilde Energy s}
	 	\tilde{E}^s_{\text{\tiny wBs}}(\mathbf{W}) 
	 	: =
	 	\big( \Lambda^s  \mathbf{W}, \mathcal{Q}(\mathbf{U}_1) \Lambda^s \mathbf{W}\big)_{L^2}. 
	 \end{equation}
	 
	 From these energies, we have the following \textit{a priori} estimate and an estimate of the difference between two solutions.

	 \begin{prop}\label{Energy fully disp BO} Let $\ve, \mu, \mathrm{bo}^{-1} \in (0,1) $, $s> \frac{3}{2}$,  and $ (\zeta,u) \in C([0,T]; X^s_{\mu,\mathrm{bo}}(\R) )$ be a solution to \eqref{Eq 1 M} on a time interval $[0,T]$ for some $T >0 $.  Moreover, assume that \eqref{non-cavitation} holds for $\zeta$ uniformly in time and let $K(s) = C_1(  h_{\min}^{-1},|(\zeta, u)|_{X^s_{\mu,\mathrm{bo}}})>0$ be a nondecreasing function of its argument. Then, for the energy given in Definition  \ref{Def Energy}, there holds,

	 	\begin{itemize}
	 		\item [1.] For all $0<t<T$, we have that
	 		\begin{equation}\label{Energy full dispersion}
	 			\frac{\mathrm{d}}{\mathrm{d}t} E^s_{\text{\tiny wBs}}(\mathbf{U}) \leq \ve  K(s)E_s(\mathbf{U}).
	 		\end{equation}
	 		\item [2.] For all $0<t<T$, there exist $C_2>0$ such that
	 		\begin{equation}\label{equiv. full dispersion}
	 			(K(s))^{-1}|(\zeta, u)|^2_{X^s_{\mu,\mathrm{bo}}} \leq E^s_{\text{\tiny wBs}}(\mathbf{U}) \leq K(s)  |(\zeta, u)|^2_{X^s_{\mu,\mathrm{bo}}}.
	 		\end{equation}
	 	\end{itemize}
	 	Moreover, let $ (\zeta_1,u_1),(\zeta_2,u_2) \in C([0,T]; X^s_{\mu,\mathrm{bo}}(\R) )$ be solutions to \eqref{Eq 1 M} on a time interval $[0,T]$ and satisfying the condition \eqref{non-cavitation}  uniformly in time. Define the difference to be $\mathbf{W}=(\eta,w) = (\zeta_1 - \zeta_2, u_1 - u_2)$ and for  $i=1,2$ let $\tilde{K}(s) = C_2(  h_{\min}^{-1},|(\zeta_i, v_i)|_{X^s_{\mu,\mathrm{bo}}})>0$ be a nondecreasing function of its argument. Then, for the energy defined by \eqref{tilde Energy s}, there holds
	 	\begin{itemize}
	 		\item [3.] For all $0<t<T$, we have that
	 		\begin{equation}\label{Energy 1}
	 			\frac{\mathrm{d}}{\mathrm{d}t} \tilde{E}^0_{\text{\tiny wBs}}(\mathbf{W}) \leq \ve  \tilde{K}(s) \tilde{E}^0_{\text{\tiny wBs}}(\mathbf{W}).
	 		\end{equation}
	 		\item [4.] For all $0<t<T$, we have that
	 		\begin{equation}\label{Energy 2}
	 			\frac{\mathrm{d}}{\mathrm{d}t} \tilde{E}^s_{\text{\tiny wBs}}(\mathbf{W})  \leq \ve \tilde{K}(s+1)\tilde{E}^s_{\text{\tiny wBs}}(\mathbf{W}).
	 		\end{equation}
	 		
	 		\item [5.] For all $0<t<T$ and $r\geq 0$, there exist $C_3(h_{\min}^{-1},|\zeta|_{L^{\infty}})>0$ nondecreasing function of its argument such that
	 		\begin{equation}\label{equiv 2}
	 			C_3^{-1}|(\eta,w)|^2_{X^r_{\mu,\mathrm{bo}}} \leq \tilde{E}^r_{\text{\tiny wBs}}(\mathbf{W}) \leq C_3 |(\eta,w)|^2_{X^r_{\mu,\mathrm{bo}}}.
	 		\end{equation}
	 	\end{itemize}

	 \end{prop}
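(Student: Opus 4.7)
My plan is to establish the five points in the order stated. First, for the equivalence estimates \eqref{equiv. full dispersion} and \eqref{equiv 2}, the diagonal structure of $\mathcal{Q}$ reduces the task to two symbol bounds. The $(1,1)$-entry $1-\mathrm{bo}^{-1}\partial_x^2$ produces the $H^{s+1}_{\mathrm{bo}}$ norm of $\Lambda^s\zeta$ directly by Plancherel. The Fourier symbol of $\mathrm{t}^{-1}(\mathrm{D})$ is $\frac{(1+\gamma\tanh(\sqrt{\mu}|\xi|))\sqrt{\mu}|\xi|}{\tanh(\sqrt{\mu}|\xi|)}$, which one checks is bounded above and below by universal positive multiples of $1+\sqrt{\mu}|\xi|$ uniformly in $\mu,\gamma\in(0,1)$, yielding the equivalence with $|u|_{H^s}^2+\sqrt{\mu}||\mathrm{D}|^{1/2}u|_{H^s}^2$. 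The perturbation $\ve\zeta$ sitting in the $(2,2)$-entry of $\mathcal{Q}$ (respectively of $\mathcal{Q}(\mathbf{U}_1)$ for point 5) is controlled by $\ve|\zeta|_{L^\infty}\leq \ve C_{\mathrm{Sob}}|\zeta|_{H^{s+1}_{\mathrm{bo}}}$, which may be taken smaller than $1/2$ on bounded balls of $X^s_{\mu,\mathrm{bo}}(\R)$; combined with non-cavitation this proves both equivalences.

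For the a priori estimate \eqref{Energy full dispersion} I would apply $\Lambda^s$ to \eqref{Eq 1 M} and test against $\mathcal{Q}(\mathbf{U})\Lambda^s\mathbf{U}$. The symmetrizer is tailored so that the order-one off-diagonal contributions cancel: since $\mathrm{t}^{-1}(\mathrm{D})\mathrm{k}(\mathrm{D}) = 1-\mathrm{bo}^{-1}\partial_x^2$, an integration by parts gives
\begin{equation*}
  -\big(\mathrm{t}^{-1}(\mathrm{D})\Lambda^s u,\, \Lambda^s \mathrm{k}(\mathrm{D})\partial_x\zeta\big)_{L^2} = \big((1-\mathrm{bo}^{-1}\partial_x^2)\Lambda^s\zeta,\, \Lambda^s\partial_x u\big)_{L^2},
\end{equation*}
cancelling the analogous contribution coming from the $\zeta$-equation. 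The $\mathcal{O}(\ve)$ off-diagonal terms $-\ve(\mathrm{k}(\mathrm{D})\Lambda^s\zeta,\Lambda^s\partial_x(\zeta u))_{L^2}-\ve(\zeta\Lambda^s u,\mathrm{k}(\mathrm{D})\Lambda^s\partial_x\zeta)_{L^2}$ are handled by commuting $\zeta$ past both $\Lambda^s$ and $\mathrm{k}(\mathrm{D})$: the non-commutator parts combine into a perfect derivative that integrates to zero, and the remainders are controlled by Kato--Ponce commutator estimates, which are admissible since $s>3/2$. The diagonal $\ve$-contributions coming from the $u$-equation nonlinearity, together with the $\partial_t\mathcal{Q}$ term $\ve(\partial_t\zeta)|\Lambda^s u|^2$ in which $\partial_t\zeta$ is replaced via \eqref{Eq 1 M}, reduce to bounds of the form $\ve K(s) E^s_{\text{\tiny wBs}}(\mathbf{U})$.

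The difference estimates \eqref{Energy 1} and \eqref{Energy 2} follow the same scheme applied to the linearised system \eqref{lin W} with symmetrizer $\mathcal{Q}(\mathbf{U}_1)$: the cancellation of the principal part is identical, and yields the contribution $\ve\tilde K(s)\tilde E^s_{\text{\tiny wBs}}(\mathbf{W})$ exactly as above. The new feature is the source term $\mathbf{F}=-(\mathcal{M}(\mathbf{U}_1)-\mathcal{M}(\mathbf{U}_2))\mathbf{U}_2$, whose components are of the form $\ve\mathrm{t}(\mathrm{D})\partial_x(\eta u_2)$ and $\ve\mathrm{t}(\mathrm{D})\partial_x(w u_2)$, hence already $\mathcal{O}(\ve)$. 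Pairing $\mathbf{F}$ with $\mathcal{Q}(\mathbf{U}_1)\Lambda^s\mathbf{W}$ and integrating by parts so as to move one derivative onto $\mathbf{U}_2$ yields the bound $\ve\tilde K(s+1)\tilde E^s$, which explains the one-derivative loss in \eqref{Energy 2}; at $s=0$ no derivative has to be placed on $\mathbf{W}$, so \eqref{Energy 1} holds without loss.

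The main technical obstacle is the uniform (in $\mathrm{bo}^{-1},\mu\in(0,1)$) control of commutators involving $\mathrm{k}(\mathrm{D})$, whose symbol grows like $\mathrm{bo}^{-1}|\xi|^2$ in the regime $\sqrt{\mu}|\xi|\lesssim 1$ and like $\mathrm{bo}^{-1}|\xi|/\sqrt{\mu}$ for $\sqrt{\mu}|\xi|\gtrsim 1$. The saving is that the prefactor $\mathrm{bo}^{-1}$ allows the top-order part of $[\mathrm{k}(\mathrm{D}),\zeta]$ to be absorbed by the $\mathrm{bo}^{-1}|\partial_x\cdot|_{H^s}^2$ piece of the energy, provided one applies Coifman--Meyer (or paradifferential) calculus carefully. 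Once this is in place, the well-posedness statement in Theorem \ref{W-P System BO} follows by a standard Friedrichs mollification/Picard scheme applied to \eqref{Eq 1 M}, combined with the uniform energy bound \eqref{Energy full dispersion} and the uniqueness/continuity estimates \eqref{Energy 1}--\eqref{Energy 2}.
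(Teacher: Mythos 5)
Your overall scheme is the one the paper follows (self-adjoint symmetrizer, cancellation of the principal cross terms through $\mathrm{t}^{-1}(\mathrm{D})\mathrm{k}(\mathrm{D})=1-\mathrm{bo}^{-1}\partial_x^2$, commutator control of the $\mathcal{O}(\ve)$ terms, and the same explanation of the one-derivative loss in the difference estimate), but two of your steps do not hold up as written. First, the lower bound in \eqref{equiv. full dispersion}: you control the perturbation $\ve\zeta$ in the $(2,2)$-entry by claiming $\ve|\zeta|_{L^\infty}$ ``may be taken smaller than $1/2$ on bounded balls of $X^s_{\mu,\mathrm{bo}}$''. This is not available: $\ve\in(0,1)$ and the size of the solution are unrelated, and the statement must hold for, say, $\ve$ close to $1$ and $|\zeta|_{L^\infty}$ of order one, where only the non-cavitation condition \eqref{non-cavitation} prevents $\mathrm{t}^{-1}(\mathrm{D})+\ve\zeta$ from losing positivity. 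Moreover, your ``universal multiple of $1+\sqrt{\mu}|\xi|$'' lower bound on the symbol of $\mathrm{t}^{-1}(\mathrm{D})$ (with a constant like $1/2$) is not enough to combine with non-cavitation when $h_{\min}<1/2$. What the paper uses is the sharper splitting \eqref{coercivity}: the symbol is bounded below by $(1-\tfrac{h_0}{2})+\tfrac{h_0}{2}\sqrt{\mu}|\xi|$, so the constant part stays close to $1$ and, together with $1+\ve\zeta\geq h_{\min}$, yields coercivity of $\big(\Lambda^s u,(\mathrm{t}^{-1}(\mathrm{D})+\ve\zeta)\Lambda^s u\big)_{L^2}$ with no smallness assumption. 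Your argument is fixable, but the decisive input is this refined symbol bound plus non-cavitation, not smallness of $\ve\zeta$.

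Second, for \eqref{Energy full dispersion} the delicate point is exactly where you become vague: commutators of $\zeta$ (or $u$) with the full operator $\mathrm{k}(\mathrm{D})$, uniformly in $\mu,\mathrm{bo}^{-1}\in(0,1)$. Saying that the prefactor $\mathrm{bo}^{-1}$ lets the top-order part of $[\mathrm{k}(\mathrm{D}),\zeta]$ be absorbed by the $\mathrm{bo}^{-1}|\partial_x\cdot|_{H^s}^2$ part of the energy does not close by itself: the energy only controls $\mathrm{bo}^{-1/2}$ of one extra derivative of $\zeta$ (and $\mu^{1/4}$ of half a derivative of $u$), while $[\mathrm{k}(\mathrm{D}),\zeta]$ still carries an uncompensated power of $\mathrm{bo}^{-1}$ against terms like $\mathrm{k}(\mathrm{D})\Lambda^s\zeta$. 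The paper avoids commuting with $\mathrm{k}(\mathrm{D})$ altogether: it writes $\mathrm{k}=\sqrt{\mathrm{k}}(\mathrm{D})\sqrt{\mathrm{k}}(\mathrm{D})$ (and similarly $\mathrm{t}=\sqrt{\mathrm{t}}\sqrt{\mathrm{t}}$), distributes one square root on each side of the pairing, and then only ever needs the specific uniform estimates $|\sqrt{\mathrm{k}}(\mathrm{D})f|_{H^s}\lesssim|f|_{H^{s+1}_{\mathrm{bo}}}$, $|[\Lambda^s\sqrt{\mathrm{k}}(\mathrm{D}),f]\partial_x g|_{L^2}\lesssim|f|_{H^{s+1}_{\mathrm{bo}}}|g|_{H^{s+1}_{\mathrm{bo}}}$, $|[\Lambda^s\sqrt{\mathrm{t}}(\mathrm{D}),f]\partial_x g|_{L^2}\lesssim|f|_{H^s}|g|_{H^s}$ and $|\partial_x[\sqrt{\mathrm{t}}(\mathrm{D}),f]g|_{L^2}\lesssim|f|_{H^{t_0+1}}|g|_{L^2}$, which are proved in Propositions \ref{prop B 2} and \ref{prop B 5}; with these in hand the algebra is taken over line by line from the earlier work on weakly dispersive systems. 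So your plan needs either these square-root commutator estimates (and the corresponding rearrangement of the quadratic form) or an honest paradifferential argument with explicit tracking of $\mu$ and $\mathrm{bo}$; as stated, the uniformity claim is asserted rather than proved. The remaining items (cancellation of the principal part, the $\partial_t\mathcal{Q}$ term of size $\ve\partial_t\zeta$ handled through the equation, and points 3--5 by the same scheme with the source $\mathbf{F}$ responsible for the loss recorded in \eqref{Energy 2}) are in line with the paper.
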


	 \begin{remark}\label{On loss of der}
	 	In estimate \eqref{Energy 2}, we observe that there is a loss of derivative. This is not sufficient for the proof of the continuous dependence with respect to initial data. In fact, one would have to refine this estimate using a Bona-Smith argument \cite{BonaSmith75}. Since the system is almost identical to one in \cite{Paulsen22} we only detail the main differences. With this in mind, we simply allow for this rough estimate and use it to deduce the convergence estimate when comparing its solution with the ones of the internal water waves system.
	 \end{remark}
	 
	 \iffalse 
	 \begin{remark}\label{Remark est diff wBo}
	 	For \eqref{WP wBo} we can deduce similar estimates. In fact, a solution $\zeta \in H^s(\R)$ for $s>\frac{3}{2}$ would satisfy the energy estimate 
	 	%
	 	%
	 	%
	 	\begin{equation*}
	 		\frac{1}{2} \frac{\mathrm{d}}{\mathrm{d} t} |\zeta_+|_{H^s}^2 \leq \ve C |\partial_x \zeta_+|_{s-1}|\zeta_+|_{H^s}^2,
	 	\end{equation*}
 		%
 		%
 		%
 		for some $C>0$. Moreover, if $\eta_+ = (\zeta_{+,1} - \zeta_{+,2})\in H^s(\R)$ are two solutions to \eqref{WP wBo}, then we have classically that
 		%
 		%
 		%
 		\begin{equation*}
 			\frac{1}{2} \frac{\mathrm{d}}{\mathrm{d} t} |\eta_+|_{H^s}^2 \leq \ve C (|\partial_x \zeta_{+,1}|_{s-1}+|\partial_x\zeta_{+,2}|_{H^{s-1}})|\eta_+|_{H^s}^2.
 		\end{equation*}
	 	
	 \end{remark}
 	\fi
	 
	 \begin{proof}[Proof of Proposition \ref{Energy fully disp BO}] The proof of point $3-5$ is similar to the proof of the first two points. We will therefore only prove \eqref{Energy full dispersion}  and \eqref{equiv. full dispersion}.

	 	We first prove estimate \eqref{equiv. full dispersion}.  By definition, we have  that
	 	\begin{align*}
	 		E^s_{\text{\tiny wBs}}(\mathbf{U}) =|\Lambda^s \zeta |_{L^2}^2 + \mathrm{bo}^{-1}|\Lambda^s \partial_x \zeta|^2_{L^2} 
	 		+
	 		 \big( \Lambda^s  u,(\mathrm{t}^{-1}(\mathrm{D}) +\ve \zeta)  \Lambda^s u \big)_{L^2}.
	 	\end{align*}
	 	Then, as a result of the non-cavitation condition \eqref{non-cavitation} and \eqref{coercivity} we find that
	 	\begin{align*}
			\big( \Lambda^s  u,(\mathrm{t}^{-1}(\mathrm{D}) +\ve \zeta) \Lambda^s  u \big)_{L^2} \geq \frac{h_0}{2}|u|_{H^s}^2 + C\sqrt{\mu} |u|_{\mathring{H}^{s+\frac{1}{2}}}^2,
	 	\end{align*}
	 	for any $h_0\in(0,1-h_{\min})$ and some $C>0$. The reverse inequality is a consequence of \eqref{coercivity},  Hölder's inequality, and the Sobolev embedding with $s>\frac{1}{2}$:
	 	\begin{align*}
	 		E^s_{\text{\tiny wBs}}(\mathbf{U}) 
	 		& \leq
	 		|\Lambda^s \zeta |_{L^2}^2 
	 		+
	 		\mathrm{bo}^{-1}|\Lambda^s \partial_x \zeta|^2_{L^2}  
	 		+
	 		|\Lambda^s \mathrm{t}^{-\frac{1}{2}}(\mathrm{D}) u|_{H^s}^2 
	 		+
	 		\ve |\zeta|_{L^{\infty}}|\Lambda^su|_{H^s}^2
	 		\\  
	 		& \leq C (|\zeta|_{H^{s+1}_{\mathrm{bo}}}^2 + |\Lambda^su|_{H^s}^2 +\sqrt{\mu} |u|_{\mathring{H}^{s+\frac{1}{2}}}^2 ).
	 	\end{align*}

	 	Next, we prove \eqref{Energy full dispersion}. By using \eqref{Eq 1 M} and the fact that $\mathcal{Q}(\mathbf{U})$ is self-adjoint, we compute
	 	\begin{align*}
	 		\frac{1}{2}\frac{\mathrm{d}}{\mathrm{d}t} E^s_{\text{\tiny wBs}}(\mathbf{U})
	 		& =
	 		-
	 		\big(\Lambda^s \mathcal{M}(\mathbf{U}) \mathbf{U} , \mathcal{Q}(\mathbf{U}) \Lambda^s\mathbf{U}\big)_{L^2}
	 		+
	 		\frac{1}{2}\big( \Lambda^s \mathbf{U}, ( \partial_t \mathcal{Q}(\mathbf{U})) \Lambda^s \mathbf{U}\big)_{L^2}
	 		\\
	 		& = 
	 		-
	 		\ve \big(\Lambda^s (u \partial_x \zeta) , \mathrm{k}(\mathrm{D}) \Lambda^s\zeta \big)_{L^2}
	 		-
	 		\big(\Lambda^s(\partial_x u + \ve \mathrm{t}(\mathrm{D})(\zeta \partial_x u))  , (1-\mathrm{bo}^{-1} \partial_x^2) \Lambda^s\zeta\big)_{L^2}
	 		\\
	 		&
	 		\hspace{0.5cm}
	 		-
	 		\big(\Lambda^s \mathrm{k}(\mathrm{D}) \partial_x \zeta , (\mathrm{t}^{-1}(\mathrm{D}) + \varepsilon \zeta ) \Lambda^s u\big)_{L^2}
	 		 \\
	 		 & \hspace{0.5cm}
	 		 -
	 		 \ve \big(\Lambda^s \mathrm{t}(\mathrm{D})(u\partial_x u) , (\mathrm{t}^{-1}(\mathrm{D}) + \varepsilon \zeta ) \Lambda^s u\big)_{L^2}
	 		  +
	 		  \frac{1}{2}	\big( \Lambda^s u,  (\partial_t \zeta ) \Lambda^s  u \big)_{L^2}. 
	 	\end{align*} 
	 	We now note that the terms are similar to the ones in the proof of Proposition 3.4. in \cite{Paulsen22}.  The only difference is the presence of the zero order differential operator $\sqrt{\mathrm{I}}(\mathrm{D})$ in the definition of $\sqrt{\mathrm{k}}(\mathrm{D})$ and $\sqrt{\mathrm{t}}(\mathrm{D})$. However, the final estimates still remain the same. In particular, the proof of  Proposition 3.4 in \cite{Paulsen22} relies on
	 	\begin{align*}
	 		|\sqrt{\mathrm{k}}(\mathrm{D})f|_{H^s}   & \leq C|f|_{H^{s+1}_{\mathrm{bo}}},
	 		\\ 
	 		|\sqrt{\mathrm{t}}(\mathrm{D})f|_{H^s}   & \leq C |f|_{H^s},
	 		\\
	 		|[\Lambda^s \sqrt{\mathrm{k}}(\mathrm{D}),  f]\partial_x g|_{L^2} & \leq C |f|_{H^{s+1}_{\mathrm{bo}}} |g|_{H^{s+1}_{\mathrm{bo}}},
	 		\\
	 		|[ \Lambda^s \sqrt{\mathrm{t}}(\mathrm{D}),  f]\partial_x g|_{L^2} & \leq C |f|_{H^s} |g|_{H^s},
	 		\\
	 		|\partial_x [\sqrt{\mathrm{t}}(\mathrm{D}),  f]g|_{L^2}& \leq C |f|_{H^{t_0+1}} |g|_{L^2},
	 	\end{align*}
 		which are provided in Propositions \ref{prop B 2} and \ref{prop B 5} in the Appendix. From these estimates one can follow \cite{Paulsen22} line by line to deduce the final result.

	 \end{proof}

	 \begin{proof}[Proof of Theorem \ref{W-P System BO}]
	 	The proof is an application of the energy method where we use the estimates in Proposition \ref{Energy fully disp BO} and Remark \ref{On loss of der}. We refer the reader to \cite{Paulsen22}, Theorem 1.11. for a similar proof. 
	 \end{proof}

	\section{Proof of Theorem \ref{Thm: Consistency}}\label{Sec Consistency}
	For the derivation of \eqref{Bo equation}, we follow the strategy given in \cite{Bona_Saut_Lannes_08}, where we formulated \eqref{IWW} in terms of the $(\zeta, v)$ solving system \eqref{IWW - v}.	The convenience of this formulation is apparent since we can use the shallow water expansion of $\mathcal{G}_{\mu}^+$ derived by Emerald in \cite{Emerald21}:%\footnote{The classical expansion of $\mathcal{G}_{\mu}^+$ is provided by Proposition $3.8$ in \cite{Alvarez-SamaniegoLannes08a}:
		%
		%
		% 
%		\begin{equation}\label{classical exp G+}
	%		|\mathcal{G}_{\mu}^+[\ve \zeta]\psi^+ - \partial_x\big{(} -\mu(1+\ve \zeta)\partial_x \psi^+\big{)}|_{H^s}\leq \mu^2 M(s+3)|\partial_x \psi^+|_{H^{s+3}}.
%	\end{equation}}:
	%
	%
	%
	\begin{prop}Let $t_0\geq 1$, $s \geq 0$, and $\zeta \in H^{s + 3}(\mathbb{R})$ such that it satisfies \eqref{non-cavitation}. Then for all  $ \psi^+ \in  \dot{H}^{s + 4}(\mathbb{R})$ there holds,
		\begin{equation}\label{exp G+}
			|\mathcal{G}_{\mu}^+[\ve \zeta]\psi^+ - \partial_x\big{(} -\mu(1+\ve \zeta)\mathrm{T}(\mathrm{D})\partial_x \psi^+\big{)}|_{H^s}\leq \mu^2\ve  M(s+3)|\partial_x \psi^+|_{H^{s+3}},
		\end{equation}
		where $\mathrm{T}(\mathrm{D}) = \frac{\tanh(\sqrt{\mu} |\mathrm{D}|)}{\sqrt{\mu}|\mathrm{D}|}$.
	\end{prop}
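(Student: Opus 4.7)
The strategy is to construct an explicit approximate solution $\phi^+_{\mathrm{app}}$ of the elliptic problem defining $\mathcal{G}^+_{\mu}[\ve\zeta]\psi^+$, whose normal derivative at $z=0$ in the curved metric reproduces the claimed leading-order expression up to an error of size $\mu^2\ve$, and then control the correction $u=\phi^+-\phi^+_{\mathrm{app}}$ by the variational framework developed in Section \ref{Properties of G}. Following the approach of \cite{Emerald21}, I would take as starting ansatz the flat-bottom harmonic extension
\[
\phi^+_{\mathrm{app}}(x,z)\;=\;\frac{\cosh\bigl((z+1)\sqrt{\mu}|\mathrm{D}|\bigr)}{\cosh\bigl(\sqrt{\mu}|\mathrm{D}|\bigr)}\,\psi^+(x),
\]
which satisfies $(\mu\partial_x^2+\partial_z^2)\phi^+_{\mathrm{app}}=0$ in $\mathcal{S}^+$, the Dirichlet condition $\phi^+_{\mathrm{app}}|_{z=0}=\psi^+$, the no-flux condition $\partial_z\phi^+_{\mathrm{app}}|_{z=-1}=0$, and obeys $\partial_z\phi^+_{\mathrm{app}}|_{z=0}=\mathcal{G}^+_{\mu}[0]\psi^+$. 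This already recovers the $\zeta = 0$ part of the claimed expansion exactly, so the overall error is a priori of size $\ve$ only and must be refined to $\mu^2\ve$.

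The plan then has three steps. First, I would compute $\partial_n^{P^+}\phi^+_{\mathrm{app}}|_{z=0}$ in the curved metric using the explicit form of $P(\Sigma^+)$ from Definition \ref{diffeomorphism +-}, rearranging it around the target expression $-\mu\partial_x((1+\ve\zeta)\mathrm{T}(\mathrm{D})\partial_x\psi^+)$. If the direct computation leaves a residual of the wrong order, one adds an $\ve$-linear correction to $\phi^+_{\mathrm{app}}$ that vanishes at $z=0$ (so as to preserve the Dirichlet trace) and is tailored to convert the boundary output into the desired form; the remainders are bounded in $H^s$ by $\mu^2\ve M(s+3)|\partial_x\psi^+|_{H^{s+3}}$ via the Fourier-multiplier identity $\mathrm{T}(\mathrm{D})-1 = O(\mu|\mathrm{D}|^2)$ together with commutator estimates of the type $|[\mathrm{T}(\mathrm{D}),\zeta]\partial_x f|_{H^s}\lesssim \mu|\zeta|_{H^{s+2}}|\partial_x f|_{H^{s+1}}$. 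Second, I would evaluate the interior residual $r_{\mathrm{int}} = \nabla_{x,z}^{\mu}\cdot P(\Sigma^+)\nabla_{x,z}^{\mu}\phi^+_{\mathrm{app}}$ and the bottom residual at $z=-1$; both vanish for $\zeta = 0$, and the flat-bottom Laplace identity $\mu\partial_x^2\phi^+_{\mathrm{app}}=-\partial_z^2\phi^+_{\mathrm{app}}$ exposes a cancellation between the diagonal entries $P_{11}-1$ and $P_{22}-1$ of $P(\Sigma^+)-I$, yielding $\|\Lambda^s r_{\mathrm{int}}\|_{L^2(\mathcal{S}^+)}\lesssim\mu^{3/2}\ve M(s+3)|\partial_x\psi^+|_{H^{s+3}}$.

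Third, the function $u=\phi^+-\phi^+_{\mathrm{app}}$ solves a linear elliptic problem with zero top Dirichlet data, small interior source, and small Neumann data at $z=-1$. The variational argument of Step~2 of the proof of Proposition \ref{Inverse 2}, transposed to the bounded strip $\mathcal{S}^+$ (where the relevant trace estimate is elementary and costs only $\mu^{1/2}$), delivers $\|\Lambda^s\nabla^{\mu}_{x,z}u\|_{L^2(\mathcal{S}^+)}\leq\mu^{3/2}\ve M(s+3)|\partial_x\psi^+|_{H^{s+3}}$, and the trace inequality then yields $|\partial_n^{P^+}u|_{z=0}|_{H^s}\leq\mu^2\ve M(s+3)|\partial_x\psi^+|_{H^{s+3}}$. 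Combining this with the boundary analysis of the first step completes the proof.

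The main obstacle is the first step. A direct substitution into the curved-metric expression for $\partial_n^{P^+}\phi^+_{\mathrm{app}}|_{z=0}$ produces an $\ve$-linear discrepancy of size only $\ve\mu$, involving $\zeta\partial_x\mathrm{T}(\mathrm{D})\partial_x\psi^+$ with the wrong coefficient. Recovering the extra power of $\mu$ — the gain characteristic of the fully dispersive expansion over the classical shallow-water one — requires either a judiciously chosen $\ve$-linear correction to the ansatz, as in Emerald's construction, or an alternative Taylor-in-$\ve$ argument using the shape derivative formula for $\mathcal{G}^+_\mu$ combined with $\mathrm{T}(\mathrm{D})-1 = O(\mu|\mathrm{D}|^2)$ at every step where $\partial_x(\zeta\mathrm{T}(\mathrm{D})\partial_x\psi^+)$ must be compared to its shallow-water analogue. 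In either case, the regularity loss $s+3$ is natural: two derivatives are consumed by $\partial_x^2\phi^+_{\mathrm{app}}$ in the interior residual, and one more by the trace.
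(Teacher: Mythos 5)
First, note that the paper does not actually prove this proposition: it is quoted as the shallow-water expansion of $\mathcal{G}^+_\mu$ established by Emerald in \cite{Emerald21}, so you are supplying a proof where the paper only cites one. Your proof as written, however, has a genuine gap, and it sits exactly where you locate "the main obstacle." The cancellation you invoke in Step 2 does not occur. With the flat ansatz one has $r_{\mathrm{int}}=\nabla^{\mu}_{x,z}\cdot(P(\Sigma^+)-I)\nabla^{\mu}_{x,z}\phi^+_{\mathrm{app}}$, and to first order in $\ve$ the diagonal entries are $(P-I)_{11}\approx\ve\zeta$ and $(P-I)_{22}\approx-\ve\zeta$; using $\partial_z^2\phi^+_{\mathrm{app}}=-\mu\partial_x^2\phi^+_{\mathrm{app}}$ their two contributions are \emph{each} equal to $+\ve\mu\zeta\partial_x^2\phi^+_{\mathrm{app}}$, so they add to $2\ve\mu\zeta\partial_x^2\phi^+_{\mathrm{app}}$ rather than cancel (the genuine cancellation is between the $(\partial_x\zeta)\partial_x\phi^+_{\mathrm{app}}$ parts of the $P_{11}$ term and the off-diagonal terms). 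Hence $\|\Lambda^s r_{\mathrm{int}}\|_{L^2(\mathcal{S}^+)}$ is of size $\ve\mu$, not $\ve\mu^{3/2}$. Step 3 is then not only unproved but impossible for the uncorrected ansatz: comparing the $\ve$-linear parts at $z=0$ (or using the shape derivative formula \eqref{shape derivative G+}), the correction must satisfy $\partial_n^{P^+}u|_{z=0}=-2\ve\mu\,\zeta\,\mathrm{T}(\mathrm{D})\partial_x^2\psi^+ +O(\ve\mu^2)$, i.e.\ its Neumann trace is genuinely of size $\ve\mu$; an estimate of size $\ve\mu^2$ would contradict \eqref{exp G+} itself. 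The entire content of the result is precisely the construction of the $\ve$-linear corrector (vanishing at $z=0$, compatible with the bottom condition) that converts this $O(\ve\mu)$ contribution into the coefficient $(1+\ve\zeta)$ of the target; you acknowledge that such a corrector is needed "as in Emerald's construction," but you neither construct it nor estimate its residuals, so the proof is not complete.

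The fallback you mention in passing does work and is the cleanest repair within this paper's toolbox: Taylor expand in the surface variable, writing $\mathcal{G}^+_\mu[\ve\zeta]\psi^+-\mathcal{G}^+_\mu[0]\psi^+$ as an integral of $\mathrm{d}_{\zeta}\mathcal{G}^+_\mu[t\ve\zeta](\zeta)\psi^+$ and using \eqref{shape derivative G+}. The term $-\ve\,\mathcal{G}^+_\mu[t\ve\zeta](\zeta\,\underline{w}^+_t)$ is $O(\ve\mu^2)$ by \eqref{Est G+ mu} combined with \eqref{w pm mu} (the key gain: $\underline{w}^+_t$ is itself $O(\mu)$), while $-\ve\mu\partial_x(\zeta\,\underline{V}^+_t)=-\ve\mu\partial_x(\zeta\partial_x\psi^+)+O(\ve^2\mu^2)$; finally $\mathcal{G}^+_\mu[0]\psi^+=-\mu\partial_x\mathrm{T}(\mathrm{D})\partial_x\psi^+$ exactly, and replacing $\partial_x\psi^+$ by $\mathrm{T}(\mathrm{D})\partial_x\psi^+$ in the $\ve$-term costs $O(\ve\mu^2)$ since $\mathrm{T}(\mathrm{D})-1=O(\mu\partial_x^2)$ by \eqref{est T}. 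This yields \eqref{exp G+} with the stated regularity $|\partial_x\psi^+|_{H^{s+3}}$ and avoids the corrector construction altogether; as it stands, your main argument does not.
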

	Finally, the last ingredient before we turn to the proof of Theorem \ref{Thm: Consistency} is an expansion of the interface operator defined by \eqref{interface op}.
	\begin{prop}\label{H: Infinite depth/BO regime} Let $t_0\geq 1$, $s \geq 0$, and $\zeta \in H^{s + 3}(\mathbb{R})$ such that it satisfies \eqref{non-cavitation}. Then for all  $ \psi^+ \in  \dot{H}^{s + 4}(\mathbb{R})$ there holds,
		\begin{align}\label{exp H}
			\Big| \mathbf{H}_{\mu}[\ve \zeta]\psi^+- \big{(}-  \tanh(\sqrt{\mu} |\mathrm{D}|)\partial_x\psi^+\big{)}  \Big|_{H^{s}} 
			\leq
			\ve 
			\sqrt{\mu} M(s+3)|\partial_x \psi^+|_{H^{s+3}}.
		\end{align}

	\end{prop}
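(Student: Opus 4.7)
The plan is to combine the shallow-water expansion \eqref{exp G+} of $\mathcal{G}^+_{\mu}[\ve\zeta]$ with the symbolic identity \eqref{est G- inv dx pdo} for $(\mathcal{G}^-_{\mu}[\ve\zeta])^{-1}\partial_x$ in order to reduce $\partial_x\psi^- = \partial_x(\mathcal{G}^-_{\mu}[\ve\zeta])^{-1}\mathcal{G}^+_{\mu}[\ve\zeta]\psi^+$ (using the identity $\psi^- = (\mathcal{G}^-_{\mu}[\ve\zeta])^{-1}\mathcal{G}^+_{\mu}[\ve\zeta]\psi^+$ coming from \eqref{For exp phi-}) to an explicit Fourier multiplier at the flat configuration $\zeta=0$.

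First, I would apply \eqref{exp G+} at level $H^{s+1/2}$ to write
\begin{equation*}
\mathcal{G}^+_{\mu}[\ve\zeta]\psi^+ = -\mu\,\partial_x\!\big((1+\ve\zeta)\mathrm{T}(\mathrm{D})\partial_x\psi^+\big) + r_1,
\end{equation*}
with $|r_1|_{H^{s+1/2}} \leq \mu^2\ve M|\partial_x\psi^+|_{H^{s+7/2}}$. Applying $(\mathcal{G}^-_{\mu}[\ve\zeta])^{-1}$ and using \eqref{est G- inv dx pdo} on $f = -\mu(1+\ve\zeta)\mathrm{T}(\mathrm{D})\partial_x\psi^+$ produces
\begin{equation*}
\psi^- = -\sqrt{\mu}\,\mathcal{H}\!\big[(1+\ve\zeta)\mathrm{T}(\mathrm{D})\partial_x\psi^+\big] + \rho_1 + \rho_2,
\end{equation*}
where $\rho_1$ is the residual from \eqref{est G- inv dx pdo} and $\rho_2=(\mathcal{G}^-_{\mu}[\ve\zeta])^{-1}r_1$. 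Since $\mathcal{H}\partial_x=|\mathrm{D}|$ and $\mathrm{T}(\mathrm{D})=\tanh(\sqrt{\mu}|\mathrm{D}|)/(\sqrt{\mu}|\mathrm{D}|)$, the flat part collapses to $-\tanh(\sqrt{\mu}|\mathrm{D}|)\psi^+$, leaving
\begin{equation*}
\psi^- + \tanh(\sqrt{\mu}|\mathrm{D}|)\psi^+ = -\ve\sqrt{\mu}\,\mathcal{H}\!\big(\zeta\,\mathrm{T}(\mathrm{D})\partial_x\psi^+\big) + \rho_1 + \rho_2.
\end{equation*}

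I then apply $\partial_x$ and estimate in $H^s$. The explicit first term is bounded by $\ve\sqrt{\mu}M|\partial_x\psi^+|_{H^{s+1}}$ using the $L^2$-boundedness of $\mathcal{H}$ and $\mathrm{T}(\mathrm{D})$ together with a tame product estimate controlled by $|\zeta|_{H^{s+2+t_0}}$ (absorbed into $M(s+3)$). For $\partial_x\rho_1$, choosing the index $s'=s+1$ in \eqref{est G- inv dx pdo} and using $|\partial_x g|_{H^s}\leq |g|_{\mathring{H}^{s+3/2}}$ gives an $\ve\mu^{7/4}M|\partial_x\psi^+|_{H^{s+1/2}}$ bound, which fits the target since $\mu^{7/4}\leq\sqrt{\mu}$.

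The main obstacle will be controlling $\partial_x\rho_2 = \partial_x(\mathcal{G}^-_{\mu}[\ve\zeta])^{-1}r_1$: the results on $(\mathcal{G}^-_{\mu})^{-1}$ available in the paper are formulated only in composition with $\mathcal{G}^+_{\mu}$ (Proposition \ref{Inverse 2}). I would overcome this either by (i) establishing a direct inverse estimate $(\mathcal{G}^-_{\mu}[\ve\zeta])^{-1}:H^{\sigma}\to\mathring{H}^{\sigma+1}$ with operator norm $\lesssim\mu^{-1/2}M$, repeating the variational/elliptic argument of Step 2 in the proof of Proposition \ref{Inverse 2} (with Neumann data $r_1$ instead of $\mathcal{G}^+_{\mu}\psi^+$), or (ii) inspecting the proof of \eqref{exp G+} in \cite{Emerald21} to rewrite $r_1=\partial_x\tilde r_1$, which permits a second application of \eqref{est G- inv dx pdo}. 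Either route converts the $\mu^2\ve$ size of $r_1$ into an $\mathcal{O}(\ve\sqrt{\mu})$ bound on $\partial_x\rho_2$ in $H^s$ at a cost of three derivatives on $\partial_x\psi^+$, producing the claimed estimate.
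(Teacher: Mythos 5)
Your reduction $\mathbf{H}_{\mu}[\ve\zeta]\psi^+=\partial_x\big((\mathcal{G}^-_{\mu}[\ve\zeta])^{-1}\mathcal{G}^+_{\mu}[\ve\zeta]\psi^+\big)$ and your treatment of the main term and of $\rho_1$ are sound (up to a harmless miscount: applying \eqref{est G- inv dx pdo} at level $s+1$ to $f=-\mu(1+\ve\zeta)\mathrm{T}(\mathrm{D})\partial_x\psi^+$ gives a residual of size $\ve\mu^{3/4}$, not $\ve\mu^{7/4}$, which is still $\le\ve\sqrt{\mu}$). The genuine gap is exactly the term you flag, $\partial_x\rho_2=\partial_x(\mathcal{G}^-_{\mu}[\ve\zeta])^{-1}r_1$, and neither of your proposed fixes closes it as written. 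Fix (i) is false: in infinite depth $(\mathcal{G}^-_{\mu}[\ve\zeta])^{-1}$ is not bounded from $H^{\sigma}(\R)$ into $\mathring{H}^{\sigma+1}(\R)$; already at $\zeta=0$ it equals $-(\sqrt{\mu}|\mathrm{D}|)^{-1}$ by \eqref{G-[0]}, and $|\mathrm{D}|^{-1/2}\Lambda^{\sigma+1/2}f\notin L^2$ for a generic $f\in H^{\sigma}$ with $\hat f(0)\neq 0$. Repeating the variational argument of Proposition \ref{Inverse 2} with Neumann datum $r_1$ does not work either: solvability in $\dot H^1(\mathcal{S}^-)$ requires the datum to act continuously on traces of $\dot H^1$ functions, i.e. to pair with $\mathring H^{1/2}$ via \eqref{trace 1}, which is precisely the low-frequency structure encoded in condition \eqref{Condition on A} of Corollary \ref{Cor inverse G-}; knowing only $|r_1|_{H^{s+1/2}}\le\ve\mu^2 M$ provides no such structure, so the smallness of $r_1$ cannot be transferred through $(\mathcal{G}^-_{\mu})^{-1}$ this way. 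Fix (ii) is plausible in principle, since $\tfrac1\mu\mathcal{G}^+_{\mu}[\ve\zeta]\psi^+=-\partial_x(h\bar V)$ exactly, so $r_1=\partial_x\tilde r_1$ with $\tilde r_1=-\mu\big(h\bar V-(1+\ve\zeta)\mathrm{T}(\mathrm{D})\partial_x\psi^+\big)$; but then you need an estimate on $\tilde r_1$ itself, i.e. on $\bar V-\mathrm{T}(\mathrm{D})\partial_x\psi^+$ without the outer divergence, which is neither stated in this paper nor quoted in your argument, and must be imported or reproved. Moreover, running \eqref{est G- inv dx pdo} at the level $s+1$ needed to absorb the final $\partial_x$, together with \eqref{exp G+} at level $s+1/2$, forces constants of the type $M(s+4)$ and a bound in terms of $|\partial_x\psi^+|_{H^{s+7/2}}$, i.e. more regularity on $\zeta$ and $\psi^+$ than the claimed $M(s+3)$ and $|\partial_x\psi^+|_{H^{s+3}}$.

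For contrast, the paper never inverts $\mathcal{G}^-_{\mu}$ on the remainder: it constructs the explicit approximate extension $\phi^-_{\mathrm{app}}=-\tanh(\sqrt{\mu}|\mathrm{D}|)e^{-z\sqrt{\mu}|\mathrm{D}|}\psi^+$ in the straightened half-plane, identifies the left-hand side of \eqref{exp H} with $\partial_x u_0$ for $u=\phi^--\phi^-_{\mathrm{app}}$, and closes a weighted coercivity estimate on $|\mathrm{D}|^{1/2}\Lambda^s\nabla^{\mu}_{x,z}u$, in which the expansion \eqref{exp G+} enters only through the boundary term and the low-frequency issue is absorbed by the trace inequality \eqref{trace 1}. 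If you wish to keep your boundary-operator composition strategy, route (i) should be abandoned, and route (ii) requires you to actually supply the averaged-velocity estimate for $\bar V-\mathrm{T}(\mathrm{D})\partial_x\psi^+$ and to accept (or repair) the attendant loss of regularity in the constants.
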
	
	\begin{remark}\label{Rmrk higher d} \color{white} space \color{black}
		\begin{itemize}
			\item The expansion of $\mathbf{H}_{\mu}[\ve \zeta]$ is a fully dispersive version of the infinite depth expansion mentioned in \cite{Bona_Saut_Lannes_08} (see Remark $20$ of this paper). \\
			
			\item This expansion can be extended to higher dimensions and may be a useful tool to derive other systems and equations from the internal water waves equations in the case one layer is of infinite depth. 
		\end{itemize}
		 % Here we make use of their method in finite depth to the current configuration. 
	\end{remark}
	\begin{proof}[Proof of Proposition \ref{H: Infinite depth/BO regime}] The proof is divided into three steps and relies on making an approximate solution of $\Phi^-$ solving \eqref{For exp phi-}.\\ 
		%$\phi^- = \Phi^-\circ \Sigma^-$
		
		\noindent 
		\underline{Step 1.} \textit{Approximate solution for \eqref{For exp phi-}}. First we make the transformation $\phi^- = \Phi^-\circ \Sigma^-$ defined on $\mathcal{S}^-$:
		\begin{equation*}
			\begin{cases}
				\nabla_{x,z}^{\mu} \cdot P(\Sigma^-)\nabla_{x,z}^{\mu} \phi^- = 0 \quad \text{in} \quad  \mathcal{S}^-
				\\
				\partial_n^{P^-} \phi^{-}|_{z=0} = \mathcal{G}^+_{\mu}[\ve \zeta]\psi^+ , \quad \lim \limits_{z\rightarrow \infty} |\nabla^{\mu}_{x,z} \phi^{-} | = 0,
			\end{cases}
		\end{equation*}
		where we recall from estimate \eqref{est on phi- in prop 2.4} in the proof of Proposition \ref{Inverse 2} that
		\begin{equation*}
			\| \Lambda^s \nabla_{x,z}^{\mu} \phi^- \|_{L^2(\mathcal{S}^-)} \leq \sqrt{\mu}M(s+2)|\psi^+|_{\dot{H}^{s+\frac{1}{2}}_{\mu}}.
		\end{equation*}
		To construct an approximation, we can verify that
		\begin{align*}
			\phi_{\mathrm{app}}^-  =- \tanh(\sqrt{\mu} |\mathrm{D}|)\mathrm{e}^{-z\sqrt{\mu}|\mathrm{D}|}\psi^+,
		\end{align*}	
		solves
		\begin{equation}\label{BO: phi_0}
			\begin{cases}
				(\mu\partial_x^2+\partial_z^2)\phi_{\mathrm{app}}^- = 0,\\
				\partial_z\phi_{\mathrm{app}}^-\:_{|_{ z = 0}} = -\mu\partial_x^2 \mathrm{T}(\mathrm{D})\psi^+,
			\end{cases}
		\end{equation}
		and satisfies the decay estimates \eqref{Decay est} by using Plancherel's identity. Defining the difference $u = \phi^- - \phi_{\mathrm{app}}^-$, then we have by construction that
		\begin{equation}\label{sol of u}
			\begin{cases}
				\nabla_{x,z}^{\mu} \cdot P(\Sigma^-)\nabla_{x,z}^{\mu} u = \ve \mu r_1 \quad \text{in} \quad  \mathcal{S}^-
				\\
				\partial_n^{P^-}u |_{z=0} = \mathcal{G}^+_{\mu}[\ve \zeta]\psi^+ + \mu \partial_x^2 \mathrm{T}(\mathrm{D}) \psi^+ + \ve \mu r_2, \quad \lim \limits_{z\rightarrow \infty} |\nabla^{\mu}_{x,z} u | = 0,
			\end{cases}
		\end{equation}
		where $r_1$ reads
		\begin{align*}
			 r_1 
			& = 
			-\frac{1}{\ve \mu}\nabla_{x,z}^{\mu} \cdot P(\Sigma^-)\nabla_{x,z}^{\mu} \phi_{\mathrm{app}}^-
			\\
			& =
			\partial_x\big{(}(\partial_x \zeta)  \partial_z \phi_{\mathrm{app}}^-\big{)} 
			+
			\partial_z\Big{(}(\partial_x \zeta)\partial_x \phi_{\mathrm{app}}^- 
			-
			\ve (\partial_x \zeta)^2
			\partial_z \phi_{\mathrm{app}}^-\Big{)},
		\end{align*}
		and the second rest is given by
		\begin{align*}
			r_2  
			& =
			\frac{1}{\ve \mu}
			\Big{(}\mathbf{e}_z \cdot P(\Sigma^-)\nabla_{x,z}^{\mu} \phi_{\mathrm{app}}^-  - \partial_z \phi_{\mathrm{app}}^-\Big{)}|_{z=0}
			\\
			& =
			 (\partial_x\zeta) \partial_x \tanh(\sqrt{\mu}|\mathrm{D}|)\psi^+ + \ve \sqrt{\mu} (\partial_x \zeta)^2 |\mathrm{D}| \tanh(\sqrt{\mu}|\mathrm{D}|) \psi^+.\\
		\end{align*}

		\noindent
		\underline{Step 2.} \textit{Estimate on $\mathrm{LHS}_{\eqref{exp H}}$}. By construction we have that 
		\begin{equation*}
			 \mathbf{H}_{\mu}[\ve \zeta]\psi^+- \big{(}-  \tanh(\sqrt{\mu} |\mathrm{D}|)\partial_x\psi^+\big{)}  = \partial_x u_0,
		\end{equation*}
		where we let $u_0 : = u|_{z=0}$. Then by trace estimate \eqref{trace 1} we obtain the bound
		\begin{align}\label{est on dx u0}
			|\partial_x u_0|_{H^{s}} 
			\leq
			||\mathrm{D}|^{\frac{1}{2}}\Lambda^{s} u_0|_{\mathring{H}^{\frac{1}{2}}}
			\leq  \mu^{-\frac{1}{4}}
			\| |\mathrm{D}|^{\frac{1}{2}}\Lambda^{s} \nabla_{x,z}^{\mu} u\|_{L^2(\mathcal{S}^-)}.
		\end{align}
		To conclude, we need an estimate on the right hand side of this inequality. \\

		\noindent
		\underline{Step 3.} \textit{Estimate on $|\mathrm{D}|^{\frac{1}{2}}\Lambda^{s} \nabla_{x,z}^{\mu} u$ in $L^2(\mathcal{S}^-)$.} We let $\mathrm{m}(\mathrm{D}) = |\mathrm{D}|^{\frac{1}{2}}\Lambda^{s}$ and observe by the coercivity of $P(\Sigma^-)$, given by \eqref{Coercivity-},  that
		\begin{align*}
			\frac{1}{1+M}\|\mathrm{m}(\mathrm{D}) \nabla_{x,z}^{\mu}u\|_{L^2(\mathcal{S}^-)}^2 
			& \leq 
			\int_{\mathcal{S}^-} P(\Sigma^-)\mathrm{m}(\mathrm{D})\nabla_{x,z}^{\mu}u \cdot \mathrm{m}(\mathrm{D}) \nabla_{x,z}^{\mu}u \: \mathrm{d}x \mathrm{dz}
			\\ 
			& =
			\int_{\mathcal{S}^-} \mathrm{m}(\mathrm{D}) P(\Sigma^-)\nabla_{x,z}^{\mu}u \cdot \mathrm{m}(\mathrm{D}) \nabla_{x,z}^{\mu}u \: \mathrm{d}x \mathrm{dz} 
			\\ 
			& 
			\hspace{0.5cm}
			+
			\int_{\mathcal{S}^-} [\mathrm{m}(\mathrm{D}), P(\Sigma^-)]\nabla_{x,z}^{\mu}u \cdot \mathrm{m}(\mathrm{D}) \nabla_{x,z}^{\mu}u \: \mathrm{d}x \mathrm{dz} 
			\\ 
			& = 
			I_1 + I_2.
		\end{align*}
		For the estimate of $I_1$, we use the Divergence Theorem together with \eqref{sol of u} to obtain the relation
		\begin{align*}
			I_1 
			& =
			\int_{\R} (\mathrm{m}(\mathrm{D})\partial_n^{P^-} u|_{z=0}) \mathrm{m}(\mathrm{D}) u_0 \: \mathrm{d}x
			- 
			\varepsilon \mu \int_{\mathcal{S}^-} \mathrm{m}(\mathrm{D}) r_1 \:  \mathrm{m}(\mathrm{D}) u \: \mathrm{d}x \mathrm{d}z
			\\ 
			& = 
			I_1^1 + I_1^2.
		\end{align*}
		To estimate $I_1^1$, we use Plancherel's identity, Cauchy-Schwarz inequality, trace estimate \eqref{trace 1},  estimate \eqref{exp G+} to approximate $\mathcal{G}^+_{\mu}$, and basic product estimates to treat $r_2$:\footnote{The term  $I_1^1$ gives the main restriction on the precision of the parameters and the regularity of $(\zeta,\psi^+)$.} 
		\begin{align*}
			|I_1^1| 
			&  =
			\Big{|} 
			\int (\Lambda^{s}\partial_n^{P^-} u|_{z=0})
			|\mathrm{D}|^{\frac{1}{2}}   
			\mathrm{m}(\mathrm{D}) u_0
			\: \mathrm{d}x
			\Big{|}
			\\ 
			& \leq 
			\big{(}| \mathcal{G}^+_{\mu}[\ve \zeta]\psi^+ + \mu \partial_x^2 \mathrm{T}(\mathrm{D})\psi^+|_{H^{s}} +  \ve\mu |r_2|_{H^{s}}\big{)}|\mathrm{m}(\mathrm{D}) u_0|_{\mathring{H}^{\frac{1}{2}}}
			\\ 
			& \leq 
			\ve \mu^{\frac{3}{4}} M(s+3)|\partial_x \psi^+|_{H^{s+3}} \|\mathrm{m}(\mathrm{D}) \nabla_{x,z}^{\mu}u\|_{L^2(\mathcal{S}^-)}.
		\end{align*}
		To estimate $I_1^2$, we use the definition of $r_1$, Plancherel's identity and the Hilbert transform identity $\partial_x = - \mathcal{H}|\mathrm{D}|$ to make the decomposition:
		\begin{align*}
			I_1^{2} 
			& =
			\varepsilon \mu
			\int_{\mathcal{S}^-} \Lambda^s\mathcal{H}|\mathrm{D}|^{\frac{1}{2}}\big{(}(\partial_x \zeta)  \partial_z \phi_{\mathrm{app}}^-\big{)} \: |\mathrm{D}|^1 \mathrm{m}(\mathrm{D}) u\: \mathrm{d}x \mathrm{d}z 
			\\ 
			& \hspace{0.5cm}
			-
			\varepsilon \mu
			\int_{\mathcal{S}^-}  \mathrm{m}(\mathrm{D}) 	\partial_z\Big{(}(\partial_x \zeta)\partial_x \phi_{\mathrm{app}}^- 
			-
			\ve (\partial_x \zeta)^2
			\partial_z \phi_{\mathrm{app}}^-\Big{)}\: \mathrm{m}(\mathrm{D}) u\: \mathrm{d}x \mathrm{d}z	
			\\	
			& = 
			I_1^{2,1} + I_1^{2,2}.	
		\end{align*}
		To estimate $I_1^{2,1}$, we use Cauchy-Schwarz inequality, the definition $\partial_z\phi_{\mathrm{app}}^-$ to gain $\sqrt{\mu}$ and the smoothing estimate:  
		\begin{align}\notag 
			\| \tanh(\sqrt{\mu}|\mathrm{D}|)|\mathrm{D}|e^{-z\sqrt{\mu}|\mathrm{D}|}\psi^+\|_{L^2(\mathcal{S}^-)}
			& \leq \notag 
			 \mu^{-\frac{1}{4}} ||\mathrm{D}|^{\frac{1}{2}}\tanh(\sqrt{\mu}|\mathrm{D}|)\psi^{+}|_{L^2}
			\\ 
			& \leq \mu^{\frac{1}{4}}\notag
			||\mathrm{D}|\frac{\tanh(\sqrt{\mu}|\mathrm{D}|)}{\sqrt{\mu}|\mathrm{D}|}\psi^{+}|_{H^{\frac{1}{2}}}
			\\
			& \leq \mu^{\frac{1}{4}} |\partial_x \psi^+|_{H^{\frac{1}{2}}}, \label{smoothing 2}
		\end{align}
		to deduce the bound
		\begin{align*}
			|I_2^1|
			\leq \varepsilon \mu  M(s+2) |\partial_x\psi^+ |_{H^{s+1}} \|\mathrm{m}(\mathrm{D})\nabla_{x,z}^{\mu}u\|_{L^2(\mathcal{S}^-)}.
		\end{align*}
		On the other hand, to estimate $I_1^{2,2}$ we also integrate by parts and the trace estimate \eqref{trace 1} to find that
		\begin{align*}
			|I_2^2| 
			& \leq 
			\varepsilon \mu 
			\Big{|}
			\int_{\R}
			\Lambda^s\Big{(}(\partial_x \zeta)\partial_x \phi_{\mathrm{app}}^- 
			-
			\ve (\partial_x \zeta)^2
			\partial_z \phi_{\mathrm{app}}^-\Big{)}|_{z=0} |\mathrm{D}|^{\frac{1}{2}} \mathrm{m}(\mathrm{D}) u_0\: \mathrm{d}x
			\Big{|}
			\\
			& 
			\hspace{0.5cm}
			+
			\varepsilon \mu 
			\Big{|}
			\int_{\mathcal{S}^-} 
			|\mathrm{D}|^{\frac{1}{2}}
			\Lambda^s\Big{(}(\partial_x \zeta)\partial_x \phi_{\mathrm{app}}^- 
			-
			\ve (\partial_x \zeta)^2
			\partial_z \phi_{\mathrm{app}}^-\Big{)} \mathrm{m}(\mathrm{D}) \partial_z u\: \mathrm{d}x
			\Big{|}
			\\ 
			& 
			\leq
			\varepsilon \mu^{\frac{3}{4}}
			M(s+2) |\partial_x\psi^+ |_{H^{s+1}} \|\mathrm{m}(\mathrm{D})\nabla_{x,z}^{\mu}u\|_{L^2(\mathcal{S}^-)}.
		\end{align*}
		Gathering these estimates yields the following bound on $I_1$:
		\begin{equation*}
			|I_1| \leq \varepsilon \mu^{\frac{3}{4}}
			M(s+3) |\partial_x\psi^+ |_{H^{s+1}} \|\mathrm{m}(\mathrm{D})\nabla_{x,z}^{\mu}u\|_{L^2(\mathcal{S}^-)}.
		\end{equation*}
		Lastly, to estimate $I_2$ we use Cauchy-Schwarz inequality,  \eqref{est on phi- in prop 2.4}, \eqref{smoothing 2},  and that the nonzero terms in the commutator $[\mathrm{m}(\mathrm{D}),P(\Sigma^-)]$ is of order $\varepsilon \sqrt{ \mu}$:
		\begin{align*}
			|I_2| 
			&  \leq
			\|[\mathrm{m}(\mathrm{D}), P(\Sigma^-)]\nabla_{x,z}^{\mu}u \|_{L^2(\mathcal{S}^-)} \| \mathrm{m}(\mathrm{D}) \nabla_{x,z}^{\mu}u \|_{L^2(\mathcal{S}^-)}
			\\
			& \leq 
			\varepsilon\sqrt{\mu} M(s+2) \big{(}
			\|\Lambda^{s+\frac{1}{2}} \nabla_{x,z}^{\mu}\phi^-\|_{L^2(\mathcal{S}^-)}
			+
			\|\Lambda^{s+\frac{1}{2}} \nabla_{x,z}^{\mu}\phi^-_{\mathrm{app}}\|_{L^2(\mathcal{S}^-)}
			\big{)}
			\| \mathrm{m}(\mathrm{D}) \nabla_{x,z}^{\mu}u \|_{L^2(\mathcal{S}^-)}
			\\
			& \leq 
			\varepsilon\mu 
			M(s+3)|\partial_x \psi^+|_{H^{s+2}}\| \mathrm{m}(\mathrm{D}) \nabla_{x,z}^{\mu}u \|_{L^2(\mathcal{S}^-)},
		\end{align*}
		which implies that
		\begin{align*}
			\||\mathrm{D}|^{\frac{1}{2}}\Lambda^s \nabla_{x,z}^{\mu}u\|_{L^2(\mathcal{S}^-)} \leq \varepsilon\mu^{\frac{3}{4}} M(s+3)|\partial_x \psi^+|_{H^{s+3}},
		\end{align*}
		and from estimate \eqref{est on dx u0} we make the conclusion:
		\begin{equation*}
			|\partial_x u_0|_{H^{s}}  \leq \varepsilon \sqrt{\mu} M(s+3)|\partial_x \psi^+|_{H^{s+3}}.
		\end{equation*}

	\end{proof}

	Having these expansions at hand, we can turn to the main proofs of the section.

	\begin{proof}[Proof of Theorem \ref{Thm: Consistency}] We first observe that we can have bound on $v = \partial_x \psi$ in terms of the intial data. Indeed, using the definition of $\psi_{(\alpha)}$ with \eqref{Classical prod est} and \eqref{W+ est low reg}  yields,
		\begin{align*}
			|\partial_x \psi |_{H^{N-\frac{1}{2}}} 
			 \leq 
			|\psi|_{\dot{H}_{\mu}^{N+\frac{1}{2}}}
			& \leq
			\sum \limits_{\alpha \in \N^2 \: : \: |\alpha|\leq N} |\psi_{(\alpha)}|_{\dot{H}_{\mu}^{\frac{1}{2}}} + \ve |\underline{w}\zeta_{(\alpha)}|_{\dot{H}_{\mu}^{\frac{1}{2}}} 
			\\ 
			& \leq 
			\sum \limits_{\alpha \in \N^2 \: : \: |\alpha|\leq N} |\psi_{(\alpha)}|_{\dot{H}_{\mu}^{\frac{1}{2}}} +  M |\psi|_{\dot{H}^{2}} |\zeta_{(\alpha)}|_{H^1_{ \mathrm{bo}}}
			\\ 
			& 
			\leq 
			 C( \mathcal{E}^N(\mathbf{U})).
		\end{align*}
		since $\varepsilon\leq\mathrm{bo}^{-\frac{1}{2}}$. Then Theorem \ref{Thm 1} with estimate \eqref{bound on sol E}  implies
		\begin{equation*}%\label{Est on Sol}
			\sup_{t\in [0,\ve^{-1}T] }\big{(}|\zeta|_{H^{N+1}_{\mathrm{bo}}}^2 + |v|_{H^{N-\frac{1}{2}}}^2\big{)} \leq  C(\mathcal{E}^N(\mathbf{U}_0), {h^{-1}_{\min}}, \mathfrak{d}(\mathbf{U}_0)^{-1}).
		\end{equation*}	
		With this estimate in mind, we now give the proof in two steps where we first derive \eqref{System Full disp BO} in the sense of consistency.  \\ 
		
		\noindent
		\underline{Step 1.} To derive \eqref{System Full disp B} we let $C : = C(\mathcal{E}^N(\mathbf{U}_0), {h^{-1}_{\min}}, \mathfrak{d}(\mathbf{U}_0)^{-1})$ and $R$ be some generic function satisfying
		\begin{equation}\label{est R}
			|R|_{H^{N-5}}\leq C.
		\end{equation}
		Then we can simplify \eqref{IWW - v}. In particular, for the second equation in \eqref{IWW}, we first estimate the nonlinear terms. For  $\mathcal{N}[\ve\zeta,\psi^{\pm}]$ we apply estimates \eqref{Classical prod est}, \eqref{exp G+}, and \eqref{bound on sol E} to obtain,
		\begin{equation*}
			|\mathcal{N}[\ve\zeta,\psi^{\pm}]|_{H^{N-5}}\leq \mu  C (\mathcal{E}^N(\mathbf{U})) \leq\mu C,
		\end{equation*}
		and for the term involving surface tension, we observe that
		\begin{align*}
			\Big|\partial_x 
			\bigg{(}\partial_x \zeta 
			\Big{(} 
			\frac{1}{\sqrt{1+ \ve^2 \mu (\partial_x \zeta )^2}} - 1 
			\Big{)}
			\bigg{)}
			\Big|_{H^{N-5}}\leq \varepsilon^2\mu C (\mathcal{E}^N(\mathbf{U})) \leq \varepsilon^2\mu C.
		\end{align*}
		While the first equation in \eqref{IWW - v}, we use \eqref{exp G+} to deduce that
		\begin{equation}\label{IWW - pf}
			\begin{cases}
				\partial_t \zeta + \partial_x^2\mathrm{T}(\mathrm{D})\psi^+ + \ve \partial_x(\zeta \partial_x \mathrm{T}(\mathrm{D})\psi^+) = \ve \mu R
				\\
				\partial_t v + \big{(}1 - \mathrm{bo}^{-1} \partial_x^2\big{)}\partial_x \zeta 
				+
				\frac{\ve}{2}  \partial_x
				\big{(}
				(\partial_x \psi^{+})^2 
				-
				\gamma (\mathbf{H}_{\mu}[\ve\zeta]\psi^+)^2 
				\big{)}
				=    \mu \ve R.
			\end{cases}
		\end{equation}		
		Then to relate the equation to $v$, we observe by \eqref{exp H}:
		%
		%. We are working with the regime $\mu\sim\ve^2$ as $\mu \rightarrow 0$, and falls within the scope of Proposition \ref{Estimate V BO}.  Thus, we have  $\frac{1}{\mu} G^{\mu}[\ve \zeta] \psi_1 = \nabla\cdot(h_1\nabla \psi_1) + \mathcal{O}(\mu)$. Combined with \eqref{BO interface operator}
		% 
		% 
		% 
		\begin{align*}
			\partial_x \psi^+
			& =
			\gamma \mathbf{H}_{\mu} [\ve \zeta]\psi^+ + v \\
			& = 
			-\gamma \tanh(\sqrt{\mu} |\mathrm{D}|) \partial_x \psi^+  + v +  \ve \sqrt{\mu} R,
		\end{align*}
		so that
		\begin{equation*}
			\partial_x \psi^+
			 =
			(1+\gamma \tanh(\sqrt{\mu} |\mathrm{D}|))^{-1}v + \ve \sqrt{\mu} R.
		\end{equation*}
		% 
		% 
		% 
		% 
		% 
		%%%%%%%%%%%%%%%%%
		% 
		From this relation, we can express the first equation in terms of $\zeta$ and $v$ by using  the expansions of $\mathcal{G}^+_{\mu}$ given by \eqref{exp G+},  $\mathrm{T}(\mathrm{D})$ given by \eqref{est T},  $\mathrm{I}(\mathrm{D}) = (1+\gamma \tanh(\sqrt{\mu} |\mathrm{D}|))^{-1}$ given by \eqref{inverse tanh}, and recalled here:
		\begin{equation}\label{approxs}
			|(\mathrm{T}(\mathrm{D}) -1)f|_{H^s}  \leq \mu C |\partial_x^2f|_{H^{s}}, \quad  |(\mathrm{I}(\mathrm{D})-1)f|_{H^s}  \leq \sqrt{\mu} C |\partial_xf|_{H^{s}}.
		\end{equation}
		Indeed, from these observations there holds,
		\begin{align*}
			\partial_x^2\mathrm{T}(\mathrm{D})\psi^+ + \ve \partial_x(\zeta \partial_x \mathrm{T}(\mathrm{D})\psi^+) 
			& = 
			\mathrm{I}(\mathrm{D})\mathrm{T}(\mathrm{D})\partial_x v  
			+
			\ve \partial_x(\zeta v) + \ve\sqrt{\mu} R. 
		\end{align*}
		For the second equation, we have that
		\begin{equation*}
			(\partial_x \psi^+)^2 =v^2%- 2\gamma v \sqrt{\mu}|\mathrm{D}|v+
			+  \sqrt{\mu} R,
		\end{equation*}
		and
		\begin{align*}
			(\mathbf{H}_{\mu}[\ve \zeta]\psi^+)^2 
			& =  \mu R.
		\end{align*}
		To summarize the approximations so far, we have the following system:
		\begin{equation*}
			\begin{cases}
				\partial_t \zeta
				+
				\mathrm{I}(\mathrm{D})\mathrm{T}(\mathrm{D})\partial_x v  +\ve \partial_x(\zeta v) =  \ve \sqrt{\mu}R
				\\
				\partial_t v 
				+
				\big{(}1- \mathrm{bo}^{-1} \partial_x^2\big{)}\partial_x \zeta 
				+
				\ve v\partial_xv =   \ve\sqrt{\mu}R.
			\end{cases}
		\end{equation*}
		Next, we ease the notation by defining 
		$$\mathrm{t}(\mathrm{D}) = \mathrm{I}(\mathrm{D})\mathrm{T}(\mathrm{D}), \quad \mathrm{k}(\mathrm{D}) = \mathrm{t}(\mathrm{D})\big{(}1 - \mathrm{bo}^{-1} \partial_x^2\big{)},$$ 
		and 
		$$u = \mathrm{t}(\mathrm{D})v.$$
		Then  use  \eqref{approxs}  to  find the desired system 
		\begin{equation*}
			\begin{cases}
				\partial_t \zeta
				+
				\partial_x u +\ve \mathrm{t}(\mathrm{D})\partial_x(\zeta u) =  \ve \sqrt{\mu}R
				\\
				\partial_t u
				+
				\mathrm{k}(\mathrm{D})\partial_x \zeta 
				+
				\frac{\ve}{2} \mathrm{t}(\mathrm{D})\partial_x (u)^2 =   \ve\sqrt{\mu}R.
			\end{cases}
		\end{equation*}\\

			\noindent
			\underline{Step 2.} To derive \eqref{Full disp Benjamin} we can make an approximation $u^{\text{\tiny wB}} $ in terms of $\zeta^{\text{\tiny wB}}$ at order $\mathcal{O}(\varepsilon( \ve +\sqrt{\mu} +  \mathrm{bo}^{-1}))$ solving \eqref{Full disp Benjamin}. Then we will show that this solution is consistent with the weakly dispersive Benjamin system \eqref{System Full disp B}.  %Morevoer, since the BO system \eqref{WP system Full disp BO} is long time well-posed from the same data and consistent with the internal water waves system we can work directly with it. Let the solution of \eqref{WP system Full disp BO}  be defined by
			%
			%
			%
			%$$(\zeta^{\text{\tiny BOs}}, v^{\text{\tiny BOs}})  \in C\big{(}[0,\ve^{-1}T] \: : \: H^{N-\frac{1}{2}}(\R) \times H^{N-\frac{1}{2}}(\R) \big{)}.$$
			%
			%
			%
			The proof is given in two steps, where we first make formal computations and then prove the consistency once we have constructed $u^{\text{\tiny wB}}$.	\\

			\noindent
			\underline{Step 2.1.} \textit{Formal derivation:}
			To construct the lowest order approximation of $u^{\text{\tiny wB}}$ in terms of the solution $\zeta^{\text{\tiny wB}}$, we  first consider system \eqref{System Full disp B} at order $\ve$: %\ BONA -Then we have that $\zeta$ is a right moving wave if $(\zeta, v)$ satisfies two transport equations:
			\begin{equation*}
				\begin{cases}
					\partial_t \zeta^{\text{\tiny wB}}
					+
					 \partial_xu^{\text{\tiny wB}} = \mathcal{O}(\ve)
					\\
					\partial_t u^{\text{\tiny wB}}
					+
					\mathrm{k}(\mathrm{D}) \partial_x \zeta^{\text{\tiny wB}}
					=\mathcal{O}(\ve).
				\end{cases}
			\end{equation*}\\ 
			We observe $u^{\text{\tiny wB}}$ is equal to  $\sqrt{	\mathrm{k}}(\mathrm{D})\zeta^{\text{\tiny wB}}$ at first order. Then having an approximation at first order we can now make a correction at higher order:\color{black}
			\begin{equation*}
				u^{\text{\tiny wB}}= \sqrt{	\mathrm{k}}(\mathrm{D})\zeta^{\text{\tiny wB}}+ \ve A,
			\end{equation*}
			for some function $A$ depending on the solution $\zeta^{\text{\tiny wB}}$.  In particular, we construct $A$ by plugging the ansatz into \eqref{System Full disp B} and using \eqref{est sqrt K}:
			\begin{equation*}
				\begin{cases}
					\partial_t \zeta^{\text{\tiny wB}}
					+
					\sqrt{	\mathrm{k}}(\mathrm{D})\partial_x \zeta^{\text{\tiny wB}}  
					+
					\ve\big{(}   \partial_xA + \partial_x((\zeta^{\text{\tiny wB}})^2)\big{)} = \mathcal{O}\big{(}\varepsilon( \varepsilon + \sqrt{\mu} + \mathrm{bo}^{-1} ) \big{)}
					\\
					\partial_t \zeta^{\text{\tiny wB}}
					+
					\sqrt{	\mathrm{k}}(\mathrm{D})\partial_x \zeta^{\text{\tiny wB}}
					+
					\ve\big{(}\partial_tA + \zeta^{\text{\tiny wB}} \partial_x\zeta^{\text{\tiny wB}}\big{)} = \mathcal{O}\big{(}\varepsilon( \varepsilon + \sqrt{\mu} + \mathrm{bo}^{-1} ) \big{)}.
				\end{cases}
			\end{equation*}
			Now using the nonlocal transport equations to see that $\partial_x A = -\partial_t A$ (up to an order $\mathcal{O}(\ve + \sqrt{\mu})$), we find that
			\begin{equation*}
					2 \partial_x A = -\zeta^{\text{\tiny wB}} \partial_x \zeta^{\text{\tiny wB}}.
			\end{equation*}
			We therefore let $v^{\text{\tiny wB}}$ be given by
			\begin{equation}\label{v app}
				v^{\text{\tiny wB}} 
				=
				\mathrm{t}^{-1}(\mathrm{D})u^{\text{\tiny wB}} 
				=
				\mathrm{t}^{-1}(\mathrm{D})
				\big{(}\sqrt{	\mathrm{k}}(\mathrm{D})\zeta^{\text{\tiny wB}}  - \frac{\ve}{4} (\zeta^{\text{\tiny wB}})^2\big{)}.
			%	\partial_t \zeta
			%	+
			%	(1 - \frac{\gamma}{2}\tanh(\sqrt{\mu}|\mathrm{D}|))\partial_x \zeta 
			%	+
			%	\frac{3\ve }{2}\zeta \partial_x\zeta = 0
			\end{equation}

			\noindent
			\underline{Step 2.2.} \textit{Rigorous derivation:} For any $ \zeta_0 \in H^{N+1}_{\mathrm{bo}}(\R)$ we have that  
			\begin{equation*}
				v_0 = \mathrm{t}^{-1}(\mathrm{D})\big{(}\sqrt{	\mathrm{k}}(\mathrm{D}) \zeta_0   - \frac{\ve}{4} (\zeta_0)^2\big{)} \in H^{N-\frac{1}{2}}(\R).
			\end{equation*}
			Indeed, using Plancherel's identity, the algebra property of $H^{N-\frac{1}{2}}(\R)$, we observe that
			\begin{align}\notag
				|v_0|_{H^{N-\frac{1}{2}}} 
				& \leq
				|\zeta_0|_{H^{N+\frac{1}{2}}_{\mathrm{bo}}} + \ve \Big{|} \frac{\sqrt{\mu} |\mathrm{D}|}{\tanh(\sqrt{\mu}|\mathrm{D}|)} \zeta_0^2  \Big{|}_{H^{N-\frac{1}{2}}}
				\\\notag
				& 
				\lesssim 	|\zeta_0|_{H^{N+\frac{1}{2}}_{\mathrm{bo}}} +
				\ve |\zeta_0|_{H^{N-\frac{1}{2}}}^2 + \ve |\partial_x (\zeta_0^2)|_{H^{N-\frac{1}{2}}}
				\\ \label{est v0 wb}
				& \leq |\zeta_0|_{H^{N+1}_{\mathrm{bo}}}^2,
			\end{align}
			where we used that $\ve \leq \mathrm{bo}^{-\frac{1}{2}}$. Moreover, there is a time $T_1>0$ and a unique solution\footnote{See Remark \ref{Remark wp}.} 	$\zeta^{\text{\tiny wB}} \in C\big{(}[0,\ve^{-1}T_1] \: : \: H^{N+1}_{\mathrm{bo}}(\R)  \big{)}$ associated to $\zeta_0$ solving the equation
			\begin{equation*}
				\partial_t \zeta^{\text{\tiny wB}}
				+
				\sqrt{	\mathrm{k}}(\mathrm{D})\partial_x \zeta^{\text{\tiny wB}}
				+
				\frac{3\ve }{2}\zeta^{\text{\tiny wB}} \partial_x\zeta^{\text{\tiny wB}} =0
			\end{equation*}
			and satisfying
			\begin{equation}\label{Bound sol wBo}
				\sup\limits_{t\in[0,\ve^{-1}T]}|\zeta^{\text{\tiny{wB}}}|_{H^{N+1}_{\mathrm{bo}}} \leq |\zeta_0|_{H^{N+1}_{\mathrm{bo}}}.
			\end{equation}
			Moreover, we can define $v^{\text{\tiny wB}} \in C\big{(}[0,\ve^{-1}T_1] \: : \: H^{N-\frac{1}{2}}(\R)  \big{)}$  by \eqref{v app} and $u^{\text{\tiny wB}} = \mathrm{t}(\mathrm{D})v^{\text{\tiny wB}}$ in the same space. Then  by using estimate \eqref{est sqrt K}  we deduce from \eqref{Bound sol wBo} that
			\begin{equation*}
				\begin{cases}
					\partial_t \zeta^{\text{\tiny wB}}
					+
					 \partial_{ x} u^{\text{\tiny wB}}  +\ve\mathrm{t}(\mathrm{D}) \partial_{x}(\zeta^{\text{\tiny wB}}u^{\text{\tiny wB}}) = \varepsilon( \varepsilon + \sqrt{\mu} + \mathrm{bo}^{-1} )R
					\\
					\partial_t u^{\text{\tiny wB}}
					+
					\sqrt{	\mathrm{k}}(\mathrm{D})\partial_x  \zeta^{\text{\tiny wB}}
					+
					\frac{\ve}{2} \mathrm{t}(\mathrm{D}) \partial_{{x}}\big{(}u^{\text{\tiny wB}}\big{)}^2 = \varepsilon( \varepsilon + \sqrt{\mu} + \mathrm{bo}^{-1} )R,
				\end{cases}
			\end{equation*}
			for $R$ satisfying \eqref{est R} and concludes the proof of this step.  \\

			\noindent
			\underline{Step 3.} For the consistency result in the case of the Benjamin equation we  use \eqref{precise est sqrt K}:
			\begin{equation*}
				\Big{|}	\big{(}\sqrt{k}(\mathrm{D}) -  (1- \frac{\gamma}{2} \sqrt{\mu}|\mathrm{D}| - \frac{\partial_x^2}{2\mathrm{bo}})\big{)}f\Big{|}_{H^s} \lesssim (\mu + \frac{\sqrt{\mu}}{\mathrm{bo}}) |\partial_x^2 f|_{H^{s+1}}.
			\end{equation*}\\

				\noindent
			\underline{Step 4.} For the consistency result in the case of the BO equation we also  neglect the surface tension effects.
			%
			%
			%
		%	Then from Step $2$. we deduce that
			%
			%
			%
		%	\begin{equation*}
			%	\partial_t \zeta
			%	+
			%	c(1 - \frac{\gamma}{2}\sqrt{\mu}|\mathrm{D}|)\partial_x \zeta 
			%	+
			%	c\frac{3\ve }{2}\zeta \partial_x\zeta = (\mu + \ve \sqrt{\mu})R_2,
			%\end{equation*}
			%
			%
			%
			%for some $R_2$ satisfyng the estimate $|R_2|_{H^{N-5}}\leq C (\mathcal{E}^N(\mathbf{U}_0))$.

	\end{proof}

	\section{Proof of Theorem \ref{Justification} and Corollary \ref{Cor ilw}}\label{Sec just}

	\begin{proof}[Proof of Theorem \ref{Justification}]

	First, we let $N\geq 8$ and take initial data $(\zeta_0 ,  \psi_0)$ satisfying the assumptions of Theorem \ref{Thm 1}. Then using \eqref{Est on Sol}, we can define the solutions of the internal  water waves equations \eqref{IWW - v} with variables 
	$$(\zeta, v) \in C([0,\ve^{-1}T_1] ; H^{N+1}_{\mathrm{bo}}(\R)\times H^{N-\frac{1}{2}}(\R)),$$
	from the data $(\zeta_0,\partial_x\psi_0) \in H^{N+1}_{\mathrm{bo}}(\R)\times H^{N-\frac{1}{2}}(\R)$ for some $T_1>0$. Next, we use Theorem \ref{Thm: Consistency} and the matrix notation \eqref{M 1} to say that on the same time interval the functions 
	$$u = \mathrm{t}(\mathrm{D})v \quad \text{and} \quad \mathbf{U} = (\zeta, u)^T,$$ 
	solves
	\begin{equation*}
		\partial_t \mathbf{U} +  \mathcal{M}(\mathbf{U}) \mathbf{U} 
		=
		\varepsilon \sqrt{\mu} (R,R)^T,
	\end{equation*}
	for any $t \in [0, \ve^{-1}T_1]$ and  where the rest satisfies
	\begin{equation}\label{est on rest}
		|R|_{H^{N-5}}\leq C,
	\end{equation}
	for  $C = C(\mathcal{E}^N(\mathbf{U}_0), {h^{-1}_{\min}}, \mathfrak{d}(\mathbf{U}_0)^{-1})>0$ nondecreasing function of its argument. We will now use this to prove estimates \eqref{CV BOs}, \eqref{CV wB},  \eqref{CV B}, and \eqref{CV BO} in four separate steps. \\

	\noindent
	\underline{Step 1.} \textit{Proof of estimate \eqref{CV BOs}}. We take the same data $(\zeta_0,v_0)$ with $v_0 = \partial_x \psi_0$, we by Plancherel's identity we observe that
	\begin{equation*}
		|u_0|_{H^{N-\frac{1}{2}}}^2+\sqrt{\mu}|u_0|_{\mathring{H}^{N}}^2 = |\mathrm{t}(\mathrm{D})v_0|_{H^{N-\frac{1}{2}}}^2+\sqrt{\mu}||\mathrm{D}|^{\frac{1}{2}}\mathrm{t}(\mathrm{D})v_0|_{H^{N-\frac{1}{2}}}^2 \leq |v_0|_{H^{N-\frac{1}{2}}}^2,
	\end{equation*}
	and \eqref{Est on Sol} implies $(\zeta_0, u_0) \in X_{\mu,\mathrm{bo}}^{N-\frac{1}{2}}(\R)$. We may therefore apply Theorem \ref{W-P System BO} to deduce the existence of $T_2>0$ such that 
	$$\mathbf{U}^{\text{\tiny   wBs} } = (\zeta^{\text{\tiny   wBs} }, u^{\text{\tiny   wBs} } ) \in C([0,\ve^{-1}T_2] ; X_{\mu,\mathrm{bo}}^{N-\frac{1}{2}}(\R)),$$
	solves system \eqref{Eq 1 M}:
	\begin{equation*} 
		\partial_t \mathbf{U}^{\text{\tiny   wBs} } +  \mathcal{M}( \mathbf{U}^{\text{\tiny  wBs} })\mathbf{U}^{\text{\tiny   wBs} }=\mathbf{0},
	\end{equation*}
	for any $t \in [0,\ve^{-1}T_2]$. We may now take the difference between the two solutions
	$$\mathbf{W} = (\eta, w)^T = \mathbf{U} - \mathbf{U}^{\text{\tiny   wBs} },$$
	to find that
	\begin{equation*} 
		\partial_t \mathbf{W} +   \mathcal{M}(\mathbf{U}) \mathbf{W}=  \mathbf{F} + \varepsilon \sqrt{\mu} (R,R)^T, 
	\end{equation*}	
	for any $t \in [0,\ve^{-1}\min\{T_1,T_2\}]$ where $\mathbf{F}$ is defined by  \eqref{F: source term}. Then using the estimate \eqref{Energy 2} with  $N-6>\frac{3}{2}$ and adding the rest term we find that
	\begin{align*}
		\frac{\mathrm{d}}{\mathrm{d}t} \tilde{E}_{\text{\tiny{wBs}}}^{N-6}(\mathbf{W}) 
		& \leq \varepsilon\sqrt{\mu} 
		|\big(\mathcal{Q}( \mathbf{U}^{\text{\tiny   wBs}} ) \Lambda^{N-6}(R,R)^T,  \Lambda^{N-6} \mathbf{W}\big)_{L^2}| +  \tilde{K}(N-5)  \tilde{E}_{\text{\tiny{wBs}}}^{N-6}(\mathbf{W}).
	\end{align*}
	where the energy is defined by \eqref{tilde Energy s}.	Furthermore, by definition of $\mathcal{Q}(\mathbf{U}_1 )$, the Hölder inequality, the Sobolev embedding,  \eqref{equiv 2}, and \eqref{est on rest} we obtain  
	\begin{align*}
		\frac{\mathrm{d}}{\mathrm{d}t} \tilde{E}_{\text{\tiny{wBs}}}^{N-6}(\mathbf{W}) 
		& \leq 
		\varepsilon \sqrt{\mu} |R|_{H^{N-5}}(\tilde{E}_{\text{\tiny{wBs}}}^{N-6}(\mathbf{W}))^{\frac{1}{2}} +
		\ve \tilde{K}(N-5)  \tilde{E}_{\text{\tiny{wBs}}}^{N-6}(\mathbf{W})
		\\ 
		& \leq 
		\varepsilon \sqrt{\mu} 
		C (\tilde{E}_{\text{\tiny{wBs}}}^{N-6}(\mathbf{W}))^{\frac{1}{2}} +
		\ve \tilde{K}(N-5)  \tilde{E}_{\text{\tiny{wBs}}}^{N-6}(\mathbf{W})
		.
	\end{align*}
	Then Grönwall's inequality,  \eqref{equiv 2}, \eqref{Bound sol Full disp B system}, and \eqref{Est on Sol}  yields
	\begin{equation*} \label{convergence est in proof}
		|(\eta, w)|_{X_{\mu,\mathrm{bo}}^{N-6}} \leq 
		 \varepsilon \sqrt{\mu} t
		  C e^{\ve \tilde{K}(N-5)t}.
	\end{equation*}
	Finally, we observe that
	$$\ve \tilde{K}(N-5)t\leq C \min\{T_1,T_2\},$$
	and from the rough estimate
	$$| v-v^{\text{\tiny wBs}}|_{H^{N-7}} = |\mathrm{t}(\mathrm{D})^{-1}w|_{H^{N-7}} \leq  |w|_{H^{N-6}},$$ 
	that there holds,%it together with the embedding $H^{N-5}(\R)  \hookrightarrow L^{\infty}(\R)$ for $N\geq7$ to find:
	\begin{align*}
		| (\zeta-\zeta^{\text{\tiny wBs}}, v-v^{\text{\tiny wBs}}) |_{L^{\infty}([0,t]; H^{N-7}\times H^{N-7})} 
		& \leq  |(\eta, w)|_{L^{\infty}([0,t]: X^{N-6}_{\mu, \mathrm{bo}})} 
		\\
		& \leq   \varepsilon  \sqrt{\mu} t C,
	\end{align*}
	for all $t \in [0,\ve^{-1} \min\{T_1,T_2\}]$. This completes the proof of estimate \eqref{CV BOs}.\\

	\noindent
	\underline{Step 2.} \textit{Proof of estimate \eqref{CV wB}}.
	For $\zeta_0\in H^{N+1}_{\mathrm{bo}}(\R))$ there  exist a time  $T_3>0$ such that 
	$$\zeta^{\text{\tiny   wB} }  \in C([0,\ve^{-1}T_3] ;H^{N+1}_{\mathrm{bo}}(\R)),$$
	solves \eqref{Full disp Benjamin} and from it we define 
	\begin{equation*}
		v^{\text{\tiny wB}} 
		=
		\mathrm{t}^{-1}(\mathrm{D})u^{\text{\tiny wB}} 
		=
		\mathrm{t}^{-1}(\mathrm{D})
		\big{(}\sqrt{	\mathrm{k}}(\mathrm{D})\zeta^{\text{\tiny wB}}  - \frac{\ve}{4} (\zeta^{\text{\tiny wB}})^2\big{)}.
	\end{equation*}
	Moreover, by estimate  \eqref{est v0 wb} and \eqref{Bound sol wBo}  we have that
	\begin{equation*}
		\sup\limits_{t\in[0,\ve^{-1}T_3]}|(\zeta^{\text{\tiny   wB} }, v^{\text{\tiny wB}})|_{H^{N+1}_{\mathrm{bo}}\times H^{N-\frac{1}{2}}} \leq C(|\zeta_0|_{H^{N+1}_{\mathrm{bo}}}).
	\end{equation*}
	Then if we define $\mathbf{U}^{\text{\tiny   wB} } = (\zeta^{\text{\tiny   wB} }, u^{\text{\tiny wB}})^T$, where $u^{\text{\tiny wB}} = \mathrm{t}(\mathrm{D})v^{\text{\tiny wB}}$, and then use Theorem \ref{Thm: Consistency} and argue as above to find that
	\begin{align*}
		|\mathbf{U}  - \mathbf{U}^{\text{\tiny wB}} |_{L^{\infty}([0,t]; H^{N-6}\times H^{N-6})}
		& \leq 
		|\mathbf{U}  - \mathbf{U}^{\text{\tiny wBs}} |_{L^{\infty}([0,t]; H^{N-6}\times H^{N-6})}
		\\ 
		& \hspace{0.5cm}
		+
		|\mathbf{U}^{\text{\tiny wBs}} - \mathbf{U}^{\text{\tiny wB}} |_{L^{\infty}([0,t]; H^{N-6}\times H^{N-6})}
		\\
		& \leq   \varepsilon(\ve +  \sqrt{\mu} +  \mathrm{bo}^{-1})  t C,
	\end{align*}
	for all $t \in [0,\ve^{-1}\min\{T_1,T_2,T_3\}]$.\\

	\noindent 
	\underline{Step 3.} \textit{Proof of estimate \eqref{CV B}}. There exist a time $T_4 > 0$ such that $\zeta^{\text{\tiny   B} }  \in C([0,\ve^{-1}T_4] ;H^{N+1}_{\mathrm{bo}}(\R))$ solves \eqref{Benjamin} that is bounded by its intial data. Moreover, by Theorem  \ref{Thm: Consistency} the solution satisfies
	\begin{equation*} 
		\partial_t \zeta^{\text{\tiny B}}
		+
		\sqrt{k}(\mathrm{D})\partial_x \zeta^{\text{\tiny B}}
		+
		\frac{3\ve }{2}\zeta^{\text{\tiny B}}  \partial_x\zeta^{\text{\tiny B}} =  (\mu +  \frac{\sqrt{\mu}}{\mathrm{bo}}) R.
	\end{equation*}
	Then if we define the difference $\tilde \eta =(\zeta^{\text{\tiny   wB} }- \zeta^{\text{\tiny   B} })$ it is straightforward to deduce that
	\begin{equation*}
		\frac{1}{2} \frac{\mathrm{d}}{\mathrm{d} t} |\tilde \eta|_{H^{N-5}}^2 \leq \ve (|\zeta^{\text{\tiny wB}}|_{H^{N-4}} + |\zeta^{\text{\tiny B}}|_{H^{N-4}})|\tilde\eta|_{H^{N-5}}^2 + (\mu  +  \frac{\sqrt{\mu}}{\mathrm{bo}})|R|_{H^{N-5}} |\tilde \eta|_{H^{N-5}}.
	\end{equation*}
	Then by Grönwall's inequality, the bound on the solution, and \eqref{est on rest} we find that
	\begin{equation*}
		 |\zeta^{\text{\tiny   wB} }- \zeta^{\text{\tiny   B} }|_{H^{N-5}} \leq (\mu + \frac{\sqrt{\mu}}{\mathrm{bo}}) tC,
	\end{equation*}
	and the result is  a direct consequence of the previous steps. \\ 

	\noindent
	\underline{Step 4.} \textit{Proof of estimate \eqref{CV BO}}.  The proof is the same as in the previous step.

\end{proof}

\begin{proof}[Proof of Corollary \ref{Cor ilw}]
	Since the well-posedness of the ILW equation is well-known for regular initial data we are simply left to prove a convergence estimate between its solutions and the ones of BO. However, comparing the linear terms we have that %this follows from the observation that 
	\begin{equation*}
		\Big{|}|\mathrm{D}|(1- \coth(\sqrt{\mu^-}|\mathrm{D}|))f\Big{|}_{L^2} \leq \frac{1}{\sqrt{\mu^-}} |f|_{L^2},
	\end{equation*}
	using Lemma 2.3. given in \cite{chapouto2024deepwaterlimitintermediatelong}. Then we can deduce an estimate between the BO equation and the ILW  equation and apply Theorem \ref{Justification} to conclude.
\end{proof}

		\section*{Acknowledgements}
	
	This research was supported by a Trond Mohn Foundation grant. The material is also based on discussions with David Lannes at Institut Mittag-Leffler in Djursholm, Sweden during the program \lq\lq Order and Randomness in Partial Differential Equations\rq\rq \: in Fall, 2023 that was supported by the Swedish Research Council under grant no.~2016-06596. 
	
	I want to thank Vincent Duchêne for several important comments and for pointing out the full dispersion expansion of the interface operator. 	I also would like to thank Jean-Claude Saut for suggesting the problem, Louis Emerald for helpful remarks, and my advisor, Didier Pilod, for his comments on the introduction, support, and friendship.  Lastly, I would like to thank the anonymous referees for their careful reading and helpful comments.
	%\section*{Acknowledgements}
	%\noindent
	%This research was supported by a Trond Mohn Foundation grant.% I also thank my advisor, Didier Pilod, for the support and ... % importaint comments on the introduction....

	\appendix
	 \section{Properties of the Dirichlet-Neumann operators} 
	 In this section we will give several results on $\mathcal{G}^{\pm}_{\mu}[\ve\zeta]$ and $\mathcal{G}_{\mu}[\ve\zeta]$. Then we will use these results to share some details on the proof Proposition \ref{Prop Quasilinear system}. But as we will shortly see, the main quantities in \eqref{IWW} can be estimated in terms of the principal unknown $(\zeta, \psi)$ where the estimates are very similar to the ones in \cite{LannesTwoFluid13}. We will therefore give more details when there is a difference, and when the estimates are the same, we will simply refer to the results in \cite{LannesTwoFluid13}.

\subsection{Properties of $\mathcal{G}_{\mu}^+$} We start this section by giving a precise definition of the positive Dirichlet-Neumann operator $\mathcal{G}^+_{\mu}$.
\begin{Def}\label{Def G+}
	Let $t_0>\frac{1}{2}$, $\psi^+ \in \dot{H}_{\mu}^{\frac{3}{2}}(\R)$, and $\zeta\in H^{t_0 +2}(\R)$ be such that \eqref{non-cavitation} is satisfied. Let $\Phi^+$ be the unique solution in $\dot{H}^2(\Omega_t^+)$ of the boundary value problem 
	\begin{equation*} 
		\begin{cases}
			\Delta^{\mu}_{x,z} \Phi^+ = 0 \hspace{1.3cm}\qquad \text{in} \quad \Omega_t^+
			\\
			\Phi^+ = \psi^+ \hspace{2.54cm} \text{on} \quad  z = \varepsilon \zeta
			\\
			\partial_{z} \Phi^+  = 0 \hspace{2.5cm} \text{on} \quad  z = -1,
		\end{cases}
	\end{equation*}	
	then $\mathcal{G}^+_{\mu}[\ve \zeta]\psi^+ \in H^{\frac{1}{2}}(\R)$ is defined by
	\begin{equation*}%\label{D-N operator}
		\mathcal{G}^+_{\mu}[\varepsilon \zeta]\psi^+ = (\partial_z \Phi^+ 
		- 
		\mu \varepsilon 
		\partial_x \zeta  \partial_x \Phi^+){|_{z = \varepsilon \zeta}}.
	\end{equation*}
\end{Def}

\begin{remark}
	Under the provisions of Definition \ref{Def G+} and let $\phi^+ = \Phi\circ\Sigma^+$ where $\Sigma^+$ is the diffeomorphism from the fixed domain $\mathcal{S}^+$ onto $\Omega^+_t$ given in Definition \ref{diffeomorphism +-},  then we have that
	\begin{equation}\label{Pb phi+}
		\begin{cases}
			\nabla^{\mu}_{x,z} \cdot P(\Sigma^{+})\nabla_{x,z}^{\mu} \phi^+ = 0 \hspace{0.7cm}\qquad \text{in} \quad \mathcal{S}^+
			\\
			\phi^+ = \psi^+ \hspace{4.05cm} \text{on} \quad  z = 0,
			\\
			\partial_{n}^{P^+}\phi^+  = 0 \hspace{3.7cm} \text{on} \quad  z = -1,
		\end{cases}
	\end{equation}	
	where
	\begin{align*}
		\partial_{n}^{P^+} =  \mathbf{e}_{z} \cdot  P(\Sigma^+) \nabla^{\mu}_{x,z}.
	\end{align*}
	Moreover, we have an equivalent expression of $\mathcal{G}^{+}[\zeta]\psi^+$ given by
	\begin{equation}\label{G+}
		\mathcal{G}^+_{\mu}[\varepsilon \zeta]\psi^+  = \partial_n^{P^+}\phi^+|_{z=0}.
	\end{equation}
\end{remark}

\begin{remark}
	For $\ve = 0$ we have that $\mathcal{G}^+$ becomes
	\begin{equation}\label{G+[0]}
		\mathcal{G}^+_{\mu}[0]\psi^+ = \sqrt{\mu}|\mathrm{D}| \tanh(\sqrt{\mu}|\mathrm{D}|)\psi^+.
	\end{equation}
\end{remark}

Next, we will state several results on the Dirichlet-Neumann operator that are taken from \cite{WWP,LannesTwoFluid13}.
\begin{prop}\label{Prop G+ dual est}
	Let $t_0> \frac{1}{2}$, $s \in [0,t_0 + 1]$ and $\zeta \in H^{t_0+2}(\R)$ be such that \eqref{non-cavitation} is satisfied. Then we have the following properties:
	
	\begin{itemize}
		\item [1.]  For $\psi^+ \in \dot{H}^{s+{\frac{1}{2}}}_{\mu}(\R)$ there is a (variational) solution of \eqref{Pb phi+} satisfying the estimates
		\begin{equation}\label{est psi +}
			\sqrt{\mu}|\psi^+|_{\dot{H}_{\mu}^{s+\frac{1}{2}}} \leq M \| \Lambda^s\nabla_{x,z}^{\mu} \phi^+\|_{L^2(\mathcal{S}^+)},
		\end{equation}
		and
		\begin{equation}\label{est phi +}
			\| \Lambda^s\nabla_{x,z}^{\mu} \phi^+\|_{L^2(\mathcal{S}^+)}
			\leq \sqrt{\mu} M |\psi^+|_{\dot{H}_{\mu}^{s+\frac{1}{2}}}.
		\end{equation}
		\item [2.] We may extend definition \ref{Def G+} for
		\begin{align*}
			\mathcal{G}^+_{\mu}[\ve \zeta ] : \dot{H}_{\mu}^{s+\frac{1}{2}}(\R) \rightarrow H^{s-\frac{1}{2}}(\R),
		\end{align*}
		where for all $\psi_1,\psi_2 \in \dot{H}_{\mu}^{s+\frac{1}{2}}(\R)$, there holds,
		\begin{equation*}
			\int_{\R} \psi_1 \mathcal{G}^+_{\mu}[\varepsilon \zeta] \psi_2 \: \mathrm{d}x = \int_{\mathcal{S}^+} \nabla_{x,z}^{\mu} \phi_1^+ \cdot P^+(\Sigma^+)\nabla_{x,z}^{\mu} \phi_2^+ \: \mathrm{d}x \mathrm{d}z.
		\end{equation*}
		\item [3.] For all $\psi_1,\psi_2 \in \dot{H}_{\mu}^{s+\frac{1}{2}}(\R)$, there holds,
		\begin{align}\label{dual est G+ s}
			\big{(} \Lambda^s \psi_1,  \Lambda^s \mathcal{G}^+_{\mu}[\varepsilon \zeta] \psi_2\big{)}_{L^2} 
			\leq
			\mu M|\psi_1|_{\dot{H}_{\mu}^{s+\frac{1}{2}}}|\psi_2|_{\dot{H}_{\mu}^{s+\frac{1}{2}}}.
		\end{align}
		\item [4.] For $\psi^+ \in H^{s+\frac{1}{2}}_{\mu}(\R)$ the following estimates hold
		\begin{align}\label{Est G+}
			& \forall s \in [0,t_0+\frac{3}{2}],
			\quad
			|\mathcal{G}^+_{\mu}[\varepsilon \zeta]\psi^+|_{H^{s-\frac{1}{2}}} \leq \mu^{\frac{3}{4}} M |\psi|_{\dot{H}_{\mu}^{s+\frac{1}{2}}},
			\\ 
			& \forall s \in [0,t_0+1], \:
			\quad
			|\mathcal{G}^+_{\mu}[\varepsilon \zeta]\psi^+|_{H^{s-\frac{1}{2}}} \leq \mu M |\psi|_{\dot{H}_{\mu}^{s+1}}. \label{Est G+ mu}
		\end{align}
		\item [5.] For all $\psi_1 \in \dot{H}_{\mu}^{s-\frac{1}{2}}(\R)$ and $\psi_2 \in \dot{H}_{\mu}^{s+\frac{1}{2}}(\R)$, there holds
		\begin{equation}\label{Commutator G+}
			\big{(}\big [\Lambda^s,\mathcal{G}^+_{\mu}[\varepsilon \zeta]\big ]\psi_1, \Lambda^s\psi_2\big{)}_{L^2}
			\leq 
			\mu \ve 
			M|\psi_1|_{\dot{H}_{\mu}^{s-\frac{1}{2}}}|\psi_2|_{\dot{H}_{\mu}^{s+\frac{1}{2}}}.
		\end{equation}
		%
		%eqref{Prop 3.30}Proposition $3.30$ in \cite{WWP}
		%
		\item [6.] For all $V \in H^{t_0+1}(\R)$ and $\psi^+\in H_{\mu}^{\frac{1}{2}}(\R)$ there holds,
		\begin{equation}\label{Prop 3.30}
			\big{(}(V \partial_x \psi^+ ), \frac{1}{\mu}\mathcal{G}_{\mu}^+[\ve \zeta]\psi^+\big{)}_{L^2} \leq M |V|_{W^{1,\infty}}|\psi^+|_{\dot{H}^{\frac{1}{2}}_{\mu}}^2.
		\end{equation}
	\end{itemize}
\end{prop}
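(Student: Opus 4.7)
The plan is to prove all six items by following the strategy of Lannes \cite{LannesTwoFluid13,WWP}, but carefully tracking the dependencies on $\mu$ and $\varepsilon$. The foundation is item~1, which I would establish by recasting the straightened boundary value problem \eqref{Pb phi+} as a variational problem on $\dot{H}^1(\mathcal{S}^+)$: the bilinear form $a(\phi,\varphi)=\int_{\mathcal{S}^+} P(\Sigma^+)\nabla^{\mu}_{x,z}\phi\cdot\nabla^{\mu}_{x,z}\varphi$ is coercive by \eqref{Coercivity+} with lift $\underline\psi(x,z)=\chi(z\sqrt{\mu}|\mathrm{D}|)\psi^+(x)$ for a suitable cutoff, giving the base case $s=0$ of \eqref{est phi +} via a trace estimate analogous to \eqref{trace 2} but adapted to $\mathcal{S}^+=\R\times(-1,0)$ (where the Poincaré inequality is available, so one directly controls $|\psi^+|_{\dot{H}_{\mu}^{1/2}}$ by $\mu^{-1/2}\|\nabla^{\mu}_{x,z}\phi^+\|_{L^2(\mathcal{S}^+)}$). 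For higher $s$, I would apply $\Lambda^s$ to \eqref{Pb phi+}, generating commutator terms $[\Lambda^s,P(\Sigma^+)]\nabla^{\mu}_{x,z}\phi^+$ in the equation and a boundary contribution on $z=0$, and close by induction using the commutator estimate \eqref{Commutator est}. The reverse inequality \eqref{est psi +} follows from the trace inequality in $\mathcal{S}^+$ since $\psi^+=\phi^+|_{z=0}$.

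Items 2 and 3 are then immediate: one defines $\mathcal{G}^+_{\mu}[\varepsilon\zeta]\psi_2$ as an element of the dual of $\dot{H}_{\mu}^{s+1/2}(\R)$ through the Green identity $\int\psi_1\mathcal{G}^+_{\mu}[\varepsilon\zeta]\psi_2=\int_{\mathcal{S}^+}P(\Sigma^+)\nabla^{\mu}_{x,z}\phi_1^+\cdot\nabla^{\mu}_{x,z}\phi_2^+$, and the dual estimate \eqref{dual est G+ s} follows by Cauchy–Schwarz together with \eqref{est phi +}. Item 4 is deduced from item 3 by choosing $\psi_1$ cleverly to extract the $H^{s-1/2}$ norm of $\mathcal{G}^+_{\mu}\psi^+$; the gain of $\mu^{3/4}$ in \eqref{Est G+} comes from the elliptic half-derivative smoothing of the Poisson-type solution (cf.~\eqref{smoothing}), whereas \eqref{Est G+ mu} trades an extra $\mu^{1/4}$ for one more derivative on $\psi$ via the inequality $|\xi|/(1+\sqrt{\mu}|\xi|)^{1/2}\leq\mu^{-1/4}|\xi|$.

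For the commutator estimate \eqref{Commutator G+} in item 5, I would apply $\Lambda^s$ to the elliptic problem satisfied by $\phi^+$ and note that $\Lambda^s$ commutes with $\Delta^{\mu}_{x,z}$, so that the commutator with $\mathcal{G}^+_{\mu}$ reduces to estimating $\int_{\mathcal{S}^+}[\Lambda^s,P(\Sigma^+)]\nabla^{\mu}_{x,z}\phi_1^+\cdot\nabla^{\mu}_{x,z}\phi_2^+$, which gains a factor $\varepsilon\mu$ because every entry of $P(\Sigma^+)-\operatorname{Id}$ is of that order.

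The main obstacle, and the step worth detailing, is item 6. A naïve estimate would lose a full derivative. The key observation is that $V\partial_x$ is skew-symmetric modulo a factor of $\partial_x V$, and $\mathcal{G}^+_{\mu}$ is self-adjoint, so the leading-order contribution cancels upon integration by parts. More precisely, I would first use the variational identity to write
\[
\bigl(V\partial_x\psi^+,\tfrac{1}{\mu}\mathcal{G}^+_{\mu}[\varepsilon\zeta]\psi^+\bigr)_{L^2}=\tfrac{1}{\mu}\int_{\mathcal{S}^+}P(\Sigma^+)\nabla^{\mu}_{x,z}(\widetilde{V\partial_x\psi^+})\cdot\nabla^{\mu}_{x,z}\phi^+,
\]
where $\widetilde{V\partial_x\psi^+}$ denotes the harmonic extension (the solution of the associated elliptic problem with this boundary data). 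Then the commutator identity
\[
\nabla^{\mu}_{x,z}(V\phi^+)-V\nabla^{\mu}_{x,z}\phi^+=(\sqrt{\mu}\,(\partial_xV)\phi^+,0)^T
\]
together with symmetry of $P(\Sigma^+)$ produces the cancellation: the top-order term $\int P(\Sigma^+)\nabla^{\mu}_{x,z}\phi^+\cdot V\nabla^{\mu}_{x,z}\phi^+$ becomes, after further integration by parts, $-\tfrac{1}{2}\int(\partial_xV)|\nabla^{\mu}_{x,z}\phi^+|^2_P$ up to a remainder controlled by $|V|_{W^{1,\infty}}\|\nabla^{\mu}_{x,z}\phi^+\|^2_{L^2(\mathcal{S}^+)}$. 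Applying the base case of item 1 to convert $\|\nabla^{\mu}_{x,z}\phi^+\|_{L^2(\mathcal{S}^+)}$ into $\sqrt{\mu}|\psi^+|_{\dot{H}^{1/2}_{\mu}}$ completes the estimate with the right half-derivative bookkeeping.
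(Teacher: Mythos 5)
First, a point of reference: the paper does not prove this proposition at all. It is introduced with ``results \ldots taken from \cite{WWP,LannesTwoFluid13}'', and the only proof-related content is the remark that \eqref{Commutator G+} is $(2.24)$ of \cite{LannesTwoFluid13} with a missing $\varepsilon$ restored. So your proposal is a reconstruction of the cited proofs, and in outline it follows exactly their strategy (variational solution with a frequency-localized lifting and coercivity \eqref{Coercivity+}, induction on $s$ via bulk commutators, the Green identity as the definition of the extended operator, and the Prop.~3.30-type integration by parts for item~6). Two of your justifications, however, are not correct as stated. For item~5 you claim the $\varepsilon\mu$ gain because ``every entry of $P(\Sigma^+)-\operatorname{Id}$ is of that order''; this is false: by Definition \ref{diffeomorphism +-} the diagonal entries are $O(\varepsilon)$ (e.g.\ $\varepsilon\zeta$) and the off-diagonal ones $O(\varepsilon\sqrt{\mu})$. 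The correct bookkeeping is that $[\Lambda^s,P(\Sigma^+)]$ contributes only the factor $\varepsilon$, while the factor $\mu$ comes from the two gradient factors, each bounded by $\sqrt{\mu}\,M|\psi_i|_{\dot H^{\cdot+\frac12}_{\mu}}$ through \eqref{est phi +}; taken literally, your accounting would even predict $\varepsilon\mu^2$, which is not what \eqref{Commutator G+} asserts, so the step needs to be rewritten. (Relatedly, item~4 is not literally a corollary of item~3: pairing with $\psi_1=\Lambda^{-1/2}\varphi$, $\varphi\in L^2$, does not give a uniformly bounded $\dot H^{s+\frac12}_{\mu}$ norm for $s>0$; one has to run the duality with the elliptic estimate at regularity $s$ on $\phi^+$ and a smoothing lifting of $\Lambda^{-1/2}\varphi$, which is where the $\mu^{3/4}$ actually appears, and the extended range $s\le t_0+\tfrac32$ requires the refined estimates of \cite{WWP} rather than item~3.)

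For item~6 the underlying idea (self-adjointness plus the skew-symmetry of $V\partial_x$ up to $\partial_x V$) is the right one, but the mechanics as written do not parse. The commutator identity you display concerns $V\phi^+$, not the relevant quantity $V\partial_x\phi^+$, and the term you call ``top-order'', $\int P(\Sigma^+)\nabla^{\mu}_{x,z}\phi^+\cdot V\nabla^{\mu}_{x,z}\phi^+$, is already harmless (it is bounded by $|V|_{L^\infty}\|\nabla^{\mu}_{x,z}\phi^+\|^2$ with no integration by parts); the dangerous term is $\int V\,\partial_x\nabla^{\mu}_{x,z}\phi^+\cdot P(\Sigma^+)\nabla^{\mu}_{x,z}\phi^+$. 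Moreover, taking the harmonic extension of $V\partial_x\psi^+$ reintroduces the loss unless you justify replacing it; the clean route (as in Prop.~3.30 of \cite{WWP}) is to note that the Green identity of item~2 holds for \emph{any} $\dot H^1(\mathcal S^+)$ extension of the boundary datum, choose the extension $V(x)\partial_x\phi^+(x,z)$ directly, and then symmetrize the dangerous term by writing it as $\tfrac12\int V\partial_x\big(P(\Sigma^+)\nabla^{\mu}_{x,z}\phi^+\cdot\nabla^{\mu}_{x,z}\phi^+\big)$ plus terms carrying $\partial_x P(\Sigma^+)$, and integrating by parts in $x$ so that all derivatives fall on $V$ and on $\zeta$ (controlled since $\zeta\in H^{t_0+2}(\R)$); the base case of item~1 then converts $\|\nabla^{\mu}_{x,z}\phi^+\|^2_{L^2(\mathcal S^+)}$ into $\mu\,M|\psi^+|^2_{\dot H^{1/2}_{\mu}}$ and yields \eqref{Prop 3.30} after dividing by $\mu$. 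With these two steps repaired, your plan matches the proofs in \cite{WWP,LannesTwoFluid13} that the paper invokes.
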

	
	\begin{remark}
		The regularity on $\zeta$ here is not optimal. Specifically, the dependence on $|\zeta|_{H^{t_0+2}}$ in the definition of $M$ can be lowered in the estimates above. However, we do not give these estimates since we will in other instances require more regularity on the free surface (as an example, see estimate \eqref{Symbolic G+}).
	\end{remark}
	\begin{remark}
		The last estimate \eqref{Commutator G+} is taken from \cite{LannesTwoFluid13} (see equation number $(2.24)$). However, $(2.24)$ has an $\ve$ missing due to a small typo and has been added here \cite{Personal_comm}. 
	\end{remark}

 The next result concerns the shape derivative of $\mathcal{G}_{\mu}^+$. The result is found in \cite{WWP,LannesTwoFluid13}.
\begin{prop}
	Let $t_0>\frac{1}{2}$, $s \in [0,t_0+1]$, and for any $\zeta \in H^{t_0+2}(\R)$ satisfying \eqref{non-cavitation} we have the following properties:
	%
	%
	%
	\iffalse
	\begin{equation*}
		\begin{cases}
			H^{t_0 +2}(\R) & \rightarrow  H^{s - \frac{1}{2}}(\R)
			\\
			\zeta & \mapsto  \mathcal{G}_{\mu}^+[\ve \zeta] \psi^+,
		\end{cases} 
	\end{equation*}
	%
	%
	%
	is smooth in a neighborhood of any $\zeta \in H^{t_0+2}(\R)$ satisfying \eqref{non-cavitation}. 
	\fi

	\iffalse
	\item [1.] REMOVE The vertical component of the velocity at the surface is defined by
	%
	%
	%
	\begin{equation}\label{Def w+}
		\underline{w}^{+}
		\:
		:
		\:
		\begin{cases}
			\dot{H}_{\mu}^{s + \frac{1}{2}}(\R) & \rightarrow  H^{s - \frac{1}{2}}(\R)
			\\
			\psi & \mapsto  \frac{\mathcal{G}^+ \psi + \ve \mu \partial_x \zeta \partial_x \psi^+}{1+ \ve^2 \mu (\partial_x \zeta)^2}
		\end{cases} 
	\end{equation}
	%
	%
	%
	and satisfies 
	%
	%
	%
	\begin{equation}\label{est w+}
		|^{+}|_{...} \leq ... NEEDED?
	\end{equation}
	
	\item [2.] The horizontal component of the velocity at the surface is defined by
	%
	%
	%
	\begin{equation}\label{Def w-}
		\mathcal{V}^{+}
		\:
		:
		\:
		\begin{cases}
			\dot{H}_{\mu}^{s + \frac{1}{2}}(\R) & \rightarrow  H^{s - \frac{1}{2}}(\R)
			\\
			\psi & \mapsto  \frac{\mathcal{G}^+ \psi + \ve \mu^{\pm} \partial_x \zeta \partial_x \psi^+}{1+ \ve^2 \mu (\partial_x \zeta)^2}
		\end{cases} 
	\end{equation}
	%
	%
	%
	and satisfies 
	%
	%
	%
	\begin{equation}\label{est V+}
		|\mathcal{V}^+|_{...} \leq ... 
	\end{equation}
	
	\fi
	
	\begin{itemize}
	\item [1.] For $\psi^+ \in \dot{H}_{\mu}^{s+\frac{1}{2}}(\R)$  the shape derivative of $\mathcal{G}^+_{\mu}[\ve \zeta]$ at $\zeta\in H^{t_0+2}(\R)$ in the direction of $h\in  H^{t_0+2}(\R)$ is given by the formula
	\begin{equation}\label{shape derivative G+}
		\mathrm{d}_{\zeta}\mathcal{G}^+_{\mu}[\ve \zeta](h) \psi^+ = -\ve \mathcal{G}^+_{\mu}[\ve \zeta](h \underline{w}^+) - \ve \mu \partial_x(h\underline{V}^+). 
	\end{equation}
	\item [2.] For all $0\leq s \leq t_0 + 1$, $j \geq 1$ there holds
	\begin{equation}\label{directional derivative G+ mu ve}
		|\mathrm{d}^j_{\zeta} \mathcal{G}^+_{\mu}[\ve \zeta](h) \psi^+|_{H^{s-\frac{1}{2}}} \leq \ve^j\mu^{\frac{3}{4}}M \prod\limits_{m=1}^j |h_m|_{H^{\max\{s,t_0\}+1}} |\psi^+|_{\dot{H}_{\mu}^{s+\frac{1}{2}}}.
	\end{equation}

	\item [3.] For all $0\leq s \leq t_0 + \frac{1}{2}$, $j \geq 1$, and $\psi^+ \in \dot{H}^{s + \frac{1}{2}}_{\mu}(\R)$, there holds,
	\begin{equation}\label{directional derivative G mu1 ve}
		|\mathrm{d}^j_{\zeta} \mathcal{G}^+_{\mu}[\ve \zeta](h) \psi^+|_{H^{s-\frac{1}{2}}} \leq \ve^j\mu M \prod\limits_{m=1}^j |h_m|_{H^{\max\{s+ \frac{1}{2},t_0\}+1}} |\psi^+|_{\dot{H}_{\mu}^{s+1}}.
	\end{equation}
	\item [4.]  For all $0\leq s \leq t_0 + \frac{1}{2}$, $j \geq 1$, $\psi^+ \in \dot{H}^{\max\{s,t_0\}+1}_{\mu}(\R)$, there holds,
	\begin{equation}\label{6. Est dj G+ ve mu}
		|\mathrm{d}_{\zeta}^j \mathcal{G}^+_{\mu}[\ve \zeta](h) \psi^+|_{H^{s-\frac{1}{2}}} \leq \ve^j\mu M|h_k|_{H^{s+\frac{1}{2}}} \prod\limits_{m \neq k}^j |h_m|_{H^{\max\{s, t_0\} + \frac{3}{2}}} |\psi|_{\dot{H}_{\mu}^{\max\{s, t_0\}+1}}.
	\end{equation}
	\item [5.]  For all $0\leq s \leq t_0$, $j \geq 1$, $\psi^+ \in \dot{H}^{\max\{s+\frac{1}{2},t_0\}+1}_{\mu}(\R)$, there holds,
	\begin{equation}\label{7. Est dj G+ ve mu}
		|\mathrm{d}_{\zeta}^j \mathcal{G}^+_{\mu}[\ve \zeta](h) \psi^+|_{H^{s-\frac{1}{2}}} \leq \ve^j\mu M|h_k|_{H^{s+1}} \prod\limits_{m \neq k}^j |h_m|_{H^{\max\{s+ \frac{1}{2},t_0\} + \frac{3}{2}}} |\psi^+|_{\dot{H}_{\mu}^{\max\{s+\frac{1}{2},t_0\}+1}}.
	\end{equation}

	\item [6.] For all $0\leq s \leq t_0 + 1$, $j \geq 1$, $\psi_1, \psi_2 \in \dot{H}^{s + \frac{1}{2}}_{\mu}(\R)$, there holds,
	\begin{equation}\label{Est djG mu ve}
		\big{|}\big{(}\Lambda^s\mathrm{d}^j \mathcal{G}^+_{\mu}[\ve \zeta](\mathbf{h}) \psi_1,  \Lambda^s\psi_2\big{)}_{L^2}\big{|} \leq \ve^j \mu M \prod\limits_{m=1}^j |h_m|_{H^{\max\{s,t_0\}+1}} |\psi_1|_{\dot{H}^{s+\frac{1}{2}}} |\psi_2|_{\dot{H}^{s+\frac{1}{2}}_{\mu}}.
	\end{equation}
	\item [7.]For for all $0\leq s \leq t_0 + \frac{1}{2}$, $j \geq 1$, $\psi_1 \in  \dot{H}^{\max\{s,t_0\}+1}_{\mu}(\R) ,\psi_2 \in \dot{H}^{s + \frac{1}{2}}_{\mu}(\R)$, there holds,
	\begin{equation}\label{Est djG+ neq}
		\big{|}\big{(}\Lambda^s\mathrm{d}^j \mathcal{G}^+_{\mu}[\ve \zeta](\mathbf{h}) \psi_1,  \Lambda^s\psi_2\big{)}_{L^2}\big{|} \leq \ve^j \mu M |h_l|_{H^{s+\frac{1}{2}}}\prod\limits_{m\neq l}^j |h_m|_{H^{\max\{s,t_0\}+\frac{3}{2}}} |\psi_1|_{\dot{H}^{\max\{s,t_0\}+1}} |\psi_2|_{\dot{H}^{s+\frac{1}{2}}_{\mu}}.
	\end{equation}
	\end{itemize}
\end{prop}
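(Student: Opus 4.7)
The plan is to follow the classical road map of Chapter~3 of \cite{WWP}, adapted to keep sharp track of the small parameters $\varepsilon$ and $\mu$ as was done in \cite{LannesTwoFluid13}. First I would establish the shape derivative formula \eqref{shape derivative G+}. To do so, fix $\zeta, h \in H^{t_0+2}(\R)$ and $\psi^+ \in \dot H^{s+1/2}_\mu(\R)$, and let $\Phi^+$ solve \eqref{Phi+}. Differentiating the Laplace problem \eqref{Pb phi+} in the direction $h$ through the diffeomorphism $\Sigma^+$ gives an elliptic boundary value problem for the shape derivative $\dot\Phi^+ := \mathrm{d}_\zeta \Phi^+(h)$, where the boundary datum is $-\varepsilon h \,\partial_z \Phi^+|_{z=\varepsilon\zeta}$. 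The classical identity $\mathcal{G}^+_{\mu}[\varepsilon\zeta]\psi^+ = (\partial_z\Phi^+ - \varepsilon\mu \partial_x\zeta\,\partial_x\Phi^+)|_{z=\varepsilon\zeta}$ combined with the definitions of $\underline{w}^+$ and $\underline{V}^+$ in Definition~\ref{Def op} (Corollary~\ref{cor definitions}) allows one, after a short algebraic manipulation expressing $\partial_z\Phi^+|_{z=\varepsilon\zeta}$ and $\partial_x\Phi^+|_{z=\varepsilon\zeta}$ in terms of $\underline{w}^+$ and $\underline{V}^+$, to identify the shape derivative with $-\varepsilon\mathcal{G}^+_{\mu}[\varepsilon\zeta](h\underline{w}^+) - \varepsilon\mu\partial_x(h\underline{V}^+)$; this is the same computation as in Proposition~3.28 of \cite{WWP}, and works verbatim here since the lower-fluid domain is of finite depth.

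With the formula in hand, the estimates \eqref{directional derivative G+ mu ve}--\eqref{directional derivative G mu1 ve} for $j=1$ follow by plugging it into the $\mathcal{G}^+_\mu$ bounds of Proposition~\ref{Prop G+ dual est} (namely \eqref{Est G+}, \eqref{Est G+ mu}) together with the product estimate \eqref{Classical prod est} and the velocity bounds on $\underline{w}^+, \underline{V}^+$ (to be quoted from Lemma~\ref{cor definitions} in the appendix). To extend to arbitrary $j\geq 1$, I would argue by induction: applying shape differentiation $j-1$ more times to the formula \eqref{shape derivative G+} produces a sum of terms of the form $\varepsilon^j \mathcal{G}^+_\mu(\cdots)$ and $\varepsilon^j \mu \partial_x(\cdots)$ where each factor is either a (lower-order) shape derivative of $\underline{w}^+$ or $\underline{V}^+$ or of $\mathcal{G}^+_\mu$; each such lower-order term is controlled by the induction hypothesis and the product estimates, yielding the stated powers of $\varepsilon$ and $\mu$. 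The variants \eqref{6. Est dj G+ ve mu}--\eqref{7. Est dj G+ ve mu}, which distinguish one of the directions $h_k$ and allow it to sit in a lower-regularity Sobolev space, are obtained by putting that specific factor in $H^{s+1/2}$ (respectively $H^{s+1}$) and placing all the remaining factors in the higher-regularity space where $M$-type bounds are available; the only care is to use product estimates in the Moser--Kato--Ponce form to distribute the derivatives asymmetrically.

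For the duality estimates \eqref{Est djG mu ve} and \eqref{Est djG+ neq}, I would integrate by parts using Green's identity in $\mathcal{S}^+$ to represent the pairing $(\Lambda^s \mathrm{d}^j\mathcal{G}^+_\mu(\mathbf{h})\psi_1, \Lambda^s\psi_2)_{L^2}$ as an integral of $P(\Sigma^+)$-weighted gradients of $\phi_1^+$ and $\phi_2^+$ over the strip, as in Lemma~2.38 of \cite{LannesTwoFluid13}. Differentiating this variational identity $j$ times in $\zeta$ and applying the commutator estimate \eqref{Commutator G+}, the variational bounds \eqref{est phi +}, and the product structure of $P(\Sigma^+)$, the desired symmetric-looking estimate appears naturally; the asymmetric variant \eqref{Est djG+ neq} is obtained by balancing derivatives via \eqref{Classical prod est} exactly as in the proof of Proposition~3.30 of \cite{WWP}.

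The main obstacle I anticipate is not algebraic but bookkeeping: the precision in $\mu$ depends critically on whether one invokes \eqref{Est G+} (giving $\mu^{3/4}$) or \eqref{Est G+ mu} (giving $\mu$ at the cost of an extra derivative on $\psi^+$), and on how one trades a $\partial_x$ coming out of the formula \eqref{shape derivative G+} against the homogeneous norm $\dot H^{s+1/2}_\mu$. Since the estimates must close the energy estimates for \eqref{IWW} at the correct order, one must carefully select at each step which of these two variants to apply and, for the asymmetric estimates, which factor to single out. Because all of this is tracked in the finite-depth setting in \cite{WWP, LannesTwoFluid13} and nothing in the upper-fluid geometry affects the analysis of $\mathcal{G}^+_\mu$, no genuinely new idea is required; the proof is a parameter-tracked adaptation of the cited arguments.
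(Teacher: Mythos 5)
Your sketch is correct and is in substance the paper's own treatment: the paper gives no proof of this proposition, but quotes it as a known result taken from \cite{WWP,LannesTwoFluid13} (restated in the present scaling), and your argument --- shape-differentiating the elliptic problem through $\Sigma^+$ to obtain \eqref{shape derivative G+}, then inducting on $j$ with the bounds of Proposition \ref{Prop G+ dual est} and the product estimate \eqref{Classical prod est}, and using the variational/duality representation in $\mathcal{S}^+$ for \eqref{Est djG mu ve}--\eqref{Est djG+ neq} --- is precisely the argument of those references with the $\varepsilon,\mu$ dependence tracked. Nothing further is required.
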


\subsection{Properties of $\mathcal{G}_{\mu}^-$}  In this section will define and give the main properties of the negative Dirichlet-Neumann operator.

\begin{Def}\label{Def G-} Let $t_0\geq 1$, $\psi\in \mathring{H}^{\frac{3}{2}}(\R)$, and $\zeta\in H^{t_0 +2}(\R)$ be such that \eqref{non-cavitation} is satisfied. Let $\Phi^-$ be the unique solution in $\dot{H}^2(\Omega_t^-)$ of the boundary value problem 
	\begin{equation*} 
		\begin{cases}
			\Delta_{x,z}^{\mu} \Phi^- = 0 \hspace{1.3cm}\qquad \text{in} \quad \Omega_t^-
			\\
			\Phi^- = \psi \hspace{2.8cm} \text{on} \quad  z = \ve \zeta,
		\end{cases}
	\end{equation*}	
	then  $\mathcal{G}^-_{\mu}[\ve\zeta]\psi^- \in H^{\frac{1}{2}}(\R)$ is defined by 
	\begin{equation}\label{form D-N operator}
		\mathcal{G}^-_{\mu}[\ve\zeta]\psi^- = (\partial_z \Phi^- 
		- 
		\ve \mu
		\partial_x\zeta \partial_x \Phi^-){|_{z = \ve \zeta}}.
	\end{equation}
\end{Def}

\begin{remark}\label{Remark extension of G-}
	As noted in Remark $3.50$ $(2)$ in \cite{WWP}, we can define the negative Dirichlet-Neumann operator by formula \eqref{form D-N operator} (or formula \eqref{G-} below) for  $\psi^- \in  \mathring{H}^{s+\frac{1}{2}}(\R)$ if $t_0>\frac{1}{2}$ and $s \geq \max\{0,1-t_0\}$, where the restriction is a consequence of \eqref{Classical prod est}. We therefore put $t_0\geq 1$ for simplicity.
\end{remark}

\begin{remark}
	The scaling for $\mathcal{G}_{\mu}^-[\ve \zeta]$ is different from the one used in \cite{WWP}, where the author used the scaling natural for infinite depth. In that case one would change $\mu = 1$ and  $\ve$ by $\epsilon = \frac{a}{\lambda}$. In our case, the internal water wave model depends on both scales and is explained in detail in Subsection \ref{SubSec nondim}.
\end{remark}

\begin{remark}
	For $\ve \zeta  = 0$ we have that $\mathcal{G}^-_{\mu}$ becomes
	\begin{equation}\label{G-[0]}
		\mathcal{G}_{\mu}^-[0]\psi^- = -\sqrt{\mu}|\mathrm{D}| \psi^-.
	\end{equation}
\end{remark}

\begin{remark}
	Under the provisions of Definition \ref{Def G-} and let $\phi^- = \Phi\circ\Sigma^-$ where $\Sigma^+$ is the diffeomorphism from the fixed domain $\mathcal{S}^-$ onto $\Omega^-_t$ given in Definition \ref{diffeomorphism +-},  then we have that
	\begin{equation}\label{Elliptic pb G-}
		\begin{cases}
			\nabla_{x,z}^{\mu} \cdot P(\Sigma^{-})\nabla_{x,z}^{\mu} \phi^- = 0 \hspace{0.7cm}\qquad \text{in} \quad \mathcal{S}^-
			\\
			\phi^- = \psi \hspace{4.3cm} \text{on} \quad  z = 0,
		\end{cases}
	\end{equation}	
	and we have an equivalent expression of $\mathcal{G}^{-}_{\mu}[\ve\zeta]\psi^-$ given by
	\begin{equation}\label{G-}
		\mathcal{G}^-_{\mu}[\ve\zeta]\psi^-  =  \partial_n^{P^-}\phi^-|_{z=0},
	\end{equation}
	where $\partial_{n}^{P^-} =  \mathbf{e}_{z} \cdot  P(\Sigma^-) \nabla^{\mu}_{x,z}$.
	\end{remark}
 Here we use the results in \cite{WWP} adapted to the current scaling.

\begin{prop}\label{Prop G- dual est}
	Let $t_0\geq 1$, $s \in [0,t_0 + 1]$ and $\zeta \in H^{t_0+2}(\R)$ be such that \eqref{non-cavitation} is satisfied. We have the following properties:
	\begin{itemize}
		\item [1.]  For $\psi \in \mathring{H}^{s+{\frac{1}{2}}}(\R)$ there is a (variational) solution of \eqref{Pb phi+} satisfying the estimate 
		\begin{equation}\label{est psi -}
		\ 	|\psi^-|_{\mathring{H}^{s+\frac{1}{2}}} \leq \mu^{-\frac{1}{4}} M \| \Lambda^s\nabla_{x,z}^{\mu} \phi^-\|_{L^2(\mathcal{S}^-)},\color{black}
		\end{equation}
		and
		\begin{equation}\label{est phi -}
			\| \Lambda^s\nabla_{x,z}^{\mu}\phi^-\|_{L^2(\mathcal{S}^-)} 
			\leq
			\mu^{\frac{1}{4}} M |\psi^-|_{\mathring{H}^{s+\frac{1}{2}}}.
		\end{equation}
		\item [2.] By remark \ref{Remark extension of G-} we may extend Definition \ref{Def G-} for
		\begin{align*}
			\mathcal{G}^-_{\mu}[\ve\zeta]: \mathring{H}^{s+\frac{1}{2}}(\R) \rightarrow H^{s-\frac{1}{2}}(\R),
		\end{align*}
		where for all $\psi_1,\psi_2 \in \mathring{H}^{s+\frac{1}{2}}(\R)$, there holds,
		\begin{equation*}
			\int_{\R} \psi_1 \mathcal{G}^-_{\mu}[\ve\zeta] \psi_2 \: \mathrm{d}x =- \int_{\mathcal{S}^-} \nabla_{x,z}^{\mu}  \phi_1 \cdot P^-(\Sigma^-)\nabla_{x,z}^{\mu}\phi_2 \: \mathrm{d}x \mathrm{d}z.
		\end{equation*}
		\item [3.] For all $\psi_1,\psi_2 \in \mathring{H}^{s+\frac{1}{2}}(\R)$, there holds,
		\begin{align}\label{dual est G- s}
			\big{(} \Lambda^s \psi_1,  \Lambda^s \mathcal{G}^-_{\mu}[\ve\zeta] \psi_2\big{)}_{L^2} 
			\leq
			\sqrt{\mu}M|\psi_1|_{\mathring{H}^{s+\frac{1}{2}}}|\psi_2|_{\mathring{H}^{s+\frac{1}{2}}}.
		\end{align}
		\item [4.] For $\psi^- \in \mathring{H}^{s+\frac{1}{2}}(\R)$ the following estimates hold
		\begin{equation}\label{Est G-}
			|\mathcal{G}^-_{\mu}[\ve\zeta]\psi^-|_{H^{s-\frac{1}{2}}} \leq \sqrt{\mu} M |\psi^-|_{\mathring{H}^{s+\frac{1}{2}}}.
		\end{equation}

		\item [5.] For all $\psi_1\in \mathring{H}^{s-\frac{1}{2}}(\R)$ and $\psi_2 \in \mathring{H}^{s+\frac{1}{2}}(\R)$, there holds,

		\begin{equation}\label{Commutator G-}
			\big{(}\big{[}\Lambda^s,\mathcal{G}^-_{\mu}[\ve\zeta]\big{]}\psi_1, \Lambda^s\psi_2\big{)}_{L^2}
			\leq \ve  \sqrt{\mu}
			M|\psi_1|_{\mathring{H}^{s-\frac{1}{2}}}|\psi_2|_{\mathring{H}^{s+\frac{1}{2}}}.
		\end{equation}

		\item [6.] For all $V \in H^{t_0+1}(\R)$ and $\psi^-\in \mathring{H}^{\frac{1}{2}}(\R)$ there holds,	
		
		\begin{equation}\label{ext prop 3.30}
			\big{(}(V \partial_x \psi^- ), \frac{1}{\mu}\mathcal{G}_{\mu}^-[\ve \zeta]\psi^-\big{)}_{L^2} \leq \mu^{-\frac{1}{2}}M |V|_{W^{1,\infty}}|\psi^-|^2_{\mathring{H}^{\frac{1}{2}}}.
		\end{equation}
	\end{itemize}
\end{prop}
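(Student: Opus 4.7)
The six points parallel the classical theory of the Dirichlet--Neumann operator in the water waves monograph \cite{WWP}, but must be adapted so that (i) the target functional setting is the homogeneous space $\mathring{H}^{s+\frac{1}{2}}(\R)$ rather than $H^{s+\frac{1}{2}}(\R)$, since the domain $\mathcal{S}^-$ is of infinite depth, and (ii) every constant is explicit in the shallowness parameter~$\mu$. My plan is to follow the same scheme already used for $\mathcal{G}_{\mu}^+$ in Proposition \ref{Prop G+ dual est} and for the inverse problem in Proposition \ref{Inverse 2}: (a) construct $\phi^-$ variationally via a Poisson lift, (b) promote it to higher regularity by continuous induction, (c) pass to the dual via Green's identity, and finally (d) extract a commutator estimate by applying $\Lambda^s$ to the straightened Laplace problem.

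For point~1, I would set $\tilde{\phi}^- = e^{-z\sqrt{\mu}|\mathrm{D}|}\psi^-$, observe by Plancherel that $\|\nabla_{x,z}^{\mu}\tilde\phi^-\|_{L^2(\mathcal{S}^-)}\sim \mu^{1/4}|\psi^-|_{\mathring{H}^{1/2}}$ (this is precisely what produces the $\mu^{1/4}$ in \eqref{est phi -}), and then apply Lax--Milgram to the variational problem for $\phi^- - \tilde\phi^-$ in $\dot H^1(\mathcal{S}^-)$ with zero trace, using the coercivity of $P(\Sigma^-)$ from \eqref{Coercivity-}. The higher regularity is obtained exactly as in Step~2 of Proposition~\ref{Inverse 2}: apply $\Lambda^s$ to \eqref{Elliptic pb G-}, integrate by parts against $\Lambda^s\phi^-$ on the truncated strip $\mathcal{S}^-_R$, use the decay \eqref{Decay est} of Lemma~\ref{Lemma trace} and the commutator estimate \eqref{Commutator est} to close the induction on $s$. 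The inverse trace bound \eqref{est psi -} is then just the trace inequality \eqref{trace 1} applied to $\phi^-$.

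Points~2--4 are consequences of point~1. The extension to $\mathring{H}^{s+\frac{1}{2}}$ is obtained by density from the Green identity
\[
\int_{\R}\psi_1\,\mathcal{G}^-_{\mu}[\ve\zeta]\psi_2\,\mathrm{d}x=-\int_{\mathcal{S}^-}\nabla_{x,z}^{\mu}\phi_1^-\cdot P(\Sigma^-)\nabla_{x,z}^{\mu}\phi_2^-\,\mathrm{d}x\mathrm{d}z,
\]
whose justification uses the decay \eqref{Decay est} to kill the boundary terms at $z=+\infty$. The symmetric formula gives \eqref{dual est G- s} by Cauchy--Schwarz at the $\Lambda^s$-level combined with \eqref{est phi -} applied to $\Lambda^s\phi_j^-$ (the commutator $[\Lambda^s,P(\Sigma^-)]$ generates an $O(\varepsilon\sqrt{\mu})$ remainder absorbed by \eqref{Commutator est}). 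Inequality \eqref{Est G-} then follows by duality from \eqref{dual est G- s} via $|\mathcal{G}_{\mu}^-[\ve\zeta]\psi^-|_{H^{s-1/2}}\sim \sup_{\varphi}(\Lambda^s\varphi,\Lambda^s\mathcal{G}_{\mu}^-\psi^-)_{L^2}$ with $\varphi$ a properly normalized test function.

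Point~5 is the technical core. After writing the system satisfied by $\Lambda^s\phi_1^-$ (same elliptic problem, but with a commutator source $-\nabla_{x,z}^{\mu}\cdot[\Lambda^s,P(\Sigma^-)]\nabla_{x,z}^{\mu}\phi_1^-$), the identity in point~2 gives
\[
\big([\Lambda^s,\mathcal{G}^-_{\mu}[\ve\zeta]]\psi_1,\Lambda^s\psi_2\big)_{L^2}=\int_{\mathcal{S}^-}[\Lambda^s,P(\Sigma^-)]\nabla_{x,z}^{\mu}\phi_1^-\cdot\nabla_{x,z}^{\mu}\Lambda^s\phi_2^-\,\mathrm{d}x\mathrm{d}z.
\]
Since $P(\Sigma^-)-I$ is $O(\varepsilon\sqrt{\mu})$ (its non-trivial entries carry at least one $\sqrt{\mu}\partial_x\zeta$), a Coifman--Meyer-type commutator estimate of the form \eqref{Commutator est} gains an $\varepsilon$, and combined with \eqref{est phi -} applied to both factors delivers the desired $\ve\sqrt{\mu}$. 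Finally, point~6 is proved by the same integration-by-parts trick used for \eqref{Prop 3.30} in the lower layer: introduce the Poisson extension of $\psi^-$, write $\frac{1}{\mu}\mathcal{G}_{\mu}^-[\ve\zeta]\psi^-$ as a boundary flux and convert $(V\partial_x\psi^-,\mathcal{G}_{\mu}^-[\ve\zeta]\psi^-)_{L^2}$ into a bulk integral; a Hilbert-transform identity $\partial_x=-\mathcal{H}|\mathrm{D}|$ together with the commutator $[V,|\mathrm{D}|^{1/2}]$ redistributes one half derivative so that both factors of $\psi^-$ end up in $\mathring{H}^{1/2}$, and a careful tracking shows that the Poisson smoothing $\|\,|\mathrm{D}|^{1/2}e^{-z\sqrt{\mu}|\mathrm{D}|}\psi^-\|_{L^2(\mathcal{S}^-)}\lesssim\mu^{-1/4}|\psi^-|_{\mathring{H}^{1/2}}$ is exactly what produces the $\mu^{-1/2}$ in \eqref{ext prop 3.30}. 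The main obstacle will be the matching of the $\mu^{1/4}$ weights across all six points so that the homogeneous space $\mathring{H}^{s+\frac{1}{2}}$ is recovered with the correct powers of $\mu$; this is the reason the same estimate on finite depth carries a factor $\mu^{1/2}$ while its infinite-depth analogue loses a half power.
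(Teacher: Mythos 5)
Your strategy is sound, but note that the paper does not actually prove this proposition: it is stated in the appendix as an adaptation of known results, with \eqref{Commutator G-} referred to inequality $(2.24)$ of \cite{LannesTwoFluid13}, \eqref{ext prop 3.30} referred to Proposition $3.30$ of \cite{WWP} extended to infinite depth, and a remark explaining the bookkeeping in $\sqrt{\mu}$ relative to $\mathcal{G}^+_{\mu}$. What you propose is a self-contained reconstruction of the machinery behind those citations, and it coincides with the toolkit the paper deploys for its genuinely new statements: the Poisson lift $e^{-z\sqrt{\mu}|\mathrm{D}|}\psi^-$ together with the exact computation $\|\nabla^{\mu}_{x,z}e^{-z\sqrt{\mu}|\mathrm{D}|}\psi^-\|_{L^2(\mathcal{S}^-)}\sim\mu^{\frac{1}{4}}|\psi^-|_{\mathring{H}^{\frac{1}{2}}}$ (which is indeed the source of the $\mu^{\pm\frac{1}{4}}$ in \eqref{est psi -}--\eqref{est phi -}), Lax--Milgram with the coercivity \eqref{Coercivity-}, continuous induction exactly as in Step~2 of Proposition \ref{Inverse 2}, the trace inequality \eqref{trace 1}, and Green's identity justified by the decay \eqref{Decay est} for points 2--4; your duality argument for \eqref{Est G-}, testing against $\psi_1=\Lambda^{-s-\frac{1}{2}}g$ with $g\in L^2$, is correct.

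Two imprecisions are worth fixing if you write this out. In point~5 your one-line identity is not an equality: since $\Lambda^s\phi_1^-$ is not the lift of $\Lambda^s\psi_1$, you must pass through the corrected problem for $\Lambda^s\phi_1^-$, which produces, besides the bulk term with $[\Lambda^s,P(\Sigma^-)]\nabla^{\mu}_{x,z}\phi_1^-$, a boundary contribution $\mathbf{e}_z\cdot[\Lambda^s,P(\Sigma^-)]\nabla^{\mu}_{x,z}\phi_1^-|_{z=0}$ paired with $\Lambda^s\psi_2$ (compare the system displayed in Step~2 of the proof of Proposition \ref{Inverse 2}); both pieces still carry the factor $\ve\sqrt{\mu}$ coming from $P(\Sigma^-)-I$, so the conclusion is unchanged. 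Moreover, since \eqref{Commutator G-} only assumes $\psi_1\in\mathring{H}^{s-\frac{1}{2}}$, you cannot invoke \eqref{est phi -} at level $s$ for $\phi_1^-$; you must exploit that $[\Lambda^s,P(\Sigma^-)]$ is of order $s-1$ and control $\|\Lambda^{s-1}\nabla^{\mu}_{x,z}\phi_1^-\|_{L^2(\mathcal{S}^-)}$ instead. In point~6, the smoothing bound you quote should read $\|\,|\mathrm{D}|^{\frac{1}{2}}e^{-z\sqrt{\mu}|\mathrm{D}|}\varphi\|_{L^2(\mathcal{S}^-)}\lesssim\mu^{-\frac{1}{4}}|\varphi|_{L^2}$, i.e.\ \eqref{smoothing}; the version with $|\psi^-|_{\mathring{H}^{\frac{1}{2}}}$ on the right has an extra half derivative on the left. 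Neither slip affects the powers of $\mu$ you target.
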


\begin{remark}
	If we compare the estimates for $\mathcal{G}_{\mu}^+$ with the ones above, we note that there is a $\sqrt{\mu}$ missing. However, this is a simple consequence of the functional setting where there is an additional gain in $\mu$ from the observation that
	\begin{equation*}
		\sqrt{\mu} |\xi| \mathrm{tanh}(\sqrt{\mu}|\xi|) \sim \mu \frac{|\xi|^2}{(1+ \sqrt{\mu}|\xi|)}.
	\end{equation*}
\end{remark}

\begin{remark}
	The inequality \eqref{Commutator G-} is a direct extension of inequality $(2.24)$ in \cite{LannesTwoFluid13}.	While the last inequality \eqref{ext prop 3.30} is the extension of Proposition $3.30$ in \cite{WWP} to infinite depth. The extension is straightforward and is explained on page $88$ of this book. %Also, consult the previous remark for the scaling. 
\end{remark}

Lastly, we will need a shape derivative formula for $\mathcal{G}_{\mu}^-$ and estimates on higher order shape derivatives \cite{WWP}: 

\begin{prop}	Let $t_0\geq 1$, $s \in [0,t_0+1]$, and take $\zeta \in H^{t_0+2}(\R)$. Then we have the following properties: \\ 
	
	\begin{itemize}
	\item [1.]  For $\psi^- \in \mathring{H}^{s+\frac{1}{2}}(\R)$ the shape derivative of $\mathcal{G}^-_{\mu}[\ve \zeta]$ at $\zeta\in H^{t_0+2}(\R)$ in the direction of $h\in  H^{t_0+2}(\R)$ is given by the formula
	\begin{equation}\label{shape derivative G-}
		\mathrm{d}_{\zeta}\mathcal{G}^-_{\mu}[\ve \zeta](h) \psi^- = -\ve \mathcal{G}^-_{\mu}[\ve \zeta](h \underline{w}^-) - \ve \mu \partial_x(h\underline{V}^-). 
	\end{equation}

	\item [2.] For all $0\leq s \leq t_0 + 1$, $j \geq 1$ and $\psi_1, \psi_2 \in \mathring{H}^{s+\frac{1}{2}}(\R)$, there holds, 
	\begin{equation}\label{Est djG- mu ve}
		\big{|}\big{(}\Lambda^s\mathrm{d}^j \mathcal{G}^-_{\mu}[\ve \zeta](\mathbf{h}) \psi_1,  \Lambda^s\psi_2\big{)}_{L^2}\big{|} \leq \ve^j\sqrt{\mu}  M \prod\limits_{m=1}^j |h_m|_{H^{\max\{s,t_0\}+1}} |\psi_1|_{\mathring{H}^{s+\frac{1}{2}}} |\psi_2|_{\mathring{H}^{s+\frac{1}{2}}}.
	\end{equation}
	\item [3.]For all $0\leq s \leq t_0 + \frac{1}{2}$, $j \geq 1$, $\psi_1 \in\mathring{H}^{\max\{s,t_0\}+\frac{3}{2}}(\R), \psi_2 \in \mathring{H}^{s+\frac{1}{2}}(\R)$, there holds,
	\begin{equation}\label{Est djG- neq}
		\big{|}\big{(}\Lambda^s\mathrm{d}^j \mathcal{G}^-_{\mu}[\ve \zeta](\mathbf{h}) \psi_1,  \Lambda^s\psi_2\big{)}_{L^2}\big{|} \leq \ve^j\sqrt{\mu}  M  |h_l|_{H^{s+\frac{1}{2}}}\prod\limits_{m\neq l}^j |h_m|_{H^{\max\{s,t_0\}+\frac{3}{2}}} |\psi_1|_{\mathring{H}^{\max\{s,t_0\}+1}} |\psi_2|_{\mathring{H}^{s+\frac{1}{2}}}.
	\end{equation}
	\end{itemize}
\end{prop}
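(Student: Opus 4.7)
The plan is to adapt the WWP framework (Propositions 3.28, 3.29 in \cite{WWP}) to the present anisotropic scaling and to the homogeneous functional setting $\mathring{H}^{s+\frac12}(\R)$ imposed by the infinite depth of $\Omega^-_t$. For Point 1, I would start from the elliptic problem \eqref{Elliptic pb G-} satisfied by $\phi^- = \Phi^-\circ \Sigma^-$ and compute the G\^ateaux derivative in the direction $h \in H^{t_0+2}(\R)$. After writing $\Phi^{-,\delta}$ for the solution associated to $\zeta+\delta h$, one pushes the data back to the fixed strip $\mathcal{S}^-$ via $\Sigma^-$ and introduces the \emph{Alinhac good unknown}
\[
\dot{\Phi}^- := \partial_\delta \Phi^{-,\delta}\big|_{\delta=0} - \varepsilon h\,\partial_z \Phi^-,
\]
which is harmonic in $\Omega^-_t$, vanishes at infinity (by Lemma \ref{Lemma trace}), and has Dirichlet trace $-\varepsilon h \underline{w}^-$ at $z=\varepsilon\zeta$. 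Computing $\mathrm{d}_\zeta\mathcal{G}^-_\mu(h)\psi^- = (\partial_z\dot{\Phi}^- - \varepsilon\mu \partial_x\zeta\,\partial_x \dot{\Phi}^-)|_{z=\varepsilon\zeta} + \varepsilon\mu(-\partial_x h \partial_x \Phi^- + \cdots)|_{z=\varepsilon\zeta}$, and regrouping using the definitions of $\underline{w}^-$ and $\underline{V}^-$ from Definition \ref{Def op}, gives the formula
\[
\mathrm{d}_\zeta\mathcal{G}^-_\mu[\varepsilon\zeta](h)\psi^- = -\varepsilon\mathcal{G}^-_\mu[\varepsilon\zeta](h\underline{w}^-) - \varepsilon\mu \partial_x(h\underline{V}^-),
\]
exactly as in the finite-depth/positive side computation leading to \eqref{shape derivative G+}.

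For Point 2, I proceed by induction on $j$. The base case $j=1$ follows by pairing both sides of the shape derivative formula against $\Lambda^s \psi_2$: the first summand is controlled through \eqref{dual est G- s} with the trivial bound $|h\underline{w}^-|_{\mathring{H}^{s+1/2}} \lesssim |h|_{H^{\max\{s,t_0\}+1}} |\psi^-|_{\mathring{H}^{s+1/2}}$ (using \eqref{W+ est low reg}-type controls for $\underline{w}^-$ and the tame product estimate \eqref{Classical prod est}), which yields one factor of $\varepsilon\sqrt{\mu}$. The second summand is integrated by parts once to shift $\partial_x$ onto $\psi_2$, then estimated by Cauchy--Schwarz; the gain is again $\varepsilon\sqrt{\mu}$ because of the $\mu$-factor in front combined with the half-derivative loss absorbed by $|\psi_2|_{\mathring{H}^{s+1/2}}$. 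For $j\geq 2$, differentiating the formula in $\zeta$ produces three families of terms: (i) $\mathcal{G}^-_\mu(\cdot)$ applied to products containing a lower-order shape derivative of $\underline{w}^-$; (ii) a $\partial_x$-term with an iterated shape derivative of $\underline{V}^-$; (iii) the operator $\mathcal{G}^-_\mu$ itself differentiated once more, which falls under the inductive hypothesis. Each shape derivative of $\underline{w}^-$ or $\underline{V}^-$ brings a new $\varepsilon h_m$ together with a $W^{1,\infty}$-controlled multiplier that is absorbed into $M$, and the induction closes after applying the product estimate \eqref{Classical prod est} at each step.

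Point 3 is the technically delicate step. The objective is to place one distinguished factor $h_l$ in the lower-regularity space $H^{s+1/2}$ while the remaining $h_m$, $m\neq l$, sit in $H^{\max\{s,t_0\}+3/2}$; in exchange, $\psi_1$ must be controlled in the higher space $\mathring{H}^{\max\{s,t_0\}+1}$. The main obstacle is purely combinatorial/bookkeeping: when the Leibniz rule is applied $j$ times to the recursive expression produced in Point 2, $h_l$ can land either on a derivative of $\underline{w}^-$, on a derivative of $\underline{V}^-$, or inside an internal $\mathcal{G}^-_\mu$-factor, and in each case one must invoke the tame version of \eqref{Classical prod est} to keep the single \emph{rough} factor $h_l$ isolated while all other factors absorb the \emph{smooth} norms. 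The induction therefore must carry \emph{both} bounds (Point 2 and Point 3) simultaneously, and at each step one chooses the location of the distinguished factor so as to avoid doubling the loss in regularity. Once this bookkeeping is set up, the $\varepsilon^j\sqrt{\mu}$ prefactor follows by exactly the same counting as in Point 2. As in Propositions 3.28–3.29 of \cite{WWP}, the whole argument is insensitive to the infinite depth once \eqref{est phi -}, \eqref{est psi -}, and \eqref{dual est G- s} are in hand, so no additional symbolic analysis beyond Section \ref{Sec Symbolic} is required here.
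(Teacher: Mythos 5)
First, note that the paper does not actually prove this proposition: it is quoted from \cite{WWP}, adapted to the present scaling, and the argument behind that citation runs through the variational (quadratic-form) characterization of $\mathcal{G}^-_{\mu}$ rather than through the pointwise shape-derivative formula. Your Point 1 is in the right spirit (differentiate the elliptic problem and identify the boundary trace), but as written the good unknown is off: if $\Phi^{-,\delta}$ denotes the potential on the moving physical domain, then $\partial_\delta\Phi^{-,\delta}|_{\delta=0}$ is already harmonic and already has trace $-\varepsilon h\,\underline{w}^-$ at $z=\varepsilon\zeta$ (since $\underline{w}^- = \partial_z\Phi^-|_{z=\varepsilon\zeta}$); subtracting $\varepsilon h\,\partial_z\Phi^-$ on top of that destroys harmonicity and doubles the trace. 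The Alinhac-type correction belongs to the straightened-strip formulation, not to the physical-domain derivative. This is a repairable slip.

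The genuine gap is in Points 2 and 3. Your base case controls $\big(\Lambda^s\mathcal{G}^-_{\mu}[\varepsilon\zeta](h\underline{w}^-),\Lambda^s\psi_2\big)_{L^2}$ via \eqref{dual est G- s}, which requires $|h\underline{w}^-|_{\mathring{H}^{s+\frac12}}$, and you assert the ``trivial bound'' $|h\underline{w}^-|_{\mathring{H}^{s+\frac12}}\lesssim |h|_{H^{\max\{s,t_0\}+1}}|\psi^-|_{\mathring{H}^{s+\frac12}}$. This is false: up to the bounded factor $(1+\varepsilon^2\mu(\partial_x\zeta)^2)^{-1}$, $\underline{w}^-$ is $\mathcal{G}^-_{\mu}\psi^-+\varepsilon\mu\partial_x\zeta\,\partial_x\psi^-$, i.e.\ it already carries a full derivative of $\psi^-$; by \eqref{Est G-} (equivalently \eqref{W+ est low reg}) one only has $\underline{w}^-\in H^{s-\frac12}$ when $\psi^-\in\mathring{H}^{s+\frac12}$, one derivative short of what your bound needs. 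Following your route yields \eqref{Est djG- mu ve} with $|\psi_1|_{\mathring{H}^{s+\frac32}}$ on the right, a strictly weaker, non-symmetric estimate, and it is precisely the no-loss symmetric version that is used later (e.g.\ in \eqref{est G alpha} and the energy estimates). The loss cannot be relocated by invoking the symmetry of $\mathcal{G}^-_{\mu}$ or the commutator bound \eqref{Commutator G-}: wherever the boundary product $h\underline{w}^-$ is formed, one of $\psi_1,\psi_2$ pays an extra derivative. The standard proof avoids the product altogether: differentiate the identity $(\psi_1,\mathcal{G}^-_{\mu}[\varepsilon\zeta]\psi_2)_{L^2}=-\int_{\mathcal{S}^-}P(\Sigma^-)\nabla^{\mu}_{x,z}\phi_1\cdot\nabla^{\mu}_{x,z}\phi_2\,\mathrm{d}x\,\mathrm{d}z$ with respect to $\zeta$; the terms containing $\mathrm{d}_\zeta\phi_i$ vanish because the Dirichlet data are $\zeta$-independent (zero trace) and $\phi_j$ solves \eqref{Elliptic pb G-}, so only $\int\mathrm{d}^j_\zeta P(\Sigma^-)(\mathbf{h})\,\nabla^{\mu}\phi_1\cdot\nabla^{\mu}\phi_2$ survives, plus $\Lambda^s$-commutators. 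The entries of $\mathrm{d}_\zeta P(\Sigma^-)$ supply the $\varepsilon\sqrt{\mu}$ factors, and \eqref{est phi -} splits the derivative evenly between $\psi_1$ and $\psi_2$, giving \eqref{Est djG- mu ve}; Point 3 is the same computation with the weights and product estimates arranged so that a single $h_l$ sits in the low norm while $\psi_1$ absorbs the higher one. Your bookkeeping for Point 3 therefore has to be redone at the level of this volume identity, not of the operator formula.
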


\subsection{Corollaries from the results in Section \ref{Properties of G}} \label{Appendix Corollaries} We will give an important generalization of Proposition \ref{Inverse 2}  where we prove that we can trade $\mathcal{G}_{\mu}^+$ with any operator $\mathrm{Op}(A): X \rightarrow H^{s-\frac{1}{2}}(\R)$ satisfying
\begin{equation}\label{Condition on A}
	\big{|} \big{(} \Lambda^s \mathrm{Op}(A) f_1, \Lambda^s f_2 \big{)}_{L^2}\big{|} \leq M M_A(f_1) \sqrt{\mu}|f_2|_{\mathring{H}^{s+\frac{1}{2}}},
\end{equation}
where $M_{A}(\psi)$ is some positive number defined by the norm on $X$. This can be seen from Step $1.$ in the proof, which is the only place where we use that $(\mathcal{G}_{\mu}^-)^{-1}$ is composed with $\mathcal{G}_{\mu}^+$. We have the following result. 

\begin{cor}\label{Cor inverse G-}
	 Let $t_0 \geq 1$, $s\in [0, t_0+1]$, and $\zeta \in H^{t_0+2}(\R)$ be such that \eqref{non-cavitation} is satisfied.  Then for $f \in \mathscr{S}(\R)$ and  $\mathrm{Op}(A)$ satisfying condition \eqref{Condition on A},  the mapping
	\begin{equation}\label{inverse G A}
		(\mathcal{G}^-_{\mu}[\ve \zeta])^{-1}  \mathrm{Op}(A)
		\:
		:
		\:
		\begin{cases}
			X & \rightarrow   \mathring{H}^{s + \frac{1}{2}}(\R)\\
			f & \mapsto  (\mathcal{G}^-_{\mu}[\ve \zeta])^{-1}  \mathrm{Op}(A)f
		\end{cases} 
	\end{equation}
	is well-defined and satisfies
	\begin{equation}\label{Inverse est A}
		| (\mathcal{G}^-_{\mu}[\ve \zeta])^{-1} \mathrm{Op}(A) f |_{\mathring{H}^{s+ \frac{1}{2}}}\leq M M_{A}(f).
	\end{equation}
	Moreover, we have the particular cases of $\mathrm{Op}(A)$:\\ 
	
	\begin{itemize}

	\item [1.] If $\mathrm{Op}(A) = \partial_x$, then $X = \mathring{H}^{s+\frac{1}{2}}(\R)$ and there holds,
	\begin{equation}\label{Est derivative}
		|(\mathcal{G}^{-}_{\mu}[\ve \zeta])^{-1} \partial_x f|_{\mathring{H}^{s+\frac{1}{2}}} \leq \mu^{-\frac{1}{2}}M |f|_{\mathring{H}^{s+\frac{1}{2}}}.
	\end{equation}

	\iffalse 
	\item [2.] If $s=1$, $\mathrm{Op}(A) = ( \sqrt{\mu}|\mathrm{D}|- \mathcal{G}_{\mu}^{-}[\ve \zeta])$, then $X = H^{-\frac{1}{2}}(\R)$ and there holds,
	
	\begin{equation}\label{inv reg}
		|(\mathcal{G}^-_{\mu}[\ve \zeta])^{-1}\Big{(} \sqrt{\mu}|\mathrm{D}|- \mathcal{G}_{\mu}^{-}[\ve \zeta] \Big{)}f|_{\mathring{H}^{\frac{1}{2}}} \leq \ve \sqrt{\mu}M |f|_{H^{-\frac{1}{2}}}.
	\end{equation}
	\fi 
	\item [2.] If $\mathrm{Op}(A) = \mathrm{d}_{\zeta}^j \mathcal{G}^+_{\mu}[\ve \zeta]$ for $j\geq 1$, then $X = \dot{H}_{\mu}^{s+\frac{1}{2}}(\R)$ and for $\mathbf{h} = (h_1,...,h_j) \in H^{t_0+2}(\R)^j$ there holds,
	\begin{equation}\label{G- inv djG+}
		|(\mathcal{G}^{-}_{\mu}[\ve \zeta])^{-1} \mathrm{d}_{\zeta}^j \mathcal{G}^+_{\mu}[\ve \zeta]f|_{\mathring{H}^{s+\frac{1}{2}}} \leq  \ve^j \mu^{\frac{1}{4}} M \prod\limits_{m=1}^j |h_m|_{H^{\max\{s,t_0\}+1}}  |f|_{\dot{H}_{\mu}^{s+\frac{1}{2}}},
	\end{equation}
	and for $X=  \dot{H}_{\mu}^{\max\{s, t_0\}+1}(\R)$ there holds,
	\begin{equation}\label{2: G- inv djG+}
		|(\mathcal{G}^{-}_{\mu}[\ve \zeta])^{-1} \mathrm{d}_{\zeta}^j \mathcal{G}^+_{\mu}[\ve \zeta]f|_{\mathring{H}^{s+\frac{1}{2}}} \leq \ve^j\mu^{\frac{1}{4}} M|h_k|_{H^{s+\frac{1}{2}}} \prod\limits_{m \neq k}^j |h_m|_{H^{\max\{s, t_0\} + \frac{3}{2}}} |f|_{\dot{H}_{\mu}^{\max\{s, t_0\}+1}}.
	\end{equation}

	\item [3.] If $\mathrm{Op}(A) = \mathrm{d}^j \mathcal{G}^-_{\mu}[\ve \zeta]$ for $j\geq 1$, then $X = \mathring{H}^{s+\frac{1}{2}}(\R)$ and for $\mathbf{h} = (h_1,...,h_j) \in H^{t_0+2}(\R)^j$ there holds,
	\begin{equation}\label{G- inv djG-}
		|(\mathcal{G}^{-}_{\mu}[\ve \zeta])^{-1} \mathrm{d}^j_{\zeta} \mathcal{G}^-_{\mu}[\ve \zeta]f|_{\mathring{H}^{s+\frac{1}{2}}} 
		\leq
		\ve^j M \prod\limits_{m=1}^j |h_m|_{H^{\max\{s,t_0\}+1}}   |f|_{\mathring{H}_{\mu}^{s+\frac{1}{2}}},
	\end{equation}
	and for $X =  \mathring{H}^{\max\{s, t_0\}+1}(\R)$ there holds,
	\begin{equation}\label{2: G- inv djG-}
		|(\mathcal{G}^{-}_{\mu}[\ve \zeta])^{-1} \mathrm{d}^j_{\zeta} \mathcal{G}^+_{\mu}[\ve \zeta]f|_{\mathring{H}^{s+\frac{1}{2}}} 
		\leq 
		\ve^j M|h_k|_{H^{s+\frac{1}{2}}} \prod\limits_{m \neq k}^j |h_m|_{H^{\max\{s, t_0\} + \frac{3}{2}}} |f|_{\mathring{H}^{\max\{s, t_0\}+1}}.
	\end{equation}
	\end{itemize}
	
\end{cor}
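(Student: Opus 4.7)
The strategy is to mimic the proof of Proposition \ref{Inverse 2} almost verbatim, noting that the only place where the specific identity $\mathrm{Op}(A)=\mathcal{G}^+_{\mu}$ is actually exploited is when bounding the Neumann data in the variational formulation; everything else (the Lax–Milgram setup on $\dot H^1(\mathcal{S}^-)$, the coercivity \eqref{Coercivity-}, the decay \eqref{Decay est}, the trace estimates \eqref{trace 1}-\eqref{trace 2}, and the inductive elliptic regularity argument) is intrinsic to the negative Dirichlet–Neumann operator. Thus the abstract assumption \eqref{Condition on A} is precisely what is needed to run the whole machinery.

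Concretely, for given $f\in X$ I would define $g:=(\mathcal{G}^-_\mu[\ve\zeta])^{-1}\mathrm{Op}(A)f$ as the trace at $z=0$ of the variational solution $\phi^-\in\dot H^{s+1}(\mathcal{S}^-)$ of
\begin{equation*}
\begin{cases}
\nabla^{\mu}_{x,z}\cdot P(\Sigma^-)\nabla^{\mu}_{x,z}\phi^-=0 &\text{in }\mathcal{S}^-,\\
\partial_n^{P^-}\phi^-=\mathrm{Op}(A)f &\text{on }z=0,
\end{cases}
\end{equation*}
in the sense $a(\phi^-,\varphi)=-\int_{z=0}(\mathrm{Op}(A)f)\,\varphi\,\mathrm{d}x=:L(\varphi)$ for all $\varphi\in C^\infty(\overline{\mathcal{S}^-})\cap\dot H^1(\mathcal{S}^-)$. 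Continuity of $L$ is the only step that differs from Proposition \ref{Inverse 2}: combining assumption \eqref{Condition on A} with $s=0$ and the trace inequality \eqref{trace 1} yields $|L(\varphi)|\leq M\,M_A(f)\,\sqrt{\mu}\,|\varphi|_{z=0}|_{\mathring{H}^{1/2}}\leq M\,M_A(f)\,\mu^{1/4}\|\nabla^{\mu}_{x,z}\varphi\|_{L^2(\mathcal{S}^-)}$, after which coercivity delivers a unique $\phi^-$ with $\|\nabla^{\mu}_{x,z}\phi^-\|_{L^2(\mathcal{S}^-)}\leq \mu^{1/4} M\,M_A(f)$. To upgrade to general $s\in[0,t_0+1]$, I would apply $\Lambda^s$ to the equation, integrate by parts on a truncated strip $\mathcal{S}^-_R$, pass to the limit using \eqref{Decay est}, and obtain the familiar inductive bound
\begin{equation*}
\|\Lambda^s\nabla^{\mu}_{x,z}\phi^-\|_{L^2(\mathcal{S}^-)}\leq M\bigl(\mu^{1/4}M_A(f)+\|\Lambda^{s-1}\nabla^{\mu}_{x,z}\phi^-\|_{L^2(\mathcal{S}^-)}\bigr),
\end{equation*}
via \eqref{Condition on A} and the commutator estimate \eqref{Commutator est}. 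A continuous induction then gives $\|\Lambda^s\nabla^{\mu}_{x,z}\phi^-\|_{L^2(\mathcal{S}^-)}\leq \mu^{1/4}M\,M_A(f)$, and finally \eqref{est psi -} converts this into the advertised bound $|g|_{\mathring{H}^{s+1/2}}\leq M\,M_A(f)$ (absorbing the harmless $\mu^{1/4}\leq1$ into $M$).

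For the three specific cases, the work reduces to verifying \eqref{Condition on A} with the right $M_A$. For $\mathrm{Op}(A)=\partial_x$ I integrate by parts to rewrite $(\Lambda^s\partial_xf_1,\Lambda^sf_2)_{L^2}=-(\Lambda^s\mathcal{H}|\mathrm{D}|^{1/2}f_1,\Lambda^s|\mathrm{D}|^{1/2}f_2)_{L^2}$ and bound by $|f_1|_{\mathring{H}^{s+1/2}}|f_2|_{\mathring{H}^{s+1/2}}$, which matches \eqref{Condition on A} with $M_A(f)=\mu^{-1/2}|f|_{\mathring{H}^{s+1/2}}$, giving \eqref{Est derivative}. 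For $\mathrm{Op}(A)=\mathrm{d}^j_{\zeta}\mathcal{G}^+_{\mu}[\ve\zeta]$ I plug in \eqref{Est djG mu ve} (resp.\ \eqref{Est djG+ neq}) and use the pointwise inequality $|\xi|/(1+\sqrt{\mu}|\xi|)^{1/2}\leq \mu^{-1/4}|\xi|^{1/2}$ to trade the $\dot H^{s+1/2}_{\mu}$ norm against $\mu^{-1/4}|\cdot|_{\mathring{H}^{s+1/2}}$, which brings the right-hand side into the form $M\,M_A(f)\sqrt{\mu}|\varphi|_{\mathring{H}^{s+1/2}}$ with $M_A(f)=\ve^j\mu^{1/4}\bigl(\prod|h_m|_{\ldots}\bigr)|f|_{\dot H^{s+1/2}_\mu}$; \eqref{G- inv djG+} and \eqref{2: G- inv djG+} follow. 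The case $\mathrm{Op}(A)=\mathrm{d}^j_{\zeta}\mathcal{G}^-_{\mu}[\ve\zeta]$ is analogous, starting from \eqref{Est djG- mu ve} and \eqref{Est djG- neq}.

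The main obstacle is really just bookkeeping: making sure that the $\mu$-weights extracted from the dual estimates on each directional derivative (which are naturally expressed in either $\dot H^{s+1/2}_{\mu}$ or $\mathring{H}^{s+1/2}$) recombine correctly to give the $\sqrt{\mu}$ factor required by \eqref{Condition on A}. Once this is checked for the generic operator, the specific inequalities \eqref{Est derivative}–\eqref{2: G- inv djG-} are immediate consequences of the stated shape derivative estimates, and no new elliptic analysis is needed beyond the one already carried out in Proposition \ref{Inverse 2}.
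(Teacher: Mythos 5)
Your proposal is correct and follows essentially the same route as the paper: the general bound \eqref{Inverse est A} is obtained by rerunning Step 1 of Proposition \ref{Inverse 2} with the abstract condition \eqref{Condition on A} (via the trace inequality \eqref{trace 1}) replacing the dual estimate on $\mathcal{G}^+_{\mu}$, and the specific cases reduce to verifying \eqref{Condition on A} with exactly the estimates you cite — the Hilbert-transform identity for $\partial_x$, \eqref{Est djG mu ve}/\eqref{Est djG+ neq} together with \eqref{Basic est: B to D} for $\mathrm{d}^j_{\zeta}\mathcal{G}^+_{\mu}$, and \eqref{Est djG- mu ve}/\eqref{Est djG- neq} for $\mathrm{d}^j_{\zeta}\mathcal{G}^-_{\mu}$. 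The only cosmetic difference is that the paper simply cites the earlier elliptic argument where you write it out, and your $\mu$-bookkeeping (the $\mu^{1/4}$ from the elliptic estimate cancelling the $\mu^{-1/4}$ from \eqref{est psi -}) reproduces the stated constants exactly.
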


\begin{proof}
	As explained above, we only need to consider the specific cases of $\mathrm{Op}(A)$, where \eqref{Inverse est A} is deduced as in the proof of Proposition \ref{Inverse 2}, Step 1., where we use  \eqref{trace 1} (instead of \eqref{trace 2}) in combination with \eqref{Condition on A}. In other words,  since we have \eqref{Inverse est A} at hand, we simply need to verify inequality \eqref{Condition on A} and identify the constant $M_{A}(f_1)$.
	\\
	
	\noindent
	\underline{Step 1.} For the proof of \eqref{Est derivative} we use that $\partial_x = - \mathcal{H}|\mathrm{D}|,$ where $\mathcal{H}$ is the Hilbert transform and then use Plancherel's identity together with Cauchy-Schwarz inequality to deduce the bound
	\begin{align*}
		\big{|} \big{(} \Lambda^s \partial_x f_1, \Lambda^s f_2 \big{)}_{L^2}\big{|}
		\leq M |f_1|_{\mathring{H}^{s+\frac{1}{2}}}|f_2|_{\mathring{H}^{s+\frac{1}{2}}}.
	\end{align*}
	%
	%
	%
	\iffalse 
	\noindent
	\underline{Step 2.} For the proof of \eqref{inv reg}, we use the symmetry $\mathrm{Op}(A) = ( \sqrt{\mu}|\mathrm{D}|- \mathcal{G}_{\mu}^{-}[\ve \zeta])$ and estimate \eqref{G- symbolic} to find that
	%
	%
	%
		\begin{align*}
		\big{|} \big{(}\mathrm{Op}(A) f_1,  f_2 \big{)}_{L^2}\big{|}
		& = 
		\big{|} \big{(} \Lambda^{-\frac{1}{2}} f_1, \Lambda^{\frac{1}{2}} (	\sqrt{\mu}|\mathrm{D}|- \mathcal{G}_{\mu}^{-}[\ve \zeta])f_2\big{)}_{L^2}\big{|}
		\\
		& 
		\leq \ve \mu M |f_1|_{H^{-\frac{1}{2}}}|f_2|_{\mathring{H}^{\frac{1}{2}}}.
	\end{align*}
	\fi 
	
	\noindent
	\underline{Step 2.} For the proof of \eqref{G- inv djG+}, we use estimate \eqref{Est djG mu ve} and\eqref{Basic est: B to D} to get the estimate
	\begin{align*}
		\big{|}\big{(}\Lambda^s\mathrm{d}^j \mathcal{G}^+_{\mu}[\ve \zeta](\mathbf{h}) f_1,  \Lambda^sf_2\big{)}_{L^2}\big{|} 
		& \leq
		\ve^j \mu M  |f_1|_{\dot{H}_{\mu}^{s+\frac{1}{2}}} |f_2|_{\dot{H}^{s+\frac{1}{2}}_{\mu}} \prod\limits_{m=1}^j |h_m|_{H^{\max\{s,t_0\}+1}}%\langle \mathbf{h}\rangle_{H^{\max\{s,t_0\}+1}} 
		\\ 
		& 
		\leq \ve^j \mu^{\frac{3}{4}} M |f_1|_{\dot{H}_{\mu}^{s+\frac{1}{2}}} |f_2|_{\mathring{H}^{s+\frac{1}{2}}} \prod\limits_{m=1}^j |h_m|_{H^{\max\{s,t_0\}+1}}.%\langle \mathbf{h}\rangle_{H^{\max\{s,t_0\}+1}}.
	\end{align*}
	While the proof of \eqref{2: G- inv djG+} is obtained by \eqref{Est djG+ neq}. \\

	\noindent
	\underline{Step 3.} This step is a direct consequence of estimates \eqref{Est djG- mu ve}
	and \eqref{Est djG- neq}, where estimate \eqref{Est djG- mu ve} implies
	\begin{align*}
		\big{|}\big{(}\Lambda^s\mathrm{d}^j \mathcal{G}^-_{\mu}[\ve \zeta](\mathbf{h}) f_1,  \Lambda^sf_2\big{)}_{L^2}\big{|} 
		& \leq
		\ve^j \sqrt{\mu} M  |f_1|_{\mathring{H}^{s+\frac{1}{2}}} |f_2|_{\mathring{H}^{s+\frac{1}{2}}} \prod\limits_{m=1}^j |h_m|_{H^{\max\{s,t_0\}+1}}.%\langle \mathbf{h}\rangle_{H^{\max\{s,t_0\}+1}}.
	\end{align*}
\end{proof}
%\begin{remark}
%	Here we see how the precision on the shallowness parameter $\mu$ depends on the functional setting.\\
%\end{remark}

	\subsection{Basic definitions} We start this section by defining the main quantities involved in  \eqref{IWW} and relating them to the primary variables $(\zeta, \psi)$.  

\begin{cor}\label{cor definitions}
	Let $t_0 \geq 1$, $s \in [0, t_0 +1]$ , and $\zeta \in H^{t_0 + 2}(\R)$ satisfying \eqref{non-cavitation}. Moreover define $\psi^{\pm}$ as in Remark \ref{rmk id psi- psi+} and as the trace of $\phi^{\pm}$ satisfying \eqref{Transmission pb}. Then there holds:

	\begin{itemize}
		\item [1.]
		The tangential velocity is given by the mapping
		\begin{equation}\label{Def grad psi pm}
			\mathcal{V}^{\pm}_{\parallel}
			\:
			:
			\:
			\begin{cases}
				\dot{H}_{\mu}^{s + \frac{1}{2}}(\R) & \rightarrow  H^{s - \frac{1}{2}}(\R)
				\\
				\psi & \mapsto  \partial_x \psi^{\pm}
			\end{cases} 
		\end{equation}
		is well-defined and  satisfies
		\begin{equation}\label{Est V pm}
			s \in [0, t_0 + 1], \quad 	|\mathcal{V}^{\pm}_{\parallel}|_{H^{s}} \leq M |\partial_x\psi|_{H^s}.
		\end{equation}
		\item [2.]
		The horizontal component of the velocity is given by the mappings
		\begin{equation}\label{Def w pm}
			\underline{w}^{\pm}
			\:
			:
			\:
			\begin{cases}
				\dot{H}_{\mu}^{s + \frac{1}{2}}(\R) & \rightarrow  H^{s - \frac{1}{2}}(\R)
				\\
				\psi & \mapsto  \frac{\mathcal{G}^{\pm}_{\mu}[\ve \zeta] \psi^{\pm} + \ve \mu \partial_x \zeta \partial_x \psi^\pm}{1+ \ve^2 \mu (\partial_x \zeta)^2},
			\end{cases} 
		\end{equation}
		is well-defined and satisfies
		\begin{align}\label{W+ est low reg}
			& s \in [0, t_0 +1],
			\quad 
			|\underline{w}^\pm|_{H^{s-\frac{1}{2}}} \leq \mu^{\frac{3}{4}}M |\psi|_{\dot{H}^{s+\frac{1}{2}}_{\mu}},  
		\end{align}
		and
		\begin{align}\label{w pm mu}
			& s \in [0, t_0 + \frac{1}{2}], \quad 
			|\underline{w}^{\pm} |_{H^{s-\frac{1}{2}}} \leq  \mu M |\psi|_{\dot{H}^{s+1}_{\mu}}.
		\end{align}
		\item [3.]
		The vertical component of the velocity is given by the mappings
		\begin{equation}\label{Def V pm}
			{\underline{V}}^{\pm}
			\:
			:
			\:
			\begin{cases}
				\dot{H}_{\mu}^{s + \frac{1}{2}}(\R) & \rightarrow  H^{s - \frac{1}{2}}(\R)
				\\
				\psi & \mapsto \mathcal{V}^{\pm}_{\parallel} - \ve \underline{w}^\pm  \partial_x \zeta
			\end{cases} 
		\end{equation}
		is well-defined and satisfies
		\begin{equation}\label{V pm est}
			s \in [0, t_0 + \frac{1}{2}], \quad |\underline{V}^{\pm} |_{H^s} \leq M |\partial_x \psi|_{H^s}.
		\end{equation}
		%
		%
		%
		%Moreover, for $r\in [0, N-1]$ with $N\geq 5$, there holds,
		%
		%
		%
		%\begin{equation}\label{est beta psi}
		%	|\mathfrak{P}\psi|_{H^{r+1}}^2 \leq M \mathcal{E}^N(\mathbf{U})(1+ \ve^2\sqrt{\mu}|\zeta|_{\langle N+\frac{1}{2}\rangle}^2).
		%\end{equation}
		
		\item [4.]  Let $r\geq 0$, and $\delta, N \in \N$ such that $r+\delta \leq N-1$. Then we have the estimate%$\psi_{(\alpha)} = \partial_{x,t}^{\alpha} \psi - \varepsilon \underline{w}\zeta_{(\alpha)}$
		\begin{equation}\label{est psi}
			| \Lambda^{r+\frac{1}{2}} \partial_t^{\delta}\psi |_{\dot{H}_{\mu}^{\frac{1}{2}}}^2 \leq  M \mathcal{E}^{N}(\mathbf{U}).
		\end{equation}
		\item [5.] Recall the definition of  $	\mathfrak{a}$:
		\begin{equation*}
			\mathfrak{a}
			=  \Big{(}1
			+\ve \big{(} (\partial_t + \ve \underline{V}^{+}\partial_x)\underline{w}^{+} 
			-
			\gamma  (\partial_t + \ve \underline{V}^{-}\partial_x)\underline{w}^{-}
			\big{)}\Big{)}.
		\end{equation*}
		Then for $C>0$ being a nondecreasing function of its argument, we have that
		 \begin{align}\label{est on a}
		 	|\mathfrak{a}|_{L^{\infty}} \leq C\big{(}\mathcal{E}^{\lceil t_0+2\rceil}(\mathbf{U})\big{)},
		 \end{align}
	 	and
	 	\begin{align}\label{est on dta}
	 		|\partial_t \mathfrak{a}|_{L^{\infty}} \leq  \varepsilon C\big{(}\mathcal{E}^{\lceil t_0+3\rceil}(\mathbf{U})\big{)}.
	 	\end{align}
	\end{itemize}
	\color{black}
	
\end{cor}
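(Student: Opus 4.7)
The plan is to assemble the preceding estimates of Propositions \ref{Prop G}, \ref{Prop G+ dual est}, \ref{Prop G- dual est}, \ref{Inverse 2} and Lemma \ref{Prop op J}, going through the five points in order and always expressing $\psi^{\pm}$ in terms of the primary unknown $\psi$ via $\psi^{+}=(\mathcal{J}_{\mu}[\varepsilon\zeta])^{-1}\psi$ and $\psi^{-}=(\mathcal{G}^{-}_{\mu}[\varepsilon\zeta])^{-1}\mathcal{G}^{+}_{\mu}[\varepsilon\zeta]\psi^{+}$, cf. Remark \ref{rmk id psi- psi+}.

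First I would prove Point 1. Using $\psi=\mathcal{J}_{\mu}\psi^{+}$ we have $\partial_{x}\psi=\partial_{x}\psi^{+}-\gamma\,\partial_{x}\bigl((\mathcal{G}^{-}_{\mu})^{-1}\mathcal{G}^{+}_{\mu}\psi^{+}\bigr)$, so applying \eqref{Inverse est 2.1} gives $|\partial_{x}\psi^{+}|_{H^{s}}\leq |\partial_{x}\psi|_{H^{s}}+\gamma M|\partial_{x}\psi^{+}|_{H^{s}}$; a Neumann–series / homotopy argument exactly as in the proof of Lemma \ref{Prop op J} yields $|\partial_{x}\psi^{+}|_{H^{s}}\leq M|\partial_{x}\psi|_{H^{s}}$, and a second application of \eqref{Inverse est 2.1} to $\psi^{-}$ closes \eqref{Est V pm}. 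For Point 2, I would take the definition of $\underline{w}^{\pm}$, use the tame product estimate in $H^{s-\frac{1}{2}}$ together with the Sobolev embedding to deal with the denominator $1+\varepsilon^{2}\mu(\partial_{x}\zeta)^{2}$, control $\mathcal{G}^{\pm}_{\mu}\psi^{\pm}$ by \eqref{Est G+}, \eqref{Est G+ mu} and \eqref{Est G-} (in their $\dot{H}^{s+\frac{1}{2}}_{\mu}$ and $\mathring{H}^{s+\frac{1}{2}}$ forms, respectively, after inserting the bounds \eqref{Est J} and \eqref{Inverse est 2}), and handle the $\varepsilon\mu\,\partial_{x}\zeta\,\partial_{x}\psi^{\pm}$ contribution by the estimates just proved in Point 1. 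Point 3 is then immediate by combining Points 1 and 2 with Sobolev embedding for $\partial_{x}\zeta\in L^{\infty}$ (using $t_{0}\geq 1$).

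For Point 4, I would start from the identity $\partial_{x,t}^{\alpha}\psi=\psi_{(\alpha)}+\varepsilon\underline{w}\,\zeta_{(\alpha)}$. Summing the $\dot{H}^{\frac{1}{2}}_{\mu}$-norm over $|\alpha|\leq N-1$ and using the algebra-type product estimate \eqref{Classical prod est} together with the bound $|\underline{w}|_{H^{t_{0}+1}}\leq M|\partial_{x}\psi|_{H^{t_{0}+1}}\leq M(\mathcal{E}^{N}(\mathbf{U}))^{\frac{1}{2}}$ obtained from Point 2, one gets $|\Lambda^{r+\frac{1}{2}}\partial_{t}^{\delta}\psi|^{2}_{\dot{H}^{\frac{1}{2}}_{\mu}}\leq M\,\mathcal{E}^{N}(\mathbf{U})$ as soon as $r+\delta\leq N-1$; the restriction on $r+\delta$ arises precisely so that we can bundle $\varepsilon\underline{w}\,\zeta_{(\alpha)}$ into the energy without losing a derivative.

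Finally, for Point 5 the $L^{\infty}$ bound \eqref{est on a} follows by Sobolev embedding from \eqref{W+ est low reg}, \eqref{V pm est} applied to $\underline{w}^{\pm}$ and $\underline{V}^{\pm}$, together with the shape-derivative identity \eqref{shape derivative G+} (and its $\mathcal{G}^{-}_{\mu}$ analogue \eqref{shape derivative G-}) to rewrite $(\partial_{t}+\varepsilon \underline{V}^{\pm}\partial_{x})\underline{w}^{\pm}$ purely in terms of spatial derivatives of $(\zeta,\psi)$ via the equation $\partial_{t}\zeta=\frac{1}{\mu}\mathcal{G}_{\mu}\psi$ and the second equation of \eqref{IWW}. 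The estimate \eqref{est on dta} is obtained by differentiating once more in time; the main technical obstacle in the whole corollary lies here, since one must track one extra time derivative of $\underline{w}^{\pm}$ without losing a factor of $\mu^{-1}$. I would handle this by applying the formula $\partial_{t}\underline{w}^{\pm}=\frac{1}{1+\varepsilon^{2}\mu(\partial_{x}\zeta)^{2}}\bigl[\partial_{t}(\mathcal{G}^{\pm}_{\mu}\psi^{\pm})+\varepsilon\mu\,\partial_{x}(\partial_{t}\zeta)\partial_{x}\psi^{\pm}+\varepsilon\mu\,\partial_{x}\zeta\,\partial_{x}\partial_{t}\psi^{\pm}\bigr]+\text{l.o.t.}$, rewriting $\partial_{t}(\mathcal{G}^{\pm}_{\mu}\psi^{\pm})$ with the shape-derivative formulas, and then invoking Points 1–3 on the good unknowns $\psi^{\pm}_{(\alpha)}$ with $|\alpha|\leq \lceil t_{0}+3\rceil$ to produce the bound $|\partial_{t}\mathfrak{a}|_{L^{\infty}}\leq \varepsilon\,C(\mathcal{E}^{\lceil t_{0}+3\rceil}(\mathbf{U}))$, the $\varepsilon$ prefactor coming from the explicit $\varepsilon$ in front of the bracket in the definition of $\mathfrak{a}$.
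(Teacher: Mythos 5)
Your overall plan (reduce everything to $\psi$ via $\psi^{+}=(\mathcal{J}_{\mu})^{-1}\psi$, $\psi^{-}=(\mathcal{G}^{-}_{\mu})^{-1}\mathcal{G}^{+}_{\mu}\psi^{+}$ and the mapping properties of Section 2) is the paper's plan, but the way you close Point 1 has a genuine gap. From $\partial_x\psi=\partial_x\psi^{+}-\gamma\,\partial_x\big((\mathcal{G}^{-}_{\mu})^{-1}\mathcal{G}^{+}_{\mu}\psi^{+}\big)$ and \eqref{Inverse est 2.1} you get $|\partial_x\psi^{+}|_{H^{s}}\leq|\partial_x\psi|_{H^{s}}+\gamma M|\partial_x\psi^{+}|_{H^{s}}$, but you cannot absorb the last term: $M=C(h_{\min}^{-1},|\zeta|_{H^{t_0+2}})$ is not small, so $\gamma M<1$ fails in general. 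Invoking "a Neumann series / homotopy argument as in Lemma \ref{Prop op J}" does not repair this: in that lemma the Neumann series only treats small $\gamma$, and the quantitative bound \eqref{Est J} for all $\gamma\in(0,1)$ comes from the variational coercivity lower bound \eqref{lower bound} in the $\dot H^{1/2}_{\mu}$ scale (plus Fredholm-index surjectivity), an argument that is not available verbatim for the norm $|\partial_x\cdot|_{H^{s}}$. The fix — and the paper's route — is to not bootstrap at all: write $|\partial_x\psi^{+}|_{H^{s}}\lesssim|(\mathcal{J}_{\mu})^{-1}\psi|_{\dot H^{s+\frac12}_{\mu}}+\mu^{\frac14}|(\mathcal{J}_{\mu})^{-1}\psi|_{\dot H^{s+1}_{\mu}}$, apply the already-proved inverse bound \eqref{Est J}, and use $|\psi|_{\dot H^{s+\frac12}_{\mu}}\leq|\partial_x\psi|_{H^{s}}$; then \eqref{Inverse est 2.1} handles $\psi^{-}$.

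Two further points. In Point 2 your "respectively" assignment (use \eqref{Est G-} for the minus sign after \eqref{Inverse est 2}) only yields $\sqrt\mu\cdot\mu^{\frac14}=\mu^{\frac34}$, which proves \eqref{W+ est low reg} but falls short of the $\mu$-gain in \eqref{w pm mu} for $\underline w^{-}$; the paper instead exploits $\mathcal{G}^{-}_{\mu}\psi^{-}=\mathcal{G}^{+}_{\mu}\psi^{+}$ so that \eqref{Est G+ mu} applies to both signs. In Point 5 you take a genuinely different route from the paper: you propose eliminating time derivatives through the equations \eqref{IWW}, whereas the paper never uses the equation there — it estimates $\partial_t\underline w^{\pm}$, $\partial_t\underline V^{\pm}$ directly with the shape-derivative bounds \eqref{directional derivative G+ mu ve}, \eqref{shape derivative J}--\eqref{shape derivative Jinv} and then controls the resulting time-differentiated quantities by the energy via \eqref{est psi}, since $\mathcal{E}^{N}$ already contains mixed $\partial_{x,t}^{\alpha}$ derivatives. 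Your route is workable for solutions (and is essentially what the paper does separately when interpreting the stability criterion at $t=0$), but it narrows the statement's scope and requires additional $\mu$-uniform bounds on $\frac1\mu\mathcal{G}_{\mu}\psi$ (e.g. via \eqref{2 Est G}) that you should make explicit; the paper's argument avoids both issues.
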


\begin{proof}
	We prove each point in separate steps. \\

	\noindent
	\underline{Step 1.} For $\mathcal{V}^{+}_{\parallel}$ we will use formula $\psi^+ = (\mathcal{J}_{\mu})^{-1} \psi$ and Lemma  \ref{Prop op J} with estimate \eqref{Est J} to get
	\begin{align*}
		|\mathcal{V}^{+}_{\parallel}|_{H^{s}} 
		& \leq 
		M| (\mathcal{J}_{\mu}[\ve \zeta])^{-1} \psi |_{\dot{H}^{s+\frac{1}{2}}_{\mu}}
		+
		\mu^{\frac{1}{4}}
		M| (\mathcal{J}_{\mu}[\ve \zeta])^{-1} \psi |_{\dot{H}^{(s+\frac{1}{2})+\frac{1}{2}}_{\mu}}
		\\ 
	%	&\leq \mu^{\frac{1}{4}}
	%	M|   \psi |_{\dot{H}^{s+1}_{\mu}}
		& \leq 
		M |\partial_x \psi |_{H^s}.
	\end{align*}
	For $\mathcal{V}^{-}_{\parallel}$ we use formula $\psi^{-}  = (\mathcal{G}_{\mu}^-)^{-1}\mathcal{G}_{\mu}^+ \psi^+$, then Proposition \ref{Inverse 2} with estimate \eqref{Inverse est 2.1}, and the above estimates to deduce:
	\begin{align*}
		|\mathcal{V}^{-}_{\parallel}|_{H^{s}} 
		= 
		|\partial_x\big{(}(\mathcal{G}_{\mu}[\ve \zeta]^-)^{-1}\mathcal{G}^+_{\mu}[\ve \zeta] \psi^+\big{)}|_{H^s}
		\leq M
		|\partial_x \psi^+|_{H^s}
		\leq M
		|  \partial_x \psi |_{H^s}.
	\end{align*}\\
	
	\noindent
	\underline{Step 2.} We consider the first estimate of $\underline{w}^+$, where we use estimate \eqref{Est G+}, the product estimates \eqref{Classical prod est}   and definition of $\psi^+$ with \eqref{Est J} to get:
	\begin{align*}
		|\underline{w}^{+}|_{H^{s-\frac{1}{2}}} 
		& \leq 
		|\mathcal{G}^+_{\mu}[\ve \zeta] \psi^+|_{H^{s-\frac{1}{2}}} + \mu \ve |\partial_x\zeta|_{H^{\max\{t_0,s-\frac{1}{2}\}}} |\partial_x \psi^+|_{H^{s-\frac{1}{2}}}
		\\ 
		& 
		\leq \mu^{\frac{3}{4}}M |\psi^+|_{\dot{H}^{s+\frac{1}{2}}_{\mu}}
		\\ 
		& 
		\leq \mu^{\frac{3}{4}}M |\psi|_{\dot{H}^{s+\frac{1}{2}}_{\mu}}.
	\end{align*}

	For the second estimate on $\underline{w}^+$, we argue similarly using \eqref{Est G+ mu} instead of \eqref{Est G+}.\\

	For $\underline{w}^-$ we first use the definition of $\psi^- =  (\mathcal{G}_{\mu}^-)^{-1}\mathcal{G}_{\mu}^+ \psi^+$ and apply the same estimates, combined with \eqref{Inverse est 2}:
	\begin{align*}
		|\underline{w}^-|_{H^{s-\frac{1}{2}}} 
		& \leq 
		|\mathcal{G}^+_{\mu}[\ve \zeta] \psi^+|_{H^{s-\frac{1}{2}}} 
		+
		\ve \mu |\partial_x\zeta|_{H^{\max\{t_0,s-\frac{1}{2}\}}} |\partial_x \big{(}(\mathcal{G}^-_{\mu}[\ve \zeta])^{-1}\mathcal{G}^+_{\mu}[\ve \zeta] \psi^+\big{)}|_{H^{s-\frac{1}{2}}}
		\\ 
		& 
		\leq \mu^{\frac{3}{4}}M |\psi|_{\dot{H}^{s+\frac{1}{2}}_{\mu}} 
		+
		\ve \mu  M |(\mathcal{G}^-_{\mu}[\ve \zeta])^{-1}\mathcal{G}^+_{\mu}[\ve \zeta] \psi^+|_{\mathring{H}^{s+\frac{1}{2}}}
		\\
		& 
		\leq \mu^{\frac{3}{4}}M |\psi|_{\dot{H}^{s+\frac{1}{2}}_{\mu}}.
	\end{align*}

	For the second estimate on $\underline{w}^-$, we argue as above, where we use \eqref{Est G+ mu} instead of \eqref{Est G+}. \\

	\noindent
	\underline{Step 3.} We now prove  \eqref{V pm est}. For the first part of $V^{\pm}$ we use estimates \eqref{Est V pm}, while the second part is estimated by the product estimate \eqref{Classical prod est} and then \eqref{W+ est low reg} to see that
	\begin{align*}
		|V^{\pm} |_{H^s} 
		& \leq 
		|\mathcal{V}_{\parallel}^{\pm} |_{H^s} + \ve |\underline{w}^{\pm} \partial_x \zeta|_{H^s}
		\\ 
		& \leq 
		M |\partial_x \psi|_{H^s} 
		+
		\ve |\partial_x \zeta |_{H^{\max\{t_0,s\}}} |\underline{w}^{\pm} |_{H^{(s+\frac{1}{2})-\frac{1}{2}}}
		\\
		& \leq 
		M( |\partial_x \psi|_{H^s} 
		+
		\mu^{\frac{3}{4}} |\psi|_{\dot{H}_{\mu}^{s+1}}),
		\\
		& \leq 
		M |\partial_x \psi|_{H^s}.
	\end{align*}
	\noindent
	\underline{Step 4.} By definition we relate $\psi$ with $\psi_{(\alpha)} = \partial_{x,t}^{\alpha} \psi - \varepsilon \underline{w}\zeta_{(\alpha)}$,  where $\underline{w} =  \underline{w}^+ - \gamma \underline{w}^-$, and use it to obtain the estimate% with \eqref{Classical prod est} and \eqref{W+ est low reg}  to find that,
	\begin{align*}
		|\Lambda^{r+\frac{1}{2}} \partial_t^{\delta} \psi |_{\dot{H}_{\mu}^{\frac{1}{2}}} 
		%& =
            %|\Lambda^{r-1} \Lambda^{\frac{1}{2}}\partial_t^{\delta} \psi |_{\dot{H}_{\mu}^{\frac{1}{2}}} 
            %\\
            & \leq
		\sum \limits_{\alpha \in \N^2 \: : \: |\alpha|\leq N-1}
		|\Lambda^{\frac{1}{2}}\partial_{x,t}^{\alpha} \psi|_{\dot{H}_{\mu}^{\frac{1}{2}}}
		%	\leq 
		%	|\psi|_{\dot{H}_{\mu}^{4+\frac{1}{2}}}
		\leq
		\sum \limits_{\alpha \in \N^2 \: : \: |\alpha|\leq N-1} |\Lambda^{\frac{1}{2}}\psi_{(\alpha)}|_{\dot{H}_{\mu}^{\frac{1}{2}}} + \ve |\Lambda^{\frac{1}{2}}(\underline{w}\zeta_{(\alpha)})|_{\dot{H}_{\mu}^{\frac{1}{2}}}.
		\color{black}
		%	\\ 
		%	& \leq 
		%	\sum \limits_{\alpha \in \N^2 \: : \: |\alpha|\leq N} |\psi_{(\alpha)}|_{\dot{H}_{\mu}^{\frac{1}{2}}} + \ve\sqrt{\mu} M |\psi|_{\dot{H}^{t_0+2}} |\zeta_{(\alpha)}|_{\dot{H}_{\mu}^{\frac{1}{2}}}
		%	\\ 
		%	& 
		%	\leq 
		%	C( \mathcal{E}^N(\mathbf{U})).
	\end{align*}
	Then  \eqref{Classical prod est} and \eqref{W+ est low reg} yields,
	\begin{align*}
		\ve |\underline{w}\zeta_{(\alpha)}|_{\dot{H}_{\mu}^{1}} 
		\leq 
		\ve\mu^{-\frac{1}{4}} |\underline{w}\zeta_{(\alpha)}|_{H^{1}} 
		%	\\ 
		%	& \leq 
		% \ve\sqrt{\mu} C |\psi|_{\dot{H}^{2}} |\zeta_{(\alpha)}|_{\dot{H}_{\mu}^{\frac{1}{2}}}
		\leq 
		\ve \sqrt{\mu} M |\partial_x \psi|_{H^2} |\zeta_{(\alpha)}|_{H^1}.
	\end{align*} 
	Combining these estimates implies
	\begin{equation*}
		| \partial_t^{\delta}\psi |_{\dot{H}_{\mu}^{(r+\frac{1}{2}) + \frac{1}{2}}}^2 \leq  M \mathcal{E}^{N}(\mathbf{U}).
	\end{equation*}\\
	\underline{Step 5.}
	We prove first \eqref{est on a}.  By Sobolev embedding with $s>t_0$, we have that
	\begin{align*}
		|\mathfrak{a}|_{L^{\infty}} \lesssim 1 
		+ 
		\varepsilon(|\partial_t \underline{w}^+|_{H^{s-\frac{1}{2}}} +  |\partial_t \underline{w}^-|_{H^{s-\frac{1}{2}}}   )
		+
		\varepsilon^2 (|\underline{V}^+|_{H^{s-\frac{1}{2}}} |\partial_x\underline{w}^+|_{H^{s-\frac{1}{2}}}  
		+
		|\underline{V}^-|_{H^{s-\frac{1}{2}}} |\partial_x\underline{w}^-|_{H^{s-\frac{1}{2}}}).
	\end{align*}
	For the estimate on $\underline{w}^+$ we use that it satisfies \eqref{directional derivative G+ mu ve} (with $t_0<s\leq t_0+1$), which in combination with the estimates in Step 2. yields
	\begin{align*}
		|\partial_t \underline{w}^+|_{H^{s-\frac{1}{2}}} \leq \varepsilon \mu^{\frac{3}{4}} M |\partial_t \zeta|_{H^{s+1}}|\psi^+|_{H^{s+\frac{1}{2}}_{\mu}} + \mu^{\frac{3}{4}} M |\partial_t\psi^+|_{H^{s+\frac{1}{2}}_{\mu}}.
	\end{align*}
	Moreover, for $\partial_t\psi^+ = \partial_t \big{(}(\mathcal{J}_{\mu})^{-1}\psi\big{)}$ we use estimate \eqref{shape derivative J} for the shape derivative of the inverse of $\mathcal{J}_{\mu}$ to deduce that
	\begin{equation*}
		|\partial_t\psi^+|_{H^{s+\frac{1}{2}}_{\mu}} \leq \ve M |\partial_t \zeta|_{H^{s+1}}|\psi|_{H^{s+\frac{1}{2}}_{\mu}} + |\partial_t\psi|_{H^{s+\frac{1}{2}}_{\mu}},
	\end{equation*}
	which in turn implies
	\begin{align}\label{est on w in pf}
		|\partial_t \underline{w}^+|_{H^{s-\frac{1}{2}}} \leq \varepsilon \mu^{\frac{3}{4}} M |\partial_t \zeta|_{H^{s+1}}|\psi|_{H^{s+\frac{1}{2}}_{\mu}} + \mu^{\frac{3}{4}} M |\partial_t\psi|_{H^{s+\frac{1}{2}}_{\mu}}.
	\end{align}
	The estimate on $\partial_t\underline{w}^-$, $\partial_x \underline{w}^{\pm}$ is deduced similarly, and using \eqref{V pm est} we find that
	\begin{align*}%\label{est of a in pf}
		|\mathfrak{a}|_{L^{\infty}} 
		&  \lesssim 1 + \varepsilon \mu^{\frac{3}{4}} M \Big{(}|\partial_t \zeta|_{H^{s+1}}|\psi|_{H^{s+\frac{1}{2}}_{\mu}} +  |\partial_t\psi|_{H^{s+\frac{1}{2}}_{\mu}} \Big{)}
		\\
		& 
		\hspace{0.5cm} \notag
		+
		\varepsilon^2 \mu^{\frac{3}{4}} M \Big{(}| \partial_x\zeta|_{H^{s+1}}|\psi|_{H^{s+\frac{1}{2}}_{\mu}} +  |\psi|_{H^{s+\frac{3}{2}}_{\mu}} \Big{)}|\psi|_{H^{s+\frac{1}{2}}_{\mu}}.
	\end{align*}
	Then since the energy includes time derivatives we may employ \eqref{est psi} to find the bound:
	\begin{align*}
		|\mathfrak{a}|_{L^{\infty}} \lesssim 1 + \ve C\big{(}\mathcal{E}^{\lceil t_0+2\rceil}(\mathbf{U})\big{)}.
	\end{align*}
	On the other hand, the final estimate is proved similarly by accounting for one extra time derivative and that
	\begin{equation*}
		|\partial_t \underline{V}^{+}|_{H^{s-\frac{1}{2}}} \leq |\partial_t \big{(}(\mathcal{J}_{\mu})^{-1}\psi\big{)}|_{\dot{H}_{\mu}^{s+\frac{1}{2}}} + \varepsilon |\partial_t (\underline{w}^+ \partial_x \zeta)|_{H^{s-\frac{1}{2}}}   \leq C\big{(}\mathcal{E}^{\lceil t_0+2\rceil}(\mathbf{U})\big{)}.
	\end{equation*}
	using \eqref{est on w in pf} and \eqref{shape derivative J}. The same estimate holds for $\partial_t \underline{V}^{+}$ using similar arguments and we find that
	\begin{equation}\label{est on dtVpm}
		|\partial_t \underline{V}^{\pm}|_{H^{s-\frac{1}{2}}} \leq C\big{(}\mathcal{E}^{\lceil t_0+2\rceil}(\mathbf{U})\big{)},
	\end{equation}
	which  allows us to conclude the proof of the Corollary.
	\color{black}
\end{proof}
 
 \iffalse 
	\begin{remark}
		From the proof of Step. 5 we could have also written  $\mathfrak{a}$ as a function without time derivatives and provided an estimate of $|\mathfrak{a}|_{L^{\infty}}$ only depending on $\zeta$ and $\psi$. Indeed, from \eqref{est of a in pf} we may take $s = t_0 + \frac{1}{2}$ and use the main equation \eqref{IWW} with \eqref{...}
	\end{remark}
\fi

	\subsection{Proof of  Proposition \ref{Prop Quasilinear system}}\label{proof of quasi} We will in this section give the details on the proof of Proposition \ref{Prop Quasilinear system}. Since the strategy in the proof is exactly the same as in \cite{LannesTwoFluid13} and the main quantities involved satisfy the same estimates (see Corollary \ref{cor definitions}), we only prove the steps that are unique to the current regime. The first step is to derive linearization formulas. for the Dirichlet-Neumann operator $\mathcal{G}_{\mu}$.

		\subsubsection{Linearization formulas} The main step in the quasilinearisation of the internal water waves system \eqref{IWW} is to get linearization formulas for
		\begin{equation*}
			\mathcal{G}_{\mu}[\ve \zeta]= \mathcal{G}^+_{\mu}[\ve \zeta] \Big{(} 1 -   \gamma (\mathcal{G}^-_{\mu}[\ve \zeta])^{-1} \mathcal{G}^+_{\mu}[\ve \zeta] \Big{)}^{-1}\psi.
		\end{equation*}
		Here it will be important to get an explicit formula for the shape derivative of $\mathcal{G}_{\mu}$ with respect to $\zeta$ and track the dependencies in the small parameters. 
	\begin{prop}\label{linearization formulas}
		Let $T>0$, $t_0 \geq 1$ and $N \in \N$ be such that $	N \geq 5$. Furthermore, let $U = (\zeta, \psi)^T \in \mathscr{E}_{\mathrm{bo}, T}^{N, t_0}$ be such that \eqref{non-cavitation} is satisfied on $[0,T]$. For all $\alpha = (\alpha^1,\alpha^2) \in \N^2$, $\check{\alpha}^j = \alpha- \mathbf{e}_j$, with $1 \leq |\alpha| \leq N$ and define $\partial_{x,t}^{\alpha} = \partial_x^{\alpha_1} \partial_t^{\alpha_2}$, and let $\underline{w}^{\pm}$ be as defined in \eqref{Def w pm}. Moreover, define $\underline{w}$ by
		\begin{align*}
			\underline{w} &  =  \underline{w}^+ - \gamma \underline{w}^-, 
		\end{align*}
		and
		\begin{equation*}
			\zeta_{(\alpha)} = \partial_{x,t}^{\alpha} \zeta, \quad
			\psi_{(\alpha)} = \partial_{x,t}^{\alpha}\psi - \ve \underline{w} \partial_{x,t}^{\alpha}\zeta,\quad \psi_{\langle\check{\alpha}\rangle} = (\psi_{\check \alpha^1)},\psi_{(\check \alpha^2)}).
		\end{equation*}
		Then for all $1\leq |\alpha| \leq N$, one has
		\begin{align*}
			\text{if} \: \: \alpha <N :
			& \quad
			\frac{1}{\mu}\partial_{x,t}^{\alpha}(\mathcal{G}_{\mu}[\ve \zeta]\psi) = \frac{1}{\mu}\mathcal{G}_{\mu}[\ve \zeta]\psi_{(\alpha)} + \ve R_{(\alpha)},
			\\ 
			\text{if} \: \: \alpha \leq N:
			& \quad
			\frac{1}{\mu}\partial_{x,t}^{\alpha}(\mathcal{G}_{\mu}[\ve \zeta]\psi) = \frac{1}{\mu}\mathcal{G}_{\mu}[\ve \zeta]\psi_{(\alpha)} - \ve \mathcal{I}[U]\zeta_{(\alpha)} 
			+
			\frac{1}{\mu}\mathcal{G}_{\mu,(\alpha)}[\ve \zeta]\psi_{\langle\check{\alpha}\rangle} 
			+
			\ve R_{\alpha},
		\end{align*}
		where the linear operators $\mathcal{I}[U]$ and $\mathcal{G}_{\mu, (\alpha)}$ are given in Definition \ref{Def op}, while $R_{\alpha}$ is a function that satisfies the estimate
		\begin{equation}\label{Est on R}
			|R_{\alpha}(t)|^2_{H^1_{\mathrm{bo}}} \leq C \mathcal{E}^N(\mathbf{U}(t)),
		\end{equation}
		for some $C(M,\mathcal{E}^N(\mathbf{U}))>0$ nondecreasing function of its argument and for all $t \in [0,T]$.
		
	\end{prop}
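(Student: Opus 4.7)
The strategy closely mirrors that of Lannes \cite{LannesTwoFluid13} for the two finite-depth case, but requires several adjustments to track the scaling carefully and to replace the finite-depth estimates on $(\mathcal{G}^-_\mu)^{-1}$ by the ones derived in Section \ref{Properties of G} and Section \ref{Sec Symbolic}. The key first step is to derive a shape-derivative formula for $\mathcal{G}_\mu[\ve\zeta]$ itself. Using $\mathcal{G}_\mu = \mathcal{G}^+_\mu (\mathcal{J}_\mu)^{-1}$, the chain rule, and the identities
\begin{equation*}
  \mathrm{d}_\zeta (\mathcal{J}_\mu[\ve\zeta])^{-1}(h) = -(\mathcal{J}_\mu[\ve\zeta])^{-1}\bigl(\mathrm{d}_\zeta\mathcal{J}_\mu[\ve\zeta](h)\bigr)(\mathcal{J}_\mu[\ve\zeta])^{-1},
\end{equation*}
together with the shape-derivative formulas \eqref{shape derivative G+} and \eqref{shape derivative G-}, and the relations $\psi^{\pm} = (\mathcal{J}_\mu)^{-1}\psi$ and $\psi^- = (\mathcal{G}_\mu^-)^{-1}\mathcal{G}_\mu^+\psi^+$, I would compute
\begin{equation*}
  \mathrm{d}_\zeta \mathcal{G}_\mu[\ve\zeta](h)\psi = -\ve \mathcal{G}_\mu[\ve\zeta]\bigl(h\,\underline{w}\bigr) - \ve\mu\, \mathcal{I}[\mathbf{U}]h,
\end{equation*}
with $\underline{w}=\underline{w}^+-\gamma\underline{w}^-$ and $\mathcal{I}[\mathbf{U}]$ as in \eqref{Def I}. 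The term $-\ve \mathcal{G}_\mu(h\underline{w})$ originates from the part of $d_\zeta\mathcal{G}^+_\mu$ that already appears at leading order, while the operator $\mathcal{I}[\mathbf{U}]$ arises precisely from the combination of the residual $-\ve\mu\partial_x(h\underline{V}^+)$ in \eqref{shape derivative G+} with the analogous term in \eqref{shape derivative G-} transported through $(\mathcal{G}^-_\mu)^{-1}\mathcal{G}^+_\mu$.

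Given this formula, the proof of the proposition proceeds by induction on $|\alpha|$. Writing $\partial_{x,t}^\alpha (\mathcal{G}_\mu\psi) = \mathcal{G}_\mu(\partial_{x,t}^\alpha \psi) + [\partial_{x,t}^\alpha,\mathcal{G}_\mu]\psi$, I would expand the commutator via Leibniz, producing one top-order contribution $\alpha^j\, \mathrm{d}_\zeta\mathcal{G}_\mu(\partial_j\zeta)(\partial_{x,t}^{\check\alpha^j}\psi)$ for each $j=1,2$ (this is precisely $\mathcal{G}_{\mu,(\alpha)}\psi_{\langle\check\alpha\rangle}$ plus sub-principal pieces), and a sum of multilinear terms $\mathrm{d}_\zeta^k\mathcal{G}_\mu(\beta_1\zeta,\dots,\beta_k\zeta)(\partial_{x,t}^{\gamma}\psi)$ with $|\beta_i|,|\gamma|<|\alpha|$. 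Substituting the shape derivative formula above at the highest multi-index yields
\begin{equation*}
  \partial_{x,t}^\alpha(\mathcal{G}_\mu\psi) = \mathcal{G}_\mu(\partial_{x,t}^\alpha\psi - \ve\underline{w}\,\zeta_{(\alpha)}) - \ve\mu\mathcal{I}[\mathbf{U}]\zeta_{(\alpha)} + \mathcal{G}_{\mu,(\alpha)}\psi_{\langle\check\alpha\rangle} + \ve\mu R_\alpha',
\end{equation*}
where $R_\alpha'$ gathers all the genuinely sub-principal contributions: commutators $[\partial_{x,t}^\alpha,\underline{w}]\zeta_{(\alpha)}$, multilinear shape derivatives at order $\geq 2$, and lower-order Leibniz pieces. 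The recognition of $\psi_{(\alpha)}=\partial_{x,t}^\alpha\psi-\ve\underline{w}\zeta_{(\alpha)}$ as the ``good unknown'' is exactly what removes the problematic term $-\ve\mathcal{G}_\mu(\underline{w}\zeta_{(\alpha)})$. When $|\alpha|<N$, the terms $-\ve\mathcal{I}[\mathbf{U}]\zeta_{(\alpha)}$ and $\mathcal{G}_{\mu,(\alpha)}\psi_{\langle\check\alpha\rangle}$ may themselves be absorbed into $\ve R_{(\alpha)}$ since one has an extra derivative in reserve, using \eqref{est I inst op} and \eqref{Est djG mu ve}.

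The main obstacle is the estimate \eqref{Est on R} on $R_\alpha$: one must control each multilinear piece in $H^1_{\mathrm{bo}}$, which requires absorbing the surface-tension derivative through commutator estimates of the form $[\partial_x,\mathrm{d}_\zeta^k\mathcal{G}_\mu]$, and this in turn relies on the symbolic description of $\mathcal{G}_\mu$, $(\mathcal{G}^-_\mu)^{-1}\mathcal{G}_\mu^+$ and $(\mathcal{J}_\mu)^{-1}(\mathcal{G}^-_\mu)^{-1}\partial_x$ developed in Corollaries \ref{Cor 3.4} and \ref{Cor last one}. The delicate point is that the operator $(\mathcal{G}^-_\mu)^{-1}$, because it lands in the homogeneous space $\mathring H^{s+1/2}$ rather than $H^{s+1/2}$, forces the use of estimates \eqref{Inverse est 2}--\eqref{Inverse est 2.1} rather than the analogous finite-depth bounds of \cite{LannesTwoFluid13}; one must then use the tame estimates \eqref{G- inv djG+}--\eqref{2: G- inv djG-} of Corollary \ref{Cor inverse G-} to distribute regularity so that at most one factor carries more than $t_0+\tfrac32$ derivatives, after which product and commutator estimates close the bound. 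Once this is done for a generic multilinear piece, summing over the (finitely many) configurations of the Leibniz expansion and using estimate \eqref{est psi} to control $\partial_{x,t}^\gamma\psi$ by $\mathcal{E}^N(\mathbf{U})$ yields \eqref{Est on R}. The estimate for $S_\alpha$ (needed for \eqref{Est R and S} in Proposition \ref{Prop Quasilinear system}) follows analogously after applying $\partial_{x,t}^\alpha$ to the second equation of \eqref{IWW} and replacing the time derivatives of $\underline{w}^\pm$ by spatial quantities through the equation itself.
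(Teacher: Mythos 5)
Your proposal follows essentially the same route as the paper: the shape-derivative formula $\mathrm{d}_\zeta\mathcal{G}_\mu[\ve\zeta](h)\psi=-\ve\mathcal{G}_\mu[\ve\zeta](h\underline{w})-\ve\mu\,\mathcal{I}[\mathbf{U}]h$ and the tame estimates on $\mathrm{d}_\zeta^j\mathcal{G}_\mu$ that you derive in your first step are precisely the content of Lemma \ref{Lemma shape derivative G}, and the subsequent Leibniz expansion with the good unknown $\psi_{(\alpha)}$, the identification of $\mathcal{G}_{\mu,(\alpha)}\psi_{\langle\check\alpha\rangle}$ as the sub-principal part, and the absorption of $\mathcal{I}[\mathbf{U}]\zeta_{(\alpha)}$ into the remainder when $|\alpha|<N$ is exactly the argument of Proposition 6 of \cite{LannesTwoFluid13}, to which the paper's proof refers. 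No substantive gap.
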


	\begin{remark}
		The principal part of the linearization formula for $\mathcal{G}_{\mu}$ is given in terms of $\mathcal{G}_{\mu}\psi_{(\alpha)}$ and $\mathcal{I}[U]\zeta_{\alpha}$. While the additional term $\mathcal{G}_{\mu, (\alpha)}\psi_{\langle\check{\alpha}\rangle}$ is sub-principal that offers no difficulty in the proof, but is needed to deal with surface tension in the energy estimates (see the third point in Remark \ref{Remark on the energy}). 
	\end{remark}
	
	We will now give the main ingredients in proving the linearization formulas in Proposition \ref{linearization formulas}. These formulas involve the precise formulation of the directional derivative of $\mathcal{G}_{\mu}$, and we therefore give the definition.
	
	\begin{remark} Let $\psi \in H^{s+\frac{1}{2}}(\R)$ and $\zeta \in H^{t_0+2}(\R)$, then  $\zeta \mapsto \mathcal{G}_{\mu}[\ve \zeta] \psi$ is smooth and the directional derivative, in the direction of $h\in H^{t_0+2}(\R)$, is given by
		\begin{equation*}
			\mathrm{d}_{\zeta} \mathcal{G}_{\mu}[\ve \zeta](h)\psi 
			=
			\lim \limits_{\nu \rightarrow 0} \frac{\mathcal{G}_{\mu}[\ve\zeta+\nu h]\psi- \mathcal{G}_{\mu}[\ve \zeta]\psi  }{\nu}.
		\end{equation*}
	The smoothness follows from the shape analyticity of  $\mathcal{G}^{\pm}_{\mu}$ \cite{WWP}. 
		
	\end{remark}
%	\begin{remark}
	%	Since $\psi \mapsto \mathcal{G}\psi$ is linear we have that $\mathrm{d}_{\psi} \mathcal{G}_{\mu}[\ve \zeta](h)\psi = \mathcal{G}_{\mu}[\ve \zeta]h$. 
	%\end{remark}
	
	\begin{lemma}\label{Lemma shape derivative G}
		Let $t_0 \geq 1$ and $\zeta \in H^{t_0+2}(\R)$ be such that \eqref{non-cavitation} is satisfied.
		\begin{itemize}
			\item [1.] For all $\psi \in \dot{H}_{\mu}^{\frac{1}{2}}(\R)$, $h \in H^{t_0+2}(\R)$, and $U = (\zeta, \psi)^T$, one has
			\begin{equation}\label{d G}
				\mathrm{d}_{\zeta} \mathcal{G}_{\mu}[\ve \zeta](h) \psi = - \ve \mathcal{G}_{\mu}[\ve \zeta]\big{(}h(\underline{w}^+ - \gamma \underline{w}^-) \big{)} - \ve \mu \mathcal{I}[U]h.
			\end{equation}
			where $\mathcal{I}[U]h$ is defined by
			\begin{equation*}
				\mathcal{I}[U] h 
				=
				\partial_x ( h \underline{V}^+)+\gamma  \mathcal{G}_{\mu}[\ve \zeta]  (\mathcal{G}^-_{\mu}[\ve \zeta])^{-1}\partial_x	\Big{(} 
				h(\underline{V}^+-\underline{V}^-)\Big{)}.
			\end{equation*}
			\item [2.] For all $0\leq s \leq t_0 + 1$, $j \geq 1$, there holds,
			\begin{equation}\label{2. Est dj G ve mu3/4}
				|\mathrm{d}_{\zeta}^j \mathcal{G}_{\mu}[\ve \zeta](h) \psi|_{H^{s-\frac{1}{2}}} \leq \ve^j\mu^{\frac{3}{4}}M \prod\limits_{m=1}^j |h_m|_{H^{\max\{s,t_0\}+1}} |\psi|_{\dot{H}_{\mu}^{s+\frac{1}{2}}}.
			\end{equation}
			\item [3.] For all $0\leq s \leq t_0 + \frac{1}{2}$, $j \geq 1$, there holds,
			\begin{equation}\label{3. Est dj G ve mu}
				|\mathrm{d}_{\zeta}^j \mathcal{G}_{\mu}[\ve \zeta](h) \psi|_{H^{s-\frac{1}{2}}} \leq \ve^j\mu M \prod\limits_{m=1}^j |h_m|_{H^{\max\{s+ \frac{1}{2},t_0\}+1}} |\psi|_{\dot{H}_{\mu}^{s+1}}.
			\end{equation}
			\item [4.]  For all $0\leq s \leq t_0 + \frac{1}{2}$, $j \geq 1$, there holds,
			\begin{equation}\label{4. Est dj G ve mu}
				|\mathrm{d}_{\zeta}^j \mathcal{G}_{\mu}[\ve \zeta](h) \psi|_{H^{s-\frac{1}{2}}} \leq \ve^j\mu^{\frac{3}{4}} M|h_k|_{H^{s+\frac{1}{2}}} \prod\limits_{m \neq k}^j |h_m|_{H^{\max\{s, t_0\} + \frac{3}{2}}} |\psi|_{\dot{H}_{\mu}^{\max\{s, t_0\}+1}}.
			\end{equation}
			\item [5.]  For all $0\leq s \leq t_0$, $j \geq 1$, there holds,
			\begin{equation}\label{5. Est dj G ve mu}
				|\mathrm{d}_{\zeta}^j \mathcal{G}_{\mu}[\ve \zeta](h) \psi|_{H^{s-\frac{1}{2}}} \leq \ve^j\mu M|h_k|_{H^{s+1}} \prod\limits_{m \neq k}^j |h_m|_{H^{\max\{s+ \frac{1}{2},t_0\} + \frac{3}{2}}} |\psi|_{\dot{H}_{\mu}^{\max\{s+\frac{1}{2},t_0\}+1}}.
			\end{equation}
			%
			%
			\iffalse
			\item [6.] \color{red}NEED ? Let $r \geq 0$ and $\delta \in \N^2$ be such that $r+ |\delta|\leq N-1$, with $N\geq 5$. Then one has 
			%
			%
			%
			\begin{equation}\label{Est B psi 1}
				|\mathfrak{P} \partial_{x,t}^{\delta} \psi|_{H^{r+1}_{\gamma,\mathrm{bo}}}  \leq \mathfrak{m}^N(U),
			\end{equation} 
			%
			%
			%
			and
			%
			%
			%
			\begin{equation}\label{Est B psi 2}
				|\mathfrak{P} \partial_{x,t}^{\delta}\psi|_{H^{r+1}} \leq \mathfrak{m}^N(U)(1+\ve \mu^{\frac{1}{4}}\sum\limits_{|\delta|=N}||\mathrm{D}|^{\frac{1}{2}}\partial_{x,t}^{\delta}\zeta|_{L^2}).
			\end{equation}
		
			\fi
			
		\end{itemize}
	\end{lemma}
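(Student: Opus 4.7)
The plan is to establish the shape derivative formula \eqref{d G} first by direct computation using the decomposition $\mathcal{G}_{\mu}[\ve\zeta] = \mathcal{G}^+_{\mu}[\ve\zeta]\,(\mathcal{J}_{\mu}[\ve\zeta])^{-1}$ and the already known shape derivative formulas \eqref{shape derivative G+} and \eqref{shape derivative G-} for $\mathcal{G}^{\pm}_{\mu}$; once this is in hand the estimates (2)--(5) will be obtained by induction on $j$, pairing the formula with the estimates for $\mathcal{G}^{\pm}_{\mu}$, for $(\mathcal{G}^-_{\mu})^{-1}\mathcal{G}^+_{\mu}$, for $(\mathcal{J}_{\mu})^{-1}$ and for the operator $\mathcal{I}[\mathbf{U}]$ that were established in Sections \ref{Properties of G} and \ref{Quasilinearization}.

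For the derivation of \eqref{d G}, I would set $\psi^+ = (\mathcal{J}_{\mu})^{-1}\psi$, so that $\mathcal{G}_{\mu}\psi = \mathcal{G}^+_{\mu}\psi^+$, and differentiate in $\zeta$ in the direction $h$:
\begin{equation*}
\mathrm{d}_\zeta\mathcal{G}_{\mu}(h)\psi = \mathrm{d}_\zeta\mathcal{G}^+_{\mu}(h)\psi^+ - \mathcal{G}^+_{\mu}(\mathcal{J}_{\mu})^{-1}\bigl(\mathrm{d}_\zeta\mathcal{J}_{\mu}(h)\bigr)\psi^+,
\end{equation*}
with $\mathrm{d}_\zeta\mathcal{J}_{\mu}(h) = \gamma(\mathcal{G}^-_{\mu})^{-1}\mathrm{d}_\zeta\mathcal{G}^-_{\mu}(h)(\mathcal{G}^-_{\mu})^{-1}\mathcal{G}^+_{\mu} - \gamma(\mathcal{G}^-_{\mu})^{-1}\mathrm{d}_\zeta\mathcal{G}^+_{\mu}(h)$. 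Recalling that $\psi^- = (\mathcal{G}^-_{\mu})^{-1}\mathcal{G}^+_{\mu}\psi^+$, plugging in \eqref{shape derivative G+}--\eqref{shape derivative G-}, and using that $\mathcal{G}^+_{\mu}(\mathcal{J}_{\mu})^{-1} = \mathcal{G}_{\mu}$ together with $\mathcal{G}_{\mu}(\mathcal{G}^-_{\mu})^{-1}\mathcal{G}^-_{\mu} = \mathcal{G}_{\mu}$, the terms of the form $\mathcal{G}_{\mu}(h\underline{w}^{\pm})$ combine into $\mathcal{G}_{\mu}(h(\underline{w}^+-\gamma\underline{w}^-))$ while the $-\ve\mu\partial_x(h\underline{V}^{\pm})$ pieces combine into $-\ve\mu\mathcal{I}[\mathbf{U}]h$ via the very definition of $\mathcal{I}[\mathbf{U}]$ in \eqref{Def I}. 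This identification is the main algebraic step; the rest of this step is a bookkeeping exercise.

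For the higher order estimates (2)--(5), I would argue by induction on $j\geq 1$. The base case $j=1$ is read off from \eqref{d G} by controlling the two pieces separately: the term $\ve\mathcal{G}_{\mu}(h(\underline{w}^+-\gamma\underline{w}^-))$ is handled using \eqref{1 Est G}--\eqref{Est G 2} together with the product estimate \eqref{Classical prod est} and the bounds \eqref{W+ est low reg}, \eqref{w pm mu} on $\underline{w}^{\pm}$ from Corollary \ref{cor definitions}; the term $\ve\mu\mathcal{I}[\mathbf{U}]h$ is estimated by \eqref{est I inst op} and \eqref{est I[U]h psi in Hs} of Proposition \ref{prop I }, which provide precisely the dependencies $|h|_{H^{s+1/2}}|\partial_x\psi|_{H^{t_0+1/2}}$ and $|h|_{H^{t_0+1/2}}|\partial_x\psi|_{H^{s+1/2}}$ needed for the symmetric/asymmetric versions (4)--(5). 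For the induction step, differentiating \eqref{d G} further in $\zeta$ produces, via Leibniz' rule and the shape derivatives of $\mathcal{G}^{\pm}_{\mu}$ and $(\mathcal{J}_{\mu})^{-1}$ (formula \eqref{shape derivative Jinv}), a finite sum of terms each of which is a product of $j$ factors in $\mathbf{h}$ multiplied by one factor in $\psi$, acted on by a composition of $\mathcal{G}^{\pm}_{\mu}$, $(\mathcal{J}_{\mu})^{-1}$ and $(\mathcal{G}^-_{\mu})^{-1}$; each such term gains a factor $\ve$ per differentiation (from \eqref{directional derivative G+ mu ve}--\eqref{7. Est dj G+ ve mu} and their analogues \eqref{Est djG- mu ve}, \eqref{Est djG- neq}, together with \eqref{Inverse est 2} and \eqref{Est J}), which yields $\ve^j$ after $j$ steps, and the $\mu$-scaling $\mu^{3/4}$ or $\mu$ is inherited from \eqref{1 Est G}--\eqref{Est G 2}.

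The main obstacle will be the first step: performing the algebraic reorganization that turns the raw differentiated expression into the clean combination $-\ve\mathcal{G}_{\mu}(h(\underline{w}^+-\gamma\underline{w}^-)) - \ve\mu\mathcal{I}[\mathbf{U}]h$. In contrast to the finite-depth case treated in \cite{LannesTwoFluid13}, the factor $(\mathcal{G}^-_{\mu})^{-1}$ appearing in $\mathcal{J}_{\mu}$ lives on the homogeneous scale $\mathring{H}^{s+1/2}$, so one must take care in manipulating it that every composition makes sense on the functional setting developed in Section \ref{Properties of G} (in particular that we only ever compose $\mathcal{G}^+_{\mu}$ after $(\mathcal{G}^-_{\mu})^{-1}$, never the reverse, in view of Remark \ref{order of comp}). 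Once \eqref{d G} is established, the remaining estimates are routine applications of the machinery already built.
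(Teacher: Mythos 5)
Your derivation of the identity \eqref{d G} is correct and is essentially the paper's own argument: differentiate $\mathcal{G}_{\mu}=\mathcal{G}^+_{\mu}(\mathcal{J}_{\mu})^{-1}$, express $\mathrm{d}_\zeta\mathcal{J}_{\mu}(h)$ through the shape derivatives of $\mathcal{G}^{\pm}_{\mu}$ (your sign convention for $\mathrm{d}_\zeta\mathcal{J}_{\mu}$ is the correct one), and absorb the $\mathcal{G}^+_{\mu}(h\underline{w}^+)$ terms through the identity $\big(1+\gamma\mathcal{G}_{\mu}(\mathcal{G}^-_{\mu})^{-1}\big)\mathcal{G}^+_{\mu}=\mathcal{G}_{\mu}$, identifying $\mathcal{I}[\mathbf{U}]h$ from \eqref{Def I}.

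The genuine gap is in your plan for the estimates (2)--(5): estimating the closed formula \eqref{d G} term by term cannot produce the stated bounds, because it loses a full derivative on $\psi$. Concretely, for $j=1$ and, say, $s=0$, estimate \eqref{2. Est dj G ve mu3/4} requires a bound by $\ve\mu^{3/4}M|h|_{H^{t_0+1}}|\psi|_{\dot{H}_{\mu}^{1/2}}$, i.e.\ only half a derivative of $\psi$. But bounding $\ve\mathcal{G}_{\mu}\big(h(\underline{w}^+-\gamma\underline{w}^-)\big)$ via \eqref{1 Est G} needs $|h\underline{w}|_{\dot{H}_{\mu}^{s+1/2}}$, hence essentially $|\underline{w}|_{H^{s+1/2}}$, and by \eqref{W+ est low reg} that costs $|\psi|_{\dot{H}_{\mu}^{s+3/2}}$ --- one derivative too many. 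Likewise, the bounds \eqref{est I inst op} and \eqref{est I[U]h psi in Hs} of Proposition \ref{prop I } for $\ve\mu\mathcal{I}[\mathbf{U}]h$ are phrased in terms of $|\partial_x\psi|_{H^{t_0+1/2}}$ or $|\partial_x\psi|_{H^{s+1/2}}$ (and require $\zeta\in H^{t_0+3}$, more than the lemma's hypothesis $\zeta\in H^{t_0+2}$, since that proposition rests on the symbolic analysis), which for small $s$ is strictly more regularity on $\psi$ than $|\psi|_{\dot{H}_{\mu}^{s+1/2}}$ and carries the wrong $\mu$-weights. The point is that the regularity gain encoded in (2)--(5) reflects a cancellation inside $\mathrm{d}_\zeta\mathcal{G}_{\mu}(h)\psi$ that is invisible once the right-hand side of \eqref{d G} is split into its two pieces. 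The paper therefore does not estimate \eqref{d G}; it estimates the pre-simplified decomposition $\mathrm{d}_\zeta\mathcal{G}_{\mu}(h)\psi=\mathrm{d}_\zeta\mathcal{G}^+_{\mu}(h)\psi^+-\mathcal{G}_{\mu}\,\mathrm{d}_\zeta\mathcal{J}_{\mu}(h)\psi^+$ with $\psi^+=(\mathcal{J}_{\mu})^{-1}\psi$, using the loss-free shape-derivative estimates \eqref{directional derivative G+ mu ve}--\eqref{7. Est dj G+ ve mu} together with \eqref{Est J} for the first piece, and controlling $\mathrm{d}_\zeta\mathcal{J}_{\mu}(h)\psi^+$ in $\mathring{H}^{s+1/2}$ via Corollary \ref{Cor inverse G-} (estimates \eqref{G- inv djG+}, \eqref{G- inv djG-}) and \eqref{Inverse est 2} before applying \eqref{1 Est G} or \eqref{2 Est G} to the outer $\mathcal{G}_{\mu}$; the cases $j\geq 2$ then follow by induction on this decomposition. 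Your induction step already cites these ingredients, so the repair is to run the base case (and the induction) on that intermediate decomposition rather than on \eqref{d G}.
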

	
	\begin{proof}
		The proof is similar to the one in \cite{LannesTwoFluid13}, but we have to track the dependence in the small parameters for the current scaling and adapt them to a different functional setting. We prove each point separately. \\

		\noindent  
		\underline{Step 1.} We do the details here, where we note that it is important not to compose the inverse of $\mathcal{G}_{\mu}^+$ with $\mathcal{G}_{\mu}^-$ (see Remark \ref{order of comp}). With this in mind, we observe by direct computations (suppressing the argument in $\zeta$) that
		\begin{align*}
			\mathrm{d} (\mathcal{G}_{\mu}\circ \mathcal{J}_{\mu})(h) 
			& =
			\mathrm{d} \mathcal{G}_{\mu}(h )\circ \mathcal{J}_{\mu} + \mathcal{G}_{\mu}\circ \mathrm{d}\mathcal{J}_{\mu}(h). 
		\end{align*}
		Then use definition \eqref{Def G}  and compose the above identity  with $\mathcal{J}_{\mu}^{-1}$ from the right to obtain that
		\begin{equation}\label{dG pf 1}
			\mathrm{d} \mathcal{G}_{\mu}(h) 
			=
			\mathrm{d}\mathcal{G}_{\mu}^+(h) \circ (\mathcal{J}_{\mu})^{-1} 
			-
			\mathcal{G}_{\mu}\circ \mathrm{d}\mathcal{J}_{\mu}(h) \circ (\mathcal{J}_{\mu})^{-1}.
		\end{equation}
		Then we note by Proposition \ref{rmk id psi- psi+} that $(\mathcal{J}_{\mu})^{-1}\psi = \psi^+$
		allowing us to use formula \eqref{shape derivative G+} and express the first term by
		\begin{align*}
			\mathrm{d}\mathcal{G}_{\mu}^{+}(h)(\mathcal{J}_{\mu})^{-1}\psi 
			& =
			\mathrm{d}\mathcal{G}_{\mu}^{+}(h)\psi^+
			\\
			& =
			-\ve \mathcal{G}_{\mu}^+(h \underline{w}^+) - \ve \mu \partial_x(h\underline{V}^+).
		\end{align*}
		For the second term of \eqref{dG pf 1}, we first make the observation
		\begin{align*}
			(\mathcal{G}^{-}_{\mu})^{-1}\circ\mathrm{d}\mathcal{G}^{+}_{\mu}(h) 
			& =
			(\mathcal{G}^{-}_{\mu})^{-1}\circ\mathrm{d}(\mathcal{G}^{-}_{\mu} \circ\big{(} (\mathcal{G}^-_{\mu})^{-1}\circ\mathcal{G^+_{\mu}}\big{)})(h) 
			\\
			& = 
			(\mathcal{G}^{-}_{\mu})^{-1}\circ \Big{(}\mathrm{d} \mathcal{G}^{-}_{\mu}(h )\circ \big{(} (\mathcal{G}^-_{\mu})^{-1}\circ\mathcal{G}^+_{\mu}\big{)} - \gamma^{-1} \mathcal{G}^{-}_{\mu}\circ \mathrm{d}\mathcal{J}_{\mu}(h) \Big{)}
			\\ 
			& =
			(\mathcal{G}^{-}_{\mu})^{-1} \circ \mathrm{d} \mathcal{G}^{-}_{\mu}(h )\circ (\mathcal{G}^-_{\mu})^{-1}\circ\mathcal{G}^+_{\mu} 
			+
			\gamma^{-1}\mathrm{d}\mathcal{J}_{\mu}(h),
		\end{align*}
		so that
		\begin{equation*}
			\mathrm{d}\mathcal{J}_{\mu}(h)
			=
			\gamma 
			(\mathcal{G}^{-}_{\mu})^{-1}\circ\Big{(}\mathrm{d}\mathcal{G}^{+}_{\mu}(h) 
			-\mathrm{d} \mathcal{G}^{-}_{\mu}(h )\circ (\mathcal{G}^-_{\mu})^{-1}\circ\mathcal{G}^+_{\mu}\Big{)}.
		\end{equation*}
		Then by Remark \ref{rmk id psi- psi+} we have that $\psi^- =  (\mathcal{G}_{\mu}^-)^{-1} \circ \mathcal{G}_{\mu}^+\psi^+$, together with the shape derivative formulas \eqref{shape derivative G+} and \eqref{shape derivative G-}, we deduce that
		\begin{align*}
			\mathrm{d}\mathcal{J}_{\mu}(h) \circ (\mathcal{J}_{\mu})^{-1}\psi
			& =
			\mathrm{d}\mathcal{J}_{\mu}(h) \psi^+
			\\ 
			& =
			\gamma 
			(\mathcal{G}^{-}_{\mu})^{-1}\Big{(}\mathrm{d}\mathcal{G}^{+}_{\mu}(h) \psi^+
			-
			\mathrm{d} \mathcal{G}^{-}_{\mu}(h )\psi^-\Big{)}
			\\
			& =-
			\ve \gamma  (\mathcal{G}^-_{\mu})^{-1}	\Big{(} \mathcal{G}^+_{\mu}(h \underline{w}^+) - \mathcal{G}^-_{\mu}(h \underline{w}^-) \Big{)}
			-
			\gamma \ve \mu  (\mathcal{G}^-_{\mu})^{-1}	\Big{(} 
			\partial_x(hV^+)-  \partial_x(hV^-)\Big{)}.
		\end{align*}
		From \eqref{dG pf 1} and definition of $\mathcal{I}[U]h$ given by \eqref{Def I}, we may collect the above identities and factorize the leading terms together to find that
		\begin{align*}
			\mathrm{d} \mathcal{G}_{\mu}(h) \psi 
			& = 
			-\ve \mathcal{G}^+_{\mu}(h \underline{w}^+)
			-
			\ve \gamma  \mathcal{G}_{\mu}\circ(\mathcal{G}^-_{\mu})^{-1}	\Big{(} \mathcal{G}^+_{\mu}(h \underline{w}^+) - \mathcal{G}^-_{\mu}(h \underline{w}^-) \Big{)}
			\\ 
			& 
			\hspace{0.5cm}
			-
			\ve \mu  \Big{(} \partial_x(hV^+)  
			+ 
			\gamma  \mathcal{G}_{\mu} \circ (\mathcal{G}^-_{\mu})^{-1}	\Big{(} 
			\partial_x(hV^+)- \partial_x(hV^-)\Big{)}\Big{)}
			\\ 
			& =
			-\ve \Big{(}1
			+
			\gamma  \mathcal{G}_{\mu}\circ(\mathcal{G}^-_{\mu})^{-1}	\Big{)}\circ  \mathcal{G}^+_{\mu}(h \underline{w}^+) 
			+
			\ve  \gamma \mathcal{G}_{\mu}(h \underline{w}^-) 
			- \mu \ve \mathcal{I}(U)h.
		\end{align*}
		Clearly, the proof is a consequence of the following relation
		\begin{align}\label{Observation G}
			\Big{(}
			1
			+
			\gamma \mathcal{G}_{\mu}\circ(\mathcal{G}_{\mu}^-)^{-1}	\Big{)}\circ  \mathcal{G}^+_{\mu}
			= 
			\mathcal{G}_{\mu}.
		\end{align}
		and can be seen in the identity 
		\begin{align*}
			\mathcal{G}^+_{\mu}
			& =
			\mathcal{G}_{\mu} \circ \mathcal{J}_{\mu}
			\\ 
			& = 
			\mathcal{G}_{\mu}
			-
			\gamma \mathcal{G}_{\mu} \circ  (\mathcal{G}^-_{\mu})^{-1}\circ \mathcal{G}^+_{\mu}.
		\end{align*}

		\noindent
		\underline{Step 2.} We first use the previous step and \eqref{dG pf 1} to write
		\begin{align}\label{to be est}
			\mathrm{d} \mathcal{G}_{\mu}(h) \psi
			& =
			\mathrm{d}\mathcal{G}^+_{\mu}(h)\psi^+
			-
			\mathcal{G}_{\mu}\circ \mathrm{d}\mathcal{J}_{\mu}(h)\psi^+.
		\end{align}
		Then for the first term, we use \eqref{directional derivative G+ mu ve} and Proposition \eqref{Prop op J} with estimate \eqref{Est J} to get that
		\begin{align*}
			|\mathrm{d}^j\mathcal{G}^+_{\mu}(h)\psi^+|_{H^{s-\frac{1}{2}}}
			& \leq 
			\ve^j\mu^{\frac{3}{4}}M \prod\limits_{m=1}^j |h_m|_{H^{\max\{s,t_0\}+1}} |(\mathcal{J}_{\mu})^{-1} \psi|_{\dot{H}_{\mu}^{s+\frac{1}{2}}},
			\\ 
			& \leq 
			\ve^j\mu^{\frac{3}{4}}M \prod\limits_{m=1}^j |h_m|_{H^{\max\{s,t_0\}+1}} | \psi|_{\dot{H}_{\mu}^{s+\frac{1}{2}}}.
		\end{align*}
		For the second term, we first let $j=1$  and observe by direct calculations (or see previous step)  that
		\begin{align*}
			|\mathrm{d}\mathcal{J}_{\mu} \psi^+|_{\mathring{H}^{s+\frac{1}{2}}} 
			&  \leq
			|(\mathcal{G}_{\mu}^{-})^{-1}\mathrm{d}\mathcal{G}_{\mu}^{+}(h_1) \psi^+|_{\mathring{H}^{s+\frac{1}{2}}} 
			+
			|(\mathcal{G}_{\mu}^{-})^{-1}\mathrm{d} \mathcal{G}_{\mu}^{-}(h_1 )\psi^-|_{\mathring{H}^{s+\frac{1}{2}}} 
			\\ 
			& =: R_1 + R_2.
		\end{align*}
		Here we treat $R_1$ by \eqref{G- inv djG+} and then \eqref{Est J} to get that
		\begin{equation*}
			R_1 \leq \ve \mu^{\frac{1}{4}}M |h_1|_{H^{\max\{s,t_0\}+1}} | \psi|_{\dot{H}_{\mu}^{s+\frac{1}{2}}}.
		\end{equation*}
		For $R_2$ we use \eqref{G- inv djG-}, then the relation $\psi^- =  (\mathcal{G}^-_{\mu})^{-1}\mathcal{G}_{\mu}^+\psi^+$ together with estimates \eqref{Inverse est 2} and \eqref{Est J} to obtain, 
		\begin{align*}
			R_2 
			&  \leq
			\ve M |h_1|_{H^{\max\{s,t_0\}+1}} | \psi^-|_{\mathring{H}^{s+\frac{1}{2}}}
			\\ 
			& \leq \ve \mu^{\frac{1}{4}}M |h_1|_{H^{\max\{s,t_0\}+1}} | \psi|_{\dot{H}_{\mu}^{s+\frac{1}{2}}}.
		\end{align*}
		As a result,  for $j=1$, we have by \eqref{1 Est G} (for $0\leq s \leq t_0+1$) and the above estimates that
		\begin{align*}
			|\mathcal{G}_{\mu}\circ \mathrm{d}\mathcal{J}_{\mu}(h)\psi^+|_{H^{s-\frac{1}{2}}} 
			& \leq
			\mu^{\frac{3}{4}} M |\mathrm{d}\mathcal{J}_{\mu}(h)\psi^+|_{\dot{H}_{\mu}^{s+\frac{1}{2}}} 
			\\ 
			& \leq 
			\sqrt{\mu}M  |\mathrm{d}\mathcal{J}_{\mu}(h)\psi^+|_{\mathring{H}^{s+\frac{1}{2}}}
			\\ 
			& 
			\leq 
			\ve \mu^{\frac{3}{4}}M |h_1|_{H^{\max\{s,t_0\}+1}}  |\psi|_{\dot{H}_{\mu}^{s+\frac{1}{2}}}.
		\end{align*}
		The remaining cases follow recursively and can be proved by induction.\\

		\noindent
		\underline{Step 3$-$5.} The proof follows similarly, where for the first term in \eqref{to be est} we instead use \eqref{directional derivative G mu1 ve} or \eqref{6. Est dj G+ ve mu}, or \eqref{7. Est dj G+ ve mu}. While for the second term, we can gain precision in $\mu$ using \eqref{2 Est G}.
		\\ 
		
		\iffalse
		\noindent
		\underline{Step 6.} For the proof of \eqref{Est B psi 1} we simply add some additional terms  to see that
		%
		%
		%
		\begin{align*}
			|\mathfrak{P} \partial_{x,t}^{\delta} \psi|_{H^{r+1}}
			& \leq 
			| \Lambda^r \partial_{x,t}^{\delta} \psi|_{\dot{H}_{\mu}^{\frac{1}{2}}}
			\\ 
			& \leq 
			\sum \limits_{\alpha \in \N^2 : |\alpha| \leq N} \Big{(}  |\psi_{(\alpha)}|_{\dot{H}_{\mu}^{\frac{1}{2}}}
			+
			\ve |\underline{w}\zeta_{(\alpha)}|_{\dot{H}_{\mu}^{\frac{1}{2}}} \Big{)}
			\\ 
			& \leq 
			\sum \limits_{\alpha \in \N^2 : |\alpha| \leq N} \Big{(}  |\psi_{(\alpha)}|_{\dot{H}_{\mu}^{\frac{1}{2}}}
			+
			\ve |\underline{w}|_{H^1}|\zeta_{(\alpha)}|_{H^1} \Big{)}.
		\end{align*}
		%
		%
		%
		Then apply estimates \eqref{W+ est low reg} and \eqref{W- est low reg} to get that
		%
		%
		%
		\begin{align*}
			|\underline{w}|_{H^{1}} 
			& \leq \mu^{\frac{3}{4}} M(...) |\psi|_{\dot{H}_{\mu}^{\frac{3}{2} + \frac{1}{2}}}
			\\ 
			& \leq \mu^{\frac{3}{4}}  M(...) |\partial_x \psi|_{H^{2}}
		\end{align*}
		%
		%
		%
		where $2 \leq  t_0 +2$, and we can put the last term in the control norm.
		\fi 
	\end{proof}

		\begin{remark}
		In Step $1.$ of the proof, we found a formula for the shape derivative of $\mathcal{J}_{\mu}$:
		\begin{equation*}
			\mathrm{d}_{\zeta}\mathcal{J}_{\mu}[\ve \zeta](h)
			=
			\gamma 
			(\mathcal{G}^{-}_{\mu}[\ve \zeta])^{-1}\circ\Big{(}\mathrm{d}_{\zeta}\mathcal{G}^{+}_{\mu}[\ve \zeta](h) 
			-\mathrm{d}_{\zeta} \mathcal{G}^{-}_{\mu}[\ve \zeta](h )\circ (\mathcal{G}^-_{\mu}[\ve \zeta])^{-1}\circ\mathcal{G^+_{\mu}[\ve \zeta]}\Big{)}.
		\end{equation*}
		Furthermore, in Step $2.$, we proved  the  estimate
		\begin{align}\label{shape derivative J}
			|\mathrm{d}_{\zeta}\mathcal{J}_{\mu}[\ve \zeta](\partial_x\zeta) \psi^+|_{\mathring{H}^{s+\frac{1}{2}}} 
			\leq \ve \mu^{\frac{1}{4}}M |\partial_x \zeta|_{H^{\max\{s,t_0\}+1}} | \psi|_{\dot{H}_{\mu}^{s+\frac{1}{2}}}.
		\end{align}
		From these expressions, we can deduce an estimate on the inverse:
		\begin{align}\label{shape derivative Jinv} 
			|\mathrm{d}_{\zeta}(\mathcal{J}_{\mu}[\ve \zeta])^{-1}(\partial_x\zeta) \psi^+ |_{\dot{H}_{\mu}^{s+\frac{1}{2}}} 
			& =
			|(\mathcal{J}_{\mu}[\ve \zeta])^{-1}\circ \mathrm{d}_{\zeta}\mathcal{J}_{\mu}[\ve \zeta](\partial_x\zeta) \circ (\mathcal{J}_{\mu}[\ve \zeta])^{-1} \psi|_{\dot{H}_{\mu}^{s+\frac{1}{2}}} 
			\\ 
			& \leq \ve M |\partial_x \zeta|_{H^{\max\{s,t_0\}+1}} | \psi|_{\dot{H}_{\mu}^{s+\frac{1}{2}}}.\notag
		\end{align}
		For an application of these estimates, see proof of \eqref{Symbolic G}.
	\end{remark}

	\begin{remark} In step $1$. in the proof we also have some convenient identities for $\mathcal{G}_{\mu}$. We saw that
		\begin{equation*}
			\Big{(}
			1
			+
			\gamma \mathcal{G}_{\mu}[\ve \zeta]\circ(\mathcal{G}^-_{\mu}[\ve \zeta])^{-1}	\Big{)}\circ  \mathcal{G}^+_{\mu}[\ve \zeta]
			= 
			\mathcal{G}_{\mu}[\ve \zeta].
		\end{equation*}
		Composing this identity with $(\mathcal{G}^{+}_{\mu}[\ve \zeta])^{-1}$ on the right, we get a quantity we will use later:
		\begin{equation}\label{id 1}
			\big(1+ \gamma \mathcal{G}_{\mu}[\ve \zeta]  (\mathcal{G}^-_{\mu}[\ve \zeta])^{-1}\big{)} = \mathcal{G}_{\mu}[\ve \zeta]  (\mathcal{G}^{+}_{\mu}[\ve \zeta])^{-1}
		\end{equation}
		Furthermore, we have that
		\begin{align*}
			-\gamma (\mathcal{G}^-_{\mu}[\ve \zeta])^{-1}\mathcal{G}_{\mu}[\ve \zeta]  
			& =
			-\gamma (\mathcal{G}^-_{\mu}[\ve \zeta])^{-1}\mathcal{G}^+_{\mu}[\ve \zeta]  (\mathcal{J}_{\mu}[\ve \zeta])^{-1}
			\\
			& = 
			 1 - (\mathcal{J}_{\mu}[\ve \zeta])^{-1},
		\end{align*}
		which implies
		\begin{equation}\label{id 2}
			(\mathcal{J}_{\mu}[\ve \zeta])^{-1} + \gamma  (\mathcal{G}^-_{\mu}[\ve \zeta])^{-1}\mathcal{G}_{\mu}[\ve \zeta] = 1 + 2 \gamma (\mathcal{G}^-_{\mu}[\ve \zeta])^{-1}\mathcal{G}_{\mu}[\ve \zeta].
		\end{equation}
		and \iffalse 
		%
		%
		%
		\begin{align*}
			\gamma (\mathcal{G}^-_{\mu}[\ve \zeta])^{-1}\mathcal{G}^+_{\mu}[\ve \zeta](\mathcal{J}_{\mu}[\ve \zeta])^{-1} & = -(\mathcal{J}_{\mu}[\ve \zeta])^{-1}+	\gamma (\mathcal{G}^-_{\mu}[\ve \zeta])^{-1}\mathcal{G}^+_{\mu}[\ve \zeta](\mathcal{J}_{\mu}[\ve \zeta])^{-1} +(\mathcal{J}_{\mu}[\ve \zeta])^{-1}
			\\ 
			& = 
			(\mathcal{J}_{\mu}[\ve \zeta])^{-1}(\mathcal{J}_{\mu}[\ve \zeta]-1)
			\\ 
			& = 
			-\gamma(\mathcal{J}_{\mu}[\ve \zeta])^{-1} (\mathcal{G}^-_{\mu}[\ve \zeta])^{-1}\mathcal{G}^+_{\mu}[\ve \zeta],
		\end{align*}
		%
		%
		%
		which implies\fi
		%
		%
		%
		\begin{equation}\label{id 3}
				\gamma (\mathcal{G}^-_{\mu}[\ve \zeta])^{-1}\mathcal{G}^+_{\mu}[\ve \zeta]\mathcal{J}_{\mu}[\ve \zeta]^{-1}  (\mathcal{G}^{+}_{\mu}[\ve \zeta])^{-1} = -\gamma\mathcal{J}_{\mu}[\ve \zeta]^{-1} (\mathcal{G}^-_{\mu}[\ve \zeta])^{-1}.
		\end{equation}
	\end{remark}

	\begin{proof}[Proof of Proposition  \ref{linearization formulas}] 
		The proof relies on the estimates provided by Lemma \ref{Lemma shape derivative G}. However, these estimates are the same as in \cite{LannesTwoFluid13}, and so we refer the reader to the proof of Proposition $6$ in this paper.
	\end{proof}

	To prove the main result of this section, we also need the following Lemma:
	
	\begin{lemma}\label{Lemma Gpm inv}
		Under the assumptions of Proposition \ref{linearization formulas}, one has for $|\alpha|\leq N-1$ that
		\begin{equation*}
			\mathcal{G}^{\pm}_{\mu}[\ve \zeta]\psi_{(\alpha)} = r_{\alpha}^{\pm},
		\end{equation*}
		In the case when $|\alpha| = N$, then
		\begin{equation*}
			\mathcal{G}^{\pm}_{\mu}[\ve \zeta] \psi^{\pm}_{(\alpha)} 
			=
			\mathcal{G}_{\mu}[\ve \zeta]\psi_{(\alpha)}  
			-
			\gamma \ve \mu  \mathcal{G}_{\mu}[\ve \zeta]  (\mathcal{G}^{\mp}_{\mu}[\ve \zeta])^{-1}\partial_x	\Big{(} 
			\zeta_{(\alpha)}(\underline{V}^+-\underline{V}^-)\Big{)} + r_{\alpha}^{\pm},
		\end{equation*}
		where the residual terms $r^{\pm}_{\alpha}$ satisfies
		\begin{equation*}
			|(\mathcal{G}^{\pm}_{\mu}[\ve \zeta])^{-1}r_{\alpha}^{\pm}|_{\dot{H}^{\frac{3}{2}}_{\mu}}^2 \leq C \mathcal{E}^N(\mathbf{U})(1+\gamma \ve^2 \sqrt{\mu} |\zeta|_{<N+\frac{1}{2}>}^2),
		\end{equation*} 
		for some $C(M,\mathcal{E}^N(\mathbf{U}))>0$ nondecreasing function of its argument  and 
		$$|\zeta|_{<N+\frac{1}{2}>} = \sum \limits_{\alpha \in \N^2, |\alpha|=N} |\partial_{x,t}^{\alpha}\zeta|_{\mathring{H}^{\frac{1}{2}}}.$$
	\end{lemma}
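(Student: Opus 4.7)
The plan is to derive the two identities by applying $\partial_{x,t}^{\alpha}$ to the transmission relation $\mathcal{G}^{\pm}_{\mu}[\ve\zeta]\psi^{\pm} = \mathcal{G}_{\mu}[\ve\zeta]\psi$ coming from \eqref{Relation Gpm}, and then reorganizing both sides in terms of the good unknowns. For the right-hand side, Proposition \ref{linearization formulas} directly gives the quasilinear expansion with principal part $\mathcal{G}_{\mu}\psi_{(\alpha)} - \ve\mu\,\mathcal{I}[\mathbf{U}]\zeta_{(\alpha)}$, plus the sub-principal piece $\mathcal{G}_{\mu,(\alpha)}\psi_{\langle\check\alpha\rangle}$ when $|\alpha|=N$, modulo a remainder $\ve R_{\alpha}$ controlled by \eqref{Est on R}. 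For the left-hand side, I would iterate the shape-derivative formula \eqref{shape derivative G+} (or \eqref{shape derivative G-}) exactly as in the proof of Proposition \ref{linearization formulas}, obtaining $\mathcal{G}^{\pm}_{\mu}\psi^{\pm}_{(\alpha)} - \ve\mu\,\partial_x(\zeta_{(\alpha)}\underline{V}^{\pm})$ as the principal part, together with a sub-principal sum $\mathcal{G}^{\pm}_{\mu,(\alpha)}\psi^{\pm}_{\langle\check\alpha\rangle}$ and a multilinear residue.

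Equating the two expansions, the terms $\ve\mu\,\partial_x(\zeta_{(\alpha)}\underline{V}^{+})$ cancel in the ``$+$'' case against the first piece of $\mathcal{I}[\mathbf{U}]\zeta_{(\alpha)}$ in Definition \ref{Def op}, leaving exactly the claimed term $-\gamma\ve\mu\,\mathcal{G}_{\mu}(\mathcal{G}^{-}_{\mu})^{-1}\partial_x(\zeta_{(\alpha)}[\![\underline{V}^{\pm}]\!])$. For the ``$-$'' case the cancellation is between $\ve\mu\,\partial_x(\zeta_{(\alpha)}\underline{V}^{-})$ and a re-expansion of $\mathcal{I}[\mathbf{U}]$ obtained via the identity \eqref{Observation G}, which swaps the roles of $(\mathcal{G}^{-}_{\mu})^{-1}$ and $(\mathcal{G}^{+}_{\mu})^{-1}$ in the second term. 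For $|\alpha|\le N-1$ no principal surviving contribution appears, because the extra regularity allows every differentiation falling on $\zeta$ to be absorbed into $r^{\pm}_{\alpha}$, and the whole left-hand side is placed in the remainder. The sub-principal term $\mathcal{G}^{\pm}_{\mu,(\alpha)}\psi^{\pm}_{\langle\check\alpha\rangle}$ (coming from one derivative hitting $\zeta$ in $\mathcal{G}^{\pm}_{\mu}$ and all others on $\psi^{\pm}$) is rewritten using $\psi^{\pm} = (\mathcal{J}_{\mu})^{-1}\psi$ or $\psi^{-} = (\mathcal{G}^{-}_{\mu})^{-1}\mathcal{G}^{+}_{\mu}\psi^{+}$ from Remark \ref{rmk id psi- psi+}, so that it can be compared with $\mathcal{G}_{\mu,(\alpha)}\psi_{\langle\check\alpha\rangle}$ appearing on the right-hand side; the discrepancy is then absorbed into $r^{\pm}_{\alpha}$ using \eqref{shape derivative J} and \eqref{shape derivative Jinv}.

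The hard part will be the estimate on $|(\mathcal{G}^{\pm}_{\mu})^{-1}r^{\pm}_{\alpha}|_{\dot{H}^{3/2}_{\mu}}$. The residue is a sum of multilinear expressions $\mathrm{d}^{j}\mathcal{G}^{\pm}_{\mu}[\ve\zeta](\partial^{l^{1}}\zeta,\dots,\partial^{l^{j}}\zeta)\partial^{\delta}\psi^{\pm}$ with $\sum|l^{i}| + |\delta| = N$ and $|l^{i}|,|\delta|\le N-1$, together with contributions originating from the good-unknown substitution $\psi_{(\alpha)} = \partial^{\alpha}\psi - \ve\underline{w}\,\zeta_{(\alpha)}$. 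Composing with $(\mathcal{G}^{\pm}_{\mu})^{-1}$ and using the coercivity estimates \eqref{est psi +}--\eqref{est phi +} for ``$+$'' and \eqref{est psi -}--\eqref{est phi -} for ``$-$'' effectively trades a derivative, so the task reduces to bounding these multilinear pieces in $\dot{H}^{1/2}_{\mu}$ or $\mathring H^{1/2}$. The precise multilinear estimates \eqref{4. Est dj G ve mu}--\eqref{5. Est dj G ve mu} and \eqref{Est djG- mu ve}--\eqref{Est djG- neq} provide the required $\ve^{j}$ gains; the delicate case is when the highest derivative falls on $\zeta$ and must be measured in $\mathring H^{1/2}$, producing the extra factor $\ve^{2}\sqrt{\mu}\,|\zeta|^{2}_{<N+\frac{1}{2}>}$. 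The factor $\gamma$ in front of that term is tracked back to the $\psi^{-}$ contributions transported through $\psi^{-} = (\mathcal{G}^{-}_{\mu})^{-1}\mathcal{G}^{+}_{\mu}\psi^{+}$, where Proposition \ref{Inverse 2} together with \eqref{Est J} transfers the bound back to $\psi$. The bookkeeping of small parameters and the interplay between the homogeneous scale $\mathring H^{1/2}$ (natural for the infinite-depth operator $\mathcal{G}^{-}_{\mu}$) and the $\dot{H}^{1/2}_{\mu}$ scale (natural for the finite-depth operator $\mathcal{G}^{+}_{\mu}$) is the main technical novelty relative to \cite{LannesTwoFluid13} and must be carried out in the correct order to avoid losses in $\mu$.
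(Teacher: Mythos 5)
Your proposal follows essentially the same route as the paper: apply $\partial_{x,t}^{\alpha}$ to the relation $\mathcal{G}^{\pm}_{\mu}[\ve\zeta]\psi^{\pm}=\mathcal{G}_{\mu}[\ve\zeta]\psi$, expand both sides with the shape-derivative formulas \eqref{shape derivative G+}, \eqref{shape derivative G-}, \eqref{d G}, cancel $\ve\mu\,\partial_x(\zeta_{(\alpha)}\underline{V}^{\pm})$ against the definition of $\mathcal{I}[\mathbf{U}]\zeta_{(\alpha)}$, use the identity \eqref{Observation G} (equivalently \eqref{id 1}) to handle the minus case, and estimate the multilinear residue via the $\mathrm{d}^{j}\mathcal{G}^{\pm}_{\mu}$ bounds together with the $\mathring{H}^{1/2}$--$\dot{H}^{1/2}_{\mu}$ bookkeeping of Remark \ref{Remark est G inv}. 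The paper defers the residual estimate to Proposition 7 of \cite{LannesTwoFluid13}, and your sketch of that estimate is consistent with it, so no substantive difference remains.
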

	\begin{remark}\label{Remark est G inv}
		Here it is crucial to use that $\mathring{H}^{s+\frac{1}{2}}(\R) \subset \dot{H}_{\mu}^{s+\frac{1}{2}}(\R)$ where we have the estimate
		\begin{equation*}
			|f|_{\dot{H}_{\mu}^{s+\frac{1}{2}}} \leq \mu^{-\frac{1}{4}}|f|_{\mathring{H}^{s+\frac{1}{2}}}.
		\end{equation*}
		To regain the precision in $\mu$, we note that is provided in Proposition \ref{Inverse 2} and Corollary \ref{Cor inverse G-}.
	\end{remark}
	\begin{proof}
		The proof is similar to the proof of Proposition $7$ in \cite{LannesTwoFluid13}, so we will just point out the differences. To that end, we focus on the case $|\alpha| = N$ and apply $\partial^{\alpha}_{x,t}$ to the relation $\mathcal{G}^{\pm} _{\mu}[\ve \zeta]= \mathcal{G}_{\mu}[\ve \zeta]$. Then using identities \eqref{shape derivative G+}, \eqref{shape derivative G-}, and \eqref{d G} (or Proposition \ref{linearization formulas} without sub-principal terms) to find that
		\begin{equation*}
			\mathcal{G}^{\pm}_{\mu}[\ve \zeta] \psi_{(\alpha)}^{\pm} - \ve \mu \partial_x(\zeta_{(\alpha)}\underline{V}^{\pm}) 
			=
			 \mathcal{G}_{\mu}[\ve \zeta]\psi_{(\alpha)} - \ve \mu \mathcal{I}[U]\zeta_{(\alpha)} + r_{\alpha}^{\pm},
		\end{equation*}
		where $r_{\alpha}^{\pm}$ is on the form
		\begin{equation*}
			\mathrm{d}_{\zeta}^j\mathcal{G}_{\mu}[\ve \zeta] (\partial^{l^1}_{x,t}\zeta, ..., \partial^{l^j}_{x,t}\zeta)\partial^{\delta}_{x,t}\psi -
			\mathrm{d}_{\zeta}^j\mathcal{G}_{\mu}[\ve \zeta] ^{\pm}(\partial^{l^1}_{x,t}\zeta, ..., \partial^{l^j}_{x,t}\zeta)\partial^{\delta}_{x,t}\psi^{\pm}, 
		\end{equation*}
		with $\sum \limits_{i=1}^j |l^j| + |\delta| = N$, $0\leq |\delta|\leq N-1$ and $|l^i|\leq N$. The proof of estimate is the same as in the Proposition $7$ in \cite{LannesTwoFluid13} after using Remark \ref{Remark est G inv}. Moreover, applying the definition of $\mathcal{I}[U]\zeta_{(\alpha)}$ we find that
		\begin{align*}
			\mathcal{G}^{+}_{\mu}[\ve \zeta] \psi_{(\alpha)}^+ & =  \mathcal{G}_{\mu}[\ve \zeta]\psi_{(\alpha)}  
			-
			\gamma \ve \mu  \mathcal{G}_{\mu}[\ve \zeta]  (\mathcal{G}^-_{\mu}[\ve \zeta])^{-1}\partial_x	\Big{(} 
			\zeta_{(\alpha)}(\underline{V}^+-\underline{V}^-)\Big{)}.
		\end{align*}
		Similarly, for $\mathcal{G}^{-}_{\mu}$ we observe that
		\begin{align*}
			\mathcal{G}^{-}_{\mu}[\ve \zeta] \psi_{(\alpha)}^- & =  \mathcal{G}_{\mu}[\ve \zeta]\psi_{(\alpha)}  
			-
			 \ve \mu\big(1+ \gamma \mathcal{G}_{\mu}[\ve \zeta]  (\mathcal{G}^-_{\mu}[\ve \zeta])^{-1}\big{)}\partial_x	\Big{(} 
			\zeta_{(\alpha)}(\underline{V}^+-\underline{V}^-)\Big{)}.
		\end{align*}
		Then we conclude by relation \eqref{id 1}.% with $(\mathcal{G}^{+}_{\mu})^{-1}$ on the right.
		 
	\end{proof}

	We will now turn to the proof of Proposition \ref{Prop Quasilinear system}. However, since the quantities involved satisfy the same estimates as in \cite{LannesTwoFluid13}, we only point out the main differences (see also for a similar approach \cite{Iguchi_09}).

	\begin{proof}[Proof of Proposition \ref{Prop Quasilinear system}] Let $\psi^{\pm}$ and $\psi$ be defined as in the transmission problem \eqref{Transmission pb} throughout the proof. Then for the first equation, we simply apply $\partial_{x,t}^{\alpha}$ and conclude by Proposition \ref{linearization formulas}. 
		
	For the second equation, we need to prove that
	\begin{align}\notag
		\text{if} \: \: 1\leq|\alpha| <N :
		& \quad \hspace{2.3cm}
		\partial_t \psi_{(\alpha)} + \mathfrak{Ins}[\mathbf{U}]\zeta_{(\alpha)}= \ve S_{\alpha},
		\\ 
		\text{if} \qquad\: |\alpha| = N:
		& \quad \label{Quasi 2 2}
		\partial_t \psi_{(\alpha)} + \mathfrak{Ins}[\mathbf{U}]\zeta_{(\alpha)}-\ve \mathcal{I}[\mathbf{U}]^{\ast} \psi_{(\alpha)}
		= \ve S_{\alpha}.
	\end{align}
	We will only prove the most difficult case $|\alpha| = N$, but first need an observation. Let $\partial$ denote either the derivative with respect to $x$ or $t$. Then the following identity holds,
	\begin{align}\label{Claim in pf of quasi}
		\partial_t& \partial \psi +  \partial \zeta 
		-
		\frac{\ve}{\mu}(\underline{w}^+- \gamma \underline{w}^-)\partial_t \partial\zeta 
		& +\ve \underline{V}^+(\partial_x \partial \psi^+ - \ve \underline{w}^+ \partial_x \partial \zeta) 
		-
		\gamma \ve \underline{V}^-(\partial_x \partial \psi^- - \ve \underline{w}^- \partial_x \partial \zeta)\notag
		\\
		& =-\frac{1}{\mathrm{bo}}  \frac{1}{\ve \sqrt{\mu }} \partial \kappa(\ve \sqrt{\mu}\zeta).
 	\end{align}
	To prove the claim \eqref{Claim in pf of quasi}, we apply $\partial$ to the second equation \eqref{IWW} to find that
	\begin{equation}
		\partial_t\partial  \psi + \partial \zeta 
		+
		\ve\big{(}
		 (\partial_x   \psi^{+}) (\partial_x  \partial   \psi^{+}) 
		-
		\gamma (\partial_x \psi^{-})(\partial_x \partial    \psi^{-})
		\big{)}
		+
		\ve \partial \mathcal{N}[\ve \zeta,\psi^{\pm}]
		=- \frac{1}{\mathrm{bo}}\frac{1}{\ve \sqrt{\mu}} \partial \kappa(\ve \sqrt{\mu}\zeta),
	\end{equation}
	where
	\begin{align*}
	\ve \partial \mathcal{N}[\ve \zeta,\psi^{\pm}] 
	& =
	- \partial \Bigg{(}
		\frac{\ve}{2 \mu}(1+\ve^2 \mu (\partial_x\zeta)^2)
	\Big{(}
		(\underline{w}^+)^2 - \gamma (\underline{w}^-)^2
	\Big{)}
	\Bigg{)}
	\\ 
	& =
	-
	\ve^3  (\partial_x\zeta)  (\partial_x \partial \zeta)
	\Big{(}
	(\underline{w}^+)^2 - \gamma (\underline{w}^-)^2
	\Big{)}
	\\ 
	& 
	\hspace{0.5cm}
	-
	\frac{1}{ \mu}(1+\ve^2 \mu (\partial_x\zeta)^2)
	\Big{(}
	(\underline{w}^+)(\partial \underline{w}^+)  - \gamma (\underline{w}^-)(\partial \underline{w}^-)
	\Big{)}.
	\end{align*}	
	To conclude, we observe by definition that (or use Lemma $4.13$ \cite{WWP}):
	\begin{align*}
		\ve (\partial_x   \psi^{\pm}) & (\partial_x  \partial   \psi^{\pm}) 
		-
		\ve^3  (\partial_x\zeta)  (\partial_x \partial \zeta)
		(\underline{w}^{\pm})^2
		-
		\frac{\ve}{ \mu}(1+\ve^2 \mu (\partial_x\zeta)^2)
		(\underline{w}^{\pm})(\partial \underline{w}^{\pm})
		\\ 
		& = 
		\ve \underline{V}^{\pm}(\partial_x \partial \psi^{\pm} - \ve \underline{w}^{\pm} \partial_x \partial \zeta)- \frac{\ve}{\mu}\underline{w}^{\pm}\partial\big(\mathcal{G}_{\mu}^{\pm}[\ve \zeta]\psi^{\pm}\big).
	\end{align*}
	By adding these observations and trading $\frac{1}{\mu}\mathcal{G}^+_{\mu}[\ve \zeta]\psi^+$ and $\frac{1}{\mu}\mathcal{G}^-_{\mu}[\ve \zeta]\psi^-$ with $\partial_t \zeta$, we deduce the identity \eqref{Claim in pf of quasi}. 
	
	The next step is to let $\alpha = \beta + \delta$ with $|\delta|=1$, where we trade $\partial$ with $\partial^{\delta}_{x,t}$ in \eqref{Claim in pf of quasi}. Then we apply $\partial^{\beta}_{x,t}$ to this equation, where we claim that it will result in the following equation:
	\begin{align}\label{Claim 2 quasi}
		\partial_t& \partial^{\alpha}_{x,t}\psi +  \partial^{\alpha}_{x,t} \zeta 
		-
		\frac{\ve}{\mu}(\underline{w}^+
		- 
		\gamma \underline{w}^-)\partial_t \partial^{\alpha}_{x,t}\zeta
		\\ 
		&
		+
		\ve \underline{V}^+(\partial_x  \psi^+_{(\alpha)} 
		+
		\ve   (\partial_x \underline{w}^+)\partial_x \partial^{\alpha}_{x,t} \zeta) 
		-
		\gamma \ve \underline{V}^-(\partial_x \psi^-_{(\alpha)} 
		+
		\ve  (\partial_x \underline{w}^-)\partial_x \partial^{\alpha}_{x,t} \zeta)\notag
		\\
		& =- \frac{1}{\mathrm{bo}}  \frac{1}{\ve \sqrt{\mu}} \partial^{\alpha}_{x,t} \kappa(\ve \sqrt{\mu}\zeta) + \ve S_{\alpha},\notag 
	\end{align}
	where $S_{\alpha}$ is some generic function, satisfying 
	\begin{equation}\label{Claim 2.1 quasi}
		|S_{\alpha}|^2_{\dot{H}^{\frac{1}{2}}_{\mu}} \leq C \mathcal{E}^N(U) \big{(} 1 +   \ve^2 \sqrt{\mu}|\underline{V}^+-   \underline{V}^-|^2_{L^{\infty}}|\zeta|^2_{<N+\frac{1}{2}>}\big{)}.
	\end{equation}
	The proof of this fact is the same as in \cite{LannesTwoFluid13}, where the estimate relies on the following inequalities
	\begin{equation}\label{claim - est psi}
		|\psi_{(\alpha)}^{\pm}|_{\dot{H}^{\frac{1}{2}}_{\mu}} \leq 1 + \gamma \ve \mu^{\frac{1}{4}}|\underline{V}^+-  \underline{V}^-|_{L^{\infty}}|\zeta|_{<N+\frac{1}{2}>},
	\end{equation}
	and
	\begin{equation}\label{claim - est w}
		\frac{1}{\sqrt{\mu}} |\partial^{\beta}\underline{w}^{\pm}|_{\dot{H}^{\frac{1}{2}}_{\mu}} \leq 1 + \gamma \ve \mu^{\frac{1}{4}}|\underline{V}^+- \underline{V}^-|_{L^{\infty}}|\zeta|_{<N+\frac{1}{2}>}.
	\end{equation}
	This can be seen in the proof of Lemma $9$ in  \cite{LannesTwoFluid13}. In our case, the proof is a consequence of Lemma \ref{Lemma Gpm inv} and Corollary \ref{cor definitions}. However, the quantities involved satisfy the same estimates and therefore complete the estimate on $S_{\alpha}$.

	 We may now further decompose \eqref{Claim 2 quasi}, where we may now use the definition of $ \psi_{(\alpha)}$ to find that
	 \begin{align*}
	 	\partial_t \psi_{(\alpha)} 
	 	+
	 	\mathfrak{a}\zeta_{(\alpha)}
	 	+
	 	\ve \underline{V}^+\partial_x \psi^+_{(\alpha)}
	 	-
	 	\gamma \ve \underline{V}^-\partial_x \psi^-_{(\alpha)} 
	 	= -\frac{1}{\mathrm{bo}}
	 	\frac{1}{\ve \sqrt{\mu}} \partial^{\alpha} \kappa(\ve \sqrt{\mu}\zeta) + \ve S_{\alpha},\notag 
	 \end{align*}
	where we identify $\mathfrak{a}$ by
	\begin{equation*}
		\mathfrak{a} = 
		 \Big{(}1 
		 +
		 \ve \big{(} (\partial_t + \ve \underline{V}^{+}\partial_x)\underline{w}^{+} 
		 -
		 \gamma (\partial_t + \ve \underline{V}^{-}\partial_x)\underline{w}^{-}
	     \big{)}\Big{)}.
	\end{equation*}
	To conclude, we simply need to work on the terms:
	\begin{align}\label{so that}
		\ve \underline{V}^+\partial_x \psi^+_{(\alpha)}
		-
		\gamma \ve \underline{V}^-\partial_x \psi^-_{(\alpha)} 
		& = 
	 	\frac{\ve}{2}
		\big{(}\underline{V}^+ +  \underline{V}^- \big{)}\partial_x \psi_{(\alpha)}
		+
		\frac{\ve}{2}\big{(}\underline{V}^+ -  \underline{V}^- \big{)}\partial_x\big{(} \psi_{(\alpha)}^+ + \gamma \psi^-_{(\alpha)}\big{)}.
	\end{align}
	Then by identities \eqref{id 2}, \eqref{id 3}, and Lemma \ref{Lemma Gpm inv} implies
	\begin{align*}
		& \psi_{(\alpha)}^+ + \gamma \psi^-_{(\alpha)}
		\\ 
		& 
		=
		\big{(} ( \mathcal{J}_{\mu}[\ve \zeta]^{-1} + \gamma (\mathcal{G}^-_{\mu}[\ve \zeta])^{-1}\mathcal{G}_{\mu}[\ve \zeta]\big{)}\psi_{(\alpha)}	
		\\
		&
		\hspace{0.5cm}
		-
		\gamma \ve \mu   \Big{(}
		 \mathcal{J}_{\mu}[\ve \zeta]^{-1} (\mathcal{G}^{-}_{\mu}[\ve \zeta])^{-1}
		+
		\gamma(\mathcal{G}^-_{\mu}[\ve \zeta])^{-1}\mathcal{G}_{\mu}[\ve \zeta]  (\mathcal{G}^{+}_{\mu}[\ve \zeta])^{-1}
		\Big{)}
		\partial_x	\big{(} 
		\zeta_{(\alpha)}(\underline{V}^+-\underline{V}^-)\big{)}	+\tilde{r}_{\alpha}
		\\
		& = 
		\big{(}1 + 2\gamma (\mathcal{G}^-_{\mu}[\ve \zeta])^{-1}\mathcal{G}_{\mu}[\ve \zeta]\big{)}\psi_{(\alpha)}	
		-
		(1-\gamma)\gamma \ve \mu \mathcal{J}_{\mu}[\ve \zeta]^{-1}(\mathcal{G}^-_{\mu}[\ve \zeta])^{-1}\circ \partial_x	\big{(} 
		\zeta_{(\alpha)}(\underline{V}^+-\underline{V}^-)\big{)} 
		+ 
		\tilde{r}_{\alpha},
	\end{align*}
	with
	\begin{equation*}
		\partial_x \tilde{r}_{\alpha} = \partial_x\big{(} (\mathcal{G}^{+}_{\mu}[\ve \zeta])^{-1}r_{\alpha}^+ + (\mathcal{G}^{-}_{\mu}[\ve \zeta])^{-1}r_{\alpha}^-\big{)}.
	\end{equation*}
	Here $r_{\alpha}^{\pm}$ is the term in Lemma \ref{Lemma Gpm inv}, which is of lower order. So that \eqref{so that} becomes
	\begin{equation*}
		\ve \underline{V}^+\partial_x \psi^+_{(\alpha)}
		-
		\gamma \ve \underline{V}^-\partial_x \psi^-_{(\alpha)} 
		=
		- 
		\ve \mathcal{I}[\mathbf{U}]^{\ast}\psi_{(\alpha)}
		-
		(1-\gamma)\gamma \ve^2 \mu	(\underline{V}^+-\underline{V}^-)  \mathfrak{E}_{\mu}[\ve \zeta]	\big{(} 
		\zeta_{(\alpha)}(\underline{V}^+-\underline{V}^-)\big{)},
	\end{equation*}
	where $\mathfrak{E}_{\mu}[\ve \zeta]$ reads
	\begin{equation*}
		\mathfrak{E}_{\mu}[\ve \zeta] \bullet = \partial_x \circ \mathcal{J}_{\mu}[\ve \zeta]^{-1}(\mathcal{G}^-_{\mu}[\ve \zeta])^{-1}\circ \partial_x \bullet,
	\end{equation*}
	and we identify $\mathcal{I}[\mathbf{U}]^{\ast}$ by
	\begin{equation*}
		\mathcal{I}[\mathbf{U}]^{\ast} \bullet
		=
		-\underline{V}^+\partial_x  \bullet
		-
		\gamma 
		(\underline{V}^+-\underline{V}^-) \partial_x\big{(}(\mathcal{G}^-_{\mu}[\ve \zeta])^{-1}\mathcal{G}_{\mu}[\ve \zeta] \bullet\big{)}.
	\end{equation*}
	The surface tension term is linearized with the formula
	\begin{equation*}
		\frac{1}{\mathrm{bo}}\frac{1}{\ve \sqrt{\mu}} \partial^{\alpha} \kappa(\ve \sqrt{\mu}\zeta) 
		=
		\mathrm{bo}^{-1}\partial_{x} \mathcal{K}(\ve \sqrt{\mu}\partial_x\zeta)\partial_x\zeta_{(\alpha)}
		+
		\mathcal{K}_{(\alpha)}[\sqrt{\mu}\ve \partial_x\zeta]\zeta_{\langle\check{\alpha}\rangle} + \ve S_{\alpha},
	\end{equation*}
	where $\mathcal{K}$ is given in Definition \ref{Def op}. We also identify $\mathfrak{Ins}[\mathbf{U}]$ by
	\begin{equation*}
		\mathfrak{Ins}[\mathbf{U}] \bullet = \mathfrak{a}\bullet -	(1-\gamma)\gamma \ve^2 \mu	[\![ \underline{V}^{\pm}]\!]   \mathfrak{E}_{\mu}[\ve \zeta]	\big{(} 
		\bullet[\![ \underline{V}^{\pm}]\!] \big{)} - \mathrm{bo}^{-1} \partial_x \mathcal{K}[\ve \sqrt{\mu} \partial_x \zeta]\partial_x  \bullet .
	\end{equation*}

	\end{proof}

	\section{Tools}

	\subsection{Estimates on Fourier multipliers and classical estimates}
	In this section, we will give basic multiplier estimates. To be precise, we will give a definition of the Fourier multipliers.
	\begin{Def}\label{definition order of a Fourier multiplier}
		We say that a Fourier multiplier $\mathrm{F}$ is of order $s$ $(s\in\mathbb{R})$ and write $\mathrm{F} \in \mathcal{S}^s$ if $\xi \in \mathbb{R} \mapsto F(\xi) \in \mathbb{C}$ is smooth and satisfies
		\begin{align*}
			\forall \xi \in \mathbb{R}, \forall\beta\in\mathbb{N}, \qquad \sup_{\xi\in\mathbb{R}}\: \langle\xi\rangle^{\beta-s}|\partial^{\beta}F(\xi)| < \infty.
		\end{align*}
		We also introduce the seminorm
		\begin{align*}
			\mathcal{N}^s(\mathrm{F}) = \sup_{\beta\in\mathbb{N}, \beta\leq 4} \:\sup_{\xi\in\mathbb{R}}\: \langle\xi\rangle^{\beta-s}|\partial^{\beta}F(\xi)|.
		\end{align*}
	\end{Def}
	
	The first result is several basic multiplier estimates that are used throughout the paper.

	\begin{prop}\label{prop B 2} Let $\mu, \gamma, \mathrm{bo}^{-1} \in (0,1)$ and $f \in \mathscr{S}(\R)$. Then there exist a universal constant $C>0$ such that:
		
		\begin{itemize}
			\item [1.] For the Fourier multiplier
			\begin{equation*}
				\mathcal{G}_{\mu}[0] f (x)= \sqrt{\mu} \mathcal{F}^{-1}\Big{(} |\xi| \frac{\mathrm{tanh}(\sqrt{\mu}|\xi|)}{1+ \gamma\mathrm{tanh}(\sqrt{\mu}|\xi|)} \hat{f}(\xi) \Big{)}(x),
			\end{equation*}
			there holds,
			\begin{equation}\label{Est G0}
				|\mathcal{G}_{\mu}[0] f|_{L^2} \leq \mu^{\frac{3}{4}}C |f|_{\dot{H}^{1}_{\mu}},
			\end{equation}
			and
			\begin{equation}\label{Est 2 G0}
				|\mathcal{G}_{\mu}[0] f|_{L^2} \leq \mu C |f|_{\dot{H}^{\frac{3}{2}}_{\mu}}.
			\end{equation} 
			\item [2.] For the Fourier multiplier
			\begin{equation*}
				(\mathcal{G}_{\mu}[0])^{\frac{1}{2}}f(x)= \mathcal{F}^{-1}\bigg{(}\Big{(}\sqrt{\mu} |\xi| \frac{\mathrm{tanh}(\sqrt{\mu}|\xi|)}{1+\gamma\mathrm{tanh}(\sqrt{\mu}|\xi|)}\Big{)}^{\frac{1}{2}} \hat{f}(\xi) \bigg{)}(x),
			\end{equation*}
			there holds,
			\begin{equation}\label{equivalence alpha = 0}
				\frac{1}{C} |f|_{\dot{H}_{\mu}^{\frac{1}{2}}} \leq \frac{1}{\sqrt{\mu}}|(\mathcal{G}_{\mu}[0])^{\frac{1}{2}} f|_{L^2} \leq C |f|_{\dot{H}_{\mu}^{\frac{1}{2}}}.
			\end{equation}

			\item [3. ] There holds,
			\begin{equation}\label{Basic est: B to D}
				|f|_{\dot{H}_{\mu}^{s+\frac{1}{2}}} 
				\leq
				\mu^{-\frac{1}{4}} |\partial_x f|_{H^{s-\frac{1}{2}}} 
				\leq 
				\mu^{-\frac{1}{4}} | f|_{\mathring{H}^{s+\frac{1}{2}}},
			\end{equation}
			and
			\begin{equation}\label{Basic..}
				|f|_{\dot{H}_{\mu}^{s+\frac{1}{2}}} 
				\leq
				|\partial_x f|_{H^{s}}.
			\end{equation}
			Moreover, for $\mathfrak{P} = |\mathrm{D}|(1+\sqrt{\mu}|\mathrm{D}|)^{-\frac{1}{2}}$ and $S^- = -\sqrt{\mu} |\mathrm{D}|$ there holds,
			\begin{equation}\label{B2/S-}
				|\frac{\mathfrak{P}^2}{S^-} f|_{H^{s+\frac{1}{2}}} \leq \mu^{-\frac{3}{4}} |f|_{\dot{H}_{\mu}^{s+\frac{1}{2}}}.
			\end{equation}
		
			\item [4.] Define Fourier multipliers
			\begin{align*}
				\mathrm{T}(\mathrm{D})f(x) & = \mathcal{F}^{-1} \Big{(}\frac{\tanh(\sqrt{\mu}|\xi|)}{\sqrt{\mu}|\xi|} \hat{f}(\xi) \Big{)}(x), 
				\\
				\mathrm{I}(\mathrm{D})f(x) & =  \mathcal{F}^{-1} \Big{(}(1+\gamma \tanh(\sqrt{\mu} |\xi|))^{-1}\hat{f}(\xi) \Big{)}(x)
			\end{align*}
			Then there holds,
			\begin{align}
					\label{est T}
					|(\mathrm{T}(\mathrm{D}) -1)f|_{H^s}  & \leq \mu C |\partial_x^2f|_{H^{s}},
					\\
					\label{inverse tanh}
					|(\mathrm{I}(\mathrm{D}) -1)f|_{H^s} & \leq \sqrt{\mu} C |\partial_xf|_{H^{s}}.
			\end{align}
			Moreover, let $\sqrt{\mathrm{k}}(\mathrm{D})$ be the equare root of the symbol
			\begin{equation*}
				\mathrm{k}(\mathrm{D}) = \mathrm{I}(\mathrm{D})\mathrm{T}(\mathrm{D}) \big{(}(1 - \mathrm{bo}^{-1} \partial_x^2\big{)}.
			\end{equation*}
			Then there holds,
			\begin{align}\label{est sqrt k p}
				|\sqrt{\mathrm{k}}(\mathrm{D})f|_{H^s}  & \leq C( |f|_{H^s} +  \mathrm{bo}^{-\frac{1}{2}}|\partial_xf|_{H^{s}}),
				\\ 
				\label{est K}
				|(\mathrm{k}(\mathrm{D}) -1)f|_{H^s}  & \leq (\sqrt{\mu}+\mathrm{bo}^{-1}) C |\partial_xf|_{H^{s+1}},
				\\ 
				\label{est sqrt K}
				|(\sqrt{\mathrm{k}}(\mathrm{D}) -1)f|_{H^s}  & \leq (\sqrt{\mu}+\mathrm{bo}^{-1}) C |\partial_xf|_{H^{s+1}},
				\\
				\label{precise est sqrt K}
				\Big{|}	\big{(}\sqrt{k}(\mathrm{D}) -  (1- \frac{\gamma}{2} \sqrt{\mu}|\mathrm{D}| - \frac{\partial_x^2}{2\mathrm{bo}})\big{)}f\Big{|}_{H^s}& \leq
				(\mu +  \frac{\sqrt{\mu}}{\mathrm{bo}})C |\partial_x^2 f|_{H^{s+1}}.
			\end{align}
			\item 5. Define the Fourier multiplier
			\begin{equation*}
				\mathrm{t}^{-\frac{1}{2}}(\mathrm{D})f (x)
				=
				\mathcal{F}^{-1} \Big{(}\Big{(}(1+\gamma \tanh(\sqrt{\mu}|\xi|))\frac{\sqrt{\mu}|\xi|}{\tanh(\sqrt{\mu}|\xi|)} \Big{)}^{\frac{1}{2}}\hat{f}(\xi) \Big{)}(x).
			\end{equation*}
			Then for any $h_0 \in (0,1)$ there is a $C>0$ such that,
			\begin{equation}\label{coercivity}
				(1-\frac{h_0}{2})|f|_{H^s}^2 + \sqrt{\mu}C|f|^2_{\mathring{H}^{s+\frac{1}{2}}} \leq |\mathrm{t}^{-\frac{1}{2}}(\mathrm{D})f|_{H^s}^2 \leq C(|f|_{H^s}^2 + \sqrt{\mu}|f|^2_{\mathring{H}^{s+\frac{1}{2}}}).
			\end{equation}
		\end{itemize}
		
	\end{prop}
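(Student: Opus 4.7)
\medskip

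\noindent
\textbf{Proof proposal.} My plan is to treat every estimate in Proposition \ref{prop B 2} uniformly via Plancherel's identity, reducing each inequality to a pointwise bound on the associated Fourier symbol. The only non-elementary input is the calculus identity
\[
\tanh(s) = s - \tfrac{s^3}{3} + \mathcal{O}(s^5) \quad \text{for } s \to 0, \qquad 0 \le \tanh(s) \le \min\{1,s\} \quad \text{for } s\ge 0,
\]
together with the monotonicity $\tanh(s)/s \in (0,1]$. After that, each estimate splits into a low-frequency regime $\sqrt{\mu}|\xi|\le 1$ (where Taylor expansion is used) and a high-frequency regime $\sqrt{\mu}|\xi|\ge 1$ (where $\tanh$ saturates at $1$), yielding the desired comparison between the symbol under study and the target norm.

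More concretely, for point 1, setting $\sigma(\xi)=\sqrt{\mu}|\xi|\tanh(\sqrt{\mu}|\xi|)/(1+\gamma\tanh(\sqrt{\mu}|\xi|))$, one checks that $\sigma(\xi) \lesssim \mu^{3/4}|\xi|/(1+\sqrt{\mu}|\xi|)^{1/2}$ (for \eqref{Est G0}) and $\sigma(\xi)\lesssim \mu |\xi|^2/(1+\sqrt{\mu}|\xi|)^{1/2}$ (for \eqref{Est 2 G0}) by checking the two regimes separately. Point 2 follows from the simpler observation $\sigma(\xi) \sim \mu |\xi|^2/(1+\sqrt{\mu}|\xi|)$ uniformly in $\gamma$, so its square root is equivalent to $\sqrt{\mu}|\xi|/(1+\sqrt{\mu}|\xi|)^{1/2}$. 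Point 3 is immediate: the embedding $\dot{H}^{s+1/2}_{\mu}\hookrightarrow H^{s-1/2}$ (twisted by $\partial_x$) reduces to comparing $|\xi|/(1+\sqrt{\mu}|\xi|)^{1/2}$ to $\mu^{-1/4}|\xi|\langle\xi\rangle^{-1/2}$, and likewise for \eqref{Basic..} and \eqref{B2/S-}.

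For point 4, the estimates \eqref{est T} and \eqref{inverse tanh} come from Taylor expansion: $\tanh(\sqrt{\mu}|\xi|)/(\sqrt{\mu}|\xi|) - 1 = \mathcal{O}(\mu|\xi|^2)$ and $1/(1+\gamma\tanh(\sqrt{\mu}|\xi|)) - 1 = \mathcal{O}(\sqrt{\mu}|\xi|)$ uniformly in $\xi$. The bounds \eqref{est sqrt k p}--\eqref{est K} on $\sqrt{\mathrm{k}}(\mathrm{D})$ and $\mathrm{k}(\mathrm{D})-1$ follow by composing these with the elementary bound $\sqrt{1 + \mathrm{bo}^{-1}\xi^2}\lesssim 1 + \mathrm{bo}^{-1/2}|\xi|$. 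The sharpest bound \eqref{precise est sqrt K} requires a second-order Taylor expansion
\[
\sqrt{\mathrm{k}}(\xi) = 1 - \tfrac{\gamma}{2}\sqrt{\mu}|\xi| - \tfrac{\xi^2}{2\mathrm{bo}} + \mathcal{O}\!\left(\mu\xi^2 + \tfrac{\sqrt{\mu}|\xi|^3}{\mathrm{bo}} + \tfrac{\xi^4}{\mathrm{bo}^2}\right),
\]
controlling the remainder term with two powers of $|\xi|$ absorbed into $|\partial_x^2 f|_{H^{s+1}}$. Point 5 is a pointwise check on the symbol of $\mathrm{t}^{-1}(\mathrm{D}) = (1+\gamma\tanh(\sqrt{\mu}|\xi|))\sqrt{\mu}|\xi|/\tanh(\sqrt{\mu}|\xi|)$: at $\xi=0$ its value is $1$ (hence $\ge 1 - h_0/2$), while for $\sqrt{\mu}|\xi|\ge 1$ it behaves like $(1+\gamma)\sqrt{\mu}|\xi|$, giving the $\sqrt{\mu}|f|_{\mathring{H}^{s+1/2}}^2$ contribution.

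The whole argument is computational rather than conceptual, and I do not expect a true obstacle; the most delicate step is the case analysis for \eqref{Est G0}, where the factor $\mu^{3/4}$ (rather than $\sqrt{\mu}$ or $\mu$) forces one to interpolate between the two regimes carefully using the denominator $(1+\sqrt{\mu}|\xi|)^{1/2}$. Once that template estimate is established, all the remaining inequalities follow by the same pattern with different exponents.
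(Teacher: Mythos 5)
Your overall strategy — Plancherel plus pointwise bounds on the symbols, splitting at $\sqrt{\mu}|\xi|=1$ and Taylor-expanding $\tanh$ — is exactly the paper's proof, and most of your reductions (points 2, 3, 5, and the bounds \eqref{est T}--\eqref{est sqrt K}) go through as you describe. However, your key intermediate claim for \eqref{Est G0} is false as stated: the bound $\sigma(\xi)\lesssim \mu^{3/4}|\xi|(1+\sqrt{\mu}|\xi|)^{-1/2}$ is the symbol of the $\dot H^{1/2}_{\mu}$-norm, not of $\dot H^{1}_{\mu}$, and it fails at high frequency, since for $\sqrt{\mu}|\xi|\geq 1$ one has $\sigma(\xi)\sim\sqrt{\mu}|\xi|$ while your right-hand side is $\sim\sqrt{\mu}\,|\xi|^{1/2}$; as written it would prove the false estimate $|\mathcal{G}_{\mu}[0]f|_{L^2}\lesssim\mu^{3/4}|f|_{\dot H^{1/2}_{\mu}}$. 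The correct reduction — and the one the paper performs — keeps the Bessel weight coming from $|f|_{\dot H^{1}_{\mu}}=|\mathfrak{P}f|_{H^{1/2}}$: use $\sigma(\xi)\lesssim\mu|\xi|^2/(1+\sqrt{\mu}|\xi|)$ together with $\sqrt{\mu}|\xi|^{2}/(1+\sqrt{\mu}|\xi|)\le|\xi|\le\langle\xi\rangle$ to get $\sigma(\xi)\lesssim\mu^{3/4}\langle\xi\rangle^{1/2}|\xi|(1+\sqrt{\mu}|\xi|)^{-1/2}$. The extra quarter power of $\mu$ is bought with half a derivative, not with the homogeneous weight alone; this is a one-line fix, but it is precisely the point of the $\mu^{3/4}$ in \eqref{Est G0}.

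The second problem is \eqref{precise est sqrt K}: your proposed remainder contains $\mathrm{bo}^{-2}\xi^4$, and this term cannot be "absorbed into $|\partial_x^2 f|_{H^{s+1}}$" with the prefactor $(\mu+\sqrt{\mu}\,\mathrm{bo}^{-1})$ — after discounting $\xi^2\langle\xi\rangle$ one would need $|\xi|\,\mathrm{bo}^{-2}\lesssim \mu+\sqrt{\mu}\,\mathrm{bo}^{-1}$ uniformly in $\xi$, which fails at frequencies $|\xi|\sim\sqrt{\mathrm{bo}}$ with $\sqrt{\mu}|\xi|$ small (such frequencies exist unless $\mathrm{bo}^{-1}\lesssim\mu$). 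The source of the trouble is treating $\mathrm{bo}^{-1}\xi^2$ as a small quantity in the $\sqrt{1+x}$ expansion, which generates a quadratic-in-$\mathrm{bo}^{-1}$ error. The paper instead sets $r=\sqrt{\mu}|\xi|$, splits $r<1$ from $r\ge 1$ (the latter handled by the trivial bound), and organizes the expansion of $k(r)=\frac{\tanh r}{r}\frac{1}{1+\gamma\tanh r}\bigl(1+\frac{r^2}{\mu\,\mathrm{bo}}\bigr)$ so that its displayed remainder is $c\bigl(r^2+\frac{r^3}{\mu\,\mathrm{bo}}\bigr)=c\bigl(\mu\xi^2+\sqrt{\mu}\,|\xi|^3\mathrm{bo}^{-1}\bigr)$, with no $\mathrm{bo}^{-2}$ contribution. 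Before claiming \eqref{precise est sqrt K} you need to either reproduce that bookkeeping or explain explicitly how the $\mathrm{bo}^{-2}\xi^4$ term is avoided; as your proposal stands, this step does not close.
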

	
	\begin{proof}
		We first consider the  symbol in frequency, where the elementary inequality holds
		\begin{equation*}
			\frac{1}{2}\sqrt{\mu} |\xi| \mathrm{tanh}(\sqrt{\mu}|\xi|)\leq \sqrt{\mu} |\xi| \frac{\mathrm{tanh}(\sqrt{\mu}|\xi|)}{1+\gamma\mathrm{tanh}(\sqrt{\mu}|\xi|)} \leq 	\sqrt{\mu} |\xi| \mathrm{tanh}(\sqrt{\mu}|\xi|).
		\end{equation*}
		Then by splitting in high and low frequency, we can prove that there is a number $C>0$ such that
		\begin{equation*}
			\frac{\mu}{C}\frac{|\xi|^2}{(1+ \sqrt{\mu}|\xi|)} \leq \sqrt{\mu} |\xi| \mathrm{tanh}(\sqrt{\mu}|\xi|) \leq \mu C\frac{|\xi|^2}{(1+ \sqrt{\mu}|\xi|)}.
		\end{equation*}
		To conclude, use Plancherel's identity for \eqref{Est G0} to find that
		\begin{align*}
			|\mathcal{G}_{\mu}[0] f|_{L^2}^2 
			 \lesssim
			\mu^2 \int_{\R} \frac{|\xi|^4}{(1+\sqrt{\mu}|\xi|)^2} |\hat{f}(\xi)|^2 \: \mathrm{d}\xi \lesssim
			\mu^{\frac{3}{2}} \int_{\R} \langle \xi \rangle^{1} \frac{|\xi|^2}{(1+\sqrt{\mu}|\xi|)} |\hat{f}(\xi)|^2 \: \mathrm{d}\xi,
		\end{align*}
		while \eqref{Est 2 G0} and \eqref{equivalence alpha = 0} follows directly.

		The proof of the estimates in point three follows directly by Plancherel's identity and elementary inequalities.

		For the proof of the fourth point, we observe for $r\geq 0$ that
		\begin{equation*}
			\tanh(r) = \int_0^r (1- \tanh^2(s)) \: \mathrm{d}s.
		\end{equation*}
		Then since $\tanh(r)\leq r$, we have that
		\begin{equation*}
			|(T(\xi) -1)|  \lesssim \mu  \xi^2,
		\end{equation*}
		and may conclude by Plancherel's identity that \eqref{est T} holds true, while the remaining points follows similarly. For the sake of completeness we give the proof of \eqref{precise est sqrt K}. To do so, we let $r = \sqrt{\mu}|\xi|$ and for $0<r<1$ we observe:
		\begin{align*}
			k(r) %- (1 - \gamma r + \frac{r^2}{\mu \mathrm{bo}} )
			& = %\Big{|}(1 - \gamma r + \frac{r^2}{\mu \mathrm{bo}} ) -
			\frac{\tanh(r)}{r} \frac{1}{1+\gamma \tanh(r)} \big{(}1+ \frac{r^2}{\mu \mathrm{bo}}\big{)}
			% \Big{|}
			\\
			& = 1 -\gamma r +\frac{r^2}{\mu \mathrm{bo}} +
			A_1 + A_2 + A_3,
		\end{align*}
		where use  Taylor expansions to see that
		\begin{align*}
			A_1 
			& := \Big{(}  \frac{\tanh(r)}{r} -1 \Big{)}\frac{1}{1+\gamma \tanh(r)} \big{(}1+ \frac{r^2}{\mu \mathrm{bo}}\big{)} = cr^2\big{(}1+ \frac{r^2}{\mu \mathrm{bo}}\big{)} %\leq  \frac{r^2}{3}\frac{1}{1+\gamma \tanh(r)} \big{(}1+ \frac{r^2}{\mu \mathrm{bo}}\big{)},
			\\
			A_2 
			&: =
			\Big{(}\frac{1}{1+\gamma \tanh(r)}-1\Big{)}\frac{r^2}{\mu \mathrm{bo}}= c \frac{r^3}{\mu \mathrm{bo}} %\leq  \gamma \tanh(r)\frac{r^2}{\mu \mathrm{bo}} %\leq \gamma \frac{r^3}{\mu \mathrm{bo}},
			\\
			A_3 
			& := 
			\frac{1}{1+\gamma \tanh(r)} - (1 -\gamma r)= cr^2, %\leq \gamma(\tanh(r)-r)+(\gamma \tanh(r))^2,
		\end{align*}
		for some universal constant $c$. Now using a Taylor expansion of $r \mapsto \sqrt{r}$ implies that
		\begin{equation*}
			\sqrt{k}(r)  
			%(1 - \gamma r + \frac{r^2}{\mu \mathrm{bo}} +r^2)^{\frac{1}{2}}
			=
			1- \frac{\gamma}{2} r + \frac{r^2}{2\mu \mathrm{bo}}  + c\big{(}r^2+ \frac{r^3}{\mu \mathrm{bo}} \big{)},
		\end{equation*}
		for $0<r<1$. While for $r \geq 1$, we trivially have that
		\begin{equation*}
			\Big{|}\sqrt{k}(r)  - \Big{(}1- \frac{\gamma}{2} r + \frac{r^2}{2\mu \mathrm{bo}}\Big{)}\Big{|}
			%(1 - \gamma r + \frac{r^2}{\mu \mathrm{bo}} +r^2)^{\frac{1}{2}}
			\lesssim
			1+ \frac{r^2}{\mu \mathrm{bo}} + r
			\leq 
			r^2+ \frac{r^3}{\mu \mathrm{bo}}.
		\end{equation*}
		In particular, by Plancherel's identity we find that
		\begin{equation*}
			\Big{|}	\big{(}\sqrt{\mathrm{k}}(\mathrm{D}) -  (1- \frac{\gamma}{2} \sqrt{\mu}|\mathrm{D}| - \frac{\partial_x^2}{2\mathrm{bo}})\big{)}f\Big{|}_{H^s} \lesssim (\mu +\frac{\sqrt{\mu}}{\mathrm{bo}})|\partial_x^2 f|_{H^{s+1}}.
		\end{equation*}
	
		For the proof of \eqref{coercivity}, we note that the upper bound is trivial while the lower bound follows from $\tanh(r) \leq r$, $0\leq \tanh(r)\leq 1$, and the inequality
		\begin{equation*}
			(1+\gamma \tanh(r))\frac{r}{\tanh(r)} \geq  (1-\frac{h_0}{2})\frac{r}{\tanh(r)} +  \frac{h_0}{2}\frac{r}{\tanh(r)} \geq (1-\frac{h_0}{2})+ \frac{h_0}{2}r.
		\end{equation*}
	\end{proof}

	We will also use the following product estimates (see Proposition B. 2. and Proposition B. 4. in \cite{WWP}).
	\begin{lemma} Let $t_0 > \frac{d}{2}$, $s\geq - t_0$, $f \in H^{\max\{t_0,s\}}(\R^d)$, and take $g\in H^{s}(\R^d)$ then
		\begin{equation}\label{Classical prod est}
			|fg|_{H^s} \lesssim |f|_{H^{\max\{t_0,s\}}}|g|_{H^s}.
		\end{equation}
		Moreover, if there exist $c_0>0$ and $1+g \geq c_0$ then
		\begin{equation}\label{prod est division}
			\Big{|}\frac{f}{1+g}\Big{|}_{H^s} \lesssim C(c_0,|g|_{L^{\infty}})(1+|f|_{H^s})|g|_{H^s}.
		\end{equation}
	\end{lemma}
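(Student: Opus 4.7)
The plan is to split the lemma into its two claims and handle each with standard Bony paradifferential and composition techniques. For the product estimate, I would first isolate the case $s \geq 0$ and then deduce the case $-t_0 \leq s < 0$ by duality. In the case $s \geq 0$, I would write the product via the Bony decomposition
\[
fg = T_f g + T_g f + R(f,g),
\]
where $T_\cdot$ are the paraproducts and $R$ is the symmetric remainder. The paraproduct $T_g f$ is handled using $|T_g f|_{H^s} \lesssim |g|_{L^\infty} |f|_{H^s}$ together with the Sobolev embedding $H^{t_0}(\R^d) \hookrightarrow L^\infty(\R^d)$ since $t_0 > d/2$; the paraproduct $T_f g$ is controlled in the same way after swapping roles, where the low-frequency factor is placed in $L^\infty$ and controlled by $|f|_{H^{\max\{t_0,s\}}}$. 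The symmetric remainder $R(f,g)$ is controlled in $H^s$ as soon as the total regularity $s + t_0$ is positive, which is exactly the hypothesis $s \geq -t_0$; here one uses a standard Littlewood-Paley square-function argument.

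For the case $-t_0 \leq s < 0$, I would use duality: given $h \in H^{-s}$ with $0 < -s \leq t_0$, I would write
\[
|fg|_{H^s} = \sup_{|h|_{H^{-s}} = 1} \bigl|\langle fg, h\rangle\bigr|
\]
and apply the already-established non-negative-index estimate to the pair $(f,hg)$ or $(h, fg)$, exploiting that $H^{t_0}$ is a Banach algebra and that multiplication by an $H^{t_0}$ function is bounded on $H^\sigma$ for $|\sigma|\leq t_0$. Combining the two ranges gives \eqref{Classical prod est}.

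For the division estimate \eqref{prod est division}, the idea is to write
\[
\frac{f}{1+g} = f \cdot F(g), \qquad F(x) = \frac{1}{1+x},
\]
and invoke a composition (Moser-type) estimate, which gives
\[
|F(g)|_{H^s} \leq C(c_0, |g|_{L^\infty})\, |g|_{H^s},
\]
using that $F$ is smooth on the interval where $1+g \geq c_0 > 0$ (so that $F$ and all its derivatives are bounded on the range of $g$). Applying \eqref{Classical prod est} to the product $f \cdot F(g)$ then produces the stated bound, with the $1+|f|_{H^s}$ factor absorbing the low-frequency contribution coming from bounding $f$ in $L^\infty$ via $H^{t_0}$.

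The main obstacle in a careful write-up is the low-regularity range $-t_0 \leq s < 0$: the paraproduct estimates are cleanest for $s \geq 0$, and pushing to negative $s$ requires the duality argument above together with the symmetric treatment of the remainder $R(f,g)$, which is where the precise threshold $s \geq -t_0$ becomes essential. Since the statement is invoked throughout the paper at various regularities, I would present the proof so that the dependence on constants is transparent, but since this is a standard result (see \cite{WWP}, Appendix~B), I would simply refer to it rather than reproduce it in full.
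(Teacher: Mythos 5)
The paper itself offers no proof of this lemma: it is quoted verbatim from the appendix of \cite{WWP} (Propositions B.2 and B.4 there), so your sketch is not competing with an argument in the text but with the cited reference — and it is essentially the standard proof given in that reference (Bony decomposition plus duality for $-t_0\leq s<0$, Moser-type composition for the quotient), so deferring to \cite{WWP} at the end is exactly what the paper does. Two small points would need repair if you expanded the sketch. First, your treatment of the paraproduct whose low-frequency factor is $g$ is mislabeled: $g$ lies only in $H^s$, and for $0\leq s\leq d/2$ it need not be in $L^\infty$, so you cannot bound that paraproduct by $|g|_{L^\infty}|f|_{H^s}$; instead one uses the paraproduct estimate with low-frequency factor in $H^s$, $|T_g f|_{H^s}\lesssim |g|_{H^s}|f|_{H^{\max\{t_0,s\}}}$, which is where the hypothesis $\max\{t_0,s\}\geq d/2$ enters (alternatively, the whole range $s\leq t_0$ can be absorbed into the duality step). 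Second, in the division estimate you cannot apply the composition estimate directly to $F(x)=1/(1+x)$, since $F(0)=1\neq 0$ and hence $F(g)\notin H^s$ in general; one writes $\frac{1}{1+g}=1-\frac{g}{1+g}$ and applies the Moser estimate to $\tilde F(x)=x/(1+x)$, which vanishes at the origin, so that $\frac{f}{1+g}=f+f\,\tilde F(g)$ — this also explains the additive structure $(1+|f|_{H^s})|g|_{H^s}$ of the stated bound. With these corrections your route coincides with the proof in the cited source.
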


\iffalse 
	Next, we will also use the following embedding results  (see, for example, \cite{LinaresPonce15}).
	%
	%
	%
	\begin{lemma}[Sobolev embeddings]
		Let $f \in \mathscr{S}(\R)$ and $s\in (0,\frac{1}{2})$. Then $H^s(\R) \hookrightarrow L^p(\R)$ with $p = \frac{2}{1-2s}$, and there holds
		% 
		%
		%
		\begin{equation}\label{Sobolev embedding s small}
			| f |_{L^p} \lesssim | D^sf |_{L^2}.
		\end{equation}
		%
		%
		%
		Moreover, In the case $s>\frac{1}{2}$, then $H^s(\R)$ is continuously embedded in $L^{\infty}(\R)$. 
		
	\end{lemma}
	%
	%
	%
	
	We recall the Leibniz rule for the Riesz potential from \cite{KenigPonceVega93}. 
	
	\begin{lemma}[Fractional Leibniz rule]  Let $\sigma = \sigma_1 + \sigma_2 \in (0,1)$ with $\sigma_i\in[0,\sigma]$ and $p,p_1,p_2\in(1,\infty)$ satisfy $\frac{1}{p} = \frac{1}{p_1} + \frac{1}{p_2}$. Then, for $f,g \in \mathscr{S}(\R)$
		%
		%
		%
		\begin{equation}\label{FracLeibnizRule}
			|D^{\sigma}(fg) - fD^{\sigma}g - g D^{\sigma}f |_{L^p} \lesssim | D^{\sigma_1} f |_{L^{p_1}} |D^{\sigma_2}g|_{L^{p_2}}.
		\end{equation}
		%
		%
		%
		Moreover, the case $\sigma_2 = 0$, $p_2 = \infty$ is also allowed. 
	\end{lemma}
	\fi 
	
	Lastly, we will use several commutator estimates for Fourier multipliers. The first result is a generalization of the classical Kato-Ponce estimate\footnote{See \cite{KatoPonce1988} for the case $\mathrm{F} =\Lambda^s$.} and is given by Proposition B. 8. in \cite{WWP}:
	\begin{prop}\label{Commutator estimates}
		Let $t_0 > \frac{1}{2}$, $s \geq 0$. In the case $\mathrm{F} \in \mathcal{S}^s$, $f\in  H^{\max\{t_0+1,s\}}(\mathbb{R})$ then, for all $g \in H^{s-1}(\mathbb{R})$,
		\begin{align}\label{Commutator est}
			|[\mathrm{F},f]g|_{L^2} \leq \mathcal{N}^s(\mathrm{F})|f|_{H^{\max\{t_0+1,s\}}}|g|_{H^{s-1}}. 
		\end{align}
		In the case $\mathrm{F} \in \mathcal{S}^0$, $f\in  H^{t_0+1}(\mathbb{R})$ then, for all $g \in L^{2}(\mathbb{R})$,
		\begin{align}\label{Commutator est 2}
			|[\mathrm{F},f]g|_{H^1} \leq \mathcal{N}^0(\mathrm{F})|f|_{H^{t_0+1}}|g|_{L^2}. 
		\end{align}
	\end{prop}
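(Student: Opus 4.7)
The proof proposal is to deduce both commutator bounds from a paradifferential decomposition, following the standard pattern used to establish the Kato--Ponce estimate (which is recovered by taking $\mathrm{F}=\Lambda^s$). First I would write $[\mathrm{F},f]g = \mathrm{F}(fg)-f\,\mathrm{F}(g)$ and apply Bony's decomposition to both the product $fg$ and to the product $f\cdot \mathrm{F}(g)$, splitting each into a low--high paraproduct, a high--low paraproduct, and a high--high remainder. Only the low--high interaction $T_f$ requires delicate handling; in the other two pieces the high frequency sits on $f$, so the symbol estimate $|\partial^\beta F(\xi)|\leq \mathcal{N}^s(\mathrm{F})\langle\xi\rangle^{s-\beta}$ together with Bernstein's inequality yields the bound directly, and $g$ only has to be estimated in $H^{s-1}$.

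The heart of the argument is the symbolic commutator $[\mathrm{F},T_f]g$. For this, I would expand $\mathrm{F}(\xi)-\mathrm{F}(\xi-\eta)=\eta\cdot\int_0^1 F'(\xi-t\eta)\,dt$ via the fundamental theorem of calculus inside the Fourier representation
\begin{equation*}
\widehat{[\mathrm{F},T_f]g}(\xi)=\int_{|\eta|\ll |\xi|}\bigl(F(\xi)-F(\xi-\eta)\bigr)\widehat{f}(\eta)\widehat{g}(\xi-\eta)\,d\eta ,
\end{equation*}
which converts the commutator into a pseudo-product whose symbol is $\eta\,\widetilde F_1(\xi,\eta)$ with $\widetilde F_1\in \mathcal{S}^{s-1}$ uniformly. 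This produces a gain of one derivative on $f$ at the price of losing one derivative of $g$, matching precisely the claimed norms. The Coifman--Meyer / Kato--Ponce dyadic almost-orthogonality argument then gives an $L^2$ bound controlled by $\mathcal{N}^s(\mathrm{F})\,|\nabla f|_{L^\infty}\,|g|_{H^{s-1}}$, and the Sobolev embedding $H^{t_0}\hookrightarrow L^\infty$ upgrades $|\nabla f|_{L^\infty}$ to $|f|_{H^{t_0+1}}$ (or to $|f|_{H^s}$ when $s>t_0+1$, which is needed only to handle the high--low piece).

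The second estimate, in which $\mathrm{F}\in\mathcal{S}^0$ and the commutator is measured in $H^1$, I would reduce to the first one by the identity $\partial_x[\mathrm{F},f]g=[\mathrm{F},\partial_x f]g+[\mathrm{F},f]\partial_x g$. The second term is a standard $L^2$ commutator with $g\in L^2$ and $\mathrm{F}\in\mathcal{S}^0$, requiring control of $f$ up to $H^{t_0+1}$. The first term is of the same type but with $\partial_x f\in H^{t_0}$, and here one uses that a symbol of order $0$ is bounded on $L^2$ together with the paraproduct estimate above specialised to $s=0$.

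The main obstacle I anticipate is bookkeeping of the regularity threshold $\max\{t_0+1,s\}$: when $s\leq t_0+1$ the estimate is driven by the low-frequency Lipschitz control on $f$, while for $s>t_0+1$ one must genuinely use all derivatives of $f$ up to order $s$ in the high--low paraproduct, and the seminorm $\mathcal{N}^s(\mathrm F)$ controlling four derivatives of $F$ is just enough to close the dyadic sum. The key point to verify is therefore that the reconstruction of $[\mathrm{F},f]g$ from its dyadic pieces converges with a constant depending only on $\mathcal N^s(\mathrm F)$ and not on any higher seminorm of $F$.
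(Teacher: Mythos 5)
This proposition is not proved in the paper at all: it is imported verbatim, with the surrounding remark identifying it as a generalization of the classical Kato--Ponce estimate and attributing it to Proposition B.8 of \cite{WWP}. So there is no in-paper argument to compare against; what you have written is a sketch of the standard proof, and it is essentially the route taken in the cited reference: Bony decomposition of $\mathrm{F}(fg)-f\,\mathrm{F}g$, a fundamental-theorem-of-calculus expansion $F(\xi)-F(\xi-\eta)=\eta\int_0^1F'(\xi-t\eta)\,dt$ on the low--high (paraproduct) piece so that the commutator acts like an order $s-1$ operator on $g$ against $\partial_x f$, crude bounds on the high--low and remainder pieces (which is where the threshold $\max\{t_0+1,s\}$ on $f$ comes from), and Sobolev embedding $H^{t_0}\hookrightarrow L^{\infty}$, with the seminorm $\mathcal{N}^s(\mathrm{F})$ (four $\xi$-derivatives, more than enough in one dimension) controlling the symbol bounds throughout. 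Two small points to tighten. First, your phrase ``a gain of one derivative on $f$ at the price of losing one derivative of $g$'' is stated backwards: the commutator structure \emph{costs} one derivative on $f$ (one needs $\partial_x f\in L^{\infty}$, hence $f\in H^{t_0+1}$) and \emph{gains} one on $g$ (only $g\in H^{s-1}$ is needed); the norms you then write down are the correct ones, so this is a slip of wording rather than of substance. Second, in the reduction $\partial_x[\mathrm{F},f]g=[\mathrm{F},\partial_x f]g+[\mathrm{F},f]\partial_x g$ for \eqref{Commutator est 2}, be careful not to feed the first term back into \eqref{Commutator est} with $s=0$: that would require $\partial_x f\in H^{t_0+1}$, i.e. $f\in H^{t_0+2}$, which exceeds the hypothesis. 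The correct treatment, which your wording ``a symbol of order $0$ is bounded on $L^2$'' already suggests, is the crude bound $|[\mathrm{F},\partial_x f]g|_{L^2}\lesssim \mathcal{N}^0(\mathrm{F})\,|\partial_x f|_{L^{\infty}}|g|_{L^2}\lesssim \mathcal{N}^0(\mathrm{F})\,|f|_{H^{t_0+1}}|g|_{L^2}$ with no commutator gain, while the second term does use \eqref{Commutator est} with $s=0$ and $|\partial_x g|_{H^{-1}}\leq|g|_{L^2}$. With these adjustments your outline is a correct and complete plan for the estimate the paper cites.
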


	Moreover, we will also need some commutator estimates on multipliers with non-smooth symbol.

	\begin{prop}\label{prop B 5}
		Let  $\mu, \mathrm{bo}^{-1}\in (0,1)$, $f,g \in \mathscr{S}(\R)$,  $t_0 > \frac{1}{2}$ then  there is a universal constant $C>0$ such that
		\begin{equation}\label{Commutator Dhalf}
			|[f,|\mathrm{D}|^{\frac{1}{2}}] g| \leq C |f|_{H^{t_0+ \frac{1}{2}}} |g|_{L^2}
		\end{equation}
		\begin{equation}\label{Commutator mu quart}
			|[f, (1+\sqrt{\mu}|\mathrm{D}|)^{\frac{1}{2}}]g|_{L^2} \leq C|f|_{H^{t_0+\frac{1}{2}}} |g|_{L^2}.
		\end{equation}
		Moreover, let
		\begin{align*}
			\mathrm{\sqrt{T}}(\mathrm{D})f(x) & = \mathcal{F}^{-1} \Big{(}
			\Big{(}
			\frac{\tanh(\sqrt{\mu}|\xi|)}{\sqrt{\mu}|\xi|} \Big{)}^{\frac{1}{2}}\hat{f}(\xi) \Big{)}(x), 
			\\
			\sqrt{\mathrm{I}}(\mathrm{D})f(x) & =  \mathcal{F}^{-1} \Big{(}
			(1+\gamma \tanh(\sqrt{\mu} |\xi|))^{-\frac{1}{2}}\hat{f}(\xi) \Big{)}(x),
		\end{align*}
		and let $\sqrt{\mathrm{k}}(\mathrm{D})$ and $\sqrt{\mathrm{t}}(\mathrm{D})$ be defined by
		\begin{align*}
			\sqrt{\mathrm{k}}(\mathrm{D}) 
			& = 
			\sqrt{I}(\mathrm{D}) \sqrt{T}(\mathrm{D}) \sqrt{1+\mathrm{bo}^{-1}|\mathrm{D}|^2}
			\\ 
			\sqrt{\mathrm{t}}(\mathrm{D})f & = \sqrt{I}(\mathrm{D}) \sqrt{T}(\mathrm{D}) 
		\end{align*}
		then for $s\geq t_0+1$ there holds
		\begin{equation}\label{est com 1}
			|[\Lambda^s \sqrt{\mathrm{k}}(\mathrm{D}),  f]\partial_x g|_{L^2} \leq C |f|_{H^{s+1}_{\mathrm{bo}}} |g|_{H^{s+1}_{\mathrm{bo}}},
		\end{equation}
		\begin{equation}\label{est com 2}
			|[ \Lambda^s \sqrt{\mathrm{t}}(\mathrm{D}),  f]\partial_x g|_{L^2} \leq C |f|_{H^s} |g|_{H^s},
		\end{equation}
		\begin{equation}\label{est com 3}
			|\partial_x [\sqrt{\mathrm{t}}(\mathrm{D}),  f]g|_{L^2} \leq C |f|_{H^{t_0+1}} |g|_{L^2}.
		\end{equation}

	%	\begin{equation}\label{Commutator mu sharp}
		%	|[f, (1+(\sqrt{\mu}|\mathrm{D}|)^2)^{\frac{1}{4}}]g|_{L^2} \leq \mu^{\frac{1}{4}} C|f|_{H^{t_0+1}} |g|_{H^{-\frac{1}{2}}}.
		%\end{equation}
		%Last one just prop $B.8$
	\end{prop}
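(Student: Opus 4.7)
The proposal is to prove the five commutator bounds by reducing each to either a direct Fourier/Plancherel computation (for the pieces of the symbol that are non-smooth at the origin) or to the standard Kato--Ponce type bound of Proposition \ref{Commutator estimates} (for the smooth pieces). The difficulty of the statement is concentrated in the fact that $|\mathrm{D}|^{\frac{1}{2}}$, $(1+\sqrt{\mu}|\mathrm{D}|)^{\frac{1}{2}}$, $\sqrt{\mathrm{I}}(\mathrm{D})$, and the factor $\sqrt{1+\mathrm{bo}^{-1}\mathrm{D}^2}$ produce symbols that are not $C^\infty$ at $\xi=0$, so the seminorm $\mathcal{N}^s$ in Definition \ref{definition order of a Fourier multiplier} is infinite and Proposition \ref{Commutator estimates} cannot be applied directly.

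The starting point is estimate \eqref{Commutator Dhalf}. Writing the commutator in Fourier variables,
\begin{equation*}
\widehat{[f,|\mathrm{D}|^{\frac{1}{2}}]g}(\xi)=\int_{\R}\bigl(|\eta|^{\frac{1}{2}}-|\xi|^{\frac{1}{2}}\bigr)\hat f(\xi-\eta)\hat g(\eta)\,d\eta,
\end{equation*}
I would combine the elementary inequality $\bigl||\eta|^{\frac{1}{2}}-|\xi|^{\frac{1}{2}}\bigr|\leq|\xi-\eta|^{\frac{1}{2}}$ with Plancherel's identity and Cauchy--Schwarz (or Schur's test on the resulting convolution kernel) to bound $|[f,|\mathrm{D}|^{\frac{1}{2}}]g|_{L^2}$ by $C\,\| |\mathrm{D}|^{\frac{1}{2}}f\|_{L^\infty_\xi}\cdot|g|_{L^2}$, and then invoke the Sobolev embedding $H^{t_0+\frac{1}{2}}(\R)\hookrightarrow W^{\frac{1}{2},\infty}(\R)$ (which holds for $t_0>\frac{1}{2}$) to conclude. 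Estimate \eqref{Commutator mu quart} will then follow from \eqref{Commutator Dhalf} after splitting $(1+\sqrt{\mu}|\xi|)^{\frac{1}{2}}=1+\sqrt{\mu}\,m(\sqrt{\mu}|\xi|)$ with $m(r)=((1+r)^{\frac{1}{2}}-1)/r$, writing $\sqrt{\mu}\,m(\sqrt{\mu}|\mathrm{D}|)=\mu^{\frac{1}{4}}\cdot\mathrm{Op}(\sqrt{\mu}\,m)\cdot |\mathrm{D}|^{\frac{1}{2}}/(\sqrt{\mu}|\mathrm{D}|)^{\frac{1}{2}}$; the remaining factor has bounded symbol uniformly in $\mu$, so a direct application of \eqref{Commutator Dhalf} and the product estimate \eqref{Classical prod est} closes it.

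Estimates \eqref{est com 1}--\eqref{est com 2} are reduced to \eqref{Commutator est} as follows. I would decompose $\sqrt{\mathrm{k}}(\mathrm{D})=\sqrt{\mathrm{t}}(\mathrm{D})\sqrt{1+\mathrm{bo}^{-1}\mathrm{D}^2}$ and write
\begin{equation*}
\sqrt{1+\mathrm{bo}^{-1}\mathrm{D}^2}=1+\mathrm{bo}^{-\frac{1}{2}}|\mathrm{D}|\cdot\sigma(\mathrm{bo}^{-\frac{1}{2}}\mathrm{D}),\qquad \sigma(r)=\tfrac{\sqrt{1+r^2}-1}{r},
\end{equation*}
where $\sigma$ is smooth and bounded. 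Then $\sqrt{\mathrm{k}}(\mathrm{D})=\sqrt{\mathrm{t}}(\mathrm{D})+\mathrm{bo}^{-\frac{1}{2}}\sqrt{\mathrm{t}}(\mathrm{D})\,\sigma(\mathrm{bo}^{-\frac{1}{2}}\mathrm{D})\,|\mathrm{D}|$. The product $\sqrt{\mathrm{t}}(\mathrm{D})\,\sigma(\mathrm{bo}^{-\frac{1}{2}}\mathrm{D})$ has a symbol in $\mathcal{S}^0$ (here a similar splitting of $\sqrt{\mathrm{I}}$ into $1$ plus a smooth remainder of order $0$ is used), so Proposition \ref{Commutator estimates} applies and produces the clean $|f|_{H^{s+1}}|g|_{H^{s+1}}$ and $\mathrm{bo}^{-1}|\partial_x f|_{H^s}|\partial_x g|_{H^s}$ pieces that together give the $H^{s+1}_{\mathrm{bo}}$ norm on the right hand side of \eqref{est com 1}. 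Estimate \eqref{est com 2} is the same argument with only the $\sqrt{\mathrm{t}}(\mathrm{D})$ factor, and \eqref{est com 3} follows by the same decomposition once one observes that $[\sqrt{\mathrm{t}}(\mathrm{D}),f]$ gains one derivative by Proposition \ref{Commutator estimates} applied to the $\mathcal{S}^0$ symbol $\sqrt{\mathrm{t}}(\xi)$.

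The main obstacle will be justifying that the symbols which remain after the splittings above genuinely lie in the class $\mathcal{S}^0$ required by Proposition \ref{Commutator estimates}: the factor $\sqrt{\mathrm{I}}(\mathrm{D})$ has a $|\xi|$-type singularity at $\xi=0$ because $\tanh(\sqrt{\mu}|\xi|)\sim\sqrt{\mu}|\xi|$ there, and this non-smoothness is inherited by $\sqrt{\mathrm{t}}(\mathrm{D})$. I plan to absorb this defect by extracting a low-frequency piece with a cut-off $\chi(\mathrm{D})$ (smooth, supported in $|\xi|\leq 2$) on which $\sqrt{\mathrm{I}}(\mathrm{D})\chi(\mathrm{D})-\chi(\mathrm{D})$ is a bounded Fourier multiplier whose commutator with $f$ is directly controlled via \eqref{Commutator Dhalf} applied to $|\mathrm{D}|^{\frac{1}{2}}\chi(\mathrm{D})$ times a smooth bounded multiplier. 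On the complementary high-frequency piece the symbol is genuinely smooth, and $\mathcal{N}^0$ is finite uniformly in $\mu,\mathrm{bo}^{-1}\in(0,1)$, so Proposition \ref{Commutator estimates} closes the argument. The final constants are tracked to be universal precisely because all of $\sqrt{\mathrm{I}},\sqrt{\mathrm{T}}$ and $\sigma$ are uniformly bounded in their arguments, independent of the scaling parameters.
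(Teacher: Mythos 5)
Your handling of \eqref{Commutator Dhalf}--\eqref{Commutator mu quart} is in essence the paper's argument (write the commutator as a convolution in frequency and use an elementary inequality on the symbol difference), though two details are off: the Schur/Young step requires the $L^1_\xi$ norm of $|\cdot|^{1/2}\hat f$, which is what Cauchy--Schwarz with $t_0>\frac12$ converts into $|f|_{H^{t_0+\frac12}}$ -- an $L^\infty$ bound on $|\mathrm{D}|^{\frac12}f$ is not the quantity that closes the estimate -- and your splitting of $(1+\sqrt{\mu}|\xi|)^{\frac12}$ drops a factor $|\xi|$ (the correct identity is $(1+\sqrt{\mu}|\xi|)^{\frac12}=1+\sqrt{\mu}|\xi|\,m(\sqrt{\mu}|\xi|)$). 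These are fixable. For \eqref{est com 2}--\eqref{est com 3} your route (cut off the low frequencies where $\sqrt{\mathrm{I}},\sqrt{\mathrm{T}}$ are non-smooth, check the $\mathcal{S}^0$ seminorms are uniform in $\mu$ on the complement, then apply Proposition \ref{Commutator estimates}) is genuinely different from the paper, which instead cites Lemmas 2.7 and 2.12 of \cite{Paulsen22} for the multipliers without $\sqrt{\mathrm{I}}$ and only proves the new ingredient, namely that $[\sqrt{\mathrm{I}}(\mathrm{D}),f]$ gains one full derivative uniformly in $\mu$, via the explicit kernel bound $|a(\xi)-a(\rho)|\langle\rho\rangle\lesssim\langle\xi-\rho\rangle$ whose crucial step is the mean value theorem together with $\sqrt{\mu}\,e^{-2\sqrt{\mu}|\rho|}\langle\rho\rangle\lesssim 1$; your uniformity claim on the high-frequency piece relies on exactly the same exponential-decay mechanism, so it should be verified rather than asserted, but the plan is viable there.

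The genuine gap is in \eqref{est com 1}. Writing $\sqrt{1+\mathrm{bo}^{-1}\mathrm{D}^2}=1+\mathrm{bo}^{-\frac12}|\mathrm{D}|\sigma(\mathrm{bo}^{-\frac12}\mathrm{D})$ and applying Proposition \ref{Commutator estimates} to the weighted piece, whose symbol is of order $s+1$ with seminorm $O(\mathrm{bo}^{-\frac12})$, yields the unbalanced term $\mathrm{bo}^{-\frac12}|f|_{H^{s+1}}|\partial_x g|_{H^{s}}$; and your claimed output ``$|f|_{H^{s+1}}|g|_{H^{s+1}}$'' is likewise not admissible, since $|f|_{H^{s+1}}\not\lesssim|f|_{H^{s+1}_{\mathrm{bo}}}$ uniformly (recall $|u|^2_{H^{s+1}_{\mathrm{bo}}}=|u|^2_{H^s}+\mathrm{bo}^{-1}|\partial_x u|^2_{H^s}$, a weaker norm). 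Concretely, testing $\mathrm{bo}^{-\frac12}|f|_{H^{s+1}}|\partial_x g|_{H^s}$ against $|f|_{H^{s+1}_{\mathrm{bo}}}|g|_{H^{s+1}_{\mathrm{bo}}}$ with $f,g$ frequency-localized near $|\xi|\sim\mathrm{bo}^{\frac12}$ shows the ratio grows like $\mathrm{bo}^{\frac12}$, so a crude application of Kato--Ponce to the pieces cannot deliver \eqref{est com 1}. What is needed is the balanced distribution of the $\mathrm{bo}^{-\frac12}$ weights and of the derivative loss between $f$ and $g$, which comes from the structure of the symbol difference -- e.g. $\sqrt{1+\mathrm{bo}^{-1}\xi^2}-\sqrt{1+\mathrm{bo}^{-1}\eta^2}=\mathrm{bo}^{-1}(\xi-\eta)(\xi+\eta)\big(\sqrt{1+\mathrm{bo}^{-1}\xi^2}+\sqrt{1+\mathrm{bo}^{-1}\eta^2}\big)^{-1}$, so that one factor $\mathrm{bo}^{-\frac12}|\xi-\eta|$ attaches to $\hat f$ and one factor $\mathrm{bo}^{-\frac12}|\eta|$ (or $|\xi|$) to $\hat g$ -- carried out in a direct convolution estimate; this is precisely the content of Lemma 2.7 of \cite{Paulsen22} that the paper invokes rather than reproves. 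Either reproduce that kernel-level argument or cite it; as written, your Step for \eqref{est com 1} does not close.
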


	\begin{proof}
		For the proof of \eqref{Commutator Dhalf}, we write the commutator as a convolution product in frequency:
		\begin{align*}
			\big{|}  [f, |\mathrm{D}|^{\frac{1}{2}}] g\big{|}_{L^2}
			=
			\bigg{|}
			\int_{\R} \Big( 
			|\xi|^{\frac{1}{2}} - |\rho|^{\frac{1}{2}} 
			\Big) \hat{f}(\xi-\rho)\hat{g}(\rho)\: d\rho \bigg{|}_{L^2_{\xi}}.
		\end{align*}
		Then the proof is a direct consequence of the estimate $|\xi|^{\frac{1}{2}}-|\rho|^{\frac{1}{2}} \leq 1 + |\xi - \rho|^{\frac{1}{2}}$ when combined with Minkowski integral inequality and Cauchy-Schwarz inequality.
		
		Inequality \eqref{Commutator mu quart} is proved similarly.

		For the remaining estimates, they are proved in \cite{Paulsen22} in the case $	\sqrt{I}(\mathrm{D})$ is identity with the same result. However, noting that $\sqrt{I}(\mathrm{D})$ is bounded on $L^2(\R)$ we observe that
		\begin{align*}
			|[\mathrm{a}(\mathrm{D}) \mathrm{b}(\mathrm{D}),  f] \partial_xg |_{L^2}
			& \leq 
			|\mathrm{a}(\mathrm{D}) [ \mathrm{b}(\mathrm{D}),f]\partial_xg|_{L^2}
			+
			|[\mathrm{a}(\mathrm{D}) ,  f] \mathrm{b}(\mathrm{D})\partial_xg|_{L^2}
			\\ 
			& =
			A_1 + A_2.
		\end{align*}
		Then to prove \eqref{est com 1} we take $\mathrm{a}(\mathrm{D}) = \sqrt{I}(\mathrm{D})$ and $\mathrm{b}(\mathrm{D}) =  \sqrt{T}(\mathrm{D}) \sqrt{1+\mathrm{bo}^{-1}|\mathrm{D}|^2}$ to deduce that
		\begin{align*}
			A_1 
			=
			|\mathrm{a}(\mathrm{D})[ \mathrm{b}(\mathrm{D}),  f]\partial_x g|_{L^2} 
			\leq
			|[ \mathrm{b}(\mathrm{D}),  f] \partial_xg|_{L^2} \leq C |f|_{H^{s+1}_{\mathrm{bo}}} |g|_{H^{s+1}_{\mathrm{bo}}},
		\end{align*}
		where the last estimate is proved in \cite{Paulsen22}, Lemma 2.7. For the second term it is clearly enough to prove that
		\begin{equation*}
			A_2 \leq |[\mathrm{a}(\mathrm{D}) ,  f] \mathrm{b}(\mathrm{D})\partial_xg|_{L^2} \lesssim |f|_{H^s}|\mathrm{b}(\mathrm{D})g|_{L^2}.
		\end{equation*}
		In other words, we must prove that the commutator $[\mathrm{a}(\mathrm{D}) , f]$ absorbs one derivative uniformly in $\mu$. In particular, writing it as a convolution product in frequency, we find  that:
		\begin{equation*}
			[\mathrm{a}(\mathrm{D}) , f] \partial_x g = \int_{\R} \Big( 
			a(\xi)-a(\rho) 
			\Big) \hat{f}(\xi-\rho) \:\widehat{\partial_x g}(\rho)\: d\rho,
		\end{equation*}
		and we can conclude if 
		\begin{equation}\label{est claim}
			k(\xi, \rho)| = |a(\xi)-a(\rho)| \langle \rho \rangle \leq \langle \xi- \rho\rangle.
		\end{equation}
		To prove \eqref{est claim}, we consider three cases. First, if $|\rho|\leq 1$ we have trivially that
		\begin{equation*}
			|a(\xi)-a(\rho)| \langle \rho \rangle \lesssim 1,
		\end{equation*}
		since $\xi \mapsto a(\xi)$ is bounded by one. Next, we consider the region  $|\xi|\geq |\rho|>1$. Then we observe that since $\xi \mapsto a(\xi)$ is decreasing then
		\begin{equation*}
			\Big{(}\frac{a(\xi)}{a(\rho)}\Big{)}^{2} \leq \frac{a(\xi)}{a(\rho)} \leq 1,
		\end{equation*}
		from which we deduce that
		\begin{align*}
			k(\xi,\rho) 
			& =
			\Big{(}1-\frac{a(\xi)}{a(\rho)}\Big{)}a(\rho) \langle \rho \rangle
			\\
			& \leq 
			\bigg{(} 1 - \Big{(}\frac{a(\xi)}{a(\rho)}\Big{)}^{2} \bigg{)}a(\rho)\langle \rho \rangle.
		\end{align*}
		Now, use the definition of $a$ to simplify the expression further: 
		\begin{align*}
				k(\xi,\rho)
				& \leq  \big{(} \tanh(\sqrt{\mu}|\xi|) - \tanh(\sqrt{\mu}|\rho|)\big{)} \frac{\gamma a(\rho)}{1+\gamma \tanh(\sqrt{\mu}|\xi|)}\langle \rho \rangle.
		\end{align*}
		and then use the Mean value Theorem and that $|\rho|<|\xi|$ to deduce the estimate
		\begin{align*}
			k(\xi,\rho)
			& \lesssim  \sqrt{\mu}e^{-2\sqrt{\mu}|\rho|}\langle \rho \rangle \lesssim 1.
		\end{align*}
		The remaining case  $1<|\rho|$ and $|\xi|<|\rho|$ follows by reversing the role of $\xi$ and $\rho$ and use that $\langle \rho \rangle \lesssim \langle \rho - \xi \rangle + \langle \xi \rangle$ to conclude.  
		
		For the proof of \eqref{est com 2} and \eqref{est com 3}, we use Lemma 2.12 in \cite{Paulsen22} in the case $\sqrt{\mathrm{I}}(\mathrm{D})$ is identity, then argue as above where  $\sqrt{\mathrm{I}}(\mathrm{D})$  can be treated as a zero order differential operator.
	\end{proof}

	\subsection{Estimates on pseudo-differential operators} In this section, we will give estimates of pseudo-differential operators whose symbol depends on the free surface. In particular, the framework needs to handle symbols of limited smoothness which is developed in \cite{Lannes_06}. To be precise, we give the definition of the objects we will study. 
	
	\begin{Def}
		Let $m \in \R$ and $t_0> \frac{1}{2}$. A symbol $\sigma(x,\xi)$ belongs to the class $\Gamma_{t_0}^m$ if and only if 
		\begin{equation*}
			\sigma|_{\R \times \{|\xi|\leq 1\}} \in L^{\infty}(\{|\xi|\leq 1\} \: : \: H^{t_0}(\R))
		\end{equation*}
		and for all $\beta \in\N$ one has
		\begin{equation*}
			\sup\limits_{|\xi|\geq \frac{1}{4}} \langle \xi \rangle^{\beta-m} |\partial^{\beta}_{\xi}\sigma(\cdot, \xi)|_{H^{t_0}}<\infty.
		\end{equation*}
		We also introduce the seminorm
		\begin{equation*}
			\mathcal{L}_{k,s}^m(\sigma) 
			: = 
			\sup_{\beta\leq k} \:\sup_{|\xi|\geq \frac{1}{4}}\: \langle\xi\rangle^{\beta-m}|\partial_{\xi}^{\beta}\sigma(\cdot, \xi)|_{H^s},
		\end{equation*}
		Moreover, let $l_{k,s}$ be a measure of the information in low frequencies given by
		\begin{equation*}
			l_{s}(\sigma) : = \sup_{\beta \leq k, |\xi|\leq 1} |\sigma(\cdot,\xi)|_{H^s},
		\end{equation*}
		and define
		\begin{equation*}
			|\sigma|_{H^s_{(m)}}: = l_{s}(\sigma)+ \mathcal{L}^m_{4,s}(\sigma).
		\end{equation*}

	\end{Def}

	The main tool we will use to derive our estimate is provided  in \cite{Lannes_06}.
	
	\begin{prop}\label{PDO Est Lannes}
		Let $f \in \mathscr{S}(\R)$,  $t_0\in(\frac{1}{2},s_0]$. 	Then we have the following estimates:
		\begin{itemize}
			\item [1.] For $\sigma \in \Gamma_{s_0}^m$,  $s \in (-t_0,t_0)$, and $m \in \R$, there holds,
			\begin{equation}\label{product est Op}
				|\mathrm{Op}(\sigma)f|_{H^s} \leq	|\sigma|_{H^{t_0}_{(m)}}  |f|_{H^{s+m}},
			\end{equation}
			\item [2.] Let $m_1,m_2 \in \R$, $\sigma_1 \in \Gamma_{s_0+\max\{m_1,0\}+1}^{m_1}$ and $\sigma_2 \in \Gamma_{s_0+\max\{m_2,0\}+1}^{m_2}$ such that $-t_0<s+m_j\leq t_0+1$ and $-t_0<s\leq t_0+1$.% Moreover, let $v : = (v_1,v_2)^T \in H^{t_0+1}(\R)$ and define
			%
			%
			%
		%	$$\sigma_j(x,\xi) = \Sigma(v_j(x),\xi)\in C^{\infty}(\R, L^{\infty}(|\xi|\leq 1)),$$
			%
			%
			%
		%	such that for all $\alpha,\beta \in \N$ there is a positive nondecreasing function $C_{\alpha,\beta}(\cdot)$ satisfying,
			%
			%
			%
		%	\begin{equation}\label{decay high Sigma v}
			%		\sup\limits_{|\xi|\geq \frac{1}{4}} \langle \xi \rangle^{\beta-m} |\partial_v^{\alpha}\partial^{\beta}_{\xi}\Sigma(v_j, \xi)|  \leq C_{\alpha,\beta}(|v_j|).
			%\end{equation}
			%
			%
			%
			Then one has,
			\begin{align}
				\label{Commutator Op general}
				|[\mathrm{Op}(\sigma_1),\mathrm{Op}(\sigma_2)]f|_{H^s} 
				& \lesssim	
				|\sigma^1|_{H^{t_0+1}_{(m_1)}} 	|\sigma^2|_{H^{t_0+1}_{(m_2)}} 
				|f|_{H^{s+m_1+m_2-1}},
				\\
				\label{adjoint est}
				|\big{(}\mathrm{Op}(\sigma_1)^{\ast}-\mathrm{Op}(\overline{\sigma}_1)\big{)}f|_{H^s}
				& \lesssim
				|\sigma^1|_{H^{t_0+1}_{(m_1)}} 
				|f|_{H^{s+m_1-1}},
				\\ 
				\label{Comp est}
				|\mathrm{Op}(\sigma_1\sigma_2)-\mathrm{Op}(\sigma_1) \circ \mathrm{Op}(\sigma_2)f|_{H^s} 
				& \lesssim
				 |\sigma^1|_{H^{t_0+1}_{(m_1)}} 	|\sigma^2|_{H^{t_0+1}_{(m_2)}}  |f|_{H^{s+m_1+m_2-1}}.
			\end{align}

		\end{itemize}
		
	\end{prop}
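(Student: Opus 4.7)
My plan is to follow the general paradifferential/Littlewood--Paley strategy developed in \cite{Lannes_06}, adapting the basic machinery of pseudo-differential operators with smooth symbols to the limited-smoothness class $\Gamma_{s_0}^m$. The starting point is a dyadic decomposition of $\sigma(x,\xi)$ in the $x$ variable: write $\sigma = \sum_{j\geq -1} \sigma_j$ where $\sigma_j(x,\xi) = \Delta_j^x \sigma(\cdot,\xi)(x)$. Each $\sigma_j$ is smooth in $x$ with size in $L^\infty_x$ controlled by $2^{-j t_0}|\sigma|_{H^{t_0}_{(m)}}$ (by Bernstein), while derivatives in $\xi$ inherit the decay built into the class $\Gamma_{s_0}^m$. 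The operator $\mathrm{Op}(\sigma)$ is then split into a paradifferential part (the sum over pairs $(j,k)$ with $j \ll k$, where $k$ is a Littlewood--Paley index on the target of $\hat f$) and a remainder part (the complementary region). The paradifferential part is a classical paraproduct that is handled by Bernstein and orthogonality, and the remainder is absorbed using the Sobolev regularity of $\sigma$ in $x$.

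For the product estimate \eqref{product est Op}, I would split low and high frequency contributions in $\xi$ separately: for $|\xi|\leq 1$, use $l_{t_0}(\sigma)$ together with the embedding $H^{t_0}(\R)\hookrightarrow L^\infty(\R)$; for $|\xi|>\frac14$, use $\mathcal{L}_{0,t_0}^m(\sigma)$ combined with the paraproduct decomposition and the elementary product rule $H^{t_0}\cdot H^s \hookrightarrow H^s$ valid for $|s|<t_0$. The restriction $s \in (-t_0,t_0)$ appears precisely here, as it is the range in which multiplication by an $H^{t_0}$ function is bounded on $H^s$.

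For the commutator, adjoint, and composition estimates, the guiding idea is the formal expansion
\[
\sigma_1 \# \sigma_2 \sim \sigma_1\sigma_2 + \tfrac{1}{i}\partial_\xi\sigma_1\,\partial_x\sigma_2 + \cdots,
\]
which predicts a gain of one derivative whenever one of the leading symbols is replaced by the first-order corrector. After the paradifferential decomposition, the residual terms in \eqref{Commutator Op general}, \eqref{adjoint est}, \eqref{Comp est} are written (up to acceptable remainders) as pseudo-differential operators whose symbols involve $\partial_x\sigma_j$ and $\partial_\xi \sigma_j$. Applying \eqref{product est Op} to these correctors — which live one order lower in $\xi$ — yields the stated $m_1+m_2-1$ (or $m_1-1$) gain. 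The seminorms $|\sigma^j|_{H^{t_0+1}_{(m_j)}}$ appear because the corrector symbols involve one $x$-derivative and hence require one extra degree of Sobolev regularity of $\sigma_j$.

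The main obstacle I expect is the bookkeeping for the range of $s$: each operation (commutator, composition, adjoint) shifts the effective range of admissible regularities, and for the composition \eqref{Comp est} one must verify that intermediate estimates respect $-t_0 < s + m_j \leq t_0+1$, so that the product estimate may be applied at each step. A related technical point is to ensure that the low-frequency piece of $\sigma$ (controlled only by $l_{t_0}(\sigma)$, not by seminorms involving $\partial_\xi$) contributes correctly to the commutator and composition remainders; this is handled by extracting the low-frequency part as a standard multiplier-free operator and noting that its contribution to the commutator with another PDO is smoothing of infinite order (in $\xi$) and therefore negligible.
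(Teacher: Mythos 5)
There is a mismatch of expectations here that you should be aware of: the paper does not prove this proposition at all. Its ``proof'' is the remark immediately following the statement, which attributes \eqref{product est Op} to Theorem 1 of \cite{Lannes_06}, \eqref{Commutator Op general} to Theorem 8 of the same paper, and \eqref{adjoint est}, \eqref{Comp est} to the arguments indicated in footnotes of \cite{LannesTwoFluid13} (the composition estimate being an ingredient in the proof of Theorem 8, the adjoint estimate following by the same method). Your sketch is therefore an attempt to reconstruct the content of the cited reference, and in spirit it does follow the same route (dyadic decomposition of the symbol in $x$, paraproduct splitting, first-order symbolic expansion with remainder control).

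As a proof, however, it has a genuine gap: the entire substance of the second part of the proposition is the claim that, for symbols that are only Sobolev-regular in $x$, the remainders in the composition, commutator and adjoint expansions gain one full derivative, \emph{and} that this holds up to the endpoint $s\leq t_0+1$, beyond the range $|s|<t_0$ where multiplication by an $H^{t_0}$ (or $H^{t_0+1}$) function is bounded on $H^s$. You dispose of this by saying the residual terms ``are written (up to acceptable remainders) as pseudo-differential operators whose symbols involve $\partial_x\sigma_j$ and $\partial_\xi\sigma_j$'' and then invoking \eqref{product est Op}; but with limited smoothness there is no exact symbolic calculus, the remainder is not of the form $\mathrm{Op}(\tau)$ for a symbol in the class with the claimed gain, and controlling it directly (together with the extended range of $s$) is precisely the technical content of Theorems 6--8 in \cite{Lannes_06}. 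Two further inaccuracies: Bernstein in dimension one gives $|\Delta_j^x\sigma(\cdot,\xi)|_{L^\infty_x}\lesssim 2^{-j(t_0-\frac12)}|\sigma(\cdot,\xi)|_{H^{t_0}}$, not $2^{-jt_0}$; and the low-frequency piece of the symbol is not ``smoothing of infinite order'' -- its $x$-dependence has only $H^{t_0}$ regularity, so its output is not band-limited and its contribution must be estimated (harmlessly, but not dismissed as negligible). If you want a self-contained argument you would essentially have to reproduce the proofs of \cite{Lannes_06}; otherwise the honest proof, as in the paper, is a citation.
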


\begin{remark}
	The estimates provided in \cite{Lannes_06} hold for a larger class of symbols, but here, we only state the result for a symbol that is bounded in the origin and smooth for large frequencies. To be precise, 	estimate \eqref{product est Op} follows from Theorem 1 in this paper and  \eqref{Commutator Op general} is an application of Theorem 8. While estimate \eqref{adjoint est} and  \eqref{Comp est} are not given in \cite{Lannes_06}. However,  as stated in footnote 8 and 9 of \cite{LannesTwoFluid13}, the composition estimate is used to prove Theorem 8, while the adjoint estimate can be deduced by similar methods.
\end{remark}

Another important result is on Fourier multipliers that are merely bounded in low frequency. 

\begin{Def}	Let $m \in \R$ and $t_0> \frac{1}{2}$.	We say $\sigma \in \mathcal{M}^{m}$  if and only if 
	$$ \sigma^1 \in L^{\infty}(\R)$$
	and for all $\beta \in\N$ one has 
	\begin{equation*}
		\sup\limits_{|\xi| \geq \frac{1}{4}} \langle \xi \rangle^{\beta-m} |\partial^{\beta}_{\xi}\sigma^1(\xi)|<\infty.
	\end{equation*}
\end{Def}

In particular, we have from Theorem $6$ (i) \cite{Lannes_06}:

\begin{prop} 
	Let  $m_1,m_2 \in \R$,  and $\frac{1}{2}<t_0 \leq s_0$. Let $\sigma^1 \in \mathcal{M}^{m_1}$ and $\sigma^{2} \in  \Gamma^{m_2}_{s_0 + \max\{m_1,0\} +1}$. Then for all $s\in \R$ such that $-t_0< s \leq t_0+1$ and $-t_0<s+m_1 \leq t_0+1$, there holds,
	\begin{equation}\label{Thm 6}
		|[\sigma^1(\mathrm{D}),\mathrm{Op}(\sigma^2)]f|_{H^s}
		\lesssim 
		\big{(} 
		\sup\limits_{|\xi|\leq 1}|\sigma^1(\xi)|
		+
		\sup\limits_{\beta \leq 4|\xi| \geq \frac{1}{4}} \langle \xi \rangle^{\beta-m_1} |\partial^{\beta}_{\xi}\sigma^1(\xi)| 
		\big{)}
		|\sigma^2|_{H^{t_0+1}_{(m_2)}}|f|_{H^{s+m_1+m_2-1}}.
	\end{equation}
\end{prop}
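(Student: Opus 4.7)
My plan is to reduce the estimate to the pseudo-differential commutator bound \eqref{Commutator Op general} by decomposing the Fourier multiplier symbol $\sigma^1$ into low- and high-frequency pieces, so that the limited regularity of $\sigma^1$ at the origin is isolated into a smoothing operator that can be treated by hand. Concretely, fix $\chi \in C_c^\infty(\mathbb{R})$ with $\chi \equiv 1$ on $\{|\xi| \le 1/4\}$ and $\operatorname{supp}\chi \subset \{|\xi| \le 1\}$, and write
\begin{equation*}
\sigma^1(\xi) = \sigma^1_L(\xi) + \sigma^1_H(\xi), \qquad \sigma^1_L := \chi \sigma^1, \quad \sigma^1_H := (1-\chi)\sigma^1.
\end{equation*}
The commutator then splits into $[\sigma^1_L(\mathrm{D}),\mathrm{Op}(\sigma^2)] + [\sigma^1_H(\mathrm{D}),\mathrm{Op}(\sigma^2)]$, which I would estimate separately.

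For the high-frequency piece, $\sigma^1_H$ is smooth on all of $\mathbb{R}$ and, by the hypothesis $\sigma^1 \in \mathcal{M}^{m_1}$, satisfies classical symbol estimates of order $m_1$. Viewed as an $x$-independent symbol, $\sigma^1_H \in \Gamma^{m_1}_{s_0+\max\{m_1,0\}+1}$ with
\begin{equation*}
|\sigma^1_H|_{H^{t_0+1}_{(m_1)}} \;\lesssim\; \sup_{\beta \le 4,\; |\xi|\ge 1/4}\langle\xi\rangle^{\beta - m_1}|\partial_\xi^\beta \sigma^1(\xi)|,
\end{equation*}
since the $H^{t_0+1}(\mathbb{R}_x)$ norm of a constant function is just its absolute value (up to an irrelevant constant, after localization in $\xi$). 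Under the range hypotheses on $s$, estimate \eqref{Commutator Op general} then yields exactly the required bound with this seminorm in place of the high-frequency contribution of $\sigma^1$ in \eqref{Thm 6}.

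For the low-frequency piece, $\sigma^1_L$ is merely bounded (not smooth at $0$), but its compact Fourier support makes $\sigma^1_L(\mathrm{D})$ a convolution operator with kernel $K = \mathcal{F}^{-1}(\sigma^1_L)$ lying in a weighted $L^1$ class. The plan is to write
\begin{equation*}
[\sigma^1_L(\mathrm{D}),\mathrm{Op}(\sigma^2)]f(x) = \iint K(x-y)\bigl(\sigma^2(x,\xi) - \sigma^2(y,\xi)\bigr)e^{iy\xi}\hat{f}(\xi)\,d\xi\,dy,
\end{equation*}
expand $\sigma^2(x,\xi)-\sigma^2(y,\xi) = (x-y)\int_0^1 \partial_x \sigma^2(y+\theta(x-y),\xi)\,d\theta$, and absorb the factor $(x-y)$ into $K$. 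Because $\sigma^1_L$ is compactly Fourier-supported, the resulting operator is of order $m_2$ (there is no contribution from $m_1$ on this piece — all the Fourier decay comes from $\widehat{f}$ localized to $|\xi|\ge 1/2$ after re-expanding $\mathrm{Op}(\sigma^2)$, while on $|\xi|\le 1$ we estimate trivially by $\sup_{|\xi|\le 1}|\sigma^1(\xi)|$). The standard product/symbol estimates from Proposition \ref{PDO Est Lannes} then close the argument, with the prefactor $\sup_{|\xi|\le 1}|\sigma^1(\xi)|$ coming from $\|K\|$-type bounds.

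The main obstacle is controlling the low-frequency piece uniformly, because one cannot invoke symbolic calculus there. I would expect to spend the bulk of the proof handling $[\sigma^1_L(\mathrm{D}),\mathrm{Op}(\sigma^2)]$ by combining the convolution representation above with a Bony-type paraproduct splitting $\mathrm{Op}(\sigma^2) = T_{\sigma^2} + R$, using that $R$ maps $H^{s+m_1+m_2-1}$ into $H^s$ with the required gain thanks to $t_0 > 1/2$, and observing that $[\sigma^1_L(\mathrm{D}), T_{\sigma^2}]$ gains an additional derivative by a standard commutator argument on dyadic blocks. The range $-t_0 < s+m_1 \le t_0+1$ enters precisely so that this paradifferential argument is legitimate on both input and output spaces.
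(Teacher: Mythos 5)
The paper does not prove this statement at all: it is quoted verbatim as Theorem 6 (i) of \cite{Lannes_06}, so any comparison is between your sketch and Lannes's original argument, not a proof in this paper. Your proposed reduction, however, has a concrete gap at the very first step. After writing $\sigma^1 = \sigma^1_L + \sigma^1_H$, you treat the $x$-independent high-frequency piece $\sigma^1_H$ as an element of $\Gamma^{m_1}_{s_0+\max\{m_1,0\}+1}$ and claim $|\sigma^1_H|_{H^{t_0+1}_{(m_1)}} \lesssim \sup_{\beta\le 4,\,|\xi|\ge 1/4}\langle\xi\rangle^{\beta-m_1}|\partial_\xi^\beta\sigma^1(\xi)|$, asserting that ``the $H^{t_0+1}(\mathbb{R}_x)$ norm of a constant function is just its absolute value.'' This is false: a nonzero constant is not in $L^2(\R_x)$, so $|\sigma^1_H(\cdot,\xi)|_{H^{t_0+1}}=+\infty$ and the seminorm $|\sigma^1_H|_{H^{t_0+1}_{(m_1)}}$ is infinite. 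Consequently \eqref{Commutator Op general} cannot be applied to the pair $(\sigma^1_H,\sigma^2)$, and the high-frequency half of your argument collapses. This is not a cosmetic issue: the whole point of the class $\mathcal{M}^{m_1}$ and of the separate estimate \eqref{Thm 6} is precisely that $x$-independent multipliers are excluded from the $\Gamma$-classes (whose seminorms are $H^s$-based in $x$), so the multiplier--symbol commutator must be handled by a dedicated theorem (Lannes's Theorem 6), not deduced from the symbol--symbol estimate. Any attempted fix that replaces the $H^{t_0+1}$-based seminorm by an $L^\infty$-based one is exactly the content of the statement you are trying to prove, so the reduction is circular within the paper's framework.

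The low-frequency piece is also not under control as written. For $\sigma^1\in\mathcal{M}^{m_1}$ one only knows $\sigma^1\in L^\infty$ near the origin, so $K=\mathcal{F}^{-1}(\chi\sigma^1)$ is in $L^2$ but need not lie in any weighted $L^1$ class (take $\sigma^1(\xi)=\mathrm{sign}(\xi)$ near $0$: then $K(x)\sim 1/x$ and $xK(x)\notin L^1$), so absorbing the factor $(x-y)$ from the Taylor expansion of $\sigma^2$ into $K$ is not justified. The subsequent paraproduct argument, which is where the crucial gain of one derivative (order $m_1+m_2-1$ instead of $m_1+m_2$) must come from with $\sigma^2$ only $H^{t_0+1}$ in $x$, is only sketched. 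If you want a self-contained proof you should follow the structure of Lannes's Theorem 6 directly (dyadic decomposition in the frequency of $f$, exploiting the smoothness of $\sigma^1$ only on $|\xi|\ge 1/4$ and paying only $\sup_{|\xi|\le 1}|\sigma^1|$ on the low block), rather than trying to route the multiplier through the $\Gamma$-class commutator estimate.
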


\begin{remark}
	If we let $\sigma^1= \Lambda^s$ and  $s=\frac{1}{2}$ then we have that
	\begin{equation}\label{Commutator Op}
		|[\Lambda^s,\mathrm{Op}(\sigma)]f|_{H^{\frac{1}{2}}} \lesssim |\sigma|_{H^{t_0+1}_{(m)}}  |f|_{H^{s+m-\frac{1}{2}}}.
	\end{equation}\\
\end{remark}

	\begin{lemma}\label{est Op L}
		Let $t_0>\frac{1}{2}$, $s\geq 0$, $\zeta \in H^{t_0+3}(\R)$ and define the symbol
		\begin{equation*}
				L(x,\xi, z) = e^{-z\big( \frac{	\sqrt{\mu}|\xi|}{1+\ve^2\mu(\partial_x \zeta)^2} - i \frac{\ve \mu \partial_x \zeta \xi}{1+\ve^2\mu(\partial_x \zeta)^2}\big)}.
		\end{equation*}
		Then for any $f \in \mathscr{S}(\R)$ there is $C>0$ nondecreasing function of its argument such that,
		\begin{align}\label{Est op L dx}
			\|\Lambda^s\mathrm{Op}(L)\partial_x f\|_{L^2(\mathcal{S}^-)}
			&  \leq \mu^{-\frac{1}{4}}C(|\zeta|_{H^{t_0+1}})  |f|_{\mathring{H}^{s+\frac{1}{2}}}
			\\ \label{Est op dzL}
			\|\Lambda^s\mathrm{Op}(\partial_z L) f\|_{L^2(\mathcal{S}^-)} 
			& \leq \mu^{\frac{1}{4}} C( |\zeta|_{H^{t_0+1}})  |f|_{\mathring{H}^{s+\frac{1}{2}}}
			\\ \label{Est op 1/z dxL}
			\|\Lambda^s\mathrm{Op}(\frac{1}{z}\partial_xL) f\|_{L^2(\mathcal{S}^-)} 
			&  \leq  \ve \mu^{\frac{3}{4}}C(|\zeta|_{H^{t_0+2}}) |f|_{\mathring{H}^{s+\frac{1}{2}}}
			\\ \label{Est op 1/z dxxL}
			\|\Lambda^s\mathrm{Op}(\frac{1}{z}\partial_x^2L) f\|_{L^2(\mathcal{S}^-)} 
			&  \leq  \ve \mu^{\frac{3}{4}}C(|\zeta|_{H^{t_0+3}}) |f|_{\mathring{H}^{s+\frac{1}{2}}}
			\\ \label{Est op dx dz L}
			\|\Lambda^s\mathrm{Op}(\partial_x \partial_z L)f \|_{L^2(\mathcal{S}^-)} 
			&  \leq  \ve \mu^{\frac{3}{4}}C(|\zeta|_{H^{t_0+2}}) |f|_{\mathring{H}^{s+\frac{1}{2}}}
			\\ \label{Est op dxL dx}
			\|\Lambda^s\mathrm{Op}(\partial_xL) \partial_x f\|_{L^2(\mathcal{S}^-)} 
			&  \leq  \ve \mu^{\frac{1}{4}}C(|\zeta|_{H^{t_0+2}}) |f|_{\mathring{H}^{s+\frac{1}{2}}}.
		\end{align}
	\end{lemma}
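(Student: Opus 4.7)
The plan is to prove the six estimates in a unified way by first analyzing the symbol $L(x,\xi,z)$ for fixed $z$, applying the pseudo-differential calculus of Proposition~\ref{PDO Est Lannes} on each horizontal slice, and then integrating in $z \in (0,\infty)$ using the exponential decay of $L$. Writing $L = e^{-z A(x,\xi)}$ with
\begin{equation*}
A(x,\xi) = \tfrac{\sqrt{\mu}|\xi|}{1+\ve^2\mu(\partial_x\zeta)^2} - i \tfrac{\ve\mu\partial_x\zeta\,\xi}{1+\ve^2\mu(\partial_x\zeta)^2},
\end{equation*}
one has $\Re A \geq c_0 \sqrt{\mu}|\xi|$ for some $c_0 = c_0(|\zeta|_{W^{1,\infty}})>0$. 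This provides the key smoothing identity $(z\sqrt{\mu}|\xi|)^\alpha e^{-c_0 z\sqrt{\mu}|\xi|} \leq C_\alpha e^{-(c_0/2)z\sqrt{\mu}|\xi|}$, which, combined with direct differentiation in $x$ and $\xi$, shows that $L(\cdot,\cdot,z)$, $\partial_z L(\cdot,\cdot,z)$, $\partial_x L / z$, $\partial_x^2 L / z$ and $\partial_x \partial_z L$ all belong to classes $\Gamma^m_{t_0+k}$ with explicit orders $m \in \{0,1\}$, Sobolev index $k\in\{1,2,3\}$, and symbol seminorms controlled by $C(|\zeta|_{H^{t_0+k+1}})$ times a function of $z\sqrt{\mu}|\xi|$ that retains the exponential decay.

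First I would handle \eqref{Est op L dx}. Since $i\xi$ is independent of $x$, we have $\mathrm{Op}(L)\partial_x f = \mathrm{Op}(L \cdot i\xi) f$, so \eqref{product est Op} applied to the symbol $L\cdot i\xi \in \Gamma^1_{t_0}$ together with the heat-kernel bound above yields the slice estimate
\begin{equation*}
\|\Lambda^s \mathrm{Op}(L)\partial_x f(\cdot,z)\|_{L^2} \leq C(|\zeta|_{H^{t_0+1}})\, \big\| e^{-(c_0/4)z\sqrt{\mu}|\mathrm{D}|}\,\Lambda^s |\mathrm{D}|^{1/2} f \big\|_{L^2}\cdot\big(1+(z\sqrt{\mu})^{-1/2}\big)^{0},
\end{equation*}
and squaring and integrating in $z$ against the elementary identity $\int_0^\infty e^{-c z\sqrt{\mu}|\xi|}\,dz = c^{-1}\mu^{-1/2}|\xi|^{-1}$ (together with Plancherel) gives $\mu^{-1/2}|f|^2_{\mathring{H}^{s+1/2}}$, producing the claimed $\mu^{-1/4}$. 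The bound \eqref{Est op dzL} is the same scheme applied to $\partial_z L = -A\cdot L$, where the extra factor of $\sqrt{\mu}|\xi|$ in $A$ changes the final power from $\mu^{-1/4}$ to $\mu^{1/4}$.

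For \eqref{Est op 1/z dxL}, \eqref{Est op 1/z dxxL}, and \eqref{Est op dx dz L}, the key point is that each $\partial_x$ falling on $L$ produces an $\ve$ (through $\partial_x \alpha$, $\partial_x \beta$) together with a factor of $z$. In $\partial_x L / z$ and $\partial_x^2 L / z$ this factor of $z$ cancels against the prefactor $1/z$, leaving symbols of order one with norms of size $\ve\,C(|\zeta|_{H^{t_0+2}})\sqrt{\mu}|\xi|$ (resp.\ $\ve\,C(|\zeta|_{H^{t_0+3}})\mu|\xi|^2 + \ve\,C(|\zeta|_{H^{t_0+3}})\sqrt{\mu}|\xi|$), while for $\partial_x\partial_z L$ one gains both a $\sqrt{\mu}|\xi|$ from the $\partial_z$ and an $\ve$ and $z$ from $\partial_x$, the $z$ being absorbed as before by trading $|\xi|$-powers for $z\sqrt{\mu}$-powers. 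After applying \eqref{product est Op} and integrating in $z$ as in the first step, the resulting powers of $\mu$ are $\mu^{3/4}$ in the first three of these estimates and $\mu^{1/4}$ in \eqref{Est op dxL dx}. Estimate \eqref{Est op dxL dx} itself is the combination of \eqref{Est op L dx} (applied to $\partial_x L$ in place of $L$) with the $\ve$-smallness extracted from $\partial_x L$; writing $\mathrm{Op}(\partial_x L)\partial_x f = \mathrm{Op}((\partial_x L)\cdot i\xi) f$ and proceeding as in the first case yields the desired $\ve\mu^{1/4}$.

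The main technical obstacle is bookkeeping: at each step one must verify that the $x$-Sobolev regularity of the symbol in each class $\Gamma^m_{s_0}$ matches the assumption $s_0 \geq t_0+1$ of Proposition~\ref{PDO Est Lannes}, and that the target homogeneous space $\mathring{H}^{s+1/2}$ is reached via the half-derivative smoothing $\int_0^\infty e^{-cz\sqrt{\mu}|\xi|}\,dz \sim \mu^{-1/2}|\xi|^{-1}$ applied at the level of $\Lambda^s |\mathrm{D}|^{1/2} f$. Along the way one also checks that the low-frequency part $|\xi|\leq 1$ contributes only harmless $O(1)$ terms in the seminorms $l_{t_0}$ and $\mathcal{L}^m_{4,t_0}$, since $L$, $\partial_x L / z$, etc.\ remain bounded there uniformly in $z\geq 0$.
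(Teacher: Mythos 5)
Your proposal follows essentially the same route as the paper's proof: factor out the exponential decay $e^{-c\,z\sqrt{\mu}|\xi|}$ from the symbol using the lower bound on the real part of the exponent, verify that the remaining (suitably $|\xi|$-normalized) symbols lie in Lannes' classes with seminorms uniform in $z$ (absorbing powers of $z\sqrt{\mu}|\xi|$ into the exponential), apply the product estimate of Proposition \ref{PDO Est Lannes} on each horizontal slice, and integrate in $z$ via Plancherel/Fubini to reach $\mathring{H}^{s+\frac{1}{2}}$ with the stated powers of $\ve$ and $\mu$. The only cosmetic difference is that you fold $i\xi$ into the symbol where the paper keeps $\partial_x$ acting on $f$; the argument is otherwise the same.
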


\begin{remark}\label{remark op L}
	Estimate \eqref{Est op dzL} also holds for the symbol $\mathrm{Op}\big{(}(\partial_x^2\zeta)\partial_zL\big{)}$, but with constant depending on $|\zeta|_{H^{t_0+2}}$. %Moreover, estimate \eqref{Est op 1/z dxL}  holds for the symbol $\mathrm{Op}(\partial_x \partial_z L)$.
\end{remark}

	\begin{proof} 
		
		The inequalities are proved similarly, where we first consider the proof of \eqref{Est op L dx}. To employ Proposition \ref{PDO Est Lannes}, we observe by Sobolev embedding that there exist $C=C(|\zeta|_{t_0+1})>0$ nondecreasing such that
		\begin{equation*}
			-\frac{1}{1+\ve^2\mu(\partial_x \zeta)^2} \leq -\frac{1}{C(|\zeta|_{H^{t_0+1}})},
		\end{equation*}
		and we use it to define the symbol:
		\begin{equation*}
			\sigma_z(x,\xi) =  e^{-z( \frac{1}{1+\ve^2\mu(\partial_x \zeta)^2}-\frac{1}{2C})\sqrt{\mu}|\xi|} e^{ iz \frac{\ve \mu \partial_x \zeta \xi}{1+\ve^2\mu(\partial_x \zeta)^2}}.
		\end{equation*}
		Then for $z \in (0,\infty)$ and any  $|\xi|\geq \frac{1}{4}$ we have that
		\begin{align*}
			|\partial_{\xi}^{\beta}\sigma_z(x, \xi)|_{H^{t_0}} 
			& \lesssim
			C(|\partial_x \zeta |_{H^{t_0}})(z\sqrt{\mu})^{\beta}e^{-\frac{z}{2C}\sqrt{\mu}|\xi|} 
			\\ 
			& \leq
			C(|\partial_x \zeta |_{H^{t_0}})(z\sqrt{\mu}|\xi|)^{\beta}e^{-\frac{z}{2C}\sqrt{\mu}|\xi|} \langle \xi \rangle^{-\beta},
		\end{align*}
		by the algebra property of $H^{t_0}(\R)$, so that
		\begin{equation*}
			\mathcal{L}_{4,t_0}^0(\sigma_z) \lesssim C(|\zeta|_{H^{t_0+1}}).
		\end{equation*}
		While in low frequencies, the symbol is bounded, which implies
		\begin{equation*}
			l_{0,t_0}(\sigma_z)  \leq C(|\zeta|_{H^{t_0+1}}).
		\end{equation*}
		Since $\sigma \in \Gamma_{t_0}^0$ we may use Proposition \ref{PDO Est Lannes} with inequality \eqref{product est Op} and Plancherel's identity to say that
		\begin{align*}
			| \mathrm{Op}(L)f|_{H^s} 
			& 
			\leq
			C(|\zeta|_{H^{t_0+1}})|\mathrm{e}^{-\frac{z}{2C} \sqrt{\mu}|\mathrm{D}|}f|_{H^s}.
		\end{align*}
		To conclude, we use this inequality with Plancherel's identity and Fubini's Theorem to see that
		\begin{align*}
			\|\Lambda^s\mathrm{Op}(L)\partial_x f\|_{L^2(\mathcal{S}^-)}^2 
			& =
			\int_{0}^{\infty} |(\mathrm{Op}(L)\partial_xf)(\cdot,z)|_{H^s}^2 \: \mathrm{d}z
			\\ 
			& 
			\leq
			C(|\zeta|_{H^{t_0+1}})\int_{0}^{\infty} |\mathrm{e}^{-\frac{z}{2C} \sqrt{\mu}|\mathrm{D}|}\partial_x f|_{H^s}^2 \: \mathrm{d}z
			\\ 
			& 
			\leq
			C(|\zeta|_{H^{t_0+1}})
			\int_{\R} ||\xi|^{\frac{1}{2}}\langle \xi \rangle^{s}\hat{f}(\xi)|^2\int_{0}^{\infty} e^{-\frac{z}{C} \sqrt{\mu}|\xi|}|\xi| \: \mathrm{d}z \mathrm{d}\xi
			\\ 
			& 
			\leq \mu^{-\frac{1}{2}}
			C(|\zeta|_{H^{t_0+1}})
			|f|_{\mathring{H}^{s+\frac{1}{2}}}^2.
		\end{align*}

		The proof of the remaining inequalities essentially boils down to having one more polynomial power in $|\xi|$ when compared to  $z$. In particular, for the proof of \eqref{Est op dzL}, we see that there is a gain of $\sqrt{\mu}$ and a $|\xi|$ that appears after computing the derivative with respect to $z$. Therefore, the proof follows as it did for \eqref{Est op L dx}.

		For the proof of \eqref{Est op 1/z dxL}, there is also a gain in the small parameters. However, there is a polynomial dependence in $z$ and an additional $|\xi|$. Since we need $|\xi|$ for the integrability, we define 
		\begin{equation*}
			\sigma_z^{1}(x,\xi) = |\xi|^{-1}\partial_x \sigma_z(x,\xi),
		\end{equation*}
		and make the computation for $|\xi|>\frac{1}{4}$:
		\begin{align*}
			|\partial_{\xi}^{\beta}\sigma_z^{1}(x, \xi)|_{H^{t_0}} 
			& \leq
			\ve  C(| \zeta |_{H^{t_0+2}})(z\mu)(z\sqrt{\mu})^{\beta}e^{-\frac{z}{2C}\sqrt{\mu}|\xi|} 
			\\ 
			& \leq z\ve \mu
			C(|\partial_x \zeta |_{H^{t_0+2}})(z\sqrt{\mu}|\xi|)^{\beta}e^{-\frac{z}{2C}\sqrt{\mu}|\xi|} \langle \xi \rangle^{-\beta}.
		\end{align*}
		Then we use Proposition \ref{PDO Est Lannes} to find that
		\begin{align*}
			\|\Lambda^s\mathrm{Op}(\frac{1}{z}\partial_x L)f\|_{L^2(\mathcal{S}^-)}^2 
			&\leq (\ve \mu)^2
			C(|\zeta|_{H^{t_0+2}})\int_{0}^{\infty} |e^{-\frac{z}{2C} \sqrt{\mu}|\mathrm{D}|} |\mathrm{D}| f|_{H^s}^2 \: \mathrm{d}z
			\\ 
			& 
			\leq \ve^2  \mu^{\frac{3}{2}}
			C(|\zeta|_{H^{t_0+1}})
			|f|_{\mathring{H}^{s+\frac{1}{2}}}^2.
		\end{align*}

		 For the proof of \eqref{Est op 1/z dxxL},  we argue similarly and define 
		 \begin{equation*}
		 	\sigma_z^{2}(x,\xi) = |\xi|^{-1}\partial_x^2 \sigma_z(x,\xi),
		 \end{equation*}
	 	and find that
	 	\begin{align*}
	 		|\partial_{\xi}^{\beta}\sigma_z^{2}(x, \xi)|_{H^{t_0}} 
	 	%	& \lesssim
	 	%	\ve C(| \zeta |_{H^{t_0+3}})
	 	%	\big{(}(z\sqrt{\mu})^{\beta+1}+|\xi|(z\sqrt{\mu})^{\beta+2}\big{)}
	 	%	e^{-\frac{z}{2}\sqrt{\mu}|\xi|} 
	 	%	\\ 
	 	%	& \leq z\ve \sqrt{\mu}
	 	%	C(|\partial_x \zeta |_{H^{t_0+3}})
	 	%%	e^{-\frac{z}{2}\sqrt{\mu}|\xi|} \langle \xi \rangle^{-\beta}
	 		%\\ 
	 		& \leq z\ve \mu
	 		C(|\partial_x \zeta |_{H^{t_0+3}})
	 		\langle \xi \rangle^{-\beta}.
	 	\end{align*}
 		At this point, the proof is the same as for \eqref{Est op 1/z dxL}.

 		For the proof of \eqref{Est op dx dz L}, we consider 
 		\begin{equation*}
 			\sigma_z^{3}(x,\xi) = |\xi|^{-1}\partial_x \partial_z \sigma_z(x,\xi),
 		\end{equation*}
 		and finds that
 		\begin{align*}
 			|\partial_{\xi}^{\beta}\sigma_z^3(x, \xi)|_{H^{t_0}} 
 			& \lesssim
 			\ve C(|\partial_x \zeta |_{H^{t_0}})(z\mu)(\sqrt{\mu}|\xi|)(z\sqrt{\mu})^{\beta}e^{-\frac{z}{2C}\sqrt{\mu}|\xi|} 
 			\\ 
 			& \leq
 			\ve \mu  C(|\partial_x \zeta |_{H^{t_0}})(z\sqrt{\mu}|\xi|)^{\beta+1}e^{-\frac{z}{2C}\sqrt{\mu}|\xi|} \langle \xi \rangle^{-\beta},
 		\end{align*}
 		from which we deduce the final result as above.
 		
 		For the proof of \eqref{Est op dxL dx}, we will apply the operator to $\partial_x f$ so we consider 
 		\begin{equation*}
 			\sigma_z^{4}(x,\xi) = \partial_x \sigma_z(x,\xi),
 		\end{equation*}
 		and as for $\sigma_z^{1}$ we find that
 		\begin{align*}
 			|\partial_{\xi}^{\beta}\sigma_z^4(x, \xi)|_{H^{t_0}} 
 			& \lesssim
 			\ve C(|\partial_x \zeta |_{H^{t_0}})(z\mu |\xi|)(z\sqrt{\mu})^{\beta}e^{-\frac{z}{2C}\sqrt{\mu}|\xi|} 
 			\\ 
 			& \leq
 			\ve \sqrt{\mu}  C(|\partial_x \zeta |_{H^{t_0}})(z\sqrt{\mu}|\xi|)^{\beta+1}e^{-\frac{z}{2C}\sqrt{\mu}|\xi|} \langle \xi \rangle^{-\beta},
 		\end{align*}
 		where there is a loss of $\mu^{\frac{1}{4}}$ when computing the final integral as above.
	\end{proof}

	The next estimates are given on $H^s(\R)$ and are simpler to deal with. They are versions of estimates used in \cite{LannesTwoFluid13}, but are listed here for the sake of clarity.
	
	\begin{prop}
		Let $\ve, \mu, \gamma, h_{\min} \in (0,1)$, $t_0>\frac{1}{2}$, and $\zeta \in H^{t_0+1}(\R)$ such that 
		\begin{equation*}
			h: = 1+\varepsilon \zeta \geq h_{\min}, \quad \text{for all } x \in \R.
		\end{equation*}
		Then for $f, g\in \mathscr{S}(\R)$ and $s\geq t_0+1$ there exist   $C = C(h_{\min}^{-1},|\zeta|_{H^s}) >0$ nondecreasing function of its argument such that:
		\begin{itemize}
			\item [1.] Define the operator 	
			\begin{equation*}
				\mathrm{Op}\big{(} \frac{S^+}{S^-}\big{)}f(x) =-  \mathcal{F}^{-1}\Big{(} \tanh(\sqrt{\mu} t(x, \xi))\hat{f}(\xi)\Big{)}(x),
			\end{equation*}
			where 
			\begin{equation*}
				t(x,\xi) = (1+\ve \zeta)\frac{\arctan(\ve \sqrt{\mu}\partial_x \zeta)}{\ve 	\sqrt{\mu}\partial_x \zeta}|\xi|.
			\end{equation*}
			Then for $s\in (-t_0,t_0)$ there holds,  
			\begin{equation}\label{PDO est tanh}
				|\mathrm{Op}\big{(} \frac{S^+}{S^-}\big{)} f|_{H^{s}}  \leq C(h_{\min}^{-1},|\zeta|_{H^{t_0+1}}) |f|_{H^{s}},
			\end{equation}
			and for $ s\in (-t_0,t_0+1]$ there holds,
			\begin{align}
				\label{Commutator Lambda s S+/S-}
				|[\Lambda^s, \mathrm{Op}\big{(} \frac{S^+}{S^-}\big{)}]f|_{H^{\frac{1}{2}}} 
				& \leq 
				\ve  C(h_{\min}^{-1},|\zeta|_{H^{t_0+2}}) |f|_{H^{s-\frac{1}{2}}}
				\\ 
				\label{adjoint est on S+/D}
				|(\mathrm{Op}\big( \frac{S^+}{S^-}\big)^{\ast} - \mathrm{Op}\big( \frac{S^+}{S^-}\big))f|_{H^s} & \leq
				 \ve  C(h_{\min}^{-1},|\zeta|_{H^{t_0+2}})
				 |f|_{H^{s-1}}.
			\end{align}
			
			\item [2.] Define the operators
			\begin{equation*}
				\mathrm{Op}\big{(}S_J\big{)}f(x) =  \mathcal{F}^{-1}\Big{(}\big{(}1 +  \gamma\tanh(\sqrt{\mu} t(x, \xi) \big{)} \hat{f}(\xi)\Big{)}(x)
			\end{equation*}
			\begin{equation*}
					\mathrm{Op}\big{(} \frac{1}{S_J}\big{)}f(x)
					 =
					  \mathcal{F}^{-1}\Big{(}\frac{\hat{f}(\xi)}{1 +  \gamma\tanh(\sqrt{\mu} t(x, \xi))}\Big{)}(x).
			\end{equation*}
			Then for $s\in (-t_0,t_0)$ there holds,
			\begin{equation}\label{Op 1/SJ}
				|\mathrm{Op}\big{(}\frac{1}{S_J}\big{)} f|_{H^s}  \leq   C(h_{\min}^{-1},|\zeta|_{H^{t_0+1}}) |f|_{H^s},
			\end{equation}
			and for $ s\in (-t_0,t_0+1]$ there holds,
			\begin{align}
				\label{Op S_J 1/SJ} 
				|\big(1 -\mathrm{Op}(S_J) \mathrm{Op}\big{(}\frac{1}{S_J}\big{)}\big{)}f|_{H^{s}} & \leq  \ve  C(h_{\min}^{-1},|\zeta|_{H^{t_0+2}}) |f|_{H^{s-1}}
				\\ 
				\label{Op SJ 1}
				|\big{[}\mathrm{Op}\big{(}\frac{1}{S_J}\big{)}, \mathcal{H} \big{]}f|_{H^{s}}  \leq  & \ve  C(h_{\min}^{-1},|\zeta|_{H^{t_0+2}}) |f|_{H^{s-1}}
				\\ 
				\label{Op SJ 2}
				|\big{[}\mathrm{Op}\big{(}\frac{1}{S_J}\big{)}, \partial_x \big{]}f|_{H^{s}}  \leq  & \ve  C(h_{\min}^{-1},|\zeta|_{H^{t_0+2}}) |f|_{H^{s}}
				\\
				\label{Op SJ 3}
				|\big{(}\mathrm{Op}\big{(}\frac{1}{S_J}\big{)}^{\ast} - \mathrm{Op}\big{(}\frac{1}{S_J}\big{)}\big{)}f|_{H^{s}}  \leq  & \ve  C(h_{\min}^{-1},|\zeta|_{H^{t_0+2}}) |f|_{H^{s-1}}.
			\end{align}

			\item [3.] Define the operators
			\begin{align*}
				\mathrm{Op}\Big( \frac{S^+}{S^-S_J} \Big)f(x)
				& = 
				-\mathcal{F}^{-1}\Big{(} \frac{\tanh(\sqrt{\mu} t(x, \xi))}{ 1 +  \gamma \tanh(\sqrt{\mu} t(x, \xi))}\hat{f}(\xi)\Big{)}(x)
				\\ 
				\mathrm{Op}\Big(\partial_x \frac{S^+}{S^-S_J} \Big)f(x)
				& = 
				-
				\mathcal{F}^{-1}\Big{(}\partial_x  \frac{\tanh(\sqrt{\mu} t(x, \xi))}{ 1 +  \gamma \tanh(\sqrt{\mu} t(x, \xi))}\hat{f}(\xi)\Big{)}(x)
			\end{align*}
			Then for $s\in (-t_0,t_0)$ there holds,
			\begin{align}\label{est S+/S-SJ}
				|\mathrm{Op}\Big( \frac{S^+}{S^-S_J}\Big{)}f|_{H^s} & \leq    C(h_{\min}^{-1},|\zeta|_{H^{t_0+1}})  |f|_{H^s},
				\\ 
				\label{Op dx S+/S- 1/SJ}
				| \mathrm{Op}\Big( \partial_x\frac{S^+}{S^-S_J} \Big) f|_{H^s}
				& \leq 
				\ve C(h_{\min}^{-1},|\zeta|_{H^{t_0+2}}) |f|_{H^s}.
			\end{align} 
			and for $ s\in (-t_0,t_0+1]$ there holds,
			\begin{align}	
				\label{commutator est S+/S-SJ}
				|[\mathrm{Op}\Big( \frac{S^+}{S^-S_J}\Big{)},f]\partial_x g|_{H^s} &\leq   \ve  C(h_{\min}^{-1},|\zeta|_{H^{t_0+2}})    |f|_{H^s}|g|_{H^s}
				\\ 
				\label{adjoint est S+/S-SJ}
				|\Big{(}\mathrm{Op}\Big( \frac{S^+}{S^-S_J} \Big)^{\ast}-\mathrm{Op}\Big( \frac{S^+}{S^-S_J} \Big)\Big{)}f|_{H^s} & \leq  \ve  C(h_{\min}^{-1},|\zeta|_{H^{t_0+2}})   |f|_{H^{s-1}}
				\\
				\label{Op S+/S- 1/SJ}
				|\Big{(} \mathrm{Op}\Big( \frac{S^+}{S^-} \Big) \mathrm{Op}\Big{(} \frac{1}{S_J} \Big{)}  - \mathrm{Op}\Big( \frac{S^+}{S^-S_J} \Big)\Big{)} f|_{H^{s}} & \leq \ve   C(h_{\min}^{-1},|\zeta|_{H^{t_0+2}})    |f|_{H^{s-1}}
			\end{align}
			\item [4.] Define the operators 
			\begin{equation*}
				\mathrm{Op}\Big(\frac{\mathfrak{P}^2}{S_J S^-}\Big)f(x)
				= 
				-\frac{1}{\sqrt{\mu}}\mathcal{F}^{-1}\Big{(} \frac{|\xi|(1+\sqrt{\mu}|\xi|)^{-1}}{ 1 +  \gamma \tanh(\sqrt{\mu} t(x, \xi))}\hat{f}(\xi)\Big{)}(x)
			\end{equation*}
			\begin{equation*}
				\mathrm{Op}\Big(\frac{S_J S^-}{\mathfrak{P}^2}\Big)\mathfrak{P}f(x)
				= 
				-\sqrt{\mu}
				\mathcal{F}^{-1}\Big{(} \frac{1 +  \gamma \tanh(\sqrt{\mu} t(x, \xi))}{|\xi| (1+\sqrt{\mu}|\xi|)^{-1}}\widehat{\mathfrak{P}f}(\xi)\Big{)}(x).
			\end{equation*}
			Then for  $k=0,1,$ and  $s\in (-t_0,t_0)$ there holds,
			\begin{align}
				\label{S-SJ comp est}  
				|\big{(}	\mathrm{Op}(S^-S_J) -\mathrm{Op}(S^-)\mathrm{Op}(S_J)  \big{)}f|_{H^{s+\frac{k}{2}}} 
				& \leq \ve \mu^{1-\frac{k}{4}}C(h_{\min}^{-1},|\zeta|_{H^{t_0+2}})  |f|_{\dot{H}_{\mu}^{s+\frac{1}{2}}}.
				\\ 
				\label{1 - op b2...}
				|\big{(} 1- \mathrm{Op}(\frac{\mathfrak{P}^2}{S_J S^-})\mathrm{Op}(\frac{S_J S^-}{\mathfrak{P}^2}) \big{)}\mathfrak{P}f|_{H^{s+\frac{k}{2}}} 
				 & \leq
				 \ve \mu^{-\frac{k}{4}}C(|\zeta|_{H^{{t_0+2}}}) |f|_{H^s}.
			\end{align}
		\color{black}
		
	\end{itemize}	
	\end{prop}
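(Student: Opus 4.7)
The plan is to apply the symbolic calculus of Proposition \ref{PDO Est Lannes} (and the mixed estimate \eqref{Thm 6}) systematically to each of the symbols in the statement. The common preparatory step is to analyze the ``tail'' symbol
\[
t(x,\xi)=(1+\varepsilon\zeta)\,\frac{\arctan(\varepsilon\sqrt{\mu}\partial_x\zeta)}{\varepsilon\sqrt{\mu}\partial_x\zeta}\,|\xi|,
\]
and to show that, because $r\mapsto\arctan(r)/r$ is smooth and bounded with all derivatives bounded, the function $(x,\xi)\mapsto t(x,\xi)/|\xi|$ depends only on $x$ and lies in $H^{s}(\R)$ whenever $\zeta\in H^{s+1}(\R)$, with
\[
\Big|\,\tfrac{t(x,\xi)}{|\xi|}-1\,\Big|_{H^{t_0+1}}\le\varepsilon\,C(h_{\min}^{-1},|\zeta|_{H^{t_0+2}}).
\]
This key estimate is what produces the factor $\varepsilon$ in all the commutator/adjoint/composition estimates: symbols of the form $F(\sqrt{\mu}t(x,\xi))$ differ from the Fourier multiplier $F(\sqrt{\mu}|\xi|)$ by something of order $\varepsilon$ in the $\Gamma^0_{t_0+1}$ topology.

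With this in hand I would proceed block by block. \textbf{Block 1 (symbols $S^+/S^-$ and $S_J$).} Since $\tanh$ is a bounded smooth function and $t/|\xi|$ is a 0-th order symbol, both $\tanh(\sqrt{\mu}t)$ and $1+\gamma\tanh(\sqrt{\mu}t)$ belong to $\Gamma_{t_0+1}^0$ with seminorm bounded by $C(h_{\min}^{-1},|\zeta|_{H^{t_0+1}})$; \eqref{product est Op} then yields \eqref{PDO est tanh}, while \eqref{Commutator Op general} or \eqref{Thm 6} (with the gain from the preparatory estimate above) gives \eqref{Commutator Lambda s S+/S-}, and \eqref{adjoint est} gives \eqref{adjoint est on S+/D}. \textbf{Block 2 (symbol $1/S_J$).} Since $1+\gamma\tanh(\sqrt{\mu}t)\ge 1$ uniformly, the reciprocal is in $\Gamma_{t_0+1}^0$; its difference with $1/(1+\gamma\tanh(\sqrt{\mu}|\xi|))$ is of order $\varepsilon$, which yields \eqref{Op 1/SJ}, \eqref{Op S_J 1/SJ}, and the commutator and adjoint estimates \eqref{Op SJ 1}--\eqref{Op SJ 3} directly from \eqref{Comp est}, \eqref{Commutator Op general}, and \eqref{adjoint est}.

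\textbf{Block 3 (composite symbol $S^+/(S^-S_J)$).} This is the product of the two symbols above, hence lies in $\Gamma^0_{t_0+1}$ with the appropriate norm, so \eqref{product est Op} yields \eqref{est S+/S-SJ}; applying $\partial_x$ pulls out a factor $\varepsilon$ (from the $x$-dependence entering only through $\varepsilon\zeta$), giving \eqref{Op dx S+/S- 1/SJ}; the remaining estimates \eqref{commutator est S+/S-SJ}, \eqref{adjoint est S+/S-SJ}, \eqref{Op S+/S- 1/SJ} follow from \eqref{Commutator Op general}, \eqref{adjoint est}, \eqref{Comp est}, respectively, each with an $\varepsilon$ gain provided by Block 1. \textbf{Block 4 (symbols $\mathfrak{P}^2/(S_J S^-)$, $S^-S_J$, $S_JS^-/\mathfrak{P}^2$).} Here the symbols have order $\pm 1$ because of the $\sqrt{\mu}|\xi|$ factor. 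One applies \eqref{Comp est} with $m_1=-1,m_2=+1$ to get \eqref{S-SJ comp est} with the correct $\varepsilon$, and then converts the loss of one derivative into a gain of $\sqrt{\mu}$ using Plancherel and the elementary inequality $\sqrt{\mu}\,|\xi|\le\langle \xi\rangle(1+\sqrt{\mu}|\xi|)^{1/2}$ inherited from the definition of $\mathfrak{P}$. The estimate \eqref{1 - op b2...} is then obtained by iterating the composition estimate for the product of $\mathrm{Op}(\mathfrak{P}^2/(S_JS^-))$ and $\mathrm{Op}(S_JS^-/\mathfrak{P}^2)$ against $1=\mathrm{Op}(1)$.

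The main obstacle, and the place that needs the most care, is extracting all the $\varepsilon$ prefactors cleanly: none of the symbols $\tanh(\sqrt{\mu}t)$, $S_J$, $1/S_J$ themselves have an $\varepsilon$ in front, so one cannot simply read off the $\varepsilon$ from the seminorm bound provided by Proposition \ref{PDO Est Lannes}. Instead, in every commutator, adjoint, or composition identity one must first write the relevant symbol as (frequency-only part) + (part of order $\varepsilon$) by Taylor expansion around $\varepsilon\zeta=0$, and observe that the frequency-only part has trivial commutators/adjoints. A related delicate point is the parameter range $s\in(-t_0,t_0+1]$ (one unit beyond the range $(-t_0,t_0)$ where \eqref{product est Op} is directly applicable): there one must invoke the sharper composition/commutator bounds \eqref{Comp est}--\eqref{Commutator Op general} rather than the product estimate, and check that the symbol class $\Gamma^m_{s_0+\max\{m,0\}+1}$ assumption is met, which forces the dependence on $|\zeta|_{H^{t_0+2}}$ (rather than $|\zeta|_{H^{t_0+1}}$) seen throughout the statement.
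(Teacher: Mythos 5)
Your proposal follows essentially the same route as the paper for points 1--3: the paper also reduces everything to the observation that each symbol splits as a pure Fourier multiplier plus an $O(\ve)$ remainder in the rough symbol class (the paper writes $\tanh(\sqrt{\mu}|\xi|hg)=\sigma^1+\sigma^2+\tanh(\sqrt{\mu}|\xi|)$ with $|\sigma^1|_{H^{t_0+1}_{(0)}}\lesssim\ve$, $|\sigma^2|_{H^{t_0+1}_{(0)}}\lesssim\ve\sqrt{\mu}$, which is your one-step Taylor decomposition done in two steps), and then applies \eqref{product est Op}, \eqref{Commutator Op general}/\eqref{Thm 6}, \eqref{adjoint est} and \eqref{Comp est} to the $\ve$-parts while the multiplier part has vanishing commutators and is self-adjoint; your identification of the main obstacle and of the $H^{t_0+2}$ dependence is exactly the paper's mechanism.

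For point 4, however, your sketch has a concrete gap in how the $\mu$-exponents are produced. In \eqref{S-SJ comp est} the powers $\ve\mu^{1-\frac{k}{4}}$ and the norm $|f|_{\dot H_\mu^{s+\frac12}}$ do not come from applying \eqref{Comp est} with orders $(-1,+1)$ (the symbols $S^-S_J$, $S^-$, $S_J$ have orders $+1,+1,0$) followed by a pointwise inequality such as $\sqrt{\mu}|\xi|\le\langle\xi\rangle(1+\sqrt{\mu}|\xi|)^{1/2}$; the paper instead writes the defect as $-\gamma\sqrt{\mu}\sum_j[|\mathrm{D}|,\mathrm{Op}(\sigma^j/\mathfrak{P})]\mathfrak{P}$ and gains the extra $\mu^{\frac12-\frac{k}{4}}$ by estimating the seminorm $|\mathfrak{P}^{-1}\sigma^j|_{H^{t_0}_{(-k/2)}}\le\ve\mu^{\frac12-\frac{k}{4}}C$, using the $\ve\sqrt{\mu}$-smallness of $g-1$ and the exponential decay $\sqrt{\mu}|\xi|e^{-c\sqrt{\mu}|\xi|}\lesssim1$; a factorization through $\mathfrak{P}$ of this kind is needed to even land in the $\dot H_\mu^{s+\frac12}$ norm. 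Similarly, \eqref{1 - op b2...} is not obtained by ``iterating the composition estimate'' directly (that would give no $\ve$ and wrong $\mu$-powers for $k=1$); the paper rewrites $\mathrm{Op}(\frac{\mathfrak{P}^2}{S_JS^-})\mathrm{Op}(\frac{S_JS^-}{\mathfrak{P}^2})\mathfrak{P}$ exactly as $\mathrm{Op}(\frac{1}{S_J})\mathrm{Op}(S_J)\mathfrak{P}$ plus commutators of $|\mathrm{D}|$ with the $\ve$-parts $\sigma^j$, and then invokes \eqref{Op S_J 1/SJ}, \eqref{Op 1/SJ} and \eqref{Thm 6}. Your general principle (multiplier part plus $\ve$-part) is the right one here too, but the quantitative bookkeeping in $\mu$ is where the actual work lies, and as stated your argument for point 4 would not deliver the exponents claimed in \eqref{S-SJ comp est} and \eqref{1 - op b2...}.
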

	
	\begin{proof}
	%	Each of the points is proven similarly, so we only give the details for estimates \eqref{PDO est tanh}-\eqref{adjoint est on S+/D}. 
	
		We consider each point separately in individual steps. \\ 
		
		\noindent
		\underline{Step 1.}  For the proof of \eqref{PDO est tanh}, we will verify that 
		\begin{equation}\label{claim S+/S-}
			\sigma : = \frac{S^+}{S^-} = \tanh(\sqrt{\mu} t ) \in \Gamma_{t_0}^0.
		\end{equation}
		In this case, the proof follows by Theorem \ref{PDO Est Lannes} and estimate \eqref{product est Op}:
		\begin{align*}
			|\mathrm{Op}\big{(} \frac{S^+}{S^-}\big{)} f|_{H^{s}}  
			 \leq 	|\sigma|_{H^{t_0}_{(0)}}  |f|_{H^{s}}.
		\end{align*} 
		To verify \eqref{claim S+/S-}, we observe by Sobolev embedding and the non-cavitation condition that
		\begin{align*}
			h_{\min}\leq h = 1+ \varepsilon\zeta \leq C(|\zeta|_{H^{t_0}}),
		\end{align*}
		and 
		\begin{align*}
			0< \frac{1}{C(|\zeta|_{t_0+1})} \leq g := \frac{\arctan(\varepsilon \sqrt{\mu}\partial_x \zeta)}{\varepsilon \sqrt{\mu}\partial_x \zeta }\leq C(|\zeta|_{H^{t_0+1}}).
		\end{align*}
		Then since $|\tanh(r)|\leq |r|$		we can use it to  prove that $\sigma$ satisfies
		\begin{equation*}
			|\tanh(\sqrt{\mu} t(\cdot, \xi))|_{H^{t_0}} \leq C(|\zeta|_{H^{t_0+1}}),
		\end{equation*}
		for all $\xi \in \R$. On the other hand, for $|\xi| \geq \frac{1}{4}$ and  $\beta \geq 1$ we have that $\xi \mapsto  \sigma(\cdot, \xi)$ is smooth and since
		\begin{equation}\label{def tanh}
			r \mapsto \tanh(r) = 1 - \frac{2}{e^{2r} +1},
		\end{equation} 
		we have that
		\begin{align*}
			|\partial_{\xi}^{\beta}\sigma(\cdot, \xi)|_{H^{t_0}} 
			& \lesssim
			C(| \zeta |_{H^{t_0+1}})(\sqrt{\mu})^{\beta}e^{-\frac{1}{C}\sqrt{\mu}|\xi|} 
			\\ 
			& \leq
			C(| \zeta |_{H^{t_0+1}})(\sqrt{\mu}|\xi|)^{\beta}e^{-\frac{1}{C}\sqrt{\mu}|\xi|} \langle \xi \rangle^{-\beta},
		\end{align*}
		which implies
		\begin{equation*}
			|\sigma|_{H^{t_0}_{(0)}} \leq C(h_{\min}^{-1},|\zeta|_{t_0+1}).
		\end{equation*}

		For the proof of \eqref{Commutator Lambda s S+/S-}, we will use \eqref{Commutator Op}. However, to make the small parameters appear, we  decompose $\sigma$ into three pieces:
		\begin{align}
			\: \sigma(x,\xi) 
			& =\notag
			\tanh(\sqrt{\mu}|\xi| h(x) g(x))
			\\
			& = 
			\Big{(}\tanh(\sqrt{\mu}|\xi| h(x) g(x)) - \tanh(\sqrt{\mu}|\xi|  g(x)) \Big{)}
			+
			\Big{(}\tanh(\sqrt{\mu}|\xi|  g(x)) - \tanh(\sqrt{\mu}|\xi|  ) \Big{)} \notag
			\\ 
			& \hspace{0.5cm}\notag
			+ \tanh(\sqrt{\mu}|\xi| ) 
			\\ 
			& = 	\sigma^1(x,\xi) + 	\sigma^2(x,\xi) + 	\sigma^3(\xi),\label{decom sigma}
		\end{align}
		where $h$ and $g$ are as above. Then for $\sigma(x,\xi)  = \Sigma (h(x),g(x),\xi)$, we have $(h,g) \mapsto \Sigma (h,g,\xi)$ is smooth and for 
		$$\sigma^1(x,\xi) = \Sigma (h(x),g(x),\xi) - \Sigma (1,g(x),\xi),$$
		we observe by the Mean Value Theorem, the algebra property of $H^{t_0+1}(\R)$, and the identity
		$$|h-1|_{H^{t_0+1}} = \ve |\zeta|_{H^{t_0+1}}$$
		that
		\begin{align*}
				|\sigma^1(\cdot, \xi)|_{H^{t_0+1}} 
				%&\leq
				%|\partial_h	\Sigma(h,f, \xi)|_{H^{t_0+1}}|h-1|_{H^{t_0+1}}
			%	\\ 
			%	&
				\leq \ve C(h_{\min}^{-1}, |\zeta|_{H^{t_0+2}})
				\sqrt{\mu} |\xi| e^{-\frac{1}{C}\sqrt{\mu}|\xi|}, 
		\end{align*}
		for any $\xi \in \R$. While for $|\xi|\geq \frac{1}{4}$ we note that 
		$$|\partial_{\xi}^{\beta}\partial_h	\Sigma(h(\cdot), f(\cdot), \xi) |_{H^{t_0+1}}\lesssim C(h_{\min}^{-1}, |\zeta|_{H^{t_0+2}}) (\sqrt{\mu})^{\beta} \sqrt{\mu}|\xi|
		e^{-\frac{1}{C}\sqrt{\mu}|\xi|},$$
		to get 
			\begin{align*}
			|\partial_{\xi}^{\beta}\sigma^1(\cdot, \xi)|_{H^{t_0+1}} 
		%	&\leq
		%	|\partial_{\xi}^{\beta}\partial_h	\Sigma(\cdot, \xi)|_{H^{t_0+1}}|h-1|_{H^{t_0+1}}
		%	\\ 
			%&
			\leq \ve C(h_{\min}^{-1}, |\zeta|_{H^{t_0+2}})
			(\sqrt{\mu} |\xi|)^{\beta+1} e^{-\frac{1}{C}\sqrt{\mu}|\xi|} \langle \xi \rangle^{-\beta},
		\end{align*}
		and we conclude that 
		\begin{equation*}
			|\sigma^1|_{H^{t_0}_{(0)}} \leq \ve C(h_{\min}^{-1},|\zeta|_{t_0+2}).
		\end{equation*}
		For the estimate on $\sigma^2$, we argue similarly but instead note that  $|\frac{\arctan(r)}{r}-1|\leq |r|$ and we can use it to obtain
		\begin{equation*}
			|g-1|_{H^{t_0+1}} \leq  \ve \sqrt{\mu}	|\partial_x\zeta|_{H^{t_0+1}},
		\end{equation*}
		from which we deduce the bound
		\begin{equation*}
			|\sigma^2|_{H^{t_0}_{(0)}} \leq \ve \sqrt{\mu}C(h_{\min}^{-1},|\zeta|_{H^{t_0+2}}).
		\end{equation*}
		Then since $[\Lambda^{\frac{1}{2}}, \sigma^3(\mathrm{D})]f = 0$, we may conclude by  \eqref{Commutator Op} that 
		\begin{align*}
			|[\Lambda^s, \mathrm{Op}\big{(} \frac{S^+}{S^-}\big{)}]f|_{H^{\frac{1}{2}}} 
			& \leq
			|[\Lambda^s, \mathrm{Op}(\sigma^1)]f|_{H^{\frac{1}{2}}} 
			+
			|[\Lambda^s, \mathrm{Op}(\sigma^2)]f|_{H^{\frac{1}{2}}} 
			\\ 
			& \leq 
			\ve  C(h_{\min}^{-1},|\zeta|_{H^{t_0+2}}) |f|_{H^{s-\frac{1}{2}}}.
		\end{align*}

		For the proof of \eqref{adjoint est on S+/D}, we again use the decomposition given by \eqref{decom sigma} and then conclude by \eqref{adjoint est} using that the symbol is real-valued and $(\sigma^3)^{\ast} = \sigma^3$. \\ 

		\noindent
		\underline{Step 2.} For the proof of \eqref{Op 1/SJ}, we  argue as in the previous step to find that
		\begin{equation}\label{est step 2}
		 	\frac{1}{1 +  \gamma\sigma} \in \Gamma_{t_0}^0
		 	\quad 
		 	\text{with} \quad 
		 	\Big{|}\frac{1}{1 +  \gamma\sigma}\Big{|}_{H^{t_0}_{(0)}} \leq C(h_{\min}^{-1},|\zeta|_{H^{t_0+1}}),
		 	%=1 -  \frac{\gamma \sigma (x,\xi)}{1 +  \gamma \sigma (x,\xi)}
		\end{equation}
		Then for \eqref{Op 1/SJ} we use \eqref{product est Op}.

		For the proof of \eqref{Op S_J 1/SJ} we use the notation in Step 1. to make the following decomposition
		\begin{align*}
			1-\mathrm{Op}(S_J)\mathrm{Op}\big{(}\frac{1}{S_J}\big{)} 
			& =
			\gamma \mathrm{Op}\big{(} \frac{\sigma}{S_J}\big{)}-
			\gamma \mathrm{Op}(\sigma)\mathrm{Op}\big{(}\frac{1}{S_J}\big{)} 
			\\
			& = 
			\gamma \sum\limits_{j=1}^2 \Big{(}\mathrm{Op}\big{(} \frac{\sigma^j}{S_J}\big{)}-
			 \mathrm{Op}(\sigma^j)\mathrm{Op}\big{(}\frac{1}{S_J}\big{)}\Big{)} - \gamma[\sigma^{3}(\mathrm{D}), \mathrm{Op}\big(\frac{1}{S_J}\big{)}]
			 \\ 
			 & =  \mathrm{Op}(A_1) +  \mathrm{Op}(A_2).
		\end{align*}
		where we used that $\mathrm{Op}\big{(}\frac{1}{S_J}\big{)} \circ \sigma^3(\mathrm{D}) = \mathrm{Op}\big{(}\frac{\sigma^3}{S_J}\big{)}$. Then to estimate the contribution of $A_1$, we apply the composition estimate \eqref{Comp est}, the estimates in Step 1., and \eqref{est step 2}  to deduce that
		\begin{align*}
			|\mathrm{Op}(A_1)f|_{H^s} = 
			\Big|\Big{(}\mathrm{Op}\big{(} \frac{\sigma^j}{S_J}\big{)}-
			\mathrm{Op}(\sigma^j)\mathrm{Op}\big{(}\frac{1}{S_J}\big{)}\Big{)}f\Big|_{H^s} 
			& \lesssim
			|\sigma^j|_{H^{t_0+1}_{(0)}} \big|\frac{1}{S_J}\big|_{H^{t_0+1}_{(0)}}|f|_{H^{s-1}}
			\\ 
			& \leq 
			\ve  C(h_{\min}^{-1},|\zeta|_{H^{t_0+2}})  |f|_{H^{s-1}},
		\end{align*}
		for $j=1,2$. On the other hand, for the symbol associated to $A_2$ we observe that
		%
		%
		%
%		\begin{align*}
	%		\Big|\Big{(}\mathrm{Op}(S_J) - \big(1+\gamma \sigma^3(\mathrm{D})\Big)f\Big|_{H^s}
		%	& \leq 
		%	\gamma (|\mathrm{Op}(\sigma^1)f|_{H^s} +  |\mathrm{Op}(\sigma^2)f|_{H^s} )
		%%	& \leq \ve \gamma  C(h_{\min}^{-1},|\zeta|_{t_0+1}) |f|_{H^s}.
		%\end{align*}
		%
		%
		%
		%and
		%
		%
		%
		\begin{align*}
			\Big|\frac{1}{S_J} -\frac{1}{1+\gamma\sigma^3(\mathrm{D})}\Big|_{H^{t_0+1}_{(0)}}
			&
			\leq \gamma
			\big{|}\frac{ \sigma^1}{(1+\gamma \sigma)(1+\gamma\tanh(\sqrt{\mu}|\mathrm{D}| g))} \big{|}_{H^{t_0+1}_{(0)}}
			\\ 
			&
			\hspace{0.5cm}+
			\gamma\Big{|}
			\frac{ \sigma^2}{(1+\gamma \sigma)(1+\gamma \tanh(\sqrt{\mu}|\mathrm{D}| ))} \Big{|}_{H^{t_0+1}_{(0)}}
			\\ 
			& \leq \ve \gamma C(h_{\min}^{-1},|\zeta|_{H^{t_0+2}}),
		\end{align*}
		and then conclude by the commutator estimate \eqref{Thm 6} that
		\begin{align*}
				|\mathrm{Op}(A_2)f|_{H^s} = |[\sigma^{3}(\mathrm{D}), \mathrm{Op}\big(\frac{1}{S_J}\big{)}]f|_{H^s} \leq \ve  C(h_{\min}^{-1},|\zeta|_{H^{t_0+2}}) |f|_{H^{s-1}}.
		\end{align*}
	
		For the proof of \eqref{Op SJ 1}, \eqref{Op SJ 2}, and  \eqref{Op SJ 3},  we argue as above where we use  \eqref{Thm 6} for the first two, and for \eqref{Op SJ 3} we use the adjoint estimate \eqref{adjoint est}.
		\\
		
		\noindent
		\underline{Step 3.} For the proof of estimates \eqref{est S+/S-SJ}, \eqref{Op dx S+/S- 1/SJ}, \eqref{commutator est S+/S-SJ}, \eqref{adjoint est S+/S-SJ}, and \eqref{Op S+/S- 1/SJ} we simply use the observation
		\begin{equation*}
			\gamma\frac{S^+}{S^-S_J} = \frac{1}{1+\gamma \sigma}-1,
		\end{equation*}
		and arguing similarly as in Step 2. to deduce the results.
		
		For the proof of \eqref{S-SJ comp est}, we use the notation in Step 1. to make the observation that
		\begin{align*}
			\mathrm{Op}(S^-S_J) -\mathrm{Op}(S^-)\mathrm{Op}(S_J) 
			& = \gamma \sqrt{\mu}\Big{(} \mathrm{Op}(|\mathrm{D}|\sigma)-|\mathrm{D}|\mathrm{Op}(\sigma)\Big{)}
			\\
			& = 
			\gamma \sqrt{\mu}
			\sum \limits_{j=1}^2
			\Big{(} \mathrm{Op}(|\mathrm{D}|\frac{\sigma^j}{\mathfrak{P}})-|\mathrm{D}|\mathrm{Op}(\frac{\sigma^j}{\mathfrak{P}})\Big{)}\mathfrak{P}
			\\ 
			& =
			-	
			\gamma \sqrt{\mu}
			\sum \limits_{j=1}^2[|\mathrm{D}|,\mathrm{Op}(\frac{\sigma^j}{\mathfrak{P}})\Big{)}]\mathfrak{P}.
		\end{align*}
		Then we would like to conclude by the commutator estimate \eqref{Thm 6}:
		\begin{align*}
			|\big{(}	\mathrm{Op}(S^-S_J) -\mathrm{Op}(S^-)\mathrm{Op}(S_J)  \big{)}f|_{H^{s+\frac{k}{2}}} 
			&\lesssim  \sqrt{\mu} \max_{j=1,2} 
			  \Big| \big[|\mathrm{D}|,\mathrm{Op}(\frac{\sigma^j}{\mathfrak{P}})\big]\mathfrak{P}f\Big|_{H^{s+\frac{k}{2}}}
			\\
			& \lesssim 
			\sqrt{\mu} \max_{j=1,2}|\frac{\sigma^j}{\mathfrak{P}}|_{H^{t_0+1}_{(-\frac{k}{2})}} |\mathfrak{P}f|_{H^{s}}.
		\end{align*}
		Then, arguing as in Step 1. we deduce that
		\begin{align*}
			|\mathfrak{P}^{-1}\sigma^j(\cdot, \xi)|_{H^{t_0+1}} 
			%&\leq
			%|\partial_h	\Sigma(h,f, \xi)|_{H^{t_0+1}}|h-1|_{H^{t_0+1}}
			%	\\ 
			%	&
		%	& \leq \ve C(h_{\min}^{-1}, |\zeta|_{H^{t_0+2}})  \frac{(1+\sqrt{\mu}|\xi|)^\frac{1}{2}}{|\xi|}
		%	\sqrt{\mu} |\xi| e^{-\frac{1}{C}\sqrt{\mu}|\xi|}, 
		%	\\ 
			& 
			\leq 
			\ve\sqrt{\mu} C(h_{\min}^{-1}, |\zeta|_{H^{t_0+2}})
			 (1+\sqrt{\mu}|\xi|)^{\frac{1}{2}} e^{-\frac{1}{C}\sqrt{\mu}|\xi|}
			 \\ 
			 & 
			 \leq 
			 \ve\sqrt{\mu} C(h_{\min}^{-1}, |\zeta|_{H^{t_0+2}}),
		\end{align*}
		and for $|\xi|\geq \frac{1}{4}$ we get
		\begin{align*}
			|\partial_{\xi}^{\beta}(\mathfrak{P}^{-1}\sigma^j(\cdot, \xi))|_{H^{t_0+1}} 
			%	&\leq
			%	|\partial_{\xi}^{\beta}\partial_h	\Sigma(\cdot, \xi)|_{H^{t_0+1}}|h-1|_{H^{t_0+1}}
			%	\\ \langle \xi\rangle^{\beta}
			%&
			& \leq
			\ve \sqrt{\mu} C(h_{\min}^{-1}, |\zeta|_{H^{t_0+2}})
			(1+\sqrt{\mu} |\xi|)^{\frac{1}{2}} (\sqrt{\mu})^{\beta}e^{-\frac{1}{C}\sqrt{\mu}|\xi|},
			\\ 
			& 
			\leq 
			\ve\sqrt{\mu} C(h_{\min}^{-1}, |\zeta|_{H^{t_0+2}})	\langle \xi \rangle^{\frac{k}{2}}\langle\sqrt{\mu} |\xi|\rangle^{\beta + \frac{1}{2}}e^{-\frac{1}{C}
			\sqrt{\mu}|\xi|} \langle \xi\rangle^{-\beta-\frac{k}{2}} .  
		\end{align*}
		We conclude the final estimate from the following bound
		\begin{equation*}
			|\mathfrak{P}^{-1}\sigma^j|_{H^{t_0}_{(-\frac{k}{2})}} \leq \ve \mu^{\frac{1}{2}-\frac{k}{4}} C(h_{\min}^{-1},|\zeta|_{H^{t_0+2}}).
		\end{equation*}

		\noindent
		\underline{Step 4.} For the proof of \eqref{1 - op b2...},  we need to work on the domain of $\mathfrak{P}f$ for the composition to be well-defined in low frequency. With this in mind, we define the symbols 
		$$\sigma^4 = \frac{\mathfrak{P}^2}{S_J S^-} \quad \text{and} \quad \sigma^5 = \frac{S_J S^-}{\mathfrak{P}^2},$$
		then make the decomposition
		\begin{align*}
			 \mathrm{Op}(\sigma^4)\circ\mathrm{Op}(\sigma^5)\circ |\mathrm{D}|
			 & =
			 \mathrm{Op}(\frac{1}{S_J}) \circ \frac{|\mathrm{D}|}{(1+\sqrt{\mu}|\mathrm{D}|)} \circ \mathrm{Op}(S_J ) \circ (1+\sqrt{\mu}|\mathrm{D}|)
			 \\ 
			 & = 
			 \mathrm{Op}(\frac{1}{S_J}) \circ |\mathrm{D}| \circ \mathrm{Op}(S_J ) 
			 +
			 \mathrm{Op}(\frac{1}{S_J}) \circ \frac{|\mathrm{D}|}{(1+\sqrt{\mu}|\mathrm{D}|)} \circ [\mathrm{Op}(S_J ), \sqrt{\mu}|\mathrm{D}|]
			 \\ 
			 & =
			 \mathrm{Op}(\frac{1}{S_J}) \circ \mathrm{Op}(S_J )\circ |\mathrm{D}| 
			 +
			  \gamma \mathrm{Op}(\frac{1}{S_J})\sum \limits_{j=1}^2 [ |\mathrm{D}|, \mathrm{Op}(\sigma^j) ]
			 \\
			 & \hspace{0.5cm}
			 +
			 \gamma  \mathrm{Op}(\frac{1}{S_J}) \circ \frac{\sqrt{\mu}|\mathrm{D}|}{(1+\sqrt{\mu}|\mathrm{D}|)} \circ \sum \limits_{j=1}^2[\mathrm{Op}(\sigma^j), |\mathrm{D}|],
		\end{align*}
		and use it together with the boundedness of $\mathrm{Op}(\frac{1}{S_J})$, given by \eqref{Op 1/SJ}, to see that
		\begin{align*}
			\mathrm{LHS}_{\eqref{1 - op b2...}} 
			& =
			|\big{(} 1- \mathrm{Op}(\frac{\mathfrak{P}^2}{S_J S^-})\mathrm{Op}(\frac{S_J S^-}{\mathfrak{P}^2}) \big{)}\mathfrak{P}f|_{H^{s + \frac{k}{2}}} 
			\\ 
			& =
			\Big|\big{(} 1- \mathrm{Op}(\sigma^4)\mathrm{Op}(\sigma^5) \big{)}|\mathrm{D}| \frac{\mathfrak{P}}{|\mathrm{D}|}f\Big|_{H^{s+ \frac{k}{2}}} 
			\\ 
			& \leq 
			\Big|\big{(} 1-  \mathrm{Op}(\frac{1}{S_J}) \circ \mathrm{Op}(S_J ) \big{)} \mathfrak{P}f\Big|_{H^{s+ \frac{k}{2}}}
			+
			 \max\limits_{j=1,2} \Big|[ |\mathrm{D}|, \mathrm{Op}(\sigma^j) ] \frac{\mathfrak{P}}{|\mathrm{D}|}f\Big|_{H^{s+ \frac{k}{2}}}. 
		\end{align*}
		Then by \eqref{Op S_J 1/SJ} and the commutator estimate \eqref{Thm 6} we conclude that
		\begin{equation*}
			\mathrm{LHS}_{\eqref{1 - op b2...}} \leq  \ve  C(h_{\min}^{-1},|\zeta|_{H^{t_0+2}}) |(1+\sqrt{\mu} |\mathrm{D}|)^{-\frac{1}{2}} f|_{H^{s+ \frac{k}{2}}}.
		\end{equation*}

	\end{proof}

    \bibliographystyle{plain}
    \bibliography{Biblio.bib}

\end{document}